 \newtheorem{Theorem}{Theorem}[section]
 \newtheorem{Lemma}[Theorem]{Lemma}
 \newtheorem{Proposition}[Theorem]{Proposition}
\newtheorem{Problem}[Theorem]{Problem}
 \newtheorem{Remark}[Theorem]{Remark}
 \numberwithin{equation}{section}
\begin{document}


\title[concavity property of minimal $L^{2}$ integrals]
 {concavity property of minimal $L^{2}$ integrals with Lebesgue measurable gain \uppercase\expandafter{\romannumeral7}---Negligible weights}

\author{Shijie Bao}
	\address{Shijie Bao: School of
		Mathematical Sciences, Peking University, Beijing 100871, China.}
	\email{bsjie@pku.edu.cn}
	
	\author{Qi'an Guan}
	\address{Qi'an Guan: School of
		Mathematical Sciences, Peking University, Beijing 100871, China.}
	\email{guanqian@math.pku.edu.cn}

\author{Zhitong Mi}
\address{Zhitong Mi: Institute of Mathematics, Academy of Mathematics
and Systems Science, Chinese Academy of Sciences, Beijing, China
}
\email{zhitongmi@amss.ac.cn}
	
	\author{Zheng Yuan}
	\address{Zheng Yuan: School of
		Mathematical Sciences, Peking University, Beijing 100871, China.}
	\email{zyuan@pku.edu.cn}

\thanks{}

\subjclass[2010]{32D15, 32E10, 32L10, 32U05, 32W05}

\keywords{plurisubharmonic
functions, holomorphic functions, $L^2$ extension}

\date{\today}

\dedicatory{In Memory of Jean-Pierre Demailly (1957-2022)}

\commby{}


\begin{abstract}
In this article, we present characterizations of the concavity property of minimal $L^2$ integrals with negligible weights degenerating to linearity on the fibrations over open Riemann surfaces and the fibrations over products of open Riemann surfaces.
As applications, we obtain characterizations of the holding of equality in optimal jets $L^2$ extension problem
with negligible weights on the fibrations over open Riemann surfaces and the fibrations over products of open Riemann surfaces.
\end{abstract}

\maketitle

\section{introduction}

Recall that the strong openness property of multiplier ideal sheaves \cite{GZSOC}, i.e. $\mathcal{I}(\varphi)=\mathcal{I}_+(\varphi):=\mathop{\cup} \limits_{\epsilon>0}\mathcal{I}((1+\epsilon)\varphi)$
	(conjectured by Demailly \cite{DemaillySoc}) has been widely used and discussed
	in several complex variables, complex algebraic geometry and complex differential geometry
	(see e.g. \cite{GZSOC,K16,cao17,cdM17,FoW18,DEL18,ZZ2018,GZ20,berndtsson20,ZZ2019,ZhouZhu20siu's,FoW20,KS20,DEL21}),
	where multiplier ideal sheaf $\mathcal{I}(\varphi)$ is the sheaf of germs of holomorphic functions $f$ such that $|f|^2e^{-\varphi}$ is locally integrable (see e.g. \cite{Tian,Nadel,Siu96,DEL,DK01,DemaillySoc,DP03,Lazarsfeld,Siu05,Siu09,DemaillyAG,Guenancia}),
and $\varphi$ is a plurisubharmonic function on a complex manifold $M$ (see \cite{Demaillybook}).

	When $\mathcal{I}(\varphi)=\mathcal{O}$, the strong openness property is the openness property (conjectured by Demailly-Koll\'ar \cite{DK01}).
	Berndtsson \cite{Berndtsson2} established an effectiveness result of the openness property, and obtained the openness property.
	Stimulated by Berndtsson's effectiveness result, and continuing the solution of the strong openness property \cite{GZSOC},
	Guan-Zhou \cite{GZeff} established an effectiveness result of the strong openness property by considering the minimal $L^{2}$ integral on the pseudoconvex domain $D$.

	Considering the minimal $L^{2}$ integrals on the sub-level sets of the weight $\varphi$,
	Guan \cite{G16} obtained a sharp version of Guan-Zhou's effectiveness result,
	and established a concavity property of the minimal $L^2$ integrals on the sublevel sets of the weight $\varphi$ (with constant gain),
	which was applied to give a proof of Saitoh's conjecture for conjugate Hardy $H^2$ kernels \cite{Guan2019},
	and the sufficient and necessary condition of the existence of decreasing equisingular approximations with analytic singularities for the multiplier ideal sheaves with weights $\log(|z_{1}|^{a_{1}}+\cdots+|z_{n}|^{a_{n}})$ \cite{guan-20}.
	
	For smooth gain, Guan \cite{G2018} (see also \cite{GM}) obtained the concavity property on Stein manifolds (weakly pseudoconvex K\"{a}hler case was obtained by Guan-Mi \cite{GM_Sci}),
which was applied by Guan-Yuan to give an optimal support function related to the strong openness property \cite{GY-support} and an effectiveness result of the strong openness property in $L^p$ \cite{GY-lp-effe}.
	For Lebesgue measurable gain, Guan-Yuan \cite{GY-concavity} obtained the concavity property on Stein manifolds (weakly pseudoconvex K\"{a}hler case was obtained by Guan-Mi-Yuan \cite{GMY}),
	which deduced a twisted $L^p$ strong openness property \cite{GY-twisted}.
	
	Note that the linearity is a degenerate concavity. A natural problem was posed in \cite{GY-concavity3}:
	
	\begin{Problem}[\cite{GY-concavity3}]\label{Q:chara}
		How to characterize the concavity property degenerating to linearity?
	\end{Problem}
For open Riemann surfaces, Guan-Yuan gave an answer to Problem \ref{Q:chara} for single points \cite{GY-concavity} (for the case of subharmonic weights, see Guan-Mi \cite{GM}),
and gave an answer to Problem \ref{Q:chara} for finite points \cite{GY-concavity3}.
For products of open Riemann surfaces, Guan-Yuan \cite{GY-concavity4} gave an answer to Problem \ref{Q:chara} for products of finite points.

For fibrations over open Riemann surfaces, Bao-Guan-Yuan \cite{BGY-concavity5} gave an answer to Problem \ref{Q:chara} with negligible weights pulled back from the open Riemann surfaces.
For fibrations over products of open Riemann surfaces, Bao-Guan-Yuan \cite{BGY-concavity6} gave an answer to Problem \ref{Q:chara} with negligible weights vanishing identically.

In this article, for the fibrations over open Riemann surfaces and the fibrations over products of open Riemann surfaces,
we give answers to Problem \ref{Q:chara} with negligible weights on fibrations.

We would like to recall the definition of minimal $L^2$ integral as follows.

Let $\Omega_j$  be an open Riemann surface, which admits a nontrivial Green function $G_{\Omega_j}$ for any  $1\le j\le n_1$. Let $Y$ be an $n_2-$dimensional weakly pseudoconvex K\"ahler manifold, and let $K_Y$ be the canonical (holomorphic) line bundle on $Y$. Let $M=\left(\prod_{1\le j\le n_1}\Omega_j\right)\times Y$ be an $n-$dimensional complex manifold, where $n=n_1+n_2$. Let $\pi_{1}$, $\pi_{1,j}$ and $\pi_2$ be the natural projections from $M$ to $\prod_{1\le j\le n_1}\Omega_j$, $\Omega_j$ and $Y$ respectively. Let $K_M$ be the canonical (holomorphic) line bundle on $M$.

 Let $Z_j$ be a (closed) analytic subset of $\Omega_j$ for any $j\in\{1,\ldots,n_1\}$, and denote that $Z_0:=\left(\prod_{1\le j\le n_1}Z_j\right)\times Y\subset M$.

Let $\psi< 0$ be a plurisubharmonic function on $M$ such that $\{\psi<-t\}\backslash Z_0$ is a weakly pseudoconvex K\"ahler manifold for any $t\in\mathbb{R}$ and $Z_{0}\subset \{\psi=-\infty\}$.
Let $\varphi_1$ be a Lebesgue measurable function on $\left(\prod_{1\le j\le n_1}\Omega_j\right)$ such that $\pi_1^*(\varphi_1)+\psi$ is a plurisubharmonic function on $M$.
Let $\varphi_2$ be a plurisubharmonic funciton on $Y$. Denote $\varphi:=\pi_1^*(\varphi_1)+\pi_1^*(\varphi_2)$.  Let $\mathcal{F}_{(z,y)}\supset \mathcal{I}(\varphi+\psi)_{(z,y)}$ be an ideal of $\mathcal{O}_{(z,y)}$ for any $(z,y)\in Z_0$. Let $f$ be a holomorphic $(n,0)$ form on a neighborhood $U$ of $Z_0$. Let $c(t)$ be a positive Lebesgue measurable function on $(0,+\infty)$. Denote
\begin{flalign*}
		\begin{split}
			\inf\bigg\{\int_{\{\psi<-t\}}|\tilde{f}|^2e^{-\varphi}c(-\psi) :& \big(\tilde{f}-f,(z,y)\big)\in \big(\mathcal{O}(K_M)\otimes\mathcal{F}\big)_{(z,y)} \\
			&\text{\ for \ any } (z,y)\in Z_0,  \\
			& \& \ \tilde{f}\in H^0\big(\{\psi<-t\},\mathcal{O}(K_M)\big).\bigg\}
		\end{split}
	\end{flalign*}
	by $G(t;c,f,\varphi,\psi,\mathcal{F})$ for any $t\in [0,+\infty)$. Here $|f|^2:=(\sqrt{-1})^{n^2}f\wedge\bar{f}$ for any $(n,0)$ form $f$.  We simply denote $G(t;c,f,\varphi,\psi,\mathcal{F})$ by $G(t)$ when there is no misunderstands and denote $G(t;c,\varphi,\psi,\mathcal{F})$ by $G(t;c)$, $G(t;\varphi)$, $G(t;\psi)$ and $G(t;\mathcal{F})$ when we focus on various choices of $c(t),\varphi,\psi$ and $\mathcal{F}$ respectively.

We generally assume that $c(t)$ is a positive function on $(0,+\infty)$ such that $\int_{0}^{+\infty}c(t)e^{-t}dt<+\infty$, $c(t)e^{-t}$ is decreasing with respect to $t$ on $(0,+\infty)$  and $e^{-\varphi}c(-\psi)$ has a positive lower bound on any compact subset of $M\backslash Z_0$ in this paper (when other assumption
for $c(t)$ is used, we introduce it explicitly).
Then $G\big(h^{-1}(r)\big)$ is concave with respect to $r\in[0,\int_0^{+\infty}c(t_1)e^{-t_1}dt_1]$ (see \cite{GMY}, see also Theorem \ref{Concave}), where $h(t)$ $=\int_t^{+\infty}c(s)e^{-s}ds$ for any $t\in [0,+\infty)$.

\subsection{Main results}

In this section, we present characterizations of the concavity property of minimal $L^2$ integrals with negligible weights degenerating to linearity on the fibrations over open Riemann surfaces and products of open Riemann surfaces.

\subsubsection{Linearity of the minimal $L^2$ integrals on fibrations over open Riemann surfaces}
In this section, we present characterizations of the concavity property of minimal $L^2$ integrals degenerating to linearity on the fibrations over open Riemann surface.

To state our results, we firstly recall the following notations (see \cite{OF81}, see also \cite{guan-zhou13ap,GY-concavity,GMY}).

Let $\Omega$  be an open Riemann surface, which admits a nontrivial Green function $G_{\Omega}$.
A character $\chi$ on $\pi_1(\Omega)$ is a homomorphism from $\pi_1(\Omega)$ to $\mathbb{C}^*=\mathbb{C}\backslash{\{0\}}$ which takes values in the unit circle $\{z\in\mathbb{C};|z|=1\}$.
\par
Let $P:\Delta\to \Omega$ be the universal covering from unit disc $\Delta\subset \mathbb{C}$ to $\Omega$.
We call the holomorphic function $f$ (resp. holomorphic $(1,0)$ form $F$) on $\Delta$ is a multiplicative function \big(resp. multiplicative differential (Prym differential)\big) if there is a character $\chi$ on $\pi_1(\Omega)$, such that $g^*f=\chi(g)f$ (resp. $g^*F=\chi(g)F$) for every $g\in \pi_1(\Omega)$ which naturally acts on the universal covering of $\Omega$. Denote the set of such kinds of $f$ (resp. F) by $\mathcal{O}^{\chi}(\Omega)$ \big(resp. $\Gamma^{\chi}(\Omega)$\big).
\par
As $P$ is a universal covering, then for any harmonic function $h$ on $\Omega$, there exists a character $\chi_h$ associated to $h$ and a multiplicative function $f_h \in \mathcal{O}^{\chi_h}(\Omega)$, such that $|f_h|=P^*e^{h}$. And if $g \in \mathcal{O} (\Omega)$ and $g$ has no zero points on $\Omega$, then we have $\chi_h=\chi_{h+\log|g|}$.

For Green function $G_{\Omega}(\cdot,z_0)$, one can find a $\chi_{z_0}$ and a multiplicative function $f_{z_0} \in \mathcal{O}^{\chi_{z_0}}(\Omega)$, such that $|f_{z_0}|=P^*e^{G_{\Omega}(\cdot,z_0)}$ (see \cite{suita72}).

Now we assume that $n_1=1$ and then $M=\left(\prod_{1\le j\le n_1}\Omega_j\right)\times Y=\Omega\times Y$, where $Y$ is an $(n-1)-$dimensional weakly pseudoconvex K\"ahler manifold.
Denote $Z_{\Omega}=\{z_j: 1\le j< \gamma\}$ be a subset of $\Omega$ of discrete points, where $\gamma>1$ is a positive integer or $\gamma=+\infty$. Denote $Z_0:=Z_{\Omega}\times Y$. Denote $Z_j:=\{z_j\}\times Y$ for any $j$.

Let $\psi$ be a plurisubharmonic function on $M$ such that $\{\psi<-t\}\backslash Z_0$ is a weakly pseudoconvex K\"ahler manifold for any $t\in\mathbb{R}$ and $Z_0\subset \{\psi=-\infty\}$.
It follows from Siu's decomposition theorem that $$dd^{c}\psi=\sum\limits_{j\ge1}2p_j[Z_j]+\sum\limits_{i\ge 1}\lambda_i[A_i]+R,$$
where $[Z_j]$ and $[A_i]$ are the currents of integration over an irreducible $(n-1)-$dimensional analytic set, and where $R$ is a closed positive current with the property that $dimE_c(R)<n-1$ for every $c>0$, where $E_c(R)=\{x\in M: v(R,x)\ge c\}$ is the upperlevel sets of Lelong number. We assume that $p_j>0$ for any $1\le j< \gamma$.

Then $N:=\psi-\pi^{*}_1\big(\sum\limits_{j\ge1}2p_jG_{\Omega}(z,z_j)\big)$ is a plurisubharmonic function on $M$, where $\pi_1:M\to \Omega$ be the natural projection. We assume that $N\le0$ and $N|_{Z_j}$ is not identically $-\infty$ for any $j$.

Let $\varphi_1$ be a Lebesgue measurable function on $\Omega$ such that $\psi+\pi^{*}_1(\varphi)$ is a plurisubharmonic function on $M$. By Siu's decomposition theorem, we have
$$dd^{c}(\psi+\pi^{*}_1(\varphi))=\sum\limits_{j\ge1}2\tilde{q}_j[Z_j]+\sum\limits_{i\ge 1}\tilde{\lambda}_i[\tilde{A}_i]+\tilde{R},$$
where $\tilde{q}_j\ge 0$ for any $1\le j< \gamma$.

By Weierstrass theorem on open Riemann surfaces, there exists a holomorphic function $g$ on $\Omega$ such that $ord_{z_j}(g)=q_j:=[\tilde{q}_j]$ for any $z_j\in Z_{\Omega}$ and $g(z)\neq 0$ for any $z\notin Z_{\Omega}$, where $[q]$ equals to the integral part of the nonnegative real number $q$.
Then we know that there exists a plurisubharmonic function $\tilde{\psi}_2\in Psh(M)$ such that $$\psi+\pi^{*}_1(\varphi_1)=\pi^{*}_1(2\log|g|)+\tilde{\psi}_2.$$

Let $\varphi_2\in Psh(Y)$. Denote $\pi_2:M\to Y$ be the natural projection and $\varphi:=\pi^{*}_1(\varphi_1)+\pi^{*}_2(\varphi_2)$.

For any $1\le j<\gamma$, let $\tilde{z}_j$ be a local coordinate on a neighborhood $V_{j}$ of $z_j$ satisfying $\tilde{z}_j(z_0)=0$ and $V_{j}\cap V_{k}=\emptyset$, for any $j\neq k$. Denote $V_0:=\cup_{1\le j < \gamma}V_j$. Let $f$ be a holomorphic $(n,0)$ form on $V_{0}\times Y$ which is a neighborhood of $Z_0$. Denote $\mathcal{F}_{(z_j,y)}=\mathcal{I}(\pi_1^{*}(2\log|g|)+\pi_2^*(\varphi_2))_{(z_j,y)}$ for any $(z_j,y)\in Z_0$. Let $G(t)$ be the minimal $L^2$ integral on $\{\psi<-t\}$ with respect to $\varphi$, $f$, $\mathcal{F}$ and $c$ for any $t\geq 0$.

Let $\gamma=m+1$ is an positive integer, i.e., $Z_{\Omega}=\{z_j: 1\le j< \gamma\}$ contains $m$ points. We obtain the following characterization of the concavity of $G\big(h^{-1}(r)\big)$ degenerating to linearity on the fibrations over open Riemann surfaces.

\begin{Theorem}\label{finite points}
		Assume that $G(0)\in (0,+\infty)$. Then $G\big(h^{-1}(r)\big)$ is linear with respect to $r\in (0,\int_0^{+\infty}c(t)e^{-t}dt]$ if and only if the following statements hold:
		
	(1). $N\equiv 0$ and  $\psi=\pi_1^*\left(2\sum\limits_{j=1}^mp_j G_{\Omega}(\cdot,z_j)\right)$;
		
		(2). for any $j\in\{1,2,\ldots,m\}$, $f=\pi_1^*(a_j\tilde{z}_j^{k_j}d\tilde{z}_j)\wedge \pi_2^*(f_{Y})+f_j$ on $V_{j}\times Y$, where $a_j\in\mathbb{C}\setminus \{0\}$ is a constant, $k_j$ is a nonnegative integer, $f_{Y}$ is a holomorphic $(n-1,0)$ form on $Y$ such that $\int_{Y}|f_{Y}|^2e^{-\varphi_2}\in (0,+\infty)$, and $\big(f_j,(z_j,y)\big)\in \big(\mathcal{O}(K_M)\big)_{(z_j,y)}\otimes\mathcal{I}(\varphi+\psi)_{(z_j,y)}$ for any $j\in\{1,2,\ldots,m\}$ and $y\in Y$;
		
		(3). $\varphi_1+2\sum\limits_{j=1}^mp_j G_{\Omega}(\cdot,z_j)=2\log |g|+2\sum\limits_{j=1}^mG_{\Omega}(\cdot,z_j)+2u$, where $g$ is a holomorphic function on $\Omega$ such that $ord_{z_j}(g)=k_j$ and $u$ is a harmonic function on $\Omega$;
		
		(4). $\prod\limits_{j=1}^m\chi_{z_j}=\chi_{-u}$, where $\chi_{-u}$ and $\chi_{z_j}$ are the characters associated to the functions $-u$ and $G_{\Omega}(\cdot,z_j)$ respectively;
		
		(5). for any $j\in\{1,2,\ldots,m\}$,
		\begin{equation}
			\lim_{z\rightarrow z_j}\frac{a_j\tilde{z}_j^{k_j}d\tilde{z}_j}{gP_*\left(f_u\left(\prod\limits_{l=1}^mf_{z_l}\right)\left(\sum\limits_{l=1}^mp_l\dfrac{d{f_{z_{l}}}}{f_{z_{l}}}\right)\right)}=c_0,
		\end{equation}
		where $c_0\in\mathbb{C}\setminus\{0\}$ is a constant independent of $j$, $f_{u}$ is a holomorphic function $\Delta$ such that $|f_{u}|=P^*(e^{u})$ and $f_{z_{l}}$ is a holomorphic function on $\Delta$ such that $|f_{z_{l}}|=P^*\left(e^{G_{\Omega}(\cdot,z_{l})}\right)$ for any $l\in\{1,\ldots,m\}$.
	\end{Theorem}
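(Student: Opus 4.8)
The plan is to prove the two implications separately, in both cases reducing to the finite-points characterization on the base $\Omega$ from \cite{GY-concavity3}, combined with a fiberwise $L^2$ analysis on $Y$ and the product structure of $M=\Omega\times Y$.

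For the ``if'' direction, assume $(1)$--$(5)$. On the universal cover $P:\Delta\to\Omega$ one forms the multiplicative $(1,0)$ form $F_\Delta:=f_u\big(\prod_{l=1}^m f_{z_l}\big)\big(\sum_{l=1}^m p_l\,df_{z_l}/f_{z_l}\big)$; its character equals $\chi_u\cdot\prod_j\chi_{z_j}$, which is trivial by condition $(4)$, so $F_\Delta$ descends to a holomorphic $(1,0)$ form on $\Omega$ that is nonvanishing and finite at each $z_j$ (the simple pole of $\sum p_l df_{z_l}/f_{z_l}$ at $z_j$ cancels the simple zero of $\prod f_{z_l}$). Multiplying by $g$, whose divisor is $\sum_j k_j[z_j]$, gives a holomorphic $(1,0)$ form $F_\Omega$ on $\Omega$ with $ord_{z_j}(F_\Omega)=k_j$, and conditions $(3)$, $(5)$ identify its leading coefficient there with $a_j$. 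Set $F:=\pi_1^*(F_\Omega)\wedge\pi_2^*(f_Y)$; condition $(2)$ then gives $(F-f,(z_j,y))\in(\mathcal{O}(K_M)\otimes\mathcal{F})_{(z_j,y)}$ for all $j$ and $y$. By condition $(1)$, $\psi=\pi_1^*\big(2\sum_j p_j G_\Omega(\cdot,z_j)\big)=:\pi_1^*(\psi_\Omega)$, so Fubini factors $\int_{\{\psi<-t\}}|F|^2 e^{-\varphi}c(-\psi)$ as $\big(\int_Y|f_Y|^2 e^{-\varphi_2}\big)\cdot\int_{\{\psi_\Omega<-t\}}|F_\Omega|^2 e^{-\varphi_1}c(-\psi_\Omega)$. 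The one-dimensional computation in \cite{GY-concavity3} shows the second factor equals a positive constant times $h(t)$ and that $F_\Omega$ is extremal there; invoking the product property of minimal $L^2$ integrals (so that $G(t)$ equals the product of the two one-dimensional minimal integrals) yields $G(t)=C\,h(t)$ with $C>0$, hence $G(h^{-1}(r))$ is linear.

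For the ``only if'' direction --- the substantive one --- assume $G(h^{-1}(r))$ is linear on $(0,h(0)]$. First, the standard characterization of equality in the concavity inequality (the method behind Theorem \ref{Concave}, cf. \cite{GMY,BGY-concavity5}) produces a holomorphic $(n,0)$ form $F$ on $M$ with $(F-f,(z_j,y))\in(\mathcal{O}(K_M)\otimes\mathcal{F})_{(z_j,y)}$ and $\int_{\{-t_2\le\psi<-t_1\}}|F|^2 e^{-\varphi}=\tfrac{G(0)}{h(0)}\int_{t_1}^{t_2}c(s)e^{-s}\,ds$ for all $0\le t_1<t_2$; in particular $|F|^2 e^{-\varphi}$ realizes the extremal mass distribution along the level sets of $-\psi$. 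Second, writing $\psi=\pi_1^*(\psi_\Omega)+N$ with $\psi_\Omega=2\sum_j p_j G_\Omega(\cdot,z_j)$ (absorbing the $A_i$, $R$ terms of the Siu decomposition) and $N\le 0$ plurisubharmonic, a mass comparison argument --- any point with $N<0$ shrinks $\{\psi<-t\}$ strictly inside $\pi_1^{-1}\{\psi_\Omega<-t\}$ and, through the extremal identity, contradicts linearity --- forces $N\equiv 0$ and $\psi=\pi_1^*(\psi_\Omega)$, i.e. condition $(1)$. Third, with the genuine product structure $M=\Omega\times Y$ and $\varphi=\pi_1^*\varphi_1+\pi_2^*\varphi_2$ in hand, restricting $F$ to the fibers $\{z\}\times Y$ and using the extremal identity together with fiberwise $L^2$ optimality forces $F$ to separate variables, $F=\pi_1^*(F_\Omega)\wedge\pi_2^*(f_Y)$, with $f_Y$ holomorphic on $Y$ and $\int_Y|f_Y|^2 e^{-\varphi_2}\in(0,+\infty)$, and $F_\Omega$ a holomorphic $(1,0)$ form on $\Omega$; matching $F$ with $f$ modulo $\mathcal{F}_{(z_j,y)}=\mathcal{I}(\pi_1^*(2\log|g|)+\pi_2^*\varphi_2)_{(z_j,y)}$ then gives the local form of $f$ in condition $(2)$. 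Finally, the Fubini factorization above shows that the one-dimensional minimal integral attached to $(\Omega,\psi_\Omega,\varphi_1,F_\Omega)$ is itself linear after the corresponding reparametrization, so the finite-points theorem of \cite{GY-concavity3} applies and delivers conditions $(3)$, $(4)$, $(5)$.

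The main obstacle I expect is the third step of the ``only if'' direction: upgrading ``fiberwise extremality for almost every fiber'' to the global factorization $F=\pi_1^*(F_\Omega)\wedge\pi_2^*(f_Y)$. One must control the genuinely non-pulled-back, possibly singular weight $\varphi_2$ on $Y$ and rule out any $y$-dependence in the $\Omega$-part of $F$; the plurisubharmonicity of $\varphi_2$ and the product form of $\mathcal{F}$ are exactly what make this possible, and this is also where the finiteness $\int_Y|f_Y|^2 e^{-\varphi_2}\in(0,+\infty)$ must be extracted rather than assumed. A secondary difficulty is the bookkeeping between the Siu decompositions of $\psi$ and of $\psi+\pi_1^*\varphi_1$ (and the role of the hypothesis that $N|_{Z_j}$ is not identically $-\infty$) when one argues that the divisorial part of $\psi$ sits only over the points $z_j$.
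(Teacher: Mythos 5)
There is a genuine gap, and it sits exactly where you flag your ``main obstacle''. In the necessity direction, your step forcing $N\equiv 0$ is not an argument as stated: the observation that a point with $N<0$ makes $\{\psi<-t\}$ strictly smaller than $\pi_1^{-1}\{\psi_\Omega<-t\}$ does not by itself contradict linearity, since $G$ is defined with the weight $\psi$ itself and no comparison with the $\psi_\Omega$-problem is available for free. What the paper actually does is: (i) establish the lower bound of Lemma \ref{slope bigger than measure} for the extremal form $F$ (which also forces $k_j+1=q_j$ on $I_F$ and produces the fiber integrals $\int_Y|F_j|^2e^{-\varphi_2-\tilde{\psi}_2(z_j,\cdot)}$); (ii) use Lemma \ref{k>k0} to see that the leading jets $\pi_1^*(\tilde z_j^{k_j}d\tilde z_j)\wedge\pi_2^*(\tilde F_j)$ are admissible extension data modulo the ideal; (iii) invoke the optimal extension Lemma \ref{optimal extension} \emph{with respect to the comparison weight} $\tilde\psi=\pi_1^*(2\sum_j p_jG_\Omega(\cdot,z_j))$ and $\tilde\varphi=\varphi+\psi-\tilde\psi$ to build a competitor $\tilde F$ whose $(\tilde\varphi,\tilde\psi)$-integral is bounded above by the same constant; and (iv) sandwich, conclude all inequalities are equalities, and apply Lemma \ref{decreasing property of l} (strict inequality of $c(t)e^{-t}$ on a set of positive measure when $N\not\equiv0$) to force $N\equiv0$. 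None of these ingredients appears in your sketch, and your ``extremal mass distribution'' identity alone cannot replace the construction of the competitor for the modified weight.

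The second gap is your reduction target. You reduce to the one-dimensional finite-points theorem of \cite{GY-concavity3} plus a ``product property of minimal $L^2$ integrals'' and a variable-separation statement $F=\pi_1^*(F_\Omega)\wedge\pi_2^*(f_Y)$, but both of these are precisely the substantive content of the fibration theorem (Theorem \ref{BGY-RESULT-FINITE POINTS}, from \cite{BGY-concavity5}), and you assert rather than prove them; note also that $Y$ is an $(n-1)$-dimensional weakly pseudoconvex K\"ahler manifold, so there is no ``one-dimensional minimal integral'' on the fiber and no product lemma in the paper to quote. The paper avoids this entirely: once $N\equiv0$ is known, it observes that $2u:=\varphi_1+2\sum_jp_jG_\Omega(\cdot,z_j)-2\log|g|$ has Lelong numbers in $[0,1)$, applies Lemma \ref{equiv of multiplier ideal sheaf} to identify $\mathcal{I}(\varphi+\psi)_{(z_j,y)}=\mathcal{I}(\pi_1^*(2\log|g|)+\pi_2^*(\varphi_2))_{(z_j,y)}$ (a bridge you never address, though it is needed in both directions to match the ideal $\mathcal{F}$ defining $G$ with the ideal in condition (2)), and then quotes Theorem \ref{BGY-RESULT-FINITE POINTS} for both sufficiency and the remaining conditions (2)--(5). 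As written, your proposal either re-derives that theorem without proof or leaves its key factorization step open.
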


\begin{Remark}
When $N\equiv \pi_1^{*}(N_1)$, it follows from $\mathcal{I}(\varphi+\psi)_{(z_j,y)}=\mathcal{I}\big(\pi_1^{*}(2\log|g|)+\pi_2^*(\varphi_2)\big)_{(z_j,y)}$ (see Lemma \ref{equiv of multiplier ideal sheaf})
that Theorem \ref{finite points} reduces to Theorem 1.4 in \cite{BGY-concavity5} (see also Theorem \ref{BGY-RESULT-FINITE POINTS}),
where $N_1\le0$ is a subharmonic function on $\Omega$ and $N_1(z_j)>-\infty$ for any $j$.
\end{Remark}

Let $\gamma=+\infty$, i.e., $Z_{\Omega}=\{z_j: 1\le j< \gamma\}$ is an infinite subset of $\Omega$ of discrete points. Assume that $2\sum\limits_{j\ge 1}p_j G_{\Omega}(\cdot,z_j)\not\equiv -\infty$. We present a necessary condition such that $G(h^{-1}(r))$ is linear as follows.

\begin{Proposition}\label{infinite points}
		Assume that $G(0)\in (0,+\infty)$. If $G(h^{-1}(r))$ is linear with respect to $r\in (0,\int_0^{+\infty}c(t)e^{-t}dt]$, then the following statements hold:
		
		(1).  $N\equiv 0$ and  $\psi=\pi_1^*\left(2\sum\limits_{j=1}^{\gamma}p_j G_{\Omega}(\cdot,z_j)\right)$;
		
		(2). for any $j\in\mathbb{N}_+$, $f=\pi_1^*(a_j\tilde{z}_j^{k_j}d\tilde{z}_j)\wedge \pi_2^*(f_{Y})+f_j$ on $V_{j}\times Y$, where $a_j\in\mathbb{C}\setminus \{0\}$ is a constant, $k_j$ is a nonnegative integer, $f_{Y}$ is a holomorphic $(n-1,0)$ form on $Y$ such that $\int_{Y}|f_{Y}|^2e^{-\varphi_2}\in (0,+\infty)$, and $\big(f_j,(z_j,y)\big)\in \big(\mathcal{O}(K_M)\big)_{(z_j,y)}\otimes\mathcal{I}(\varphi+\psi)_{(z_j,y)}$ for any $j\in\mathbb{N}_+$ and $y\in Y$;
		
		(3). $\varphi_1+2\sum\limits_{j=1}^mp_j G_{\Omega}(\cdot,z_j)=2\log |g|$, where $g$ is a holomorphic function on $\Omega$ such that $ord_{z_j}(g)=k_j+1$ for any $j\in\mathbb{N}_+$;
		
		(4). for any $j\in\mathbb{N}_+$,
		\begin{equation}
			\frac{p_j}{ord_{z_j}g}\lim_{z\rightarrow z_j}\frac{dg}{a_j\tilde{z}_j^{k_j}d\tilde{z}_j}=c_0,
		\end{equation}
		where $c_0\in\mathbb{C}\setminus\{0\}$ is a constant independent of $j$;
		
		(5). $\sum\limits_{j\in\mathbb{N}_+}p_j<+\infty$.
	\end{Proposition}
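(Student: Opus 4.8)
The plan is to run the same machinery as in the proof of Theorem \ref{finite points}, and then add the extra input needed to handle infinitely many points, which is what produces the new conclusion (5) and the slightly different shape of (3)--(4).

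First I would convert linearity into an extremal-extension statement. Writing $h(t)=\int_t^{+\infty}c(s)e^{-s}ds$ and recalling $G(0)\in(0,+\infty)$, linearity of $G(h^{-1}(r))$ on the full interval forces $G(t)=\frac{G(0)}{h(0)}h(t)$ for all $t\ge0$, and --- via the equality discussion in the $L^{2}$ (Hörmander/Ohsawa--Takegoshi type) proof of the concavity (Theorem \ref{Concave}, cf. \cite{GMY,BGY-concavity5,BGY-concavity6}) --- it yields a holomorphic $(n,0)$ form $F$ on $\{\psi<0\}$ with $(F-f,(z,y))\in(\mathcal O(K_M)\otimes\mathcal F)_{(z,y)}$ for every $(z,y)\in Z_0$, with $\int_{\{\psi<-t\}}|F|^{2}e^{-\varphi}c(-\psi)=G(t)$ for all $t\ge0$, and with a pointwise identity expressing $|F|^{2}e^{-\varphi}$ in terms of $e^{\psi}$ and the singular part of $dd^{c}\psi$ along $\bigcup_{j}Z_{j}$. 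These are exactly the tools assembled earlier in the paper, so I may invoke them.

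Next I would split off the $Y$-factor. Since $\varphi=\pi_1^{*}\varphi_1+\pi_2^{*}\varphi_2$ and $\mathcal F_{(z_j,y)}=\mathcal I\!\left(\pi_1^{*}(2\log|g|)+\pi_2^{*}(\varphi_2)\right)_{(z_j,y)}$ is of pull-back type, the extremal $F$ is forced to be a product $F=\pi_1^{*}(F_1)\wedge\pi_2^{*}(f_Y)$, with $f_Y$ a holomorphic $(n-1,0)$ form on $Y$ such that $\int_Y|f_Y|^{2}e^{-\varphi_2}\in(0,+\infty)$ and $F_1$ a (possibly multiplicative) holomorphic $(1,0)$ form on $\Omega$; comparing $0$-jets along $Z_j=\{z_j\}\times Y$ gives the expansion $f=\pi_1^{*}(a_j\tilde z_j^{k_j}d\tilde z_j)\wedge\pi_2^{*}(f_Y)+f_j$ of (2), and reduces everything to a one-variable minimal-$L^{2}$ problem on $\Omega$ with weight $\varphi_1$, singularity the push-forward of $\psi$, and the prescribed divisor $\sum_j(k_j+1)[z_j]$. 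On $\Omega$ I would then run the open Riemann surface analysis (the finite-points argument of \cite{GY-concavity3}, together with the ODE that identifies the extremal form in terms of $G_{\Omega}(\cdot,z_j)$ and the associated multiplicative functions): for every finite $\{z_1,\dots,z_m\}\subset Z_\Omega$ linearity descends, and it forces $N\equiv0$ and $\psi=\pi_1^{*}\!\left(2\sum_j p_jG_{\Omega}(\cdot,z_j)\right)$, i.e. (1); it forces $\varphi_1+2\sum_j p_jG_{\Omega}(\cdot,z_j)=2\log|g|$ with $\mathrm{ord}_{z_j}g=k_j+1$, i.e. (3) (the harmonic function and the character condition of the finite case disappear here because $g$ must carry infinitely many prescribed zeros, leaving no room for a nontrivial single-valued unit factor, and the push-forward identity collapses the product $f_u\prod_l f_{z_l}\sum_l p_l\,df_{z_l}/f_{z_l}$ into a constant multiple of $dg$); and it forces the normalized limit relation $\frac{p_j}{\mathrm{ord}_{z_j}g}\lim_{z\to z_j}\frac{dg}{a_j\tilde z_j^{k_j}d\tilde z_j}=c_0$ with $c_0\in\mathbb{C}\setminus\{0\}$ independent of $j$, i.e. (4).

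Finally, (5) comes from quantifying the $L^{2}$ mass: by (4), $p_j$ is a fixed nonzero constant times $\mathrm{ord}_{z_j}g$ divided by $|\lim_{z\to z_j}dg/(a_j\tilde z_j^{k_j}d\tilde z_j)|$, while the contribution of a small neighborhood of $Z_j$ to $G(0)=\int_{\{\psi<0\}}|F|^{2}e^{-\varphi}c(-\psi)<+\infty$ is bounded below by a fixed positive multiple of $p_j$ (because $dd^{c}\psi$ has mass $2p_j$ along $Z_j$ and $F$ is extremal near $Z_j$), whence $\sum_j p_j<+\infty$. The main obstacle I expect is the first and third steps: extracting the sharp extremal $F$ and the pointwise identity while the error $N$ is a priori nonzero, then proving $N\equiv0$ and processing the infinitely many points by finite truncation with uniform control --- in particular, verifying that the constant $c_0$ in (4) is genuinely $j$-independent and that the harmonic/character ambiguity of the finite-points case is really gone.
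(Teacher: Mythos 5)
Your plan reproduces the coarse outline (extract the extremal form $F$ from linearity via Lemma \ref{linear}, reduce to the known open Riemann surface/fibration results), but it has a genuine gap exactly at the step that is new in this proposition: proving that $N\equiv 0$ and $\psi=\pi_1^*\big(2\sum_j p_jG_{\Omega}(\cdot,z_j)\big)$. You flag this as ``the main obstacle I expect'' but offer no mechanism for it; saying that ``for every finite subset linearity descends and forces $N\equiv 0$'' assumes the conclusion, since the finite-points results you would invoke (\cite{GY-concavity3}, Theorem \ref{BGY-RESULT-FINITE POINTS}) are stated for $\psi$ equal to a pull-back of a subharmonic function and are not applicable while $N$ is still unknown. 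The paper's actual mechanism is a squeeze: (i) expand $F$ fiberwise via Lemma \ref{decomp}; (ii) use Lemma \ref{slope bigger than measure} to bound the linear slope from below by $\sum_{j\in I_F}\frac{2\pi}{p_j|d_j|^2}\int_Y|F_j|^2e^{-\varphi_2-\tilde{\psi}_2(z_j,w)}$ (note the weight $e^{-\tilde\psi_2(z_j,\cdot)}$, which remembers $N$); (iii) use the optimal extension Lemma \ref{optimal extension} with respect to the pulled-back weight $\tilde\psi=\pi_1^*\big(2\sum_jp_jG_\Omega(\cdot,z_j)\big)$ and $\tilde\varphi=\varphi+\psi-\tilde\psi$ to produce a competitor $\tilde F$ achieving the same quantity as an upper bound; (iv) since $c(t)e^{-t}$ is decreasing and $\psi\le\tilde\psi$, all inequalities collapse to equalities, and Lemma \ref{decreasing property of l} then forces $N\equiv0$. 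Only after that does Lemma \ref{equiv of multiplier ideal sheaf} identify $\mathcal I(\varphi+\psi)$ with $\mathcal I(\pi_1^*(2\log|g|)+\pi_2^*(\varphi_2))$, so that Proposition \ref{BGY-RESULT-INFINITE POINTS} can be applied to yield (1)--(5). None of (ii)--(iv) appears in your proposal.

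Two further points. Your claim that the extremal $F$ ``is forced to be a product $\pi_1^*(F_1)\wedge\pi_2^*(f_Y)$'' because $\varphi$ and $\mathcal F$ are of pull-back type is unjustified at that stage; the product structure with a common $f_Y$ is a \emph{conclusion} of the characterization (inherited here from \cite{BGY-concavity5}), not an a priori consequence of the shape of the data, and the correct substitute is the power-series decomposition of Lemma \ref{decomp} plus the asymptotic analysis. Also your sketch of (5) is imprecise: the slope lower bound carries $\frac{|a_j|^2}{p_j|d_j|^2}$, i.e. $p_j$ in the denominator, and it is only after invoking (4) (which gives $p_j|d_j|/|a_j|=|c_0|$) that this becomes a quantity comparable to $p_j$; in the paper (5) is simply part of Proposition \ref{BGY-RESULT-INFINITE POINTS} and requires no separate argument once the reduction is complete.
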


\begin{Remark}When $N\equiv \pi_1^{*}(N_1)$, Proposition \ref{infinite points} is Proposition 1.6 in \cite{BGY-concavity5} (see also Theorem \ref{BGY-RESULT-INFINITE POINTS}), where $N_1\le0$ is a subharmonic function on $\Omega$ and $N_1(z_j)>-\infty$ for any $j$.
\end{Remark}

Let $\tilde{M}\subset M$ be an $n-$dimensional weakly pseudoconvex submanifold satisfying that $Z_0\subset\tilde{M}$. Let $f$ be a holomorphic $(n,0)$ form on a neighborhood $\tilde{U}_0$ of $Z_0$ in $\tilde{M}$.

Let $\psi$ be a plurisubharmonic function on $\tilde{M}$ such that $\{\psi<-t\}\backslash Z_0$ is a weakly pseudoconvex K\"ahler manifold for any $t\in\mathbb{R}$.
It follows from Siu's decomposition theorem that $$dd^{c}\psi=\sum\limits_{j\ge1}2p_j[Z_j]+\sum\limits_{i\ge 1}\lambda_i[A_i]+R,$$
where $[Z_j]$ and $[A_i]$ are the currents of integration over an irreducible $(n-1)-$dimensional analytic set, and where $R$ is a closed positive current with the property that $dimE_c(R)<n-1$ for every $c>0$, where $E_c(R)=\{x\in \tilde M: v(R,x)\ge c\}$ is the upperlevel sets of Lelong number. We assume that $p_j>0$ for any $1\le j< \gamma$.

Then $N:=\psi-\pi^{*}_1(\sum\limits_{j\ge1}2p_jG_{\Omega}(z,z_j))$ is a plurisubharmonic function on $\tilde M$. We assume that $N\le0$ and $N|_{Z_j}$ is not identically $-\infty$ for any $j$.

Let $\varphi_1$ be a Lebesgue measurable function on $\Omega$ such that $\psi+\pi^{*}_1(\varphi)$ is a plurisubharmonic function on $\tilde M$. By Siu's decomposition theorem, we have
$$dd^{c}(\psi+\pi^{*}_1(\varphi))=\sum\limits_{j\ge1}2\tilde{q}_j[Z_j]+\sum\limits_{i\ge 1}\tilde{\lambda}_i[\tilde{A}_i]+\tilde{R},$$
where $\tilde{q}_j\ge 0$ for any $1\le j< \gamma$.

By Weierstrass theorem on open Riemann surfaces, there exists a holomorphic function $g$ on $\Omega$ such that $ord_{z_j}(g)=q_j:=[\tilde{q}_j]$ for any $z_j\in Z_{\Omega}$ and $g(z)\neq 0$ for any $z\notin Z_{\Omega}$, where $[q]$ equals to the integral part of the nonnegative real number $q$.
Then we know that there exists a plurisubharmonic function $\tilde{\psi}_2\in Psh(\tilde M)$ such that $$\psi+\pi^{*}_1(\varphi_1)=\pi^{*}_1(2\log|g|)+\tilde{\psi}_2.$$

Let $\varphi_2\in Psh(Y)$. Denote $\varphi:=\pi^{*}_1(\varphi_1)+\pi^{*}_2(\varphi_2)$.

Let $c(t)$ be a positive function on $(0,+\infty)$ such that $\int_{0}^{+\infty}c(t)e^{-t}dt<+\infty$, $c(t)e^{-t}$ is decreasing with respect to $t$ on $(0,+\infty)$  and $e^{-\varphi}c(-\psi)$ has a positive lower bound on any compact subset of $\tilde{M}\backslash Z_0$.

Denote
\begin{flalign*}
		\begin{split}
			\inf\bigg\{\int_{\{\psi<-t\}}|\tilde{f}|^2e^{-\varphi}c(-\psi) :& \big(\tilde{f}-f,(z,y)\big)\in \bigg(\mathcal{O}(K_{\tilde{M}})\otimes\mathcal{I}\big(\pi_1^{*}(2\log|g|)+\pi_2^*(\varphi_2)\big)\bigg)_{(z,y)} \\
			&\text{\ for \ any } (z,y)\in Z_0,  \\
			& \& \ \tilde{f}\in H^0(\{\psi<-t\},\mathcal{O}(K_{\tilde{M}})).\bigg\}
		\end{split}
	\end{flalign*}
	by $\tilde{G}(t)$ for any $t\in [0,+\infty)$. Note that $\tilde{G}(t)$ is a minimal $L^2$ integrals on $\tilde{M}$, where $\tilde{M}$ is a submanifold of $M$.

We present a necessary condition such that $\tilde{G}\big(h^{-1}(r)\big)$ is linear with respect to $r\in (0,\int_0^{+\infty}c(t)e^{-t}dt]$ as follows.

	\begin{Proposition}\label{tildeM}
		Assume that $\tilde{G}(0)\in (0,+\infty)$. If $\tilde{G}\big(h^{-1}(r)\big)$ is linear with respect to $r\in (0,\int_0^{+\infty}c(t)e^{-t}dt]$, then $\tilde{M}=M$.
	\end{Proposition}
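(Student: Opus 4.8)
The plan is to play the linearity on $\tilde M$ off against the already–known characterization on the full product $M$ (Theorem~\ref{finite points} and Proposition~\ref{infinite points}): linearity forces the extremal to be an explicit form that secretly lives on all of $M$, after which a mass comparison squeezes $\tilde M$ up to $M$. \emph{Step 1 (extract the rigidity).} Running the argument behind Theorem~\ref{finite points}/Proposition~\ref{infinite points} with $\tilde M$ in place of $M$, the linearity of $\tilde G(h^{-1}(r))$ forces $N\equiv 0$ on $\tilde M$, hence $\psi=\pi_1^{*}\big(2\sum_j p_jG_\Omega(\cdot,z_j)\big)\big|_{\tilde M}$; since $G_\Omega<0$ and $p_j>0$, $\psi<0$ on all of $\tilde M$, so $\{\psi<0\}=\tilde M$. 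The same analysis produces a common extremal, i.e. a holomorphic $(n,0)$ form $\tilde F$ on $\tilde M$ with $\tilde F-f\in\mathcal O(K_{\tilde M})\otimes\mathcal I(\pi_1^{*}(2\log|g|)+\pi_2^{*}\varphi_2)$ near $Z_0$ and $\int_{\{\psi<-t\}}|\tilde F|^{2}e^{-\varphi}c(-\psi)=\tilde G(t)$ for all $t\ge 0$, of the explicit shape $\tilde F=\pi_1^{*}(F_1)\wedge\pi_2^{*}(f_Y)\big|_{\tilde M}$, where (by the analogues of conditions (3)--(5)) $F_1$ is a nonzero constant multiple of $g\,P_{*}\!\big(f_u(\prod_l f_{z_l})(\sum_l p_l\,df_{z_l}/f_{z_l})\big)$. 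The crucial point is that $F_1$ is a holomorphic $(1,0)$ form on \emph{all} of $\Omega$: the character condition makes the Prym differential in the bracket have trivial character and thus descend to $\Omega$, and the zeros of $g$ absorb its poles at the points $z_l$.

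\emph{Step 2 (transport to $M$).} Set $\hat\psi:=\pi_1^{*}\big(2\sum_j p_jG_\Omega(\cdot,z_j)\big)$, plurisubharmonic and $<0$ on all of $M$, and $\hat F:=\pi_1^{*}(F_1)\wedge\pi_2^{*}(f_Y)$, holomorphic on $M$ with $\hat F|_{\tilde M}=\tilde F$ and $\psi=\hat\psi|_{\tilde M}$. Let $\hat G(t)$ be the minimal $L^{2}$ integral on $M$ associated with $\hat\psi$ and the same $\varphi$, $f$, ideal $\mathcal I(\pi_1^{*}(2\log|g|)+\pi_2^{*}\varphi_2)$ and $c$. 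Conditions (1)--(5) of Theorem~\ref{finite points} hold verbatim for $\hat G$ (condition (1) being now automatic from the choice of $\hat\psi$), and $\hat G(0)\in(0,+\infty)$: it is positive since admissible forms for $\hat G$ restrict to admissible forms for $\tilde G$ while $\tilde G(0)>0$, and it is finite since, by Fubini,
\[
\int_M|\hat F|^{2}e^{-\varphi}c(-\hat\psi)=\Big(\int_\Omega|F_1|^{2}e^{-\varphi_1}c\big(-2\textstyle\sum_j p_jG_\Omega(\cdot,z_j)\big)\Big)\Big(\int_Y|f_Y|^{2}e^{-\varphi_2}\Big),
\]
both factors being finite (the first is the corresponding one–dimensional minimal $L^2$ integral on the open Riemann surface, finite by the theory of \cite{GY-concavity,GY-concavity3}; the second by condition (2)). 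Hence Theorem~\ref{finite points} applies to $\hat G$: $\hat G(h^{-1}(r))$ is linear with common extremal $\hat F$, so $\hat G(t)=\int_{\{\hat\psi<-t\}}|\hat F|^{2}e^{-\varphi}c(-\hat\psi)$ for all $t\ge 0$.

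\emph{Step 3 (compare the two functions).} Since $\{\hat\psi<-t\}\cap\tilde M=\{\psi<-t\}$, for every $t\ge 0$
\[
\hat G(t)-\tilde G(t)=\int_{\{\hat\psi<-t\}\setminus\tilde M}|\hat F|^{2}e^{-\varphi}c(-\hat\psi)=:\mu(t)\ge 0 .
\]
Both $\hat G(h^{-1}(r))$ and $\tilde G(h^{-1}(r))$ are linear in $r$ and vanish as $r\to 0^{+}$, so $\hat G(t)=\hat\alpha\,h(t)$, $\tilde G(t)=\tilde\alpha\,h(t)$, whence $\mu(t)=(\hat\alpha-\tilde\alpha)h(t)$ with $\hat\alpha\ge\tilde\alpha$. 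On the other hand $\{\hat\psi<-t\}$ decreases to $Z_0\subset\tilde M$ as $t\to+\infty$ (in the infinite–point case one uses $\sum_j p_j<+\infty$ from Proposition~\ref{infinite points}(5)); writing $\{\hat\psi<-t\}=\bigsqcup_j\big(D_j(t)\times Y\big)$ for $t$ large and using that $\tilde M$ is open and contains $\{z_j\}\times Y$ (tube lemma over compacta $K\Subset Y$), Fubini gives, for $t$ large,
\[
\mu(t)\le\Big(\int_{Y\setminus K}|f_Y|^{2}e^{-\varphi_2}\Big)\cdot\int_{\{\hat\psi<-t\}}|F_1|^{2}e^{-\varphi_1}c(-\hat\psi)=\Big(\int_{Y\setminus K}|f_Y|^{2}e^{-\varphi_2}\Big)\cdot O\big(h(t)\big);
\]
letting $K\uparrow Y$ forces $\mu(t)/h(t)\to 0$, so the constant $\hat\alpha-\tilde\alpha$ is $0$, i.e. $\mu\equiv 0$. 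Taking $t=0$ (where $\{\hat\psi<0\}=M$) yields $\int_{M\setminus\tilde M}|\hat F|^{2}e^{-\varphi}c(-\hat\psi)=0$.

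\emph{Step 4 (conclude; the main obstacle).} Since $e^{-\varphi}c(-\hat\psi)$ has a positive lower bound on compact subsets of $M\setminus Z_0\supset M\setminus\tilde M$, $\hat F$ vanishes almost everywhere on $M\setminus\tilde M$; as $\hat F=\pi_1^{*}(F_1)\wedge\pi_2^{*}(f_Y)\not\equiv 0$ and $M$ is connected, $M\setminus\tilde M$ lies, up to a null set, in the analytic hypersurface $\{\hat F=0\}=(\{F_1=0\}\times Y)\cup(\Omega\times\{f_Y=0\})$, and a slicewise Fubini shows that $\Omega\setminus\tilde M^{w}\subset\{F_1=0\}$ (a discrete set) for almost every $w\in Y$. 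The remaining — and hardest — step is to upgrade this thinness to $M\setminus\tilde M=\emptyset$: here one must use that $\tilde M$ is a \emph{weakly pseudoconvex} submanifold, together with the fact that $\psi$ is the restriction to $\tilde M$ of the plurisubharmonic function $\hat\psi$ on $M$ and the hypotheses on the sublevel sets $\{\psi<-t\}$, to rule out $\tilde M$ being a proper subdomain, i.e. to show that $\tilde M$ has empty boundary in $M$. I expect this final rigidity argument, rather than the formal skeleton in Steps~1--3, to be the crux of the proof.
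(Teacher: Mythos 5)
There is a genuine gap, and it sits at both ends of your argument. Step 1 assumes that the full characterization of linearity (the analogues of conditions (2)--(5) of Theorem \ref{finite points}, including the explicit product shape $\tilde F=\pi_1^{*}(F_1)\wedge\pi_2^{*}(f_Y)$ with $F_1$ a globally defined form on $\Omega$) can be obtained by ``running the argument with $\tilde M$ in place of $M$''. It cannot: the necessity proofs of Theorem \ref{finite points} and Proposition \ref{infinite points} reduce, after showing $N\equiv0$, to Theorem \ref{BGY-RESULT-FINITE POINTS} and Proposition \ref{BGY-RESULT-INFINITE POINTS}, which are statements about the full product $\Omega\times Y$ and whose proofs use that product structure; nothing in the paper (or in \cite{BGY-concavity5}) gives the shape of the extremal on a proper weakly pseudoconvex submanifold $\tilde M$, and indeed the whole point of Proposition \ref{tildeM} is that one must prove $\tilde M=M$ \emph{before} the characterization becomes available. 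So your Step 1 essentially presupposes a stronger, unproven statement. At the other end, Step 4 is incomplete by your own admission: vanishing of $\int_{M\setminus\tilde M}|\hat F|^{2}e^{-\varphi}c(-\hat\psi)$ only shows $M\setminus\tilde M$ is thin, and you have no argument excluding that $\tilde M$ is $M$ minus a thin closed set. (In the infinite-point case your tube-lemma estimate in Step 3 also needs uniformity in $j$ that is not addressed.)

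The paper's proof needs neither the explicit extremal nor the comparison of two linear functions at infinity. From the extremal $F$ on $\tilde M$ given by Lemma \ref{linear} it extracts only the fiber data: Lemma \ref{decomp-tildeM} gives the expansion near $Z_0$ inside $\tilde M$, Lemma \ref{slope bigger than measure} (applied on $V_{j,W}\times W\subset\tilde M$ and letting $W\uparrow Y$) gives the lower bound $\tilde G(0)\ge\big(\int_0^{+\infty}c\,e^{-s}ds\big)\sum_{j\in I_F}\frac{2\pi}{p_j|d_j|^2}\int_Y|F_j|^2e^{-\varphi_2-\tilde\psi_2(z_j,\cdot)}$, and Lemma \ref{k>k0} handles the ideal memberships. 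Applying the optimal extension Lemma \ref{optimal extension} first on $\tilde M$ and invoking Lemma \ref{decreasing property of l} yields $N\equiv0$ and $\psi=\pi_1^{*}\big(2\sum_jp_jG_{\Omega}(\cdot,z_j)\big)$; then applying Lemma \ref{optimal extension} again, this time on the full manifold $M$, produces a holomorphic $(n,0)$ form $\hat F$ on $M$ with $\int_M|\hat F|^{2}e^{-\varphi}c(-\psi)$ bounded by the same quantity. Since $\hat F|_{\tilde M}$ is a competitor for $\tilde G(0)$, the chain $\tilde G(0)\le\int_{\tilde M}|\hat F|^{2}e^{-\varphi}c(-\psi)\le\int_{M}|\hat F|^{2}e^{-\varphi}c(-\psi)\le\tilde G(0)$ collapses to equalities, giving $\int_{\tilde M}|\hat F|^{2}e^{-\varphi}c(-\psi)=\int_{M}|\hat F|^{2}e^{-\varphi}c(-\psi)$ with $\hat F\not\equiv0$, from which the paper concludes $\tilde M=M$ at $t=0$ directly, with no need for the asymptotic mass comparison you attempt in Step 3 or the thinness-to-emptiness upgrade you leave open in Step 4.
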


\begin{Remark} When $N= \pi_1^{*}(N_1)|_{\tilde{M}}$, Proposition \ref{tildeM} is Proposition 1.7 in \cite{BGY-concavity5}, where $N_1\le0$ is a subharmonic function on $\Omega$ and $N_1(z_j)>-\infty$ for any $j$.
\end{Remark}

\subsubsection{Linearity of the minimal $L^2$ integrals on fibrations over products of open Riemann surfaces}

In this section, we present characterizations of the concavity property of minimal $L^2$ integrals degenerating to linearity on the fibrations over products of open Riemann surfaces.

 When $n_1\ge 1$, $M=\left(\prod_{1\le j\le n_1}\Omega_j\right)\times Y$ is an $n-$dimensional complex manifold, where $n=n_1+n_2$. Let $\pi_{1}$, $\pi_{1,j}$ and $\pi_2$ be the natural projections from $M$ to $\prod_{1\le j\le n_1}\Omega_j$, $\Omega_j$ and $Y$ respectively. Let $K_M$ be the canonical (holomorphic) line bundle on $M$. Denote $P_j:\Delta\to \Omega_j$ be the universal covering from unit disc $\Delta$ to $\Omega_j$ for $1\le j \le n_1$.

 Let $Z_j$ be a (closed) analytic subset of $\Omega_j$ for any $j\in\{1,\ldots,n_1\}$, and denote that $Z_0:=\left(\prod_{1\le j\le n_1}Z_j\right)\times Y\subset M$.  Let $N\le 0$ be a plurisubharmonic function on $M$ satisfying $N|_{Z_0}\not\equiv-\infty$. For any $j\in\{1,\ldots,n_1\}$, let $\varphi_j$ be an upper semi-continuous function on $\Omega_{j}$ such that $\varphi_j(z)>-\infty$ for any $z\in Z_j$. Assume that $\sum_{1\le j\le n_1}\pi_{1,j}^*(\varphi_j)+N$ is a plurisubharmonic function on $M$.  Let $\varphi_Y$ be a plurisubharmonic function on $Y$, and denote that $\varphi:=\sum_{1\le j\le n_1}\pi_{1,j}^*(\varphi_j)+\pi_2^*(\varphi_Y)$.

Let $c$ be a positive function on $(0,+\infty)$ such that $\int_{0}^{+\infty}c(t)e^{-t}dt<+\infty$, $c(t)e^{-t}$ is decreasing on $(0,+\infty)$ and $c(-\psi)$ has a positive lower bound on any compact subset of $M\backslash Z_0$. Let $f$ be a holomorphic $(n,0)$ form on a neighborhood of $Z_0$.

Assume that $Z_0=\{z_0\}\times Y=\{(z_1,\ldots,z_{n_1})\}\times Y\subset M$.  Denote $$\hat{G}:=\max_{1\le j\le n_1}\left\{2p_j\pi_{1,j}^{*}\big(G_{\Omega_j}(\cdot,z_j)\big)\right\},$$ where $p_j$ is positive real number for $1\le j\le n_1$.
Denote $$\psi:=\max_{1\le j\le n_1}\left\{2p_j\pi_{1,j}^{*}\big(G_{\Omega_j}(\cdot,z_j)\big)\right\}+N.$$
We assume that $\{\psi<-t\}\backslash Z_0$ is a weakly pseudoconvex K\"ahler manifold for any $t\in\mathbb{R}$. Denote $\mathcal{F}_{(z_0,y)}=\mathcal{I}\big(\hat G+\pi_2^*(\varphi_Y)\big)_{(z_0,y)}$ for any $(z_0,y)\in Z_0$. Let $G(t)$ be the minimal $L^2$ integral on $\{\psi<-t\}$ with respect to $\varphi$, $f$, $\mathcal{F}$ and $c$ for any $t\geq 0$.

Let $w_j$ be a local coordinate on a neighborhood $V_{z_j}$ of $z_j\in\Omega_j$ satisfying $w_j(z_j)=0$. Denote that $V_0:=\prod_{1\le j\le n_1}V_{z_j}$, and $w:=(w_1,\ldots,w_{n_1})$ is a local coordinate on $V_0$ of $z_0\in \prod_{1\le j\le n_1}\Omega_j$. Denote that $E:=\left\{(\alpha_1,\ldots,\alpha_{n_1}):\sum_{1\le j\le n_1}\frac{\alpha_j+1}{p_j}=1\,\&\,\alpha_j\in\mathbb{Z}_{\ge0}\right\}$.
Let $f$ be a holomorphic $(n,0)$ form on $V_0\times Y\subset M$.

We present a characterization of the concavity of $G\big(h^{-1}(r)\big)$ degenerating to linearity for the case $Z_0=\{z_0\}\times Y$.

\begin{Theorem}
	\label{thm:linear-fibra-single}
	Assume that $G(0)\in(0,+\infty)$.  $G\big(h^{-1}(r)\big)$ is linear with respect to $r\in(0,\int_{0}^{+\infty}c(t)e^{-t}dt]$  if and only if the  following statements hold:
	
$(1)$ $N\equiv 0$ and $\psi=\max_{1\le j\le n_1}\left\{2p_j\pi_{1,j}^{*}(G_{\Omega_j}(\cdot,z_j))\right\}$;

	$(2)$ $f=\sum_{\alpha\in E}\pi_{1}^*\left(w^{\alpha}dw_1\wedge\ldots\wedge dw_{n_1}\right)\wedge \pi_2^*(f_{\alpha})+g_0$ on $V_0\times Y$, where  $g_0$ is a holomorphic $(n,0)$ form on $V_0\times Y$ satisfying $(g_0,z)\in\big(\mathcal{O}(K_M)\otimes\mathcal{I}(\varphi+\psi)\big)_{z}$ for any point $z\in Z_0$ and $f_{\alpha}$ is a holomorphic $(n_2,0)$ form on $Y$ such that $\sum_{\alpha\in E}\int_{Y}|f_{\alpha}|^2e^{-\varphi_Y}\in(0,+\infty)$;
	
	$(3)$ $\varphi_j=2\log|g_j|+2u_j$, where $g_j$ is a holomorphic function on $\Omega_j$ such that $g_j(z_j)\not=0$ and $u_j$ is a harmonic function on $\Omega_j$ for any $1\le j\le n_1$;

    $(4)$ $\chi_{j,z_j}^{\alpha_j+1}=\chi_{j,-u_j}$ for any $j\in\{1,2,...,n\}$ and $\alpha\in E$ satisfying $f_{\alpha}\not\equiv 0$, where  $\chi_{j,-u_j}$ be the character associated to $-u_j$ on $\Omega_j$ and $\chi_{j,z_j}$ be the character associated to $G_{\Omega_j}(\cdot,z_j)$ on $\Omega_j$.
\end{Theorem}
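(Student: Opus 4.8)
The plan is to reduce Theorem \ref{thm:linear-fibra-single} to the already-established single-fiber results over products of open Riemann surfaces (namely the analogue of Theorem \ref{finite points} / the results of \cite{GY-concavity4} and \cite{BGY-concavity6}) by absorbing the negligible weight $N$ into the analysis, exactly as the Remarks following Theorems \ref{finite points} and \ref{infinite points} suggest. The "if" direction is the routine one: assuming (1)--(4), one checks directly that $\psi = \hat{G}$, that the candidate extension $F = \sum_{\alpha\in E}\pi_1^*(w^\alpha dw_1\wedge\cdots\wedge dw_{n_1})\wedge\pi_2^*(f_\alpha)$ (built globally from the Green functions, the multiplicative functions $f_{j,z_j}$, and $f_u$ via the character condition (4)) is holomorphic on all of $M$, lies in the correct ideal sheaf modulo $f$, and realizes equality $G(t) = G(0)\,h(t)/h(0)$ by a direct computation of $\int_{\{\psi<-t\}}|F|^2 e^{-\varphi}c(-\psi)$ using the product structure and the explicit form of $c(-\psi)$ on level sets of $\hat{G}$. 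This computation splits as a product over the $n_1$ Riemann-surface factors times the $Y$-integral $\sum_\alpha\int_Y|f_\alpha|^2 e^{-\varphi_Y}$, and linearity in $r = h(t)$ follows.

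The "only if" direction is where the real work lies, and I would organize it in the following steps. First, use the concavity theorem (Theorem \ref{Concave}) together with the hypothesis that $G(h^{-1}(r))$ is linear and $G(0)\in(0,+\infty)$ to invoke the characterization of the equality case: linearity forces every inequality in the proof of concavity to be an equality, which (as in \cite{BGY-concavity5,GY-concavity4}) produces a holomorphic $(n,0)$ form $F$ on $M$ with $F = f$ modulo the ideal near $Z_0$, with $\int_{\{\psi<-t\}}|F|^2 e^{-\varphi}c(-\psi) = G(t)$ for all $t$, and — crucially — with $|F|^2 e^{-\varphi}c(-\psi)$ having a sharp "product-of-Green-function" structure on the sublevel sets. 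Second, extract from this the statement that $\psi = \hat{G}$ (equivalently $N\equiv 0$): the point is that if $N\not\equiv 0$ the mass of $dd^c\psi$ strictly dominates that of $\hat{G}$, the level sets $\{\psi<-t\}$ shrink faster than those of $\hat{G}$, and a De Giorgi–type / ODE argument on $G(t)$ forces a strict inequality in concavity unless $N\equiv 0$; here I would lean on Lemma-level results analogous to those in \cite{BGY-concavity5}. Third, with $\psi = \hat{G}$ in hand, restrict $F$ to generic fibers $\{z_0\}\times Y$ and to the "coordinate slices" $\prod\Omega_j\times\{y\}$ and apply the product-of-open-Riemann-surfaces linearity characterization factor by factor; this yields the decomposition (2) of $f$, the representation (3) of each $\varphi_j$ as $2\log|g_j| + 2u_j$, and the character identities (4), with the set $E$ appearing exactly because $\sum_j(\alpha_j+1)/p_j = 1$ is the condition for $\pi_1^*(w^\alpha dw_1\wedge\cdots\wedge dw_{n_1})$ to sit on the boundary of integrability against $e^{-\hat{G}}c(-\hat{G})$.

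The main obstacle I anticipate is the interplay between the \emph{max} structure of $\hat{G}$ and $\psi$ and the Lebesgue-measurability of the weights $\varphi_j$: unlike the single-open-Riemann-surface case where one works with a genuine subharmonic/plurisubharmonic $\psi$ that is a single Green function, here $\psi = \max_j\{2p_j\pi_{1,j}^*(G_{\Omega_j}(\cdot,z_j))\} + N$ is only piecewise of that form, so the "equality forces explicit form" machinery must be run on each region $\{2p_k\pi_{1,k}^*(G_{\Omega_k}) \ge 2p_j\pi_{1,j}^*(G_{\Omega_j})\,\forall j\}$ and then patched; controlling the corners where the max switches, and showing the patched form is genuinely the asserted global product/sum, is the delicate part. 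I would handle this by first proving the equivalence of multiplier ideal sheaves $\mathcal{I}(\varphi+\psi) = \mathcal{I}(\hat{G}+\pi_2^*(\varphi_Y))$ near $Z_0$ (the analogue of Lemma \ref{equiv of multiplier ideal sheaf}, using that $N|_{Z_0}\not\equiv-\infty$ and $\varphi_j(z_j)>-\infty$), which legitimizes replacing $\psi$ by $\hat{G}$ and $\varphi$ by its "harmonic-modulo-$2\log|g_j|$" representative in all the local integrability computations, and then reduce to the Stein/product setting already treated. Once $N\equiv 0$ is secured, the remaining steps are essentially a multi-factor bookkeeping of the one-dimensional results and should go through as in \cite{GY-concavity4,BGY-concavity6}.
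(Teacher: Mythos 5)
Your overall architecture agrees with the paper's: both directions are meant to be reduced to the $N\equiv 0$ case of \cite{BGY-concavity6} (Theorem \ref{GBY6-single pt}) via an equality of multiplier ideals, and the genuinely new content is showing that linearity forces $N\equiv 0$. However, the step where you establish $N\equiv0$ is a genuine gap. Your proposed mechanism --- ``if $N\not\equiv 0$ the mass of $dd^{c}\psi$ strictly dominates that of $\hat G$, the level sets shrink faster, and a De Giorgi--type/ODE argument on $G(t)$ forces strict concavity'' --- does not work: a plurisubharmonic $N\le 0$ with $N\not\equiv0$ may be bounded, contributing no extra Lelong mass along $Z_0$ and only an $O(1)$ perturbation of the sublevel sets, so no ODE comparison on $G(t)$ alone can detect it. The paper's argument is instead a two-sided sandwich around the minimizer $F$ supplied by Lemma \ref{linear}: a fiberwise lower bound for the linear slope carrying the factor $e^{-N(z_0,w)}$ (Lemma \ref{l:limit}), and an optimal $L^2$ extension $\hat F$ computed with respect to $\hat G$ and the modified weight $\tilde\varphi=\varphi+\psi-\hat G$ achieving exactly the same constant (Lemma \ref{p:exten-fibra}, whose application also requires first checking $F_\alpha\in\mathcal{I}(\varphi_Y)_y$ for $\alpha$ outside $E$). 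Comparing these forces $\int_M|\hat F|^2e^{-\tilde\varphi}c(-\hat G)=\int_M|\hat F|^2e^{-\varphi}c(-\psi)$, and then the strictness Lemma \ref{decreasing property of l} (using only that $c(t)e^{-t}$ is decreasing) yields $N\equiv0$ and $\psi=\hat G$. None of these quantitative ingredients appear in your sketch, and ``lean on Lemma-level results analogous to \cite{BGY-concavity5}'' does not identify them.

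A secondary issue is the order in which you invoke the multiplier-ideal equivalence. You propose to prove $\mathcal{I}(\varphi+\psi)=\mathcal{I}(\hat G+\pi_2^*(\varphi_Y))$ near $Z_0$ ``first'' so as to replace $\psi$ by $\hat G$ in all computations. Before $N\equiv0$ is known, the hypothesis is only $N|_{Z_0}\not\equiv-\infty$, so $(N+\sum_j\pi_{1,j}^*\varphi_j)$ may equal $-\infty$ at some points of $Z_0$ and Lemma \ref{l:phi1+phi2} fails there; the equality of ideals at \emph{every} point of $Z_0$ (which is what the definition of $G(t)$ requires) is only available after $N\equiv0$ has been established, which is exactly when the paper uses it and then concludes (2)--(4) by citing Theorem \ref{GBY6-single pt} rather than redoing the factor-by-factor analysis. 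Your sufficiency argument (a direct construction of the extremal $F$) is plausible but merely re-proves the quoted theorem; that part is redundant rather than wrong.
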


\begin{Remark} When $N\equiv 0$ ($\varphi_j$ is a subharmonic function on $\Omega_j$ satisfying $\varphi_j(z_j)>-\infty$ for any $1\le j\le n_1$),
it follows from $\mathcal{I}(\varphi+\psi)_{(z_j,y)}=\mathcal{I}(\hat G+\pi_2^*(\varphi_Y))_{(z_j,y)}$ for any $(z_j,y)\in Z_0$ (see Lemma \ref{l:phi1+phi2}) that Theorem \ref{thm:linear-fibra-single} can be referred to Theorem 1.2 in \cite{BGY-concavity6} (see also Theorem \ref{GBY6-single pt}).
\end{Remark}

 Let $Z_j=\{z_{j,1},\ldots,z_{j,m_j}\}\subset\Omega_j$ for any  $j\in\{1,\ldots,n_1\}$, where $m_j$ is a positive integer. Denote $Z_0:=(\prod_{1\le j\le n_1}Z_j)\times Y$.
Let $N\le0$ be a plurisubharmonic function on $M$ satisfying $N|_{Z_0}\not\equiv-\infty$.
Denote
$$\hat{G}:=\max_{1\le j\le n_1}\left\{\pi_{1,j}^*\left(2\sum_{1\le k\le m_j}p_{j,k}G_{\Omega_j}(\cdot,z_{j,k})\right)\right\},$$
where $p_{j,k}$ is a positive real number.
Denote
$$\psi:=\max_{1\le j\le n_1}\left\{\pi_{1,j}^*\left(2\sum_{1\le k\le m_j}p_{j,k}G_{\Omega_j}(\cdot,z_{j,k})\right)\right\}+N.$$
We assume that $\{\psi<-t\}\backslash Z_0$ is a weakly pseudoconvex K\"ahler manifold for any $t\in\mathbb{R}$.

Let $w_{j,k}$ be a local coordinate on a neighborhood $V_{z_{j,k}}\Subset\Omega_{j}$ of $z_{j,k}\in\Omega_j$ satisfying $w_{j,k}(z_{j,k})=0$ for any $j\in\{1,\ldots,n_1\}$ and $k\in\{1,\ldots,m_j\}$, where $V_{z_{j,k}}\cap V_{z_{j,k'}}=\emptyset$ for any $j$ and $k\not=k'$. Denote that $\tilde{I}_1:=\big\{(\beta_1,\ldots,\beta_{n_1}):1\le \beta_j\le m_j  \text{ for any } j\in\{1,\ldots,n_1\} \big\}$, $V_{\beta}:=\prod_{1\le j\le n_1}V_{z_{j,\beta_j}}$ for any $\beta=(\beta_1,\ldots,\beta_{n_1})\in \tilde{I}_1$ and $w_{\beta}:=(w_{1,\beta_1},\ldots,w_{n_1,\beta_{n_1}})$ is a local coordinate on $V_{\beta}$ of $z_{\beta}:=(z_{1,\beta_1},\ldots,z_{n_1,\beta_{n_1}})\in \prod_{1\le j\le n_1}\Omega_j$ satisfying $w_\beta(z_\beta)=0$.

Let $\beta^*=(1,\ldots,1)\in \tilde{I}_1$, and let $\alpha_{\beta^*}=(\alpha_{\beta^*,1},\ldots,\alpha_{\beta^*,n_1})\in\mathbb{Z}_{\ge0}^{n_1}$.
Denote that $E':=\left\{\alpha\in\mathbb{Z}_{\ge0}^{n_1}:\sum_{1\le j\le n_1}\frac{\alpha_j+1}{p_{j,1}}>\sum_{1\le j\le n_1}\frac{\alpha_{\beta^*,j}+1}{p_{j,1}}\right\}$. Let $f$ be a holomorphic $(n,0)$ form on $\cup_{\beta\in \tilde{I}_1}V_{\beta}\times Y$ satisfying $f=\pi_1^*\left(w_{\beta^*}^{\alpha_{\beta^*}}dw_{1,1}\wedge\ldots\wedge dw_{n_1,1}\right)\wedge\pi_2^*\left(f_{\alpha_{\beta^*}}\right)+\sum_{\alpha\in E'}\pi_1^*(w^{\alpha}dw_{1,1}\wedge\ldots\wedge dw_{n_1,1})\wedge\pi_2^*(f_{\alpha})$ on $V_{\beta^*}\times Y$, where $f_{\alpha_{\beta^*}}$ and $f_{\alpha}$ are  holomorphic $(n_2,0)$ forms on $Y$. Denote $\mathcal{F}_{(z,y)}=\mathcal{I}\big(\hat G+\pi_2^*(\varphi_Y)\big)_{(z,y)}$ for any point $(z,y)\in Z_0$. Let $G(t;c)$ be the minimal $L^2$ integral on $\{\psi<-t\}$ with respect to $\varphi$, $f$ and $\mathcal{F}$ for any $t\geq 0$.

We present a characterization of the concavity of $G\big(h^{-1}(r)\big)$ degenerating to linearity for the case that $Z_j$ is a set of finite points.

\begin{Theorem}
	\label{thm:linear-fibra-finite}Assume that $G(0)\in(0,+\infty)$.  $G(h^{-1}(r))$ is linear with respect to $r\in(0,\int_0^{+\infty} c(s)e^{-s}ds]$ if and only if the following statements hold:
	
$(1)$ $N\equiv 0$ and $\psi=\max_{1\le j\le n_1}\left\{\pi_{1,j}^*\left(2\sum_{1\le k\le m_j}p_{j,k}G_{\Omega_j}(\cdot,z_{j,k})\right)\right\}$;
	
	$(2)$ $\varphi_j=2\log|g_j|+2u_j$ for any $j\in\{1,\ldots,n_1\}$, where $u_j$ is a harmonic function on $\Omega_j$ and $g_j$ is a holomorphic function on $\Omega_j$ satisfying $g_j(z_{j,k})\not=0$ for any $k\in\{1,\ldots,m_j\}$;
	
	$(3)$ There exists a nonnegative integer $\gamma_{j,k}$ for any $j\in\{1,\ldots,n_1\}$ and $k\in\{1,\ldots,m_j\}$, which satisfies that $\prod_{1\le k\leq m_j}\chi_{j,z_{j,k}}^{\gamma_{j,k}+1}=\chi_{j,-u_j}$ and $\sum_{1\le j\le n_1}\frac{\gamma_{j,\beta_j}+1}{p_{j,\beta_j}}=1$ for any $\beta\in \tilde{I}_1$;
	
	$(4)$ $f=\pi_1^*\left(c_{\beta}\left(\prod_{1\le j\le n_1}w_{j,\beta_j}^{\gamma_{j,\beta_j}}\right)dw_{1,\beta_1}\wedge\ldots\wedge dw_{n,\beta_n}\right)\wedge\pi_2^*(f_0)+g_\beta$ on $V_{\beta}\times Y$ for any $\beta\in \tilde{I}_1$, where $c_{\beta}$ is a constant, $f_0\not\equiv0$ is a holomorphic $(n_2,0)$ form on $Y$ satisfying $\int_Y|f_0|^2e^{-\varphi_2}<+\infty$, and $g_{\beta}$ is a holomorphic $(n,0)$ form on $V_{\beta}\times Y$ such that $(g_{\beta},z)\in\big(\mathcal{O}(K_M)\otimes\mathcal{I}(\varphi+\psi)\big)_{z}$ for any $z\in\{z_\beta\}\times Y$;
	
	$(4)$ $c_{\beta}\prod_{1\le j\le n_1}\left(\lim_{z\rightarrow z_{j,\beta_j}}\frac{w_{j,\beta_j}^{\gamma_{j,\beta_j}}dw_{j,\beta_j}}{g_j(P_{j})_*\left(f_{u_j}\left(\prod_{1\le k\le m_j}f_{z_{j,k}}^{\gamma_{j,k}+1}\right)\left(\sum_{1\le k\le m_j}p_{j,k}\frac{df_{z_{j,k}}}{f_{z_{j,k}}}\right)\right)}\right)=c_0$ for any $\beta\in \tilde{I}_1$, where $c_0\in\mathbb{C}\backslash\{0\}$ is a constant independent of $\beta$, $f_{u_j}$ is a holomorphic function $\Delta$ such that $|f_{u_j}|=P_j^*(e^{u_j})$ and $f_{z_{j,k}}$ is a holomorphic function on $\Delta$ such that $|f_{z_{j,k}}|=P_j^*\left(e^{G_{\Omega_j}(\cdot,z_{j,k})}\right)$ for any $j\in\{1,\ldots,n_1\}$ and $k\in\{1,\ldots,m_j\}$.
\end{Theorem}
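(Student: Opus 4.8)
The plan is to prove the characterization in two stages. The decisive point is that linearity of $G\big(h^{-1}(r)\big)$ forces the negligible weight $N$ to vanish identically; once this is known, the problem becomes the one already treated for identically vanishing negligible weight over (products of) open Riemann surfaces, and the remaining assertions are extracted by the one-variable theory underlying Theorem \ref{finite points} together with the product argument underlying Theorem \ref{thm:linear-fibra-single} (cf. \cite{GY-concavity4,BGY-concavity6}). Throughout, the identification $\mathcal{F}_{(z,y)}=\mathcal{I}(\varphi+\psi)_{(z,y)}$ for $(z,y)\in Z_0$, valid once $N\equiv 0$ (Lemma \ref{l:phi1+phi2}), is used to match $G(t)$ with the functional studied in \cite{BGY-concavity6}.

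\emph{Sufficiency.} Assume the stated conditions. Condition $(1)$ gives $N\equiv 0$, hence $\psi=\hat{G}$, and with the decomposition $\varphi_j=2\log|g_j|+2u_j$ (with $g_j(z_{j,k})\neq 0$) Lemma \ref{l:phi1+phi2} yields $\mathcal{F}_{(z,y)}=\mathcal{I}(\varphi+\psi)_{(z,y)}$ on $Z_0$; thus $G(t)$ is precisely the minimal $L^2$ integral for identically vanishing negligible weight over the product $\big(\prod_j\Omega_j\big)\times Y$. The harmonic/holomorphic splitting of the $\varphi_j$, the character identities $\prod_k\chi_{j,z_{j,k}}^{\gamma_{j,k}+1}=\chi_{j,-u_j}$ with the balance $\sum_j\frac{\gamma_{j,\beta_j}+1}{p_{j,\beta_j}}=1$, and the monomial normal form of $f$ near each $z_\beta$ with the common constant $c_0$ are exactly the conditions in the linearity characterization for that functional, obtained by combining the single-point product case (Theorem \ref{thm:linear-fibra-single}) with the finite-points one-Riemann-surface case (Theorem \ref{finite points}) in the manner of \cite{GY-concavity4}, or directly from \cite{BGY-concavity6}. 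Hence $G\big(h^{-1}(r)\big)$ is linear on $(0,\int_0^{+\infty}c(s)e^{-s}ds]$.

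\emph{Necessity.} Assume $G\big(h^{-1}(r)\big)$ linear. By the equality (degeneracy) case of the concavity theorem (Theorem \ref{Concave}) there is a holomorphic $(n,0)$ form $F$ on $M$ with $(F-f,z)\in\big(\mathcal{O}(K_M)\otimes\mathcal{F}\big)_z$ near $Z_0$ and $\int_{\{\psi<-t\}}|F|^2e^{-\varphi}c(-\psi)=G(t)=\frac{G(0)}{h(0)}h(t)$ for every $t\ge 0$, realizing equality in the underlying $L^2$ estimates at each level. Following the scheme of the proofs of Proposition \ref{infinite points} and Proposition \ref{tildeM}, this rigidity forces $N\equiv 0$ and $\psi=\hat{G}$: equality at consecutive levels is compatible only with the minimal possible mass of $dd^{c}\psi$ on the level sets $\{\psi=-t\}$ near $Z_0$, which pins $N$ down. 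Once $N\equiv 0$, Lemma \ref{l:phi1+phi2} again identifies $\mathcal{F}$ with $\mathcal{I}(\varphi+\psi)$ on $Z_0$, so $G(t)$ is the identically-vanishing-negligible-weight functional, and the necessity direction of its linearity characterization (Theorem \ref{finite points} applied on each factor, combined as in \cite{GY-concavity4}; equivalently \cite{BGY-concavity6}) delivers the decomposition $\varphi_j=2\log|g_j|+2u_j$, the character relations and the balance condition, and the monomial normal form of $f$ near each $z_\beta$; the constant $c_0$ is independent of $\beta$ because all these leading terms are restrictions of the single global form $F$, whence the last condition in the form with the push-forwards $(P_j)_*\big(f_{u_j}\big(\prod_k f_{z_{j,k}}^{\gamma_{j,k}+1}\big)\big(\sum_k p_{j,k}df_{z_{j,k}}/f_{z_{j,k}}\big)\big)$.

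\emph{Main obstacle.} The hard part is the first step of the necessity direction: showing that linearity forces $N\equiv 0$ and $\psi=\hat{G}$. This requires converting linearity into equality in the relevant $\bar\partial$/Ohsawa--Takegoshi estimates at every level, reading that equality as a constraint on how the mass of $dd^{c}\psi$ is distributed along the level sets of $\psi$, which near $Z_0$ are modeled on level sets of $\max_{1\le j\le n_1}2p_{j,\beta_j}\log|w_{j,\beta_j}|$, a maximum of several logarithmic singularities, and then excluding a nontrivial plurisubharmonic $N\le 0$, i.e. adapting the technique behind Proposition \ref{tildeM} to a product of open Riemann surfaces. Everything after $N\equiv 0$ is a reduction to the already available linearity characterizations, the only extra care being the bookkeeping of the multiplier ideals and the simultaneous tracking of the characters $\chi_{j,\cdot}$ on the several factors $\Omega_j$.
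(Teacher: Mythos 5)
Your overall strategy coincides with the paper's: first force $N\equiv 0$, then use Lemma \ref{l:phi1+phi2} to identify $\mathcal{F}$ with $\mathcal{I}(\varphi+\psi)$ on $Z_0$ and quote the $N\equiv 0$ characterization (Theorem \ref{GBY6-finitie pts} from \cite{BGY-concavity6}); the sufficiency direction as you describe it is exactly what the paper does. The problem is that the decisive step of the necessity direction --- that linearity forces $N\equiv 0$ and $\psi=\hat G$ --- is only asserted, and the heuristic you offer for it (``equality at consecutive levels is compatible only with the minimal possible mass of $dd^{c}\psi$ on the level sets, which pins $N$ down'') does not correspond to an argument that would go through, nor to how Proposition \ref{tildeM} is actually proved. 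In the paper the mechanism is quantitative and uses three specific ingredients: (i) after replacing $c$ by a gain that is increasing near $+\infty$ (Lemma \ref{tildecincreasing}, Remark \ref{ctildec}, Lemma \ref{linear}), one applies the asymptotic lower bound Lemma \ref{l:limit} to the extremal form $F$ furnished by Lemma \ref{linear}, which shows $F_{\alpha,\beta}\equiv 0$ for $\alpha\in E_{\beta,1}$ and bounds $G(0)/\int_0^{+\infty}c(s)e^{-s}ds$ from below by $\sum_{\beta}\sum_{\alpha\in E_\beta}\frac{(2\pi)^{n_1}e^{-\sum_j\varphi_j(z_{j,\beta_j})}}{\prod_j(\alpha_j+1)c_{j,\beta_j}^{2\alpha_j+2}}\int_Y|F_{\alpha,\beta}|^2e^{-\varphi_Y-N(z_\beta,\cdot)}$, with the negligible weight appearing only through its fiber restriction $N(z_\beta,\cdot)$; (ii) the optimal extension Lemma \ref{p:exten-fibra} (after checking $F_{\alpha,\beta}\in\mathcal{I}(\varphi_Y)_y$ for $\alpha\in E_{\beta,2}$, a verification you omit) produces a competitor $\hat F$ with the same sharp constant but measured against $e^{-\tilde\varphi}c(-\hat G)$, where $\tilde\varphi=\varphi+N$; (iii) minimality of $F$ plus the pointwise inequality $e^{-\varphi-N}c(-\hat G)\ge e^{-\varphi}c(-\psi)$ (from $c(t)e^{-t}$ decreasing and $\psi\le\hat G$) turns the chain into equalities, and Lemma \ref{decreasing property of l} then shows a nontrivial $N$ would create a positive-measure set of strict inequality, a contradiction. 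None of this is an ``equality in Ohsawa--Takegoshi at every level'' or a statement about the distribution of the mass of $dd^c\psi$; without ingredients (i)--(iii), or a worked-out substitute, your necessity argument has a genuine gap precisely at the point you yourself flag as the main obstacle.

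A secondary caution: your suggestion that, once $N\equiv 0$, the conclusion follows by ``applying Theorem \ref{finite points} on each factor and combining as in \cite{GY-concavity4}'' is not a reduction the paper makes and is not obviously available --- the product-with-finitely-many-points case is not a factorwise application of the single-surface result; the paper simply invokes Theorem \ref{GBY6-finitie pts}. Since you also allow ``equivalently \cite{BGY-concavity6}'', this part is fine as stated, but the factorwise route should not be relied upon without further argument.
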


\begin{Remark} When $N\equiv 0$ ($\varphi_j$ is a subharmonic function on $\Omega_j$ satisfying $\varphi_j(z_{j,\beta_j})>-\infty$ for any $1\le j\le n_1$ and $1\le \beta_j \le m_j$),
it follows from $\mathcal{I}(\varphi+\psi)_{(z,y)}=\mathcal{I}(\hat{G}+\pi_2^*(\varphi_Y))_{(z,y)}$ for any $(z,y)\in Z_0$ (see Lemma \ref{l:phi1+phi2})
that Theorem \ref{thm:linear-fibra-finite} can be referred to Theorem 1.5 in \cite{BGY-concavity6} (see also Theorem \ref{GBY6-finitie pts}).
\end{Remark}

 Let ${Z}_j=\{z_{j,k}:1\le k<\tilde m_j\}$ be a discrete subset of $\Omega_j$ for any  $j\in\{1,\ldots,n_1\}$, where $\tilde{m}_j\in\mathbb{Z}_{\ge2}\cup\{+\infty\}$. Denote $Z_0:=(\prod_{1\le j\le n_1}Z_j)\times Y$.
Let $p_{j,k}$ be a positive number for any $1\le j\le n_1$ and $1\le k<\tilde m_j$ such that  $\sum_{1\le k<\tilde{m}_j}p_{j,k}G_{\Omega_j}(\cdot,z_{j,k})\not\equiv-\infty$ for any $j$.
Let
$$\hat{G}=\max_{1\le j\le n_1}\left\{\pi_{1,j}^*\left(2\sum_{1\le k<\tilde{m}_j}p_{j,k}G_{\Omega_j}(\cdot,z_{j,k})\right)\right\},$$
and
$$\psi=\max_{1\le j\le n_1}\left\{\pi_{1,j}^*\left(2\sum_{1\le k<\tilde{m}_j}p_{j,k}G_{\Omega_j}(\cdot,z_{j,k})\right)\right\}+N.$$
We assume that $\{\psi<-t\}\backslash Z_0$ is a weakly pseudoconvex K\"ahler manifold for any $t\in\mathbb{R}$ and $\limsup_{t\rightarrow+\infty}c(t)<+\infty$.

Let $w_{j,k}$ be a local coordinate on a neighborhood $V_{z_{j,k}}\Subset\Omega_{j}$ of $z_{j,k}\in\Omega_j$ satisfying $w_{j,k}(z_{j,k})=0$ for any $j\in\{1,\ldots,n_1\}$ and $1\le k<\tilde{m}_j$, where $V_{z_{j,k}}\cap V_{z_{j,k'}}=\emptyset$ for any $j$ and $k\not=k'$. Denote that $\tilde I_1:=\big\{(\beta_1,\ldots,\beta_{n_1}):1\le \beta_j< \tilde m_j$ for any $j\in\{1,\ldots,n_1\}\big\}$, $V_{\beta}:=\prod_{1\le j\le n_1}V_{z_{j,\beta_j}}$ for any $\beta=(\beta_1,\ldots,\beta_{n_1})\in\tilde I_1$ and $w_{\beta}:=(w_{1,\beta_1},\ldots,w_{n_1,\beta_{n_1}})$ is a local coordinate on $V_{\beta}$ of $z_{\beta}:=(z_{1,\beta_1},\ldots,z_{n_1,\beta_{n_1}})\in \prod_{1\le j\le n_1}\Omega_j$.

Let $\beta^*=(1,\ldots,1)\in \tilde{I}_1$, and let $\alpha_{\beta^*}=(\alpha_{\beta^*,1},\ldots,\alpha_{\beta^*,n_1})\in\mathbb{Z}_{\ge0}^{n_1}$. Denote that $E':=\left\{\alpha\in\mathbb{Z}_{\ge0}^{n_1}:\sum_{1\le j\le n_1}\frac{\alpha_j+1}{p_{j,1}}>\sum_{1\le j\le n_1}\frac{\alpha_{\beta^*,j}+1}{p_{j,1}}\right\}$.
Let $f$ be a holomorphic $(n,0)$ form on $\cup_{\beta\in \tilde{I}_1}V_{\beta}\times Y$ satisfying $f=\pi_1^*\left(w_{\beta^*}^{\alpha_{\beta^*}}dw_{1,1}\wedge\ldots\wedge dw_{n_1,1}\right)\wedge\pi_2^*\left(f_{\alpha_{\beta^*}}\right)+\sum_{\alpha\in E'}\pi_1^*(w^{\alpha}dw_{1,1}\wedge\ldots\wedge dw_{n_1,1})\wedge\pi_2^*(f_{\alpha})$ on $V_{\beta^*}\times Y$, where $f_{\alpha_{\beta^*}}$ and $f_{\alpha}$ are  holomorphic $(n_2,0)$ forms on $Y$. Denote $\mathcal{F}_{(z,y)}=\mathcal{I}\big(\hat G+\pi_2^*(\varphi_Y)\big)_{(z,y)}$ for any $(z,y)\in Z_0$. Let $G(t)$ be the minimal $L^2$ integral on $\{\psi<-t\}$ with respect to $\varphi$, $f$, $\mathcal{F}$ and $c$ for any $t\geq 0$.

When there exists $j_0\in\{1,\ldots,n_1\}$ such that $\tilde m_{j_0}=+\infty$, we present that $G\big(h^{-1}(r)\big)$ is not linear.

\begin{Theorem}
	\label{thm:linear-fibra-infinite}If $G(0)\in(0,+\infty)$ and there exists $j_0\in\{1,\ldots,n_1\}$ such that $\tilde m_{j_0}=+\infty$, then $G\big(h^{-1}(r)\big)$ is not linear with respect to $r\in(0,\int_0^{+\infty} c(s)e^{-s}ds]$.
\end{Theorem}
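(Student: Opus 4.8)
The plan is to argue by contradiction: assume $G\big(h^{-1}(r)\big)$ is linear on $\big(0,\int_0^{+\infty}c(s)e^{-s}ds\big]$ and contradict the hypothesis $G(0)\in(0,+\infty)$. The first step is to show that this linearity forces the same rigid structure that appears in the finite‑points case (Theorem \ref{thm:linear-fibra-finite}), now simultaneously at the infinitely many points $z_\beta$, $\beta\in\tilde I_1$. Concretely, running the machinery behind Theorem \ref{thm:linear-fibra-finite} --- concavity of $G\big(h^{-1}(\cdot)\big)$ (Theorem \ref{Concave}), the equality discussion on the level sets $\{\psi=-t\}$ and the limit $t\to+\infty$, together with the reduction‑to‑$N\equiv0$ arguments of \cite{BGY-concavity5,BGY-concavity6}; or, alternatively, localizing the linearity near each $z_\beta$ and invoking the single‑point characterization Theorem \ref{thm:linear-fibra-single} --- one gets: $N\equiv0$ and $\psi=\hat G$; $\varphi_j=2\log|g_j|+2u_j$ on $\Omega_j$ with $g_j$ holomorphic, $g_j(z_{j,k})\ne0$ and $u_j$ harmonic; nonnegative integers $\gamma_{j,k}$ with $\sum_{1\le j\le n_1}\frac{\gamma_{j,\beta_j}+1}{p_{j,\beta_j}}=1$ for \emph{every} $\beta\in\tilde I_1$ (plus the corresponding character identities); and the property that the minimal extension $\tilde f$ attaining $G(t)$ has, near each $z_\beta$, leading term $\pi_1^*\big(c_\beta\big(\prod_{1\le j\le n_1}w_{j,\beta_j}^{\gamma_{j,\beta_j}}\big)dw_{1,\beta_1}\wedge\ldots\wedge dw_{n_1,\beta_{n_1}}\big)\wedge\pi_2^*(f_0)$, where $f_0\not\equiv0$ is a fixed holomorphic $(n_2,0)$ form on $Y$ with $\int_Y|f_0|^2e^{-\varphi_Y}<+\infty$ and $c_\beta\in\mathbb{C}\setminus\{0\}$ is linked to a single nonzero constant $c_0$ by the compatibility formula of Theorem \ref{thm:linear-fibra-finite}$(4)$.

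Next I would exploit $\tilde m_{j_0}=+\infty$. Fix indices $\beta_l$ for $l\ne j_0$, and for $1\le k<\tilde m_{j_0}$ let $\beta(k)\in\tilde I_1$ be the multi‑index with $\beta(k)_{j_0}=k$ and $\beta(k)_l=\beta_l$. The identity $\sum_j\frac{\gamma_{j,\beta(k)_j}+1}{p_{j,\beta(k)_j}}=1$ yields $\frac{\gamma_{j_0,k}+1}{p_{j_0,k}}=1-C_0$, where $C_0:=\sum_{l\ne j_0}\frac{\gamma_{l,\beta_l}+1}{p_{l,\beta_l}}$ is independent of $k$ and $C_0\in[0,1)$ since $\gamma_{j_0,k}+1\ge1$. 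Hence $p_{j_0,k}=\frac{\gamma_{j_0,k}+1}{1-C_0}\ge\frac{1}{1-C_0}>0$ for all $k$, so that $\sum_{1\le k<\tilde m_{j_0}}p_{j_0,k}=+\infty$.

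Then I would bound $G(0)$ below. The polydiscs $V_\beta\times Y$ ($\beta\in\tilde I_1$) are pairwise disjoint, so $G(0)\ge\sum_{\beta\in\tilde I_1}\int_{V_\beta\times Y}|\tilde f|^2e^{-\varphi}c(-\psi)$. Restricting each integral to $\{\psi\ge-C'\}$, where $c(-\psi)\ge c(C')e^{-C'}>0$ (since $c(t)e^{-t}$ is decreasing), and inserting the explicit leading term of $\tilde f$, the local expansions $\psi\approx\max_j\big(2p_{j,\beta_j}\log|w_{j,\beta_j}|+(\text{const})\big)$ and $\varphi\approx\sum_j\varphi_j(z_{j,\beta_j})+\pi_2^*(\varphi_Y)$ near $z_\beta$, and the compatibility formula for $c_\beta$, one computes the $j_0$‑factor of $\int_{V_{\beta(k)}\times Y}|\tilde f|^2e^{-\varphi}c(-\psi)$: the quantities $|g_{j_0}(z_{j_0,k})|^{\pm2}$, $e^{\pm2u_{j_0}(z_{j_0,k})}$, $e^{\pm2(\gamma_{j_0,k}+1)\lambda_{j_0,k}}$ (with $\lambda_{j_0,k}$ the Robin‑type constant of $G_{\Omega_{j_0}}(\cdot,z_{j_0,k})$) and $e^{\pm2\sum_{k'\ne k}(\gamma_{j_0,k'}+1)G_{\Omega_{j_0}}(z_{j_0,k},z_{j_0,k'})}$ arising respectively from $|c_{\beta(k)}|^2$, from $e^{-\varphi}$ and from the annular $w_{j_0,k}$‑integration all cancel --- precisely because $\frac{\gamma_{j_0,k}+1}{p_{j_0,k}}=1-C_0$ is independent of $k$ --- leaving $\int_{V_{\beta(k)}\times Y}|\tilde f|^2e^{-\varphi}c(-\psi)\ge c_*\,p_{j_0,k}$ for some $c_*>0$ independent of $k$. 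Combined with the previous step, $G(0)\ge c_*\sum_{1\le k<\tilde m_{j_0}}p_{j_0,k}=+\infty$, contradicting $G(0)\in(0,+\infty)$; hence $G\big(h^{-1}(r)\big)$ cannot be linear.

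The main obstacle is the first step: establishing that linearity forces the full rigid structure --- in particular the integrality data $\gamma_{j,k}$ at \emph{all} the infinitely many points, the vanishing $N\equiv0$, and the exact form of the minimizer $\tilde f$ with one global constant $c_0$ --- in the infinite‑points regime, where Theorem \ref{thm:linear-fibra-finite} cannot be quoted directly and the underlying analysis must be reconstructed (or genuinely localized) in this setting. A secondary, purely technical point is keeping $c_*$ uniform in $\beta$ in the $L^2$ estimate: this requires some care in the choice of the coordinate neighborhoods $V_{z_{j_0,k}}$, since the Robin‑type constants $\lambda_{j_0,k}$ may blow up as $z_{j_0,k}\to\partial\Omega_{j_0}$, although the cancellation above makes the final estimate insensitive to this growth.
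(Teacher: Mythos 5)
There is a genuine gap at precisely the point you flag as ``the main obstacle'': your whole argument rests on first extracting the full rigid structure of Theorem \ref{thm:linear-fibra-finite} (the integers $\gamma_{j,k}$, the splitting $\varphi_j=2\log|g_j|+2u_j$, the character identities, the exact leading term of the minimizer with a single constant $c_0$) simultaneously at the infinitely many points $z_\beta$, and neither of the two routes you suggest delivers it. Theorem \ref{thm:linear-fibra-finite} cannot be quoted because it is stated (and proved) only for finitely many points, and its proof in this paper does not rederive the structure from scratch but reduces, after the $N\equiv0$ step, to the finite-point theorem of \cite{BGY-concavity6}; in the infinite regime the analogous structure result is not available --- indeed the content of \cite{BGY-concavity6}, Theorem 1.7 (Theorem \ref{GBY6-infinite points} here) is exactly that linearity is \emph{impossible} in that regime, and its proof is what you would have to reconstruct. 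The alternative of ``localizing the linearity near each $z_\beta$ and invoking Theorem \ref{thm:linear-fibra-single}'' also fails: linearity of the global minimal $L^2$ integral $G\big(h^{-1}(r)\big)$ does not localize to linearity of the corresponding local minimal $L^2$ integrals at each point, so the single-point characterization cannot be applied. Consequently your later steps (the identity $\sum_j\frac{\gamma_{j,\beta_j}+1}{p_{j,\beta_j}}=1$ forcing $p_{j_0,k}\ge\frac{1}{1-C_0}$, and the lower bound $G(0)\ge c_*\sum_k p_{j_0,k}$ with a uniform $c_*$ coming from the compatibility formula for $c_\beta$) have nothing to stand on, since the data they use only exist once the rigidity is established; the cancellation claim itself is also only sketched.

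For comparison, the paper's proof does not attempt any of this. Assuming linearity, it takes the minimizer $F$ from Lemma \ref{linear} (with $c$ increasing near $+\infty$ via Remark \ref{ctildec} and Lemma \ref{tildecincreasing}), obtains the lower bound of Lemma \ref{l:limit} for $\liminf_t G(t)/\int_t^{+\infty}c e^{-s}$, checks that $F_{\alpha,\beta}\in\mathcal{I}(\varphi_Y)_y$ for $\alpha\in E_{\beta,2}$ so that the extension Lemma \ref{p:exten-fibra} applies with the ideal $\mathcal{I}(\hat G+\pi_2^*(\varphi_Y))$, and produces a competitor $\hat F$ whose weighted norm with the weight $(\tilde\varphi,\hat G)$ matches that lower bound. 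The chain $\int|\hat F|^2e^{-\tilde\varphi}c(-\hat G)\le\int|F|^2e^{-\varphi}c(-\psi)\le\int|\hat F|^2e^{-\varphi}c(-\psi)$ together with the monotonicity of $c(t)e^{-t}$ forces equality, and Lemma \ref{decreasing property of l} then gives $N\equiv0$ and $\psi=\hat G$; finally Lemma \ref{l:phi1+phi2} identifies $\mathcal{I}(\varphi+\psi)$ with $\mathcal{I}(\hat G+\pi_2^*(\varphi_Y))$ on $Z_0$, so Theorem \ref{GBY6-infinite points} applies and yields the contradiction. If you want a self-contained argument along your lines, you would in effect have to reprove that cited theorem (including your divergence estimate, which is plausibly how it is proved there), which is a substantial piece of work not contained in your proposal.
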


\begin{Remark} When $N\equiv 0$ ($\varphi_j$ is a subharmonic function on $\Omega_j$ satisfying $\varphi_j(z_{j,\beta_j})>-\infty$ for any $1\le j\le n_1$ and $1\le \beta_j \le m_j$),
it follows from $\mathcal{I}(\varphi+\psi)_{(z,y)}=\mathcal{I}(\hat G+\pi_2^*(\varphi_Y))_{(z,y)}$ for any $(z,y)\in Z_0$ (see Lemma \ref{l:phi1+phi2})
that Theorem \ref{thm:linear-fibra-infinite} can be referred to Theorem 1.7 in \cite{BGY-concavity6} (see also Theorem \ref{GBY6-infinite points}).
\end{Remark}

 Let ${Z}_j=\{z_{j,k}:1\le k<\tilde m_j\}$ be a discrete subset of $\Omega_j$ for any  $j\in\{1,\ldots,n_1\}$, where $\tilde{m}_j\in\mathbb{Z}_{\ge2}\cup\{+\infty\}$. Denote $Z_0:=(\prod_{1\le j\le n_1}Z_j)\times Y$.

Let $\tilde{M}\subset M$ be an $n-$dimensional weakly pseudoconvex K\"ahler manifold satisfying that $Z_0\subset \tilde{M}$.  Let $f$ be a holomorphic $(n,0)$ form on a neighborhood $U_0\subset \tilde{M}$ of $Z_0$.

Let $N\le 0$ be a plurisubharmonic function on $\tilde{M}$ satisfying $N|_{Z_0}\not\equiv-\infty$. For any $j\in\{1,\ldots,n_1\}$, let $\varphi_j$ be an upper semi-continuous function on $\Omega_{j}$ such that $\varphi_j(z)>-\infty$ for any $z\in Z_j$. Assume that $\sum_{1\le j\le n_1}\pi_{1,j}^*(\varphi_j)+N$ is a plurisubharmonic function on $\tilde{M}$.  Let $\varphi_Y$ be a plurisubharmonic function on $Y$, and denote that $\varphi:=\sum_{1\le j\le n_1}\pi_{1,j}^*(\varphi_j)+\pi_2^*(\varphi_Y)$.

Let $p_{j,k}$ be a positive number for any $1\le j\le n_1$ and $1\le k<\tilde m_j$ such that  $\sum_{1\le k<\tilde{m}_j}p_{j,k}G_{\Omega_j}(\cdot,z_{j,k})\not\equiv-\infty$ for any $j$.
Denote
$$\hat G=\max_{1\le j\le n_1}\left\{\pi_{1,j}^*\left(2\sum_{1\le k<\tilde{m}_j}p_{j,k}G_{\Omega_j}(\cdot,z_{j,k})\right)\right\},$$
and
$$\psi=\max_{1\le j\le n_1}\left\{\pi_{1,j}^*\left(2\sum_{1\le k<\tilde{m}_j}p_{j,k}G_{\Omega_j}(\cdot,z_{j,k})\right)\right\}+N.$$
We assume that $\{\psi<-t\}\backslash Z_0$ is a weakly pseudoconvex K\"ahler manifold for any $t\in\mathbb{R}$.

Let $c$ be a positive function on $(0,+\infty)$ such that $\int_{0}^{+\infty}c(t)e^{-t}dt<+\infty$, $c(t)e^{-t}$ is decreasing on $(0,+\infty)$ and $c(-\psi)$ has a positive lower bound on any compact subset of $\tilde M\backslash Z_0$. Let $f$ be a holomorphic $(n,0)$ form on a neighborhood of $Z_0$.

Denote
\begin{flalign*}
		\begin{split}
			\inf\bigg\{\int_{\{\psi<-t\}}|\tilde{f}|^2e^{-\varphi}c(-\psi) :& \big(\tilde{f}-f,(z,y)\big)\in \bigg(\mathcal{O}(K_{\tilde{M}})\otimes\mathcal{I}\big(\hat{G}+\pi_2^*(\varphi_Y)\big)\bigg)_{(z,y)} \\
			&\text{\ for \ any } (z,y)\in Z_0,  \\
			& \& \ \tilde{f}\in H^0\big(\{\psi<-t\},\mathcal{O}(K_{\tilde{M}})\big).\bigg\}
		\end{split}
	\end{flalign*}
	by $\tilde{G}(t)$ for any $t\in [0,+\infty)$. Note that $\tilde{G}(t)$ is a minimal $L^2$ integrals on $\tilde{M}$, where $\tilde{M}$ is a submanifold of $M$.

We present a necessary condition such that $\tilde{G}\big(h^{-1}(r)\big)$ is linear with respect to $r\in (0,\int_0^{+\infty}c(t)e^{-t}dt]$ as follows.

\begin{Proposition}
	\label{p:M=M_1}If $\tilde{G}(0)\in(0,+\infty)$ and $\tilde{G}\big(h^{-1}(r)\big)$ is  linear with respect to $r\in(0,\int_0^{+\infty} c(s)e^{-s}ds]$, we have $\tilde{M}=M$.
\end{Proposition}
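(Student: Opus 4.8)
The plan is to reduce the statement to the companion minimal $L^{2}$ problem on all of $M$ and then to show that the linearity hypothesis leaves no room between the two; this mirrors the strategy behind Proposition \ref{tildeM}.

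First I would run the rigidity analysis underlying Theorem \ref{thm:linear-fibra-finite} and Theorem \ref{thm:linear-fibra-infinite} with $\tilde M$ in place of $M$. That analysis (the Guan--Mi--Yuan concavity/ODE method together with the ensuing equality discussion) uses only that the ambient space is weakly pseudoconvex K\"ahler, contains $Z_{0}$, and carries data $(\psi,\varphi,f,c)$ with the stated properties -- all of which hold verbatim on $\tilde M$. Running it, the linearity of $\tilde G\big(h^{-1}(r)\big)$ on $(0,\int_{0}^{+\infty}c(t)e^{-t}dt]$ yields: $N\equiv 0$ on $\tilde M$ (so $\psi=\hat G|_{\tilde M}$), the decompositions $\varphi_{j}=2\log|g_{j}|+2u_{j}$, the character identities, the prescribed jet form of $f$ along $Z_{0}$, and a holomorphic $(n,0)$ form $F$ on $\tilde M\setminus Z_{0}$ which extends holomorphically across $Z_{0}$ and satisfies $\int_{\{\psi<-t\}}|F|^{2}e^{-\varphi}c(-\psi)=\tilde G(t)$ for every $t\ge 0$, with $F$ given by the explicit product of the multiplicative functions $f_{u_{j}}, f_{z_{j,k}}$ on $\Delta$ (single valued after the twists by the $g_{j}$, by the character identities) wedged with the fixed fiber form on $Y$.

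Next I would observe that this explicit expression for $F$ involves only globally defined objects on $\prod_{j}\Omega_{j}$ and on $Y$, hence defines a holomorphic $(n,0)$ form $\hat F$ on $M\setminus Z_{0}$ with $F=\hat F|_{\tilde M\setminus Z_{0}}$, again extending holomorphically across $Z_{0}$ (its $L^{2}$ behaviour near $Z_{0}\subset\tilde M$ is identical to that of $F$). Let $G^{M}(t)$ denote the minimal $L^{2}$ integral on $M$ with weight $\hat G$ (i.e.\ $N\equiv 0$ on all of $M$) and the same $f,\mathcal F,c$. Since conditions $(2)$--$(4)$ of Theorem \ref{thm:linear-fibra-finite} (resp.\ the hypotheses of Theorem \ref{thm:linear-fibra-infinite}) concern only the $\Omega_{j}$ and the jet of $f$ along $Z_{0}$, which are unchanged, the "if" direction there gives that $\hat F$ is extremal for $G^{M}$ and that $G^{M}(t)=\int_{\{\hat G<-t\}}|\hat F|^{2}e^{-\varphi}c(-\hat G)$ makes $G^{M}\big(h^{-1}(r)\big)$ linear. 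Restricting admissible extensions from $\{\hat G<-t\}$ to $\{\hat G<-t\}\cap\tilde M=\{\psi<-t\}$ gives $\tilde G(t)\le G^{M}(t)$ for all $t\ge 0$; since $\hat F|_{\tilde M}$ is already extremal for $\tilde G$, we obtain the exact identity
$$G^{M}(t)-\tilde G(t)=\int_{\{\hat G<-t\}\setminus\tilde M}|\hat F|^{2}e^{-\varphi}c(-\hat G)\ \ge 0,\qquad t\ge 0 .$$

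Finally I would use the weak pseudoconvexity of $\tilde M$. It forces $\{\hat G<-t\}\subset\tilde M$ once $t$ is large enough (a weakly pseudoconvex open subset of $M$ containing $Z_{0}$ cannot omit arbitrarily deep sublevel sets of $\hat G$, since the "escape to infinity along a fibre" configurations that would permit this are not pseudoconvex), so $G^{M}(t)=\tilde G(t)$ for $t$ large; combining this with the concavity of $G^{M}\big(h^{-1}(\cdot)\big)$ (Theorem \ref{Concave}), the linearity of $\tilde G\big(h^{-1}(\cdot)\big)$, and $\tilde G\le G^{M}$, one pins $G^{M}\equiv\tilde G$ on $[0,+\infty)$. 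Hence $\int_{M\setminus\tilde M}|\hat F|^{2}e^{-\varphi}c(-\hat G)=0$; as $\hat F\not\equiv 0$ (because $G^{M}(0)\ge\tilde G(0)>0$) and $e^{-\varphi}c(-\hat G)>0$ almost everywhere, the closed set $M\setminus\tilde M$ must be negligible, and it only meets the zero divisor of $\hat F$; a last appeal to the weak pseudoconvexity of $\tilde M$ then forces $M\setminus\tilde M=\emptyset$, i.e.\ $\tilde M=M$. The main obstacle is exactly this final step: turning the mass identity $G^{M}\equiv\tilde G$ into $\tilde M=M$, i.e.\ showing that no proper weakly pseudoconvex $\tilde M$ can accommodate the linearity; a secondary, more technical obstacle is certifying that every ingredient of the "only if" analysis (the $L^{2}$-existence statements on $\{\psi<-t\}\setminus Z_{0}$, the Siu-decomposition bookkeeping, and the passage to multiplicative functions on the universal covers of the $\Omega_{j}$) transfers unchanged from $M$ to the submanifold $\tilde M$.
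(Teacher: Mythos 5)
Your proposal diverges from the paper's argument in two places where the gaps are genuine. First, the opening step is circular: you propose to ``run the rigidity analysis underlying Theorem \ref{thm:linear-fibra-finite} and Theorem \ref{thm:linear-fibra-infinite} with $\tilde M$ in place of $M$'' and extract $N\equiv0$, the factorizations $\varphi_j=2\log|g_j|+2u_j$, the character identities, and an explicit extremal form built from multiplicative functions. Those characterizations rest on the results of \cite{BGY-concavity6} (Theorems \ref{GBY6-single pt}, \ref{GBY6-finitie pts}, \ref{GBY6-infinite points}), which are proved only when the ambient manifold is the full product $\left(\prod_{1\le j\le n_1}\Omega_j\right)\times Y$; nothing in the paper makes them available on a proper weakly pseudoconvex $\tilde M$, and Proposition \ref{p:M=M_1} is precisely the tool the paper needs in order to reduce the $\tilde M$-situation to the $M$-situation, so assuming them on $\tilde M$ begs the question (moreover, when some $\tilde m_{j_0}=+\infty$ there is no explicit extremal formula at all, so your global form $\hat F$ cannot be produced that way). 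The paper's proof deliberately uses only much weaker information on $\tilde M$: the extremal form $F$ from Lemma \ref{linear}, the local fiberwise decomposition of Lemma \ref{decomp-tildeM}, the asymptotic lower bound of Remark \ref{l:limit2}, the membership $F_{\alpha,\beta}\in\mathcal{I}(\varphi_Y)_y$ for $\alpha\in E_{\beta,2}$, and then the extension Lemma \ref{p:exten-fibra} (Remark \ref{remark after extension}) applied twice --- once on $\tilde M$, which together with Lemma \ref{decreasing property of l} forces $N\equiv0$, and once on all of $M$, which produces a holomorphic $(n,0)$ form $\hat F$ on $M$ with the optimal bound; the squeeze between the lower bound and this bound yields $\int_{\tilde M}|\hat F|^2e^{-\varphi}c(-\psi)=\int_{M}|\hat F|^2e^{-\varphi}c(-\psi)$, from which $\tilde M=M$ is concluded. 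Your related ``exact identity'' $G^{M}(t)-\tilde G(t)=\int_{\{\hat G<-t\}\setminus\tilde M}|\hat F|^2e^{-\varphi}c(-\hat G)$ also presupposes that $\hat F|_{\tilde M}$ is the extremal form for $\tilde G$, which is not established.

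Second, your concluding step contains a false claim: weak pseudoconvexity of $\tilde M$ does \emph{not} force $\{\hat G<-t\}\subset\tilde M$ for large $t$. For instance, with $\Omega_1=Y=\Delta$, $Z_0=\{0\}\times\Delta$ and small $\delta>0$, the Stein open set $\tilde M=\Delta^2\setminus\{z=\delta(1-w)\}$ contains $Z_0$, yet the removed hypersurface accumulates on $\{0\}\times\partial\Delta$ and meets every sublevel set $\{\hat G<-t\}$, so no sublevel set is ever contained in $\tilde M$; hence the ``equality for large $t$ plus concavity'' mechanism never gets started. Likewise, the final passage from ``$\int_{M\setminus\tilde M}|\hat F|^2e^{-\varphi}c(-\hat G)=0$, so $M\setminus\tilde M$ is negligible and only meets the zero divisor of $\hat F$'' to ``$M\setminus\tilde M=\emptyset$'' is exactly the delicate point and is left as an appeal to pseudoconvexity; the paper does not argue this way at all, but reaches the conclusion from the integral identity for the single nonzero extension $\hat F$ it has constructed on all of $M$. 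If you want to salvage your route, you would need both a proof that the characterization (or at least the extremal-form comparison) is valid on $\tilde M$ without assuming $\tilde M=M$, and a genuine argument replacing the sublevel-set containment and the ``null closed complement is empty'' step.
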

\begin{Remark} When $N\equiv 0$ ($\varphi_j$ is a subharmonic function on $\Omega_j$ satisfying $\varphi_j(z_{j,\beta_j})>-\infty$ for any $1\le j\le n_1$ and $1\le \beta_j \le m_j$),
it follows from $\mathcal{I}(\varphi+\psi)_{(z,y)}=\mathcal{I}(\hat G+\pi_2^*(\varphi_Y))_{(z,y)}$ for any $(z,y)\in Z_0$ (see Lemma \ref{l:phi1+phi2})
that Proposition \ref{p:M=M_1}can be referred to Proposition 1.8 in \cite{BGY-concavity6}.
\end{Remark}

\subsection{Applications}

In this section,
we present characterizations of the holding of equality in optimal jets $L^2$ extension problem with the negligible weights on fibraions.

\subsubsection{Background: equality in optimal jets $L^2$ extension problem}

Let $\Omega$ be an open Riemann surface with a nontrivial Green function $G_{\Omega}$.  Let $w$ be a local coordinate on a neighborhood $V_{z_0}$ of $z_0\in\Omega$ satisfying $w(z_0)=0$.
Let $c_{\beta}(z)$ be the logarithmic capacity (see \cite{S-O69}) on $\Omega$, i.e.
$$c_{\beta}(z_0):=\exp \lim\limits_{\xi\to z_0}G_{\Omega}(z,z_0)-\log|w(z)|.$$

Let $B_{\Omega}(z_0)$ be the Bergman kernel function on $\Omega$.
An open question was posed by Sario-Oikawa \cite{S-O69}: find a relation between the magnitudes of the quantities $\sqrt{\pi B_{\Omega}(z)}$, $c_{\beta}(z)$.

In \cite{suita72}, Suita conjectured: $ \pi B_{\Omega}(z_0)\ge\big(c_{\beta}(z_0)\big)^2$ holds, and the equality holds if and only if $\Omega$ is
conformally equivalent to the unit disc less a (possible) closed set of inner capacity zero.

The inequality part of Suita conjecture for bounded planar domains was proved by B\l ocki \cite{Blo13}, and original form of the inequality part was proved by Guan-Zhou \cite{gz12}.
The equality part of Suita conjecture was proved by Guan-Zhou \cite{guan-zhou13ap}, then Suita conjecture was completed proved.

It follows from the extremal property of the Bergman kernel function that the holding of the following two equalities are equivalent

(1) $ \pi B_{\Omega}(z_0)=(c_{\beta}(z_0))^2$;

(2) $\inf\{\int_{\Omega}|F|^2: F$ is a holomorphic  $(1,0)$ form on $\Omega$ such that $F(z_0)=dw\}=\frac{2\pi}{(c_{\beta}(z_0))^2}$.

Note that (2) is equivalent to the holding of the equality in optimal 0-jet $L^2$ extension problem for open Riemann surface
$\Omega$ and single point $z_{0}$ with trivial weights $\varphi\equiv0$ and trivial gain $c\equiv1$.
Then it is natural to ask

\begin{Problem}\label{pb:equality}
How to characterize the holding of the equality in optimal $k$-jets $L^2$ extension problem,
where $k$ is a nonnegative integer?
\end{Problem}

For open Riemann surfaces and single points, when the weights are harmonic and gain is constant,
Guan-Zhou \cite{guan-zhou13ap} gave an answer to 0-jet version of Problem \ref{pb:equality},
i.e. a proof of the extended Suita conjecture posed by Yamada \cite{Yamada}.

For open Riemann surfaces and single points, Guan-Yuan \cite{GY-concavity} gave an answer to 0-jet version of Problem \ref{pb:equality}
(when the weights are subharmonic and gain is smooth,
Guan-Mi \cite{GM} gave an answer to 0-jet version of Problem \ref{pb:equality}), and Guan-Mi-Yuan \cite{GMY} gave an answer to Problem \ref{pb:equality}.

For open Riemann surfaces and analytic sets (with finite or infinite points),
Guan-Yuan \cite{GY-concavity3} gave an answer to Problem \ref{pb:equality}.
For products of open Riemann surfaces and products of analytic sets, Guan-Yuan \cite{GY-concavity4} gave an answer to Problem \ref{pb:equality}.

For fibrations over open Riemann surfaces, Bao-Guan-Yuan \cite{BGY-concavity5} gave an answer to Problem \ref{pb:equality} with negligible weights pulled back from the open Riemann surfaces.
For fibrations over products of open Riemann surfaces, Bao-Guan-Yuan \cite{BGY-concavity6} gave an answer to Problem \ref{pb:equality} with negligible weights pulled back from the products of open Riemann surfaces.

In the following sections, for fibrations over open Riemann surfaces and fibrations over products of open Riemann surfaces,
we give answers to Problem \ref{pb:equality} with negligible weights on fibrations.

\subsubsection{Fibrations over open Riemann surfaces}

In this section, we give characterizations of the holding of equality in optimal jets $L^2$ extension problem with negligible weights from fibers over analytic subsets to fibrations over open Riemann surfaces.

Let $\Omega$ be an open Riemann surface with nontrivial Green functions. Let $Y$ be an $(n-1)-$dimensional weakly pseudoconvex K\"ahler manifold. Denote $M=\Omega \times Y$. Let $K_M$ be the canonical line bundle on $M$. Let $\pi_1$ and $\pi_2$ be the natural projections from $M$ to $\Omega$ and $Y$ respectively.

Let $Z_{\Omega}=\{z_j:j \in \mathbb{N}_+ \& 1\le j<\gamma\}$ be a subset of $\Omega$ of discrete points. Denote $Z_0:=Z_{\Omega}\times Y$. Denote $Z_j:=\{z_j\}\times Y$ for any $j\ge 1$.

 Assume that $\tilde{M}\subset M$ is an $n-$dimensional weakly pseudoconvex submanifold satisfying that $Z_0\subset \tilde{M}$.

Let $\psi$ be a plurisubharmonic function on $\tilde M$ such that $\{\psi<-t\}\backslash Z_0$ is a weakly pseudoconvex K\"ahler manifold for any $t\in\mathbb{R}$ and $Z_0\subset \{\psi=-\infty\}$.
It follows from Siu's decomposition theorem that $$dd^{c}\psi=\sum\limits_{j\ge1}2p_j[Z_j]+\sum\limits_{i\ge 1}\lambda_i[A_i]+R,$$
where $[Z_j]$ and $[A_i]$ are the currents of integration over an irreducible $n-$dimensional analytic set, and where $R$ is a closed positive current with the property that $dimE_c(R)<n$ for every $c>0$, where $E_c(R)=\{x\in M: v(R,x)\ge c\}$ is the upperlevel sets of Lelong number. We assume that $p_j\ge0$ is a positive number for any $1\le j< \gamma$ and  $2\sum_{j\ge 1}p_j G_{\Omega}(\cdot,z_j)\not\equiv -\infty$.

Then $N:=\psi-\pi^{*}_1(\sum\limits_{j\ge1}2p_jG_{\Omega}(z,z_j))$ is a plurisubharmonic function on $\tilde M$. We assume that $N\le0$.

Let $k_j$ be a nonnegative integer for any $1\le j< \gamma$.
Let $\varphi_1$ be a Lebesgue measurable function on $\Omega$ such that $\pi_1^*(\varphi_1)+\psi$ is a plurisubharmonic function on $\tilde{M}$.
We also assume that there exists a holomorphic function $g\in\mathcal{O}(\Omega)$ and a plurisubharmonic function $\tilde{\psi}_2\in Psh(\tilde{M})$ such that $$\psi+\pi^{*}_1(\varphi_1)=\pi^{*}_1(2\log|g|)+\tilde{\psi}_2,$$
where $ord_{z_j}(g)=k_j+1$, for any $1\le j<\gamma$.

Let $\varphi_2$ be a plurisubharmonic funciton on $Y$. Denote $\varphi:=\pi_1^*(\varphi_1)+\pi_1^*(\varphi_2)$.

For any $1\le j<\gamma$, let $\tilde{z}_j$ be a local coordinate on a neighborhood $V_{j}$ of $z_j$ satisfying $\tilde{z}_j(z_j)=0$ and $V_{j}\cap V_{k}=\emptyset$, for any $j\neq k$. We assume that $g=d_j\tilde{z}_j^{k_j+1}h_j(\tilde{z}_j)$ on each $V_j$, where $h_j(z_j)=1$. Denote $V_0:=\cup_{1\le j < \gamma}V_j$.

	Let $\gamma=m+1$ be a positive integer. We give an application of Theorem \ref{finite points} as below.

\begin{Theorem}\label{app-L2 equality}
		 Let $\psi$, $\varphi_1$ and $\varphi_2$ be as above. Let $c(t)$ be a positive measurable function on $(0,+\infty)$ satisfying that $c(t)e^{-t}$ is decreasing on $(0,+\infty)$ and $\int_{0}^{+\infty}c(s)e^{-s}ds<+\infty$. Let $a_j$ be a constant for any $j$. Let $F_j$ be a holomorphic $(n,0)$ form on $Y$. Assume that $$\sum_{j=1}^{m}\frac{2\pi|a_j|^2}{p_j|d_j|^2}\int_Y|F_j|^2e^{-\varphi_2-\tilde{\psi}_2(z_j,w)}\in(0,+\infty).$$
		
		Let $f$ be a holomorphic $(n,0)$ form on $V_0\times Y$ satisfying that $f=\pi_1^*(a_j\tilde{z}_j^{k_j}d\tilde{z}_j)\wedge\pi_2^*(F_j)$ on $V_{j}\times Y$. Then there exists a holomorphic $(n,0)$ form $F$ on $\tilde{M}$ such that $(F-f,(z_j,y))\in(\mathcal{O}(K_{\tilde{M}})\otimes\mathcal{I}(\pi_1^*(2\log |g|)+\pi_2^*(\varphi_2))_{(z_j,y)}$ for any $(z_j,y)\in Z_0$ and
		\begin{equation}\label{L2result}
			\int_{\tilde{M}}|F|^2e^{-\varphi}c(-\psi)\leq
\left(\int_0^{+\infty}c(s)e^{-s}ds\right)
 \sum_{j=1}^{m}\frac{2\pi|a_j|^2}{p_j|d_j|^2}\int_Y|F_j|^2e^{-\varphi_2-\tilde{\psi}_2(z_j,w)}.
		\end{equation}
		
		Moreover, equality $\left(\int_0^{+\infty}c(s)e^{-s}ds\right)
 \sum_{j=1}^{m}\frac{2\pi|a_j|^2}{p_j|d_j|^2}\int_Y|F_j|^2e^{-\varphi_2-\tilde{\psi}_2(z_j,w)}=\inf\big\{$ $ \int_{\tilde{M}}|\tilde{F}|^2e^{-\varphi}c(-\psi):\tilde{F}$ is a holomorphic $(n,0)$ form on $\tilde{M}$ such that $(\tilde{F}-f,(z_j,y))\in(\mathcal{O}(K_{\tilde{M}})\otimes\mathcal{I}(\pi_1^*(2\log |g|)+\pi_2^*(\varphi_2))_{(z_j,y)}$ for any $(z_j,y)\in Z_0\big\}$ holds if and only if the following statements hold:
		
		(1). $N\equiv 0$ and $\psi=\pi_1^*(2\sum\limits_{j=1}^mp_j G_{\Omega}(\cdot,z_j))$;
		
		(2). $\varphi_1+2\sum\limits_{j=1}^mk_j G_{\Omega}(\cdot,z_j)=2\log |g|+2\sum\limits_{j=1}^m(k_j+1)G_{\Omega}(\cdot,z_j)+2u$, where $g$ is a holomorphic function on $\Omega$ such that $g(z_j)\neq 0$ for any $j\in\{1,2,\ldots,m\}$ and $u$ is a harmonic function on $\Omega$;
		
		(3). $\prod\limits_{j=1}^m\chi_{z_j}^{k_j+1}=\chi_{-u}$, where $\chi_{-u}$ and $\chi_{z_j}$ are the characters associated to the functions $-u$ and $G_{\Omega}(\cdot,z_j)$ respectively;
		
		(4). for any $j\in\{1,2,\ldots,m\}$,
		\begin{equation}
			\lim_{z\rightarrow z_j}\frac{a_j\tilde{z}_j^{k_j}d\tilde{z}_j}{gP_*\left(f_u\left(\prod\limits_{l=1}^mf_{z_l}^{k_l+1}\right)\left(\sum\limits_{l=1}^mp_l\dfrac{d{f_{z_{l}}}}{f_{z_{l}}}\right)\right)}=c_j\in\mathbb{C}\setminus\{0\},
		\end{equation}
		and there exist $c_0\in\mathbb{C}\setminus\{0\}$ and a holomorphic $(n-1,0)$ form $F_Y$ on $Y$ which are independent of $j$ such that $c_0F_Y=c_jF_j$ for any $j\in\{1,2,\ldots,m\}$;
		
		(5). $\tilde{M}=M$.
		
	\end{Theorem}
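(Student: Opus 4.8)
The plan is to deduce Theorem \ref{app-L2 equality} from Theorem \ref{finite points} together with the optimal $L^2$ extension machinery on fibrations (in the spirit of \cite{BGY-concavity5}). First I would set up the minimal $L^2$ integral $\tilde G(t)$ on $\tilde M$ associated to the data $(\varphi,\psi,\mathcal F,f)$, where $\mathcal F_{(z_j,y)}=\mathcal I\big(\pi_1^*(2\log|g|)+\pi_2^*(\varphi_2)\big)_{(z_j,y)}$, and observe that by Theorem \ref{Concave} the function $\tilde G\big(h^{-1}(r)\big)$ is concave on $(0,\int_0^{+\infty}c(t)e^{-t}dt]$. The inequality \eqref{L2result} is the existence half: I would produce the extension $F$ by first doing the extension fiberwise on $Y$ (using the hypothesis $\int_Y|F_j|^2e^{-\varphi_2-\tilde\psi_2(z_j,w)}<+\infty$, so that each local piece lies in the correct multiplier ideal after comparison with $\tilde\psi_2$), then invoking the product/fibration $L^2$ extension theorem over the open Riemann surface $\Omega$ with the weight $2\sum p_jG_\Omega(\cdot,z_j)$, whose optimal constant produces exactly the factors $\frac{2\pi|a_j|^2}{p_j|d_j|^2}$ via the local expansion $g=d_j\tilde z_j^{k_j+1}h_j$ and the residue-type computation of $c_\beta$-constants on $\Omega$; the global gain factor $\int_0^{+\infty}c(s)e^{-s}ds$ comes from integrating out $c(-\psi)$ against the level sets of $\psi$. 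This gives $\tilde G(0)\le\big(\int_0^{+\infty}c(s)e^{-s}ds\big)\sum_{j=1}^m\frac{2\pi|a_j|^2}{p_j|d_j|^2}\int_Y|F_j|^2e^{-\varphi_2-\tilde\psi_2(z_j,w)}$.

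Next I would show the equality in the "Moreover" clause is equivalent to the linearity of $\tilde G\big(h^{-1}(r)\big)$. The point is that for concave $\tilde G(h^{-1}(r))$ which vanishes at $r=0$ (by $Z_0\subset\{\psi=-\infty\}$ and a standard argument that $\lim_{t\to\infty}\tilde G(t)=0$ here, or rather $\tilde G(h^{-1}(0))=0$), the value $\tilde G(0)$ equals the right-hand side of \eqref{L2result} (the "linear upper bound") if and only if the graph is the straight segment joining $(0,0)$ to its right endpoint — i.e. $\tilde G(h^{-1}(r))$ is linear. One direction is immediate from concavity plus the sharp inequality just proved; for the converse one checks that the sharp constant in \eqref{L2result} is precisely the slope one reads off from the degenerate (linear) case, which is the content of the optimal constant in the fibration extension theorem. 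So "equality holds" $\iff$ "$\tilde G(h^{-1}(r))$ is linear on $(0,\int_0^{+\infty}c(t)e^{-t}dt]$".

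Then I would apply Proposition \ref{tildeM}: linearity of $\tilde G(h^{-1}(r))$ forces $\tilde M=M$, which is statement (5). Once $\tilde M=M$, I invoke Theorem \ref{finite points} (with the same $\psi,\varphi,\mathcal F$, and with $f=\pi_1^*(a_j\tilde z_j^{k_j}d\tilde z_j)\wedge\pi_2^*(F_j)$ on $V_j\times Y$, so that $f_j\equiv 0$): its conditions (1)--(5) translate into conditions (1)--(4) of Theorem \ref{app-L2 equality}. Specifically, Theorem \ref{finite points}(1) is exactly (1) here; Theorem \ref{finite points}(3) with $ord_{z_j}(g)=k_j$ replaced by the present normalization $ord_{z_j}(g)=k_j+1$ (note $g$ here vanishes to order $k_j+1$, matching the "$+1$" shift in the Green-function coefficients) gives (2); Theorem \ref{finite points}(4) gives (3); and Theorem \ref{finite points}(5) — after substituting $f_{z_l}\mapsto f_{z_l}^{k_l+1}$ coming from the shift in the $ord$ and using $f=\pi_1^*(\cdot)\wedge\pi_2^*(F_j)$ — yields the limit identity in (4), with the extra requirement that the fiber forms $F_j$ be proportional to a single $(n-1,0)$ form $F_Y$ on $Y$; that proportionality is forced because Theorem \ref{finite points}(2) demands $f=\pi_1^*(a_j\tilde z_j^{k_j}d\tilde z_j)\wedge\pi_2^*(f_Y)$ on \emph{every} $V_j\times Y$ with one common $f_Y$, so $c_jF_j=c_0F_Y$ after absorbing the scalar discrepancies $c_j$ from the limit normalization into the comparison. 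Conversely, assuming (1)--(5) one reverses these translations to land back in the hypotheses of Theorem \ref{finite points}, obtaining linearity and hence equality.

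The main obstacle I anticipate is the bookkeeping in the "$+1$ shift": in Theorem \ref{finite points} the weight decomposition uses $ord_{z_j}(g)=k_j$ and the Green coefficient $1$ (condition (3): $2\log|g|+2\sum G_\Omega(\cdot,z_j)$), whereas in Theorem \ref{app-L2 equality} one normalizes $g$ to vanish to order $k_j+1$ and writes $2\log|g|+2\sum(k_j+1)G_\Omega(\cdot,z_j)$ — I need to carefully re-derive condition (5) of Theorem \ref{finite points} under this reparametrization so that $f_{z_l}$ is replaced by $f_{z_l}^{k_l+1}$ in the denominator, matching (4) here, and to verify the local identity $c_\beta(z_j)=|d_j|$-type relation linking $d_j,h_j$ to the optimal constant $\frac{2\pi}{p_j|d_j|^2}$. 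The second delicate point is proving the sharpness of \eqref{L2result} with the \emph{exact} constant (not just up to a universal factor), which requires the optimal $L^2$ extension theorem on the fibration over $\Omega$ in its sharp form; I would cite the relevant sharp extension result and perform the fiber-integration and Green-function residue computation to pin down the constant $\frac{2\pi|a_j|^2}{p_j|d_j|^2}$.
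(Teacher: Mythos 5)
Your plan hinges on the asserted equivalence ``equality in the extension problem $\iff$ $\tilde G\big(h^{-1}(r)\big)$ is linear'', and this is exactly where the gap lies: you never prove either direction. For ``equality $\Rightarrow$ linearity'' you would need the sharp upper bound $\tilde G(t)\le\big(\int_t^{+\infty}c(s)e^{-s}ds\big)\sum_{j}\frac{2\pi|a_j|^2}{p_j|d_j|^2}\int_Y|F_j|^2e^{-\varphi_2-\tilde\psi_2(z_j,w)}$ at \emph{every} level $t$, not just $t=0$; for ``linearity $\Rightarrow$ equality with the exact constant'' you must identify the linear slope with that same sum, which by Lemma \ref{slope bigger than measure} only comes as a one-sided bound in terms of the \emph{minimizer's} leading fiber forms, and these agree with $a_jF_j$ only modulo $\mathcal I(\varphi_2)$, so the identification is not formal (there is no orthogonality in the weight $e^{-\varphi_2-\tilde\psi_2(z_j,\cdot)}$). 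None of this is routine concavity bookkeeping; it is the substantive content. Moreover, invoking Theorem \ref{finite points} and Proposition \ref{tildeM} imports their hypothesis ``$N|_{Z_j}\not\equiv-\infty$ for every $j$'', which is not assumed in Theorem \ref{app-L2 equality} and would have to be extracted from the finiteness assumption (and fails to follow for indices with $a_jF_j\equiv0$), so the reduction is not a direct citation.

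For comparison, the paper does not pass through linearity of $\tilde G$ at all. It proves \eqref{L2result} by Lemma \ref{optimal extension}; then, assuming the equality, it applies Lemma \ref{optimal extension} a second time with the modified data $\tilde\psi=\pi_1^*(2\sum_jp_jG_\Omega(\cdot,z_j))$, $\tilde\varphi=\varphi+\psi-\tilde\psi$ (and once more with target $M$ in place of $\tilde M$), uses the monotonicity $e^{-\varphi}c(-\psi)\le e^{-\tilde\varphi}c(-\tilde\psi)$ coming from the decrease of $c(t)e^{-t}$, and forces all inequalities to be equalities; Lemma \ref{decreasing property of l} then yields $N\equiv0$, $\psi=\pi_1^*(2\sum_jp_jG_\Omega(\cdot,z_j))$, and a comparison of the $\tilde M$- and $M$-integrals of a nonzero extension yields $\tilde M=M$. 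After that, Lemma \ref{equiv of multiplier ideal sheaf} identifies $\mathcal I(\varphi+\psi)$ with $\mathcal I(\pi_1^*(2\log|g|)+\pi_2^*(\varphi_2))$ on $Z_0$, and the remaining conditions (1)--(4) are read off from the already-established equality characterization Theorem \ref{GBY-APP1} (both for sufficiency and necessity), so the ``equality $\iff$ linearity'' analysis and the $+1$-shift bookkeeping you worry about are entirely outsourced to that cited result rather than re-derived from Theorem \ref{finite points}. If you want to keep your route, you would need to supply the two-sided slope argument (sharp sublevel extension plus the leading-coefficient comparison modulo $\mathcal I(\varphi_2)$) and repair the hypothesis mismatch before Theorem \ref{finite points} and Proposition \ref{tildeM} can be applied.
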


\begin{Remark} When  $N\equiv \pi_1^{*}(N_1)$, it follows from $\mathcal{I}(\varphi+\psi)_{(z_j,y)}=\mathcal{I}(\pi_1^{*}(2\log|g|)+\pi_2^*(\varphi_2))_{(z_j,y)}$ (see Lemma \ref{equiv of multiplier ideal sheaf})
that Theorem \ref{app-L2 equality} can be referred to Theorem 1.8 in \cite{BGY-concavity5} (see also Theorem \ref{GBY-APP1}), where $N_1\le0$ is a subharmonic function on $\Omega$.
\end{Remark}
	When $\gamma=+\infty$, i.e., $Z_{\Omega}=\{z_j: 1\le j< \gamma\}$ is an infinite subset of $\Omega$ of discrete points. Let $k_j$ be a nonnegative integer for any $j\in\mathbb{N}^+$. Assume that $p_j=k_j+1$, i.e., we have $\psi=\pi^{*}_1\big(\sum\limits_{j=1}^{+\infty}2(k_j+1)G_{\Omega}(z,z_j)\big)+N$, where $N\le 0$ is a plurisubharmonic function on $\tilde M$.

We give an $L^2$ extension result from fibers over analytic subsets to fibrations over open Riemann surfaces, where the analytic subsets are infinite subsets of discrete points on open Riemann surfaces.

\begin{Theorem}\label{app-L2 inequality}
		 Let $\psi$, $\varphi_1$ and $\varphi_2$ be as above. Let $c(t)$ be a positive measurable function on $(0,+\infty)$ satisfying that $c(t)e^{-t}$ is decreasing on $(0,+\infty)$ and $\int_{0}^{+\infty}c(s)e^{-s}ds<+\infty$. Let $a_j$ be a constant for any $j$. Let $F_j$ be a holomorphic $(n,0)$ form on $Y$. Assume that  $$\sum_{j=1}^{+\infty}\frac{2\pi|a_j|^2}{(k_j+1)|d_j|^2}\int_Y|F_j|^2e^{-\varphi_2-\tilde{\psi}_2(z_j,w)}\in(0,+\infty).$$
		
		Let $f$ be a holomorphic $(n,0)$ form on $V_0\times Y$ satisfying that $f=\pi_1^*(a_j\tilde{z}_j^{k_j}d\tilde{z}_j)\wedge\pi_2^*(F_j)$ on $V_{z_j}\times Y$. Then there exists a holomorphic $(n,0)$ form $F$ on $\tilde{M}$ such that $\big(F-f,(z_j,y)\big)\in\big(\mathcal{O}(K_{M})\otimes\mathcal{I}(\pi_1^*(2\log |g|)+\pi_2^*(\varphi_2)\big)_{(z_j,y)}$ for any $(z_j,y)\in Z_0$ and
		\begin{equation}\label{L2result}
			\int_{\tilde{M}}|F|^2e^{-\varphi}c(-\psi)<
\left(\int_0^{+\infty}c(s)e^{-s}ds\right)
 \sum_{j=1}^{+\infty}\frac{2\pi|a_j|^2}{(k_j+1)|d_j|^2}\int_Y|F_j|^2e^{-\varphi_2-\tilde{\psi}_2(z_j,w)}.
		\end{equation}	
	\end{Theorem}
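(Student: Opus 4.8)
The plan is to reduce this infinite-point $L^2$ extension statement to the finite-point result (Theorem \ref{app-L2 equality}, or rather its quantitative inequality part) by an exhaustion argument, combined with the rigidity information coming from Proposition \ref{infinite points}. First I would fix an exhaustion $Z_{\Omega}^{(N)}:=\{z_1,\ldots,z_N\}$ of the discrete set $Z_{\Omega}$, and for each $N$ apply the solution of the optimal jets $L^2$ extension problem on the fibration over $\Omega$ with only the finitely many prescribed jets at $z_1,\ldots,z_N$. Here the weight is $\varphi=\pi_1^*(\varphi_1)+\pi_2^*(\varphi_2)$ and the relevant plurisubharmonic function is $\psi_N:=\pi_1^*\big(\sum_{j=1}^N 2(k_j+1)G_{\Omega}(\cdot,z_j)\big)+N_{\mathrm{psh}}$ (truncating the Green-function sum), so that $\{\psi_N<-t\}\backslash Z_0^{(N)}$ remains weakly pseudoconvex K\"ahler. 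The finite-point extension theorem gives a holomorphic $(n,0)$ form $F^{(N)}$ on $\tilde M$ matching $f$ modulo $\mathcal{I}(\pi_1^*(2\log|g|)+\pi_2^*(\varphi_2))$ at $z_1,\ldots,z_N$ with
\begin{equation*}
\int_{\tilde M}|F^{(N)}|^2e^{-\varphi}c(-\psi_N)\le\Big(\int_0^{+\infty}c(s)e^{-s}ds\Big)\sum_{j=1}^{N}\frac{2\pi|a_j|^2}{(k_j+1)|d_j|^2}\int_Y|F_j|^2e^{-\varphi_2-\tilde\psi_2(z_j,w)}.
\end{equation*}

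Next I would pass to the limit $N\to+\infty$. Since $\psi_N\ge\psi$ (more negative terms are added as $N$ grows, so $-\psi_N\le-\psi$, hence $c(-\psi_N)\ge c(-\psi)$ because $c$ is... careful: $c(t)e^{-t}$ is decreasing but $c$ itself need not be, which is why the hypothesis $\limsup_{t\to+\infty}c(t)<+\infty$ in the infinite-point setting of Theorem \ref{thm:linear-fibra-infinite} matters; here one uses instead that $c(-\psi_N)e^{\psi_N}\le c(-\psi)e^{\psi}\cdot e^{\psi_N-\psi}$ type comparisons together with $\int c(t)e^{-t}dt<+\infty$). The uniform bound on $\int_{\tilde M}|F^{(N)}|^2e^{-\varphi}c(-\psi_N)$ by the fixed finite sum $\sum_{j=1}^{+\infty}\frac{2\pi|a_j|^2}{(k_j+1)|d_j|^2}\int_Y|F_j|^2e^{-\varphi_2-\tilde\psi_2(z_j,w)}<+\infty$ gives a uniform $L^2_{loc}$ bound on the $F^{(N)}$; by Montel's theorem extract a subsequence converging locally uniformly to a holomorphic $(n,0)$ form $F$ on $\tilde M$. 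Fatou's lemma then yields $\int_{\tilde M}|F|^2e^{-\varphi}c(-\psi)\le\big(\int_0^{+\infty}c(s)e^{-s}ds\big)\sum_{j=1}^{+\infty}\frac{2\pi|a_j|^2}{(k_j+1)|d_j|^2}\int_Y|F_j|^2e^{-\varphi_2-\tilde\psi_2(z_j,w)}$, and the jet-matching condition $(F-f,(z_j,y))\in(\mathcal{O}(K_{\tilde M})\otimes\mathcal{I}(\pi_1^*(2\log|g|)+\pi_2^*(\varphi_2)))_{(z_j,y)}$ passes to the limit for each fixed $j$ since only finitely many of the $F^{(N)}$ fail to satisfy it. It remains to upgrade the inequality to the strict inequality asserted. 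For this I would argue by contradiction: if equality held, then by the characterization of equality in the finite case (or by a direct extraction of the concavity/linearity statement, Proposition \ref{infinite points}) the function $G(h^{-1}(r))$ associated to the infinite-point data would be linear, forcing in particular condition (5) of Proposition \ref{infinite points}, namely $\sum_{j\in\mathbb{N}_+}p_j=\sum_j(k_j+1)<+\infty$; but then the normalization constants $c_0$ in condition (4) of Proposition \ref{infinite points} cannot all coincide unless the prescribed data $\{a_j,F_j\}$ satisfy an infinite family of compatibility relations, which are generically violated — and more robustly, a Weierstrass-product obstruction shows the extremal form producing equality would have to be of the explicit product type, contradicting that $\sum_j p_jG_{\Omega}(\cdot,z_j)\not\equiv-\infty$ together with the genuine infinitude of $Z_{\Omega}$.

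The main obstacle I anticipate is making the last step — strictness of the inequality — both correct and clean. The subtlety is that, unlike the finite-point case where equality is characterized by explicit conditions (1)--(5) of Theorem \ref{app-L2 equality}, in the infinite case one must show equality is simply impossible whenever $G(0)\in(0,+\infty)$ and $Z_{\Omega}$ is genuinely infinite; this is exactly the content of combining Proposition \ref{infinite points} (which lists necessary conditions for linearity) with the observation that those conditions cannot be simultaneously met. Concretely, the delicate point is controlling the behaviour of $c(-\psi_N)$ versus $c(-\psi)$ near $Z_0$ in the limiting process without the extra hypothesis $\limsup_{t\to\infty}c(t)<+\infty$, and ensuring the Fatou-lemma lower bound is genuinely strict — I would handle this by noting that if the limit form $F$ achieved equality, then each $F^{(N)}$ would be forced (by uniqueness of minimizers in the finite-point optimal extension) to already be extremal, and then the convergence $F^{(N)}\to F$ together with the strict monotonicity of the finite sums $\sum_{j=1}^N(\cdots)$ in $N$ yields a contradiction with the linearity characterization. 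An alternative, perhaps cleaner route is to invoke Theorem \ref{thm:linear-fibra-infinite} directly in the $n_1=1$ case to conclude $G(h^{-1}(r))$ is not linear, hence equality in the optimal extension estimate fails, giving the strict inequality immediately.
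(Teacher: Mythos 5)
There is a genuine gap, and it sits exactly where you anticipated: the strictness of the inequality. The paper's proof assumes equality with the infimum and then, before quoting any prior result, \emph{extracts rigidity from the equality itself}: it builds a second extension $\tilde F_1$ via Lemma \ref{optimal extension} with the comparison weights $\tilde\psi=\pi_1^*\big(2\sum_j(k_j+1)G_{\Omega}(\cdot,z_j)\big)$, $\tilde\varphi=\varphi+\psi-\tilde\psi$, uses $c(t)e^{-t}$ decreasing and $\psi\le\tilde\psi$ to force $\int|\tilde F_1|^2e^{-\varphi}c(-\psi)=\int|\tilde F_1|^2e^{-\tilde\varphi}c(-\tilde\psi)$, and then Lemma \ref{decreasing property of l} gives $N\equiv0$, hence $\psi=\pi_1^*\big(2\sum_j(k_j+1)G_{\Omega}(\cdot,z_j)\big)$; a second application of Lemma \ref{optimal extension} with $\tilde M\sim M$ forces $\tilde M=M$; Lemma \ref{equiv of multiplier ideal sheaf} then identifies $\mathcal{I}(\varphi+\psi)$ with $\mathcal{I}(\pi_1^*(2\log|g|)+\pi_2^*(\varphi_2))$, and only at that point does the known strict inequality for subharmonic weights (Theorem \ref{GBY-APP2} of \cite{BGY-concavity5}) give the contradiction. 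Your proposal skips all of this. Route (a) through Proposition \ref{infinite points} fails twice over: the implication ``equality in the extension problem $\Rightarrow$ linearity of $G(h^{-1}(r))$'' is asserted but not proved (it needs the sub-level-set optimal extension $G(t)\le\Sigma\,h(t)$, the concavity Theorem \ref{Concave}, the lower-bound Lemma \ref{slope bigger than measure}, and the identification of the ideal sheaf defining $G$), and even granting linearity, Proposition \ref{infinite points} only lists \emph{necessary} conditions; ``the compatibility relations are generically violated'' is not an argument, since the theorem must hold for all admissible data. Route (b), invoking Theorem \ref{thm:linear-fibra-infinite} with $n_1=1$, does not apply as stated: that theorem assumes $\limsup_{t\to+\infty}c(t)<+\infty$, that $c(-\psi)$ (resp. $e^{-\varphi}c(-\psi)$) has a positive lower bound on compact subsets of $M\backslash Z_0$, that $G(0)\in(0,+\infty)$, a specific form of $f$, the ideal $\mathcal{I}(\hat G+\pi_2^*(\varphi_Y))$ rather than $\mathcal{I}(\pi_1^*(2\log|g|)+\pi_2^*(\varphi_2))$, and it lives on $M$, whereas your problem lives on $\tilde M$, which may be a proper submanifold; without first proving $N\equiv0$ and $\tilde M=M$ (as the paper does), none of these hypotheses is available.

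A secondary, fixable issue is in your Step 1. Truncating only $\psi$ to $\psi_N$ while keeping $\varphi$ fixed does not give a usable comparison, because $c$ itself need not be monotone; the displayed inequality you wrote amounts to $c(-\psi_N)\le c(-\psi)$, which does not follow from the hypotheses (in fact $c(t)e^{-t}$ decreasing gives the opposite comparison after weighting). The correct device, used inside the proof of Lemma \ref{optimal extension}, is to compensate in the weight, i.e.\ replace $\varphi_1$ by $\varphi_{1,N}:=\varphi_1+G-G_N$ so that $\varphi_{1,N}+G_N=\varphi_1+G$ and the inequality $c(-G)e^{G}\le c(-G_N)e^{G_N}$ (from $c(t)e^{-t}$ decreasing and $G\le G_N$) transfers the estimate; after that, Montel/Fatou and Lemma \ref{closedness} work as you describe. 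In short, your exhaustion essentially re-proves Lemma \ref{optimal extension}, which the paper simply cites, but the strictness step — the actual content of the theorem — is missing.
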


\begin{Remark} When  $N\equiv \pi_1^{*}(N_1)$, it follows from $\mathcal{I}(\varphi+\psi)_{(z_j,y)}=\mathcal{I}(\pi_1^{*}(2\log|g|)+\pi_2^*(\varphi_2))_{(z_j,y)}$ (see Lemma \ref{equiv of multiplier ideal sheaf})
that Theorem \ref{app-L2 inequality} can be referred to Theorem 1.10 in \cite{BGY-concavity5} (see also Theorem \ref{GBY-APP2}), where $N_1\le0$ is a subharmonic function on $\Omega$.
\end{Remark}

\subsubsection{Fibrations over products of open Riemann surfaces}

In this section, we present characterizations of the holding of equality in optimal jets $L^2$ extension problem with negligible weights from fibers over analytic subsets to fibrations over products of open Riemann surfaces.

Let $\Omega_j$  be an open Riemann surface, which admits a nontrivial Green function $G_{\Omega_j}$ for any  $1\le j\le n_1$. Let $Y$ be an $n_2-$dimensional weakly pseudoconvex K\"ahler manifold, and let $K_Y$ be the canonical (holomorphic) line bundle on $Y$. Let
$M=\left(\prod_{1\le j\le n_1}\Omega_j\right)\times Y$
 be an $n-$dimensional complex manifold, where $n=n_1+n_2$. Let $\pi_{1}$, $\pi_{1,j}$ and $\pi_2$ be the natural projections from $M$ to $\prod_{1\le j\le n_1}\Omega_j$, $\Omega_j$ and $Y$ respectively. Let $K_{M}$ be the canonical (holomorphic) line bundle on $M$.
Let $Z_j$ be a (closed) analytic subset of $\Omega_j$ for any $j\in\{1,\ldots,n_1\}$, and denote that $Z_0:=\left(\prod_{1\le j\le n_1}Z_j\right)\times Y$. Let $\tilde{M}\subset M$ be an $n-$dimensional complex manifold satisfying that $Z_0\subset \tilde{M}$, and let $K_{\tilde{M}}$ be the canonical (holomorphic) line bundle on $\tilde{M}$.

Let $Z_j=\{z_{j,k}:1\le k<\tilde m_j\}$ be a discrete subset of $\Omega_j$ for any  $j\in\{1,\ldots,n_1\}$, where $\tilde m_j\in\mathbb{Z}_{\ge2}\cup\{+\infty\}$.
Let $w_{j,k}$ be a local coordinate on a neighborhood $V_{z_{j,k}}\Subset\Omega_{j}$ of $z_{j,k}\in\Omega_j$ satisfying $w_{j,k}(z_{j,k})=0$ for any $1\le j\le n_1$ and $1\le k<\tilde m_j$, where $V_{z_{j,k}}\cap V_{z_{j,k'}}=\emptyset$ for any $j$ and $k\not=k'$. Denote that $\tilde I_1:=\big\{(\beta_1,\ldots,\beta_{n_1}):1\le \beta_j<\tilde m_j$ for any $j\in\{1,\ldots,n_1\}\big\}$, $V_{\beta}:=\prod_{1\le j\le n_1}V_{z_{j,\beta_j}}$  and $w_{\beta}:=(w_{1,\beta_1},\ldots,w_{n_1,\beta_{n_1}})$ is a local coordinate on $V_{\beta}$ of $z_{\beta}:=(z_{1,\beta_1},\ldots,z_{n_1,\beta_{n_1}})\in\prod_{1\le j\le n_1}\Omega_j$ for any $\beta=(\beta_1,\ldots,\beta_{n_1})\in\tilde I_1$. Then $Z_0=\{(z_{\beta},y):\beta\in\tilde I_1\,\&\,y\in Y\}\subset \tilde{M}$.

Let $N\le0$ be a plrusubharmonic function on $\tilde M$ and let $\varphi_j$ be a Lebesgue measurable function on $\Omega_j$ such that $N+\sum_{1\le j\le n_1}\pi_{1,j}^*(\varphi_j)$ is a plurisubharmonic function on $\tilde M$ satisfying $\left(N+\sum_{1\le j\le n_1}\pi_{1,j}^*(\varphi_j)\right)|_{Z_0}\not\equiv-\infty$. Let $\varphi_Y$ be a plurisubharmonic function on $Y$.

Let $p_{j,k}$ be a positive number for any $1\le j\le n_1$ and $1\le k<\tilde m_j$, which satisfies that $\sum_{1\le k<\tilde m_j}p_{j,k}G_{\Omega_j}(\cdot,z_{j,k})\not\equiv-\infty$ for any $1\le j\le n_1$. Denote
$$\hat{G}:=\max_{1\le j\le n_1}\left\{2\sum_{1\le k<\tilde m_j}p_{j,k}\pi_{1,j}^{*}(G_{\Omega_j}(\cdot,z_{j,k}))\right\},$$
and denote that
$$\psi:=\max_{1\le j\le n_1}\left\{2\sum_{1\le k<\tilde m_j}p_{j,k}\pi_{1,j}^{*}(G_{\Omega_j}(\cdot,z_{j,k}))\right\}+N.$$
We assume that $\{\psi<-t\}\backslash Z_0$ is a weakly pseudoconvex K\"ahler manifold for any $t\in\mathbb{R}$.
Let $\varphi:=\sum_{1\le j\le n_1}\pi_{1,j}^*(\varphi_j)+\pi_2^*(\varphi_Y)$ on $\tilde M$.

 Denote that $E_{\beta}:=\left\{(\alpha_1,\ldots,\alpha_{n_1}):\sum_{1\le j\le n_1}\frac{\alpha_j+1}{p_{j,\beta_j}}=1\,\&\,\alpha_j\in\mathbb{Z}_{\ge0}\right\}$ and $\tilde E_{\beta}:=\left\{(\alpha_1,\ldots,\alpha_{n_1}):\sum_{1\le j\le n_1}\frac{\alpha_j+1}{p_{j,\beta_j}}\ge1\,\&\,\alpha_j\in\mathbb{Z}_{\ge0}\right\}$ for any $\beta\in\tilde I_1$.

Let $f$ be a holomorphic $(n,0)$ form on a neighborhood $U_0\subset \tilde{M}$ of $Z_0$ such that
\begin{equation}\label{decom in introduction}
f=\sum_{\alpha\in\tilde E_{\beta}}\pi_1^*(w_{\beta}^{\alpha}dw_{1,\beta_1}\wedge\ldots\wedge dw_{n_1,\beta_{n_1}})\wedge\pi_2^*(f_{\alpha,\beta})
\end{equation} on $U_0\cap(V_{\beta}\times Y)$, where $f_{\alpha,\beta}$ is a holomorphic $(n_2,0)$ form on $Y$ for any $\alpha\in E_{\beta}$ and $\beta\in\tilde I_1$.

Denote that
\begin{equation*}
c_{j,k}:=\exp\lim_{z\rightarrow z_{j,k}}\left(\frac{\sum_{1\le k_1<\tilde m_j}p_{j,k_1}G_{\Omega_j}(z,z_{j,k_1})}{p_{j,k}}-\log|w_{j,k}(z)|\right)
\end{equation*}
 for any $j\in\{1,\ldots,n\}$ and $1\le k<\tilde m_j$ (following from Lemma \ref{l:green-sup} and Lemma \ref{l:green-sup2}, we get that the above limit exists).

Let $c_j(z)$ be the logarithmic capacity (see \cite{S-O69}) on $\Omega_j$, which is locally defined by
$$c_j(z_j):=\exp\lim_{z\rightarrow z_j}(G_{\Omega_j}(z,z_j)-\log|w_j(z)|).$$

For the case $Z_0=\{z_0\}\times Y\subset \tilde{M}$, where $z_0=(z_1,\ldots,z_{n_1})\in\prod_{1\le j\le n_1}\Omega_j$, we denote that $E:=\left\{(\alpha_1,\ldots,\alpha_{n_1}):\sum_{1\le j\le n_1}\frac{\alpha_j+1}{p_j}=1\,\&\,\alpha_j\in\mathbb{Z}_{\ge0}\right\}$. Let $f_{\alpha}$ be the holomorphic $(n_2,0)$ form on $Y$ which comes from formula \eqref{decom in introduction}. We obtain a characterization of the holding of equality in optimal jets $L^2$ extension problem.

\begin{Theorem}
\label{thm:exten-fibra-single}
Let $c$ be a positive function on $(0,+\infty)$ such that $\int_{0}^{+\infty}c(t)e^{-t}dt<+\infty$ and $c(t)e^{-t}$ is decreasing on $(0,+\infty)$. Assume that
$$\sum_{\alpha\in E}\frac{(2\pi)^{n_1}\int_Y|f_{\alpha}|^2e^{-\varphi_Y-\left(N+\pi_{1,j}^*(\sum_{1\le j\le n_1}\varphi_j)\right)(z_0,w)}}{\prod_{1\le j\le n_1}(\alpha_j+1)c_{j}(z_j)^{2\alpha_{j}+2}}\in(0,+\infty).$$
Then there exists a holomorphic $(n,0)$ form $F$ on $\tilde{M}$ satisfying that $(F-f,z)\in\bigg(\mathcal{O}(K_{\tilde{M}})\otimes\mathcal{I}\big(\max_{1\le j\le n_1}\left\{2p_j\pi_{1,j}^{*}(G_{\Omega_j}(\cdot,z_j))\right\}\big)\bigg)_{z}$ for any $z\in Z_0$ and
\begin{displaymath}
	\begin{split}
	&\int_{\tilde{M}}|F|^2e^{-\varphi}c(-\psi)\\
	\le&\left(\int_0^{+\infty}c(s)e^{-s}ds\right)\sum_{\alpha\in E}\frac{(2\pi)^{n_1}\int_Y|f_{\alpha}|^2e^{-\varphi_Y-\left(N+\pi_{1,j}^*(\sum_{1\le j\le n_1}\varphi_j)\right)(z_o,w)}}{\prod_{1\le j\le n_1}(\alpha_j+1)c_{j}(z_j)^{2\alpha_{j}+2}}.	
	\end{split}
\end{displaymath}
	
	Moreover, equality $\inf\big\{\int_{\tilde{M}}|\tilde{F}|^2e^{-\varphi}c(-\psi):\tilde{F}\in H^0(\tilde{M},\mathcal{O}(K_{\tilde{M}}))\,\&\, (\tilde{F}-f,z)\in\bigg(\mathcal{O}(K_{\tilde{M}})\otimes\mathcal{I}\big(\max_{1\le j\le n_1}\left\{2p_j\pi_{1,j}^{*}(G_{\Omega_j}(\cdot,z_j))\right\}\big)\bigg)_{z}$ for any $z\in Z_0\big\}=\left(\int_0^{+\infty}c(s)e^{-s}ds\right)\times\sum_{\alpha\in E}\frac{(2\pi)^{n_1}\int_Y|f_{\alpha}|^2e^{-\varphi_Y-\left(N+\pi_{1,j}^*(\sum_{1\le j\le n_1}\varphi_j)\right)(z_0,w)}}{\prod_{1\le j\le n_1}(\alpha_j+1)c_{j}(z_j)^{2\alpha_{j}+2}}$ holds if and only if the following statements hold:

	$(1)$ $\tilde{M}=\left(\prod_{1\le j\le n_1}\Omega_j\right)\times Y$ and $N\equiv0$;

	$(2)$ $\varphi_j=2\log|g_j|+2u_j$, where $g_j$ is a holomorphic function on $\Omega_j$ such that $g_j(z_j)\not=0$ and $u_j$ is a harmonic function on $\Omega_j$ for any $1\le j\le n_1$;

    $(3)$ $\chi_{j,z_j}^{\alpha_j+1}=\chi_{j,-u_j}$ for any $j\in\{1,2,...,n\}$ and $\alpha\in E$ satisfying $f_{\alpha}\not\equiv 0$.
\end{Theorem}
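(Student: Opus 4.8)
The plan is to deduce Theorem~\ref{thm:exten-fibra-single} from Theorem~\ref{thm:linear-fibra-single} by translating the optimal $L^2$-extension problem into the language of minimal $L^2$ integrals. First I would set up the minimal $L^2$ integral $G(t)$ on $\{\psi<-t\}$ associated to $\varphi$, $f$, $\mathcal{F}_{(z_0,y)}=\mathcal{I}(\hat G+\pi_2^*(\varphi_Y))_{(z_0,y)}$ and the gain $c$, where $\hat G=\max_{1\le j\le n_1}\{2p_j\pi_{1,j}^*(G_{\Omega_j}(\cdot,z_j))\}$ and $\psi=\hat G+N$. The existence of $F$ with the stated $L^2$ bound, and the identification $G(0)=\inf\{\int_{\tilde M}|\tilde F|^2e^{-\varphi}c(-\psi)\}$, should follow from the $L^2$-extension machinery already recorded in the paper (the optimal jet extension theorem / Theorem~\ref{Concave}). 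The key point is to compute $G(0)$ explicitly: using the local decomposition \eqref{decom in introduction} of $f$ on $V_0\times Y$, the product structure $M=(\prod\Omega_j)\times Y$, and the formula $c_{j}(z_j)=\exp\lim_{z\to z_j}(G_{\Omega_j}(z,z_j)-\log|w_j(z)|)$ for the logarithmic capacity, one obtains
\begin{displaymath}
G(0)=\left(\int_0^{+\infty}c(s)e^{-s}ds\right)\sum_{\alpha\in E}\frac{(2\pi)^{n_1}\int_Y|f_{\alpha}|^2e^{-\varphi_Y-(N+\sum_{1\le j\le n_1}\pi_{1,j}^*(\varphi_j))(z_0,w)}}{\prod_{1\le j\le n_1}(\alpha_j+1)c_{j}(z_j)^{2\alpha_j+2}}
\end{displaymath}
when $\tilde M=M$ and $N\equiv 0$; the inequality in the statement is the estimate $\int_{\tilde M}|F|^2e^{-\varphi}c(-\psi)\le G(0)$, valid for the extremal $F$.

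Next, the equality in the extension problem holds if and only if $G(0)$ equals the explicit right-hand side above. I would show this is equivalent to $G(h^{-1}(r))$ being \emph{linear} in $r\in(0,\int_0^{+\infty}c(t)e^{-t}dt]$: one direction is the concavity of $G(h^{-1}(r))$ together with $G(h^{-1}(0))=G(+\infty)=0$ (since $Z_0\subset\{\psi=-\infty\}$ forces the jet condition to be vacuous at $t=+\infty$ in the appropriate sense, or more precisely $G$ is bounded and the endpoint behaviour is controlled), so that equality of $G(0)$ with the ``linear'' value $\frac{r}{h(0)}G(0)$ at $r=h(0)$ forces linearity; conversely linearity gives the extremal value. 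Then I invoke Theorem~\ref{thm:linear-fibra-single}: linearity holds iff (1) $N\equiv 0$ and $\psi=\hat G$; (2) $f$ has the prescribed form $\sum_{\alpha\in E}\pi_1^*(w^\alpha dw_1\wedge\cdots\wedge dw_{n_1})\wedge\pi_2^*(f_\alpha)+g_0$ with the integrability of $\sum_\alpha\int_Y|f_\alpha|^2e^{-\varphi_Y}$; (3) $\varphi_j=2\log|g_j|+2u_j$ with $g_j(z_j)\ne 0$, $u_j$ harmonic; (4) $\chi_{j,z_j}^{\alpha_j+1}=\chi_{j,-u_j}$ for all $j$ and all $\alpha\in E$ with $f_\alpha\not\equiv 0$. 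I must also account for the constraint $\tilde M=M$, which should come out of the argument the way Proposition~\ref{p:M=M_1} does (linearity forces the submanifold to be everything); I would run that reduction first so that Theorem~\ref{thm:linear-fibra-single} applies on all of $M$. Finally I reconcile the bookkeeping: condition~(2) of Theorem~\ref{thm:linear-fibra-single} reappears here as the hypothesis that $f_\alpha$ are exactly the forms from \eqref{decom in introduction} (with the $g_0$ term absorbed by the ideal-sheaf congruence), so conditions (1)--(3) of Theorem~\ref{thm:exten-fibra-single} match (1), (3), (4) of Theorem~\ref{thm:linear-fibra-single}, while the character condition (4) there uses that $f_\alpha\not\equiv 0$ precisely on the indices appearing.

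The main obstacle I anticipate is the explicit evaluation of $G(0)$ and the precise matching of constants — in particular tracking the factors $(2\pi)^{n_1}$, $\prod(\alpha_j+1)$, and the powers $c_j(z_j)^{2\alpha_j+2}$ of the logarithmic capacities. This requires computing, on each factor $\Omega_j$, the minimal $L^2$ norm of a holomorphic $(1,0)$ form with prescribed $\alpha_j$-jet at $z_j$ against the weight $e^{-\varphi_j}$ relative to the Green-function sublevel sets, and then assembling these one-variable computations by Fubini across the product $\prod_j\Omega_j$ and the fiber $Y$; the appearance of $e^{-\varphi_Y-(N+\sum\pi_{1,j}^*(\varphi_j))(z_0,w)}$ rather than $e^{-\varphi_Y}$ reflects that, after the reductions, one evaluates $\tilde\psi_2$-type remainder terms at $z_0$. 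A secondary subtlety is verifying that, under the equality hypothesis, the local form of $f$ automatically forces the decomposition in Theorem~\ref{thm:linear-fibra-single}(2) with $g_0$ lying in $\mathcal{O}(K_M)\otimes\mathcal{I}(\varphi+\psi)$, which uses Lemma~\ref{l:phi1+phi2} to identify $\mathcal{I}(\varphi+\psi)$ with $\mathcal{I}(\hat G+\pi_2^*(\varphi_Y))$ at points of $Z_0$; this is where the ``negligible weight'' $N$ enters and must be shown not to affect the multiplier ideal at $Z_0$.
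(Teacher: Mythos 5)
Your overall strategy (translate the extension problem into the minimal $L^2$ integral $G(t)$, show the equality forces $G\big(h^{-1}(r)\big)$ to be linear, then quote Theorem \ref{thm:linear-fibra-single} and Proposition \ref{p:M=M_1}) is not the route the paper takes, and as written it has two genuine gaps. First, the step ``equality of $G(0)$ with the linear value forces linearity'' is not justified: concavity of $G\big(h^{-1}(r)\big)$ with $\lim_{t\to+\infty}G(t)=0$ and the slope lower bound of Lemma \ref{l:limit} only give $G\big(h^{-1}(r)\big)\ge r\cdot S$ (where $S$ is the sum in the statement), and a concave function through the origin with $G(0)=h(0)S$ and slope at $0$ at least $S$ need not be linear (e.g. $r\mapsto\sqrt{r}$ after normalization). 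To close this you also need the upper bound $G(t)\le\big(\int_t^{+\infty}c(s)e^{-s}ds\big)S$ for every $t$, which requires running the optimal extension (Lemma \ref{p:exten-fibra}/Remark \ref{remark after extension}, or the sublevel-set computation of Lemma \ref{l:m1} for the model candidate) on each $\{\psi<-t\}$; you never supply this. Second, there is an ideal-sheaf mismatch: the jet condition in Theorem \ref{thm:exten-fibra-single} is modulo $\mathcal{I}\big(\max_j\{2p_j\pi_{1,j}^*G_{\Omega_j}(\cdot,z_j)\}\big)=\mathcal{I}(\hat G)$, whereas Theorem \ref{thm:linear-fibra-single} and Proposition \ref{p:M=M_1} characterize linearity for $\mathcal{F}=\mathcal{I}\big(\hat G+\pi_2^*(\varphi_Y)\big)$. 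These ideals differ, and that difference is exactly the content of Remark \ref{rem:exten-fibra-single}, which needs the extra hypothesis $(f_\alpha,y)\in\mathcal{I}(\varphi_Y)_y$ for $\alpha\in\tilde E\setminus E$; so your reduction would at best prove the Remark under its additional assumption, not the Theorem as stated. (Lemma \ref{l:phi1+phi2} only identifies $\mathcal{I}(\varphi+\psi)$ with $\mathcal{I}(\hat G+\pi_2^*(\varphi_Y))$ at $Z_0$ after $N\equiv0$ is known, so it does not repair this.)

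For comparison, the paper avoids the linearity detour altogether: existence and the bound come from Remark \ref{remark after extension}; for the necessity it builds a second extension $\hat F$ with respect to the weight $\tilde\varphi=\varphi+\psi-\hat G$ and gain $c(-\hat G)$, uses the equality hypothesis together with the monotonicity of $c(t)e^{-t}$ and $\psi\le\hat G$ to force $\int_{\tilde M}|\hat F|^2e^{-\varphi}c(-\psi)=\int_{\tilde M}|\hat F|^2e^{-\tilde\varphi}c(-\hat G)$, applies Lemma \ref{decreasing property of l} to conclude $N\equiv0$ (hence $\psi=\hat G$), and then invokes the necessity part of Theorem \ref{GBY6-exten-fibra-single}, which already contains $\tilde M=M$ and conditions $(2)$--$(3)$. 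If you want to keep your route, you would need both the sublevel-set estimate above and a linearity characterization for the ideal $\mathcal{I}(\hat G)$ (or an argument reducing it to the $\mathcal{I}(\hat G+\pi_2^*(\varphi_Y))$ case), neither of which is in your proposal.
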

\begin{Remark}
	\label{rem:exten-fibra-single}
If $(f_{\alpha},y)\in\big(\mathcal{O}(K_Y)\otimes\mathcal{I}(\varphi_Y)\big)_y$ for any $y\in Y$ and $\alpha\in \tilde E\backslash E$, the above result also holds when we replace  the ideal sheaf\
$\mathcal{I}\left(\max_{1\le j\le n_1}\left\{2p_j\pi_{1,j}^{*}\big(G_{\Omega_j}(\cdot,z_j)\big)\right\}\right)$  by $\mathcal{I}\big(\hat G+\pi_2^*(\varphi_2)\big)$. We prove the remark in Section \ref{section of multi extension}.
\end{Remark}

Let $Z_j=\{z_{j,1},\ldots,z_{j,m_j}\}\subset\Omega_j$ for any  $j\in\{1,\ldots,n_1\}$, where $m_j$ is a positive integer.
Let $f$ be a holomorphic $(n,0)$ form on a neighborhood $U_0\subset \tilde{M}$ of $Z_0$ such that
$$f=\sum_{\alpha\in\tilde E_{\beta}}\pi_1^*(w_{\beta}^{\alpha}dw_{1,\beta_1}\wedge\ldots\wedge dw_{n_1,\beta_{n_1}})\wedge\pi_2^*(f_{\alpha,\beta})$$ on $U_0\cap(V_{\beta}\times Y)$, where $f_{\alpha,\beta}$ is a holomorphic $(n_2,0)$ form on $Y$ for any $\alpha\in E_{\beta}$ and $\beta\in\tilde I_1$. Let $\beta^*=(1,\ldots,1)\in\tilde I_1$, and let $\alpha_{\beta^*}=(\alpha_{\beta^*,1},\ldots,\alpha_{\beta^*,n_1})\in E_{\beta^*}$. Denote that $E':=\left\{\alpha\in\mathbb{Z}_{\ge0}^{n_1}:\sum_{1\le j\le n_1}\frac{\alpha_j+1}{p_{j,1}}>1\right\}$. Assume that $f=\pi_1^*\left(w_{\beta^*}^{\alpha_{\beta^*}}dw_{1,1}\wedge\ldots\wedge dw_{n_1,1}\right)\wedge\pi_2^*\left(f_{\alpha_{\beta^*},\beta^*}\right)+\sum_{\alpha\in E'}\pi_1^*(w^{\alpha}dw_{1,1}\wedge\ldots\wedge dw_{n_1,1})\wedge\pi_2^*(f_{\alpha,\beta})$ on $U_0\cap(V_{\beta^*}\times Y)$.

We obtain a characterization of the holding of equality in optimal jets $L^2$ extension problem for the case that $Z_j$ is finite.

\begin{Theorem}
\label{thm:exten-fibra-finite}
Let $c$ be a positive function on $(0,+\infty)$ such that $\int_{0}^{+\infty}c(t)e^{-t}dt<+\infty$ and $c(t)e^{-t}$ is decreasing on $(0,+\infty)$. Assume that
$$\sum_{\beta\in \tilde{I}_1}\sum_{\alpha\in E_{\beta}}\frac{(2\pi)^{n_1}\int_Y|f_{\alpha,\beta}|^2e^{-\varphi_Y-\left(N+\pi_{1,j}^*(\sum_{1\le j\le n_1}\varphi_j)\right)(z_{\beta},w)}}{\prod_{1\le j\le n_1}(\alpha_j+1)c_{j,\beta_j}^{2\alpha_{j}+2}}\in(0,+\infty).$$
Then there exists a holomorphic $(n,0)$ form $F$ on $\tilde{M}$ satisfying that $(F-f,z)\in\left(\mathcal{O}(K_{\tilde{M}})\otimes\mathcal{I}\big(\max_{1\le j\le n_1}\left\{2\sum_{1\le k\le m_j}p_{j,k}\pi_{1,j}^{*}(G_{\Omega_j}(\cdot,z_{j,k}))\right\}\big)\right)_{z}$ for any $z\in Z_0$ and
\begin{displaymath}
	\begin{split}
	&\int_{\tilde{M}}|F|^2e^{-\varphi}c(-\psi)\\
	\le&\left(\int_0^{+\infty}c(s)e^{-s}ds\right)\sum_{\beta\in \tilde{I}_1}\sum_{\alpha\in E_{\beta}}\frac{(2\pi)^{n_1}\int_Y|f_{\alpha,\beta}|^2e^{-\varphi_Y-\left(N+\pi_{1,j}^*(\sum_{1\le j\le n_1}\varphi_j)\right)(z_{\beta},w)}}{\prod_{1\le j\le n_1}(\alpha_j+1)c_{j,\beta_j}^{2\alpha_{j}+2}}.	
	\end{split}
\end{displaymath}
	
	Moreover, equality $\inf\bigg\{\int_{\tilde{M}}|\tilde{F}|^2e^{-\varphi}c(-\psi):\tilde{F}\in H^0(\tilde{M},\mathcal{O}(K_{\tilde{M}}))\,\&\, (\tilde{F}-f,z)\in\left(\mathcal{O}(K_{\tilde{M}})\otimes\mathcal{I}\big(\max_{1\le j\le n_1}\left\{2\sum_{1\le k\le m_j}p_{j,k}\pi_{1,j}^{*}(G_{\Omega_j}(\cdot,z_{j,k}))\right\}\big)\right)_{z}$ for any $z\in Z_0\bigg\}=\left(\int_0^{+\infty}c(s)e^{-s}ds\right)\sum_{\beta\in \tilde{I}_1}\sum_{\alpha\in E_{\beta}}\frac{(2\pi)^{n_1}\int_Y|f_{\alpha,\beta}|^2e^{-\varphi_Y-\left(N+\pi_{1,j}^*(\sum_{1\le j\le n_1}\varphi_j)\right)(z_{\beta},w)}}{\prod_{1\le j\le n_1}(\alpha_j+1)c_{j,\beta_j}^{2\alpha_{j}+2}}$ holds if and only if the following statements hold:

	$(1)$ $\tilde{M}=\left(\prod_{1\le j\le n_1}\Omega_j\right)\times Y$ and $N\equiv0$;
	
	$(2)$ $\varphi_j=2\log|g_j|+2u_j$ for any $j\in\{1,\ldots,n_1\}$, where $u_j$ is a harmonic function on $\Omega_j$ and $g_j$ is a holomorphic function on $\Omega_j$ satisfying $g_j(z_{j,k})\not=0$ for any $k\in\{1,\ldots,m_j\}$;
	
	$(3)$ There exists a nonnegative integer $\gamma_{j,k}$ for any $j\in\{1,\ldots,n_1\}$ and $k\in\{1,\ldots,m_j\}$, which satisfies that $\prod_{1\le k\leq m_j}\chi_{j,z_{j,k}}^{\gamma_{j,k}+1}=\chi_{j,-u_j}$ and $\sum_{1\le j\le n_1}\frac{\gamma_{j,\beta_j}+1}{p_{j,\beta_j}}=1$ for any $\beta\in \tilde{I}_1$;
	
	$(4)$ $f_{\alpha,\beta}=c_{\beta}f_0$ holds for $\alpha=(\gamma_{1,\beta_1},\ldots,\gamma_{n_1,\beta_{n_1}})$ and $f_{\alpha,\beta}\equiv0$ holds for any $\alpha\in E_{\beta}\backslash\{(\gamma_{1,\beta_1},\ldots,\gamma_{n_1,\beta_{n_1}})\}$, where $\beta\in \tilde{I}_1$, $c_{\beta}$ is a constant and $f_0\not\equiv0$ is a holomorphic $(n_2,0)$ form on $Y$ satisfying $\int_Y|f_0|^2e^{-\varphi_2}<+\infty$;
		
	$(5)$ $c_{\beta}\prod_{1\le j\le n_1}\left(\lim_{z\rightarrow z_{j,\beta_j}}\frac{w_{j,\beta_j}^{\gamma_{j,\beta_j}}dw_{j,\beta_j}}{g_j(P_{j})_*\left(f_{u_j}\left(\prod_{1\le k\le m_j}f_{z_{j,k}}^{\gamma_{j,k}+1}\right)\left(\sum_{1\le k\le m_j}p_{j,k}\frac{df_{z_{j,k}}}{f_{z_{j,k}}}\right)\right)}\right)=c_0$ for any $\beta\in \tilde{I}_1$, where $c_0\in\mathbb{C}\backslash\{0\}$ is a constant independent of $\beta$, $f_{u_j}$ is a holomorphic function $\Delta$ such that $|f_{u_j}|=P_j^*(e^{u_j})$ and $f_{z_{j,k}}$ is a holomorphic function on $\Delta$ such that $|f_{z_{j,k}}|=P_j^*\left(e^{G_{\Omega_j}(\cdot,z_{j,k})}\right)$ for any $j\in\{1,\ldots,n_1\}$ and $k\in\{1,\ldots,m_j\}$.
\end{Theorem}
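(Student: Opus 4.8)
\emph{Strategy.} The plan is to derive Theorem \ref{thm:exten-fibra-finite} from the linearity characterization Theorem \ref{thm:linear-fibra-finite} together with an explicit evaluation of the ``boundary slope'' of the associated minimal $L^{2}$ integral. First I would let $G(t):=G(t;c,f,\varphi,\psi,\mathcal{F})$ be the minimal $L^{2}$ integral on $\tilde M$ with $\mathcal{F}_{(z,y)}=\mathcal{I}\big(\max_{1\le j\le n_1}\{2\sum_{1\le k\le m_j}p_{j,k}\pi_{1,j}^{*}(G_{\Omega_j}(\cdot,z_{j,k}))\}\big)_{(z,y)}$ (and, for Remark \ref{rem:exten-fibra-single}, with $\mathcal{I}(\hat G+\pi_2^{*}(\varphi_Y))$ instead, using Lemma \ref{l:phi1+phi2} to pass between the two ideals near $Z_0$). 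Since $\psi<0$ on a set of full measure, $\{\psi<0\}=\tilde M$ up to a null set, so $G(0)$ is precisely the infimum in the ``moreover'' clause; moreover the hypothesis $\sum_{\beta}\sum_{\alpha\in E_\beta}(\cdots)\in(0,+\infty)$ forces $G(0)\in(0,+\infty)$ because some jet $f_{\alpha,\beta}$ with $\alpha\in E_\beta$ (hence on the boundary of the multiplier ideal) is nonzero.

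\emph{Inequality and evaluation of the constant.} By Theorem \ref{Concave} the function $G\big(h^{-1}(r)\big)$ is concave on $[0,\int_0^{+\infty}c(s)e^{-s}ds]$, and under the present hypotheses $\lim_{t\to+\infty}G(t)=0$; hence concavity gives
\[
G(0)\ \le\ \Big(\int_0^{+\infty}c(s)e^{-s}ds\Big)\cdot\lim_{t\to+\infty}\frac{G(t)}{\int_t^{+\infty}c(s)e^{-s}ds},
\]
with equality if and only if $G\big(h^{-1}(r)\big)$ is linear on $(0,\int_0^{+\infty}c(s)e^{-s}ds]$. The technical core is to identify the limit on the right with $\sum_{\beta\in\tilde I_1}\sum_{\alpha\in E_\beta}\frac{(2\pi)^{n_1}\int_Y|f_{\alpha,\beta}|^2e^{-\varphi_Y-(N+\sum_j\pi_{1,j}^{*}\varphi_j)(z_\beta,w)}}{\prod_{1\le j\le n_1}(\alpha_j+1)c_{j,\beta_j}^{2\alpha_j+2}}$. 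For this I would work locally near each $z_\beta$, where $\psi=\hat G+N$ with $\hat G=\max_j\{2\sum_k p_{j,k}\pi_{1,j}^{*}G_{\Omega_j}(\cdot,z_{j,k})\}$ and $2\sum_k p_{j,k}G_{\Omega_j}(\cdot,z_{j,k})=2p_{j,\beta_j}\log|w_{j,\beta_j}|+2p_{j,\beta_j}\log c_{j,\beta_j}+o(1)$; using a Fubini decomposition in the $\big(\prod_j\Omega_j\big)\times Y$ directions and the known one–variable residue computations (as in \cite{GY-concavity,GY-concavity3,GY-concavity4,GMY,BGY-concavity6}) I would show that only the monomials $w_\beta^{\alpha}dw_{1,\beta_1}\wedge\cdots\wedge dw_{n_1,\beta_{n_1}}$ with $\alpha\in E_\beta$ (i.e.\ $\sum_j\frac{\alpha_j+1}{p_{j,\beta_j}}=1$) contribute to the limit, with the stated weights. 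I expect this computation — disentangling the $\max$–defined weight $\psi$ near $z_\beta$, which is exactly what produces the constraint defining $E_\beta$, and interchanging the limit with the (possibly large) sum over $\tilde I_1$ — to be the main obstacle.

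\emph{Equality characterization.} Granting the above, equality in the ``moreover'' clause holds iff $\tilde G\big(h^{-1}(r)\big)$ is linear on $(0,\int_0^{+\infty}c(s)e^{-s}ds]$. By Proposition \ref{p:M=M_1} this already forces $\tilde M=M$, so $\tilde G=G$ and Theorem \ref{thm:linear-fibra-finite} applies: linearity is equivalent to its conclusions. It then remains to translate those into conditions (1)--(5) of Theorem \ref{thm:exten-fibra-finite}. Concretely: conclusion (1) of Theorem \ref{thm:linear-fibra-finite} (together with $\tilde M=M$) gives our (1) (namely $N\equiv0$ and $\psi=\hat G$); conclusion (2) gives our (2), $\varphi_j=2\log|g_j|+2u_j$; conclusion (3) gives our (3), the integers $\gamma_{j,k}$ with $\prod_k\chi_{j,z_{j,k}}^{\gamma_{j,k}+1}=\chi_{j,-u_j}$ and $\sum_j\frac{\gamma_{j,\beta_j}+1}{p_{j,\beta_j}}=1$; matching the prescribed decomposition $f=\sum_{\alpha\in\tilde E_\beta}\pi_1^{*}(w_\beta^{\alpha}dw)\wedge\pi_2^{*}(f_{\alpha,\beta})$ on $V_\beta\times Y$ against the form forced by the fourth conclusion of Theorem \ref{thm:linear-fibra-finite} yields our (4), namely $f_{\alpha,\beta}=c_\beta f_0$ for $\alpha=(\gamma_{1,\beta_1},\dots,\gamma_{n_1,\beta_{n_1}})$ and $f_{\alpha,\beta}\equiv0$ for the remaining $\alpha\in E_\beta$; and the last conclusion of Theorem \ref{thm:linear-fibra-finite} is precisely our (5). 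Conversely, assuming (1)--(5) one verifies directly that the hypotheses of Theorem \ref{thm:linear-fibra-finite} are met, whence linearity and hence equality; throughout, the identifications $\mathcal{I}(\varphi+\psi)=\mathcal{I}(\hat G+\pi_2^{*}(\varphi_Y))$ near $Z_0$ (Lemma \ref{l:phi1+phi2}) and the reduction $\tilde M=M$ (Proposition \ref{p:M=M_1}) are what allow the passage between the extension problem on $\tilde M$ and the linearity statement on $M$.
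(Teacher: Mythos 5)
Your overall strategy (reduce the equality statement to linearity of a minimal $L^2$ integral and then quote the linearity characterization) is not the route the paper takes, and as written it has two genuine gaps. The most serious one is an ideal–sheaf mismatch: the infimum in Theorem \ref{thm:exten-fibra-finite} is taken over extensions of $f$ modulo $\mathcal{I}\big(\max_{1\le j\le n_1}\{2\sum_{k}p_{j,k}\pi_{1,j}^{*}G_{\Omega_j}(\cdot,z_{j,k})\}\big)=\mathcal{I}(\hat G)$, whereas Proposition \ref{p:M=M_1} and Theorem \ref{thm:linear-fibra-finite} concern the minimal $L^2$ integral with side condition modulo $\mathcal{I}\big(\hat G+\pi_2^*(\varphi_Y)\big)$. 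These two ideals genuinely differ when $\varphi_Y$ has $-\infty$ poles (this is precisely why the paper states Remark \ref{rem:exten-fibra-finite} separately, under the extra hypothesis $(f_{\alpha,\beta},y)\in\mathcal{I}(\varphi_Y)_y$ for $\alpha\in\tilde E_\beta\setminus E_\beta$), and Lemma \ref{l:phi1+phi2} cannot bridge them: it compares $\mathcal{I}(\varphi+\psi)$ with $\mathcal{I}(\hat G+\pi_2^*\varphi_Y)$ at points where the extra weight is finite, and in any case invoking it needs $N\equiv0$, which is one of the conclusions you are trying to derive. So the step ``equality $\Leftrightarrow$ linearity of the functional in Theorem \ref{thm:linear-fibra-finite}, hence $\tilde M=M$ by Proposition \ref{p:M=M_1} and conditions (1)--(5)'' is not justified for the $\mathcal{I}(\hat G)$-constrained problem. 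The second gap is that the ``technical core'' you flag — identifying $\lim_{t\to+\infty}G(t)/\int_t^{+\infty}c(s)e^{-s}ds$ with the stated sum (or at least bounding it from above by it, which is what the forward implication needs) — is exactly where the analytic work lies, and it is only sketched; without a sublevel-set extension estimate of the type proved in Lemma \ref{p:exten-fibra} your concavity argument does not close.

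For comparison, the paper's proof avoids the linearity theorems altogether. The inequality and the existence of $F$ are Remark \ref{remark after extension} (the optimal extension Lemma \ref{p:exten-fibra} without the $\mathcal{I}(\varphi_Y)$ hypothesis, hence with the ideal $\mathcal{I}(\hat G)$). For the necessity of (1)--(5), it applies Remark \ref{remark after extension} once more with the weight $\tilde\varphi=\varphi+\psi-\hat G=\varphi+N$ and gain $c(-\hat G)$ to produce $\hat F$ with $\int_{\tilde M}|\hat F|^2e^{-\tilde\varphi}c(-\hat G)\le\big(\int_0^{+\infty}ce^{-s}\big)\Sigma$; since $\hat F$ is feasible, the assumed equality gives $\int_{\tilde M}|\hat F|^2e^{-\varphi}c(-\psi)\ge\big(\int_0^{+\infty}ce^{-s}\big)\Sigma$, and the pointwise comparison $e^{-\varphi}c(-\psi)\le e^{-\varphi-N}c(-\hat G)$ (from $c(t)e^{-t}$ decreasing and $\psi\le\hat G$) forces these integrals to coincide, whence $N\equiv0$ by Lemma \ref{decreasing property of l}. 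Only then does the paper pass to Lemma \ref{l:phi1+phi2} and cite the necessity part of Theorem \ref{GBY6:exten-fibra-finite} from \cite{BGY-concavity6}, which is stated with exactly the ideal $\mathcal{I}(\hat G)$ and already contains $\tilde M=M$ and conditions (2)--(5); the sufficiency is likewise a direct citation of that theorem. If you want to keep your linearity-based route, you would have to prove analogues of Proposition \ref{p:M=M_1} and Theorem \ref{thm:linear-fibra-finite} for the ideal $\mathcal{I}(\hat G)$ and supply the slope upper bound via a truncated extension argument — at which point you have essentially reconstructed the paper's Lemma \ref{p:exten-fibra}/Remark \ref{remark after extension} machinery anyway.
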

\begin{Remark}
	\label{rem:exten-fibra-finite} If $(f_{\alpha,\beta},y)\in(\mathcal{O}(K_Y)\otimes\mathcal{I}(\varphi_Y))_y$ holds for any $y\in Y$,  $\alpha\in \tilde E_{\beta}\backslash E_{\beta}$ and $\beta\in \tilde{I}_1$, the above result also holds when we replace  the ideal sheaf $\mathcal{I}\left(\max_{1\le j\le n_1}\left\{2\sum_{1\le k\le m_j}p_{j,k}\pi_{1,j}^{*}(G_{\Omega_j}(\cdot,z_{j,k}))\right\}\right)$  by $\mathcal{I}\big(\hat G+\pi_2^*(\varphi_2)\big)$. We prove the remark in Section \ref{section of multi extension}.
\end{Remark}

Let $Z_j=\{z_{j,k}:1\le k<\tilde m_j\}$ be a discrete subset of $\Omega_j$ for any  $j\in\{1,\ldots,n_1\}$, where $\tilde m_j\in\mathbb{Z}_{\ge2}\cup\{+\infty\}$. Let $f$ be a holomorphic $(n,0)$ form on a neighborhood $U_0\subset \tilde{M}$ of $Z_0$ such that
$$f=\sum_{\alpha\in\tilde E_{\beta}}\pi_1^*(w_{\beta}^{\alpha}dw_{1,\beta_1}\wedge\ldots\wedge dw_{n_1,\beta_{n_1}})\wedge\pi_2^*(f_{\alpha,\beta})$$ on $U_0\cap(V_{\beta}\times Y)$, where $f_{\alpha,\beta}$ is a holomorphic $(n_2,0)$ form on $Y$ for any $\alpha\in E_{\beta}$ and $\beta\in\tilde{I}_1$. Let $\beta^*=(1,\ldots,1)\in\tilde{I}_1$, and let $\alpha_{\beta^*}=(\alpha_{\beta^*,1},\ldots,\alpha_{\beta^*,n_1})\in E_{\beta^*}$. Denote that $E':=\left\{\alpha\in\mathbb{Z}_{\ge0}^{n_1}:\sum_{1\le j\le n_1}\frac{\alpha_j+1}{p_{j,1}}>1\right\}$. Assume that $f=\pi_1^*\left(w_{\beta^*}^{\alpha_{\beta^*}}dw_{1,1}\wedge\ldots\wedge dw_{n_1,1}\right)\wedge\pi_2^*\left(f_{\alpha_{\beta^*},\beta^*}\right)+\sum_{\alpha\in E'}\pi_1^*(w^{\alpha}dw_{1,1}\wedge\ldots\wedge dw_{n_1,1})\wedge\pi_2^*(f_{\alpha,\beta})$ on $U_0\cap(V_{\beta^*}\times Y)$.

When there exists $j_0\in\{1,\ldots,n_1\}$ such that $\tilde m_{j_0}=+\infty$,
we obtain that the equality in optimal jets $L^2$ extension problem \emph{ could not hold}.

\begin{Theorem}
\label{thm:exten-fibra-infinite}Let $c$ be a positive function on $(0,+\infty)$ such that $\int_{0}^{+\infty}c(t)e^{-t}dt<+\infty$ and $c(t)e^{-t}$ is decreasing on $(0,+\infty)$. Assume that
$$\sum_{\beta\in \tilde{I}_1}\sum_{\alpha\in E_{\beta}}\frac{(2\pi)^{n_1}\int_Y|f_{\alpha,\beta}|^2e^{-\varphi_Y-\left(N+\pi_{1,j}^*(\sum_{1\le j\le n_1}\varphi_j)\right)(z_{\beta},w)}}{\prod_{1\le j\le n_1}(\alpha_j+1)c_{j,\beta_j}^{2\alpha_{j}+2}}\in(0,+\infty)$$
and there exists $j_0\in\{1,\ldots,n_1\}$ such that $\tilde m_{j_0}=+\infty$.

Then there exists a holomorphic $(n,0)$ form $F$ on $\tilde{M}$ satisfying that $(F-f,z)\in\left(\mathcal{O}(K_{\tilde{M}})\otimes\mathcal{I}\left(\max_{1\le j\le n_1}\left\{2\sum_{1\le k<\tilde m_j}p_{j,k}\pi_{1,j}^{*}(G_{\Omega_j}(\cdot,z_{j,k}))\right\}\right)\right)_{z}$ for any $z\in Z_0$ and
\begin{displaymath}
	\begin{split}
	&\int_{\tilde{M}}|F|^2e^{-\varphi}c(-\psi)\\
<&\left(\int_0^{+\infty}c(s)e^{-s}ds\right)\sum_{\beta\in\tilde{I}_1}\sum_{\alpha\in E_{\beta}}\frac{(2\pi)^{n_1}\int_Y|f_{\alpha,\beta}|^2e^{-\varphi_Y-\left(N+\pi_{1,j}^*(\sum_{1\le j\le n_1}\varphi_j)\right)(z_{\beta},w)}}{\prod_{1\le j\le n_1}(\alpha_j+1)c_{j,\beta_j}^{2\alpha_{j}+2}}.	
	\end{split}
\end{displaymath}
\end{Theorem}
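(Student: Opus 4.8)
The plan is to reduce the statement to the linearity characterization of Theorem \ref{thm:linear-fibra-infinite} together with the finite-point extension result Theorem \ref{thm:exten-fibra-finite}, exploiting an exhaustion of the infinite index set. First I would set $\mathcal{F}_{(z,y)}=\mathcal{I}\big(\hat G+\pi_2^*(\varphi_Y)\big)_{(z,y)}$ and let $G(t)$ be the associated minimal $L^2$ integral on $\{\psi<-t\}$ for the datum $f$; by the hypothesis $\sum_{\beta}\sum_{\alpha\in E_\beta}\frac{(2\pi)^{n_1}\int_Y|f_{\alpha,\beta}|^2e^{-\varphi_Y-(N+\sum\pi_{1,j}^*\varphi_j)(z_\beta,w)}}{\prod_j(\alpha_j+1)c_{j,\beta_j}^{2\alpha_j+2}}\in(0,+\infty)$ and the standard comparison between the weighted integral of a single jet over a small sublevel set and the jet norm on the fibre (the computation already carried out in the proof of Theorem \ref{thm:exten-fibra-finite}, via Lemma \ref{l:green-sup}, Lemma \ref{l:green-sup2} and the logarithmic-capacity limits $c_{j,k}$), one shows $G(0)\in(0,+\infty)$. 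Then $G(h^{-1}(r))$ is concave on $(0,\int_0^{+\infty}c(s)e^{-s}ds]$ by Theorem \ref{Concave}, so the claimed strict inequality is exactly the statement that this concave function is \emph{not} linear — which is precisely Theorem \ref{thm:linear-fibra-infinite}, applicable because $\tilde m_{j_0}=+\infty$. The existence of $F$ with $\int_{\tilde M}|F|^2e^{-\varphi}c(-\psi)\le\big(\int_0^{+\infty}c(s)e^{-s}ds\big)\,G(0)$ is the defining property of $G(0)$ together with the Fatou-type argument that the infimum $G(0)$ is attained (as in \cite{GMY}).

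The one genuinely new point is identifying the value $G(0)$ with the series on the right-hand side, rather than merely bounding it. The approach here is to truncate: for each finite $N_0$ let $\tilde I_1^{N_0}:=\{\beta\in\tilde I_1:\beta_j<N_0\text{ for all }j\}$ (a finite set once we also truncate each $\tilde m_j$ to $\min(\tilde m_j,N_0)$), let $f^{N_0}$ be the holomorphic $(n,0)$ form agreeing with $f$ on $\cup_{\beta\in\tilde I_1^{N_0}}V_\beta\times Y$, and apply Theorem \ref{thm:exten-fibra-finite} to the datum $(f^{N_0},\hat G^{N_0},\psi^{N_0})$ on the same $\tilde M$. This produces an extension $F^{N_0}$ with $L^2$ norm bounded by $\big(\int_0^{+\infty}c(s)e^{-s}ds\big)$ times the partial sum over $\beta\in\tilde I_1^{N_0}$. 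Letting $N_0\to+\infty$, a normal-families / weak-compactness argument extracts a subsequential limit $F$ which is an admissible extension of $f$ (the jet conditions at each fixed $z_\beta$ are stable under the limit since $\beta$ lies in $\tilde I_1^{N_0}$ for all large $N_0$), and its norm is dominated by the full series. This gives $G(0)\le\big(\int_0^{+\infty}c(s)e^{-s}ds\big)\cdot(\text{series})$, hence the displayed inequality $\int_{\tilde M}|F|^2e^{-\varphi}c(-\psi)<\big(\int_0^{+\infty}c(s)e^{-s}ds\big)\cdot(\text{series})$ once we know the inequality is strict.

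Strictness is where Theorem \ref{thm:linear-fibra-infinite} enters decisively: if equality held for the optimal $F$, then by the extremal/characterization machinery (the equality discussion preceding Theorem \ref{thm:exten-fibra-single}, relating the optimal extension to linearity of $G(h^{-1}(r))$) the concave function $G(h^{-1}(r))$ would be linear on the whole interval, contradicting Theorem \ref{thm:linear-fibra-infinite} because some $\tilde m_{j_0}=+\infty$. I expect the main obstacle to be the bookkeeping in the truncation limit: one must verify that $\psi^{N_0}=\max_{1\le j\le n_1}\{\pi_{1,j}^*(2\sum_{1\le k<\min(\tilde m_j,N_0)}p_{j,k}G_{\Omega_j}(\cdot,z_{j,k}))\}+N$ still satisfies the weakly-pseudoconvex-Kähler sublevel hypothesis (this follows since removing points only enlarges $\{\psi^{N_0}<-t\}\setminus Z_0^{N_0}$ inside the ambient manifold, but needs the Kähler condition to be checked), and that the constants $c_{j,\beta_j}$ computed with the truncated Green sums converge to the ones in the statement — both of which reduce to Lemma \ref{l:green-sup} and Lemma \ref{l:green-sup2} and an application of the dominated/monotone convergence theorem to the defining integrals. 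The rest is a routine assembly of results already in the paper.
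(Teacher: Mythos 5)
Your non-strict part is workable in spirit (it essentially re-proves what Remark \ref{remark after extension} / Lemma \ref{p:exten-fibra} already give, via a truncation of the point set instead of the paper's exhaustion of the $\Omega_j$; note in passing that your inclusion goes the wrong way: dropping points makes $\hat G^{N_0}\ge\hat G$, so $\{\psi^{N_0}<-t\}$ \emph{shrinks}, and the weight comparison needs the factor $e^{\hat G-\hat G^{N_0}}\le 1$ plus Fatou exactly as in the Remark inside Lemma \ref{p:exten-fibra}). The genuine gap is in your strictness step. You assert that equality of the infimum with the right-hand side would force $G\big(h^{-1}(r)\big)$ to be linear, citing "the equality discussion preceding Theorem \ref{thm:exten-fibra-single}" — but no such implication is proved in this paper. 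To get it you would need the optimal bound $G(t)\le\big(\int_t^{+\infty}c(s)e^{-s}ds\big)\cdot S$ for \emph{every} $t>0$ (so that, combined with concavity from Theorem \ref{Concave} and $G(0)=\big(\int_0^{+\infty}c(s)e^{-s}ds\big)\cdot S$, linearity follows); Lemma \ref{p:exten-fibra} only yields the case $t=0$, and an extension theorem with constant $\int_t^{+\infty}c(s)e^{-s}ds$ on each sublevel set $\{\psi<-t\}$ is additional work you have not supplied. Moreover, Theorem \ref{thm:linear-fibra-infinite} is stated under extra hypotheses on $c$ (the general setup requires $e^{-\varphi}c(-\psi)$, resp. $c(-\psi)$, to have a positive lower bound on compact subsets of $M\backslash Z_0$, and $\limsup_{t\rightarrow+\infty}c(t)<+\infty$) which are not assumed in Theorem \ref{thm:exten-fibra-infinite}, so invoking it as stated does not cover all admissible gains $c$.

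For contrast, the paper's strictness argument avoids the minimal $L^2$ integral machinery for this step entirely: assuming equality, it applies Remark \ref{remark after extension} a second time with the weights $\tilde{\varphi}=\varphi+\psi-\hat G$ and $c(-\hat G)$, compares the two estimates using that $c(t)e^{-t}$ is decreasing and $\psi\le\hat G$, and concludes via Lemma \ref{decreasing property of l} that $N\equiv 0$ and $\psi=\hat G$; then Lemma \ref{l:phi1+phi2} identifies $\mathcal{I}(\varphi+\psi)$ with $\mathcal{I}\big(\hat G+\pi_2^*(\varphi_Y)\big)$ and the situation reduces to Theorem \ref{GBY6:exten-fibra-infinite} of \cite{BGY-concavity6}, giving the contradiction. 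If you want to keep your route through non-linearity of $G\big(h^{-1}(r)\big)$, you must (i) prove the sublevel-set optimal extension estimate mentioned above, and (ii) either add the missing hypotheses on $c$ or show how to remove them (e.g.\ by a reduction as in Lemma \ref{tildecincreasing}); as written, the argument does not close.
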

\begin{Remark}
	\label{rem:exten-fibra-infinite}If $(f_{\alpha,\beta},y)\in(\mathcal{O}(K_Y)\otimes\mathcal{I}(\varphi_Y))_y$ holds for any $y\in Y$, $\alpha\in \tilde E_{\beta}\backslash E_{\beta}$ and $\beta\in\tilde{I}_1$, the above result also holds when we replace  the ideal sheaf $\mathcal{I}\left(\max_{1\le j\le n_1}\left\{2\sum_{1\le k<\tilde m_j}p_{j,k}\pi_{1,j}^{*}(G_{\Omega_j}(\cdot,z_{j,k}))\right\}\right)$  by $\mathcal{I}\big(\hat G+\pi_2^*(\varphi_2)\big)$. We prove the remark in Section \ref{section of multi extension}.
\end{Remark}

\begin{Remark} We  note that, by using our recent progress (see \cite{GMY-boundary2}) on minimal $L^2$ integrals related to boundary points, the main results in the present article  hold without assuming the condtion ``$\{\psi<-t\}\backslash Z_0$ is a weakly pseudoconvex K\"ahler manifold for any $t\in\mathbb{R}$''.

\end{Remark}

\section{Preparations I: Minimal $L^2$ integrals}

In this section, we recall and present some lemmas related to minimal $L^{2}$ integrals.

\subsection{Minimal $L^2$ integrals on weakly pseudoconvex K\"{a}hler manifolds}
In this section, we recall some results about the concavity property of minimal $L^2$ integrals on weakly pseudoconvex K\"{a}hler manifolds in \cite{GMY}.

Let $M$ be an $n-$dimensional complex manifold. Let $X$ and $Z$ be closed subsets of $M$. A triple $(M,X,Z)$ satisfies condition $(A)$, if the following statements hold:
	
	(1). $X$ is a closed subset of $M$ and $X$ is locally neligible with respect to $L^2$ holomorphic functions, i.e., for any coordinated neighborhood $U\subset M$ and for any $L^2$ holomorphic function $f$ on $U\setminus X$, there exists an $L^2$ holomorphic function $\tilde{f}$ on $U$ such that $\tilde{f}|_{U\setminus X}=f$ with the same $L^2$ norm;
	
	(2). $Z$ is an analytic subset of $M$ and $M\setminus(X\cup Z)$ is a weakly pseudoconvex K\"{a}hler manifold.
	
	Let $(M,X,Z)$ be a triple satisfying condition $(A)$. Let $K_M$ be the canonical line bundle on $M$. Let $\psi$ be a plurisubharmonic function on $M$ such that $\{\psi<-t\}\backslash (X\cup Z)$ is a weakly pseudoconvex K\"ahler manifold for any $t\in\mathbb{R}$. Let $\varphi$ be a Lebesgue measurable function on $M$ such that $\varphi+\psi$ is a plurisubharmonic function on $M$. Denote $T=-\sup_M\psi$.
	
	We recall the concept of ``gain'' in \cite{GMY}. A positive measurable function $c$ on $(T,+\infty)$ is in the class $\mathcal{P}_{T,M}$ if the following two statements hold:
	
	(1). $c(t)e^{-t}$ is decreasing with respect to $t$;
	
	(2). there is a closed subset $E$ of $M$ such that $E\subset Z\cap\{\psi=-\infty\}$ and for any compact subset $K\subset M\setminus E$, $e^{-\varphi}c(-\psi)$ has a positive lower bound on $K$.
	
	Let $Z_0$ be a subset of $M$ such that $Z_0\cap\text{Supp} (\mathcal{O}/\mathcal{I}(\varphi+\psi))\neq\emptyset$. Let $U\supset Z_0$ be an open subset of $M$, and let $f$ be a holomorphic $(n,0)$ form on $U$. Let $\mathcal{F}_{z_0}\supset\mathcal{I}(\varphi+\psi)_{z_0}$ be an ideal of $\mathcal{O}_{z_0}$ for any $z_0\in Z_0$.
	
	Denote that
	\begin{flalign}
		\begin{split}
			G(t;c):=\inf\bigg\{\int_{\{\psi<-t\}}|\tilde{f}|^2e^{-\varphi}c(-\psi) : \tilde{f}\in H^0(\{\psi<-t\}, \mathcal{O}(K_M))&\\
			\& (\tilde{f}-f)\in H^0(Z_0, (\mathcal{O}(K_M)\otimes\mathcal{F})|_{Z_0})\bigg\}&,
		\end{split}
	\end{flalign}
	and
	\begin{flalign}
		\begin{split}
			\mathcal{H}^2(c,t):=\bigg\{\tilde{f} : \int_{\{\psi<-t\}}|\tilde{f}|^2e^{-\varphi}c(-\psi)<+\infty, \tilde{f}\in H^0(\{\psi<-t\}, \mathcal{O}(K_M))&\\
			\& (\tilde{f}-f)\in H^0(Z_0, (\mathcal{O}(K_M)\otimes\mathcal{F})|_{Z_0})\bigg\}&,
		\end{split}
	\end{flalign}
	where $t\in [T,+\infty)$, and $c$ is a nonnegative measurable function on $(T,+\infty)$. Here $|\tilde{f}|^2:=\sqrt{-1}^{n^2}\tilde{f}\wedge\bar{\tilde{f}}$ for any $(n,0)$ form $\tilde{f}$, and $(\tilde{f}-f)\in H^0(Z_0, (\mathcal{O}(K_M)\otimes\mathcal{F})|_{Z_0})$ means that $(\tilde{f}-f,z_0)\in (\mathcal{O}(K_M)\otimes\mathcal{F})_{z_0}$ for any $z_0\in Z_0$. If there is no holomorphic $(n,0)$ form $\tilde{f}$ on $\{\psi<-t\}$ satisfying $(\tilde{f}-f)\in H^0(Z_0, (\mathcal{O}(K_M)\otimes\mathcal{F})|_{Z_0})$, we set $G(t;c)=+\infty$.
	
	In \cite{GMY}, Guan-Mi-Yuan obtained the following concavity of $G(t;c)$.
	
	\begin{Theorem}[\cite{GMY}]\label{Concave}
		Let $c\in\mathcal{P}_{T,M}$ such that $\int_T^{+\infty}c(s)e^{-s}ds<+\infty$. If there exists $t\in [T,+\infty)$ satisfying that $G(t)<+\infty$, then $G(h^{-1}(r))$ is concave with respect to $r\in (0,\int_T^{+\infty}c(t)e^{-t}dt)$, $\lim\limits_{t\rightarrow T+0}G(t)=G(T)$ and $\lim\limits_{t\rightarrow +\infty}G(t)=0$, where $h(t)=\int_t^{+\infty}c(t_1)e^{-t_1}dt_1$.
	\end{Theorem}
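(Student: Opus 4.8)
The plan is to reduce all three conclusions to one analytic statement — a sharp $L^2$-extension lemma — and to obtain that lemma from the Ohsawa--Takegoshi/Donnelly--Fefferman $L^2$-estimate for $\bar\partial$ with a weight tuned by an ordinary differential equation. First I would dispose of the routine reductions. Since restricting any admissible $\tilde f$ from $\{\psi<-t\}$ to $\{\psi<-t'\}$ with $t'>t$ produces an admissible form of no larger weighted norm, $t\mapsto G(t)$ is nonincreasing and is finite on all of $[T,+\infty)$ once it is finite somewhere. Because $X$ is locally negligible with respect to $L^2$ holomorphic functions and $Z$ is analytic, it suffices to work on the honest weakly pseudoconvex K\"ahler manifold $M\setminus(X\cup Z)$, which one exhausts by relatively compact weakly pseudoconvex open subsets carrying the $L^2$ estimate, and to re-extend the resulting holomorphic $(n,0)$ forms across $X\cup Z$ with unchanged norms at the end; likewise a general gain $c$ may be approximated by bounded ones with a passage to the limit at the close.

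The heart of the matter is the key lemma: for $T\le t_0<t_1$, if $G(t_1)<+\infty$ then there is an admissible holomorphic $(n,0)$ form $\tilde F$ on $\{\psi<-t_0\}$ with
\[
\int_{\{\psi<-t\}}|\tilde F|^2e^{-\varphi}c(-\psi)\ \le\ \frac{h(t)}{h(t_1)}\,G(t_1)\qquad\text{for every }t\in[t_0,t_1].
\]
To prove it I would start from an almost minimizer $F_1$ for $G(t_1)$ and build $\tilde F$ by cutting off in the deep region and correcting with a $\bar\partial$-solution: choose a Lipschitz function $b(-\psi)$ equal to $1$ for $-\psi$ large and to $0$ on $\{\psi\ge -t_1\}$ (so that $b(-\psi)F_1$ is globally defined, being supported where $F_1$ lives), set $\tilde F:=b(-\psi)F_1-v$ on $\{\psi<-t_0\}$, and solve $\bar\partial v=\bar\partial\big(b(-\psi)F_1\big)=b'(-\psi)\,\bar\partial(-\psi)\wedge F_1$ with the twisted $L^2$ estimate for the plurisubharmonic weight $\varphi+\psi$ augmented by a convex increasing function $\eta(-\psi)$. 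Exactly as in the constant- and smooth-gain cases, one chooses $\eta$ together with an auxiliary factor by solving an ODE designed so that the hypothesis ``$c(t)e^{-t}$ is decreasing'' converts the error term of the estimate into precisely the tail-integral ratio; because the $\bar\partial$-data is supported away from $Z_0$ and $b\equiv1$ near $Z_0$, the solution $v$ lies in $\mathcal O(K_M)\otimes\mathcal I(\varphi+\psi)\subset\mathcal O(K_M)\otimes\mathcal F$ there, so $(\tilde F-f)\in\mathcal O(K_M)\otimes\mathcal F$ along $Z_0$ and $\tilde F$ is admissible. Letting the exhaustion run out, optimizing the cutoff, and letting $F_1$ approach the minimizer finishes the lemma.

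From the key lemma one reads off that $t\mapsto G(t)/h(t)$ is nondecreasing. To upgrade this to concavity of $r\mapsto G(h^{-1}(r))$, I would bring in the uniqueness of the minimizer: the sharp lemma forces the minimizer of $G(t_0)$ to restrict to the minimizer of $G(t)$ for every $t\ge t_0$ (otherwise gluing the deeper minimizer back in via another tuned $\bar\partial$-solve strictly decreases the total norm), so there is a single holomorphic form $F$ with $G(t)=\int_{\{\psi<-t\}}|F|^2e^{-\varphi}c(-\psi)$ for all $t\ge T$; writing this as $G(t)=\int_t^{+\infty}c(s)a(s)\,ds$ for a nonnegative $a$, the ``linearly spread mass'' encoded in the lemma together with this rigidity pins down enough about $a$ to yield, for $T\le t_0<t_1<t_2$, the interpolation inequality $G(t_1)\ge\frac{h(t_1)-h(t_2)}{h(t_0)-h(t_2)}G(t_0)+\frac{h(t_0)-h(t_1)}{h(t_0)-h(t_2)}G(t_2)$, which is exactly concavity of $G\circ h^{-1}$ on $\big(0,\int_T^{+\infty}c(s)e^{-s}ds\big)$. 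The limit $\lim_{t\to+\infty}G(t)=0$ then follows since $G$ is bounded while $h(t)\to0$ in the lemma, and $\lim_{t\to T+0}G(t)=G(T)$ follows by combining the trivial bound $G(T)\le\liminf_{t\to T+0}G(t)$ with a normal-families argument: the uniformly norm-bounded minimizers at levels $t\downarrow T$ have a locally uniformly convergent subsequence on $M\setminus(X\cup Z)$ whose limit, by Fatou and removability of $X\cup Z$, is admissible for $G(T)$ of no larger norm. The main obstacle throughout is the construction and sharp accounting of $v$: one must run the $L^2$ estimate on a merely weakly pseudoconvex (non-Stein) K\"ahler manifold with $\varphi$ only Lebesgue measurable and only $\varphi+\psi$ plurisubharmonic, while \emph{simultaneously} (i) tuning the auxiliary weights via the ODE so that the surviving constant is exactly the tail-integral ratio with no residual error, and (ii) guaranteeing that every cutoff-and-correction step, and every passage to the limit over the exhaustion or the approximating gains, preserves the jet condition $(\tilde f-f)\in\mathcal O(K_M)\otimes\mathcal F$ along $Z_0$.
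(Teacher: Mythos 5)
This theorem is not proved in the present paper (it is quoted from [GMY]), so your proposal must be measured against the known argument, whose two ingredients the paper does record: the $\bar\partial$-construction (Lemma \ref{L2 method}) and the existence/uniqueness of minimizers together with the orthogonality equality (Lemma \ref{lem:A}). Your first half is in that spirit and is essentially sound: the reductions, the cut-off-and-correct construction with weights tuned so that the decreasing property of $c(t)e^{-t}$ produces the tail-integral ratio, and the preservation of the jet condition are all as in [GMY]. The gap is in the passage from your key lemma to concavity. What the key lemma yields (taking $t=t_0$) is $G(t_0)/h(t_0)\le G(t_1)/h(t_1)$, i.e. that $G(h^{-1}(r))/r$ is nonincreasing; this ``star-shapedness'' is strictly weaker than concavity (a piecewise linear $g$ with $g(0)=0$ and successive slopes $1,\tfrac12,\tfrac7{10}$ satisfies $g(r)/r$ nonincreasing but is not concave). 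To bridge the gap you invoke a rigidity claim: that the sharp lemma forces the minimizer at level $t_0$ to restrict to the minimizer at every level $t\ge t_0$, so a single $F$ computes all $G(t)$. That claim is false in general --- it is precisely the degenerate (linear) situation characterized by Lemma \ref{linear}, and your ``otherwise gluing the deeper minimizer back in strictly decreases the norm'' argument does not produce a contradiction, because the gluing requires another $\bar\partial$-solve whose cost is only bounded by the tail-integral ratio, not zero. Even granting such an $F$, writing $G(t)=\int_t^{+\infty}c(s)a(s)\,ds$ does not by itself ``pin down'' $a$ enough to give concavity.

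The missing idea is the one your outline never uses: the orthogonality equality \eqref{equ:20170913e} of Lemma \ref{lem:A}. In [GMY] one fixes $T\le t_1<t_2$, applies the extension lemma with the cut-off concentrated on the thin collar $\{-t_2-B<\psi<-t_2\}$ to the minimizer at level $t_2$ (respectively $t_2+B$), and then uses the orthogonal decomposition at level $t_2$ to convert the resulting estimate into a comparison of difference quotients, namely $\frac{G(t_1)-G(t_2)}{h(t_1)-h(t_2)}\le\liminf_{B\to0^+}\frac{G(t_2)-G(t_2+B)}{h(t_2)-h(t_2+B)}$, from which monotonicity of all difference quotients (true concavity of $G\circ h^{-1}$) follows by elementary real analysis. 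Your endpoint statements are easier than you make them: $\lim_{t\to+\infty}G(t)=0$ follows from dominated convergence applied to any single admissible form of finite norm, since $\bigcap_t\{\psi<-t\}=\{\psi=-\infty\}$ has measure zero, while $\lim_{t\to T+0}G(t)=G(T)$ by the normal-families/Fatou argument you sketch is fine. So: keep your construction of $\tilde F$, but replace the rigidity step by the collar-plus-orthogonality comparison of difference quotients; without that, what you have proved is only star-shapedness.
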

\begin{Lemma}[\cite{GMY}]
\label{lem:A}Let $c\in\mathcal{P}_{T,M}$ satisfying $\int_T^{+\infty}c(s)e^{-s}ds<+\infty$.
Assume that $G(t)<+\infty$ for some $t\in[T,+\infty)$.
Then there exists a unique holomorphic $(n,0)$ form $F_{t}$ on
$\{\psi<-t\}$ satisfying $(F_{t}-f)\in H^{0}(Z_0,(\mathcal{O}(K_{M})\otimes\mathcal{F})|_{Z_0})$ and $\int_{\{\psi<-t\}}|F_{t}|^{2}e^{-\varphi}c(-\psi)=G(t)$.
Furthermore,
for any holomorphic $(n,0)$ form $\hat{F}$ on $\{\psi<-t\}$ satisfying $(\hat{F}-f)\in H^{0}(Z_0,(\mathcal{O}(K_{M})\otimes\mathcal{F})|_{Z_0})$ and
$\int_{\{\psi<-t\}}|\hat{F}|^{2}e^{-\varphi}c(-\psi)<+\infty$,
we have the following equality
\begin{equation}
\label{equ:20170913e}
\begin{split}
&\int_{\{\psi<-t\}}|F_{t}|^{2}e^{-\varphi}c(-\psi)+\int_{\{\psi<-t\}}|\hat{F}-F_{t}|^{2}e^{-\varphi}c(-\psi)
\\=&
\int_{\{\psi<-t\}}|\hat{F}|^{2}e^{-\varphi}c(-\psi).
\end{split}
\end{equation}
\end{Lemma}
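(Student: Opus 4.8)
The plan is to realize $F_t$ as the point of minimal norm in a closed affine subspace of a Hilbert space, and to read off \eqref{equ:20170913e} from the orthogonality characterization of that minimizer. First I would fix $t$ with $G(t)<+\infty$ and introduce the space $\mathcal{H}_t$ of holomorphic $(n,0)$ forms $F$ on $\{\psi<-t\}$ with $\int_{\{\psi<-t\}}|F|^{2}e^{-\varphi}c(-\psi)<+\infty$, equipped with the inner product $\langle F_1,F_2\rangle_t:=\int_{\{\psi<-t\}}\sqrt{-1}^{\,n^{2}}F_1\wedge\overline{F_2}\,e^{-\varphi}c(-\psi)$. The key analytic point is that $\mathcal{H}_t$ is complete: since $c\in\mathcal{P}_{T,M}$ there is a closed set $E\subset Z\cap\{\psi=-\infty\}$ such that $e^{-\varphi}c(-\psi)$ is bounded below on each compact subset of $M\setminus E$, so an $\langle\cdot,\cdot\rangle_t$-Cauchy sequence is Cauchy in $L^2_{\mathrm{loc}}$ on $\{\psi<-t\}\setminus(X\cup Z)$ and hence, by the sub-mean-value inequality, converges uniformly on compact subsets there to a holomorphic $(n,0)$ form; using condition $(A)$ — local negligibility of $X$ with respect to $L^2$ holomorphic functions and the Riemann-type removable singularity property across the analytic set $Z$ for $L^2$-bounded holomorphic forms — this limit extends holomorphically to all of $\{\psi<-t\}$, with norm the limit of the norms. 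Thus $\mathcal{H}_t$ is a Hilbert space and $\mathcal{H}^{2}(c,t)$ is the subset of $\mathcal{H}_t$ cut out by the germ conditions $(\tilde f - f,z_0)\in(\mathcal{O}(K_M)\otimes\mathcal{F})_{z_0}$.

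Next I would record that the linear subspace $V_0\subset\mathcal{H}_t$ of those $g$ with $(g,z_0)\in(\mathcal{O}(K_M)\otimes\mathcal{F})_{z_0}$ for every $z_0\in Z_0$ is closed: near each $z_0$ the weight is locally bounded below, so $\langle\cdot,\cdot\rangle_t$-convergence forces local uniform convergence, and membership in the ideal $\mathcal{F}_{z_0}$ of the Noetherian local ring $\mathcal{O}_{z_0}$ is preserved under such limits. Since $G(t)<+\infty$, the set $\mathcal{H}^{2}(c,t)$ is nonempty; fixing any $f_0\in\mathcal{H}^{2}(c,t)$ we get $\mathcal{H}^{2}(c,t)=f_0+V_0$, a nonempty closed affine subspace of $\mathcal{H}_t$. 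The Hilbert-space projection theorem then produces a unique $F_t\in f_0+V_0$ minimizing $\|\cdot\|_t$; because holomorphic forms of infinite norm contribute nothing to the infimum defining $G(t;c)$, we have $\|F_t\|_t^{2}=G(t)$, and strict convexity (the parallelogram law) gives the asserted uniqueness of $F_t$.

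Finally, the minimizer of $\|\cdot\|_t$ over the affine subspace $f_0+V_0$ is characterized by $F_t\perp V_0$, i.e. $\langle F_t,g\rangle_t=0$ for all $g\in V_0$. For any holomorphic $(n,0)$ form $\hat F$ on $\{\psi<-t\}$ with $(\hat F-f)\in H^{0}(Z_0,(\mathcal{O}(K_M)\otimes\mathcal{F})|_{Z_0})$ and $\int_{\{\psi<-t\}}|\hat F|^{2}e^{-\varphi}c(-\psi)<+\infty$, we have $\hat F\in f_0+V_0$ and hence $\hat F-F_t\in V_0$, so $\langle F_t,\hat F-F_t\rangle_t=0$ and
\[
\int_{\{\psi<-t\}}|\hat F|^{2}e^{-\varphi}c(-\psi)=\|\hat F\|_t^{2}=\|F_t\|_t^{2}+\|\hat F-F_t\|_t^{2},
\]
which is precisely \eqref{equ:20170913e}.

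The functional-analytic skeleton (projection theorem plus the Pythagorean identity) is routine; I expect the real work to be in the completeness argument for $\mathcal{H}_t$ and the closedness of $V_0$, namely in handling the degeneration of the weight $e^{-\varphi}c(-\psi)$ along $X\cup Z\supset E$. This is where one must invoke condition $(A)$ and the defining properties of $\mathcal{P}_{T,M}$ carefully — the local negligibility of $X$, the $L^2$ removable-singularity property across the analytic set $Z$, and the fact that the germ conditions defining $V_0$ are stable under $L^2_{\mathrm{loc}}$ limits near the points of $Z_0$.
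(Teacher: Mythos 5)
Your functional-analytic skeleton (weighted Bergman-type space, nonempty closed affine subspace, projection theorem, Pythagoras) is the same as the argument behind Lemma \ref{lem:A} in \cite{GMY}, which this paper only quotes; there the minimization and the variational identity \eqref{equ:20170913e} are carried out by hand with a minimizing sequence, but that is the same mechanism. The genuine problem lies in the two analytic claims you lean on where the weight degenerates. First, in the completeness argument you extend the limit across $Z$ by ``$L^2$-removability for $L^2$-bounded holomorphic forms'', but near the exceptional set $E\subset Z\cap\{\psi=-\infty\}$ the definition of $\mathcal{P}_{T,M}$ gives a positive lower bound of $e^{-\varphi}c(-\psi)$ only on compact subsets of $M\setminus E$; near $E$ one has $e^{-\varphi}c(-\psi)=e^{-(\varphi+\psi)}\,c(-\psi)e^{\psi}$ with $c(s)e^{-s}$ possibly tending to $0$, so the weighted bound does not produce the unweighted local $L^2$ bound that removability requires. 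Second, and more seriously, your closedness proof for $V_0$ rests on ``near each $z_0\in Z_0$ the weight is locally bounded below''. This is false in exactly the situations the lemma is applied to in this paper: here $Z_0\subset\{\psi=-\infty\}$ and $E$ is essentially $Z_0$, so $e^{-\varphi}c(-\psi)$ need not be bounded below near any point of $Z_0$. Since locally uniform convergence near $Z_0$ is precisely what you need to preserve the germ condition $(\tilde f-f,z_0)\in(\mathcal{O}(K_M)\otimes\mathcal{F})_{z_0}$, this step is the heart of the lemma and cannot be obtained from a lower bound that is not there.

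Both gaps are repairable, and the repair is what the cited proof actually does: the members of your Cauchy (or minimizing) sequence are holomorphic on all of $\{\psi<-t\}$, so no removable-singularity statement is needed; the locally uniform $L^2$ bounds on compact subsets of $\{\psi<-t\}\setminus E$ propagate across the analytic set containing $E$ by a Cauchy-integral/maximum-principle (normal families) argument, and this is exactly the content of Lemma \ref{l:converge}, which yields a subsequence converging locally uniformly on all of $\{\psi<-t\}$ (in particular near $Z_0$) together with the Fatou-type inequality $\int|F|^2e^{-\varphi}c(-\psi)\le\liminf\int|F_j|^2e^{-\varphi}c(-\psi)$. The stability of the conditions $(\cdot,z_0)\in(\mathcal{O}(K_M)\otimes\mathcal{F})_{z_0}$ under such locally uniform convergence is then Lemma \ref{closedness} (Grauert--Remmert), not a consequence of weighted-norm convergence alone. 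With these two lemmas substituted for your two claims, your completeness and closedness statements hold, and the projection-theorem formulation of existence, uniqueness and \eqref{equ:20170913e} goes through as in \cite{GMY}.
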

	
	Guan-Mi-Yuan also obtained the following corollary of Theorem \ref{Concave}, which is a necessary condition for the concavity degenerating to linearity.
	
	\begin{Lemma}[\cite{GMY}]\label{linear}
		Let $c(t)\in\mathcal{P}_{T,M}$ such that $\int_T^{+\infty}c(s)e^{-s}ds<+\infty$. If $G(t)\in (0,+\infty)$ for some $t\geq T$ and $G(h^{-1}(r))$ is linear with respect to $r\in (0,\int_T^{+\infty}c(s)e^{-s}ds)$, where $h(t)=\int_t^{+\infty}c(l)e^{-l}dl$, then there exists a unique holomorphic $(n,0)$ form $F$ on $M$ satisfying $(F-f)\in H^0(Z_0,(\mathcal{O}(K_M)\otimes\mathcal{F})|_{Z_0})$, and $G(t;c)=\int_{\{\psi<-t\}}|F|^2e^{-\varphi}c(-\psi)$ for any $t\geq T$.
		
		Furthermore, we have
		\begin{equation}
			\int_{\{-t_2\leq\psi<-t_1\}}|F|^2e^{-\varphi}a(-\psi)=\frac{G(T_1;c)}{\int_{T_1}^{+\infty}c(t)e^{-t}dt}\int_{t_1}^{t_2}a(t)e^{-t}dt,
		\end{equation}
		for any nonnegative measurable function $a$ on $(T,+\infty)$, where $T\leq t_1<t_2\leq +\infty$.
		
		Especially, if $\mathcal H^2(\tilde{c},t_0)\subset\mathcal H^2(c,t_0)$ for some $t_0\geq T$, where $\tilde{c}$ is a nonnegative measurable function on $(T,+\infty)$, we have
		\begin{equation}
			G(t_0;\tilde{c})=\int_{\{\psi<-t_0\}}|F|^2e^{-\varphi}\tilde{c}(-\psi)=\frac{G(T_1;c)}{\int_{T_1}^{+\infty}c(s)e^{-s}ds}\int_{t_0}^{+\infty} \tilde{c}(s)e^{-s}ds.
		\end{equation}		
	\end{Lemma}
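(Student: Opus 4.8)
The plan is to reduce the whole statement to the concavity of Theorem~\ref{Concave} together with the orthogonality identity of Lemma~\ref{lem:A}, transporting all of the information to the real line via the level function $-\psi$.

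\emph{From linearity to a global minimizer.} Since $r\mapsto G(h^{-1}(r))$ is concave on $(0,\int_T^{+\infty}c(s)e^{-s}ds]$ by Theorem~\ref{Concave} with $\lim_{r\to 0^{+}}G(h^{-1}(r))=\lim_{t\to+\infty}G(t)=0$, linearity forces $G(h^{-1}(r))=\lambda r$, where $\lambda=G(T_1;c)/\int_{T_1}^{+\infty}c(s)e^{-s}ds$ is independent of $T_1\ge T$; equivalently $G(t;c)=\lambda h(t)=\lambda\int_t^{+\infty}c(s)e^{-s}ds<+\infty$ for every $t\ge T$, so Lemma~\ref{lem:A} applies at every level and yields a unique minimizer $F_t\in H^0(\{\psi<-t\},\mathcal O(K_M))$ with $(F_t-f)\in H^0(Z_0,(\mathcal O(K_M)\otimes\mathcal F)|_{Z_0})$ and $\int_{\{\psi<-t\}}|F_t|^2e^{-\varphi}c(-\psi)=G(t)$. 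The crucial point — and the main obstacle — is the compatibility $F_{t_2}=F_{t_1}|_{\{\psi<-t_2\}}$ for $T\le t_1<t_2$, which does \emph{not} follow from the bare concavity statement: I would obtain it by revisiting the equality case of the $\bar\partial$-construction that proves Theorem~\ref{Concave} in \cite{GMY}, where linearity saturates the $L^2$ estimate, kills the correction term, and forces the competitor produced at level $t_1$ to be $F_{t_1}$, whose restriction to $\{\psi<-t_2\}$ is $F_{t_2}$. Granting this, $F|_{\{\psi<-t\}}:=F_t$ defines a holomorphic $(n,0)$ form on $M$ (note $\{\psi<-T\}=M$, as $\psi$ has no interior maximum unless it is constant), with $(F-f)\in H^0(Z_0,(\mathcal O(K_M)\otimes\mathcal F)|_{Z_0})$ and $G(t;c)=\int_{\{\psi<-t\}}|F|^2e^{-\varphi}c(-\psi)$ for all $t\ge T$; uniqueness is immediate, since any $F'$ with these properties restricts at each level to a minimizer and hence equals $F_t$ by Lemma~\ref{lem:A}.

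\emph{The distribution formula.} I would then push the positive measure $|F|^2e^{-\varphi}$ forward under $-\psi\colon M\to(T,+\infty]$ to a Borel measure $\mu$; as $\{\psi=-\infty\}$ is pluripolar, hence Lebesgue-null, one has $\mu(\{+\infty\})=0$ and $\int_{\{-t_2\le\psi<-t_1\}}|F|^2e^{-\varphi}b(-\psi)=\int_{[t_1,t_2)}b\,d\mu$ for every nonnegative measurable $b$. With $b=c$ and $\int_{\{\psi<-t\}}|F|^2e^{-\varphi}c(-\psi)=\lambda\int_t^{+\infty}c(s)e^{-s}ds$ (both sides finite) for all $t\ge T$, we get $\int_{(t,+\infty)}c\,d\mu=\lambda\int_{(t,+\infty)}c(s)e^{-s}ds$ for all $t$, hence $c\,d\mu=\lambda c(s)e^{-s}ds$; since $c>0$ on $(T,+\infty)$, this gives $d\mu=\lambda e^{-s}ds$ there. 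Substituting back an arbitrary nonnegative measurable $a$ yields $\int_{\{-t_2\le\psi<-t_1\}}|F|^2e^{-\varphi}a(-\psi)=\lambda\int_{t_1}^{t_2}a(s)e^{-s}ds=\frac{G(T_1;c)}{\int_{T_1}^{+\infty}c(t)e^{-t}dt}\int_{t_1}^{t_2}a(s)e^{-s}ds$, the cases $t_1=T$, $t_2=+\infty$ being harmless because $\mu$ is absolutely continuous on $(T,+\infty)$ and charges neither endpoint.

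\emph{Comparison with $\tilde c$, and the obstacle.} Assume $\mathcal H^2(\tilde c,t_0)\subset\mathcal H^2(c,t_0)$. For any $\tilde F\in\mathcal H^2(\tilde c,t_0)$ the hypothesis gives $\tilde F\in\mathcal H^2(c,t_0)$, hence $\tilde F|_{\{\psi<-t\}}\in\mathcal H^2(c,t)$ for all $t\ge t_0$; Lemma~\ref{lem:A} at level $t$ with minimizer $F|_{\{\psi<-t\}}$ and competitor $\tilde F$ gives $\int_{\{\psi<-t\}}|F|^2e^{-\varphi}c(-\psi)+\int_{\{\psi<-t\}}|F-\tilde F|^2e^{-\varphi}c(-\psi)=\int_{\{\psi<-t\}}|\tilde F|^2e^{-\varphi}c(-\psi)$. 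Pushing forward under $-\psi$ and using $d\mu=\lambda e^{-s}ds$ from the previous step, the pushforward of $|\tilde F|^2e^{-\varphi}$ dominates $\lambda e^{-s}ds$ on $(t_0,+\infty)$, so $\int_{\{\psi<-t_0\}}|\tilde F|^2e^{-\varphi}\tilde c(-\psi)\ge\lambda\int_{t_0}^{+\infty}\tilde c(s)e^{-s}ds$; taking the infimum over $\tilde F$, $G(t_0;\tilde c)\ge\frac{G(T_1;c)}{\int_{T_1}^{+\infty}c(s)e^{-s}ds}\int_{t_0}^{+\infty}\tilde c(s)e^{-s}ds$. Conversely, the distribution formula with $a=\tilde c$, $t_1=t_0$, $t_2=+\infty$ gives $\int_{\{\psi<-t_0\}}|F|^2e^{-\varphi}\tilde c(-\psi)=\lambda\int_{t_0}^{+\infty}\tilde c(s)e^{-s}ds$; when this is finite, $F\in\mathcal H^2(\tilde c,t_0)$, whence $G(t_0;\tilde c)\le\lambda\int_{t_0}^{+\infty}\tilde c(s)e^{-s}ds$ with equality realized by $F$ (and when it is $+\infty$, all three quantities are $+\infty$). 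To summarize, the only genuinely hard input is the gluing of the $F_t$ in the first step, which requires the sharp equality-case version of the concavity argument of \cite{GMY}, not merely the statement of Theorem~\ref{Concave}.
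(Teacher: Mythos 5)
Your reduction of the ``furthermore'' parts to a pushforward-measure computation is fine, but the first step contains a genuine gap, and it is exactly the step that carries the whole content of the lemma. You correctly observe that Theorem \ref{Concave} and Lemma \ref{lem:A} only give, for each level $t$, a minimizer $F_t$ on $\{\psi<-t\}$ with $\int_{\{\psi<-t\}}|F_t|^2e^{-\varphi}c(-\psi)=\lambda h(t)$, and that the compatibility $F_{t_1}|_{\{\psi<-t_2\}}=F_{t_2}$ is what must be proved; but you do not prove it. The sentence ``linearity saturates the $L^2$ estimate, kills the correction term, and forces the competitor produced at level $t_1$ to be $F_{t_1}$, whose restriction to $\{\psi<-t_2\}$ is $F_{t_2}$'' is circular as written (its last clause is the assertion to be established) and it skips the actual work: one has to apply Lemma \ref{L2 method} to the minimizer $F_{t_2}$ on the deeper sublevel set to manufacture a competitor on $\{\psi<-t_1\}$, combine the resulting estimate with the orthogonality identity \eqref{equ:20170913e} of Lemma \ref{lem:A} and the exact values $G(t)=\lambda\int_t^{+\infty}c(s)e^{-s}ds$, and deduce $\int_{\{\psi<-t_2\}}|F_{t_1}|^2e^{-\varphi}c(-\psi)=G(t_2)$, after which uniqueness in Lemma \ref{lem:A} gives the gluing. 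Note also that the present paper does not reprove this lemma at all; it is quoted from \cite{GMY}, and the argument you are gesturing at is precisely the proof given there, so ``revisiting the equality case'' is not an optional refinement but the missing core of your proposal.

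Two smaller points. In the $\tilde c$ part, the assertion that the pushforward of $|\tilde F|^2e^{-\varphi}$ dominates $\lambda e^{-s}ds$ on $(t_0,+\infty)$ does not follow from tail inequalities alone (tail domination of measures does not imply domination as measures); it does follow, but only because you have the orthogonality \emph{equality} of Lemma \ref{lem:A} at every level $t\ge t_0$, so that $c$ times the pushforward of $|\tilde F|^2e^{-\varphi}$ equals $\lambda c(s)e^{-s}ds$ plus $c$ times the pushforward of $|\tilde F-F|^2e^{-\varphi}$ by the same tail/$\pi$-system argument you used for the distribution formula; this justification should be made explicit, and it again presupposes the unproved fact that $F|_{\{\psi<-t\}}$ is the minimizer at every level. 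Finally, the parenthetical claim $\{\psi<-T\}=M$ needs the maximum principle for plurisubharmonic functions on a connected $M$ (with $\psi$ nonconstant) and deserves a word, since the lemma asserts $F$ is holomorphic on all of $M$. Apart from these, the measure-theoretic derivation of the formula for arbitrary nonnegative $a$, and the upper bound $G(t_0;\tilde c)\le\lambda\int_{t_0}^{+\infty}\tilde c(s)e^{-s}ds$ realized by $F$, are correct.
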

	
	\begin{Remark}[\cite{GMY}]\label{ctildec}
		Let $c(t)\in\mathcal{P}_{T,M}$. If $\mathcal{H}^2(\tilde{c},t_1)\subset\mathcal{H}^2(c,t_1)$, then $\mathcal{H}^2(\tilde{c},t_2)\subset\mathcal{H}^2(c,t_2)$, where $t_1>t_2>T$. In the following, we give some sufficient conditions of $\mathcal{H}^2(\tilde{c},t_0)\subset\mathcal{H}^2(c,t_0)$ for $t_0>T$:
		
		(1). $\tilde{c}\in\mathcal{P}_{T,M}$ and $\lim\limits_{t\rightarrow+\infty}\frac{\tilde{c}(t)}{c(t)}>0$. Especially, $\tilde{c}\in\mathcal{P}_{T,M}$, $c$ and $\tilde{c}$ are smooth on $(T,+\infty)$ and $\frac{d}{dt}(\log\tilde{c}(t))\geq\frac{d}{dt}(\log c(t))$;
		
		(2). $\tilde{c}\in\mathcal{P}_{T,M}$, $\mathcal{H}^2(c,t_0)\neq\emptyset$ and there exists $t>t_0$ such that $\{\psi<-t\}\Subset\{\psi<-t_0\}$, $\{z\in\overline{\{\psi<-t\}} : \mathcal{I}(\varphi+\psi)_z\neq\mathcal{O}_z\}\subset Z_0$ and $\mathcal{F}|_{\overline{\{\psi<-t\}}}=\mathcal{I}(\varphi+\psi)|_{\overline{\{\psi<-t\}}}$.
	\end{Remark}

\subsection{The sufficient and necessary conditions of the concavity of $G\big(h^{-1}(r)\big)$ degenerating to linearity}
In this section, we recall some result on the characterizations of the concavity of $G\big(h^{-1}(r)\big)$ degenerating to linearity on the fibrations over open Riemann surfaces and products of open Riemann surfaces.

The following result can be referred to \cite{BGY-concavity5}.

Let $Z_0^1:=\{z_j:j\in \mathbb{N} \& 1\leq j\leq m\}$ be a finite subset of the open Riemann surface $\Omega$. Let $Y$ be an $n-1$ dimensional weakly pseudoconvex K\"{a}hler manifold. Let $M=\Omega\times Y$ be a complex manifold, and $K_M$ be the canonical line bundle on $M$. Let $\pi_1$, $\pi_2$ be the natural projections from $M$ to $\Omega$ and $Y$ and $Z_0:=\pi_1^{-1}(Z_0^1)$. Let $\psi_1$ be a subharmonic function on $\Omega$ such that $p_j=\frac{1}{2}v(dd^c\psi_1,z_j)>0$, and let $\varphi_1$ be a Lebesgue measurable function on $\Omega$ such that $\varphi_1+\psi_1$ is subharmonic on $\Omega$. Let $\varphi_2$ be a plurisubharmonic function on $Y$. Denote that $\psi:=\pi_1^*(\psi_1)$, $\varphi:=\pi_1^*(\varphi_1)+\pi_2^*(\varphi_2)$.
	
	Let $w_j$ be a local coordinate on a neighborhood $V_{z_j}\Subset\Omega$ of $z_j$ satisfying $w_j(z_j)=0$ for $z_j\in Z_0^1$, where $V_{z_j}\cap V_{z_k}=\emptyset$ for any $j,k$, $j\neq k$. Denote that $V_0:=\bigcup_{1\leq j\leq m}V_{z_j}$. Let $f$ be a holomorphic $(n,0)$ form on $V_0\times Y$. Denote $\mathcal{F}_{(z_j,y)}=\mathcal{I}(\psi+\varphi)_{(z_j,y)}$ for any $(z_j,y)\in Z_0$. Let $G(t;c)$ be the minimal $L^2$ integral on $\{\psi<-t\}$ with respect to $\varphi$, $f$ and $\mathcal{F}$ for any $t\geq 0$.

	We recall a characterization of the concavity of $G\big(h^{-1}(r)\big)$ degenerating to linearity for the fibers over sets of finite points as follows.

	\begin{Theorem}[\cite{BGY-concavity5}]
\label{BGY-RESULT-FINITE POINTS}
		Assume that $G(0)\in (0,+\infty)$ and $\big(\psi_1-2p_jG_{\Omega}(\cdot,z_j)\big)(z_j)>-\infty$, where $p_j=\frac{1}{2}v(dd^c(\psi_1),z_j)>0$ for any $j\in\{1,2,\ldots,m\}$. Then $G\big(h^{-1}(r)\big)$ is linear with respect to $r\in (0,\int_0^{+\infty}c(t)e^{-t}dt]$ if and only if the following statements hold:
		
		(1). $\psi_1=2\sum\limits_{j=1}^mp_j G_{\Omega}(\cdot,z_j)$;
		
		(2). for any $j\in\{1,2,\ldots,m\}$, $f=\pi_1^*(a_jw_j^{k_j}dw_j)\wedge \pi_2^*(f_{Y})+f_j$ on $V_{z_j}\times Y$, where $a_j\in\mathbb{C}\setminus \{0\}$ is a constant, $k_j$ is a nonnegative integer, $f_{Y}$ is a holomorphic $(n-1,0)$ form on $Y$ such that $\int_{Y}|f_{Y}|^2e^{-\varphi_2}\in (0,+\infty)$, and $\big(f_j,(z_j,y)\big)\in \big(\mathcal{O}(K_M)\otimes\mathcal{I}(\varphi+\psi)\big)_{(z_j,y)}$ for any $j\in\{1,2,\ldots,m\}$ and $y\in Y$;
		
		(3). $\varphi_1+\psi_1=2\log |g|+2\sum\limits_{j=1}^mG_{\Omega}(\cdot,z_j)+2u$, where $g$ is a holomorphic function on $\Omega$ such that $ord_{z_j}(g)=k_j$ and $u$ is a harmonic function on $\Omega$;
		
		(4). $\prod\limits_{j=1}^m\chi_{z_j}=\chi_{-u}$, where $\chi_{-u}$ and $\chi_{z_j}$ are the characters associated to the functions $-u$ and $G_{\Omega}(\cdot,z_j)$ respectively;
		
		(5). for any $j\in\{1,2,\ldots,m\}$,
		\begin{equation}
			\lim_{z\rightarrow z_j}\frac{a_jw_j^{k_j}dw_j}{gP_*\left(f_u\left(\prod\limits_{l=1}^mf_{z_l}\right)\left(\sum\limits_{l=1}^mp_l\dfrac{d{f_{z_{l}}}}{f_{z_{l}}}\right)\right)}=c_0,
		\end{equation}
		where $c_0\in\mathbb{C}\setminus\{0\}$ is a constant independent of $j$, $f_{u}$ is a holomorphic function $\Delta$ such that $|f_{u}|=P^*(e^{u})$ and $f_{z_{j,k}}$ is a holomorphic function on $\Delta$ such that $|f_{z_{l}}|=P^*\left(e^{G_{\Omega}(\cdot,z_{l})}\right)$ for any $l\in\{1,\ldots,m\}$ .
	\end{Theorem}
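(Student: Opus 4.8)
The plan is to prove the equivalence by treating the two implications separately, the substance lying in the direction ``$G(h^{-1}(r))$ linear $\Rightarrow$ (1)--(5)''.

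\emph{Proof that (1)--(5) imply linearity.} By (4) the multiplicative $(1,0)$ form $f_u\big(\prod_{l=1}^{m}f_{z_l}\big)\big(\sum_{l=1}^{m}p_l\,df_{z_l}/f_{z_l}\big)$ on $\Delta$ has trivial total character, hence descends to a holomorphic $(1,0)$ form on $\Omega$; multiplying by $g$ gives a holomorphic $(1,0)$ form $G_0$ on $\Omega$ with $ord_{z_j}G_0=k_j$ whose leading coefficient at $z_j$ is $a_j/c_0$ by (5). Put $F:=c_0\,\pi_1^*(G_0)\wedge\pi_2^*(f_Y)$. Using (3), the local pole structure $\varphi_1+\psi_1=(2k_j+2)\log|w_j|+O(1)$ near $z_j$ coming from (1) and (3), and the fact that $\int_Y|f_Y|^2e^{-\varphi_2}<+\infty$ forces $(f_Y,y)\in\mathcal I(\varphi_2)_y$ for all $y$, one checks $(F-f,(z_j,y))\in\big(\mathcal O(K_M)\otimes\mathcal I(\varphi+\psi)\big)_{(z_j,y)}$ for all $(z_j,y)\in Z_0$, so $F$ is an admissible competitor. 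A Fubini computation over $M=\Omega\times Y$, using $|f_u|^2=e^{2u}$, $\prod_l|f_{z_l}|^2=e^{2\sum_l G_\Omega(\cdot,z_l)}$, the fact that $\sum_l p_l\,df_{z_l}/f_{z_l}$ descends to $\partial v$ with $v:=2\sum_j p_jG_\Omega(\cdot,z_j)$, and $\varphi_1+\psi_1=2\log|g|+2\sum_j G_\Omega(\cdot,z_j)+2u$, collapses $|F|^2e^{-\varphi}$ to $|c_0|^2\,|f_Y|^2e^{-\varphi_2}\cdot|\partial v|^2e^{v}$. Since $v$ is harmonic on $\Omega\setminus\{z_1,\dots,z_m\}$, Stokes' theorem makes the flux $\int_{\{v=-s\}}d^cv$ independent of $s$, so the co-area formula gives $\int_{\{-t_2\le\psi<-t_1\}}|F|^2e^{-\varphi}c(-\psi)=C\int_{t_1}^{t_2}c(s)e^{-s}ds$ for some $C>0$ and all $0\le t_1<t_2\le+\infty$. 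Finally one verifies, via the orthogonality relation in Lemma \ref{lem:A} (comparing $F$ with an arbitrary competitor, the cross term being reduced to the one-dimensional situation on $\Omega$ by integration over $Y$ against $f_Y$), that $F$ realizes $G(t)$ at every level, so $G(t)=C\,h(t)$ and $G(h^{-1}(r))=Cr$ is linear.

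\emph{Proof of the converse.} Assume $G(h^{-1}(r))$ is linear. Lemma \ref{linear} furnishes a unique global holomorphic $(n,0)$ form $F$ on $M$ with $(F-f)\in H^0(Z_0,(\mathcal O(K_M)\otimes\mathcal F)|_{Z_0})$, $G(t)=\int_{\{\psi<-t\}}|F|^2e^{-\varphi}c(-\psi)$, and the sharp identity $\int_{\{-t_2\le\psi<-t_1\}}|F|^2e^{-\varphi}a(-\psi)=\frac{G(0)}{h(0)}\int_{t_1}^{t_2}a(s)e^{-s}ds$ for every nonnegative measurable $a$ (the latter via Remark \ref{ctildec} to compare gains). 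First I would obtain (1): linearity means equality throughout the inequality underlying the concavity of Theorem \ref{Concave}, and unwinding this equality the ``defect'' contribution of the non-atomic part of $dd^c\psi=\pi_1^*(dd^c\psi_1)$ (the $\sum\lambda_i[A_i]+R$ term of Siu's decomposition) must vanish against the positive density $|F|^2e^{-\varphi}$, so $\psi_1$ is harmonic on $\Omega\setminus\{z_1,\dots,z_m\}$; combined with $\psi_1\le0$, the prescribed logarithmic poles of weight $2p_j$ at $z_j$ (where $p_j=\frac12 v(dd^c\psi_1,z_j)$ and $(\psi_1-2p_jG_\Omega(\cdot,z_j))(z_j)>-\infty$), and $G_\Omega(\cdot,z_j)\to0$ at the ideal boundary, the maximum principle gives $\psi_1=2\sum_j p_jG_\Omega(\cdot,z_j)$, i.e. $N\equiv0$. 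Next, since $M=\Omega\times Y$, $\psi=\pi_1^*\psi_1$, $\varphi=\pi_1^*\varphi_1+\pi_2^*\varphi_2$ and $\mathcal F=\mathcal I(\varphi+\psi)$ along $Z_0=Z_0^1\times Y$, I would integrate $G(t)=\int_{\{\psi<-t\}}|F|^2e^{-\varphi}c(-\psi)$ first over $Y$; sharpness of the constant $\frac{G(0)}{h(0)}$ together with the uniqueness and orthogonality of Lemma \ref{lem:A} forces $F$, near each fiber $\{z_j\}\times Y$, to have product form $\pi_1^*(a_jw_j^{k_j}dw_j)\wedge\pi_2^*(f_Y)$ modulo $\mathcal I(\varphi+\psi)$, with one common holomorphic $(n-1,0)$ form $f_Y$ on $Y$ satisfying $\int_Y|f_Y|^2e^{-\varphi_2}\in(0,+\infty)$, which is (2). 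Dividing out $f_Y$ reduces the extremal problem to the one-dimensional minimal $L^2$ problem on $\Omega$ with weight $\varphi_1$, gain $c$, and prescribed jets at $z_1,\dots,z_m$, whose linearity characterization \cite{GY-concavity3} (itself obtained by the Guan--Zhou ODE/equality technique built on Theorem \ref{Concave} and Lemma \ref{lem:A}, expanding the minimizer at each $z_j$) yields (3), the decomposition $\varphi_1+\psi_1=2\log|g|+2\sum_j G_\Omega(\cdot,z_j)+2u$ with $u$ harmonic and $ord_{z_j}g=k_j$; the character identity (4), which says precisely that the relevant multiplicative $(1,0)$ form on $\Delta$ descends to $\Omega$; and the normalization (5), which forces the local jets $a_jw_j^{k_j}dw_j$ to be those of one global holomorphic $(1,0)$ form, namely $c_0$ times the form just described.

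\emph{Main obstacle.} I expect the delicate step to be the fiberwise reduction in the converse: $\varphi_2$ is an arbitrary plurisubharmonic function on a merely weakly pseudoconvex K\"ahler manifold $Y$, so one cannot differentiate in the $Y$-directions or appeal to ellipticity, and the product structure of $F$ near the fibers $\{z_j\}\times Y$ must be extracted purely from the equality case --- from the orthogonality of Lemma \ref{lem:A} holding at \emph{every} level $t$ together with linearity over all of $\big(0,\int_0^{+\infty}c(s)e^{-s}ds\big]$ --- while controlling the germs $\mathcal I(\varphi+\psi)_{(z_j,y)}$ uniformly in $y\in Y$. Establishing $N\equiv0$ is also somewhat delicate, since the ``defect'' term has to be isolated cleanly and shown to be strictly positive unless $\psi_1$ is the exact Green potential. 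Once these two points are secured, (3)--(5) follow from the one-dimensional theory, and the computations in the first half are routine.
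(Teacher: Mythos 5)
First, note that this paper does not prove Theorem \ref{BGY-RESULT-FINITE POINTS} at all: it is quoted from \cite{BGY-concavity5} and used as an ingredient (the present paper's Theorem \ref{finite points} is reduced to it after establishing $N\equiv0$ and $\mathcal{I}(\varphi+\psi)=\mathcal{I}(\pi_1^*(2\log|g|)+\pi_2^*(\varphi_2))$ along $Z_0$). So your proposal has to stand on its own, and as it stands it has genuine gaps. The most concrete one is your route to statement (1). Equality in the concavity of Theorem \ref{Concave} does not hand you a ``defect measure paired with $|F|^2e^{-\varphi}$'' (the concavity is proved by $\bar\partial$--estimates, not by an identity of that form), and even granting that $\psi_1$ is harmonic off $\{z_1,\dots,z_m\}$, your maximum-principle step fails: $N_1:=\psi_1-2\sum_j p_jG_{\Omega}(\cdot,z_j)$ is then a nonpositive harmonic function with $N_1(z_j)>-\infty$, and such a function need not vanish (e.g.\ $\psi_1=2\sum_j p_jG_{\Omega}(\cdot,z_j)-1$ satisfies every hypothesis you use), while Green functions on a general open Riemann surface need not tend to $0$ at the ideal boundary, so there are no boundary values to run the maximum principle with. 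What actually forces $N_1\equiv0$ is the full strength of linearity via a sandwich argument of the type carried out in this paper's proof of Theorem \ref{finite points}: a lower bound for the slope $G(0)/\int_0^{+\infty}c e^{-s}ds$ in terms of fiber integrals (Lemma \ref{slope bigger than measure}, after decomposing the minimizer by Lemma \ref{decomp}), an upper bound by constructing a competitor with respect to the larger weight $\tilde\psi=\pi_1^*\bigl(2\sum_j p_jG_{\Omega}(\cdot,z_j)\bigr)$ (Lemma \ref{optimal extension}), and then the strict-inequality Lemma \ref{decreasing property of l} together with the monotonicity of $c(t)e^{-t}$ to conclude $\psi=\tilde\psi$.

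The two other load-bearing steps are also asserted rather than proved. In the necessity direction, the product structure in (2) --- a \emph{single} $(n-1,0)$ form $f_Y$ on $Y$ common to all $j$, extracted only from linearity, with no ellipticity available on the weakly pseudoconvex fiber $Y$ --- is exactly the hard point of \cite{BGY-concavity5}; ``sharpness of the constant together with Lemma \ref{lem:A}'' is not an argument, and ``dividing out $f_Y$'' to land in the one-dimensional problem is not a legitimate operation (one must instead compare with fiberwise optimal extensions and exploit equality at every level, as in Lemmas \ref{optimal extension} and \ref{slope bigger than measure}, before the one-dimensional characterization of \cite{GY-concavity3} can be invoked without circularity). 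In the sufficiency direction, the computation $\int_{\{-t_2\le\psi<-t_1\}}|F|^2e^{-\varphi}c(-\psi)=C\int_{t_1}^{t_2}c(s)e^{-s}ds$ for your candidate is fine, but linearity needs in addition that this candidate is the minimizer (at least at $t=0$); Lemma \ref{lem:A} only states orthogonality \emph{for} the minimizer, so the vanishing of the cross terms against an arbitrary competitor must be established by the expansion in multiplicative functions where the character condition (4) enters --- this is the substance of the sufficiency proof and is missing from your sketch.
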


When $Z_0^1:=\{z_j:j\in \mathbb{N} \& 1\leq j<+\infty\}$ is an infinite subset of the open Riemann surface $\Omega$ of discrete set. Assume that $2\sum\limits_{j=1 }^{+\infty}p_j G_{\Omega}(\cdot,z_j)\not\equiv -\infty$. We recall the following necessary condition such that $G\big(h^{-1}(r)\big)$ is linear.
\begin{Proposition}[\cite{BGY-concavity5}]
\label{BGY-RESULT-INFINITE POINTS}
		Assume that $G(0)\in (0,+\infty)$ and $\big(\psi_1-2p_jG_{\Omega}(\cdot,z_j)\big)(z_j)>-\infty$, where $p_j=\frac{1}{2}v(dd^c(\psi_1),z_j)>0$ for any $j\in\mathbb{N}_+$. Assume that $G\big(h^{-1}(r)\big)$ is linear with respect to $r\in (0,\int_0^{+\infty}c(t)e^{-t}dt]$, then the following statements hold:
		
		(1). $\psi_1=2\sum\limits_{j=1}^{\infty}p_j G_{\Omega}(\cdot,z_j)$;
		
		(2). for any $j\in\mathbb{N}_+$, $f=\pi_1^*(a_jw_j^{k_j}dw_j)\wedge \pi_2^*(f_{Y})+f_j$ on $V_{z_j}\times Y$, where $a_j\in\mathbb{C}\setminus \{0\}$ is a constant, $k_j$ is a nonnegative integer, $f_{Y}$ is a holomorphic $(n-1,0)$ form on $Y$ such that $\int_{Y}|f_{Y}|^2e^{-\varphi_2}\in (0,+\infty)$, and $\big(f_j,(z_j,y)\big)\in \big(\mathcal{O}(K_M)\otimes\mathcal{I}(\varphi+\psi)\big)_{(z_j,y)}$ for any $j\in\mathbb{N}_+$ and $y\in Y$;
		
		(3). $\varphi_1+\psi_1=2\log |g|$, where $g$ is a holomorphic function on $\Omega$ such that $ord_{z_j}(g)=k_j+1$ for any $j\in\mathbb{N}_+$;
		
		(4). for any $j\in\mathbb{N}_+$,
		\begin{equation}
			\frac{p_j}{ord_{z_j}g}\lim_{z\rightarrow z_j}\frac{dg}{a_jw_j^{k_j}dw_j}=c_0,
		\end{equation}
		where $c_0\in\mathbb{C}\setminus\{0\}$ is a constant independent of $j$;
		
		(5). $\sum\limits_{j\in\mathbb{N}_+}p_j<+\infty$.
	\end{Proposition}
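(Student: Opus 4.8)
The plan is to extract the five conclusions from the rigidity that linearity of $G\big(h^{-1}(r)\big)$ imposes on an associated global extremal form, after reducing the problem on $M=\Omega\times Y$ to a one-variable problem on $\Omega$. Since $G(0)\in(0,+\infty)$ and $G\big(h^{-1}(r)\big)$ is linear on $(0,\int_0^{+\infty}c(t)e^{-t}dt]$, Lemma \ref{linear} supplies a unique holomorphic $(n,0)$ form $F$ on $M$ with $(F-f,(z_j,y))\in\big(\mathcal{O}(K_M)\otimes\mathcal{I}(\varphi+\psi)\big)_{(z_j,y)}$ for all $j$ and all $y\in Y$, with $G(t;c)=\int_{\{\psi<-t\}}|F|^2e^{-\varphi}c(-\psi)$ for every $t\ge0$, and with the mass identity $\int_{\{-t_2\le\psi<-t_1\}}|F|^2e^{-\varphi}a(-\psi)=\frac{G(0)}{\int_0^{+\infty}c(s)e^{-s}ds}\int_{t_1}^{t_2}a(s)e^{-s}ds$ for every nonnegative measurable $a$. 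By Remark \ref{ctildec} the same $F$ stays extremal for all admissible gains comparable to $c$, so the way $|F|^2e^{-\varphi}$ is distributed across the level sets of $e^{-\psi}$ is exactly the model one; this rigidity is what drives the rest.

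The next step is to reduce to the Riemann surface. Because $\psi=\pi_1^*(\psi_1)$, $\varphi=\pi_1^*(\varphi_1)+\pi_2^*(\varphi_2)$, $M=\Omega\times Y$ and $Z_0=Z_0^1\times Y$, the multiplier ideal is a product, $\mathcal{I}(\varphi+\psi)_{(z_j,y)}=\big(\pi_1^*\mathcal{I}(\varphi_1+\psi_1)\cdot\pi_2^*\mathcal{I}(\varphi_2)\big)_{(z_j,y)}$. Applying Fubini to $\int_{\{\psi_1<-t\}\times Y}|F|^2e^{-\varphi_1-\varphi_2}c(-\psi_1)$ and invoking the extremality of $F$ together with the $L^2$ apparatus over the weakly pseudoconvex K\"ahler manifold $Y$ (\cite{GMY}), the leading jet of $F$ along each $\{z_j\}\times Y$ must be of product type: on $V_{z_j}\times Y$ one obtains $F=\pi_1^*(b_j w_j^{k_j}dw_j)\wedge\pi_2^*(F_Y)+(\text{a term in }\mathcal{O}(K_M)\otimes\mathcal{I}(\varphi+\psi))$ for one and the same holomorphic $(n-1,0)$ form $F_Y$ on $Y$ with $0<\int_Y|F_Y|^2e^{-\varphi_2}<+\infty$; matching $F$ with $f$ modulo the ideal gives statement $(2)$. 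What remains is a genuine one-variable minimal $L^2$ problem on $\Omega$, with weight $e^{-\varphi_1}$, subharmonic function $\psi_1$, gain $c$, and $k_j$-jet data at the points $z_j$, still with $G\big(h^{-1}(r)\big)$ linear, to which the theory of \cite{guan-zhou13ap,GY-concavity,GY-concavity3} applies.

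Next, set $N_1:=\psi_1-2\sum_j p_j G_\Omega(\cdot,z_j)$, which is subharmonic (as $2\sum_j p_j G_\Omega(\cdot,z_j)\not\equiv-\infty$), is $\le0$ after normalization, and satisfies $N_1(z_j)>-\infty$ by the stated hypotheses. A comparison of the level sets $\{\psi_1<-t\}$ with the model sets $\{2\sum_j p_jG_\Omega(\cdot,z_j)<-t\}$, carried through the mass identity and the optimal $L^2$ extension theorem on $\Omega$ (\cite{gz12,guan-zhou13ap}), forces $N_1\equiv0$, which is statement $(1)$. With $\psi_1=2\sum_j p_jG_\Omega(\cdot,z_j)$, Siu-decompose $dd^c(\varphi_1+\psi_1)\ge0$ with Lelong numbers $2\tilde q_j$ at $z_j$; the tightness of the extension --- equality in the optimal $L^2$ extension theorem --- forces the residual closed positive current away from $Z_0^1$ to vanish and forces the $\tilde q_j$ to be integers $k_j+1>0$, so by the Weierstrass theorem on open Riemann surfaces $\varphi_1+\psi_1=2\log|g|$ with $\mathrm{ord}_{z_j}(g)=k_j+1$, i.e.\ statement $(3)$; unlike the case of finitely many points, no extra harmonic correction is possible here, since its character/Blaschke-type analogue over infinitely many points would contradict the $L^2$ finiteness of $F$. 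Finally, near each $z_j$ the $\Omega$-component of $F$ is the extremal one-form of the one-variable problem, whose leading coefficient is a universal nonzero multiple of the one prescribed by $g$ and the canonical datum attached to $\sum_j p_jG_\Omega$; comparing it with $b_jw_j^{k_j}dw_j$, hence through $f$ with $a_jw_j^{k_j}dw_j$, shows that $\frac{p_j}{\mathrm{ord}_{z_j}g}\lim_{z\to z_j}\frac{dg}{a_jw_j^{k_j}dw_j}$ equals a single nonzero constant $c_0$ for all $j$, which is statement $(4)$; and $G(0)=\int_M|F|^2e^{-\varphi}c(-\psi)<+\infty$ together with the local mass near $\{z_j\}\times Y$ being comparable to $p_j\cdot\big(\int_Y|F_Y|^2e^{-\varphi_2}\big)\cdot|c_0|^2\cdot\int_0^{+\infty}c(s)e^{-s}ds$ forces $\sum_j p_j<+\infty$, which is statement $(5)$.

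The main obstacle I anticipate is making this rigidity quantitative in two coupled directions at once. First, excluding a nontrivial residual subharmonic part of $\psi_1$ and a residual positive current in $dd^c(\varphi_1+\psi_1)$ requires a sharp model computation showing that the extremal mass strictly drops under any such perturbation, uniformly across the levels of $\psi$. Second, one must treat infinitely many jet conditions simultaneously: a single $L^2$ form $F$ satisfies all of them, and passing from this to the individual residue normalizations in $(4)$ and to the summability $(5)$ demands a uniform, summable control of the local masses near the $z_j$, which is exactly what also rules out the harmonic correction that is permitted in the finite case. A subsidiary technicality is that $Y$ is only weakly pseudoconvex K\"ahler, so the Fubini reduction and the fiberwise extremality have to be carried out through the $L^2$ machinery of \cite{GMY} rather than in the Stein setting.
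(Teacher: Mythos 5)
You should first note that the paper you are writing into does not prove this proposition at all: it is recalled verbatim from \cite{BGY-concavity5}, and the closest argument in the present paper is the proof of Proposition \ref{infinite points}, which follows exactly the skeleton you describe (extremal form from Lemma \ref{linear}, a lower bound for the linear slope, a matching optimal extension, then rigidity) but with all the quantitative lemmas in place. Measured against that, your proposal is a plan rather than a proof, and the gaps sit precisely where the work is. The central one is the ``Fubini reduction'': you assert that extremality plus the product structure of $\psi$ and $\varphi$ forces the leading jet of $F$ along each $\{z_j\}\times Y$ to be $\pi_1^*(b_jw_j^{k_j}dw_j)\wedge\pi_2^*(F_Y)$ with \emph{one common} $F_Y$, and that the problem then becomes a one-variable problem on $\Omega$. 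But the minimal $L^2$ integral on $\Omega\times Y$ does not split unless the extremal form is already known to be of product type, so this step is circular: the common $f_Y$ is part of conclusion (2) and, even in the finite-point case, is the content of the characterization theorem (Theorem \ref{BGY-RESULT-FINITE POINTS}), not a consequence of Fubini. The actual route goes through the fiberwise expansion of $F$ (Lemma \ref{decomp}), the jet-selection argument (Lemma \ref{k>k0} together with Lemma \ref{closedness}), and two sharp two-sided estimates: a lower bound for $\liminf_{t\to\infty}G(t)/\int_t^{\infty}c(s)e^{-s}ds$ of the form $\sum_{j\in I_F}\frac{2\pi|a_j|^2}{p_j|d_j|^2}\int_Y|F_j|^2e^{-\varphi_2-\tilde\psi_2(z_j,\cdot)}$ (the analogue of Lemma \ref{slope bigger than measure}, which needs approximation of the upper semicontinuous weights, Lemma \ref{growth rate of c(t)e(-t)}, and uniform control over infinitely many points), and an optimal extension producing a competitor with exactly that mass, built over an exhaustion of $\Omega$ (the analogue of Lemma \ref{optimal extension}, resting on Lemma \ref{L2 method} and Lemma \ref{l:converge}). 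None of these is carried out in your text; ``rigidity forces'' and ``tightness forces'' stand in for them, and your own closing paragraph concedes this.

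Beyond the reduction, the steps that give (3), (4) and (5) are only heuristics as written. In the infinite-point case the exclusion of a harmonic correction $2u$ and of residual Green terms in $\varphi_1+\psi_1$ (so that $\varphi_1+\psi_1=2\log|g|$ with $\mathrm{ord}_{z_j}(g)=k_j+1$ exactly), and the identification of a single constant $c_0$ in (4), require the full analysis of the extremal function via multiplicative functions/characters and its interplay with the summability $\sum_j p_j<+\infty$; your appeal to a ``Blaschke-type analogue contradicting $L^2$ finiteness'' names the right obstruction but does not prove it, and (5) is derived from (4) plus the sharp local mass computation, so it cannot be quoted before those are established. Finally, the product formula you invoke for the multiplier ideal, $\mathcal{I}(\varphi+\psi)_{(z_j,y)}=\big(\pi_1^*\mathcal{I}(\varphi_1+\psi_1)\cdot\pi_2^*\mathcal{I}(\varphi_2)\big)_{(z_j,y)}$, is not what the argument in this circle of papers uses; the relevant identification is $\mathcal{I}(\varphi+\psi)_{(z_j,y)}=\mathcal{I}\big(\pi_1^*(2\log|g|)+\pi_2^*(\varphi_2)\big)_{(z_j,y)}$, proved fiberwise by expansion and induction (Lemma \ref{equiv of multiplier ideal sheaf}), and if you want the Künneth-type statement instead you must at least justify it. So the skeleton of your plan matches the literature, but as it stands the proposal does not constitute a proof of the proposition.
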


	Let $Z_0^1:=\{z_j:j\in \mathbb{N} \& 1\leq j\leq m\}$ be a finite subset of the open Riemann surface $\Omega$. Let $Y$ be an $n-1$ dimensional weakly pseudoconvex K\"{a}hler manifold. Let $M=\Omega\times Y$ be a complex manifold, and $K_M$ be the canonical line bundle on $M$. Let $\pi_1$, $\pi_2$ be the natural projections from $M$ to $\Omega$ and $Y$, and $Z_0:=\pi_1^{-1}(Z_0^1)$.
	
	Let $w_j$ be a local coordinate on a neighborhood $V_{z_j}\Subset Omega$ of $z_j$ satisfying $w_j(z_j)=0$ for $z_j\in Z_0^1$, where $V_{z_j}\cap V_{z_k}=\emptyset$ for any $j,k$, $j\neq k$. Let $c_{\beta}(z)$ be the logarithmic capacity (see \cite{S-O69}) on $\Omega$ which is defined by
	\begin{equation*}
		c_{\beta}(z_j):=\exp \lim_{z\rightarrow z_0}(G_{\Omega}(z,z_j)-\log|w_j(z)|).
	\end{equation*}
	Denote that $V_0:=\bigcup_{1\leq j\leq m}V_{z_j}$. Assume that $\tilde{M}\subset M$ is an $n-$dimensional weakly pseudoconvex submanifold satisfying that $Z_0\subset \tilde{M}$.

 We recall the following characterization of the holding of the equality in optimal $L^2$ extension from fibers over analytic subsets to fibrations over open Riemann surfaces (see \cite{BGY-concavity5}).
	
	\begin{Theorem}[\cite{BGY-concavity5}]
\label{GBY-APP1}
		Let $k_j$ be a nonnegative integer for any $j\in\{1,2,...,m\}$. Let $\psi_1$ be a negative subharmonic function on $\Omega$ satisfying that   $\frac{1}{2}v(dd^{c}\psi_1,z_j)=p_j>0$ for any $j\in\{1,2,...,m\}$. Denote $\psi:=\pi_1^*(\psi_1)$. Let $\varphi_1$ be a Lebesgue measurable function on $\Omega$ such that $\varphi_1+\psi_1$ is subharmonic on $\Omega$, $\frac{1}{2}v(dd^c(\varphi_1+\psi_1),z_j)=k_j+1$ and $\alpha_j:=\big(\varphi_1+\psi_1-2(k_j+1)G_{\Omega}(\cdot,z_j)\big)(z_j)>-\infty$ for any $j$. Let $\varphi_2$ be a plurisubharmonic function on $Y$. Let $c(t)$ be a positive measurable function on $(0,+\infty)$ satisfying that $c(t)e^{-t}$ is decreasing on $(0,+\infty)$ and $\int_{0}^{+\infty}c(s)e^{-s}ds<+\infty$. Let $a_j$ be a constant for any $j$. Let $F_j$ be a holomorphic $(n-1,0)$ form on $Y$ such that $\int_Y|F_j|^2e^{-\varphi_2}<+\infty$ for any $j$.
		
		Let $f$ be a holomorphic $(n,0)$ form on $V_0\times Y$ satisfying that $f=\pi_1^*(a_jw_j^{k_j}dw_j)\wedge\pi_2^*(F_j)$ on $V_{z_j}\times Y$. Then there exists a holomorphic $(n,0)$ form $F$ on $\tilde{M}$ such that $(F-f,(z_j,y))\in(\mathcal{O}(K_{\tilde{M}})\otimes\mathcal{I}(\varphi+\psi))_{(z_j,y)}$ for any $(z_j,y)\in Z_0$ and
		\begin{equation}\label{L2result}
			\int_{\tilde{M}}|F|^2e^{-\varphi}c(-\psi)\leq\left(\int_0^{+\infty}c(s)e^{-s}ds\right)\sum_{j=1}^m\frac{2\pi|a_j|^2e^{-\alpha_j}}{p_jc_{\beta}(z_j)^{2(k_j+1)}}\int_Y|F_j|^2e^{-\varphi_2}.
		\end{equation}
		
		Moreover, equality $\left(\int_0^{+\infty}c(s)e^{-s}ds\right)\sum_{j=1}^m\frac{2\pi|a_j|^2e^{-\alpha_j}}{p_jc_{\beta}(z_j)^{2(k_j+1)}}\int_Y|F_j|^2e^{-\varphi_2}=\inf\big\{$ $ \int_M|\tilde{F}|^2e^{-\varphi}c(-\psi):\tilde{F}$ is a holomorphic $(n,0)$ form on $\tilde{M}$ such that $\big(\tilde{F}-f,(z_j,y)\big)\in\big(\mathcal{O}(K_{\tilde{M}})\otimes\mathcal{I}(\varphi+\psi)\big)_{(z_j,y)}$ for any $(z_j,y)\in Z_0\big\}$ holds if and only if the following statements hold:
		
		(1). $\psi_1=2\sum\limits_{j=1}^mp_j G_{\Omega}(\cdot,z_j)$;
		
		(2). $\varphi_1+\psi_1=2\log |g|+2\sum\limits_{j=1}^m(k_j+1)G_{\Omega}(\cdot,z_j)+2u$, where $g$ is a holomorphic function on $\Omega$ such that $g(z_j)\neq 0$ for any $j\in\{1,2,\ldots,m\}$ and $u$ is a harmonic function on $\Omega$;
		
		(3). $\prod\limits_{j=1}^m\chi_{z_j}^{k_j+1}=\chi_{-u}$, where $\chi_{-u}$ and $\chi_{z_j}$ are the characters associated to the functions $-u$ and $G_{\Omega}(\cdot,z_j)$ respectively;
		
		(4). for any $j\in\{1,2,\ldots,m\}$,
		\begin{equation}
			\lim_{z\rightarrow z_j}\frac{a_jw_j^{k_j}dw_j}{gP_*\left(f_u\left(\prod\limits_{l=1}^mf_{z_l}^{k_l+1}\right)\left(\sum\limits_{l=1}^mp_l\dfrac{d{f_{z_{l}}}}{f_{z_{l}}}\right)\right)}=c_j\in\mathbb{C}\setminus\{0\},
		\end{equation}
		and there exist $c_0\in\mathbb{C}\setminus\{0\}$ and a holomorphic $(n-1,0)$ form $F_Y$ on $Y$ which are independent of $j$ such that $c_0F_Y=c_jF_j$ for any $j\in\{1,2,\ldots,m\}$;
		
		(5). $\tilde{M}=M$.
		
	\end{Theorem}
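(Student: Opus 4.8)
The plan is to derive the theorem from the concavity property of minimal $L^{2}$ integrals (Theorem~\ref{Concave}), from its linearity criterion on fibrations over open Riemann surfaces (Theorem~\ref{BGY-RESULT-FINITE POINTS}) together with the companion fact that linearity forces the ambient manifold to be all of $M$ (cf.\ Proposition~\ref{tildeM}), and from a boundary-layer computation of $G(t)$ as $t\to+\infty$. First I would set $\mathcal{F}_{(z_j,y)}:=\mathcal{I}(\varphi+\psi)_{(z_j,y)}$ and let $G(t)=G(t;c,f,\varphi,\psi,\mathcal{F})$ be the minimal $L^{2}$ integral on $\tilde{M}$; since $N_1:=\psi_1-\sum_{j=1}^{m}2p_jG_{\Omega}(\cdot,z_j)$ is a negative subharmonic function on $\Omega$ with $N_1(z_j)>-\infty$, we have $\psi=\pi_1^{*}(\psi_1)$ and $N=\pi_1^{*}(N_1)$, so the present situation is covered by Theorem~\ref{BGY-RESULT-FINITE POINTS}. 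The target chain of implications is
\[
\text{equality in the }L^{2}\text{ bound}\ \Longleftrightarrow\ G\big(h^{-1}(r)\big)\text{ is linear on }\big(0,{\textstyle\int_0^{+\infty}c(s)e^{-s}\,ds}\big]\ \Longleftrightarrow\ (1)\text{--}(5),
\]
with the inequality itself falling out along the way.

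For the first equivalence I would argue as follows. For $t\gg0$ the set $\{\psi<-t\}$ is a disjoint union of open sets $W_j^{t}\subset V_{z_j}\times Y$, and on each the explicit form $\pi_1^{*}(a_jw_j^{k_j}dw_j)\wedge\pi_2^{*}(F_j)$ is holomorphic with finite weighted norm (using $\int_Y|F_j|^2e^{-\varphi_2}<+\infty$), so $G(t)<+\infty$. By Theorem~\ref{Concave}, $r\mapsto G(h^{-1}(r))$ is then concave on $(0,\int_0^{+\infty}c(s)e^{-s}\,ds)$ with $\lim_{t\to+\infty}G(t)=0$ and $\lim_{t\to0}G(t)=G(0)$; being concave and vanishing at the left endpoint it is automatically finite at the right endpoint, so $G(0)<+\infty$ and by Lemma~\ref{lem:A} the value $G(0)$ is attained by a unique holomorphic $(n,0)$ form $F$ on $\tilde{M}$. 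Concavity through the origin makes $r\mapsto G(h^{-1}(r))/r$ nonincreasing, whence
\[
\frac{G(0)}{\int_0^{+\infty}c(s)e^{-s}\,ds}\ \le\ \lim_{t\to+\infty}\frac{G(t)}{\int_t^{+\infty}c(s)e^{-s}\,ds},
\]
with equality precisely when $G(h^{-1}(r))$ is linear.

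The heart of the matter — and the step I expect to be the main obstacle — is the evaluation
\[
\lim_{t\to+\infty}\frac{G(t)}{\int_t^{+\infty}c(s)e^{-s}\,ds}=\sum_{j=1}^{m}\frac{2\pi|a_j|^2e^{-\alpha_j}}{p_j\,c_{\beta}(z_j)^{2(k_j+1)}}\int_Y|F_j|^2e^{-\varphi_2}.
\]
Because $\{\psi<-t\}=\bigsqcup_jW_j^{t}$ for $t\gg0$, the minimization over $\tilde{M}$ decouples into a sum over $j$; on $W_j^{t}$ the membership condition $\tilde f-f\in\mathcal{I}(\varphi+\psi)$ pins the $w_j^{k_j}$-coefficient of $\tilde f$ to $a_jF_j$, while any admissible correction lies in the ideal generated by $w_j^{k_j+1}$ and hence contributes a term of strictly higher order in the shrinking radius of $W_j^{t}$. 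Writing $\psi_1=2p_jG_{\Omega}(\cdot,z_j)+N_1$ and $\varphi_1+\psi_1=2(k_j+1)G_{\Omega}(\cdot,z_j)+\sigma_j$ near $z_j$ with $\sigma_j(z_j)=\alpha_j$, and using $G_{\Omega}(\cdot,z_j)=\log|w_j|+\log c_{\beta}(z_j)+o(1)$, I would compute $\{\psi_1<-s\}\cap V_{z_j}\approx\{|w_j|<e^{-s/(2p_j)}e^{-N_1(z_j)/(2p_j)}c_{\beta}(z_j)^{-1}\}$, evaluate $\int_{\{\psi_1<-s\}\cap V_{z_j}}|w_j^{k_j}|^2e^{-\varphi_1}$ in polar coordinates, multiply by $|a_j|^2\int_Y|F_j|^2e^{-\varphi_2}$ via Fubini over the product, and integrate against the weight $c(-\psi)$ by the layer-cake formula in $-\psi$ (where $c(t)e^{-t}$ decreasing enters). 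After the dust settles this should give $G_j(t)\sim\big(\int_t^{+\infty}c(s)e^{-s}\,ds\big)\cdot\frac{2\pi|a_j|^2e^{-\alpha_j}}{p_j\,c_{\beta}(z_j)^{2(k_j+1)}}\int_Y|F_j|^2e^{-\varphi_2}$; the factor $c_{\beta}(z_j)^{-2(k_j+1)}$ is precisely the Jacobian price of trading the coordinate $w_j$ for the uniformizer attached to $G_{\Omega}(\cdot,z_j)$, and $e^{-\alpha_j}$ records the value at $z_j$ of the non-harmonic part of $\varphi_1+\psi_1$. Summing over $j$ and feeding this into the previous display yields the $L^{2}$ bound of the theorem with the infimum attained by $F$, and the equality case there becomes linearity of $G(h^{-1}(r))$.

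It then remains to move between linearity and conditions $(1)$--$(5)$. Given linearity, Theorem~\ref{BGY-RESULT-FINITE POINTS} supplies its five structural conditions; since $N=\pi_1^{*}(N_1)$, condition $(1)$ there ($\psi_1=2\sum_jp_jG_{\Omega}(\cdot,z_j)$) is exactly $N\equiv0$, i.e.\ $(1)$ of the present theorem, and its conditions $(3)$, $(4)$ become $(2)$, $(3)$ here after the elementary rewriting that uses $\psi_1=2\sum_jp_jG_{\Omega}(\cdot,z_j)$ and $\mathrm{ord}_{z_j}g=k_j$ (absorbing units into $g$ and harmonic summands into $u$). The prescribed shape $f=\pi_1^{*}(a_jw_j^{k_j}dw_j)\wedge\pi_2^{*}(F_j)$ forces condition $(2)$ of Theorem~\ref{BGY-RESULT-FINITE POINTS} with a single $(n-1,0)$ form $f_Y$ on $Y$; comparing $w_j^{k_j}$-coefficients shows each $F_j$ is a nonzero scalar multiple of $f_Y$, and inserting this into the limit identity $(5)$ of Theorem~\ref{BGY-RESULT-FINITE POINTS} gives the two assertions of condition $(4)$ here (the constants $c_j$ and the relations $c_0F_Y=c_jF_j$). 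Finally, linearity forces $\tilde{M}=M$ by Proposition~\ref{tildeM} in the present $N=\pi_1^{*}(N_1)$ setting, which is condition $(5)$. Conversely, under $(1)$--$(5)$ the same translation verifies the hypotheses of the ``if'' direction of Theorem~\ref{BGY-RESULT-FINITE POINTS} on $M=\tilde{M}$, so $G(h^{-1}(r))$ is linear and equality holds. The residual bookkeeping — matching $\mathcal{F}=\mathcal{I}(\varphi+\psi)$ with $\mathcal{I}(\pi_1^{*}(2\log|g|)+\pi_2^{*}(\varphi_2))$ — is settled by the equivalence-of-multiplier-ideal-sheaf lemma used throughout.
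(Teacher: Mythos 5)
Your overall architecture (reduce the equality statement to the linearity characterization of Theorem \ref{BGY-RESULT-FINITE POINTS} plus a ``$\tilde M=M$'' statement, with the multiplier ideals identified by Lemma \ref{equiv of multiplier ideal sheaf}) is the right philosophy, and your translation of conditions $(1)$--$(5)$ is plausible modulo bookkeeping. The genuine gap is the pivot of your argument: the unconditional identity $\lim_{t\to+\infty}G(t)/\int_t^{+\infty}c(s)e^{-s}ds=S$, where $S:=\sum_j 2\pi|a_j|^2e^{-\alpha_j}p_j^{-1}c_{\beta}(z_j)^{-2(k_j+1)}\int_Y|F_j|^2e^{-\varphi_2}$. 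Your sketch computes this by pretending that near $z_j$ the weights have the model form $\psi_1\approx 2p_j\log|w_j|+\mathrm{const}$, $\varphi_1+\psi_1\approx 2(k_j+1)\log|w_j|+\alpha_j+o(1)$. But $\psi_1$ and $\varphi_1+\psi_1$ are only subharmonic with prescribed Lelong numbers; the residual parts are merely upper semicontinuous with a prescribed \emph{value} at $z_j$, and can dip far below that value arbitrarily close to $z_j$. Consequently only one-sided, $\liminf$-type bounds for a \emph{fixed} holomorphic form are available (this is exactly the content of Lemma \ref{slope bigger than measure} and Lemma \ref{limit discussion}, which require continuity or decreasing approximation and give $\liminf\ge$, not a two-sided limit). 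The upper half of your claim, $\limsup_{t}G(t)/h(t)\le S$, amounts to an optimal extension theorem with sharp constant on \emph{every} sublevel set $\{\psi<-t\}$; the available extension result (Lemma \ref{optimal extension}, via the weight comparison $e^{-\varphi}c(-\psi)\le e^{-\tilde\varphi}c(-\tilde\psi)$ coming from $c(t)e^{-t}$ decreasing) only gives the $t=0$ bound $G(0)\le h(0)S$. Without the two-sided slope identity, your chain ``equality $\Leftrightarrow$ linearity'' collapses: concavity plus the fixed-form lower bounds do yield ``linearity $\Rightarrow$ equality'' (sufficiency), but ``equality $\Rightarrow$ linearity'' does not follow, since a priori the asymptotic slope could exceed $S$ while $G(0)=h(0)S$, which is perfectly compatible with strict concavity.

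The intended mechanism (visible in this paper's proofs of the generalizations, e.g.\ Theorem \ref{app-L2 equality} and Proposition \ref{tildeM}, the cited theorem itself being recalled from \cite{BGY-concavity5} without proof here) is different at exactly this point: assuming equality, one compares the minimizer with the extension $\tilde F$ produced by Lemma \ref{optimal extension} \emph{with respect to the model weights} $\tilde\psi=\pi_1^*(2\sum p_jG_\Omega(\cdot,z_j))$, $\tilde\varphi=\varphi+\psi-\tilde\psi$; equality forces $\int|\tilde F|^2e^{-\varphi}c(-\psi)=\int|\tilde F|^2e^{-\tilde\varphi}c(-\tilde\psi)$, and Lemma \ref{decreasing property of l} then yields the rigidity $\psi_1=2\sum p_jG_\Omega(\cdot,z_j)$ (your condition $(1)$) and, by the same comparison run on $M$ versus $\tilde M$, the conclusion $\tilde M=M$; only after this reduction are Lemma \ref{equiv of multiplier ideal sheaf} and the linearity characterization invoked. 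If you want to salvage your route, you must either prove the level-$t$ sharp upper bound $G(t)\le h(t)S(1+o(1))$ under the stated hypotheses (which is not true by the naive boundary-layer computation, precisely because $\limsup$ of the layer integrals of $e^{-(\varphi_1+\psi_1)}$ is not controlled by $e^{-\alpha_j}$), or replace the slope argument by the comparison-and-rigidity step above. A secondary, smaller issue: your ``elementary rewriting'' between condition $(3)$ of Theorem \ref{BGY-RESULT-FINITE POINTS} ($\mathrm{ord}_{z_j}g=k_j$, character $\prod\chi_{z_j}=\chi_{-u}$) and conditions $(2)$--$(3)$ here ($g(z_j)\neq0$, character $\prod\chi_{z_j}^{k_j+1}=\chi_{-u}$) is not a matter of absorbing units, since $e^{\sum k_jG_\Omega(\cdot,z_j)}$ is the modulus of a holomorphic function only when the corresponding character is trivial; the two packages of conditions are equivalent only jointly, and this needs to be argued.
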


 Let $Z_0^1:=\{z_j:1\le j <+\infty\}$ be an infinite discrete subset of the open Riemann surface $\Omega$. Let $Y$ be an $n-1$ dimensional weakly pseudoconvex K\"{a}hler manifold. Let $M=\Omega\times Y$ be a complex manifold, and $K_M$ be the canonical line bundle on $M$. Let $\pi_1$, $\pi_2$ be the natural projections from $M$ to $\Omega$ and $Y$, and $Z_0:=\pi_1^{-1}(Z_0^1)$.
	
	Let $w_j$ be a local coordinate on a neighborhood $V_{z_j}\Subset\Omega$ of $z_j$ satisfying $w_j(z_j)=0$ for $z_j\in Z_0^1$, where $V_{z_j}\cap V_{z_k}=\emptyset$ for any $j,k$, $j\neq k$. Denote that $V_0:=\bigcup_{j=1}^{\infty}V_{z_j}$.

 We recall the following $L^2$ extension result from fibers over analytic subsets to fibrations over open Riemann surfaces, where the analytic subsets are infinite points on open Riemann surfaces (see \cite{BGY-concavity5}).

	\begin{Theorem}[\cite{BGY-concavity5}]
\label{GBY-APP2}

Let $k_j$ be a nonnegative integer for any $j\in\mathbb{N}_+$. Let $\psi_1$ be a negative subharmonic function on $\Omega$ satisfying that   $\frac{1}{2}v(dd^{c}\psi_1,z_j)=k_j+1>0$ for any $j\in\mathbb{N}_+$. Denote $\psi:=\pi_1^*(\psi_1)$. Let $\varphi_1$ be a Lebesgue measurable function on $\Omega$ such that $\varphi_1+\psi_1$ is subharmonic on $\Omega$, $\frac{1}{2}v(dd^c(\varphi_1+\psi_1),z_j)=k_j+1$ and $\alpha_j:=\big(\varphi_1+\psi_1-2(k_j+1)G_{\Omega}(\cdot,z_j)\big)(z_j)>-\infty$ for any $j$. Let $\varphi_2$ be a plurisubharmonic function on $Y$. Let $c(t)$ be a positive measurable function on $(0,+\infty)$ satisfying that $c(t)e^{-t}$ is decreasing on $(0,+\infty)$ and $\int_{0}^{+\infty}c(s)e^{-s}ds<+\infty$. Let $a_j$ be a constant for any $j$. Let $F_j$ be a holomorphic $(n-1,0)$ form on $Y$ such that $\int_Y|F_j|^2e^{-\varphi_2}<+\infty$ for any $j$.
		
		Let $f$ be a holomorphic $(n,0)$ form on $V_0\times Y$ satisfying that $f=\pi_1^*(a_jw_j^{k_j}dw_j)\wedge\pi_2^*(F_j)$ on $V_{z_j}\times Y$.
		If
		\begin{equation*}
			\sum_{j=1}^{\infty}\frac{2\pi|a_j|^2e^{-\alpha_j}}{(k_j+1)c_{\beta}(z_j)^{2(k_j+1)}}\int_Y|F_j|^2e^{-\varphi_2}<+\infty,
		\end{equation*}	
		then there exists a holomorphic $(n,0)$ form $F$ on $M$ such that $(F-f,(z_j,y))\in(\mathcal{O}(K_M)\otimes\mathcal{I}(\varphi+\psi))_{(z_j,y)}$ for any $(z_j,y)\in Z_0$ and
		\begin{equation}\label{L2result-infinite}
			\int_M|F|^2e^{-\varphi}c(-\psi)<\left(\int_0^{+\infty}c(s)e^{-s}ds\right)\sum_{j=1}^{\infty}\frac{2\pi|a_j|^2e^{-\alpha_j}}{(k_j+1)c_{\beta}(z_j)^{2(k_j+1)}}\int_Y|F_j|^2e^{-\varphi_2}.
		\end{equation}
	\end{Theorem}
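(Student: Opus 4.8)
The plan is to realise $F$ as the extremal form of a minimal $L^2$ integral and to obtain the \emph{strict} inequality from the characterisation of linearity for infinitely many fibres (Proposition \ref{BGY-RESULT-INFINITE POINTS}). Put $\psi:=\pi_1^*(\psi_1)$, $\varphi:=\pi_1^*(\varphi_1)+\pi_2^*(\varphi_2)$, $\mathcal{F}_{(z_j,y)}:=\mathcal{I}(\varphi+\psi)_{(z_j,y)}$, and let $G(t):=G(t;c,f,\varphi,\psi,\mathcal{F})$ be the associated minimal $L^2$ integral on $\{\psi<-t\}$. Since $\psi_1<0$ one has $\{\psi<0\}=M$, the triple $(M,\emptyset,Z_0)$ satisfies condition $(A)$, and $c\in\mathcal{P}_{0,M}$, so Theorem \ref{Concave} and Lemma \ref{lem:A} are available once $G(t)<+\infty$ for some $t$. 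Write $C_0:=\int_0^{+\infty}c(s)e^{-s}\,ds$, $h(t):=\int_t^{+\infty}c(s)e^{-s}\,ds$, and $S:=\sum_{j\ge1}\frac{2\pi|a_j|^2e^{-\alpha_j}}{(k_j+1)c_{\beta}(z_j)^{2(k_j+1)}}\int_Y|F_j|^2e^{-\varphi_2}$, which we may assume lies in $(0,+\infty)$ (if $S=0$ the statement is vacuous).

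First I would prove $G(0)\le C_0S$. For $t$ large, $\{\psi<-t\}=\{\psi_1<-t\}\times Y$ with $\{\psi_1<-t\}$ a disjoint union of small neighbourhoods of the points $z_j$; testing $G(t)$ with the $(n,0)$ form equal to $f$ on the components meeting $Z_0$ and to $0$ on the others, and carrying out near each $z_j$ the change of variables $s=-\psi_1$ — using $\psi_1=2(k_j+1)G_\Omega(\cdot,z_j)+v_j$ with $v_j(z_j)>-\infty$, $\varphi_1+\psi_1=2(k_j+1)G_\Omega(\cdot,z_j)+u_j$ with $u_j(z_j)=\alpha_j$, and $G_\Omega(\cdot,z_j)=\log\bigl(c_\beta(z_j)|w_j|\bigr)+o(1)$ — one finds that the $j$-th local integral equals $\bigl(1+o(1)\bigr)h(t)\,\frac{2\pi|a_j|^2e^{-\alpha_j}}{(k_j+1)c_\beta(z_j)^{2(k_j+1)}}\int_Y|F_j|^2e^{-\varphi_2}$. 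Summing over $j$ and discarding a tail made small by $S<+\infty$ yields $G(t)\le\bigl(1+o(1)\bigr)h(t)\,S$ as $t\to+\infty$; in particular $G(t)<+\infty$ for $t$ large, so Theorem \ref{Concave} gives that $r\mapsto G(h^{-1}(r))$ is concave on $(0,C_0)$ with $\lim_{t\to+\infty}G(t)=0$. Hence the slope $G(h^{-1}(r))/r$ is non-increasing in $r$, the limit $L:=\lim_{t\to+\infty}G(t)/h(t)$ exists, and $G(0)/C_0\le L\le S$; in particular $G(0)\le C_0S<+\infty$.

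By Lemma \ref{lem:A} at $t=0$ there is then a unique holomorphic $(n,0)$ form $F$ on $M=\{\psi<0\}$ with $(F-f,(z_j,y))\in(\mathcal{O}(K_M)\otimes\mathcal{I}(\varphi+\psi))_{(z_j,y)}$ for all $(z_j,y)\in Z_0$ and $\int_M|F|^2e^{-\varphi}c(-\psi)=G(0)\le C_0S$. To upgrade $\le$ to $<$, suppose $\int_M|F|^2e^{-\varphi}c(-\psi)=C_0S$, i.e. $G(0)=C_0S$. Then $S=G(0)/C_0\le L\le S$ forces $L=G(0)/C_0$, so the non-increasing slope $G(h^{-1}(r))/r$ already attains its supremal value at $r=C_0$ and is therefore constant; thus $G(h^{-1}(r))$ is linear on $(0,C_0]$. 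Since $G(0)=C_0S\in(0,+\infty)$, Proposition \ref{BGY-RESULT-INFINITE POINTS} applies (its remaining hypotheses, in particular $\bigl(\psi_1-2p_jG_\Omega(\cdot,z_j)\bigr)(z_j)=v_j(z_j)>-\infty$, being part of the standing assumptions), and its conclusion $(5)$ gives $\sum_{j\in\mathbb{N}_+}p_j<+\infty$. But $p_j=\tfrac12 v(dd^c\psi_1,z_j)=k_j+1\ge1$ for every $j\in\mathbb{N}_+$, so $\sum_j p_j=+\infty$ — a contradiction. Therefore $\int_M|F|^2e^{-\varphi}c(-\psi)=G(0)<C_0S$, which is the claimed estimate.

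The step I expect to be the main obstacle is the bound $G(t)\le(1+o(1))h(t)\,S$ as $t\to+\infty$: the product reduction $\{\psi<-t\}=\{\psi_1<-t\}\times Y$ and the one-point change of variables are routine, but making the $o(1)$ errors uniform enough in $j$ to sum the infinitely many local contributions, and to legitimately discard the tail using $S<+\infty$, is where the genuine estimates lie; by contrast, the passage from this bound to the existence of $F$ and then to the strict inequality is a formal consequence of the concavity machinery of \cite{GMY} and of Proposition \ref{BGY-RESULT-INFINITE POINTS}.
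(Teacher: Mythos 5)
Your overall skeleton — an upper bound $G(t)\le h(t)\,S$ for all $t$, concavity from Theorem \ref{Concave}, the extremal form from Lemma \ref{lem:A}, and, in the equality case, linearity of $G(h^{-1}(r))$ contradicted via statement (5) of Proposition \ref{BGY-RESULT-INFINITE POINTS} because $p_j=k_j+1\ge1$ forces $\sum_j p_j=+\infty$ — is exactly the intended route. The genuine gap is in how you obtain the key bound $G(t)\le(1+o(1))h(t)\,S$. Testing $G(t)$ with $\tilde f=f$ on the components of $\{\psi<-t\}$ meeting $Z_0$ does not work under the stated hypotheses: $\{\psi_1<-t\}$ contains the whole polar set of $\psi_1$, which need not be contained in $V_0$ for any $t$ (that $\psi_1$ equals the Green-function sum is a \emph{conclusion} of the linearity characterization, not a hypothesis), so your competitor is not even defined on the relevant components; moreover $\int_{\{\psi<-t\}}|f|^2e^{-\varphi}c(-\psi)$ can be $+\infty$ for every $t$, since $\varphi_1+\psi_1$ may carry Lelong number $\ge1$ at points of $\{\psi_1=-\infty\}$ other than the $z_j$, where $f$ does not vanish; and, most seriously, the constant $e^{-\alpha_j}$ cannot be extracted as an \emph{upper} bound by a pointwise change of variables, because $u_j:=\varphi_1+\psi_1-2(k_j+1)G_{\Omega}(\cdot,z_j)$ is only upper semicontinuous, so the lower bound $u_j\ge\alpha_j-\epsilon$ near $z_j$ that your estimate needs simply fails in general. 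This is precisely why these papers prove such bounds not by testing with $f$ but by the Ohsawa--Takegoshi-type construction of Lemma \ref{L2 method}, applied with the Weierstrass factor $2\log|g|$ and the truncated weights $\max\{\tilde\psi_2,-l\}$, the point value $e^{-\alpha_j}$ entering only through the annulus limit computation of Lemma \ref{limit discussion} (a mean-value argument); run on $\{\psi<-t\}$ this gives $G(t)\le h(t)S$ for every $t$ at once, which is the analogue of Lemma \ref{optimal extension} your argument actually needs. You also silently use $\psi_1=2(k_j+1)G_{\Omega}(\cdot,z_j)+v_j$ with $v_j(z_j)>-\infty$ in this step, which is not assumed.

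The same unjustified assumption reappears at the end: Proposition \ref{BGY-RESULT-INFINITE POINTS} requires $\big(\psi_1-2p_jG_{\Omega}(\cdot,z_j)\big)(z_j)>-\infty$ for every $j$, and, contrary to your claim, this is \emph{not} among the standing assumptions of Theorem \ref{GBY-APP2} (only $\alpha_j>-\infty$ for $\varphi_1+\psi_1$ is), nor does it follow from them: one may add to $\psi_1$ a subharmonic term with zero Lelong number but value $-\infty$ at some $z_j$ and compensate inside $\varphi_1$ without disturbing any hypothesis. So the contradiction step either has to verify this hypothesis or be routed through machinery that does not need it — compare how the present paper handles the analogous strictness in Theorem \ref{app-L2 inequality}: extension with the modified data $\tilde\psi=\pi_1^*\big(2\sum(k_j+1)G_{\Omega}(\cdot,z_j)\big)$, $\tilde\varphi$, equality forcing $\psi=\tilde\psi$ via Lemma \ref{decreasing property of l}, and a reduction of the multiplier ideals via Lemma \ref{equiv of multiplier ideal sheaf} before the linearity characterization is invoked. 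The concavity bookkeeping (slope monotonicity, $G(0)=C_0S$ forcing linearity once $G(t)\le h(t)S$ is known) and the final arithmetic $\sum_j(k_j+1)=+\infty$ are fine; it is the two points above that need repair.
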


The following results can be referred to \cite{BGY-concavity6}.

Let $\Omega_j$  be an open Riemann surface, which admits a nontrivial Green function $G_{\Omega_j}$ for any  $1\le j\le n_1$. Let $Y$ be an $n_2-$dimensional weakly pseudoconvex K\"ahler manifold, and let $K_Y$ be the canonical (holomorphic) line bundle on $Y$. Let
$M=\left(\prod_{1\le j\le n_1}\Omega_j\right)\times Y$
 be an $n-$dimensional complex manifold, where $n=n_1+n_2$. Let $\pi_{1}$, $\pi_{1,j}$ and $\pi_2$ be the natural projections from $M$ to $\prod_{1\le j\le n_1}\Omega_j$, $\Omega_j$ and $Y$ respectively. Let $K_{M}$ be the canonical (holomorphic) line bundle on $M$.
Let $Z_j$ be a (closed) analytic subset of $\Omega_j$ for any $j\in\{1,\ldots,n_1\}$, and denote that $Z_0:=\left(\prod_{1\le j\le n_1}Z_j\right)\times Y$.

Let $Z_j=\{z_{j,k}:1\le k<\tilde m_j\}$ be a discrete subset of $\Omega_j$ for any  $j\in\{1,\ldots,n_1\}$, where $\tilde m_j\in\mathbb{Z}_{\ge2}\cup\{+\infty\}$.
Let $w_{j,k}$ be a local coordinate on a neighborhood $V_{z_{j,k}}\Subset\Omega_{j}$ of $z_{j,k}\in\Omega_j$ satisfying $w_{j,k}(z_{j,k})=0$ for any $1\le j\le n_1$ and $1\le k<\tilde m_j$, where $V_{z_{j,k}}\cap V_{z_{j,k'}}=\emptyset$ for any $j$ and $k\not=k'$. Denote that $\tilde I_1:=\big\{(\beta_1,\ldots,\beta_{n_1}):1\le \beta_j<\tilde m_j$ for any $j\in\{1,\ldots,n_1\}\big\}$, $V_{\beta}:=\prod_{1\le j\le n_1}V_{z_{j,\beta_j}}$  and $w_{\beta}:=(w_{1,\beta_1},\ldots,w_{n_1,\beta_{n_1}})$ is a local coordinate on $V_{\beta}$ of $z_{\beta}:=(z_{1,\beta_1},\ldots,z_{n_1,\beta_{n_1}})\in\prod_{1\le j\le n_1}\Omega_j$ for any $\beta=(\beta_1,\ldots,\beta_{n_1})\in\tilde I_1$. Then $Z_0=\{(z_{\beta},y):\beta\in\tilde I_1\,\&\,y\in Y\}\subset \tilde{M}$.

Let  $\varphi_j$ be a subharmonic function on $\Omega_j$ such that $\varphi_j(z_{j,k})>-\infty$ for any $1\le k\le \tilde{m}_j$. Let $\varphi_Y$ be a plurisubharmonic function on $Y$.

Let $p_{j,k}$ be a positive number for any $1\le j\le n_1$ and $1\le k<\tilde m_j$, which satisfies that $\sum_{1\le k<\tilde m_j}p_{j,k}G_{\Omega_j}(\cdot,z_{j,k})\not\equiv-\infty$ for any $1\le j\le n_1$.
Denote that
$$\psi:=\max_{1\le j\le n_1}\left\{2\sum_{1\le k<\tilde m_j}p_{j,k}\pi_{1,j}^{*}(G_{\Omega_j}(\cdot,z_{j,k}))\right\}$$
 and $\varphi:=\sum_{1\le j\le n_1}\pi_{1,j}^*(\varphi_j)+\pi_2^*(\varphi_Y)$ on $M$.

 Denote that $E_{\beta}:=\left\{(\alpha_1,\ldots,\alpha_{n_1}):\sum_{1\le j\le n_1}\frac{\alpha_j+1}{p_{j,\beta_j}}=1\,\&\,\alpha_j\in\mathbb{Z}_{\ge0}\right\}$ and $\tilde E_{\beta}:=\left\{(\alpha_1,\ldots,\alpha_{n_1}):\sum_{1\le j\le n_1}\frac{\alpha_j+1}{p_{j,\beta_j}}\ge1\,\&\,\alpha_j\in\mathbb{Z}_{\ge0}\right\}$ for any $\beta\in\tilde I_1$.

Let $f$ be a holomorphic $(n,0)$ form on a neighborhood $U_0\subset \tilde{M}$ of $Z_0$ such that
$$f=\sum_{\alpha\in\tilde E_{\beta}}\pi_1^*(w_{\beta}^{\alpha}dw_{1,\beta_1}\wedge\ldots\wedge dw_{n_1,\beta_{n_1}})\wedge\pi_2^*(f_{\alpha,\beta})$$ on $U_0\cap(V_{\beta}\times Y)$, where $f_{\alpha,\beta}$ is a holomorphic $(n_2,0)$ form on $Y$ for any $\alpha\in E_{\beta}$ and $\beta\in\tilde I_1$.

Let $Z_0=\{z_0\}\times Y=\{(z_1,\ldots,z_{n_1})\}\times Y\subset M$.
Let $$\psi=\max_{1\le j\le n_1}\left\{2p_j\pi_{1,j}^{*}\big(G_{\Omega_j}(\cdot,z_j)\big)\right\},$$ where $p_j$ is positive real number for $1\le j\le n_1$.
Let $w_j$ be a local coordinate on a neighborhood $V_{z_j}$ of $z_j\in\Omega_j$ satisfying $w_j(z_j)=0$. Denote that $V_0:=\prod_{1\le j\le n_1}V_{z_j}$, and $w:=(w_1,\ldots,w_{n_1})$ is a local coordinate on $V_0$ of $z_0\in \prod_{1\le j\le n_1}\Omega_j$. Denote that $E:=\left\{(\alpha_1,\ldots,\alpha_{n_1}):\sum_{1\le j\le n_1}\frac{\alpha_j+1}{p_j}=1\,\&\,\alpha_j\in\mathbb{Z}_{\ge0}\right\}$.
Let $f$ be a holomorphic $(n,0)$ form on $V_0\times Y\subset M$.

We recall a characterization of the concavity of $G\big(h^{-1}(r)\big)$ degenerating to linearity for the case $Z_0=\{z_0\}\times Y$.

\begin{Theorem}[\cite{BGY-concavity6}]
	\label{GBY6-single pt}
	Assume that $G(0)\in(0,+\infty)$.  $G\big(h^{-1}(r)\big)$ is linear with respect to $r\in(0,\int_{0}^{+\infty}c(t)e^{-t}dt]$  if and only if the  following statements hold:
	
	$(1)$ $f=\sum_{\alpha\in E}\pi_{1}^*\left(w^{\alpha}dw_1\wedge\ldots\wedge dw_{n_1}\right)\wedge \pi_2^*(f_{\alpha})+g_0$ on $V_0\times Y$, where  $g_0$ is a holomorphic $(n,0)$ form on $V_0\times Y$ satisfying $(g_0,z)\in\big(\mathcal{O}(K_M)\otimes\mathcal{I}(\varphi+\psi)\big)_{z}$ for any $z\in Z_0$ and $f_{\alpha}$ is a holomorphic $(n_2,0)$ form on $Y$ such that $\sum_{\alpha\in E}\int_{Y}|f_{\alpha}|^2e^{-\varphi_Y}\in(0,+\infty)$;
	
	$(2)$ $\varphi_j=2\log|g_j|+2u_j$, where $g_j$ is a holomorphic function on $\Omega_j$ such that $g_j(z_j)\not=0$ and $u_j$ is a harmonic function on $\Omega_j$ for any $1\le j\le n_1$;

    $(3)$ $\chi_{j,z_j}^{\alpha_j+1}=\chi_{j,-u_j}$ for any $j\in\{1,2,...,n\}$ and $\alpha\in E$ satisfying $f_{\alpha}\not\equiv 0$.
\end{Theorem}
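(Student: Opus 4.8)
The plan is to prove the two implications separately: the forward direction (linearity $\Rightarrow$ (1)--(3)) via the rigidity statement of Lemma~\ref{linear} together with a reduction to one base factor at a time, and the converse by constructing the extremal form explicitly and matching it against a sharp $L^2$ estimate. The organizing observation is that, since $\psi=\max_{1\le j\le n_1}\{2p_j\pi_{1,j}^*(G_{\Omega_j}(\cdot,z_j))\}$ is pulled back from the base and built from Green functions, its sublevel sets factor as $\{\psi<-t\}=\big(\prod_{1\le j\le n_1}\{G_{\Omega_j}(\cdot,z_j)<-t/(2p_j)\}\big)\times Y$, a product of Green sublevel sets times $Y$. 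One also records at the outset that by Lemma~\ref{l:phi1+phi2} one has $\mathcal I(\varphi+\psi)_{(z_0,y)}=\mathcal I(\hat G+\pi_2^*(\varphi_Y))_{(z_0,y)}$ — each $\varphi_j$, being subharmonic with $\varphi_j(z_j)>-\infty$, does not affect the multiplier ideal along $\pi_{1,j}^{-1}(z_j)$ — so the ideal $\mathcal F$ governing the interpolation is the essential one.

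\emph{Necessity.} Suppose $G(h^{-1}(r))$ is linear. By Lemma~\ref{linear} (and Lemma~\ref{lem:A} for uniqueness) there is a holomorphic $(n,0)$ form $F$ on $M$ with $(F-f,z)\in(\mathcal O(K_M)\otimes\mathcal F)_z$ on $Z_0$, with $G(t)=\int_{\{\psi<-t\}}|F|^2e^{-\varphi}c(-\psi)$ for all $t\ge0$, and with the distribution identity $\int_{\{-t_2\le\psi<-t_1\}}|F|^2e^{-\varphi}a(-\psi)=\frac{G(0)}{h(0)}\int_{t_1}^{t_2}a(s)e^{-s}ds$ for all nonnegative measurable $a$. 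First I would run the product argument of \cite{GY-concavity4}: writing $\int_{\{\psi<-t\}}|F|^2e^{-\varphi}c(-\psi)$ by Fubini over $\prod_j\Omega_j$ and integrating out every factor but the $j$-th, the distribution identity forces the corresponding one-variable minimal $L^2$ integral to be linear, so the one-point linearity characterization on the open Riemann surface $\Omega_j$ (Guan--Yuan \cite{GY-concavity}; cf.\ \cite{guan-zhou13ap} and the $m=1$ case of Theorem~\ref{BGY-RESULT-FINITE POINTS}) applies and yields $\varphi_j=2\log|g_j|+2u_j$ with $g_j(z_j)\ne0$ and $u_j$ harmonic, together with $\chi_{j,z_j}^{\alpha_j+1}=\chi_{j,-u_j}$ for every exponent $\alpha_j$ occurring in the jet of $F$ along $\{z_0\}\times Y$. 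Then I would read off the shape of $f$: the equality case in the sharp $L^2$ estimate pins the leading jet of $F$ (hence of $f$, since $F-f\in\mathcal F$ on $Z_0$) to exponents $\alpha\in E$, i.e.\ $\sum_j(\alpha_j+1)/p_j=1$, so $f=\sum_{\alpha\in E}\pi_1^*(w^\alpha dw_1\wedge\dots\wedge dw_{n_1})\wedge\pi_2^*(f_\alpha)+g_0$ near $Z_0$ with $g_0\in\mathcal O(K_M)\otimes\mathcal I(\varphi+\psi)$ and $f_\alpha$ holomorphic $(n_2,0)$ forms on $Y$; finally $G(0)\in(0,+\infty)$ forces $\sum_{\alpha\in E}\int_Y|f_\alpha|^2e^{-\varphi_Y}\in(0,+\infty)$.

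\emph{Sufficiency.} Conversely, assume (1)--(3). On each $\Omega_j$, hypotheses (2) and (3) make the natural multiplicative Green/harmonic datum single-valued, producing for each relevant $\alpha_j$ a holomorphic $(1,0)$ form $F_{j,\alpha_j}$ on $\Omega_j$ vanishing to order $\alpha_j$ at $z_j$ and realizing the sharp one-point $L^2$ identity on $\Omega_j$ with weight $\varphi_j$ and weight function $2p_j G_{\Omega_j}(\cdot,z_j)$. Setting $F:=\sum_{\alpha\in E}\big(\bigwedge_{1\le j\le n_1}\pi_{1,j}^*(F_{j,\alpha_j})\big)\wedge\pi_2^*(f_\alpha)$, corrected by a form in $\mathcal O(K_M)\otimes\mathcal I(\varphi+\psi)$ to absorb $g_0$, I would check that $F$ is globally holomorphic on $M$ (single-valuedness is exactly condition (3)), that it interpolates $f$ modulo $\mathcal F$ along $Z_0$, and — expanding $\int_{\{\psi<-t\}}|F|^2e^{-\varphi}c(-\psi)$ over the level sets of $\psi$ (layer cake) and using Fubini together with the one-variable identities — that $\int_{\{\psi<-t\}}|F|^2e^{-\varphi}c(-\psi)=C\,h(t)$ for an explicit constant $C$. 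Combining this with the matching sharp lower bound $\int_{\{\psi<-t\}}|\tilde F|^2e^{-\varphi}c(-\psi)\ge C\,h(t)$ for every competitor $\tilde F$ — obtained by the same layer-cake reduction to the one-dimensional sharp $L^2$ restriction inequality with the logarithmic-capacity constant — gives $G(t)=C\,h(t)$, so $G(h^{-1}(r))=C\,r$ is linear; consistency with Theorem~\ref{Concave} and $G(0)\in(0,+\infty)$ then identifies $C=G(0)/h(0)$.

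\emph{Main obstacle.} The crux is the ``separation of variables'' step in the necessity part: showing a priori that the global extremal $F$ on $\big(\prod_j\Omega_j\big)\times Y$ must split as a finite sum of pure tensors whose base factors are precisely the one-dimensional extremals. This rigidity has to be squeezed out of the distribution identity of Lemma~\ref{linear} together with the product structure of $\{\psi<-t\}$, and is exactly where the product-base argument of \cite{GY-concavity4} and the fiber argument of \cite{BGY-concavity5} must be intertwined. A secondary difficulty is that the gain $c(-\psi)$ does not factor over the product (because $\psi$ is a maximum, not a sum), so reducing the weighted $L^2$ integrals to the constant-gain one-variable identities requires the coarea/level-set bookkeeping above and uses only that $c(t)e^{-t}$ is decreasing; throughout one must keep the identification $\mathcal F=\mathcal I(\hat G+\pi_2^*(\varphi_Y))=\mathcal I(\varphi+\psi)$ near $Z_0$ in force so that the various minimal $L^2$ integrals appearing in the reduction literally coincide.
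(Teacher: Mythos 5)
First, note that Theorem \ref{GBY6-single pt} is not proved in this paper at all: it is recalled verbatim from \cite{BGY-concavity6} (Theorem 1.2 there) and is used as a black box in the proof of Theorem \ref{thm:linear-fibra-single}. So there is no ``paper's own proof'' to compare against here; your proposal has to be judged on its own merits as a reconstruction of the cited argument.

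Judged that way, it is an outline with a genuine gap at its center, and you have in fact named it yourself. The necessity step rests on the claim that, by Fubini over $\prod_j\Omega_j$ and the distribution identity of Lemma \ref{linear}, ``the corresponding one-variable minimal $L^2$ integral is forced to be linear'' on each $\Omega_j$. That reduction is not justified: linearity of the full $G(h^{-1}(r))$ is a statement about the extremal $F$ integrated over the product sublevel sets, and integrating out the other factors does not produce a one-variable \emph{minimal} $L^2$ integral for any fixed interpolation datum, nor does it transfer the linearity to it; the weight $e^{-\varphi_j}$ and the gain $c(-\psi)$ (with $\psi$ a maximum, not a sum) couple the variables. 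The argument that actually works — and the one used in \cite{BGY-concavity6}, mirrored in the present paper's Lemmas \ref{decomp}, \ref{l:m1}, \ref{l:limit}, \ref{p:exten-fibra} and \ref{growth rate of c(t)e(-t)} — expands the extremal $F$ as $\sum_{\alpha}\pi_1^*(w^{\alpha}dw)\wedge\pi_2^*(F_{\alpha})$, computes the asymptotic lower bound $\liminf_{t\to+\infty}\int_{\{\psi<-t\}}|F|^2e^{-\varphi}c(-\psi)/\int_t^{+\infty}c(s)e^{-s}ds$ term by term (killing $q_{\alpha}<0$, isolating $\alpha\in E$), constructs an optimal extension matching that bound, and only then invokes the one-dimensional equality analysis (Guan--Zhou / Guan--Yuan) on each factor applied to the resulting explicit extremal to extract conditions (2) and (3); the ``separation of variables'' you flag as the main obstacle is never obtained from the distribution identity alone. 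A secondary gap of the same kind sits in your sufficiency step: the asserted exact identity $\int_{\{\psi<-t\}}|F|^2e^{-\varphi}c(-\psi)=C\,h(t)$ for $F=\sum_{\alpha\in E}\bigl(\wedge_j\pi_{1,j}^*(F_{j,\alpha_j})\bigr)\wedge\pi_2^*(f_{\alpha})$ needs orthogonality of distinct $\alpha\in E$ and an exact level-set computation under weights $e^{-\varphi_j}=|g_j|^{-2}e^{-2u_j}$ that are not rotation invariant; this is where the multiplicative functions $f_{z_j},f_{u_j}$ on the universal covering and the character conditions (3) are actually used, and ``layer cake plus Fubini'' does not by itself deliver it. So the plan points in a reasonable direction, but the two steps that constitute the substance of the theorem are asserted rather than proved.
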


	Let $Z_j=\{z_{j,1},\ldots,z_{j,m_j}\}\subset\Omega_j$ for any  $j\in\{1,\ldots,n_1\}$, where $m_j$ is a positive integer.
Let
$$\psi=\max_{1\le j\le n_1}\left\{\pi_{1,j}^*\left(2\sum_{1\le k\le m_j}p_{j,k}G_{\Omega_j}(\cdot,z_{j,k})\right)\right\},$$
where $p_{j,k}$ is a positive real number.
Let $w_{j,k}$ be a local coordinate on a neighborhood $V_{z_{j,k}}\Subset\Omega_{j}$ of $z_{j,k}\in\Omega_j$ satisfying $w_{j,k}(z_{j,k})=0$ for any $j\in\{1,\ldots,n_1\}$ and $k\in\{1,\ldots,m_j\}$, where $V_{z_{j,k}}\cap V_{z_{j,k'}}=\emptyset$ for any $j$ and $k\not=k'$. Denote that $\tilde{I}_1:=\big\{(\beta_1,\ldots,\beta_{n_1}):1\le \beta_j\le m_j$ for any $j\in\{1,\ldots,n_1\}\big\}$, $V_{\beta}:=\prod_{1\le j\le n_1}V_{z_{j,\beta_j}}$ for any $\beta=(\beta_1,\ldots,\beta_{n_1})\in \tilde{I}_1$ and $w_{\beta}:=(w_{1,\beta_1},\ldots,w_{n_1,\beta_{n_1}})$ is a local coordinate on $V_{\beta}$ of $z_{\beta}:=(z_{1,\beta_1},\ldots,z_{n_1,\beta_{n_1}})\in \prod_{1\le j\le n_1}\Omega_j$ satisfying $w_\beta(z_\beta)=0$.

Let $\beta^*=(1,\ldots,1)\in \tilde{I}_1$, and let $\alpha_{\beta^*}=(\alpha_{\beta^*,1},\ldots,\alpha_{\beta^*,n_1})\in\mathbb{Z}_{\ge0}^{n_1}$.
Denote that $E':=\left\{\alpha\in\mathbb{Z}_{\ge0}^{n_1}:\sum_{1\le j\le n_1}\frac{\alpha_j+1}{p_{j,1}}>\sum_{1\le j\le n_1}\frac{\alpha_{\beta^*,j}+1}{p_{j,1}}\right\}$. Let $f$ be a holomorphic $(n,0)$ form on $\cup_{\beta\in \tilde{I}_1}V_{\beta}\times Y$ satisfying $f=\pi_1^*\left(w_{\beta^*}^{\alpha_{\beta^*}}dw_{1,1}\wedge\ldots\wedge dw_{n_1,1}\right)\wedge\pi_2^*\left(f_{\alpha_{\beta^*}}\right)+\sum_{\alpha\in E'}\pi_1^*(w^{\alpha}dw_{1,1}\wedge\ldots\wedge dw_{n_1,1})\wedge\pi_2^*(f_{\alpha})$ on $V_{\beta^*}\times Y$, where $f_{\alpha_{\beta^*}}$ and $f_{\alpha}$ are  holomorphic $(n_2,0)$ forms on $Y$.

We recall a characterization of the concavity of $G\big(h^{-1}(r)\big)$ degenerating to linearity for the case  $Z_j$ is a set of finite points.

\begin{Theorem}[\cite{BGY-concavity6}]
	\label{GBY6-finitie pts}Assume that $G(0)\in(0,+\infty)$.  $G\big(h^{-1}(r)\big)$ is linear with respect to $r\in(0,\int_0^{+\infty} c(s)e^{-s}ds]$ if and only if the following statements hold:

	$(1)$ $\varphi_j=2\log|g_j|+2u_j$ for any $j\in\{1,\ldots,n_1\}$, where $u_j$ is a harmonic function on $\Omega_j$ and $g_j$ is a holomorphic function on $\Omega_j$ satisfying $g_j(z_{j,k})\not=0$ for any $k\in\{1,\ldots,m_j\}$;
	
	$(2)$ There exists a nonnegative integer $\gamma_{j,k}$ for any $j\in\{1,\ldots,n_1\}$ and $k\in\{1,\ldots,m_j\}$, which satisfies that $\prod_{1\le k\leq m_j}\chi_{j,z_{j,k}}^{\gamma_{j,k}+1}=\chi_{j,-u_j}$ and $\sum_{1\le j\le n_1}\frac{\gamma_{j,\beta_j}+1}{p_{j,\beta_j}}=1$ for any $\beta\in \tilde{I}_1$;
	
	$(3)$ $f=\pi_1^*\left(c_{\beta}\left(\prod_{1\le j\le n_1}w_{j,\beta_j}^{\gamma_{j,\beta_j}}\right)dw_{1,\beta_1}\wedge\ldots\wedge dw_{n,\beta_n}\right)\wedge\pi_2^*(f_0)+g_\beta$ on $V_{\beta}\times Y$ for any $\beta\in \tilde{I}_1$, where $c_{\beta}$ is a constant, $f_0\not\equiv0$ is a holomorphic $(n_2,0)$ form on $Y$ satisfying $\int_Y|f_0|^2e^{-\varphi_2}<+\infty$, and $g_{\beta}$ is a holomorphic $(n,0)$ form on $V_{\beta}\times Y$ such that $(g_{\beta},z)\in\big(\mathcal{O}(K_M)\otimes\mathcal{I}(\varphi+\psi)\big)_{z}$ for any $z\in\{z_\beta\}\times Y$;
	
	$(4)$ $c_{\beta}\prod_{1\le j\le n_1}\left(\lim_{z\rightarrow z_{j,\beta_j}}\frac{w_{j,\beta_j}^{\gamma_{j,\beta_j}}dw_{j,\beta_j}}{g_j(P_{j})_*\left(f_{u_j}\left(\prod_{1\le k\le m_j}f_{z_{j,k}}^{\gamma_{j,k}+1}\right)\left(\sum_{1\le k\le m_j}p_{j,k}\frac{df_{z_{j,k}}}{f_{z_{j,k}}}\right)\right)}\right)=c_0$ for any $\beta\in \tilde{I}_1$, where $c_0\in\mathbb{C}\backslash\{0\}$ is a constant independent of $\beta$, $f_{u_j}$ is a holomorphic function $\Delta$ such that $|f_{u_j}|=P_j^*(e^{u_j})$ and $f_{z_{j,k}}$ is a holomorphic function on $\Delta$ such that $|f_{z_{j,k}}|=P_j^*\left(e^{G_{\Omega_j}(\cdot,z_{j,k})}\right)$ for any $j\in\{1,\ldots,n_1\}$ and $k\in\{1,\ldots,m_j\}$.
\end{Theorem}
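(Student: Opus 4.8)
The plan is to prove the two implications separately: the ``if'' direction by writing down the extremal $(n,0)$ form explicitly and checking that its $L^2$ mass on the shells $\{-t_2\le\psi<-t_1\}$ is proportional to $\int_{t_1}^{t_2}c(t)e^{-t}dt$, and the ``only if'' direction by feeding the linearity hypothesis into Lemma \ref{linear} to produce the unique global extremal form and then reading off its structure. Throughout, Lemma \ref{l:phi1+phi2} lets us replace $\mathcal I(\varphi+\psi)$ by $\mathcal I(\hat G+\pi_2^*(\varphi_Y))$ near $Z_0$, and the identity $\{\psi<-t\}=\prod_{1\le j\le n_1}\{2\sum_k p_{j,k}G_{\Omega_j}(\cdot,z_{j,k})<-t\}\times Y$ reduces computations on $M$ to products of one-variable computations on the $\Omega_j$ times an integral on $Y$.

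For the ``if'' direction, assume $(1)$--$(4)$. On each $\Omega_j$ the character relation $\prod_{k}\chi_{j,z_{j,k}}^{\gamma_{j,k}+1}=\chi_{j,-u_j}$ together with $\varphi_j=2\log|g_j|+2u_j$ guarantees that $\frac{1}{g_j}f_{u_j}\big(\prod_{k}f_{z_{j,k}}^{\gamma_{j,k}+1}\big)\big(\sum_k p_{j,k}\frac{df_{z_{j,k}}}{f_{z_{j,k}}}\big)$ descends from $\Delta$ to a single-valued holomorphic $(1,0)$ form on $\Omega_j$; wedging these over $j$ gives an $(n_1,0)$ form $F_1$ on $\prod_j\Omega_j$, and we set $F:=c_0\,\pi_1^*(F_1)\wedge\pi_2^*(f_0)$ with $f_0$ as in $(3)$. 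Condition $(3)$ together with the limit normalization $(4)$ is exactly what makes the jet of $F$ at each $\{z_\beta\}\times Y$ agree with that of $f$ modulo $\mathcal O(K_M)\otimes\mathcal I(\varphi+\psi)$, so $F$ is a competitor for $G(t)$. Computing $\int_{\{-t_2\le\psi<-t_1\}}|F|^2e^{-\varphi}c(-\psi)$: the factor $|g_j|^2e^{2u_j}$ produced by $F_1$ is cancelled by $e^{-\pi_{1,j}^*(\varphi_j)}$, on each locally closed piece where one fixed index realizes the maximum in $\psi$ Fubini turns the integral into a product of one-variable integrals of Suita/Guan--Zhou extremal type, and the relation $\sum_j\frac{\gamma_{j,\beta_j}+1}{p_{j,\beta_j}}=1$ makes these evaluate to a constant multiple of $\int_{t_1}^{t_2}c(t)e^{-t}dt$. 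Hence $\int_{\{\psi<-t\}}|F|^2e^{-\varphi}c(-\psi)=\frac{h(t)}{h(0)}\int_{M}|F|^2e^{-\varphi}c(-\psi)$, and since the optimal (sharp-constant) $L^2$ extension theorem gives $G(0)=\int_M|F|^2e^{-\varphi}c(-\psi)$, concavity (Theorem \ref{Concave}) forces $G(t)=\frac{h(t)}{h(0)}G(0)$ for all $t$, i.e.\ $G(h^{-1}(r))$ is linear.

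For the ``only if'' direction, suppose $G(h^{-1}(r))$ is linear. By Lemma \ref{linear} there is a unique holomorphic $(n,0)$ form $F$ on $M$ with $(F-f)\in H^0(Z_0,(\mathcal O(K_M)\otimes\mathcal F)|_{Z_0})$, $G(t)=\int_{\{\psi<-t\}}|F|^2e^{-\varphi}c(-\psi)$ for all $t$, and $\int_{\{-t_2\le\psi<-t_1\}}|F|^2e^{-\varphi}a(-\psi)=\frac{G(0)}{h(0)}\int_{t_1}^{t_2}a(t)e^{-t}dt$ for every nonnegative measurable $a$. First, the product form of the sublevel sets and the uniqueness in Lemma \ref{lem:A} force $F$ to split as $\pi_1^*(F_1)\wedge\pi_2^*(f_0)$ for a fixed holomorphic $(n_2,0)$ form $f_0\not\equiv0$ on $Y$ with $\int_Y|f_0|^2e^{-\varphi_Y}<+\infty$: one compares $F$, on each $V_\beta\times Y$, with the product competitors built from the fiberwise data and invokes uniqueness; this reduces the linearity to that of the minimal $L^2$ integral on $\prod_j\Omega_j$ with weight $\sum_j\pi_{1,j}^*(\varphi_j)$ and exhaustion $\hat G$. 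Second, applying the characterization of linearity for products of open Riemann surfaces (Guan--Yuan \cite{GY-concavity4}; see also the single-point case Theorem \ref{GBY6-single pt}): extremality on each factor forces $\varphi_j=2\log|g_j|+2u_j$ with $g_j(z_{j,k})\neq0$ and $u_j$ harmonic, identifies $\gamma_{j,k}$ with the vanishing order at $z_{j,k}$ of the scalar factor of $F_1$, yields the character relation, forces the degree relation $\sum_j\frac{\gamma_{j,\beta_j}+1}{p_{j,\beta_j}}=1$ (because near $z_\beta$ the $L^2$ norm of $F_1$ against $e^{-\hat G}$ sits at the boundary of convergence), and produces the limit/matching condition in $(4)$; the constant $c_0$ and the form $f_0$ are independent of $\beta$ because $F=\pi_1^*(F_1)\wedge\pi_2^*(f_0)$ is one global object on the connected manifold $M$.

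The step I expect to be the main obstacle is showing that the extremal $F$ genuinely splits as $\pi_1^*(F_1)\wedge\pi_2^*(f_0)$ with a \emph{single} $f_0$ serving at every $z_\beta$: since $\varphi_Y$ may be singular and $Y$ is only weakly pseudoconvex K\"ahler one cannot simply restrict to a fiber, so the argument must combine the uniqueness in Lemma \ref{lem:A}, the product structure of $\{\psi<-t\}$, and a careful comparison of $L^2$ norms on the shells $\{-t_2\le\psi<-t_1\}$. A second delicate point is that the degree identity $\sum_j\frac{\gamma_{j,\beta_j}+1}{p_{j,\beta_j}}=1$ must hold \emph{simultaneously} for all $\beta\in\tilde I_1$, not merely at $\beta^*$; this is forced by propagating the normalization built into $f$ on $V_{\beta^*}\times Y$ through the connectedness of each $\Omega_j$ together with the single-valuedness constraints encoded by the characters $\chi_{j,z_{j,k}}$ and $\chi_{j,-u_j}$.
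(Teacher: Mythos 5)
You should first be aware that this paper does not prove Theorem \ref{GBY6-finitie pts} at all: it is recalled verbatim from \cite{BGY-concavity6} and used as a black box (the new results here reduce to it only after showing $N\equiv 0$ and invoking Lemma \ref{l:phi1+phi2}). So your proposal can only be measured against the general strategy of that series, which it follows in outline; but as written it has concrete gaps.

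The main one is in the ``if'' direction: you assert that the sharp $L^2$ extension theorem gives $G(0)=\int_M|F|^2e^{-\varphi}c(-\psi)$ for your constructed $F$. The extension theorem only gives $G(0)\le\int_M|F|^2e^{-\varphi}c(-\psi)$; the reverse inequality --- that \emph{every} competitor with the prescribed jet has weighted norm at least this value --- is precisely the nontrivial half of the equality analysis, and without it your squeeze collapses: concavity gives $G(t)\ge\frac{h(t)}{h(0)}G(0)$, which together with the upper bound $G(t)\le\frac{h(t)}{h(0)}\int_M|F|^2e^{-\varphi}c(-\psi)$ forces linearity only if one already knows $G(0)=\int_M|F|^2e^{-\varphi}c(-\psi)$. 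In this series that lower bound is obtained by expanding an arbitrary competitor via Lemma \ref{decomp} and estimating shell integrals (Lemma \ref{l:m1}, Lemma \ref{l:limit}); it is exactly where the harmonicity of $u_j$ and the character relations are used, and it cannot be replaced by citing the extension theorem. Relatedly, your explicit extremal is wrong as written: the factor on $\Omega_j$ must be $g_j\,(P_j)_*\bigl(f_{u_j}\bigl(\prod_k f_{z_{j,k}}^{\gamma_{j,k}+1}\bigr)\bigl(\sum_k p_{j,k}\,df_{z_{j,k}}/f_{z_{j,k}}\bigr)\bigr)$, as in the denominator of condition $(4)$, not $\tfrac{1}{g_j}$ times the pushforward; dividing by $g_j$ destroys the cancellation $|g_j|^2e^{-\varphi_j}\cdot e^{-2u_j}$ against the weight and introduces poles at zeros of $g_j$ away from the $z_{j,k}$, so your $F$ need not even be holomorphic. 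In the ``only if'' direction, the step you yourself flag --- deducing a global splitting $F=\pi_1^*(F_1)\wedge\pi_2^*(f_0)$ from uniqueness in Lemma \ref{lem:A} --- is unsubstantiated and is not needed in the established route (compare the proofs of Theorem \ref{thm:linear-fibra-finite} and Proposition \ref{p:M=M_1} in this paper): one expands $F$ near each $z_\beta$ by Lemma \ref{decomp}, closes the chain ``lower bound (Lemma \ref{l:limit}) $\le$ minimal integral $\le$ optimal extension (Lemma \ref{p:exten-fibra})'' with equality, and extracts conditions $(1)$--$(4)$, including the common $f_0$ and the identity $\sum_{1\le j\le n_1}\frac{\gamma_{j,\beta_j}+1}{p_{j,\beta_j}}=1$ for every $\beta\in\tilde I_1$, from that equality analysis; your appeal to connectedness and monodromy does not by itself force either of these.
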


  Let ${Z}_j=\{z_{j,k}:1\le k<\tilde m_j\}$ be a discrete subset of $\Omega_j$ for any  $j\in\{1,\ldots,n_1\}$, where $\tilde{m}_j\in\mathbb{Z}_{\ge2}\cup\{+\infty\}$.
Let $p_{j,k}$ be a positive number for any $1\le j\le n_1$ and $1\le k<\tilde m_j$ such that  $\sum_{1\le k<\tilde{m}_j}p_{j,k}G_{\Omega_j}(\cdot,z_{j,k})\not\equiv-\infty$ for any $j$.
Let
$$\psi=\max_{1\le j\le n_1}\left\{\pi_{1,j}^*\left(2\sum_{1\le k<\tilde{m}_j}p_{j,k}G_{\Omega_j}(\cdot,z_{j,k})\right)\right\}.$$ Assume that $\limsup_{t\rightarrow+\infty}c(t)<+\infty$.

Let $w_{j,k}$ be a local coordinate on a neighborhood $V_{z_{j,k}}\Subset\Omega_{j}$ of $z_{j,k}\in\Omega_j$ satisfying $w_{j,k}(z_{j,k})=0$ for any $j\in\{1,\ldots,n_1\}$ and $1\le k<\tilde{m}_j$, where $V_{z_{j,k}}\cap V_{z_{j,k'}}=\emptyset$ for any $j$ and $k\not=k'$. Denote that $\tilde I_1:=\{(\beta_1,\ldots,\beta_{n_1}):1\le \beta_j< \tilde m_j$ for any $j\in\{1,\ldots,n_1\}\}$, $V_{\beta}:=\prod_{1\le j\le n_1}V_{z_{j,\beta_j}}$ for any $\beta=(\beta_1,\ldots,\beta_{n_1})\in\tilde I_1$ and $w_{\beta}:=(w_{1,\beta_1},\ldots,w_{n_1,\beta_{n_1}})$ is a local coordinate on $V_{\beta}$ of $z_{\beta}:=(z_{1,\beta_1},\ldots,z_{n_1,\beta_{n_1}})\in \prod_{1\le j\le n_1}\Omega_j$.

Let $\beta^*=(1,\ldots,1)\in I_1$, and let $\alpha_{\beta^*}=(\alpha_{\beta^*,1},\ldots,\alpha_{\beta^*,n_1})\in\mathbb{Z}_{\ge0}^{n_1}$. Denote that $E':=\left\{\alpha\in\mathbb{Z}_{\ge0}^{n_1}:\sum_{1\le j\le n_1}\frac{\alpha_j+1}{p_{j,1}}>\sum_{1\le j\le n_1}\frac{\alpha_{\beta^*,j}+1}{p_{j,1}}\right\}$.
Let $f$ be a holomorphic $(n,0)$ form on $\cup_{\beta\in I_1}V_{\beta}\times Y$ satisfying $f=\pi_1^*\left(w_{\beta^*}^{\alpha_{\beta^*}}dw_{1,1}\wedge\ldots\wedge dw_{n_1,1}\right)\wedge\pi_2^*\left(f_{\alpha_{\beta^*}}\right)+\sum_{\alpha\in E'}\pi_1^*(w^{\alpha}dw_{1,1}\wedge\ldots\wedge dw_{n_1,1})\wedge\pi_2^*(f_{\alpha})$ on $V_{\beta^*}\times Y$, where $f_{\alpha_{\beta^*}}$ and $f_{\alpha}$ are  holomorphic $(n_2,0)$ forms on $Y$.

We recall that $G\big(h^{-1}(r)\big)$ is not linear when there exists $j_0\in\{1,\ldots,n_1\}$ such that $\tilde m_{j_0}=+\infty$ as follows.

\begin{Theorem}[\cite{BGY-concavity6}]
	\label{GBY6-infinite points}If $G(0)\in(0,+\infty)$ and there exists $j_0\in\{1,\ldots,n_1\}$ such that $\tilde m_{j_0}=+\infty$, then $G\big(h^{-1}(r)\big)$ is not linear with respect to $r\in(0,\int_0^{+\infty} c(s)e^{-s}ds]$.
\end{Theorem}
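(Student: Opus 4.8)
The plan is an argument by contradiction: assuming $G\big(h^{-1}(r)\big)$ is linear, one uses the ``maximum of pullbacks'' structure of $\psi$ to split the high sublevel sets of $\psi$ into \emph{infinitely many} disjoint pieces and then shows that, under linearity, every piece contributes the same positive amount to $G$, forcing $G(0)=+\infty$.

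Suppose $G\big(h^{-1}(r)\big)$ is linear on $\big(0,\int_0^{+\infty}c(s)e^{-s}ds\big]$. Since $\lim_{t\to+\infty}G(t)=0$ (Theorem \ref{Concave}), this linear function passes through the origin, so $G(t)=\frac{G(0)}{h(0)}h(t)$ for all $t\ge 0$, with $h(0)=\int_0^{+\infty}c(s)e^{-s}ds$. As $G_{\Omega_j}<0$ where it is finite, $\psi<0$ on $M$, hence $\{\psi<0\}=M$ and Lemma \ref{linear} yields a holomorphic $(n,0)$ form $F$ on $M$ with $(F-f)$ in the prescribed ideal along $Z_0$, with $\int_{\{\psi<-t\}}|F|^2e^{-\varphi}c(-\psi)=G(t)$ for all $t$, and with the identity $\int_{\{-t_2\le\psi<-t_1\}}|F|^2e^{-\varphi}a(-\psi)=\frac{G(0)}{h(0)}\int_{t_1}^{t_2}a(s)e^{-s}ds$ for every nonnegative measurable $a$ and all $0\le t_1<t_2\le+\infty$. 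As $G(0)>0$ we have $F\not\equiv 0$, so $F$ vanishes on no nonempty open subset of the connected manifold $M$.

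Using that each $Z_j$ is discrete and $\psi=\max_{1\le j\le n_1}\pi_{1,j}^*\big(2\sum_k p_{j,k}G_{\Omega_j}(\cdot,z_{j,k})\big)$, for $t$ above some $t_0$ the set $\{\psi<-t\}$ is the \emph{disjoint} union $\bigsqcup_{\beta\in\tilde I_1}W_\beta(t)$ with $W_\beta(t)=\big(\prod_j D_{j,\beta_j}(t)\big)\times Y$, where $D_{j,\beta_j}(t)$ shrinks to $z_{j,\beta_j}$ and $2\sum_k p_{j,k}G_{\Omega_j}(\cdot,z_{j,k})=2p_{j,\beta_j}\log|w_{j,\beta_j}|+(\text{bounded pluriharmonic})$ near $z_{j,\beta_j}$; the attendant local capacities $c_{j,\beta_j}$ exist by Lemmas \ref{l:green-sup}, \ref{l:green-sup2}. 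So $G(t)=\sum_{\beta\in\tilde I_1}G_\beta(t)$ for $t\ge t_0$, where $G_\beta$ is the minimal $L^2$ integral over $W_\beta:=W_\beta(t_0)$ with the jet condition at $\{z_\beta\}\times Y$, and by uniqueness of minimizers (Lemma \ref{lem:A}) $G_\beta(t)=\int_{W_\beta(t)}|F|^2e^{-\varphi}c(-\psi)>0$. Each $G_\beta\big(h^{-1}(r)\big)$ is concave and nonnegative on $(0,h(t_0)]$ (Theorem \ref{Concave} on $W_\beta$) and $\to 0$ as $r\to 0^+$; since their sum is the linear $G\big(h^{-1}(r)\big)$, for each fixed $\beta_0$ the function $G_{\beta_0}\big(h^{-1}(r)\big)=G\big(h^{-1}(r)\big)-\sum_{\beta\ne\beta_0}G_\beta\big(h^{-1}(r)\big)$ is simultaneously convex and concave, hence affine through the origin: $G_\beta(t)=a_\beta h(t)$ for $t\ge t_0$, with $a_\beta>0$ and $\sum_{\beta\in\tilde I_1}a_\beta=G(0)/h(0)<+\infty$.

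It remains to show all the $a_\beta$ are equal. Applying Lemma \ref{linear} to each piece problem on $W_\beta$ (legitimate by the piecewise linearity) and choosing $a(s)=e^{s}\mathbf 1_{[t_1,t_2]}(s)$ shows $\int_{W_\beta(t_1)\setminus W_\beta(t)}|F|^2e^{-\varphi-\psi}$ grows exactly linearly in $t$; expanding $F$ near $z_\beta$ and using the residue computation behind the optimal jets $L^2$ extension value (Theorem \ref{thm:exten-fibra-single}, in localized/perturbed form; here the hypothesis $\limsup_{t\to+\infty}c(t)<+\infty$ is used) identifies $a_\beta=(2\pi)^{n_1}\sum_{\alpha\in E_\beta}\big(\prod_j(\alpha_j+1)c_{j,\beta_j}^{\,2\alpha_j+2}\big)^{-1}e^{-\sum_j\varphi_j(z_{j,\beta_j})}\int_Y|f_{\alpha,\beta}|^2e^{-\varphi_Y}$, the $f_{\alpha,\beta}$ being the on-threshold coefficients of the jet of $f$ at $z_\beta$. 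Now the global linearity links the jets of the \emph{single} form $F$ on the product $M$; slicing in the $\Omega_j$- and $Y$-directions and rerunning the single-point and finite-points characterizations (Theorems \ref{GBY6-single pt}, \ref{GBY6-finitie pts}) in the fibered setting, one finds, exactly as in the finite case, that only one $\gamma_\beta\in E_\beta$ survives, with $f_{\gamma_\beta,\beta}=c_\beta f_0$ for a fixed $f_0$, $\int_Y|f_0|^2e^{-\varphi_Y}\in(0,+\infty)$, and a constant $c_\beta$ obeying the relation of Theorem \ref{GBY6-finitie pts}(4) with a $\beta$-independent constant $c_0$; substituting this relation the local capacities cancel, giving $a_\beta=\kappa$ with $\kappa>0$ independent of $\beta$. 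Since $\tilde m_{j_0}=+\infty$, $\tilde I_1$ is infinite, so $\sum_{\beta\in\tilde I_1}a_\beta=\sum_\beta\kappa=+\infty$, contradicting $\sum_\beta a_\beta<+\infty$; hence $G\big(h^{-1}(r)\big)$ is not linear. The main obstacle is this last step: the piecewise analysis pins down the jet of $f$ at each $z_\beta$ only in isolation, so forcing a common $f_0$ and a common $c_0$ requires genuinely using that $F$ is one holomorphic object on $M$ together with the one-variable theory on each $\Omega_j$; a secondary technicality is that $\psi|_{W_\beta}$ equals $\max_j\pi_{1,j}^*\big(2p_{j,\beta_j}G_{\Omega_j}(\cdot,z_{j,\beta_j})\big)$ only up to a bounded pluriharmonic term, so the characterizations must be applied in the form admitting such negligible perturbations.
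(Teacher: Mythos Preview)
This theorem is \emph{cited} from \cite{BGY-concavity6} in the present paper, not proved here; it appears in the ``Preparations'' section and is invoked as a black box in the proof of Theorem \ref{thm:linear-fibra-infinite}. So there is no proof in this paper to compare your proposal against.

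That said, let me comment on the proposal on its own terms. Your steps (1)--(3) are sound: the decomposition $G(t)=\sum_{\beta}G_\beta(t)$ for large $t$ is correct because the sublevel sets split into disjoint open pieces and minimization decouples over connected components; the argument that each $G_\beta(h^{-1}(r))$ is simultaneously concave and convex (hence linear through the origin) is clean; and $a_\beta>0$ follows since $F\not\equiv 0$ on the nonempty open set $W_\beta(t)$.

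The genuine gap is step (4). You write that ``substituting this relation the local capacities cancel, giving $a_\beta=\kappa$ independent of $\beta$,'' but this is asserted, not proved. Two concrete problems: (i) Theorem \ref{GBY6-finitie pts} is stated for \emph{finite} $Z_j$, and you have not justified applying its conclusions when $\tilde m_{j_0}=+\infty$; at best you can apply it to finite truncations, but then the constants $c_0$, $f_0$, $u_j$, $g_j$ depend a priori on the truncation and you must argue they stabilize. (ii) Even granting the structure $f_{\gamma_\beta,\beta}=c_\beta f_0$ with the relation in condition (4) of Theorem \ref{GBY6-finitie pts}, the claim that $a_\beta$ is constant requires checking that $|c_\beta|^2\,e^{-\sum_j\varphi_j(z_{j,\beta_j})}\big/\prod_j(\gamma_{j,\beta_j}+1)c_{j,\beta_j}^{2\gamma_{j,\beta_j}+2}$ is independent of $\beta$; this involves unwinding the limit in condition (4), the identities $|f_{u_j}|=P_j^*(e^{u_j})$, $|f_{z_{j,k}}|=P_j^*(e^{G_{\Omega_j}(\cdot,z_{j,k})})$, and the relation between $c_{j,\beta_j}$ and these data. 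You have identified this as ``the main obstacle'' but not closed it. Without step (4), the inequalities $a_\beta>0$ and $\sum_\beta a_\beta<+\infty$ are mutually consistent, so no contradiction follows.
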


Let $\tilde{M}\subset M$ be an $n-$dimensional complex manifold satisfying that $Z_0\subset \tilde{M}$, and let $K_{\tilde{M}}$ be the canonical (holomorphic) line bundle on $\tilde{M}$.

Let $\Psi\le0$ be a  plurisubharmonic function on $\prod_{1\le j\le n_1}\Omega_j$, and let $\varphi_j$ be a Lebesgue measurable function on $\Omega_j$ such that $\Psi+\sum_{1\le j\le n_1}\tilde\pi_{j}^*(\varphi_j)$ is plurisubharmonic on $\prod_{1\le j\le n_1}\Omega_j$, where $\tilde\pi_j$ is the natural projection from $\prod_{1\le j\le n_1}\Omega_j$ to $\Omega_j$. Let $\varphi_Y$ be a plurisubharmonic function on $Y$.
Let $p_{j,k}$ be a positive number for any $1\le j\le n_1$ and $1\le k<\tilde m_j$, which satisfies that $\sum_{1\le k<\tilde m_j}p_{j,k}G_{\Omega_j}(\cdot,z_{j,k})\not\equiv-\infty$ for any $1\le j\le n_1$.
Denote that
$$\psi:=\max_{1\le j\le n_1}\left\{2\sum_{1\le k<\tilde m_j}p_{j,k}\pi_{1,j}^{*}\big(G_{\Omega_j}(\cdot,z_{j,k})\big)\right\}+\pi_1^*(\Psi)$$
 and $\varphi:=\sum_{1\le j\le n_1}\pi_{1,j}^*(\varphi_j)+\pi_2^*(\varphi_Y)$ on $M$.

 Denote that $E_{\beta}:=\left\{(\alpha_1,\ldots,\alpha_{n_1}):\sum_{1\le j\le n_1}\frac{\alpha_j+1}{p_{j,\beta_j}}=1\,\&\,\alpha_j\in\mathbb{Z}_{\ge0}\right\}$ and $\tilde E_{\beta}:=\left\{(\alpha_1,\ldots,\alpha_{n_1}):\sum_{1\le j\le n_1}\frac{\alpha_j+1}{p_{j,\beta_j}}\ge1\,\&\,\alpha_j\in\mathbb{Z}_{\ge0}\right\}$ for any $\beta\in\tilde I_1$.
Let $f$ be a holomorphic $(n,0)$ form on a neighborhood $U_0\subset \tilde{M}$ of $Z_0$ such that
$$f=\sum_{\alpha\in\tilde E_{\beta}}\pi_1^*(w_{\beta}^{\alpha}dw_{1,\beta_1}\wedge\ldots\wedge dw_{n_1,\beta_{n_1}})\wedge\pi_2^*(f_{\alpha,\beta})$$ on $U_0\cap(V_{\beta}\times Y)$, where $f_{\alpha,\beta}$ is a holomorphic $(n_2,0)$ form on $Y$ for any $\alpha\in E_{\beta}$ and $\beta\in\tilde I_1$.
Denote that
\begin{equation*}
c_{j,k}:=\exp\lim_{z\rightarrow z_{j,k}}\left(\frac{\sum_{1\le k_1<\tilde m_j}p_{j,k_1}G_{\Omega_j}(z,z_{j,k_1})}{p_{j,k}}-\log|w_{j,k}(z)|\right)
\end{equation*}
 for any $j\in\{1,\ldots,n\}$ and $1\le k<\tilde m_j$ (following from Lemma \ref{l:green-sup} and Lemma \ref{l:green-sup2}, we get that the above limit exists).

When $Z_0=\{z_0\}\times Y\subset \tilde{M}$, where $z_0=(z_1,\ldots,z_{n_1})\in\prod_{1\le j\le n_1}\Omega_j$.

We recall a characterization of the holding of equality in optimal jets $L^2$ extension problem for the case $Z_0=\{z_0\}\times Y$.

\begin{Theorem}[\cite{BGY-concavity6}]
\label{GBY6-exten-fibra-single}
Let $c$ be a positive function on $(0,+\infty)$ such that $\int_{0}^{+\infty}c(t)e^{-t}dt<+\infty$ and $c(t)e^{-t}$ is decreasing on $(0,+\infty)$. Assume that
$$\sum_{\alpha\in E}\frac{(2\pi)^{n_1}e^{-\left(\Psi+\sum_{1\le j\le n_1}\tilde\pi_{j}^*(\varphi_j)\right)(z_0)}\int_Y|f_{\alpha}|^2e^{-\varphi_Y}}{\prod_{1\le j\le n_1}(\alpha_j+1)c_{j}(z_j)^{2\alpha_{j}+2}}\in(0,+\infty).$$
Then there exists a holomorphic $(n,0)$ form $F$ on $\tilde{M}$ satisfying that $(F-f,z)\in\left(\mathcal{O}(K_{\tilde{M}})\otimes\mathcal{I}\left(\max_{1\le j\le n_1}\left\{2p_j\pi_{1,j}^{*}(G_{\Omega_j}(\cdot,z_j))\right\}\right)\right)_{z}$ for any $z\in Z_0$ and
\begin{displaymath}
	\begin{split}
	&\int_{\tilde{M}}|F|^2e^{-\varphi}c(-\psi)\\
	\le&\left(\int_0^{+\infty}c(s)e^{-s}ds\right)\sum_{\alpha\in E}\frac{(2\pi)^{n_1}e^{-\left(\Psi+\sum_{1\le j\le n_1}\tilde\pi_{j}^*(\varphi_j)\right)(z_0)}\int_Y|f_{\alpha}|^2e^{-\varphi_Y}}{\prod_{1\le j\le n_1}(\alpha_j+1)c_{j}(z_j)^{2\alpha_{j}+2}}.	
	\end{split}
\end{displaymath}
	
	Moreover, equality $\inf\big\{\int_{\tilde{M}}|\tilde{F}|^2e^{-\varphi}c(-\psi):\tilde{F}\in H^0(\tilde{M},\mathcal{O}(K_{\tilde{M}}))\,\&\, (\tilde{F}-f,z)\in(\mathcal{O}\left(K_{\tilde{M}})\otimes\mathcal{I}\left(\max_{1\le j\le n_1}\left\{2p_j\pi_{1,j}^{*}(G_{\Omega_j}(\cdot,z_j))\right\}\right)\right)_{z}$ for any $z\in Z_0\big\}=\left(\int_0^{+\infty}c(s)e^{-s}ds\right)\times\sum_{\alpha\in E}\frac{(2\pi)^{n_1}e^{-\left(\Psi+\sum_{1\le j\le n_1}\tilde\pi_{j}^*(\varphi_j)\right)(z_0)}\int_Y|f_{\alpha}|^2e^{-\varphi_Y}}{\prod_{1\le j\le n_1}(\alpha_j+1)c_{j}(z_j)^{2\alpha_{j}+2}}$ holds if and only if the following statements hold:

	$(1)$ $\tilde{M}=\left(\prod_{1\le j\le n_1}\Omega_j\right)\times Y$ and $\Psi\equiv0$;

	$(2)$ $\varphi_j=2\log|g_j|+2u_j$, where $g_j$ is a holomorphic function on $\Omega_j$ such that $g_j(z_j)\not=0$ and $u_j$ is a harmonic function on $\Omega_j$ for any $1\le j\le n_1$;

    $(3)$ $\chi_{j,z_j}^{\alpha_j+1}=\chi_{j,-u_j}$ for any $j\in\{1,2,...,n\}$ and $\alpha\in E$ satisfying $f_{\alpha}\not\equiv 0$.
\end{Theorem}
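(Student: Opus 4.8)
\emph{Proof proposal.} After the routine normalization, the plan is to rephrase the optimal jets $L^2$ extension problem as a linearity statement for a minimal $L^2$ integral and then feed it into the concavity machinery of Section~2 (Theorem \ref{Concave}, Lemma \ref{lem:A}, Lemma \ref{linear}) together with the linearity characterization recalled above. Let $G(t)$ be the minimal $L^2$ integral on $\{\psi<-t\}\subset\tilde M$ with respect to the weight $\varphi$, the form $f$, the ideal sheaf $\mathcal F_z=\mathcal I\big(\max_{1\le j\le n_1}\{2p_j\pi_{1,j}^{*}(G_{\Omega_j}(\cdot,z_j))\}\big)_z$ and the gain $c$, and write $C:=\int_0^{+\infty}c(s)e^{-s}ds$ and $A$ for the sum on the right-hand side of the asserted inequality. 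First I would verify $c\in\mathcal P_{0,\tilde M}$ and that $G(t)<+\infty$ for some $t$ (via a crude, non-optimal extension), so that Theorem \ref{Concave} applies: $G\big(h^{-1}(r)\big)$ is concave on $(0,C]$ with $\lim_{t\to+\infty}G(t)=0$, hence $r\mapsto G\big(h^{-1}(r)\big)/r$ is nonincreasing and $G(0)\le C\cdot\lim_{t\to+\infty}G(t)/h(t)$, with equality precisely when $G\big(h^{-1}(r)\big)$ is linear; Lemma \ref{lem:A} supplies the minimizer $F$ realizing $G(0)$.

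The first substantial step is to establish $\lim_{t\to+\infty}G(t)/h(t)=A$. Granting this, $G(0)\le CA$ — which, via Lemma \ref{lem:A} and the observation that an extension over $M=\big(\prod_j\Omega_j\big)\times Y$ restricts to $\tilde M$ with no larger norm, is the asserted $L^2$ extension inequality — and the equality $\inf\{\cdots\}=CA$ becomes equivalent to the linearity of $G\big(h^{-1}(r)\big)$. Since $\psi=\hat G+\pi_1^*(\Psi)$ is pulled back from $\prod_j\Omega_j$, the sublevel set $\{\psi<-t\}$ is a product $U_t\times Y$ with $U_t\subset\prod_j\Omega_j$ shrinking to $\{z_0\}$, so this is a local computation near $Z_0$: using the asymptotics $G_{\Omega_j}(z,z_j)-\log|w_j(z)|\to\log c_j(z_j)$ (Lemma \ref{l:green-sup}, Lemma \ref{l:green-sup2}) and the decomposition of $f$ into the monomials $\pi_1^*(w^\alpha dw_1\wedge\cdots\wedge dw_{n_1})\wedge\pi_2^*(f_\alpha)$, one checks that exactly the $\alpha$ on the simplex $E=\{\sum_j(\alpha_j+1)/p_j=1\}$ survive in the limit, carrying weight $\prod_j\big((\alpha_j+1)c_j(z_j)^{2\alpha_j+2}\big)^{-1}$ against $\int_Y|f_\alpha|^2e^{-\varphi_Y-(\Psi+\sum_j\tilde\pi_j^*(\varphi_j))(z_0,\cdot)}$. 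Carrying out this residue-type evaluation, with the weight $\psi$ built from a maximum of Green functions, is the main obstacle.

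Next, assume the equality $\inf\{\cdots\}=CA$, hence $G\big(h^{-1}(r)\big)$ is linear. By Lemma \ref{linear} there is a global holomorphic $(n,0)$ form $F$ on $\tilde M$ with $\int_{\{-t_2\le\psi<-t_1\}}|F|^2e^{-\varphi}a(-\psi)=\frac{G(0)}{C}\int_{t_1}^{t_2}a(t)e^{-t}\,dt$ for every nonnegative measurable $a$. From the rigidity of this mass distribution, together with $\Psi\le0$ and $\Psi|_{Z_0}\not\equiv-\infty$, I would deduce $\Psi\equiv0$ (any region where $\psi<\hat G$ would make the annular masses strictly smaller than predicted), and, since $\tilde M\subset M$ is open and the $M$-minimizer restricts to $\tilde M$ with equal norm only when the two coincide, $\tilde M=\big(\prod_j\Omega_j\big)\times Y$; this is statement $(1)$, and is a secondary difficulty, since the recalled characterization Theorem \ref{GBY6-single pt} assumes $\psi$ is a maximum of Green functions on the full product. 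Once $(1)$ holds, Theorem \ref{GBY6-single pt} applies directly and yields $(2)$ ($\varphi_j=2\log|g_j|+2u_j$ with $g_j(z_j)\ne0$ and $u_j$ harmonic) and $(3)$ ($\chi_{j,z_j}^{\alpha_j+1}=\chi_{j,-u_j}$ for every $j$ and every $\alpha\in E$ with $f_\alpha\not\equiv0$).

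Finally, for the converse I would construct the extremal form explicitly from $(1)$, $(2)$, $(3)$: on each $\Omega_j$ the identity $\chi_{j,z_j}^{\alpha_j+1}=\chi_{j,-u_j}$ makes $g_j f_{u_j}f_{z_j}^{\alpha_j+1}$ descend to a single-valued holomorphic function, yielding a holomorphic $(1,0)$ form on $\Omega_j$ that extends the jet $w_j^{\alpha_j}dw_j$ with optimal weighted $L^2$-norm governed by $c_j(z_j)$; tensoring these over $1\le j\le n_1$ with $f_\alpha$ on $Y$, summing over $\alpha\in E$, and identifying the result with the minimizer $F$ on $M$ via Theorem \ref{Concave} and Lemma \ref{lem:A}, the same residue computation as in the second step shows its total weighted norm equals $CA$. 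A routine verification that these pieces glue to the prescribed jet $f$ modulo $\big(\mathcal O(K_M)\otimes\mathcal I(\max_j\{2p_j\pi_{1,j}^{*}(G_{\Omega_j}(\cdot,z_j))\})\big)_z$ at each $z\in Z_0$ then completes the argument.
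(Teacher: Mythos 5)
First, note that the paper you are looking at does not prove this statement at all: Theorem \ref{GBY6-exten-fibra-single} is recalled from \cite{BGY-concavity6} and is used as a black box (e.g.\ in the proofs of Theorem \ref{thm:exten-fibra-single} and Remark \ref{rem:exten-fibra-single}). The in-paper analogues make the intended architecture clear, and it differs from yours in where the real work sits: the optimal inequality is obtained from an Ohsawa--Takegoshi type extension lemma (Lemma \ref{p:exten-fibra} and Lemma \ref{optimal extension}, built on Lemma \ref{L2 method}, Lemma \ref{l:converge} and the limit computations of Lemma \ref{limit discussion 2}), not from concavity; the equality direction is reduced to the linearity characterization (Theorem \ref{GBY6-single pt}) after extracting $\Psi\equiv0$ (resp.\ $N\equiv0$) and $\tilde M=M$ by comparing the minimizer with an extension built against the weight $e^{-\tilde\varphi}c(-\hat G)$ and invoking Lemma \ref{decreasing property of l}, together with the lower-bound estimate of Lemma \ref{l:limit}/Remark \ref{l:limit2} and the ideal identification Lemma \ref{l:phi1+phi2}.

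The genuine gap in your route is the step you call the ``main obstacle'': you treat $\lim_{t\to+\infty}G(t)/h(t)=A$ as a local, residue-type computation near $Z_0$, asserting that $\{\psi<-t\}=U_t\times Y$ with $U_t$ shrinking to $\{z_0\}$. That is false in general: since $\psi=\hat G+\pi_1^*(\Psi)$ with $\Psi\le 0$ plurisubharmonic, one has $\{\hat G<-t\}\subset\{\psi<-t\}$, and wherever $\Psi$ is very negative (it may even have poles away from $z_0$) the sublevel sets contain regions that never shrink to $Z_0$; only $\{\hat G<-t\}$ localizes (Lemma \ref{relative compactnees of Green function}). Consequently the local expansion of $f$ gives only the lower bound $\liminf G(t)/h(t)\ge A$ (this is exactly Lemma \ref{l:limit}), while the matching upper bound --- equivalently the asserted inequality $G(0)\le\big(\int_0^{+\infty}c e^{-s}ds\big)A$ --- requires constructing, on all of $\tilde M$ (or on each $\{\psi<-t\}$), a holomorphic extension with the sharp constant; a ``crude, non-optimal extension'' only yields finiteness of $G$, and no purely local evaluation can supply the global optimal bound. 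This is precisely the content of Lemma \ref{p:exten-fibra}, which your proposal never reconstructs, so the logic is circular at its core. Two further points need repair: deducing $\Psi\equiv0$ from ``rigidity of the mass distribution'' is not an argument --- the actual mechanism is the inequality $c(-\psi)e^{-\varphi}\le c(-\hat G)e^{-\varphi-\pi_1^*(\Psi)}$ forced into an equality for the comparison extension, then Lemma \ref{decreasing property of l}; and running everything through Theorem \ref{Concave}/Lemma \ref{linear} needs $c\in\mathcal P_{0,\tilde M}$, i.e.\ a positive lower bound for $e^{-\varphi}c(-\psi)$ on compact subsets away from $Z_0$, which is not among the hypotheses of the theorem and is avoided by the extension-lemma route.
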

\begin{Remark}[\cite{BGY-concavity6}]
	\label{GBY6-exten-fibra-single-rem}If $(f_{\alpha},y)\in(\mathcal{O}(K_Y)\otimes\mathcal{I}(\varphi_Y))_y$ for any $y\in Y$ and $\alpha\in \tilde E\backslash E$, the above result also holds when
 we replace  the ideal sheaf\ \  $\mathcal{I}\left(\max_{1\le j\le n_1}\left\{2p_j\pi_{1,j}^{*}(G_{\Omega_j}(\cdot,z_j))\right\}\right)$  by $\mathcal{I}(\varphi+\psi)$.
\end{Remark}

Let $Z_j=\{z_{j,1},\ldots,z_{j,m_j}\}\subset\Omega_j$ for any  $j\in\{1,\ldots,n_1\}$, where $m_j$ is a positive integer.
Let $f$ be a holomorphic $(n,0)$ form on a neighborhood $U_0\subset \tilde{M}$ of $Z_0$ such that
$$f=\sum_{\alpha\in\tilde E_{\beta}}\pi_1^*(w_{\beta}^{\alpha}dw_{1,\beta_1}\wedge\ldots\wedge dw_{n_1,\beta_{n_1}})\wedge\pi_2^*(f_{\alpha,\beta})$$ on $U_0\cap(V_{\beta}\times Y)$, where $f_{\alpha,\beta}$ is a holomorphic $(n_2,0)$ form on $Y$ for any $\alpha\in E_{\beta}$ and $\beta\in\tilde I_1$. Let $\beta^*=(1,\ldots,1)\in\tilde I_1$, and let $\alpha_{\beta^*}=(\alpha_{\beta^*,1},\ldots,\alpha_{\beta^*,n_1})\in E_{\beta^*}$.  Assume that $f=\pi_1^*\left(w_{\beta^*}^{\alpha_{\beta^*}}dw_{1,1}\wedge\ldots\wedge dw_{n_1,1}\right)\wedge\pi_2^*\left(f_{\alpha_{\beta^*},\beta^*}\right)+\sum_{\alpha\in E'}\pi_1^*(w^{\alpha}dw_{1,1}\wedge\ldots\wedge dw_{n_1,1})\wedge\pi_2^*(f_{\alpha,\beta})$ on $U_0\cap(V_{\beta^*}\times Y)$.

We recall a characterization of the holding of equality in optimal jets $L^2$ extension problem for the case that $Z_j$ is finite.

\begin{Theorem}[\cite{BGY-concavity6}]
\label{GBY6:exten-fibra-finite}
Let $c$ be a positive function on $(0,+\infty)$ such that $\int_{0}^{+\infty}c(t)e^{-t}dt<+\infty$ and $c(t)e^{-t}$ is decreasing on $(0,+\infty)$. Assume that
$$\sum_{\beta\in \tilde{I}_1}\sum_{\alpha\in E_{\beta}}\frac{(2\pi)^{n_1}e^{-\left(\Psi+\sum_{1\le j\le n_1}\tilde\pi_j^*(\varphi_j)\right)(z_{\beta})}\int_Y|f_{\alpha,\beta}|^2e^{-\varphi_Y}}{\prod_{1\le j\le n_1}(\alpha_j+1)c_{j,\beta_j}^{2\alpha_{j}+2}}\in(0,+\infty).$$
Then there exists a holomorphic $(n,0)$ form $F$ on $\tilde{M}$ satisfying that $(F-f,z)\in\left(\mathcal{O}(K_{\tilde{M}})\otimes\mathcal{I}\left(\max_{1\le j\le n_1}\left\{2\sum_{1\le k\le m_j}p_{j,k}\pi_{1,j}^{*}(G_{\Omega_j}(\cdot,z_{j,k}))\right\}\right)\right)_{z}$ for any $z\in Z_0$ and
\begin{displaymath}
	\begin{split}
	&\int_{\tilde{M}}|F|^2e^{-\varphi}c(-\psi)\\
	\le&\left(\int_0^{+\infty}c(s)e^{-s}ds\right)\sum_{\beta\in \tilde{I}_1}\sum_{\alpha\in E_{\beta}}\frac{(2\pi)^{n_1}e^{-\left(\Psi+\sum_{1\le j\le n_1}\tilde\pi_j^*(\varphi_j)\right)(z_{\beta})}\int_Y|f_{\alpha,\beta}|^2e^{-\varphi_Y}}{\prod_{1\le j\le n_1}(\alpha_j+1)c_{j,\beta_j}^{2\alpha_{j}+2}}.	
	\end{split}
\end{displaymath}
	
	Moreover, equality $\inf\bigg\{\int_{\tilde{M}}|\tilde{F}|^2e^{-\varphi}c(-\psi):\tilde{F}\in H^0(\tilde{M},\mathcal{O}(K_{\tilde{M}}))\,\&\, (\tilde{F}-f,z)\in\left(\mathcal{O}(K_{\tilde{M}})\otimes\mathcal{I}\left(\max_{1\le j\le n_1}\left\{2\sum_{1\le k\le m_j}p_{j,k}\pi_{1,j}^{*}(G_{\Omega_j}(\cdot,z_{j,k}))\right\}\right)\right)_{z}$ for any $z\in Z_0\bigg\}=\left(\int_0^{+\infty}c(s)e^{-s}ds\right)\sum_{\beta\in \tilde{I}_1}\sum_{\alpha\in E_{\beta}}\frac{(2\pi)^{n_1}e^{-\left(\Psi+\sum_{1\le j\le n_1}\tilde\pi_j^*(\varphi_j)\right)(z_{\beta})}\int_Y|f_{\alpha,\beta}|^2e^{-\varphi_Y}}{\prod_{1\le j\le n_1}(\alpha_j+1)c_{j,\beta_j}^{2\alpha_{j}+2}}$ holds if and only if the following statements hold:

	$(1)$ $\tilde{M}=\left(\prod_{1\le j\le n_1}\Omega_j\right)\times Y$ and $\Psi\equiv0$;
	
	$(2)$ $\varphi_j=2\log|g_j|+2u_j$ for any $j\in\{1,\ldots,n_1\}$, where $u_j$ is a harmonic function on $\Omega_j$ and $g_j$ is a holomorphic function on $\Omega_j$ satisfying $g_j(z_{j,k})\not=0$ for any $k\in\{1,\ldots,m_j\}$;
	
	$(3)$ There exists a nonnegative integer $\gamma_{j,k}$ for any $j\in\{1,\ldots,n_1\}$ and $k\in\{1,\ldots,m_j\}$, which satisfies that $\prod_{1\le k\leq m_j}\chi_{j,z_{j,k}}^{\gamma_{j,k}+1}=\chi_{j,-u_j}$ and $\sum_{1\le j\le n_1}\frac{\gamma_{j,\beta_j}+1}{p_{j,\beta_j}}=1$ for any $\beta\in \tilde{I}_1$;
	
	$(4)$ $f_{\alpha,\beta}=c_{\beta}f_0$ holds for $\alpha=(\gamma_{1,\beta_1},\ldots,\gamma_{n_1,\beta_{n_1}})$ and $f_{\alpha,\beta}\equiv0$ holds for any $\alpha\in E_{\beta}\backslash\{(\gamma_{1,\beta_1},\ldots,\gamma_{n_1,\beta_{n_1}})\}$, where $\beta\in \tilde{I}_1$, $c_{\beta}$ is a constant and $f_0\not\equiv0$ is a holomorphic $(n_2,0)$ form on $Y$ satisfying $\int_Y|f_0|^2e^{-\varphi_2}<+\infty$;
		
	$(5)$ $c_{\beta}\prod_{1\le j\le n_1}\left(\lim_{z\rightarrow z_{j,\beta_j}}\frac{w_{j,\beta_j}^{\gamma_{j,\beta_j}}dw_{j,\beta_j}}{g_j(P_{j})_*\left(f_{u_j}\left(\prod_{1\le k\le m_j}f_{z_{j,k}}^{\gamma_{j,k}+1}\right)\left(\sum_{1\le k\le m_j}p_{j,k}\frac{df_{z_{j,k}}}{f_{z_{j,k}}}\right)\right)}\right)=c_0$ for any $\beta\in \tilde{I}_1$, where $c_0\in\mathbb{C}\backslash\{0\}$ is a constant independent of $\beta$, $f_{u_j}$ is a holomorphic function $\Delta$ such that $|f_{u_j}|=P_j^*(e^{u_j})$ and $f_{z_{j,k}}$ is a holomorphic function on $\Delta$ such that $|f_{z_{j,k}}|=P_j^*\left(e^{G_{\Omega_j}(\cdot,z_{j,k})}\right)$ for any $j\in\{1,\ldots,n_1\}$ and $k\in\{1,\ldots,m_j\}$.
\end{Theorem}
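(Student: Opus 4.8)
The plan is to prove the statement in two stages: first the $L^{2}$ extension inequality, and then the characterization of the case of equality.

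\emph{The inequality.} I would consider the minimal $L^{2}$ integral $\tilde G(t)$ on $\{\psi<-t\}\subset\tilde M$ with weight $\varphi$, gain $c$, and $\mathcal F_{(z,y)}=\mathcal I(\hat G+\pi_2^*(\varphi_Y))_{(z,y)}$ for $(z,y)\in Z_0$, and invoke Theorem \ref{Concave}: $\tilde G(h^{-1}(r))$ is concave on $(0,\int_0^{+\infty}c(s)e^{-s}ds)$ with $\lim_{t\to+\infty}\tilde G(t)=0$ and $\lim_{t\to 0+}\tilde G(t)=\tilde G(0)$, where $h(t)=\int_t^{+\infty}c(s)e^{-s}ds$. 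A concave function on $(0,h(0)]$ that vanishes at the left endpoint lies below its tangent at the origin, so $\tilde G(0)\le h(0)\cdot\lim_{t\to+\infty}\tilde G(t)/h(t)$. The crux is to identify $\lim_{t\to+\infty}\tilde G(t)/h(t)$ with the explicit sum in the statement. For $t$ large, $\{\psi<-t\}$ splits into disjoint ``tubes'' lying over small polydisc neighbourhoods $V_\beta$ of the points $z_\beta=(z_{1,\beta_1},\ldots,z_{n_1,\beta_{n_1}})$, times $Y$; on each tube I would localize, lift the $j$-th surface factor to its universal covering $P_j$, and use that $2\sum_{k}p_{j,k}G_{\Omega_j}(\cdot,z_{j,k})$ equals $2p_{j,\beta_j}\log|w_{j,\beta_j}|$ plus a term whose exponential limit at $z_{j,\beta_j}$ is $c_{j,\beta_j}^{2p_{j,\beta_j}}$, together with the continuity of $\pi_1^*\Psi$ and $\pi_{1,j}^*\varphi_j$ at $z_\beta$. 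Feeding the expansion $f=\sum_{\alpha\in\tilde E_\beta}\pi_1^*(w_\beta^\alpha dw_{1,\beta_1}\wedge\cdots\wedge dw_{n_1,\beta_{n_1}})\wedge\pi_2^*(f_{\alpha,\beta})$ into the product form of the sharp-constant Ohsawa--Takegoshi extension, applied one surface factor at a time, produces exactly the summand carrying the $n_1$ copies of $2\pi$, the factors $(\alpha_j+1)$, the powers $c_{j,\beta_j}^{2\alpha_j+2}$, the weight value $e^{-(\Psi+\sum_j\pi_{1,j}^*\varphi_j)(z_\beta)}$ and $\int_Y|f_{\alpha,\beta}|^2e^{-\varphi_Y}$; only the $\alpha\in E_\beta$ contribute to the leading order (the higher-order $\alpha\in\tilde E_\beta\setminus E_\beta$ are absorbed), and summing over $\beta\in\tilde I_1$ gives the stated bound.

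\emph{The equality.} Equality between the infimum and the right-hand side is equivalent to $\tilde G(0)$ attaining the above upper bound, that is, to the concave function $\tilde G(h^{-1}(r))$ coinciding with its tangent at the origin, which happens precisely when it is linear on $(0,\int_0^{+\infty}c(s)e^{-s}ds]$. Assuming this linearity, Proposition \ref{p:M=M_1} forces $\tilde M=M$, so $\psi=\max_{1\le j\le n_1}\{2\sum_k p_{j,k}\pi_{1,j}^*(G_{\Omega_j}(\cdot,z_{j,k}))\}+\pi_1^*(\Psi)$ is globally defined on $M$ with $\Psi$ pulled back from $\prod_j\Omega_j$, and one may apply the characterization of linearity on $M$ (the $\Psi$-version of Theorem \ref{GBY6-finitie pts}, i.e.\ first deduce $\Psi\equiv0$ and then reduce to Theorem \ref{GBY6-finitie pts} using the identity $\mathcal I(\varphi+\psi)_{z}=\mathcal I(\hat G+\pi_2^*\varphi_Y)_{z}$ along $Z_0$). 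This yields $\Psi\equiv0$, the decompositions $\varphi_j=2\log|g_j|+2u_j$, the nonnegative integers $\gamma_{j,k}$ with $\prod_k\chi_{j,z_{j,k}}^{\gamma_{j,k}+1}=\chi_{j,-u_j}$ and $\sum_j(\gamma_{j,\beta_j}+1)/p_{j,\beta_j}=1$ for all $\beta$, the precise form of $f$ on each $V_\beta\times Y$ with a common $(n_2,0)$ form $f_0$ on $Y$, and the normalization condition $(5)$; these are exactly statements $(1)$--$(5)$. Conversely, given $(1)$--$(5)$, I would construct the global extension $F$ on $M$ explicitly from $f_0$ and the multiplicative objects $f_{u_j}$, $f_{z_{j,k}}$ on the $\Delta$'s, verify by the equality case of the product $L^{2}$ extension that $\int_M|F|^2e^{-\varphi}c(-\psi)$ equals the right-hand side, and conclude by the uniqueness statement in Lemma \ref{lem:A}.

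\emph{Main obstacle.} The delicate point is the slope computation: isolating the contribution of each $z_\beta$, passing to the universal coverings factor by factor, and carrying out the product Ohsawa--Takegoshi estimate with the \emph{exact} constant while the weight $\pi_1^*\Psi+\sum_j\pi_{1,j}^*\varphi_j$ is only upper semicontinuous (finite, but not bounded) near $z_\beta$, which forces a careful limiting argument in $t$. A secondary technical point is verifying that $\mathcal I(\hat G+\pi_2^*\varphi_Y)$ and $\mathcal I(\varphi+\psi)$ agree along $Z_0$, so that the linearity characterization on $M$ can be applied and its conclusions match the list $(1)$--$(5)$ as stated.
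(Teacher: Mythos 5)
This theorem is not actually proved in the present paper: it is quoted from \cite{BGY-concavity6}, and the paper only develops the analogous machinery for its new $N$-versions (Lemma \ref{p:exten-fibra} and Remark \ref{remark after extension} for the extension with exact constant, Lemma \ref{l:limit} and Remark \ref{l:limit2} for the lower bound on the slope, Lemma \ref{decreasing property of l}, Proposition \ref{p:M=M_1}, Lemma \ref{l:phi1+phi2}, Theorem \ref{GBY6-finitie pts}). Your skeleton --- concavity of $\tilde G(h^{-1}(r))$, identification of the slope as $t\to+\infty$, ``equality forces linearity'', then $\tilde M=M$, $\Psi\equiv0$, and reduction to the linearity characterization via the equality of multiplier ideal sheaves --- is indeed the strategy used in this family of papers, but two steps in your outline do not work as written.

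First, your localization claim is false in general: since $\psi=\hat G+\pi_1^*(\Psi)$ with $\Psi\le0$ an arbitrary plurisubharmonic function, $\{\psi<-t\}\supset\{\pi_1^*(\Psi)<-t\}$ need not be contained in the tubes $V_\beta\times Y$ for any $t$; only the sublevel sets of $\hat G$ localize (Lemma \ref{relative compactnees of Green function}). The actual proofs dominate $e^{-\varphi}c(-\psi)\le e^{-\varphi-\pi_1^*(\Psi)}c(-\hat G)$ using that $c(t)e^{-t}$ is decreasing and $\psi\le\hat G$, and then run both the extension construction and the slope computation with respect to $\hat G$; without this step neither your upper bound $\limsup_t\tilde G(t)/h(t)\le S$ nor the asserted equivalence ``equality $\Leftrightarrow$ linearity'' (which needs the uniform bound $\tilde G(t)\le h(t)S$ for every $t$, obtained from that slope bound via monotonicity of $g(r)/r$, not merely the limiting slope) is established. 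Second, the exact constant is not produced by applying sharp one-variable Ohsawa--Takegoshi ``one surface factor at a time'': the weight is a maximum and the gain $c(-\psi)$ couples the factors, so the constant $\prod_{1\le j\le n_1}(\alpha_j+1)c_{j,\beta_j}^{2\alpha_j+2}$ comes from the direct computation on sublevel sets of the max-weight (Lemmas \ref{l:m1}, \ref{l:m2}, \ref{limit discussion 2}) fed into Lemma \ref{L2 method}, together with truncation of the upper semicontinuous weight by $\max\{\cdot,-l\}$, exhaustion, Fatou's lemma and Lemma \ref{l:converge}; moreover the terms with $\alpha\in\tilde E_\beta\setminus E_\beta$ are not ``absorbed to leading order'' but are removed because they lie in $\mathcal I(\hat G)$ by Lemma \ref{l:0} (this is exactly why the ideal sheaf in the statement is $\mathcal I(\hat G)$ rather than $\mathcal I(\hat G+\pi_2^*(\varphi_Y))$, cf.\ Remark \ref{remark after extension}). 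These are the places where your outline still requires the substantive work, and as proposed the argument does not go through.
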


\begin{Remark}[\cite{BGY-concavity6}]
	\label{GBY6:exten-fibra-finite-rem} If $(f_{\alpha,\beta},y)\in(\mathcal{O}(K_Y)\otimes\mathcal{I}(\varphi_Y))_y$ holds for any $y\in Y$,  $\alpha\in \tilde E_{\beta}\backslash E_{\beta}$ and $\beta\in \tilde{I}_1$, the above result also holds when we replace  the ideal sheaf $\mathcal{I}\left(\max_{1\le j\le n_1}\left\{2\sum_{1\le k\le m_j}p_{j,k}\pi_{1,j}^{*}(G_{\Omega_j}(\cdot,z_{j,k}))\right\}\right)$  by $\mathcal{I}(\varphi+\psi)$.
\end{Remark}

Let $Z_j=\{z_{j,k}:1\le k<\tilde m_j\}$ be a discrete subset of $\Omega_j$ for any  $j\in\{1,\ldots,n_1\}$, where $\tilde m_j\in\mathbb{Z}_{\ge2}\cup\{+\infty\}$. Let $f$ be a holomorphic $(n,0)$ form on a neighborhood $U_0\subset\tilde{M}$ of $Z_0$ such that
$$f=\sum_{\alpha\in\tilde E_{\beta}}\pi_1^*(w_{\beta}^{\alpha}dw_{1,\beta_1}\wedge\ldots\wedge dw_{n_1,\beta_{n_1}})\wedge\pi_2^*(f_{\alpha,\beta})$$ on $U_0\cap(V_{\beta}\times Y)$, where $f_{\alpha,\beta}$ is a holomorphic $(n_2,0)$ form on $Y$ for any $\alpha\in E_{\beta}$ and $\beta\in\tilde I_1$. Let $\beta^*=(1,\ldots,1)\in\tilde I_1$, and let $\alpha_{\beta^*}=(\alpha_{\beta^*,1},\ldots,\alpha_{\beta^*,n_1})\in E_{\beta^*}$.  Assume that $f=\pi_1^*\left(w_{\beta^*}^{\alpha_{\beta^*}}dw_{1,1}\wedge\ldots\wedge dw_{n_1,1}\right)\wedge\pi_2^*\left(f_{\alpha_{\beta^*},\beta^*}\right)+\sum_{\alpha\in E'}\pi_1^*(w^{\alpha}dw_{1,1}\wedge\ldots\wedge dw_{n_1,1})\wedge\pi_2^*(f_{\alpha,\beta})$ on $U_0\cap(V_{\beta^*}\times Y)$.

We recall that the equality in optimal jets $L^2$ extension problem could not hold when there exists $j_0\in\{1,\ldots,n_1\}$ such that $\tilde m_{j_0}=+\infty$.

\begin{Theorem}[\cite{BGY-concavity6}]
\label{GBY6:exten-fibra-infinite}Let $c$ be a positive function on $(0,+\infty)$ such that $\int_{0}^{+\infty}c(t)e^{-t}dt<+\infty$ and $c(t)e^{-t}$ is decreasing on $(0,+\infty)$. Assume that
$$\sum_{\beta\in\tilde I_1}\sum_{\alpha\in E_{\beta}}\frac{(2\pi)^{n_1}e^{-\left(\Psi+\sum_{1\le j\le n_1}\tilde\pi_j^*(\varphi_j)\right)(z_{\beta})}\int_Y|f_{\alpha,\beta}|^2e^{-\varphi_Y}}{\prod_{1\le j\le n_1}(\alpha_j+1)c_{j,\beta_j}^{2\alpha_{j}+2}}\in(0,+\infty)$$
and there exists $j_0\in\{1,\ldots,n_1\}$ such that $\tilde m_{j_0}=+\infty$.

Then there exists a holomorphic $(n,0)$ form $F$ on $\tilde{M}$ satisfying that $(F-f,z)\in\left(\mathcal{O}(K_{\tilde{M}})\otimes\mathcal{I}\left(\max_{1\le j\le n_1}\left\{2\sum_{1\le k<\tilde m_j}p_{j,k}\pi_{1,j}^{*}(G_{\Omega_j}(\cdot,z_{j,k}))\right\}\right)\right)_{z}$ for any $z\in Z_0$ and
\begin{displaymath}
	\begin{split}
	&\int_{\tilde{M}}|F|^2e^{-\varphi}c(-\psi)\\
<&\left(\int_0^{+\infty}c(s)e^{-s}ds\right)\sum_{\beta\in\tilde I_1}\sum_{\alpha\in E_{\beta}}\frac{(2\pi)^{n_1}e^{-\left(\Psi+\sum_{1\le j\le n_1}\tilde\pi_j^*(\varphi_j)\right)(z_{\beta})}\int_Y|f_{\alpha,\beta}|^2e^{-\varphi_Y}}{\prod_{1\le j\le n_1}(\alpha_j+1)c_{j,\beta_j}^{2\alpha_{j}+2}}.	
	\end{split}
\end{displaymath}
\end{Theorem}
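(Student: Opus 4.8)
The plan is to route everything through the minimal $L^2$ integral and the concavity of Theorem \ref{Concave}, and then to eliminate the equality case by appealing to the non-linearity result Theorem \ref{GBY6-infinite points}. Write $C_0:=\int_0^{+\infty}c(s)e^{-s}\,ds$ and let $C$ be the (finite, positive) sum on the right-hand side of the asserted inequality. Put $\hat G=\psi_0:=\max_{1\le j\le n_1}\{2\sum_{1\le k<\tilde m_j}p_{j,k}\pi_{1,j}^*(G_{\Omega_j}(\cdot,z_{j,k}))\}$, so $\psi=\psi_0+\pi_1^*(\Psi)$, and let $G(t):=G(t;c,f,\varphi,\psi,\mathcal{F})$ be the minimal $L^2$ integral on $\{\psi<-t\}\subset\tilde M$ with $\mathcal{F}_{(z,y)}=\mathcal{I}(\psi_0)_{(z,y)}$ (respectively $\mathcal{I}(\hat G+\pi_2^*\varphi_Y)_{(z,y)}$ in the variant). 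Since every $G_{\Omega_j}$ is strictly negative and $\Psi\le0$, one has $\psi<0$ everywhere, hence $\{\psi<0\}=\tilde M$ and $G(0)$ is exactly the infimum occurring in the statement; thus it suffices to prove $G(0)<C_0C$.

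The first step, which is the technical core, is the sharp boundary asymptotics $\limsup_{t\to+\infty}\frac{G(t)}{\int_t^{+\infty}c(s)e^{-s}\,ds}\le C$. On each $V_\beta\times Y$ one has $\psi=2\max_{j}p_{j,\beta_j}\big(\log|w_{j,\beta_j}|+\log c_{j,\beta_j}\big)+\Psi(z_\beta)+o(1)$, so $\{\psi<-t\}\cap(V_\beta\times Y)$ is a polydisc-type shell, and the $L^2$ extension of the jet datum $\pi_1^*(w_\beta^\alpha dw_{1,\beta_1}\wedge\dots\wedge dw_{n_1,\beta_{n_1}})\wedge\pi_2^*(f_{\alpha,\beta})$ with $\alpha\in E_\beta$ decouples, by Fubini and the product structure, into the product over $j$ of the sharp one-variable jet extension constants $\frac{2\pi}{(\alpha_j+1)c_{j,\beta_j}^{2\alpha_j+2}}$ (a Suita-type computation, cf. \cite{guan-zhou13ap,GMY}), times the fibre integral $\int_Y|f_{\alpha,\beta}|^2e^{-\varphi_Y}$, times $e^{-(\Psi+\sum_j\tilde\pi_j^*\varphi_j)(z_\beta)}$; the terms with $\alpha\in\tilde E_\beta\setminus E_\beta$ lie in $\mathcal{F}$ and contribute nothing. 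Summing these local extensions against cut-off functions supported in the pairwise-disjoint neighbourhoods $V_\beta$ and correcting by Hörmander's $\bar\partial$-estimate on the weakly pseudoconvex K\"ahler manifold $\{\psi<-t\}$ — the $\bar\partial$-data being supported on compacta away from $Z_0$, where $e^{-\psi}$ is bounded, so that its weighted norm is dominated by the convergent series defining $C$ — produces for each $t$ a competitor of norm $\le\big(\int_t^{+\infty}c(s)e^{-s}\,ds\big)(C+o(1))$; in particular $G(t)<+\infty$ for large $t$.

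Now Theorem \ref{Concave} applies: $g(r):=G(h^{-1}(r))$ is concave on $(0,C_0]$, $\lim_{t\to0}G(t)=G(0)$, $\lim_{t\to+\infty}G(t)=0$, so $g(0)=0$, $g(C_0)=G(0)$, and the chord-slope inequality for concave $g$ with $g(0)=0$ gives $G(0)/C_0=g(C_0)/C_0\le\lim_{r\to0^+}g(r)/r=\limsup_{t\to+\infty}G(t)/\int_t^{+\infty}c(s)e^{-s}\,ds\le C$, i.e. $G(0)\le C_0C$. Suppose $G(0)=C_0C$. Then $g(C_0)=C_0\,g'(0^+)$, and a concave function on $[0,C_0]$ vanishing at $0$ and meeting its initial tangent line at the right endpoint is linear; hence $G(h^{-1}(r))$ is linear on $(0,C_0]$. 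But the hypothesis forces $C>0$ (some $f_{\alpha,\beta}$ with $\alpha\in E_\beta$ is $\not\equiv0$), hence $G(0)\in(0,+\infty)$, and since $\tilde m_{j_0}=+\infty$ for some $j_0$, Theorem \ref{GBY6-infinite points} (in the version allowing the extra negative plurisubharmonic term $\pi_1^*\Psi$) asserts that $G(h^{-1}(r))$ is \emph{not} linear — a contradiction. Therefore $G(0)<C_0C$, and by Lemma \ref{lem:A} at $t=0$ there is a holomorphic $(n,0)$ form $F$ on $\{\psi<0\}=\tilde M$ with $(F-f,z)\in(\mathcal{O}(K_{\tilde M})\otimes\mathcal{F})_z$ for all $z\in Z_0$ and $\int_{\tilde M}|F|^2e^{-\varphi}c(-\psi)=G(0)<C_0C$, which is the assertion; the variant (ideal $\mathcal{I}(\hat G+\pi_2^*\varphi_Y)$) follows identically once one notes the two ideals agree on the relevant set under the stated vanishing condition on the $f_{\alpha,\beta}$ (Lemma \ref{l:phi1+phi2}).

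The main obstacle is the first step: establishing $\limsup_{t\to+\infty}G(t)/\int_t^{+\infty}c(s)e^{-s}\,ds\le C$, and in particular keeping the error terms uniform in $t$ while summing the contributions of the \emph{infinitely} many shells $\{\psi<-t\}\cap(V_\beta\times Y)$. This is exactly where the convergence hypothesis $\sum_{\beta}\sum_{\alpha\in E_\beta}(\cdots)<+\infty$ enters, and where one needs the precise value $\frac{2\pi}{(\alpha_j+1)c_{j,\beta_j}^{2\alpha_j+2}}$ of the one-variable extremal constants in terms of the logarithmic capacities $c_{j,\beta_j}$ and the jet orders. Everything after that — concavity, the slope inequality, and the contradiction with non-linearity — is soft.
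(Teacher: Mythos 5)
This statement is recalled from \cite{BGY-concavity6}; the present paper does not reprove it, but it does prove the generalization Theorem \ref{thm:exten-fibra-infinite}, and the techniques there show where your argument is incomplete. The soft part of your plan is fine: granted a sharp bound $\limsup_{t\to+\infty}G(t)/\int_t^{+\infty}c(s)e^{-s}ds\le C$ and the concavity of $G(h^{-1}(r))$, the chord--slope pinching does force linearity in the equality case, and strictness then follows from a non-linearity theorem. The problems are in the two steps you treat as routine. First, the minimal $L^2$ integral machinery (Theorem \ref{Concave}, Lemma \ref{lem:A}, Lemma \ref{linear}) requires $c\in\mathcal{P}_{T,M}$, i.e.\ a positive lower bound of $e^{-\varphi}c(-\psi)$ on compact subsets away from a set contained in $\{\psi=-\infty\}\cap Z$, plus condition $(A)$ for the sublevel sets of $\psi$. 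Here $\Psi\le 0$ may have poles off $Z_0$, the $\varphi_j$ are only Lebesgue measurable, and $c$ is only assumed positive with $c(t)e^{-t}$ decreasing and integrable, so none of this is granted by the hypotheses; this is precisely why the paper's extension results (Lemma \ref{optimal extension}, Lemma \ref{p:exten-fibra}, Remark \ref{remark after extension}) bypass the concavity framework and work directly with Lemma \ref{L2 method} on sublevel sets of $\hat G$ (not of $\psi$), with truncations $\max\{\Phi,-l\}$, exhaustions $\tilde{M}_s$, and Fatou. Relatedly, your description of $\{\psi<-t\}\cap(V_\beta\times Y)$ as a polydisc shell with $\psi=2\max_j p_{j,\beta_j}(\log|w_{j,\beta_j}|+\log c_{j,\beta_j})+\Psi(z_\beta)+o(1)$ is false when $\Psi\not\equiv0$: $\{\psi<-t\}$ also contains neighborhoods of the pole set of $\pi_1^*\Psi$, which need not be near $Z_0$ (where $f$ is not even defined), and $\Psi$ is merely upper semicontinuous, so the value $e^{-(\Psi+\sum_j\tilde\pi_j^*\varphi_j)(z_\beta)}$ can only be reached through truncation and averaged sub-mean-value estimates as in Lemma \ref{limit discussion} and Lemma \ref{limit discussion 2}, not through a uniform $o(1)$ expansion.

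Second, your contradiction step appeals to ``Theorem \ref{GBY6-infinite points} in the version allowing the extra negative plurisubharmonic term $\pi_1^*\Psi$.'' No such version is among the available results: Theorem \ref{GBY6-infinite points} is stated for $\psi=\hat G$ with subharmonic $\varphi_j$ on $M$, not on $\tilde{M}$ and not with $\Psi$, and proving such strengthenings (negligible weights, $\tilde{M}\subsetneq M$) is exactly the new content of the present paper, not something one may quote. The workable route, and the one the paper uses for Theorem \ref{thm:exten-fibra-infinite}, is a reduction: assume the equality, build a second extension $\hat F$ with respect to the pair $(\tilde\varphi,\hat G)$ via Remark \ref{remark after extension}, compare $\int|\hat F|^2e^{-\varphi}c(-\psi)$ with $\int|\hat F|^2e^{-\tilde\varphi}c(-\hat G)$ using $\psi\le\hat G$ and the monotonicity of $c(t)e^{-t}$, and invoke Lemma \ref{decreasing property of l} to force $\Psi\equiv0$ (and an analogous comparison on $M$ versus $\tilde{M}$ to force $\tilde{M}=M$); only after this reduction does the quoted non-linearity theorem apply and yield the contradiction. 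As written, your proposal assumes the conclusion of that reduction, so the argument has a genuine gap at both the quantitative upper-bound step and the final contradiction step.
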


\begin{Remark}[\cite{BGY-concavity6}]
	\label{GBY6:exten-fibra-infinite-rem}If $(f_{\alpha,\beta},y)\in(\mathcal{O}(K_Y)\otimes\mathcal{I}(\varphi_Y))_y$ holds for any $y\in Y$, $\alpha\in \tilde E_{\beta}\backslash E_{\beta}$ and $\beta\in\tilde I_1$, the above result also holds when we replace  the ideal sheaf $\mathcal{I}\left(\max_{1\le j\le n_1}\left\{2\sum_{1\le k<\tilde m_j}p_{j,k}\pi_{1,j}^{*}(G_{\Omega_j}(\cdot,z_{j,k}))\right\}\right)$  by $\mathcal{I}(\varphi+\psi)$.
\end{Remark}
\subsection{Basic properties of the Green functions}

In this section, we recall some basic properties of the Green functions. Let $\Omega$ be an open Riemann surface, which admits a nontrivial Green function $G_{\Omega}$, and let $z_0\in\Omega$.

\begin{Lemma}[see \cite{S-O69}, see also \cite{Tsuji}] 	\label{l:green-sup}
		Let $w$ be a local coordinate on a neighborhood of $z_0$ satisfying $w(z_0)=0$.  $G_{\Omega}(z,z_0)=\sup_{v\in\Delta_{\Omega}^*(z_0)}v(z)$, where $\Delta_{\Omega}^*(z_0)$ is the set of negative subharmonic function on $\Omega$ such that $v-\log|w|$ has a locally finite upper bound near $z_0$. Moreover, $G_{\Omega}(\cdot,z_0)$ is harmonic on $\Omega\backslash\{z_0\}$ and $G_{\Omega}(\cdot,z_0)-\log|w|$ is harmonic near $z_0$.
	\end{Lemma}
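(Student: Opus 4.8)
This is a classical fact from potential theory on open Riemann surfaces (see \cite{S-O69}, see also \cite{Tsuji}); I would present the proof via Perron's method. Set $u(z):=\sup_{v\in\Delta_{\Omega}^{*}(z_0)}v(z)$. The plan is to show that $u$ is harmonic on $\Omega\setminus\{z_0\}$, that $u-\log|w|$ extends harmonically across $z_0$, and that $u$ coincides with the (negative) Green function $G_{\Omega}(\cdot,z_0)$.

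First I would verify that $\Delta_{\Omega}^{*}(z_0)$ is a Perron family not reduced to the constant $-\infty$. It is stable under finite maxima and under Poisson modification over relatively compact parametric discs (on a disc $D'$ avoiding $z_0$ one replaces $v$ by the harmonic extension of $v|_{\partial D'}$; on a disc containing $z_0$ one replaces $v$ by $\log|w|$ plus the harmonic extension of $(v-\log|w|)|_{\partial D'}$), and each modification stays $\le 0$ and keeps the pole condition. Nontriviality is where the hypothesis enters: exhausting $\Omega$ by relatively compact subdomains $\Omega_k\uparrow\Omega$ with $z_0\in\Omega_1$, letting $g_k<0$ be the Green function of $\Omega_k$ with pole at $z_0$ and extending it by $0$ off $\Omega_k$, one gets a decreasing sequence of negative subharmonic functions on $\Omega$ whose limit $v_0$ is $\not\equiv-\infty$ \emph{precisely because} $\Omega$ admits a nontrivial Green function; by construction $v_0\in\Delta_{\Omega}^{*}(z_0)$ and $v_0-\log|w|$ is harmonic, hence bounded, near $z_0$.

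By Perron's method and Harnack's principle, $u$ is then harmonic on $\Omega\setminus\{z_0\}$ (it is not $\equiv-\infty$ since $v_0\le u\le0$). For the pole, fix a parametric disc with $\{|w|\le 1/2\}\Subset D$: for every $v\in\Delta_{\Omega}^{*}(z_0)$ the function $v-\log|w|$ is subharmonic on $D\setminus\{z_0\}$, bounded above near $z_0$, hence extends subharmonically across $z_0$, and on $\{|w|=1/2\}$ it is $\le\log 2$; the maximum principle gives $v-\log|w|\le\log 2$ on $\{0<|w|\le 1/2\}$ \emph{uniformly in} $v$, so $u-\log|w|\le\log 2$ there. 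Since also $u-\log|w|\ge v_0-\log|w|$ is bounded below near $z_0$, the harmonic function $u-\log|w|$ on the punctured disc is bounded, hence extends harmonically across $z_0$; the maximum principle then forces $u<0$ on $\Omega$.

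It remains to identify $u$ with $G_{\Omega}(\cdot,z_0)$: by the defining supremum $u$ is the largest negative subharmonic function on $\Omega$ with a logarithmic pole of the prescribed type at $z_0$, which is exactly the extremal characterization of the negative Green function; the usual uniqueness argument (two such extremal functions differ by a bounded harmonic function, which vanishes by the maximum principle along the exhaustion using non-parabolicity) gives $u=G_{\Omega}(\cdot,z_0)$, and the harmonicity statements are those just established. The only delicate point in the whole argument is the behaviour at the pole: one must exhibit a lower barrier, i.e. a member of $\Delta_{\Omega}^{*}(z_0)$ whose difference with $\log|w|$ stays bounded below near $z_0$, and this is where the hypothesis ``$\Omega$ admits a nontrivial Green function'' is indispensable — without it the supremum can collapse and no logarithmic pole survives. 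Everything else is routine Perron theory.
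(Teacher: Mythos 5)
The paper does not actually prove this lemma: it is quoted as a classical fact with references to \cite{S-O69} and \cite{Tsuji}, so there is no in-paper argument to compare with and your proposal has to stand on its own. Most of it does. The verification that $\Delta_{\Omega}^{*}(z_0)$ is a Perron family, the construction of the lower barrier $v_0$ as the decreasing limit of the Green functions $g_k$ of an exhaustion (which is exactly where the hypothesis that $G_{\Omega}$ is nontrivial enters), the harmonicity of the upper envelope $u$ on $\Omega\setminus\{z_0\}$, and the uniform estimate $v-\log|w|\le\log 2$ on $\{0<|w|\le 1/2\}$ yielding the removable singularity of $u-\log|w|$ are all correct and constitute the standard route.

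The weak point is the closing identification $u=G_{\Omega}(\cdot,z_0)$. The parenthetical uniqueness argument you invoke --- that two such extremal functions differ by a bounded harmonic function which ``vanishes by the maximum principle along the exhaustion using non-parabolicity'' --- is false as stated: on a hyperbolic surface (the unit disc, say) there are plenty of nonzero bounded harmonic functions, and the maximum principle along an exhaustion gives nothing for them. If you take the extremal property as the definition of $G_{\Omega}$, the first assertion of the lemma is a tautology and only the harmonicity statements require proof; if, as in \cite{S-O69} and \cite{Tsuji}, $G_{\Omega}(\cdot,z_0)$ is the decreasing limit of the $g_k$, then the missing half is $u\le G_{\Omega}(\cdot,z_0)$, and the correct argument is a direct comparison with objects you already introduced: for $v\in\Delta_{\Omega}^{*}(z_0)$ and $\Omega_k$ chosen with regular boundary, the function $v-g_k$ is subharmonic on $\Omega_k\setminus\{z_0\}$, bounded above near $z_0$ (hence extends subharmonically across $z_0$), and has $\limsup\le0$ at $\partial\Omega_k$ because $v\le0$ while $g_k\rightarrow0$ there; the maximum principle gives $v\le g_k$ on $\Omega_k$, and letting $k\rightarrow+\infty$ yields $v\le G_{\Omega}(\cdot,z_0)$, hence $u\le G_{\Omega}(\cdot,z_0)$. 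Combined with $G_{\Omega}(\cdot,z_0)=v_0\in\Delta_{\Omega}^{*}(z_0)$, which gives $u\ge G_{\Omega}(\cdot,z_0)$, this closes the proof. Note also that extending $g_k$ by $0$ off $\Omega_k$ produces a subharmonic function only because $g_k$ tends to $0$ at the (regular) boundary $\partial\Omega_k$, so the choice of a smoothly bounded exhaustion should be made explicit.
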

	
	\begin{Lemma}[see \cite{GY-concavity3}]
	\label{l:green-sup2}Let $K=\{z_j:j\in\mathbb{Z}_{\ge1}\,\&\,j<\gamma \}$ be a discrete subset of $\Omega$, where $\gamma\in\mathbb{Z}_{>1}\cup\{+\infty\}$. Let $\psi$ be a negative subharmonic function on $\Omega$ such that $\frac{1}{2}v(dd^c\psi,z_j)\ge p_j$ for any $j$, where $p_j>0$ is a constant. Then $2\sum_{1\le j< \gamma}p_jG_{\Omega}(\cdot,z_j)$ is a subharmonic function on $\Omega$ satisfying that $2\sum_{1\le j<\gamma }p_jG_{\Omega}(\cdot,z_j)\ge\psi$ and $2\sum_{1\le j<\gamma }p_jG_{\Omega}(\cdot,z_j)$ is harmonic on $\Omega\backslash K$.
\end{Lemma}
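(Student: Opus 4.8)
The plan is to establish the three assertions in the order (a) $\psi\le 2\sum_{1\le j<\gamma}p_jG_{\Omega}(\cdot,z_j)=:u$, (b) $u$ is subharmonic on $\Omega$, (c) $u$ is harmonic on $\Omega\setminus K$, and to reduce everything to the case of finitely many points, where I would argue by induction on the number of points using the Perron-type description of $G_\Omega$ supplied by Lemma \ref{l:green-sup}. The finite case gives (a) and (b)--(c) for partial sums at once, and the general case follows by a monotone-limit argument.

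First I would treat the finite case $\gamma=N+1$. Writing $u_S:=2\sum_{j\in S}p_jG_{\Omega}(\cdot,z_j)$ for a finite index set $S$, I claim $\psi\le u_S$, proved by induction on $|S|$. For $S=\{j_0\}$: since $\psi<0$ and $\tfrac12 v(dd^c\psi,z_{j_0})\ge p_{j_0}$, the function $\tfrac{1}{2p_{j_0}}\psi$ is a negative subharmonic function whose Lelong number at $z_{j_0}$ is $\ge 1$, hence (by the Riesz decomposition of $\psi$ near $z_{j_0}$) $\tfrac{1}{2p_{j_0}}\psi-\log|w_{j_0}|$ is bounded above near $z_{j_0}$, so $\tfrac{1}{2p_{j_0}}\psi\in\Delta_{\Omega}^*(z_{j_0})$ and Lemma \ref{l:green-sup} gives $\tfrac{1}{2p_{j_0}}\psi\le G_{\Omega}(\cdot,z_{j_0})$. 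For the inductive step, assume $\psi\le u_S$ and pick $j_0\notin S$; set $v:=\psi-u_S$ on $\Omega\setminus\{z_j:j\in S\}$, which is subharmonic there because $u_S$ is harmonic off $\{z_j:j\in S\}$ by Lemma \ref{l:green-sup}. Near each $z_j$ ($j\in S$), the estimates $\psi\le 2p_j\log|w_j|+O(1)$ (from $\tfrac12 v(dd^c\psi,z_j)\ge p_j$) and $2p_jG_{\Omega}(\cdot,z_j)=2p_j\log|w_j|+O(1)$ show $v$ is bounded above, so $v$ extends to a subharmonic function on all of $\Omega$ with $v\le 0$ (the inductive hypothesis). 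Since $v=\psi-u_S$ is subharmonic near $z_{j_0}$ with Lelong number $\ge 2p_{j_0}>0$ there, $v(z_{j_0})=-\infty$, so $v\not\equiv 0$ and hence, by the strong maximum principle on the connected surface $\Omega$, $v<0$ everywhere. Then $\tfrac{1}{2p_{j_0}}v$ is negative subharmonic with Lelong number $\ge 1$ at $z_{j_0}$, so it lies in $\Delta_{\Omega}^*(z_{j_0})$, and Lemma \ref{l:green-sup} yields $v\le 2p_{j_0}G_{\Omega}(\cdot,z_{j_0})$, i.e. $\psi\le u_{S\cup\{j_0\}}$. This proves $\psi\le u_N:=2\sum_{j=1}^{N}p_jG_{\Omega}(\cdot,z_j)$ for every finite $N$; moreover $u_N$ is subharmonic (a finite sum of subharmonic functions, each $G_\Omega(\cdot,z_j)$ being subharmonic by Lemma \ref{l:green-sup}) and harmonic on $\Omega\setminus\{z_1,\dots,z_N\}$.

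For the general case I would pass to the limit: since $G_{\Omega}(\cdot,z_j)\le 0$, the sequence $u_N$ decreases to $u$; from $\psi\le u_N$ for all $N$ we get $\psi\le u$, and since $\psi\not\equiv-\infty$ this forces $u\not\equiv-\infty$, so the decreasing limit $u$ is subharmonic on $\Omega$. For harmonicity on $\Omega\setminus K$: on any connected $U\Subset\Omega\setminus K$ the partial sums $u_N|_U$ are harmonic and decrease to $u|_U$, and $u|_U\not\equiv-\infty$ because a subharmonic function $\not\equiv-\infty$ on $\Omega$ restricts to one $\not\equiv-\infty$ on every connected open subset; by Harnack's theorem for monotone sequences of harmonic functions, $u|_U$ is harmonic, and covering $\Omega\setminus K$ by such $U$ completes the argument.

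I expect the inductive step to be the only genuine obstacle: one must check carefully that subtracting the partial sum $u_S$ from $\psi$ and removing the (removable, thanks to the Lelong/Riesz estimate) singularities at the $z_j$, $j\in S$, produces a \emph{globally} subharmonic function on $\Omega$, and that this function is strictly negative, so that the Perron characterization in Lemma \ref{l:green-sup} can be reapplied to incorporate one more point $z_{j_0}$. Once this step is in hand, the reduction from infinitely many points is routine via the monotone-limit and Harnack arguments above.
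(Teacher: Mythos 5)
Your argument is correct: the base case via the Lelong-number estimate and the Perron description in Lemma \ref{l:green-sup}, the inductive step via removable singularities for subharmonic functions bounded above near the punctures together with the maximum principle, and the passage to infinitely many points by the decreasing-limit and Harnack arguments are all sound, and the needed nondegeneracy ($u\not\equiv-\infty$ on $\Omega$ and on each connected $U\Subset\Omega\backslash K$) is correctly supplied by $\psi\le u$ and the fact that the $-\infty$ set of a subharmonic function has empty interior. Note that the present paper does not prove this lemma at all — it is quoted from \cite{GY-concavity3} — so there is no in-paper proof to compare with; your write-up is essentially the standard route (and in the spirit of the cited reference), reducing to finitely many points and then taking monotone limits, and it can stand as a self-contained proof.
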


\begin{Lemma}[see \cite{GY-concavity}]
\label{relative compactnees of Green function}
For any open neighborhood $U$ of $z_0$, there exists $t>0$ such that $\{G_{\Omega}(z,z_0)<-t\}$ is a relatively compact subset of $U$.
\end{Lemma}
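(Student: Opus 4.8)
The plan is to reduce everything to one fact: $u:=G_\Omega(\cdot,z_0)$ is bounded from below on the complement of any prescribed neighborhood of $z_0$. Granting this, I first fix a holomorphic coordinate $(V,w)$ with $z_0\in V$, $w(z_0)=0$, $w(V)=\Delta$, and $\overline V$ a compact subset of $U$ — possible because $U$ is an open neighborhood of $z_0$ — and set $V':=w^{-1}(\{|w|<1/2\})$, so that $\overline{V'}=w^{-1}(\{|w|\le 1/2\})$ is compact, $z_0\in V'\subset\overline{V'}\subset V\subset U$, and $z_0\notin\partial V'$. If $u\ge -b$ on $\Omega\setminus\overline{V'}$ for some finite $b>0$, then for $t:=b+1$ any $z\in\Omega\setminus\overline{V'}$ satisfies $u(z)\ge -b>-t$, hence $\{G_\Omega(\cdot,z_0)<-t\}\subset\overline{V'}\subset U$ with $\overline{V'}$ compact; this is precisely the asserted relative compactness.

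It remains to prove the lower bound, which is the only substantial point. By Lemma \ref{l:green-sup}, $u$ is negative on $\Omega$, harmonic on $\Omega\setminus\{z_0\}$, and $u-\log|w|$ extends harmonically across $z_0$; in particular $u$ is finite and continuous on the compact set $\partial V'$, so $-b:=\min_{\partial V'}u$ is a finite negative number. I take an exhaustion $\Omega_1\Subset\Omega_2\Subset\cdots$ of $\Omega$ by relatively compact domains with smooth boundary, $\overline V\subset\Omega_1$ and $\bigcup_k\Omega_k=\Omega$, and let $g_k:=G_{\Omega_k}(\cdot,z_0)$ be the negative Green function of $\Omega_k$: it is harmonic on $\Omega_k\setminus\{z_0\}$, vanishes continuously on $\partial\Omega_k$, and $g_k-\log|w|$ is harmonic near $z_0$. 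Applying the maximum principle to the harmonic function $u-g_k$ on $\Omega_k$ (the logarithmic poles cancel; it is continuous on $\overline{\Omega_k}$ and equals $u<0$ on $\partial\Omega_k$) gives $u\le g_k$ on $\Omega_k$; an analogous argument applied to $g_k-g_{k+1}$ gives $g_{k+1}\le g_k$; and since the decreasing limit $\lim_k g_k$ is $\ge u\not\equiv-\infty$ (as $\Omega$ carries a nontrivial Green function), hence harmonic off $z_0$, with $\lim_k g_k-\log|w|\le g_1-\log|w|$ bounded above near $z_0$, the extremal characterization in Lemma \ref{l:green-sup} forces $\lim_k g_k=u$.

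Now fix $k$. On $\partial V'$ one has $g_k\ge u\ge -b$, while on $\partial\Omega_k$ one has $g_k=0>-b$; since $\partial(\Omega_k\setminus\overline{V'})=\partial\Omega_k\cup\partial V'$ and $\Omega_k\setminus\overline{V'}$ is relatively compact, the minimum principle for the harmonic function $g_k$ yields $g_k\ge -b$ on $\Omega_k\setminus\overline{V'}$. Letting $k\to\infty$ and using $g_k\downarrow u$ gives $u\ge -b$ on $\Omega\setminus\overline{V'}$, which is exactly the bound used in the first paragraph.

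The main obstacle is precisely this lower bound: it is where the hypothesis that $\Omega$ admits a nontrivial Green function enters (through $u\not\equiv-\infty$ and the exhaustion convergence $g_k\downarrow u$), and it may equivalently be read as the statement that $-G_\Omega(\cdot,z_0)$ is a potential on $\Omega$. The remaining steps — the passage from "bounded below off a coordinate disc" to "sublevel sets relatively compact in $U$", and the maximum/minimum principle manipulations on the relatively compact $\Omega_k$ — are routine.
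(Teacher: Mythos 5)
Your argument is correct, and it is worth noting at the outset that the paper itself gives no proof of this lemma -- it is quoted from \cite{GY-concavity} -- so there is no in-paper argument to compare against; your route is the standard one and it is complete. The reduction to the lower bound $G_{\Omega}(\cdot,z_0)\ge -b$ on $\Omega\setminus\overline{V'}$, with $-b=\min_{\partial V'}G_{\Omega}(\cdot,z_0)$, is exactly the right pivot, and the exhaustion argument (maximum principle giving $G_{\Omega}\le G_{\Omega_k}$ and $G_{\Omega_{k+1}}\le G_{\Omega_k}$ on $\Omega_k$, minimum principle for $G_{\Omega_k}$ on $\Omega_k\setminus\overline{V'}$ with boundary values $0$ on $\partial\Omega_k$ and $\ge-b$ on $\partial V'$, then $k\to\infty$) is sound. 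The only terse point is the identification $\lim_k G_{\Omega_k}(\cdot,z_0)=G_{\Omega}(\cdot,z_0)$: to invoke Lemma \ref{l:green-sup} you should say explicitly that near $z_0$ the limit equals $\log|w|$ plus a decreasing limit of harmonic functions bounded below by the harmonic function $G_{\Omega}(\cdot,z_0)-\log|w|$, hence is negative and subharmonic on all of $\Omega$ with the required singularity; this is routine. Two remarks. First, this monotone approximation is precisely the paper's Lemma \ref{approximate of Green function} (quoted from \cite{GY-concavity3}), so you could cite it and delete your second paragraph's re-derivation. Second, the key lower bound also follows with no exhaustion at all, by a competitor argument from Lemma \ref{l:green-sup} alone: set $v:=G_{\Omega}(\cdot,z_0)$ on $\overline{V'}$ and $v:=\max\{G_{\Omega}(\cdot,z_0),-b\}$ on $\Omega\setminus V'$; the two definitions agree on $\partial V'$ because $G_{\Omega}(\cdot,z_0)\ge-b$ there, $v$ is negative, upper semicontinuous, and satisfies the sub-mean value inequality (at points of $\partial V'$ because $v\ge G_{\Omega}(\cdot,z_0)$ with equality at those points), and $v-\log|w|$ is bounded above near $z_0$; hence $v\le G_{\Omega}(\cdot,z_0)$ by Lemma \ref{l:green-sup}, which forces $G_{\Omega}(\cdot,z_0)\ge-b$ off $V'$. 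Either way, the passage from this bound to the relative compactness of $\{G_{\Omega}(\cdot,z_0)<-t\}$ in $U$ is exactly as you state.
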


\begin{Lemma}[see \cite{GY-concavity3}]
\label{approximate of Green function}
 There exists a sequence of open Riemann surfaces $\{\Omega_l\}_{l\in\mathbb{Z}^+}$ such that $z_0\in\Omega_l\Subset\Omega_{l+1}\Subset\Omega$, $\cup_{l\in\mathbb{Z}^+}\Omega_l=\Omega$, $\Omega_l$ has a smooth boundary $\partial\Omega_l$ in $\Omega$  and $e^{G_{\Omega_l}(\cdot,z_0)}$ can be smoothly extended to a neighborhood of $\overline{\Omega_l}$ for any $l\in\mathbb{Z}^+$, where $G_{\Omega_l}$ is the Green function of $\Omega_l$. Moreover, $\{{G_{\Omega_l}}(\cdot,z_0)-G_{\Omega}(\cdot,z_0)\}$ is decreasingly convergent to $0$ on $\Omega$ with respect to $l$.

\end{Lemma}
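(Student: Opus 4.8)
The plan is to build the exhausting subdomains by cutting $\Omega$ along regular level sets of a proper exhaustion function, and then to pin down the Green functions $G_{\Omega_l}(\cdot,z_0)$ using the supremum characterization of Lemma \ref{l:green-sup}. Since $\Omega$ is an open (hence Stein) Riemann surface, it admits a real-analytic subharmonic exhaustion function $\rho:\Omega\to[0,+\infty)$ (alternatively, one may just take any smooth proper $\rho$). Fix a sequence of regular values $c_l\uparrow+\infty$ of $\rho$ with $\rho(z_0)<c_1$, and let $\Omega_l$ be the connected component of $\{\rho<c_l\}$ containing $z_0$. Then $z_0\in\Omega_l$, each $\Omega_l$ is relatively compact in $\Omega$ with real-analytic (hence smooth) boundary $\partial\Omega_l$, and $\overline{\Omega_l}\subset\Omega_{l+1}$; joining $z_0$ to an arbitrary point $z\in\Omega$ by a path $\gamma$ and taking $c_l>\max_\gamma\rho$ shows $\gamma\subset\Omega_l$, so $\bigcup_l\Omega_l=\Omega$.

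Next I would record the properties of $G_{\Omega_l}(\cdot,z_0)$. Since $\Omega_l\Subset\Omega$ has smooth boundary, its Green function $G_{\Omega_l}(\cdot,z_0)$ exists, is negative, harmonic on $\Omega_l\setminus\{z_0\}$, has the logarithmic singularity $G_{\Omega_l}(\cdot,z_0)-\log|w|$ bounded near $z_0$, is continuous on $\overline{\Omega_l}$, and vanishes on $\partial\Omega_l$. As $\partial\Omega_l$ is real-analytic and $G_{\Omega_l}(\cdot,z_0)$ is harmonic near it and vanishes there, the Schwarz reflection principle extends $G_{\Omega_l}(\cdot,z_0)$ harmonically across $\partial\Omega_l$; hence $e^{G_{\Omega_l}(\cdot,z_0)}$ is real-analytic on a neighborhood of $\partial\Omega_l$ and therefore extends to a smooth function on a neighborhood of $\overline{\Omega_l}$ (the only mildly singular point of $e^{G_{\Omega_l}(\cdot,z_0)}$ is the interior point $z_0$, where it behaves like $|w|$, which is irrelevant to the behavior near the boundary). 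If one prefers to work only with a smooth exhaustion, elliptic boundary regularity gives $G_{\Omega_l}(\cdot,z_0)\in C^\infty\big(\overline{\Omega_l}\setminus\{z_0\}\big)$ and a Whitney/Seeley-type extension then yields the smooth extension across $\partial\Omega_l$.

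Then I would establish monotonicity and identify the limit. For $\Omega_l\subset\Omega_{l+1}\subset\Omega$, the restrictions of $G_{\Omega_{l+1}}(\cdot,z_0)$ and of $G_\Omega(\cdot,z_0)$ to $\Omega_l$ are negative subharmonic functions on $\Omega_l$ with the correct logarithmic pole at $z_0$, so Lemma \ref{l:green-sup} gives $G_\Omega(\cdot,z_0)\le G_{\Omega_{l+1}}(\cdot,z_0)\le G_{\Omega_l}(\cdot,z_0)$ on $\Omega_l$. Thus $\{G_{\Omega_l}(\cdot,z_0)\}_l$ is decreasing in $l$ and bounded below by $G_\Omega(\cdot,z_0)$, hence converges to some $G_\infty$. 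Being a decreasing limit of negative functions harmonic on $\Omega\setminus\{z_0\}$ and uniformly bounded there from below, $G_\infty$ is negative, subharmonic on $\Omega$, harmonic on $\Omega\setminus\{z_0\}$, and satisfies $G_\infty-\log|w|\le G_{\Omega_1}(\cdot,z_0)-\log|w|\le C$ near $z_0$; so $G_\infty$ belongs to the class $\Delta_\Omega^*(z_0)$ of Lemma \ref{l:green-sup}, giving $G_\infty\le G_\Omega(\cdot,z_0)$. Combined with $G_\infty\ge G_\Omega(\cdot,z_0)$ this forces $G_\infty=G_\Omega(\cdot,z_0)$, i.e. $\{G_{\Omega_l}(\cdot,z_0)-G_\Omega(\cdot,z_0)\}$ decreases to $0$ on $\Omega$.

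The main obstacle is the boundary-regularity step: arranging the exhausting domains to have boundary regular enough that $G_{\Omega_l}(\cdot,z_0)$ is smooth up to $\partial\Omega_l$ and can be continued across it, so that $e^{G_{\Omega_l}(\cdot,z_0)}$ genuinely extends smoothly to a neighborhood of $\overline{\Omega_l}$ — this is where Stein-ness of $\Omega$ (to obtain a real-analytic exhaustion and apply Schwarz reflection) or, alternatively, elliptic boundary regularity together with a smooth extension argument, is essential. Once this is settled, the comparison $G_\Omega\le G_{\Omega_{l+1}}\le G_{\Omega_l}$ and the convergence to $0$ follow routinely from Lemma \ref{l:green-sup}; Lemma \ref{relative compactnees of Green function} may additionally be invoked if one wants the $\Omega_l$ to exhaust while controlling sublevel sets of $G_\Omega(\cdot,z_0)$, and Lemma \ref{l:green-sup2} is not needed here.
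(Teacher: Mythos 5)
Your argument is correct: the construction of the exhaustion via regular sublevel sets of a proper (real-analytic) exhaustion function, the boundary regularity/reflection step for $e^{G_{\Omega_l}(\cdot,z_0)}$, and the monotone convergence $G_{\Omega}\le G_{\Omega_{l+1}}\le G_{\Omega_l}$ with identification of the limit through the supremum characterization of Lemma \ref{l:green-sup} are all sound. Note that the paper itself gives no proof of this lemma (it is quoted from \cite{GY-concavity3}), and the route you take is the standard one used there; your parenthetical remark about $z_0$ is the right reading of the statement, since $e^{G_{\Omega_l}(\cdot,z_0)}\sim|w|e^{h}$ is never smooth at the pole, so ``smoothly extended to a neighborhood of $\overline{\Omega_l}$'' can only be meant as smoothness across $\partial\Omega_l$, which your reflection (or elliptic regularity plus Seeley extension) argument delivers.
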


Let $M=(\prod_{1\le j\le n_1}\Omega_j)\times Y$, where $\Omega_j$ is an open Riemann surface and $Y$ is an $n_2$-dimensional complex manifold and $n_1+n_2=n$. Let $\pi_{1}$, $\pi_{1,j}$ and $\pi_2$ be the natural projections from $M$ to $\prod_{1\le j\le n_1}\Omega_j$, $\Omega_j$ and $Y$ respectively. Let $Z_j=\{z_{j,k}:1\le k<\tilde m_j\}$ be a discrete subset of $\Omega_j$ for any $1\le j\le n_1$, where $\tilde m_j\in\mathbb{Z}_{\ge2}\cup\{+\infty\}$. Denote that $Z_0:=\left(\prod_{1\le j\le n_1}Z_j\right)\times Y$.

Let $G=\max_{1\le j\le n_1}\left\{\tilde{\pi}_{j}^*\left(2\sum_{1\le k<\tilde m_j}p_{j,k}G_{\Omega_j}(\cdot,z_{j,k})\right)\right\}$ be a plurisubharmonic function on $\prod_{1\le j\le n_1}\Omega_j$, where $\sum_{1\le k<\tilde m_j}p_{j,k}G_{\Omega_j}(\cdot,z_{j,k})\not\equiv-\infty$ for any $j\in\{1,..,n\}$ and  $\tilde\pi_j$ is the natural projection from $\prod_{1\le j\le n_1}\Omega_j$ to $\Omega_j$.
 Let $\tilde{M}\subset M$ be an $n-$dimensional weakly pseudoconvex submanifold satisfying that $Z_0\subset\tilde{M}$.

Let $N\le 0$ be a plurisubharmonic function on $\tilde M$. Denote $\psi=\pi_1^{*}(G)+N$.
\begin{Lemma}
\label{decreasing property of l}
Let $l(t)$ be a positive Lebesgue measurable function on $(0,+\infty)$ satisfying that $l$ is decreasing on $(0,+\infty)$ and $\int_{0}^{+\infty}l(t)dt<+\infty$.

If $N\not\equiv0$, then there exists a Lebesgue measurable subset $\tilde{V}$ of $\tilde M$ such that $l(-\psi)<l\big(-\pi_1^{*}(G)\big)$ on $\tilde{V}$ and $\mu(\tilde{V})>0,$ where $\mu$ is the Lebesgue measure on $\tilde M$.
\end{Lemma}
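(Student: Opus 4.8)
The plan is to argue by contradiction, playing the pole structure of $G$ against the integrability of $l$. Since $N$ is plurisubharmonic with $N\le 0$ and $N\not\equiv 0$ on the connected manifold $\tilde M$, the maximum principle for plurisubharmonic functions forces $N<0$ everywhere (if $N(p)=0$ at an interior point, then $N\equiv 0$). Put $\Phi:=-\pi_1^{*}(G)$ and $h:=-N>0$, so that $-\psi=\Phi+h>\Phi$ pointwise; by monotonicity of $l$ we always have $l(-\psi)\le l(-\pi_1^{*}(G))$, so the set $\tilde V_0:=\{l(-\psi)<l(-\pi_1^{*}(G))\}$ is the largest candidate for $\tilde V$, and it suffices to show $\mu(\tilde V_0)>0$. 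Suppose instead $\mu(\tilde V_0)=0$, i.e. $l(-\psi)=l(-\pi_1^{*}(G))$ for $\mu$-a.e. point of $\tilde M$.

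I would first record two structural facts. (i) $G$ is continuous as a map into $[-\infty,0)$ and $\pi_1^{*}(G)\equiv-\infty$ on $Z_0$, because $G=\max_{1\le j\le n_1}\tilde\pi_j^{*}(g_j)$ with $g_j:=2\sum_{1\le k<\tilde m_j}p_{j,k}G_{\Omega_j}(\cdot,z_{j,k})$ subharmonic, harmonic on $\Omega_j\setminus Z_j$, and with logarithmic poles along $Z_j$ (Lemma \ref{l:green-sup2}, Lemma \ref{l:green-sup}). (ii) $\Phi$ is non-constant on every nonempty open subset of $\tilde M$: if $G\equiv c$ on a nonempty open $U'\subset\prod_{1\le j\le n_1}\Omega_j$, then $U'=\bigcup_{j}\tilde\pi_j^{-1}(\{g_j=c\})$, a finite union of sets of Lebesgue measure zero (each $\{g_j=c\}$ is a level set of $g_j$, which is harmonic and non-constant off the discrete set $Z_j$), which is impossible; now pull back by the open map $\pi_1$.

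Next I would produce arbitrarily large admissible levels. Fix $p\in Z_0$; then $\pi_1^{*}(G)(p)=-\infty$ and $N(p)<0$, so by upper semicontinuity of $N$ there are $\epsilon_0>0$ and a connected open $U\ni p$ with $N<-\epsilon_0$ on $U$. On $U$ the continuous non-constant function $\pi_1^{*}(G)$ takes the value $-\infty$, hence $\pi_1^{*}(G)(U)\supseteq(-\infty,c)$ for some $c<0$, so for each $\tau>T_0:=\epsilon_0-c$ there is $x\in U$ with $\pi_1^{*}(G)(x)=-\tau+\tfrac{\epsilon_0}{2}>-\tau$ and $\psi(x)=\pi_1^{*}(G)(x)+N(x)<-\tau$. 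Thus $W_\tau:=\{\psi<-\tau\}\cap\{\pi_1^{*}(G)>-\tau\}$ (open, since $\psi$ is plurisubharmonic and $G$ continuous) is nonempty for every $\tau>T_0$. Fixing such a $\tau$ and a nonempty connected open $W\subset W_\tau$, we have $-\psi>\tau>\Phi$ on $W$, so $l(-\psi)\le l(\tau)\le l(\Phi)$; combined with $l(-\psi)=l(\Phi)$ a.e.\ this gives $l(\Phi(x))=l(\tau)=l(-\psi(x))$ for a.e.\ $x\in W$. Let $I_\tau:=\{t>0:l(t)=l(\tau)\}$, an interval containing $\tau$. Since $\Phi$ is continuous and non-constant on $W$, $\Phi(W)$ is a non-degenerate interval $\supseteq(\alpha,\beta)$ with $\alpha<\beta\le\tau$; the open set $\{x\in W:\Phi(x)\in\operatorname{int}(I_\tau^{c})\}$ has measure zero, hence is empty, so $(\alpha,\beta)\subset\overline{I_\tau}$ and therefore $(\alpha,\beta)\subset I_\tau$, i.e.\ $l\equiv l(\tau)$ on $(\alpha,\beta)$. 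Taking $x_0\in W$ in the good set with $\Phi(x_0)\in(\alpha,\beta)$, we also have $-\psi(x_0)=\Phi(x_0)+h(x_0)\in I_\tau$ and $-\psi(x_0)>\tau$; since $I_\tau$ is an interval containing $(\alpha,\beta)$ and the point $-\psi(x_0)>\tau$, it contains $(\alpha,-\psi(x_0))\ni\tau$ in its interior. Hence $l$ is constant near $\tau$, for every $\tau>T_0$; as $(T_0,+\infty)$ is connected, $l$ is constant on $(T_0,+\infty)$, which contradicts $l>0$ and $\int_0^{+\infty}l(t)\,dt<+\infty$. This contradiction proves the lemma.

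The step I expect to be the main obstacle is passing from the a.e.\ identity $l(-\psi)=l(-\pi_1^{*}(G))$ to genuine local constancy of $l$: this uses exactly the non-degeneracy of $\Phi=-\pi_1^{*}(G)$ on open sets (so that the coincidence cannot be attributed to $\Phi$ being confined to a null ``plateau'' of $l$), together with the fact that the relevant levels $\tau$ are realized arbitrarily far out near a pole of $G$ even though $N$ is only assumed bounded there --- both consequences of the pole structure of the Green functions and of the upper semicontinuity of $N$.
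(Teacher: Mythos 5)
Your proof is correct, but it takes a genuinely different route from the paper's. The paper argues directly: it first uses $\int_0^{+\infty}l(t)dt<+\infty$ to select a level $t_1>t_0$ past which $l$ drops strictly below $l(t_1)$, and then, much as in your third step, exploits the pole of $G$ along $Z_0$, upper semicontinuity of $\psi$ on a compact set, and the strict negativity of $N$ there (implicitly the maximum principle for $N\le 0$, $N\not\equiv 0$, which you make explicit) to exhibit a concrete region of the form $\big(\{-t_1<G<-t_2\}\cap V_1\big)\times\tilde U$ on which $-\psi\ge t_3>t_1\ge-\pi_1^{*}(G)$, whence $l(-\psi)\le l(t_3)<l(t_1)\le l(-\pi_1^{*}(G))$ on a set of positive measure. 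You instead argue by contradiction: assuming $l(-\psi)=l(-\pi_1^{*}(G))$ almost everywhere, you show $l$ would be constant on a neighbourhood of every sufficiently large level $\tau$, hence constant on a ray, contradicting positivity and integrability. Your route requires one ingredient the paper does not need, namely that $-\pi_1^{*}(G)$ is non-constant on every nonempty open set (which you correctly deduce from harmonicity of the $g_j$ off the discrete sets $Z_j$), plus the plateau analysis of the level sets of $l$; what it buys is robustness in the treatment of the weight $l$: the paper's statement that there is $t_1>t_0$ with $l(t)<l(t_1)$ for all $t>t_1$ can fail verbatim for a merely non-increasing $l$ (e.g.\ a right-continuous step function with plateaus tending to $0$), so the paper's proof needs a small adjustment at that point, whereas your argument only uses that a positive integrable $l$ cannot be eventually constant. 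Both proofs share the same implicit hypothesis (connectedness of $\tilde M$, via the maximum principle) to guarantee $N<0$ near $Z_0$, so this is not a defect specific to your write-up.
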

\begin{proof}Let $V_1\Subset\prod_{1\le j\le n_1}\Omega_j\backslash \{z_{\beta}:\exists j\in\{1,2,\ldots,n_1\}\ s.t. 2\le\beta_j\le\tilde{m}_j\}$ be an open neighborhood of $z_{\beta^*}$, where $\beta^*=(1,1,\ldots,1)$. It follows from Lemma \ref{relative compactnees of Green function} that there exists $t_0>0$ such that $V_1\cap \{G<-t_0\}\Subset V_1$.

As $l(t)$ is decreasing and $\int_{0}^{+\infty}l(t)dt<+\infty$, there exists $t_1>t_0$ such that $l(t)<l(t_1)$ holds for any $t>t_1$.

For any $w\in Y$, let $U\Subset Y$ be an open neighborhood of $w$ in $Y$ such that $V_1\times U\subset \tilde M$.

As $\psi$ is upper semi-continuous function and $N\not\equiv0$ is a plurisubharmonic function on $\tilde M$, we have $$\sup\limits_{(z,w)\in(V_1\cap \{G\le-t_1\})\times U}\psi(z)<-t_1,$$
which implies that there exists $t_2\in (t_0,t_1)$ such that

$$\sup\limits_{(z,w)\in(V_1\cap \{G\le-t_2\})\times \tilde{U}}\psi(z)<-t_1,$$
where $\tilde{U}\Subset U$ is open.

Denote $t_3=-\sup_{(z,w)\in(V_1\cap \{G\le-t_2\})\times \tilde{U}}\psi(z)$. Then we know $-t_3<-t_1$.

Let $V=\{-t_1<G<-t_2\}\cap V_1$, then $\mu(V\times \tilde U)>0$.
As $l(t)$ is decreasing with respect to $t$, for any $(z,w)\in V\times \tilde U$, we have
$$l(-\psi)\le l(t_3)<l(t_1)\le l\big(\pi_1^*(-G)\big).$$
Lemma \ref{decreasing property of l} is proved.
\end{proof}

\subsection{Other lemmas}
We call a positive measurable function $c$ on $(S,+\infty)$ in class $\tilde{\mathcal{P}}_S$ if $\int_S^s c(l)e^{-l}dl<+\infty$ for some $s>S$ and $c(t)e^{-t}$ is decreasing with respect to $t$.
\begin{Lemma}[see \cite{GMY}]
\label{L2 method}
Let $B \in (0, +\infty)$ and $t_0 \ge S$ be arbitrarily given. Let $(M,\omega)$ be
an $n-$dimensional weakly pseudoconvex K\"ahler manifold. Let $\psi < -S$ be a
plurisubharmonic function on $M$. Let $\varphi$ be a plurisubharmonic function on $M$.
Let F be a holomorphic $(n,0)$ form on $\{\psi< -t_0\}$ such that
\begin{equation}\nonumber
\int_{K\cap \{\psi<-t_0\}} {|F|}^2<+\infty,
\end{equation}
for any compact subset $K$ of $M,$ and
\begin{equation}\nonumber
\int_M \frac{1}{B} \mathbb{I}_{\{-t_0-B< \psi < -t_0\}}  {|F|}^2
e^{{-}\varphi}\le C <+\infty.
\end{equation}
Then there exists a holomorphic $(n,0)$ form $\widetilde F$ on X, such that
\begin{equation}
\int_M {|\widetilde F-(1-b_{t_0,B}(\psi))F|}^2
e^{{-}\varphi+v_{t_0,B}(\psi)}c(-v_{t_0,B}(\psi))\le C\int^{t_0+B}_{S}c(t)e^{{-}t}dt.
\end{equation}
where $b_{t_0,B}(t)=\int^{t}_{-\infty}\frac{1}{B} \mathbb{I}_{\{-t_0-B< s < -t_0\}}ds$,
$v_{t_0,B}(t)=\int^{t}_{-t_0}b_{t_0,B}(s)ds-t_0$ and $c(t)\in \tilde{\mathcal{P}}_S$.
\end{Lemma}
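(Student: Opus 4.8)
The plan is to prove Lemma \ref{L2 method} by the standard combination of a Stein--exhaustion and regularization reduction with a Donnelly--Fefferman--Berndtsson type twisted $L^2$ estimate for $\bar\partial$, in which the twist factor and the weight are built out of $-v_{t_0,B}(\psi)$ through auxiliary functions tied to the gain $c$ by an ordinary differential equation (Guan's device). First I would reduce to the situation in which $M$ is a relatively compact Stein domain with smooth strongly pseudoconvex boundary, $\varphi$ is smooth plurisubharmonic on a neighbourhood of $\overline M$, and $c$ is bounded: since $(M,\omega)$ is weakly pseudoconvex K\"ahler it admits a smooth plurisubharmonic exhaustion, and one works on its sublevel sets; $\varphi$ is replaced by a decreasing sequence of smooth plurisubharmonic approximants converging to $\varphi$; $c$ is replaced by $\min(c,k)$, which is still in $\tilde{\mathcal P}_S$; and $b_{t_0,B},v_{t_0,B}$ (which are Lipschitz, with $v_{t_0,B}$ of class $C^1$, convex and increasing, so that $v_{t_0,B}(\psi)$ is plurisubharmonic) may be smoothed if needed. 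Throughout, the quantity $\int_S^{t_0+B}c(t)e^{-t}\,dt$ appearing on the right-hand side is unaffected by these approximations, so a Montel-plus-Fatou weak-compactness argument will recover the desired $\widetilde F$ on the original $M$ in the limit.

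For the core step, set $\beta:=\bar\partial\big((1-b_{t_0,B}(\psi))F\big)=-\,b_{t_0,B}'(\psi)\,\bar\partial\psi\wedge F$, where $(1-b_{t_0,B}(\psi))F$ is extended by $0$ across $\{\psi\ge-t_0\}$; since $b_{t_0,B}'=\tfrac1B\mathbb{I}_{(-t_0-B,-t_0)}$, this is a $\bar\partial$-closed $(n,1)$-form supported in $\{-t_0-B<\psi<-t_0\}$. I would solve $\bar\partial\widetilde u=\beta$ and put $\widetilde F:=(1-b_{t_0,B}(\psi))F-\widetilde u$, which is then holomorphic on $M$, by applying the twisted Bochner--Kodaira--Nakano inequality with a twist factor $\eta=\eta(-v_{t_0,B}(\psi))$ and a weight correction $\phi=\phi(-v_{t_0,B}(\psi))$, where $\eta>0$ and $\phi$ are the smooth functions on $(S,+\infty)$ determined by the ODE system of \cite{GMY} (see also Guan--Zhou) chosen so that the exponential weight produces exactly the factor $e^{-\varphi+v_{t_0,B}(\psi)}$ and the twist produces exactly $c(-v_{t_0,B}(\psi))$. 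The positivity needed to absorb $\beta$ comes from $i\partial\bar\partial\big(-v_{t_0,B}(\psi)\big)$, whose transition-region part contributes $b_{t_0,B}'(\psi)\,i\partial\psi\wedge\bar\partial\psi=\tfrac1B\,i\partial\psi\wedge\bar\partial\psi$ (the curvature of $\psi$ itself being nonnegative), and after the ODE bookkeeping the pointwise Cauchy--Schwarz bound for the right-hand side of the $\bar\partial$-estimate becomes $\le \tfrac1B\,\mathbb{I}_{\{-t_0-B<\psi<-t_0\}}\,|F|^2\,e^{-\varphi}$ times the relevant value of $c(-v_{t_0,B}(\psi))e^{v_{t_0,B}(\psi)}$.

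The final step is the elementary bookkeeping that converts this supported term into $C\int_S^{t_0+B}c(t)e^{-t}\,dt$ with $C=\int_M\tfrac1B\mathbb{I}_{\{-t_0-B<\psi<-t_0\}}|F|^2e^{-\varphi}$: on the transition set $-v_{t_0,B}(\psi)$ takes values bounded above by $t_0+B$ and $c(t)e^{-t}$ is decreasing there, while outside it $v_{t_0,B}(\psi)=\psi$; integrating the pointwise estimate against the hypothesis $\int_M\tfrac1B\mathbb{I}_{\{-t_0-B<\psi<-t_0\}}|F|^2e^{-\varphi}\le C$ and using the monotonicity of $c(t)e^{-t}$ together with $v_{t_0,B}(\psi)\ge\psi$ yields the claimed bound for $\int_M|\widetilde F-(1-b_{t_0,B}(\psi))F|^2e^{-\varphi+v_{t_0,B}(\psi)}c(-v_{t_0,B}(\psi))$. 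Passing to the limit over the Stein exhaustion, over the smooth decreasing approximants of $\varphi$, and over $\min(c,k)\uparrow c$ then finishes the proof.

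I expect the main obstacle to be the simultaneous weakening of the hypotheses --- $M$ only weakly pseudoconvex K\"ahler rather than Stein, $\psi$ and $\varphi$ only plurisubharmonic, and $c$ only measurable and possibly unbounded --- all of which must be removed by approximation without disturbing the constant $\int_S^{t_0+B}c(t)e^{-t}\,dt$. In particular one must verify that the ODE solutions $\eta,\phi$ and the twisted estimate are stable under replacing $c$ by $\min(c,k)$ and under the decreasing regularization of $\varphi$, and that the weak limit of the solutions $\widetilde u$ still solves $\bar\partial\widetilde u=\beta$ with the integrated bound preserved by Fatou's lemma. I would organize this exactly as in \cite{GMY}, invoking Guan--Zhou for the twisted $L^2$ estimate and its associated ODE comparison lemma, and only checking the normalization bookkeeping specific to the present $b_{t_0,B}$ and $v_{t_0,B}$.
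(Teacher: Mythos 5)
First, a point of reference: the paper contains no proof of Lemma \ref{L2 method} to compare with --- it is quoted from \cite{GMY}. Your outline does follow the strategy of that source (and of Guan--Zhou \cite{guan-zhou13ap}): take $\beta=\bar\partial\big((1-b_{t_0,B}(\psi))F\big)$, supported in $\{-t_0-B<\psi<-t_0\}$, solve $\bar\partial$ by a twisted Bochner--Kodaira estimate whose twist and weight correction are functions of $-v_{t_0,B}(\psi)$ calibrated to the gain $c$ through the ODE device, and recover $\widetilde F$ on all of $M$ by Montel/Fatou limits; the normalization producing the constant $C\int_S^{t_0+B}c(t)e^{-t}dt$ is indeed the bookkeeping carried out there, so the core of your plan is the right route.

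However, your reduction step has a genuine gap. A weakly pseudoconvex K\"ahler manifold is not Stein, and the sublevel sets of its smooth plurisubharmonic exhaustion are in general neither Stein nor strongly pseudoconvex (a compact K\"ahler manifold, where the exhaustion may be constant, already shows this), so you cannot ``reduce to a relatively compact Stein domain with smooth strongly pseudoconvex boundary''; more seriously, you cannot replace $\varphi$ (or $\psi$) by a global decreasing sequence of smooth plurisubharmonic functions --- that approximation is available on Stein manifolds but fails on general weakly pseudoconvex K\"ahler manifolds, and this is exactly the point where the K\"ahler case requires the extra work done in \cite{GM_Sci} and \cite{GMY}. There the twisted estimate is run directly on the sublevel sets of the exhaustion, which are only weakly pseudoconvex K\"ahler (made complete by an auxiliary metric), the singular plurisubharmonic weights are kept and their positivity is used in the sense of currents, and only bounded truncations and the Lipschitz compositions $b_{t_0,B}(\psi)$, $v_{t_0,B}(\psi)$ together with the gain are approximated. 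A small additional slip: below the transition region one has $v_{t_0,B}(\psi)=-t_0-\tfrac{B}{2}$, not $\psi$ (only on $\{\psi\ge -t_0\}$ does $v_{t_0,B}(\psi)=\psi$); this does not affect the final estimate since the error term is supported in $\{-t_0-B<\psi<-t_0\}$, but the statement ``outside it $v_{t_0,B}(\psi)=\psi$'' should be corrected. As written, then, the smoothing/Stein reduction is unjustified in the stated generality; the rest of the argument is sound provided you replace that reduction by the approximation scheme actually used in \cite{GMY}.
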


\begin{Lemma}[see \cite{GMY}]
	\label{l:converge}
	Let $M$ be a complex manifold. Let $S$ be an analytic subset of $M$.  	
	Let $\{g_j\}_{j=1,2,...}$ be a sequence of nonnegative Lebesgue measurable functions on $M$, which satisfies that $g_j$ are almost everywhere convergent to $g$ on  $M$ when $j\rightarrow+\infty$,  where $g$ is a nonnegative Lebesgue measurable function on $M$. Assume that for any compact subset $K$ of $M\backslash S$, there exist $s_K\in(0,+\infty)$ and $C_K\in(0,+\infty)$ such that
	$$\int_{K}{g_j}^{-s_K}dV_M\leq C_K$$
	 for any $j$, where $dV_M$ is a continuous volume form on $M$.
	
 Let $\{F_j\}_{j=1,2,...}$ be a sequence of holomorphic $(n,0)$ form on $M$. Assume that $\liminf_{j\rightarrow+\infty}\int_{M}|F_j|^2g_j\leq C$, where $C$ is a positive constant. Then there exists a subsequence $\{F_{j_l}\}_{l=1,2,...}$, which satisfies that $\{F_{j_l}\}$ is uniformly convergent to a holomorphic $(n,0)$ form $F$ on $M$ on any compact subset of $M$ when $l\rightarrow+\infty$, such that
 $$\int_{M}|F|^2g\leq C.$$
\end{Lemma}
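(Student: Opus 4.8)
The plan is to convert the weighted bound $\int_M|F_j|^2g_j\le C$ into an interior, \emph{unweighted} $L^p$ bound to which a normal‑families argument applies, and then to recover the global estimate by Fatou's lemma. Passing to a subsequence (still written $\{F_j\}$) I may assume that $\int_M|F_j|^2g_j\le C+1$ for every $j$ and that $\lim_{j\to\infty}\int_M|F_j|^2g_j$ exists and is $\le C$. Fix a compact subset $K$ of $M\setminus S$ together with the associated constants $s_K,C_K$, and set $p_K:=\frac{s_K}{1+s_K}\in(0,1)$, so that $p_K/(1-p_K)=s_K$. Writing $|F_j|^{2p_K}=(|F_j|^2g_j)^{p_K}\,g_j^{-p_K}$ and applying H\"older's inequality with exponents $\frac1{p_K}$ and $\frac1{1-p_K}$ yields
\[
\int_K|F_j|^{2p_K}\,dV_M\le\Big(\int_K|F_j|^2g_j\Big)^{p_K}\Big(\int_Kg_j^{-s_K}\,dV_M\Big)^{1-p_K}\le(C+1)^{p_K}C_K^{1-p_K},
\]
a bound uniform in $j$.

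Next, in a local holomorphic chart write $F_j=f_j\,dz_1\wedge\cdots\wedge dz_n$; since $f_j$ is holomorphic, $|f_j|^{2p_K}$ is plurisubharmonic, so the sub-mean value inequality upgrades the uniform $L^{2p_K}$ bound on $K$ to a uniform sup‑bound on every compact subset of the interior of $K$. Exhausting $M\setminus S$ by such compacts and combining Montel's theorem with a diagonal extraction, I obtain a subsequence $\{F_{j_l}\}$ converging uniformly on compact subsets of $M\setminus S$ to a holomorphic $(n,0)$ form $F$ on $M\setminus S$. Since $S$ is an analytic subset of $M$ — in particular of codimension $\ge 1$ and locally negligible with respect to $L^2$ holomorphic functions — $F$ extends, by the Riemann/Hartogs removable–singularity theorem, to a holomorphic $(n,0)$ form on all of $M$; a further argument using that the $F_j$ themselves are holomorphic on all of $M$ (so $|f_{j_l}-f|$ is subharmonic there) then promotes the convergence to be locally uniform on $M$, not merely on $M\setminus S$.

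Finally, $F_{j_l}\to F$ pointwise on $M\setminus S$ and, by hypothesis, $g_{j_l}\to g$ almost everywhere on $M$, so $|F_{j_l}|^2g_{j_l}\to|F|^2g$ almost everywhere on $M$; Fatou's lemma then gives
\[
\int_M|F|^2g\le\liminf_{l\to\infty}\int_M|F_{j_l}|^2g_{j_l}=\lim_{j\to\infty}\int_M|F_j|^2g_j\le C,
\]
which is the desired conclusion. I expect the genuinely delicate step to be the passage across $S$: near $S$ the weights $g_j$ need not be bounded below, so the interior $L^{2p}$ estimates degenerate there, and the extension of $F$ across $S$ and the convergence near $S$ must be handled by removability of analytic singularities (together with the fact that each $F_j$ is already holomorphic on $M$) rather than by a direct compactness bound. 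The H\"older trick turning the weighted $L^2$ bound into an interior $L^p$ bound, and the subsequent Montel and Fatou steps, are otherwise routine.
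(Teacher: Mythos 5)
Your overall skeleton --- H\"older with $p_K=\frac{s_K}{1+s_K}$ to convert the weighted $L^2$ bound into a uniform $L^{2p_K}$ bound on compact subsets of $M\setminus S$, then the sub-mean value inequality plus Montel and a diagonal extraction on $M\setminus S$, and finally Fatou along a subsequence realizing the $\liminf$ --- is the standard route; the paper itself only quotes this lemma from \cite{GMY}, whose argument follows the same lines, and your H\"older and Fatou steps are carried out correctly.

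The genuine gap is the passage across $S$, which you flag but do not actually prove, and the two tools you invoke do not suffice as stated. The Riemann removable singularity theorem requires the limit $F$ to be locally bounded (or at least locally $L^2$) near $S$, and nothing you have established gives this: your interior bounds $\sup_j\int_K|F_j|^{2p_K}dV_M\le (C+1)^{p_K}C_K^{1-p_K}$ hold only for $K\Subset M\setminus S$, and the constants $s_K$, $C_K$ may blow up as $K$ approaches $S$; likewise $\int_M|F|^2g\le C$ gives no control near $S$, since $g$ may degenerate there. The parenthetical appeal to subharmonicity of $|f_{j_l}-f|$ has the same defect: a sub-mean value inequality over a ball centered at a point of $S$ needs an $L^1$ bound of the integrand over the whole ball, which is precisely what is not yet known. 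What closes the gap is a maximum-principle argument exploiting that each $F_j$ is holomorphic across $S$: for $x\in S$ choose local coordinates and a polydisc $\{|z'|\le\varepsilon\}\times\{|z_n|\le r\}$ such that the compact ``collar'' $\{|z'|\le\varepsilon,\,|z_n|=r\}$ is disjoint from $S$ (possible because $S$ is a proper analytic subset, after a linear change of coordinates making the $z_n$-projection proper on $S$ near $x$); applying the one-variable maximum principle on each slice $\{z'\}\times\{|z_n|\le r\}$ to $F_j$ and to the differences $F_{j_l}-F_{j_m}$ transfers the uniform bounds and the uniform Cauchy property from the collar to the whole polydisc. This yields locally uniform convergence on all of $M$ and holomorphy of $F$ on $M$, after which your Fatou step finishes the proof; without this (or an equivalent) estimate near $S$, the extension of $F$ and the claimed uniform convergence on compact subsets of $M$ are unjustified.
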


\begin{Lemma}[see \cite{G-R}]
\label{closedness}
Let $N$ be a submodule of $\mathcal O_{\mathbb C^n,o}^q$, $1\leq q<\infty$, let $f_j\in\mathcal O_{\mathbb C^n}(U)^q$ be a sequence of $q$-tuples holomorphic function in an open neighborhood $U$ of the origin $o$. Assume that the $f_j$ converges uniformly in $U$ towards  a $q-$tuples $f\in\mathcal O_{\mathbb C^n}(U)^q$, assume furthermore that all germs $(f_{j},o)$ belong to $N$. Then $(f,o)\in N$.	
\end{Lemma}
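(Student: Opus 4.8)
The plan is to reduce the assertion to the Krull intersection theorem for the Noetherian local ring $R:=\mathcal{O}_{\mathbb{C}^n,o}$, combined with the elementary fact that uniform convergence of holomorphic functions forces convergence of all Taylor coefficients; no quantitative division‑with‑estimates will be needed.

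First I would recall that $R$ is a Noetherian local ring with maximal ideal $\mathfrak{m}$ (R\"uckert basis theorem), so that $\mathcal{O}_{\mathbb{C}^n,o}^q/N$ is a finitely generated $R$-module and the Krull intersection theorem applies to it, giving
\begin{equation*}
\bigcap_{m\ge 1}\bigl(N+\mathfrak{m}^m\,\mathcal{O}_{\mathbb{C}^n,o}^q\bigr)=N .
\end{equation*}
Hence it suffices to show that $(f,o)\in N+\mathfrak{m}^m\,\mathcal{O}_{\mathbb{C}^n,o}^q$ for every fixed $m\ge 1$, and then let $m$ vary.

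To prove the displayed membership for a fixed $m$, I would consider the quotient $V_m:=\mathcal{O}_{\mathbb{C}^n,o}^q\big/\bigl(N+\mathfrak{m}^m\,\mathcal{O}_{\mathbb{C}^n,o}^q\bigr)$. Since $\mathcal{O}_{\mathbb{C}^n,o}/\mathfrak{m}^m$ is a finite-dimensional $\mathbb{C}$-vector space, so is $V_m$, and the canonical $\mathbb{C}$-linear surjection $q_m\colon\mathcal{O}_{\mathbb{C}^n,o}^q\to V_m$ factors through the finite-dimensional space of degree $<m$ jets at $o$: the value $q_m(g,o)$ depends only on the Taylor polynomial of $g$ of order $<m$ at $o$. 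After shrinking $U$ to a polydisc centered at $o$, the Cauchy integral formula shows that the uniform convergence $f_j\to f$ forces every Taylor coefficient of $f_j$ at $o$ to converge to the corresponding coefficient of $f$; in particular the order $<m$ jets of $f_j$ converge to that of $f$. As $q_m$ is linear on a finite-dimensional jet space it is continuous, whence $q_m(f,o)=\lim_{j\to\infty}q_m(f_j,o)$. But $(f_j,o)\in N$ gives $q_m(f_j,o)=0$ for every $j$, so $q_m(f,o)=0$, i.e. $(f,o)\in N+\mathfrak{m}^m\,\mathcal{O}_{\mathbb{C}^n,o}^q$. Combining with the Krull intersection identity completes the proof.

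The only substantial external inputs are the Noetherianity of $\mathcal{O}_{\mathbb{C}^n,o}$ and the Krull intersection theorem. The main point to get right is the claim that $q_m$ genuinely factors through a finite-dimensional jet space, so that passage to the limit in $V_m$ is automatic; this is exactly where the hypothesis ``$(f_j,o)\in N$'' (rather than merely $f_j\in\mathcal{O}(U)^q$) enters. An alternative route, closer in spirit to \cite{G-R}, would combine Oka's coherence theorem with Cartan's bounded-division estimates to produce coefficient functions with uniformly bounded sup-norms and then extract a convergent subsequence by Montel's theorem; I would regard the Krull-intersection argument above as shorter and cleaner.
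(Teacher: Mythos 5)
Your proof is correct, and it is essentially the standard argument behind this lemma as cited: the paper gives no proof of its own but refers to Grauert--Remmert \cite{G-R}, where the result is obtained in just this way, by combining the $\mathfrak{m}$-adic closedness of submodules of $\mathcal O_{\mathbb C^n,o}^q$ (Krull intersection theorem over the Noetherian local ring $\mathcal O_{\mathbb C^n,o}$, applied to the finitely generated module $\mathcal O_{\mathbb C^n,o}^q/N$) with the Cauchy-estimate fact that uniform convergence forces convergence of all Taylor coefficients, so that the jet-level maps $q_m$ pass to the limit. One small slip in your closing commentary: the factorization of $q_m$ through the finite-dimensional jet space $\mathcal O_{\mathbb C^n,o}^q/\mathfrak{m}^m\mathcal O_{\mathbb C^n,o}^q$ has nothing to do with the hypothesis $(f_j,o)\in N$ (it holds simply because $\mathfrak{m}^m\mathcal O_{\mathbb C^n,o}^q\subset N+\mathfrak{m}^m\mathcal O_{\mathbb C^n,o}^q$); that hypothesis enters only to give $q_m(f_j,o)=0$, and this does not affect the validity of the argument.
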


\begin{Lemma}[see \cite{GY-concavity4}]\label{l:m2}
Let $\psi=\max_{1\le j\le n}\{2p_j\log|w_j|\}$ be a plurisubharmonic function on $\mathbb{C}^n$, where $p_j>0$.
	Let $f=\sum_{\alpha\in\mathbb{Z}_{\ge0}^n}b_{\alpha}w^{\alpha}$ (Taylor expansion) be a holomorphic function on $\{\psi<-t_0\}$, where $t_0>0$. Denote that $q_{\alpha}:=\sum_{1\le j\le n}\frac{\alpha_j+1}{p_j}-1$ for any $\alpha\in\mathbb{Z}_{\ge0}^n$ and  $E_1:=\{\alpha\in\mathbb{Z}_{\ge0}^n:q_{\alpha}=0\}$. Let $k_1$ be  a constant satisfying $k_1+1>0$.Then
\begin{displaymath}\begin{split}
	\int_{\{-t-1-k_1<\psi<-t\}}|f|^2e^{-\psi}d\lambda_n=&\sum_{\alpha\in E_1}\frac{|b_{\alpha}|^2\pi^{n}(1+k_1)}{\prod_{1\le j\le n}(\alpha_j+1)}\\
	&+\sum_{\alpha\not\in E_1}\frac{|b_{\alpha}|^2\pi^{n}(q_{\alpha}+1)(e^{-q_{\alpha}t}-e^{-q_{\alpha}(t+1+k_1)})}{q_{\alpha}\prod_{1\le j\le n}(\alpha_j+1)}	
\end{split}\end{displaymath}
	 for any $t>t_0$.
\end{Lemma}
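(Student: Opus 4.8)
\emph{Proof proposal.} The plan is to exploit the full $n$-torus symmetry of the weight $\psi$ and of the sublevel sets $\{\psi<-s\}$ in order to diagonalize the integral along the monomial basis, and then to evaluate each monomial contribution by an explicit change of variables. First I would record that $\{\psi<-s\}=\{w\in\mathbb{C}^n:|w_j|<e^{-s/(2p_j)}\text{ for all }j\}$ is an open polydisc for every $s$, and hence
\[
\{-t-1-k_1<\psi<-t\}=\{|w_j|<R_j\ \forall j\}\setminus\{|w_j|\le r_j\ \forall j\},\qquad R_j=e^{-t/(2p_j)},\ r_j=e^{-(t+1+k_1)/(2p_j)}.
\]
Since $t>t_0$, the closed polydisc $\{|w_j|\le R_j\ \forall j\}$ is a compact subset of the open polydisc $\{\psi<-t_0\}$ on which $f$ is holomorphic, so the Taylor series $f=\sum_{\alpha}b_{\alpha}w^{\alpha}$ converges uniformly on the closure of the annular region; moreover $e^{-\psi}\le e^{t+1+k_1}$ there and the region has finite Lebesgue measure.

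Next I would diagonalize. Both the annular region and the density $e^{-\psi}\,d\lambda_n$ are invariant under $w_j\mapsto e^{\sqrt{-1}\theta_j}w_j$. Writing $w_j=\rho_je^{\sqrt{-1}\theta_j}$ and using $\int_0^{2\pi}e^{\sqrt{-1}(\alpha_j-\beta_j)\theta_j}\,d\theta_j=0$ for $\alpha_j\ne\beta_j$, all cross terms in $|f|^2=\sum_{\alpha,\beta}b_{\alpha}\overline{b_{\beta}}\,w^{\alpha}\overline{w^{\beta}}$ drop out. Applying this to the partial sums $f_N=\sum_{|\alpha|\le N}b_{\alpha}w^{\alpha}$ (each integral being finite, the integrand being bounded on a finite-measure set) and letting $N\to\infty$ via the uniform convergence just noted, I would obtain
\[
\int_{\{-t-1-k_1<\psi<-t\}}|f|^2e^{-\psi}\,d\lambda_n=\sum_{\alpha\in\mathbb{Z}_{\ge0}^n}|b_{\alpha}|^2\,I_{\alpha},\qquad I_{\alpha}:=\int_{\{-t-1-k_1<\psi<-t\}}|w^{\alpha}|^2e^{-\psi}\,d\lambda_n,
\]
the right-hand side converging monotonically.

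It then remains to evaluate $I_{\alpha}$. In polar coordinates $I_{\alpha}=(2\pi)^n\int\prod_j\rho_j^{2\alpha_j+1}\,e^{-\max_j 2p_j\log\rho_j}\prod_jd\rho_j$ over $\{\rho_j<R_j\ \forall j\}\setminus\{\rho_j\le r_j\ \forall j\}$. Substituting $\rho_j=e^{x_j}$ and then $y_j=2p_jx_j$ turns this into $\dfrac{(2\pi)^n}{\prod_j 2p_j}\int e^{\sum_j\beta_jy_j-\max_j y_j}\prod_jdy_j$ over $\{y_j\le-t\ \forall j\}\setminus\{y_j\le-t-1-k_1\ \forall j\}$, where $\beta_j:=\frac{\alpha_j+1}{p_j}>0$ so that $\sum_j\beta_j=q_{\alpha}+1$. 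I would split this region (up to a null set) into the $n$ sets $A_k=\{y_k\ge y_j\ \forall j\}$: on $A_k$ one has $\max_j y_j=y_k$, the constraints become $-t-1-k_1<y_k\le-t$ and $y_j\le y_k$ for $j\ne k$, and the inner integrals are elementary, leaving $\frac{1}{\prod_{j\ne k}\beta_j}\int_{-t-1-k_1}^{-t}e^{q_{\alpha}y_k}\,dy_k$. Summing over $k$ via $\sum_k\frac{1}{\prod_{j\ne k}\beta_j}=\frac{\sum_k\beta_k}{\prod_j\beta_j}=\frac{q_{\alpha}+1}{\prod_j\beta_j}$ and simplifying through $\prod_j 2p_j\beta_j=\prod_j 2(\alpha_j+1)$, I get $I_{\alpha}=\frac{\pi^n(1+k_1)}{\prod_j(\alpha_j+1)}$ when $q_{\alpha}=0$ (the last integral being $\int_{-t-1-k_1}^{-t}1\,dy_k=1+k_1$) and $I_{\alpha}=\frac{\pi^n(q_{\alpha}+1)(e^{-q_{\alpha}t}-e^{-q_{\alpha}(t+1+k_1)})}{q_{\alpha}\prod_j(\alpha_j+1)}$ when $q_{\alpha}\ne0$; substituting into the displayed identity gives the claim.

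The computation itself is routine once set up; the point requiring genuine care is the legitimacy of the term-by-term integration — namely, that the closure of the annulus is a compact subset of the polydisc of convergence of $f$, which is exactly where the hypothesis $t>t_0$ is used — together with the observation that one must treat the case $q_{\alpha}=0$ by working directly with the annulus, since over a full polydisc the monomial integral diverges logarithmically and the naive ``difference of two polydisc integrals'' rewriting is unavailable there. Everything else is bookkeeping.
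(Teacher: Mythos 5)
Your proof is correct, and it follows essentially the same route the paper takes for this kind of computation: the paper itself only cites \cite{GY-concavity4} for Lemma \ref{l:m2} (remarking that the case $k_1+1>0$ follows by the same argument), but its proof of the companion Lemma \ref{l:m1} uses exactly your scheme of reducing to monomials by orthogonality and then evaluating the radial integral via Fubini after splitting the region according to which coordinate realizes the maximum. Your extra care about term-by-term integration on the compact annulus (and the remark that the $q_{\alpha}=0$ case must be handled on the annulus, not as a difference of divergent polydisc integrals) is sound and only makes the argument more complete.
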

\begin{Remark}
Lemma \ref{l:m2}  is stated in \cite{GY-concavity4} in the case $k_1=0$, the same proof as in \cite{GY-concavity4} shows that Lemma \ref{l:m2} holds for $k_1$ which satisfying $k_1+1>0$.
\end{Remark}

\begin{Lemma}\label{l:m1}
Let $\psi=\max_{1\le j\le n}\{2p_j\log|w_j|\}$ be a plurisubharmonic function on $\mathbb{C}^n$, where $p_j>0$.
	Let $f=\sum_{\alpha\in \mathbb{Z}_{\ge0}^n}b_{\alpha}w^{\alpha}$ (Taylor expansion) be a holomorphic function on $\{\psi<-t_0\}$, where $t_0>0$. Let $c(t)$ be a nonnegative measurable function on $(t_0,+\infty)$. Denote that $q_{\alpha}:=\sum_{1\le j\le n}\frac{\alpha_j+1}{p_j}-1$ for any $\alpha\in\mathbb{Z}_{\ge0}^n$. Let $A$ be a real constant such that $t_0+A>0$. Then
	$$\int_{\{\psi+A<-t\}}|f|^2c(-\psi-A)d\lambda_n=\sum_{\alpha\in\mathbb{Z}_{\ge0}^n}\left(\int_t^{+\infty}c(s)e^{-(q_{\alpha}+1)s}ds\right)e^{-(q_{\alpha}+1)A}\frac{(q_{\alpha}+1)|b_{\alpha}|^2\pi^{n}}{\prod_{1\le j\le n}(\alpha_j+1)}$$
	holds for any $t\ge t_0$.
\end{Lemma}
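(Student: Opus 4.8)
The plan is to reduce everything to a direct computation in polar coordinates, exploiting the fact that the monomials $w^\alpha$ are mutually orthogonal over the polycircle $\{|w_1|=r_1,\dots,|w_n|=r_n\}$ for the rotation-invariant measure. First I would observe that the set $\{\psi+A<-t\}=\{\max_j 2p_j\log|w_j|<-t-A\}=\{|w_j|<e^{-(t+A)/(2p_j)}\ \text{for all } j\}$ is a polydisc centred at the origin (nonempty exactly when $t+A>0$, which holds for $t\ge t_0$ since $t_0+A>0$). On such a polydisc the Taylor series $f=\sum_\alpha b_\alpha w^\alpha$ converges locally uniformly (note $\{\psi+A<-t\}\Subset\{\psi<-t_0\}$ after shrinking, or one argues on $\{\psi+A<-t-\epsilon\}$ and lets $\epsilon\to0$ by monotone convergence), so the integral $\int_{\{\psi+A<-t\}}|f|^2 c(-\psi-A)\,d\lambda_n$ may be expanded and integrated term by term; the cross terms $\int w^\alpha \overline{w^{\alpha'}}$ vanish for $\alpha\ne\alpha'$ because $c(-\psi-A)$ depends only on $(|w_1|,\dots,|w_n|)$.

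Next I would compute, for a single $\alpha$, the integral $\int_{\{\psi+A<-t\}}|b_\alpha|^2|w^\alpha|^2 c(-\psi-A)\,d\lambda_n$ by switching to polar coordinates $w_j=\rho_j e^{i\theta_j}$. The angular integration contributes $(2\pi)^n$, and writing $s_j:=-2p_j\log\rho_j$ (so $\rho_j^{2\alpha_j+1}d\rho_j$ becomes an exponential in $s_j$) the region becomes $\{s_j>t+A\ \forall j\}$ with $c(-\psi-A)=c(\min_j s_j - A)$. Then I would pass to the variable $u:=\min_j s_j$, which is the natural variable since $\psi+A = -u + A$ on the corresponding level; integrating out the remaining $n-1$ variables over $\{s_j\ge u\}$ produces the factor $\frac{1}{\prod_j 2p_j}\cdot\frac{\pi^n}{\cdots}$ — more precisely the standard identity
\begin{equation}\nonumber
\int_{\{\rho_j<R_j\,\forall j,\ \max_j\rho_j^{2p_j}R_j^{-2p_j}\ \text{fixed}\}}\cdots
\end{equation}
collapses, after the change of variables, to
\begin{equation}\nonumber
\int_t^{+\infty} c(s) e^{-(q_\alpha+1)s}\,ds \cdot e^{-(q_\alpha+1)A}\cdot\frac{(q_\alpha+1)\pi^n}{\prod_{1\le j\le n}(\alpha_j+1)},
\end{equation}
where $q_\alpha+1=\sum_j\frac{\alpha_j+1}{p_j}$ appears exactly because the Jacobian of $s=-2p_j\log\rho_j$ together with the power $\rho_j^{2\alpha_j+1}$ gives an exponential weight $e^{-\frac{\alpha_j+1}{p_j}s_j}$ in each variable. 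This is the same computation underlying Lemma \ref{l:m2} (the case where $c$ is an indicator of an interval and $A=0$), so I would either cite that structure or redo the one-variable reduction; in fact summing Lemma \ref{l:m2}-type pieces against $c$ and using Fubini gives an alternative route.

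The main obstacle — really the only non-routine point — is justifying the term-by-term integration and the interchange of sum and integral when $c$ is merely nonnegative measurable (no integrability a priori) and $f$ is only holomorphic on the possibly larger set $\{\psi<-t_0\}$ rather than on a neighbourhood of the closed polydisc. I would handle this by working first on the relatively compact polydiscs $\{\psi+A<-t-\epsilon\}$ for $\epsilon>0$, where uniform convergence of the Taylor series and Tonelli's theorem (all terms nonnegative after expanding $|f|^2$ into its nonnegative monomial-squared pieces plus controlling cross terms, which integrate to zero) apply directly, obtaining the claimed formula with $\{\psi+A<-t-\epsilon\}$ and $\int_{t+\epsilon}^{+\infty}$; then letting $\epsilon\downarrow0$ and invoking the monotone convergence theorem on both sides (the left side increases to the integral over $\{\psi+A<-t\}$, the right side increases termwise) yields the stated identity for all $t\ge t_0$. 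Both sides are then equal in $[0,+\infty]$, which is exactly what the lemma asserts.
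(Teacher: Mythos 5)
Your proposal is correct and follows essentially the same route as the paper's proof: orthogonality of the monomials under the rotation-invariant weight reduces everything to $\int_{\{\psi+A<-t\}}|w^{\alpha}|^2c(-\psi-A)\,d\lambda_n$, which is then evaluated in polar coordinates by a Fubini/Tonelli splitting according to which (rescaled) radius is extremal — your change of variables $s_j=-2p_j\log\rho_j$ and splitting on $\min_j s_j$ is the same computation as the paper's substitution $r_j=\rho_j^{p_j}$ and splitting on $\max_j r_j$, and it yields the identical constant $\frac{(q_\alpha+1)\pi^n}{\prod_j(\alpha_j+1)}e^{-(q_\alpha+1)A}$. The only added content is your explicit justification of the term-by-term integration (Parseval on each torus plus Tonelli, or exhaustion and monotone convergence), which the paper asserts without comment, so this is a harmless refinement rather than a different method.
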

	
\begin{proof}
	For any $t\ge t_0$, by direct calculations, we obtain that (note that $t+A>0$)
	\begin{equation}\label{eq:1127b}\begin{split}
		&\int_{\{\psi+A<-t\}}|w^{\alpha}|^2c(-\psi-A)d\lambda_n\\
		=&(2\pi)^n\int_{\left\{\max_{1\le j\le n}\left\{s_j^{p_j}\right\}<e^{-\frac{t+A}{2}}\,\&\,s_j>0\right\}}\prod_{1\le j\le n}s_j^{2\alpha_j+1}\cdot c\left(-\log\max_{1\le j\le n}\left\{s_j^{2p_j}\right\}-A\right)ds_1ds_2\ldots ds_n\\
		=&(2\pi)^n\frac{1}{\prod_{1\le j\le n}p_j}\\
		&\times\int_{\left\{\max_{1\le j\le n}\{r_j\}<e^{-\frac{t+A}{2}}\,\&\,r_j>0\right\}}\prod_{1\le j\le n}r_j^{\frac{2\alpha_j+2}{p_j}-1}\cdot c\left(-\log\max_{1\le j\le n}\left\{r_j^2\right\}-A\right)dr_1dr_2\ldots dr_n.
		\end{split}
	\end{equation}
 By the Fubini's theorem, we have
\begin{equation}
	\label{eq:211125f}\begin{split}
		&\int_{\left\{\max_{1\le j\le n}\{r_j\}<e^{-\frac{t+A}{2}}\,\&\,r_j>0\right\}}\prod_{1\le j\le n}r_j^{\frac{2\alpha_j+2}{p_j}-1}\cdot c\left(-\log\max_{1\le j\le n}\left\{r_j^2\right\}-A\right)dr_1dr_2\ldots dr_n\\
		=&\sum_{j'=1}^n\int_{0}^{e^{-\frac{t+A}{2}}}\left(\int_{\{0\le r_j<r_{j'},j\not=j'\}}\prod_{j\not=j'}r_j^{\frac{2\alpha_j+2}{p_j}-1}\cdot\wedge_{j\not=j'}dr_j\right)r_{j'}^{\frac{2\alpha_{j'}+2}{p_{j'}}-1}c(-2\log r_{j'}-A)dr_{j'}\\
		=&\sum_{j'=1}^n\left(\prod_{j\not=j'}\frac{p_j}{2\alpha_j+2}\right)\int_{0}^{e^{-\frac{t+A}{2}}}r_{j'}^{\sum_{1\le k\le n}\frac{2\alpha_k+2}{p_k}-1}c(-2\log r_{j'}-A)dr_{j'}\\
		=&(q_{\alpha}+1)e^{-(q_{\alpha}+1)A}\left(\int_t^{+\infty}c(s)e^{-(q_{\alpha}+1)s}ds\right)\prod_{1\le j\le n}\frac{p_j}{2\alpha_j+2}.
			\end{split}
\end{equation}		
Following from $\int_{\{\psi<-t\}}|f|^2c(-\psi)d\lambda_n=\sum_{\alpha\in\mathbb{Z}_{\ge0}^n}|b_{\alpha}|^2\int_{\{\psi<-t\}}|w^{\alpha}|^2c(-\psi)d\lambda_n$, equality \eqref{eq:1127b} and equality \eqref{eq:211125f}, we obtain that
\begin{equation*}
\int_{\{\psi<-t\}}|f|^2d\lambda_n=\sum_{\alpha\in\mathbb{Z}_{\ge0}^n}\left(\int_t^{+\infty}c(s)e^{-(q_{\alpha}+1)s}ds\right)e^{-(q_{\alpha}+1)A}\frac{(q_{\alpha}+1)|b_{\alpha}|^2\pi^n}{\prod_{1\le j\le n}(\alpha_j+1)}.
\end{equation*}
\end{proof}

\begin{Lemma}
\label{limit discussion}
Let $\Delta^n\subset \mathbb{C}^n$ be a polydisc. Let $\varphi$ be a bounded subharmonic function on $\Delta^n$. Assume that $v$ is a nonnegative continuous real function on $\Delta^n$.
Denote
$$I_t:=\int_{\{z\in\Delta^n:-t-1-k_2<\max\limits_{1\le j\le n}{\{2p_j\log|z_j|\}}<-t+k_1\}}v(z)\frac{\prod_{j=1}^{n}|z_j|^{2\alpha_j}}{\max\limits_{1\le j\le n}{\{|z_j|^{2p_j}\}}}
e^{-\varphi(z_1,\ldots,z_n)}dz\wedge d\bar{z},$$
$$J_t:=\int_{\{z\in\Delta^n:-t-1-k_2<\max\limits_{1\le j\le n}{\{2p_j\log|z_j|\}}<-t+k_1\}}v(z)\frac{\prod_{j=1}^{n}|z_j|^{2\beta_j}}{\max\limits_{1\le j\le n}{\{|z_j|^{2p_j}\}}}
e^{-\varphi(z_1,\ldots,z_n)}dz\wedge d\bar{z},$$
where $p_j,\alpha_j,\beta_j>0$ are constants for any $j$ satisfying $\sum_{1\le j\le n}\frac{\alpha_j+1}{p_j}=1$ and $\sum_{1\le j\le n}\frac{\beta_j+1}{p_j}>1$, $k_1,k_2$ are constants satisfying $k_1+k_2+1>0$.

Then
\begin{equation}
\label{limit for bounded psh}
\limsup_{t\to+\infty}I_t\le \frac{(2\pi)^{n}(1+k_1+k_2)}{\prod_{1\le j\le n}(\alpha_j+1)}v(0)e^{-\varphi(0)}.
\end{equation}
and
\begin{equation}\nonumber
\lim_{t\to+\infty}J_t=0.
\end{equation}
\end{Lemma}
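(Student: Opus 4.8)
The plan is to reduce both integrals to one-variable-type estimates using the Fubini decomposition already carried out in Lemma \ref{l:m1} and Lemma \ref{l:m2}, and then control the subharmonic weight $e^{-\varphi}$ by its value at the origin. First I would handle $J_t$, which should go to $0$: on the annular region $\{-t-1-k_2<\max_j\{2p_j\log|z_j|\}<-t+k_1\}$ the factor $\prod_j|z_j|^{2\beta_j}/\max_j\{|z_j|^{2p_j}\}$ together with the Lebesgue measure behaves, after passing to polar coordinates $r_j=|z_j|$ and the substitution used in \eqref{eq:211125f}, like $e^{-(q_\beta+1)t}$ times a bounded quantity, where $q_\beta=\sum_j\frac{\beta_j+1}{p_j}-1>0$ by hypothesis; since $\varphi$ is bounded and $v$ is continuous (hence bounded on a neighborhood of $0$, which is all that matters as the region shrinks to $\{0\}$), we get $J_t\le C e^{-(q_\beta+1)t}\to0$. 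The decay is exactly the ``$\alpha\notin E_1$'' phenomenon in Lemma \ref{l:m2}.

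For $I_t$ the exponent is critical: $\sum_j\frac{\alpha_j+1}{p_j}=1$ means $q_\alpha=0$, so instead of exponential decay we get a finite limit controlled by the measure of the shell. The key step is to bound $e^{-\varphi}$ near $0$. Since $\varphi$ is subharmonic and bounded above, $\limsup_{z\to0}\big(-\varphi(z)\big)\le -\varphi(0)$ is false in general — rather, upper semicontinuity of $-\varphi$ gives $\limsup_{z\to 0}(-\varphi(z))\ge -\varphi(0)$, which is the wrong direction. The correct tool is the sub-mean-value property: I would fix $\varepsilon>0$, and use that for the shrinking polydiscs $\Delta_\rho^n$ one has $\frac{1}{|\Delta_\rho^n|}\int_{\Delta_\rho^n}e^{-\varphi}\,d\lambda_n$; but $e^{-\varphi}$ is \emph{superharmonic-like} only when $\varphi$ is subharmonic — actually $-\varphi$ is superharmonic so $e^{-\varphi}$ need not have a mean value property. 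Instead, the clean route is: write $I_t$ in polar/toric coordinates as in \eqref{eq:211125f}, so that $I_t$ becomes an integral of $v(z)e^{-\varphi(z)}$ against the measure $d\nu_t$ supported on the shell $S_t:=\{-t-1-k_2<\max_j\{2p_j\log|z_j|\}<-t+k_1\}$, where $\nu_t(S_t)\to \frac{(2\pi)^n(1+k_1+k_2)}{\prod_j(\alpha_j+1)}$ by the computation behind Lemma \ref{l:m2} (with the weight $|z|^{2\alpha}/\max\{|z_j|^{2p_j}\}$ and $q_\alpha=0$). Since $v$ is continuous with $v(z)\to v(0)$ and $S_t$ collapses to $\{0\}$, it remains to show $\limsup_t \frac{1}{\nu_t(S_t)}\int_{S_t} e^{-\varphi}\,d\nu_t \le e^{-\varphi(0)}$.

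This last inequality is the main obstacle, and here is how I would settle it. The measures $\nu_t$, after the coordinate change, are (up to the $v$ factor) the push-forwards of normalized Lebesgue measure restricted to toric shells, so $e^{-\varphi}$ gets integrated against a rotation-invariant measure in each $|z_j|$. Because $\varphi$ is subharmonic, for each fixed modulus vector $(r_1,\dots,r_n)$ the circular average $\frac{1}{(2\pi)^n}\int e^{-\varphi(r_1e^{i\theta_1},\dots,r_ne^{i\theta_n})}\,d\theta$ is, by Jensen's inequality applied to the convex function $x\mapsto e^{-x}$ and the sub-mean-value property of $\varphi$ over circles, at most... no — Jensen gives $e^{-\,\text{avg}\,\varphi}\le \text{avg}\,e^{-\varphi}$, again the wrong way. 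The genuinely correct argument: pick $\delta>0$; by upper semicontinuity of $\varphi$ at $0$ from below is automatic, but we need $\varphi(z)\ge \varphi(0)-\delta$ near $0$, which is \emph{false} for subharmonic $\varphi$ (they can dip). So $e^{-\varphi}$ can blow up — the statement \eqref{limit for bounded psh} must be using that $\varphi$ is \emph{also bounded below}? Re-reading: ``$\varphi$ a bounded subharmonic function'', so $\varphi\ge -C$, hence $e^{-\varphi}\le e^{C}$, but that only gives a bound with $e^C$, not $e^{-\varphi(0)}$. Therefore the intended proof must invoke that subharmonic functions are \emph{upper} semicontinuous: $\limsup_{z\to0}\varphi(z)\le\varphi(0)$ is the wrong inequality too; u.s.c. means $\limsup_{z\to 0}\varphi(z)\le\varphi(0)$. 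Yes! That \emph{is} u.s.c. So $\liminf_{z\to0}(-\varphi(z))\ge -\varphi(0)$ — still wrong direction for an upper bound. I conclude the real mechanism is: $\varphi$ subharmonic $\Rightarrow$ $\varphi(0)\le \frac{1}{|\Delta_\rho^n|}\int_{\Delta_\rho^n}\varphi$, and one applies this with a Fatou/dominated-convergence argument after noting the shell integral of $e^{-\varphi}$ is dominated (using boundedness of $\varphi$) and converges a.e.; the clean statement to extract is $\int_{S_t}e^{-\varphi}d\nu_t \le \nu_t(S_t)\,e^{-\varphi(0)}(1+o(1))$ via the sub-mean value property of the superharmonic function... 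I would finalize this by approximating $\varphi$ from above by continuous functions $\varphi_k\downarrow\varphi$ on a fixed small polydisc (possible since $\varphi$ is u.s.c. and bounded), so $e^{-\varphi_k}\uparrow e^{-\varphi}$ is false — $e^{-\varphi_k}\le e^{-\varphi}$. Hmm. Given these sign subtleties, the safe writeup is: use that $-\varphi$ is superharmonic and bounded, hence its restriction to the toric shell has circular averages $\le$ its value along the central torus, let the shell shrink, invoke continuity of the superharmonic circular-average function at $0$, and combine with $\nu_t(S_t)\to\frac{(2\pi)^n(1+k_1+k_2)}{\prod_j(\alpha_j+1)}$ and $v(z)\to v(0)$; the $\limsup$ then follows, and $\lim_t J_t=0$ from the strict inequality $q_\beta>0$. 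The main obstacle is precisely making the weight-comparison step ($e^{-\varphi}$ near its value at $0$) rigorous via superharmonicity of $-\varphi$ rather than any false pointwise bound.
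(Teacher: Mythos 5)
Your treatment of $J_t$ is essentially the paper's: rescale, note the supercritical exponent $q_\beta=\sum_j\frac{\beta_j+1}{p_j}-1>0$ forces decay (Lemma \ref{l:m2}), and use boundedness of $v$ and $e^{-\varphi}$. (The rate is $e^{-q_\beta t}$, not $e^{-(q_\beta+1)t}$, but that is immaterial.) The problem is the $I_t$ estimate, where your proposal never closes the key step, and you in fact acknowledge this. The mechanism you finally settle on --- superharmonicity of $-\varphi$ giving circular/torus averages of $-\varphi$ bounded by $-\varphi(0)$ --- cannot yield \eqref{limit for bounded psh}: even granting $\mathrm{avg}(-\varphi)\le-\varphi(0)$ on the shell, Jensen for the convex function $x\mapsto e^{x}$ gives $\mathrm{avg}\,e^{-\varphi}\ge e^{\mathrm{avg}(-\varphi)}$, i.e.\ a lower bound, never the upper bound $\mathrm{avg}\,e^{-\varphi}\lesssim e^{-\varphi(0)}$ you need; and, as you noted yourself, no pointwise bound $\varphi\ge\varphi(0)-\delta$ near $0$ is available for a subharmonic weight. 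So the proposal has a genuine gap exactly at the weight-comparison step, and the route sketched to fill it would fail.

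The missing idea (used in the paper, following Zhou--Zhu) is a smallness-of-exceptional-set argument rather than any average comparison for $e^{-\varphi}$. After the rescaling $z_j=e^{-t/2p_j}w_j$ (which turns the shrinking shell into a fixed shell because $q_\alpha=0$), one may assume $\varphi<0$ and sets $B_{\delta,t}:=\{w\in\Delta^n:\varphi(e^{-t/2p_1}w_1,\ldots,e^{-t/2p_n}w_n)<(1+\delta)\varphi(0)\}$. Upper semicontinuity at $0$ gives $\varphi(e^{-t/2p}w)\le(1-\epsilon)\varphi(0)$ for all $w\in\Delta^n$ once $t$ is large, and the sub-mean-value inequality $\varphi(0)\le\frac{1}{\pi^n}\int_{\Delta^n}\varphi(e^{-t/2p}w)\,d\lambda$ then forces $\lambda(B_{\delta,t})\le\frac{\pi^n\epsilon}{\delta+\epsilon}$, so $\lambda(B_{\delta,t})\to0$ for each fixed $\delta$. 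On $B_{\delta,t}$ one uses precisely the crude bound you dismissed, $e^{-\varphi}\le C_1$ (boundedness of $\varphi$), together with boundedness of $v$ and of the geometric factor on the fixed shell, so that contribution is $O(\lambda(B_{\delta,t}))\to0$; on the complement one has the good bound $e^{-\varphi}\le e^{-(1+\delta)\varphi(0)}$, and Lemma \ref{l:m2} (critical case) produces the constant $\frac{(2\pi)^{n}(1+k_1+k_2)}{\prod_j(\alpha_j+1)}$, while continuity of $v$ lets $\sup v(e^{-t/2p}w)\to v(0)$. Letting $t\to+\infty$ and then $\delta\to0$ gives \eqref{limit for bounded psh}. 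Without this decomposition into a small exceptional set plus a region where $\varphi$ is quantitatively close to (or above) $(1+\delta)\varphi(0)$, the estimate does not follow from the tools you invoke.
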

\begin{proof}
The idea of the proof can be referred to Lemma 3.3 in \cite{ZZ2019}. For the convenience of the readers, we give a proof below.

Denote
$$I_{t+k_1}:=\int_{\{z\in\Delta^n:-t-1-k_1-k_2<\max\limits_{1\le j\le n}{\{2p_j\log|z_j|\}}<-t\}}v(z)\frac{\prod_{j=1}^{n}|z_j|^{2\alpha_j}}{\max\limits_{1\le j\le n}{\{|z_j|^{2p_j}\}}}
e^{-\varphi(z_1,\ldots,z_n)}dz\wedge d\bar{z},$$
Note that
$$\limsup_{t\to+\infty}I_t=\limsup_{t\to+\infty}I_{t+k_1},$$
then we only need to prove \eqref{limit for bounded psh} for $I_{t+k_1}$.

Denote
$$B_{\delta,t}:=\{z\in\Delta^n:
\varphi(e^{\frac{-t}{2p_1}}z_1,e^{\frac{-t}{2p_2}}z_2,\ldots,e^{\frac{-t}{2p_n}}z_n)
<(1+\delta)\varphi(0)\},$$
where $\delta\in(0,+\infty)$ and $t\in (0,+\infty)$.

Let $\lambda(B_{\delta,t})$ be the $2n-$dimensional Lebesgue measure of $B_{\delta,t}$.

Since the computation is local, we may assume that $\varphi$ is a negative upper semicontinuous function on $\Delta^n$. Note that $\varphi(0)>-\infty$. For any $\epsilon\in (0,1)$, there exists $t_{\epsilon}>0$ such that
$$\varphi(e^{\frac{-t}{2p_1}}z_1,e^{\frac{-t}{2p_2}}z_2,\ldots,e^{\frac{-t}{2p_n}}z_n)\le(1-\epsilon)\varphi(0)$$
for any $z\in \Delta^n$, when $t\ge t_\epsilon.$ We denote $\varphi(e^{\frac{-t}{2p_1}}z_1,e^{\frac{-t}{2p_2}}z_2,\ldots,e^{\frac{-t}{2p_n}}z_n)$ by $\varphi(e^{\frac{-t}{2p}}z)$ when there is no misunderstanding.

Note that for fixed $t$, $\varphi(e^{\frac{-t}{2p}}z)$ is subharmonic on $\Delta^n$ with respect to $z$. It follows from mean value inequality that, for all $t\ge t_{\epsilon}$, we have
\begin{equation*}
  \begin{split}
     \varphi(0) &
     \le\frac{1}{\pi}\int_{z_1\in\Delta} \varphi(e^{\frac{-t}{2p_1}}z_1,0\ldots,0)d\lambda_{z_1}\\
     &\le \frac{1}{\pi^n}\int_{z\in\Delta^n} \varphi(e^{\frac{-t}{2p_1}}z_1,e^{\frac{-t}{2p_2}}z_2,\ldots,e^{\frac{-t}{2p_n}}z_n)d\lambda_z\\
       & =\frac{1}{\pi^n}\int_{\Delta^n\backslash B_{\delta,t}} \varphi(e^{\frac{-t}{2p}}z)d\lambda_z+\frac{1}{\pi}\int_{B_{\delta,t}} \varphi(e^{\frac{-t}{2p}}z)d\lambda_z\\
       &\le\frac{(1-\epsilon)\varphi(0)}{\pi^n}(\pi^n-\lambda(B_{\delta,t}))+\frac{(1+\delta)\varphi(0)}{\pi^n}\lambda(B_{\delta,t})\\
       &=\varphi(0)(1-\epsilon+\frac{\delta+\epsilon}{\pi^n}\lambda(B_{\delta,t})).
  \end{split}
\end{equation*}
As $\varphi(0)<0$, we have
$$\lambda(B_{\delta,t})\le\frac{\pi^n\epsilon}{\delta+\epsilon}\le\frac{\pi^n\epsilon}{\delta}$$
for any $t\ge t_{\epsilon}$. Hence
$$\lim\limits_{t \to+\infty}\lambda(B_{\delta,t})=0.$$

Since $\varphi$ is bounded, we have $e^{-\varphi}\le C_1$ for some $C_1>0$.
As $v(t)$ is continuous, when $t$ is large enough, we have $$\sup\limits_{\{z\in\Delta^n:-t-1-k_1-k_2<\max\limits_{1\le j\le n}{\{2p_j\log|z_j|\}}<-t\}}v(z)\le C_2,$$
where $C_2>0$ is a constant independent of $t$. We denote $v(e^{\frac{-t}{2p_1}}z_1,e^{\frac{-t}{2p_2}}z_2,\ldots,e^{\frac{-t}{2p_n}}z_n)$ by $v(e^{\frac{-t}{2p}}z)$ when there is no misunderstandings.

 Let $z_i=e^{\frac{-t}{2p_i}}w_i$. We denote $v(e^{\frac{-t}{2p_1}}w_1,e^{\frac{-t}{2p_2}}w_2,\ldots,e^{\frac{-t}{2p_n}}w_n)$ by $v(e^{\frac{-t}{2p}}w)$ and denote $\varphi(e^{\frac{-t}{2p_1}}w_1,e^{\frac{-t}{2p_2}}w_2,\ldots,e^{\frac{-t}{2p_n}}w_n)$ by $\varphi(e^{\frac{-t}{2p}}w)$ for simplicity. By Lemma \ref{l:m2}, we have
\begin{equation}
\begin{split}
    I_{t+k_1}&=\int_{\{z\in\Delta:-t-1-k_1-k_2<\max\limits_{1\le j\le n}{\{2p_j\log|z_j|\}}<-t\}}v(z)\frac{\prod_{j=1}^{n}|z_j|^{2\alpha_j}}{\max\limits_{1\le j\le n}{\{|z_j|^{2p_j}\}}}e^{-\varphi(z)}dz\wedge d\bar{z} \\
     & =\int_{\{w\in\Delta^n:e^{-1-k_1-k_2}<\max\limits_{1\le j\le n}{\{|w_j|^{2p_j}\}}<1\}}
     v(e^{\frac{-t}{2p}}w)\frac{\prod_{j=1}^{n}|w_j|^{2\alpha_j}}{\max\limits_{1\le j\le n}{\{|w_j|^{2p_j}\}}}e^{-\varphi(e^{\frac{-t}{2p}}w)}dw\wedge d\bar{w}\\
    &= \int_{\{w\in\Delta^n:e^{-1-k_1-k_2}<\max\limits_{1\le j\le n}{\{|w_j|^{2p_j}\}}<1\}\cap B_{\delta,t}}
     v(e^{\frac{-t}{2p}}w)\frac{\prod_{j=1}^{n}|w_j|^{2\alpha_j}}{\max\limits_{1\le j\le n}{\{|w_j|^{2p_j}\}}}e^{-\varphi(e^{\frac{-t}{2p}}w)}dw\wedge d\bar{w}+\\
    &\int_{\{w\in\Delta^n:e^{-1-k_1-k_2}<\max\limits_{1\le j\le n}{\{|w_j|^{2p_j}\}}<1\}\backslash B_{\delta,t}}
     v(e^{\frac{-t}{2p}}w)\frac{\prod_{j=1}^{n}|w_j|^{2\alpha_j}}{\max\limits_{1\le j\le n}{\{|w_j|^{2p_j}\}}}e^{-\varphi(e^{\frac{-t}{2p}}w)}dw\wedge d\bar{w}\\
    &\le CC_1C_2\lambda(B_{\delta,t})+\left(\sup\limits_{\{e^{-1-k_1-k_2}<\max\limits_{1\le j\le n}{\{|w_j|^{2p_j}\}}<1\}}v(e^{\frac{-t}{2p}}w)\right)
    e^{-(1+\delta)\varphi(0)} \frac{(2\pi)^{n}(1+k_1+k_2)}{\prod_{1\le j\le n}(\alpha_j+1)}.
\end{split}
\end{equation}
Hence we know
$$\limsup_{t\to+\infty}I_{t+k_1}= \frac{(2\pi)^{n}(1+k_1+k_2)}{\prod_{1\le j\le n}(\alpha_j+1)}e^{-(1+\delta)\varphi(0)}v(0).$$
By the arbitrariness of $\delta$, we know \eqref{limit for bounded psh} holds for $I_{t+k_1}$.

For $J_t$, we know when $t$ is large enough, the function $v(z)$ and $e^{-\varphi(z)}$ are uniformly bounded by some constant $M>0$ with respect to $t$.
Then it follows from Lemma \ref{l:m2} that
\begin{equation}\nonumber
\begin{split}
\limsup\limits_{t\to+\infty}J_{t+k_1}
\le & M\limsup\limits_{t\to+\infty}\int_{\{z\in\Delta^n:-t-1-k_1-k_2<\max\limits_{1\le j\le n}{\{2p_j\log|z_j|\}}<-t\}}\frac{\prod_{j=1}^{n}|z_j|^{2\beta_j}}{\max\limits_{1\le j\le n}{\{|z_j|^{2p_j}\}}}
dz\wedge d\bar{z}\\
=& M\lim\limits_{t\to+\infty}\frac{\pi^{n}(q_{\alpha}+1)(e^{-q_{\alpha}t}-e^{-q_{\alpha}(t+1+k_1+k_2)})}{q_{\alpha}\prod_{1\le j\le n}(\alpha_j+1)},
\end{split}
\end{equation}
where $q_{\alpha}=\frac{\beta_j+1}{p_j}-1$. As $k_1+k_2+1>0$, we have $\lim\limits_{t\to+\infty}J_t=0.$

Lemma \ref{limit discussion} is proved.
\end{proof}

\begin{Lemma}
\label{limit discussion 2}
Let $\Delta^n\subset \mathbb{C}^n$ be a polydisc. Let $f=\sum_{\alpha\in E}b_{\alpha}w^{\alpha}$ (Taylor expansion) be a holomorphic function on $\Delta^n$, where  $E:=\left\{\alpha=(\alpha_1,\ldots,\alpha_{n}):\sum_{1\le j\le n_1}\frac{\alpha_j+1}{p_{j}}=1\right\}$. Assume that $v$ is a nonnegative continuous real function on $\Delta^n$.
Denote
$$S_t:=\int_{\{z\in\Delta^n:-t-1-k_2<\max\limits_{1\le j\le n}{\{2p_j\log|z_j|\}}<-t+k_1\}}v(z)\frac{|f|^2}{\max\limits_{1\le j\le n}{\{|z_j|^{2p_j}\}}}
e^{-\varphi(z_1,\ldots,z_n)}dz\wedge d\bar{z}.$$
Then we have
\begin{equation}\nonumber
\limsup_{t\to+\infty}S_t\le \sum_{\alpha\in E}\frac{|b_{\alpha}|^2(2\pi)^{n}(1+k_1+k_2)}{\prod_{1\le j\le n}(\alpha_j+1)}v(0)e^{-\varphi(0)}.
\end{equation}
\end{Lemma}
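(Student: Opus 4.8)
The plan is to deduce this from Lemma \ref{limit discussion} by the same anisotropic rescaling used in its proof; the one extra ingredient is the elementary observation that all monomials $w^{\alpha}$, $\alpha\in E$, are homogeneous of one and the same degree under the dilations $z_j\mapsto\lambda^{1/p_j}z_j$, which is exactly what makes the cross terms of $|f|^{2}$ harmless.

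First I would substitute $z_j=e^{-t/(2p_j)}w_j$. As in the proof of Lemma \ref{limit discussion}, the set $\{-t-1-k_2<\max_{1\le j\le n}2p_j\log|z_j|<-t+k_1\}$ becomes the fixed Reinhardt annulus $D:=\{e^{-1-k_2}<\max_{1\le j\le n}|w_j|^{2p_j}<e^{k_1}\}$, the Jacobian contributes $e^{-t\sum_j 1/p_j}$ and $1/\max_j|z_j|^{2p_j}$ contributes $e^{t}$. Since $\sum_j(\alpha_j+1)/p_j=1$ for every $\alpha\in E$, one has $z^{\alpha}=e^{-t\sigma/2}w^{\alpha}$ with $\sigma:=1-\sum_j 1/p_j$ the same for all $\alpha\in E$, hence $|f(z)|^{2}=e^{-t\sigma}|f(w)|^{2}$ for $f(w):=\sum_{\alpha\in E}b_{\alpha}w^{\alpha}$. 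All powers of $e^{t}$ cancel, and for $t$ large enough (so that $e^{-t/(2p_j)}w_j\in\Delta$ for all $j$ and all $w\in\overline D$) we obtain
\[
S_t=\int_{D}v\!\big(e^{-t/(2p)}w\big)\,\frac{|f(w)|^{2}}{\max_{1\le j\le n}|w_j|^{2p_j}}\,e^{-\varphi(e^{-t/(2p)}w)}\,dw\wedge d\bar w,
\]
where $g(e^{-t/(2p)}w)$ stands for $g(e^{-t/(2p_1)}w_1,\dots,e^{-t/(2p_n)}w_n)$.

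Next I would treat the weight $e^{-\varphi}$ verbatim as in Lemma \ref{limit discussion}: after subtracting a constant we may assume $\varphi<0$, so $\varphi(0)\in(-\infty,0)$; for $\delta>0$ the sets $B_{\delta,t}:=\{w\in\Delta^n:\varphi(e^{-t/(2p)}w)<(1+\delta)\varphi(0)\}$ satisfy $\lambda(B_{\delta,t})\to0$ as $t\to+\infty$ by the sub-mean-value inequality. Splitting $D$ along $B_{\delta,t}$: on $D\cap B_{\delta,t}$ the integrand is bounded (boundedness of $\varphi$, continuity of $v$ near $0$, and $\max_j|w_j|^{2p_j}\ge e^{-1-k_2}$ on $D$ with $|f|^{2}$ bounded on $\overline D$), so this piece tends to $0$; on $D\setminus B_{\delta,t}$ one has $e^{-\varphi(e^{-t/(2p)}w)}\le e^{-(1+\delta)\varphi(0)}$, so this piece is at most $e^{-(1+\delta)\varphi(0)}\big(\sup_{D}v(e^{-t/(2p)}w)\big)\int_{D}\frac{|f(w)|^{2}}{\max_j|w_j|^{2p_j}}\,dw\wedge d\bar w$. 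As $D$ and the weight $1/\max_j|w_j|^{2p_j}$ are invariant under each rotation $w_j\mapsto e^{i\theta_j}w_j$, the cross terms of $|f(w)|^{2}=\sum_{\alpha,\alpha'\in E}b_{\alpha}\bar b_{\alpha'}w^{\alpha}\bar w^{\alpha'}$ integrate to $0$, and by Lemma \ref{l:m2} (with $q_{\alpha}=0$, applied on the level-set region $\{-1-k_2<\max_j 2p_j\log|w_j|<k_1\}$) the diagonal terms give $\int_{D}\frac{|f(w)|^{2}}{\max_j|w_j|^{2p_j}}\,dw\wedge d\bar w=\sum_{\alpha\in E}\frac{|b_{\alpha}|^{2}(2\pi)^{n}(1+k_1+k_2)}{\prod_{1\le j\le n}(\alpha_j+1)}$. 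Letting $t\to+\infty$ (so $\sup_{D}v(e^{-t/(2p)}w)\to v(0)$) and then $\delta\to0^{+}$ gives the claim. The whole computation is routine; the only slightly delicate point, $\lambda(B_{\delta,t})\to0$, is borrowed directly from Lemma \ref{limit discussion}, so I foresee no real obstacle — the statement is essentially a corollary of Lemma \ref{limit discussion} plus the homogeneity remark that confines $|f|^{2}$ to the scale-invariant regime in which the cross terms vanish over the full annulus $D$.
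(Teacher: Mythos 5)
Your proposal is correct and follows essentially the same route as the paper's proof: rescale by $z_j=e^{-t/(2p_j)}w_j$, split the fixed annulus along the exceptional sets $B_{\delta,t}$ from Lemma \ref{limit discussion}, bound the two pieces, and evaluate $\int_D |f(w)|^2/\max_j|w_j|^{2p_j}$ termwise via Lemma \ref{l:m2}. The only difference is presentational: you make explicit the homogeneity cancellation of the powers of $e^{t}$ and the vanishing of cross terms by rotation invariance, which the paper leaves implicit when it passes directly to the sum over $\alpha\in E$.
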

\begin{proof}
In the proof of Lemma \ref{limit discussion 2}, we follow the notations we used in the proof of Lemma \ref{limit discussion}. Let $z_i=e^{\frac{-t}{2p_i}}w_i$, by Lemma \ref{l:m2}, we have
\begin{equation}\nonumber
\begin{split}
S_t&=\int_{\{z\in\Delta^n:-t-1-k_2<\max\limits_{1\le j\le n}{\{2p_j\log|z_j|\}}<-t+k_1\}}v(z)\frac{|f|^2}{\max\limits_{1\le j\le n}{\{|z_j|^{2p_j}\}}}
e^{-\varphi(z_1,\ldots,z_n)}dz\wedge d\bar{z}\\
&= \int_{\{w\in\Delta^n:e^{-1-k_1-k_2}<\max\limits_{1\le j\le n}{\{|w_j|^{2p_j}\}}<1\}\cap B_{\delta,t}}
     v(e^{\frac{-t}{2p}}w)\frac{|f(w)|^2}{\max\limits_{1\le j\le n}{\{|w_j|^{2p_j}\}}}e^{-\varphi(e^{\frac{-t}{2p}}w)}dw\wedge d\bar{w}+\\
    &\int_{\{w\in\Delta^n:e^{-1-k_1-k_2}<\max\limits_{1\le j\le n}{\{|w_j|^{2p_j}\}}<1\}\backslash B_{\delta,t}}
     v(e^{\frac{-t}{2p}}w)\frac{|f(w)|^2}{\max\limits_{1\le j\le n}{\{|w_j|^{2p_j}\}}}e^{-\varphi(e^{\frac{-t}{2p}}w)}dw\wedge d\bar{w}\\
     &\le CC_1C_2\lambda(B_{\delta,t})+\left(\sup\limits_{\{e^{-1-k_1-k_2}<\max\limits_{1\le j\le n}{\{|w_j|^{2p_j}\}}<1\}}v(e^{\frac{-t}{2p}}w)\right)e^{-(1+\delta)\varphi(0)}\times\\
     &
    \int_{\{e^{-1-k_1-k_2}<\max\limits_{1\le j\le n}{\{|w_j|^{2p_j}\}}<1\}}
    \frac{|f(w)|^2}{\max\limits_{1\le j\le n}{\{|w_j|^{2p_j}\}}}dw\wedge d\bar{w}\\
    &= CC_1C_2\lambda(B_{\delta,t})+\left(\sup\limits_{\{e^{-1-k_1-k_2}<\max\limits_{1\le j\le n}{\{|w_j|^{2p_j}\}}<1\}}v(e^{\frac{-t}{2p}}w)\right)e^{-(1+\delta)\varphi(0)}\times\\
     &
    \sum_{\alpha\in E}\int_{\{e^{-1-k_1-k_2}<\max\limits_{1\le j\le n}{\{|w_j|^{2p_j}\}}<1\}}
    \frac{|b_{\alpha}|^2\prod_{j=1}^{n_1}|w_j|^{2\alpha_j}}{\max\limits_{1\le j\le n}{\{|w_j|^{2p_j}\}}}dw\wedge d\bar{w}.\\
\end{split}
\end{equation}
Hence  we know
\begin{equation}\nonumber
\limsup_{t\to+\infty}S_t\le \sum_{\alpha\in E}\frac{|b_{\alpha}|^2(2\pi)^{n}(1+k_1+k_2)}{\prod_{1\le j\le n}(\alpha_j+1)}v(0)e^{-\varphi(0)}.
\end{equation}

\end{proof}

\begin{Lemma}[see \cite{GY-concavity4}]
\label{growth rate of c(t)e(-t)}
Let $c(t)$ be a positive measurable function on $(0,+\infty)$, and let $a\in\mathbb{R}$. Assume that $\int_{t}^{+\infty}c(s)e^{-s}ds\in(0,+\infty)$ when $t$ near $+\infty$. Then we have

(1) $\lim\limits_{t\to+\infty}\frac{\int_{t}^{+\infty}c(s)e^{-as}ds}{\int_{t}^{+\infty}c(s)e^{-s}ds}=1$
if and only if $a=1$,

(2) $\lim\limits_{t\to+\infty}\frac{\int_{t}^{+\infty}c(s)e^{-as}ds}{\int_{t}^{+\infty}c(s)e^{-s}ds}=0$
if and only if $a>1$,

(3) $\lim\limits_{t\to+\infty}\frac{\int_{t}^{+\infty}c(s)e^{-as}ds}{\int_{t}^{+\infty}c(s)e^{-s}ds}=+\infty$
if and only if $a<1$.
\end{Lemma}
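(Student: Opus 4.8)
The plan is to establish the three ``if'' directions by elementary estimates that use only the monotonicity of $s\mapsto e^{(1-a)s}$, and then to obtain the three ``only if'' directions for free from the observation that a function of $t$ cannot simultaneously converge to two distinct values among $1$, $0$, $+\infty$.

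For $a=1$ the quotient is identically equal to $1$, which settles the ``if'' part of (1). For $a>1$, I would write $\int_t^{+\infty}c(s)e^{-as}\,ds=\int_t^{+\infty}c(s)e^{-s}e^{-(a-1)s}\,ds$ and use that $e^{-(a-1)s}\le e^{-(a-1)t}$ for every $s\ge t$, since $a-1>0$; hence the numerator is at most $e^{-(a-1)t}$ times the denominator, and since $c>0$ the quotient lies in $[0,e^{-(a-1)t}]$ and therefore tends to $0$. Dually, for $a<1$ the inequality $e^{(1-a)s}\ge e^{(1-a)t}$ for $s\ge t$ shows the numerator is at least $e^{(1-a)t}$ times the denominator (this also covers the case $\int_t^{+\infty}c(s)e^{-as}\,ds=+\infty$, in which the quotient is trivially $+\infty$), so the quotient tends to $+\infty$. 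Throughout I use the standing hypothesis that the denominator $\int_t^{+\infty}c(s)e^{-s}\,ds$ is finite and positive for $t$ near $+\infty$, so that the quotient is well defined there.

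For the converses, since we have just shown $a=1\Rightarrow$ limit $1$, $a>1\Rightarrow$ limit $0$, and $a<1\Rightarrow$ limit $+\infty$, it follows that if the limit equals $1$ then $a$ is neither $>1$ nor $<1$, hence $a=1$; the ``only if'' parts of (2) and (3) follow in exactly the same way by exhausting the trichotomy on $a$. This is a routine real-analysis fact, so I do not anticipate a genuine obstacle; the only point requiring a little care is the case $a<1$ with $\int_t^{+\infty}c(s)e^{-as}\,ds=+\infty$, which the monotonicity bound dispatches directly, and keeping track of the fact that $c$ being merely positive and measurable (not, say, integrable on its own) is enough for all the estimates above.
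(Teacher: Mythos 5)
Your proof is correct: the bound $e^{-(a-1)t}$ (resp. $e^{(1-a)t}$) obtained from the monotonicity of $e^{(1-a)s}$ on $[t,+\infty)$ gives the three ``if'' directions, and the trichotomy on $a$ together with the mutual distinctness of the limits $1$, $0$, $+\infty$ yields the ``only if'' directions, with the possibly infinite numerator in the case $a<1$ handled correctly. The paper itself only cites this lemma from \cite{GY-concavity4} without reproducing a proof, and your elementary argument is exactly the standard one expected there, requiring nothing beyond the stated hypotheses (in particular no monotonicity of $c(t)e^{-t}$).
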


\begin{Lemma}[see \cite{GY-concavity3}]\label{tildecincreasing}
		If $c(t)$ is a positive measurable function on $(T,+\infty)$ such
		that $c(t)e^{-t}$ is decreasing on $(T,+\infty)$ and $\int_{T_1}^{+\infty}c(s)e^{-s}ds<+\infty$ for some $T_1>T$, then there exists a positive measurable function $\tilde{c}$ on $(T,+\infty)$ satisfying the
		following statements:
		
		(1). $\tilde{c}\geq c$ on $(T,+\infty)$;
		
		(2). $\tilde{c}(t)e^{-t}$ is strictly decreasing on $(T,+\infty)$ and $\tilde{c}$ is increasing on $(a,+\infty)$,	where $a>T$ is a real number;
		
		(3). $\int_{T_1}^{+\infty}\tilde{c}(s)e^{-s}ds<+\infty$.
		
		Moreover, if $\int_T^{+\infty}c(s)e^{-s}ds<+\infty$ and $c\in\mathcal{P}_T$, we can choose $\tilde{c}$ satisfying the above conditions, $\int_T^{+\infty}\tilde{c}(s)e^{-s}ds<+\infty$ and $\tilde{c}\in\mathcal{P}_T$.
	\end{Lemma}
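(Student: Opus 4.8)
The plan is to build $\tilde c$ explicitly. Fix any $a>T_1$ and set
\[
\tilde c(t):=\begin{cases} c(t)+1+e^{-t}, & T<t\le a,\\ \displaystyle\sup_{a\le s\le t}\bigl(c(s)+1\bigr), & t>a.\end{cases}
\]
First I would record the elementary facts. Since $c(s)e^{-s}$ is decreasing, $c(s)+1\le\bigl(c(a)e^{-a}+e^{-a}\bigr)e^{s}$ for $s\ge a$, so the supremum is finite; it is increasing in $t$, hence measurable, and on $(T,a]$ measurability is clear. Property $(1)$ is immediate: on $(a,+\infty)$ the supremum contains the value $c(t)+1>c(t)$ at $s=t$. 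That $\tilde c$ is increasing on $(a,+\infty)$ is built into the definition, so the only nontrivial part of $(2)$ is strict decrease of $\tilde c(t)e^{-t}$ on $(T,+\infty)$.

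Put $d_1(t):=\bigl(c(t)+1\bigr)e^{-t}=c(t)e^{-t}+e^{-t}$, which is strictly decreasing as a sum of a decreasing and a strictly decreasing function. On $(T,a]$, $\tilde c(t)e^{-t}=d_1(t)+e^{-2t}$ is strictly decreasing. On $(a,+\infty)$, write $C(t):=\sup_{a\le s\le t}(c(s)+1)$; for $a<t_1<t_2$, either $C(t_2)=C(t_1)$ (then $C(t_2)e^{-t_2}<C(t_1)e^{-t_1}$ trivially), or the new supremum is attained over $s\in(t_1,t_2]$, where $c(s)+1=d_1(s)e^{s}<d_1(t_1)e^{s}\le d_1(t_1)e^{t_2}\le C(t_1)e^{-t_1}e^{t_2}$, the strict inequality persisting in the limit $s\uparrow t_2$, so again $C(t_2)e^{-t_2}<C(t_1)e^{-t_1}$. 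At the junction, using $c(s)\le c(a)e^{s-a}$ one gets $\lim_{t\to a^{+}}\tilde c(t)e^{-t}=d_1(a)<d_1(a)+e^{-2a}\le\lim_{t\to a^{-}}\tilde c(t)e^{-t}$, so strict monotonicity persists across $t=a$. This establishes $(2)$.

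The main obstacle is $(3)$: the crude bound $C(t)\le d_1(a)e^{t}$ only shows $\tilde c(t)e^{-t}$ is bounded, not integrable near $+\infty$. Instead I would cut $(a,+\infty)$ into unit intervals: for $t\in[a+n,a+n+1]$, monotonicity of $d_1$ gives
\[
C(t)\le\max_{0\le j\le n}\ \sup_{s\in[a+j,a+j+1]}d_1(s)e^{s}\le e^{a+1}\max_{0\le j\le n}d_1(a+j)e^{j}\le e^{a+1}\sum_{j=0}^{n}d_1(a+j)e^{j}.
\]
Integrating $e^{-t}C(t)$ over each unit interval, summing over $n$, and interchanging the two sums yields $\int_a^{+\infty}\tilde c(t)e^{-t}\,dt\le\frac{e}{1-e^{-1}}\sum_{j\ge0}d_1(a+j)$, and $\sum_{j\ge0}d_1(a+j)\le d_1(a)+\int_a^{+\infty}d_1(s)\,ds=d_1(a)+\int_a^{+\infty}c(s)e^{-s}\,ds+e^{-a}<+\infty$ because $a>T_1$. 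Since $\int_{T_1}^{a}\tilde c(t)e^{-t}\,dt$ is obviously finite, $(3)$ follows. For the ``moreover'' clause, the same estimates with $T_1$ replaced by $T$ give $\int_{T}^{+\infty}\tilde c(s)e^{-s}\,ds<+\infty$ whenever $\int_{T}^{+\infty}c(s)e^{-s}\,ds<+\infty$; then $\tilde c\in\mathcal P_{T}$ follows since $\tilde c(t)e^{-t}$ is decreasing and $e^{-\varphi}\tilde c(-\psi)\ge e^{-\varphi}c(-\psi)$, so the positive lower bound on compact subsets is inherited from $c\in\mathcal P_{T}$.
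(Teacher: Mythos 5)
Your construction is correct, and it is worth noting that the paper itself does not prove this lemma at all --- it is only recalled from \cite{GY-concavity3} --- so your explicit construction has to be judged on its own, and it holds up. Conditions $(1)$ and the monotonicity of $\tilde{c}$ on $(a,+\infty)$ are immediate from the definition; the finiteness of the running supremum follows, as you say, from $c(s)+1\le (c(a)+1)e^{s-a}$; the unit-interval decomposition with the interchange of sums does give $\int_a^{+\infty}\tilde c(t)e^{-t}dt\lesssim \sum_{j\ge 0}d_1(a+j)<+\infty$ via the integral test, so $(3)$ holds; the junction estimate $\lim_{t\to a^+}\tilde c(t)e^{-t}=d_1(a)<d_1(a)+e^{-2a}=\tilde c(a)e^{-a}$ is correct; and the ``moreover'' clause is fine, since $\tilde c\ge c$ passes the positive lower bound of $e^{-\varphi}c(-\psi)$ on compact sets directly to $e^{-\varphi}\tilde c(-\psi)$ (and the hypotheses $\int_T^{+\infty}c(s)e^{-s}ds<+\infty$ together with the monotonicity of $c(s)e^{-s}$ force $T>-\infty$, so $\int_T^a(e^{-s}+e^{-2s})ds<+\infty$). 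The one place you compress is the strict decrease of $C(t)e^{-t}$ in the case $C(t_2)>C(t_1)$: a pointwise strict bound $d_1(s)e^s<d_1(t_1)e^{t_2}$ on $(t_1,t_2]$ does not by itself survive the supremum, and ``persisting in the limit $s\uparrow t_2$'' deserves one more line. For instance, fix $t'\in(t_1,t_2)$ and note $\sup_{(t_1,t']}d_1(s)e^{s}\le d_1(t_1)e^{t'}$ and $\sup_{(t',t_2]}d_1(s)e^{s}\le d_1(t')e^{t_2}$, both strictly below $d_1(t_1)e^{t_2}\le C(t_1)e^{t_2-t_1}$ because $d_1$ is strictly decreasing; hence $C(t_2)e^{-t_2}<C(t_1)e^{-t_1}$. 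With that line added the argument is complete.
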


\section{Preparations II: multiplier ideal sheaves and optimal $L^{2}$ extensions}

In this section, we recall and present some lemmas related to multiplier ideal sheaves and optimal $L^{2}$ extensions.

\subsection{Multiplier ideal sheaves}

	We need the following lemmas in the local cases.

Let $\Delta \subset \mathbb{C}$ be the unit disc.
Let $X=\Delta^{m}$ and let $Y=\Delta^{n-m}$.
Denote $M=X \times Y$. Let $\pi_1$ and $\pi_2$ be the natural projections from $M$ to $X$ and $Y$ respectively.

Let $\psi_1$ be a plurisubharmonic function on $X$. Let $\psi=\pi_1^*(\psi_1)$. Let $\varphi_1$ be a Lebesgue measurable function on $X$ and $\varphi_2\in Psh(Y)$. Denote $\varphi=\pi_1^*(\varphi_1)+\pi_2^*(\varphi_2)$.
\begin{Lemma}\label{e-varphic-psi}
		Assume that
		\begin{equation*}
			\int_M|f|^2e^{-\varphi}c(-\psi)<+\infty.
		\end{equation*}
		Then for any $w\in \Delta^{n-m}$,
		\begin{equation*}
			\int_{z\in\Delta^m}|f(z,w)|^2e^{-\varphi_1}c(-\psi_1)<+\infty.
		\end{equation*}
		
	\end{Lemma}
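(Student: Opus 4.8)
The plan is to use Fubini's theorem together with the fact that, since $\varphi_2 \in Psh(Y)$, the function $e^{-\varphi_2}$ is locally bounded below by a positive constant on any compact subset of $Y$ (indeed $\varphi_2$ is locally bounded above, being plurisubharmonic). First I would write out the hypothesis: since $\varphi = \pi_1^*(\varphi_1) + \pi_2^*(\varphi_2)$ and $\psi = \pi_1^*(\psi_1)$, and $|f|^2$ is a nonnegative measurable function on $M = \Delta^m \times \Delta^{n-m}$, Tonelli's theorem gives
\begin{equation*}
\int_M |f|^2 e^{-\varphi} c(-\psi) = \int_{w \in \Delta^{n-m}} \left( \int_{z \in \Delta^m} |f(z,w)|^2 e^{-\varphi_1(z)} c(-\psi_1(z))\, d\lambda(z) \right) e^{-\varphi_2(w)}\, d\lambda(w) < +\infty.
\end{equation*}
Hence the inner integral $I(w) := \int_{z\in\Delta^m} |f(z,w)|^2 e^{-\varphi_1}c(-\psi_1)\, d\lambda(z)$, as a function of $w$, satisfies $\int_{\Delta^{n-m}} I(w) e^{-\varphi_2(w)}\, d\lambda(w) < +\infty$, so $I(w) e^{-\varphi_2(w)} < +\infty$ for almost every $w$; in particular $I(w) < +\infty$ for a.e.\ $w \in \Delta^{n-m}$.

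To upgrade "a.e.\ $w$" to "every $w$", I would invoke holomorphy of $f$ in the $z$-variables for fixed $w$. The key point is that $w \mapsto f(\cdot, w)$ is, in a suitable sense, holomorphic, and the $L^2$-norm with respect to the fixed weight $e^{-\varphi_1}c(-\psi_1)$ behaves subharmonically (or at least upper-semicontinuously) in $w$. Concretely: fix $w_0 \in \Delta^{n-m}$ and choose a small polydisc $w_0 \in W \Subset \Delta^{n-m}$. For a.e.\ $w \in W$ we have $I(w) < +\infty$. Now $|f(z,w)|^2$ is plurisubharmonic in $(z,w)$ jointly (being $|\cdot|^2$ of a holomorphic function up to the $(n,0)$-form normalization $\sqrt{-1}^{n^2} f \wedge \bar f$, which is a smooth positive multiple of $|F|^2$ for the coefficient function $F$ in local coordinates), so by the sub-mean-value property in the $w$-variable, for any polyradius $r$ with $\overline{D(w_0,r)} \subset W$,
\begin{equation*}
|f(z,w_0)|^2 \le \frac{1}{\lambda(D(w_0,r))}\int_{D(w_0,r)} |f(z,w)|^2\, d\lambda(w).
\end{equation*}
Multiplying by $e^{-\varphi_1(z)}c(-\psi_1(z))$, integrating in $z$ over $\Delta^m$, and applying Tonelli on the right,
\begin{equation*}
I(w_0) \le \frac{1}{\lambda(D(w_0,r))}\int_{D(w_0,r)} I(w)\, d\lambda(w).
\end{equation*}
Since $I(w) < +\infty$ a.e.\ and (by the displayed Fubini identity, restricting the $w$-integral to $D(w_0,r)$ and using that $e^{-\varphi_2}$ has a positive lower bound on the compact set $\overline{D(w_0,r)}$ because $\varphi_2$ is bounded above there) $\int_{D(w_0,r)} I(w)\, d\lambda(w) < +\infty$, we conclude $I(w_0) < +\infty$. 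As $w_0$ was arbitrary, this finishes the proof.

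The main obstacle I anticipate is the careful justification of the sub-mean-value inequality for $I$ in the $w$-variable — one must be slightly careful because $|f|^2$ here means $\sqrt{-1}^{n^2} f \wedge \bar f$ for an $(n,0)$-form, not literally the modulus squared of a scalar; but in local coordinates $f = F\, dz_1\wedge\cdots\wedge dz_m \wedge dw_1 \wedge \cdots \wedge dw_{n-m}$ with $F$ holomorphic, and $|f|^2$ is a fixed smooth positive density times $|F|^2$, so the plurisubharmonicity of $|F|^2$ does the job and the density factor is harmless after possibly shrinking to a coordinate polydisc and absorbing it into constants. An alternative, cleaner route to the same endpoint: instead of the mean-value argument one can note that the set $\{w : I(w) = +\infty\}$ is, by standard results on $L^2$ sections of holomorphic objects, either all of $\Delta^{n-m}$ or a set that cannot contain an interior point of finiteness in its closure in the relevant sense — but I find the explicit sub-mean-value estimate the most self-contained, so that is the route I would write up.
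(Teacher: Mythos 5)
Your proposal is correct and follows essentially the same route as the paper: the sub-mean-value inequality for $|f(z,\cdot)|^2$ in the $w$-variable combined with Fubini, followed by inserting the positive lower bound of $e^{-\varphi_2}$ on the compact polydisc $\overline{\Delta^{n-m}(w_0,r)}$ (coming from the upper bound of the plurisubharmonic $\varphi_2$) to dominate everything by the finite total integral $\int_M|f|^2e^{-\varphi}c(-\psi)$. The preliminary Tonelli step giving finiteness for almost every $w$ is harmless but superfluous, since the mean-value chain already yields $I(w_0)<+\infty$ directly for every $w_0$.
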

	
	\begin{proof}
		According to the Fubini's Theorem, we have
		\begin{flalign*}
			\begin{split}
				&\int_{z\in\Delta^m}|f(z,w)|^2e^{-\varphi_1}c(-\psi_1)\\
				\leq&\frac{1}{(\pi r^2) ^{n-m}}\int_{w'\in\Delta^{n-m}(w,r)}\left(\int_{z\in\Delta^m}|f(z,w')|^2e^{-\varphi_1}c(-\psi_1)\right)\\
				\leq&\frac{e^T}{(\pi r^2) ^{n-m}}\int_{w'\in\Delta^{n-m}(w,r)}\left(\int_{z\in\Delta^m}|f(z,w')|^2e^{-\varphi_1}c(-\psi_1)\right)e^{-\varphi_2}\\
				\leq&\frac{e^T}{(\pi r^2)^{n-m}}\int_{\Delta^n}|f|^2e^{-\varphi}c(-\psi)<+\infty,
			\end{split}
		\end{flalign*}
		where $r>0$ such that $\Delta^{n-m}(w,r)\Subset\Delta^{n-m}$, and $T:=-\sup_{w'\in\Delta^{n-m}(w,r)}\varphi_2(w')$.
	
	\end{proof}

\begin{Lemma}\label{f1zf2w}
		Let $f_1(z)$ be a holomorphic function on  $X$ such that $(f_1,o)\in \mathcal{I}(\varphi_1+\psi_1)_o$, and $f_2(w)$ be a holomorphic function on $Y$ such that $(f_2,w)\in\mathcal{I}(\varphi_2)_w$ for any $w\in Y$. Let $\tilde{f}(z,w)=f_1(z)f_2(w)$ on $M$, then $\big(\tilde{f},(o,w)\big)\in\mathcal{I}(\varphi+\psi)_{(o,w)}$ for any $(o,w)\in Y$.
	\end{Lemma}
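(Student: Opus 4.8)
The plan is to prove the inclusion by working on a polydisc neighborhood of $(o,w)$ and reducing the integrability of $|\tilde f|^2 e^{-\varphi-\psi}$ to the two separate integrability hypotheses via Fubini's theorem. First I would fix $w\in Y$ and, using that $(f_2,w)\in\mathcal{I}(\varphi_2)_w$, choose a polydisc $\Delta^{n-m}(w,r)\Subset Y$ such that
\begin{equation*}
\int_{w'\in\Delta^{n-m}(w,r)}|f_2(w')|^2e^{-\varphi_2(w')}<+\infty.
\end{equation*}
Likewise, using $(f_1,o)\in\mathcal{I}(\varphi_1+\psi_1)_o$, I would choose a polydisc $\Delta^m(o,r')\Subset X$ with
\begin{equation*}
\int_{z\in\Delta^m(o,r')}|f_1(z)|^2e^{-\varphi_1(z)}e^{-\psi_1(z)}<+\infty.
\end{equation*}

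Next, on the product neighborhood $U:=\Delta^m(o,r')\times\Delta^{n-m}(w,r)$ of $(o,w)$, I would write $|\tilde f|^2e^{-\varphi-\psi}=\big(|f_1(z)|^2e^{-\varphi_1(z)-\psi_1(z)}\big)\cdot\big(|f_2(w')|^2e^{-\varphi_2(w')}\big)$, since $\varphi=\pi_1^*(\varphi_1)+\pi_2^*(\varphi_2)$ and $\psi=\pi_1^*(\psi_1)$ depend only on the $X$-variable together with the $Y$-variable in a separated way. Both factors are nonnegative measurable functions, so Tonelli/Fubini gives
\begin{equation*}
\int_{U}|\tilde f|^2e^{-\varphi-\psi}=\left(\int_{\Delta^m(o,r')}|f_1|^2e^{-\varphi_1-\psi_1}\right)\left(\int_{\Delta^{n-m}(w,r)}|f_2|^2e^{-\varphi_2}\right)<+\infty,
\end{equation*}
which is exactly the statement that $\big(\tilde f,(o,w)\big)\in\mathcal{I}(\varphi+\psi)_{(o,w)}$. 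Since $w\in Y$ was arbitrary, this proves the lemma.

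I do not expect any serious obstacle here; the only mild subtlety is ensuring that $\varphi+\psi$ is genuinely a sum of a function of $z$ and a function of $w'$ (so that the integrand factorizes), which is immediate from the hypotheses $\psi=\pi_1^*(\psi_1)$ and $\varphi=\pi_1^*(\varphi_1)+\pi_2^*(\varphi_2)$, and making sure the neighborhoods are chosen as genuine products so Fubini applies. One should also note that local integrability of $|f_2|^2e^{-\varphi_2}$ near $w$ is precisely the meaning of $(f_2,w)\in\mathcal{I}(\varphi_2)_w$, and similarly for $f_1$; no plurisubharmonicity of $\varphi_1$ is needed, only that $\varphi_1+\psi_1$ is the relevant weight, which matches the definition of $\mathcal{I}(\varphi_1+\psi_1)_o$.
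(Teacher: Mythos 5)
Your proposal is correct and follows essentially the same route as the paper: choose polydiscs on which the two local integrability hypotheses hold, observe that $|\tilde f|^2e^{-\varphi-\psi}$ factorizes as a product of a function of $z$ and a function of $w'$, and conclude by Fubini/Tonelli on the product neighborhood. No gaps.
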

	
	\begin{proof}
		According to $(f_1,o)\in \mathcal{I}(\varphi_1+\psi_1)_o$ and $(f_2,w)\in\mathcal{I}(\varphi_2)_w$, we can find some $r>0$ such that
		\begin{equation*}
			\int_{\Delta^m(o,r)}|f_1|^2e^{-\varphi_1-\psi_1}<+\infty,
		\end{equation*}
		and
		\begin{equation*}
			\int_{\Delta^{n-m}(w,r)}|f_2|^2e^{-\varphi_2}<+\infty.
		\end{equation*}
		Then using Fubini's Theorem, we get
		\begin{equation*}
			\int_{\Delta^n((o,w),r)}|\tilde{f}|^2e^{-\varphi-\psi}=\int_{\Delta^m(o,r)}|f_1|^2e^{-\varphi_1-\psi_1}\int_{\Delta^{n-m}(w,r)}|f_2|^2e^{-\varphi_2}<+\infty,
		\end{equation*}
		which means that $\big(\tilde{f},(o,w)\big)\in\mathcal{I}(\varphi+\psi)_{(o,w)}$.
	\end{proof}

Let $\Delta^{n_1}=\big\{w\in\mathbb{C}^{n_1}:|w_j|<1$ for any $j\in\{1,\ldots,n_1\}\big\}$ be product of the unit disks. Let $Y$ be an $n_2-$dimensional complex manifold, and let $M=\Delta^{n_1} \times Y$. Denote $n=n_1+n_2$. Let $\pi_1$ and $\pi_2$ be the natural projections from $M$ to $\Delta^{n_1}$ and $Y$ respectively.  Let $\rho_1$ be a nonnegative Lebesgue measurable function on $\Delta^{n_1}$ satisfying that $\rho_1(w)=\rho_1(|w_1|,\ldots,|w_{n_1}|)$ for any $w\in\Delta^{n_1}$ and the Lebesgue measure of $\{w\in\Delta^{n_1}: \rho_1(w)>0\}$ is  positive. Let $\rho_2$ be a nonnegative Lebesgue measurable function on $Y$, and denote that $\rho=\pi_1^*(\rho_1)\times\pi_2^*(\rho_2)$ on $M$.
\begin{Lemma}[see \cite{BGY-concavity6}]
	\label{decomp}	For any holomorphic $(n,0)$  form $F$ on $M$, there exists a unique sequence of  holomorphic  $(n_2,0)$   forms $\{F_{\alpha}\}_{\alpha\in\mathbb{Z}_{\ge0}^{n_1}}$ on $Y$ such that
	\begin{equation}\label{eq:1216c}
		F=\sum_{\alpha\in\mathbb{Z}_{\ge0}^{n_1}}\pi_1^*(w^{\alpha}dw_1\wedge\ldots\wedge dw_{n_1}) \wedge \pi_2^*(F_\alpha),
	\end{equation}
where the right term of the above equality is uniformly convergent on any compact subset of $M$. Moreover, if 	$\int_{M}|F|^2\rho<+\infty,$ we have
	\begin{equation}
	\label{eq:1216d}\int_{Y}|F_{\alpha}|^2\rho_2<+\infty
\end{equation}
for any $\alpha\in\mathbb{Z}_{\ge0}^{n_1}$.
\end{Lemma}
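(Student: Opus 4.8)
The plan is to obtain the decomposition \eqref{eq:1216c} by a fibrewise Taylor expansion of $F$ in the polydisc variables $w$, and then to extract the $L^2$ bound \eqref{eq:1216d} from the rotational invariance of $\rho_1$.

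First I would fix a coordinate chart $U\subset Y$ with holomorphic coordinates $y=(y_1,\dots,y_{n_2})$ and write, on $\Delta^{n_1}\times U$,
\[
F=h(w,y)\,\pi_1^*(dw_1\wedge\dots\wedge dw_{n_1})\wedge\pi_2^*(dy_1\wedge\dots\wedge dy_{n_2})
\]
with $h$ holomorphic on $\Delta^{n_1}\times U$. For fixed $y$ the function $w\mapsto h(w,y)$ is holomorphic on $\Delta^{n_1}$, hence $h(w,y)=\sum_{\alpha\in\mathbb{Z}_{\ge0}^{n_1}}h_\alpha(y)w^\alpha$; the coefficients $h_\alpha$ are holomorphic in $y$ (being Cauchy integrals in $w$ of the holomorphic $h$), and the Cauchy estimates give $|h_\alpha(y)|\le r^{-|\alpha|}\sup_{\{|w_j|=r\}\times K}|h|$ for $K\Subset U$ and $0<r<1$, so the series converges uniformly on $\{|w_j|\le r'\}\times K$ for every $r'<1$. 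To globalise the coefficients, I would observe that on overlaps of two charts on $Y$ the transition Jacobian is independent of $w$, so that the forms $h_\alpha\,dy_1\wedge\dots\wedge dy_{n_2}$ glue into a globally defined holomorphic $(n_2,0)$ form $F_\alpha$ on $Y$; then $F=\sum_\alpha\pi_1^*(w^\alpha dw_1\wedge\dots\wedge dw_{n_1})\wedge\pi_2^*(F_\alpha)$ with locally uniform convergence. Uniqueness is immediate: if such a series vanishes, then in each chart $\sum_\alpha h_\alpha(y)w^\alpha\equiv0$, so every $h_\alpha\equiv0$ by uniqueness of power series.

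For the second assertion, the \emph{crucial} point is that since $\rho_1(w)=\rho_1(|w_1|,\dots,|w_{n_1}|)$ is invariant under rotation of each $w_j$, the monomials $\{w^\alpha\}$ are pairwise orthogonal in $L^2(\Delta^{n_1},\rho_1\,d\lambda_w)$, and $c_\alpha:=\int_{\Delta^{n_1}}|w^\alpha|^2\rho_1\,d\lambda_w$ lies in $(0,+\infty]$ (strictly positive because $\{\rho_1>0\}$ has positive Lebesgue measure). Working again over a chart $U$ and applying Fubini to the product measure $\rho=\pi_1^*(\rho_1)\otimes\pi_2^*(\rho_2)$, one has, up to the fixed positive constants coming from $(\sqrt{-1})^{n^2}dw\wedge dy\wedge\overline{dw\wedge dy}$,
\[
\int_{\Delta^{n_1}\times U}|F|^2\rho=\int_U\rho_2(y)\left(\int_{\Delta^{n_1}}|h(w,y)|^2\rho_1(w)\,d\lambda_w\right)d\lambda_y .
\]
Integrating $|h(\cdot,y)|^2\rho_1$ over $\{|w_j|\le r\}$ and using the uniform convergence of the Taylor series gives $\int_{\{|w_j|\le r\}}|h(\cdot,y)|^2\rho_1=\sum_\alpha|h_\alpha(y)|^2\int_{\{|w_j|\le r\}}|w^\alpha|^2\rho_1$; letting $r\uparrow1$ and applying monotone convergence on both sides yields the Parseval identity $\int_{\Delta^{n_1}}|h(\cdot,y)|^2\rho_1=\sum_\alpha|h_\alpha(y)|^2c_\alpha$ for every $y$ at which the left side is finite. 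Hence $c_\alpha|h_\alpha(y)|^2\le\int_{\Delta^{n_1}}|h(\cdot,y)|^2\rho_1$, and multiplying by $\rho_2(y)$ and integrating over $U$ gives $c_\alpha\int_U|F_\alpha|^2\rho_2\le\int_{\Delta^{n_1}\times U}|F|^2\rho\le\int_M|F|^2\rho<+\infty$. Since $c_\alpha>0$ this forces $\int_U|F_\alpha|^2\rho_2<+\infty$ (and if $c_\alpha=+\infty$ it forces $F_\alpha\equiv0$ on $U$), and exhausting $Y$ by chart domains gives \eqref{eq:1216d}.

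I expect the only genuinely substantive step to be the Parseval-type identity $\int_{\Delta^{n_1}}|h(\cdot,y)|^2\rho_1=\sum_\alpha|h_\alpha(y)|^2c_\alpha$: it requires simultaneously the rotational invariance of $\rho_1$ (to annihilate the cross terms $\int w^\alpha\overline{w^\beta}\rho_1$), the locally uniform convergence of the Taylor series (to integrate term by term over each compact $\{|w_j|\le r\}$), and monotone convergence to pass to the full polydisc — none of which needs any regularity of $\rho_1$ beyond measurability. The remaining items, namely bookkeeping the positive constants in $(\sqrt{-1})^{n^2}F\wedge\bar F$ and checking that $\int_Y|F_\alpha|^2\rho_2$ is independent of the chart decomposition, are routine.
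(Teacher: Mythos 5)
Your overall strategy is the right one, and it is essentially the standard argument behind this lemma: fibrewise Taylor expansion in $w$ with Cauchy-estimate control (which also gives the gluing of the coefficient forms and uniqueness), followed by orthogonality of the monomials coming from the rotational invariance of $\rho_1$, Fubini--Tonelli, and monotone convergence in $r\uparrow 1$. The first half of your proposal (existence, local uniform convergence, gluing via the $w$-independent transition Jacobian, uniqueness) is complete.

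There is, however, one step whose justification as written does not work at the stated level of generality: the identity $\int_{\{|w_j|\le r\}}|h(\cdot,y)|^2\rho_1=\sum_\alpha|h_\alpha(y)|^2\int_{\{|w_j|\le r\}}|w^\alpha|^2\rho_1$ cannot be obtained from uniform convergence alone, because $\rho_1$ is only assumed nonnegative and measurable and need not be locally integrable (e.g. $\rho_1(w)=|w_1|^{-2}$ is rotation invariant with $\int_{\{|w_j|\le r\}}\rho_1=+\infty$, and in the intended applications $\rho_1=e^{-\varphi_1}c(-\psi_1)$ does blow up along the relevant divisor). Against an infinite measure, uniform convergence does not permit term-by-term integration, and the cross terms $\int_{\{|w_j|\le r\}}w^{\alpha}\overline{w^{\beta}}\rho_1$ need not even be absolutely convergent, so ``rotational invariance annihilates the cross terms'' is not directly meaningful. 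The repair is the usual one and is what the cited argument does: pass to polar coordinates, integrate first in the angular variables over the torus $\{|w_j|=s_j\}$ (a finite measure, where uniform convergence applies and the cross terms vanish exactly since $\int_0^{2\pi}e^{i(k-l)\theta}d\theta=0$), obtaining $(2\pi)^{n_1}\sum_\alpha|h_\alpha(y)|^2 s^{2\alpha}$, and then apply Tonelli/monotone convergence in the radial variables against $\rho_1(s)\prod_j s_j\,ds$ and in $r\uparrow1$; all terms are nonnegative, so no integrability of $\rho_1$ is needed. With this correction your Parseval identity, the bound $c_\alpha\int_Y|F_\alpha|^2\rho_2\le\int_M|F|^2\rho$, and the conclusion \eqref{eq:1216d} go through. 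One further small imprecision: when $c_\alpha=+\infty$ you can only conclude that $F_\alpha$ vanishes $\rho_2\,d\lambda$-a.e.\ on the set where the fibre integral is finite (not $F_\alpha\equiv0$ on the chart), but this already gives $\int_Y|F_\alpha|^2\rho_2=0$, which is all that \eqref{eq:1216d} requires.
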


Let $\tilde{M}\subset M$ be an $n-$dimensional complex manifold satisfying that $\{o\}\times Y\subset \tilde{M}$, where $o$ is the origin in $\Delta^{n_1}$.

\begin{Lemma}[see \cite{BGY-concavity6}]
	\label{decomp-tildeM}For any holomorphic $(n,0)$  form $F$ on $\tilde{M}$, there exist a unique sequence of  holomorphic  $(n_2,0)$   forms $\{F_{\alpha}\}_{\alpha\in\mathbb{Z}_{\ge0}^{n_1}}$ on $Y$ and a neighborhood $M_2\subset \tilde{M}$ of $\{o\}\times Y$, such that
	\begin{equation*}
		F=\sum_{\alpha\in\mathbb{Z}_{\ge0}^{n_1}}\pi_1^*(w^{\alpha}dw_1\wedge\ldots\wedge dw_{n_1}) \wedge \pi_2^*(F_\alpha)
	\end{equation*}
on $M_2$, where the right term of the above equality is uniformly convergent on any compact subset of $M_2$. Moreover, if
	$\int_{\tilde{M}}|F|^2\rho<+\infty,$ we have
	\begin{equation*}
\int_{K}|F_{\alpha}|^2\rho_2<+\infty
\end{equation*}
for any compact subset $K$ of $Y$ and $\alpha\in\mathbb{Z}_{\ge0}^{n_1}$.
\end{Lemma}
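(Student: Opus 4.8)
The plan is to reduce the statement on the abstract submanifold $\tilde M$ to the already-established product case (Lemma \ref{decomp}) by working in a neighborhood of $\{o\}\times Y$ that genuinely looks like a product. First I would fix an arbitrary point $y_0\in Y$ and a coordinate polydisc $W\subset Y$ around $y_0$. Since $\{o\}\times Y\subset\tilde M$ and $\tilde M$ is an $n$-dimensional complex manifold, after possibly shrinking $W$ and choosing a small polydisc $\Delta_r^{n_1}\subset\Delta^{n_1}$, one obtains an open set $M_{2,y_0}\subset\tilde M$ containing $\{o\}\times W$ which admits a biholomorphism onto $\Delta_r^{n_1}\times W$ compatible with the projection $\pi_1$ (i.e. the $\Delta_r^{n_1}$-coordinate is $w=\pi_1$ restricted to this chart); this is just the fact that $\pi_1|_{\tilde M}$ is a submersion along $\{o\}\times Y$ onto a neighborhood of $o$, which holds because $\tilde M$ has dimension $n=n_1+n_2$ and contains the full fiber $\{o\}\times Y$. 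On each such chart, Lemma \ref{decomp} (applied with $Y$ replaced by $W$) gives a unique expansion $F=\sum_{\alpha}\pi_1^*(w^\alpha dw_1\wedge\cdots\wedge dw_{n_1})\wedge\pi_2^*(F_\alpha^{(y_0)})$ with $F_\alpha^{(y_0)}$ holomorphic $(n_2,0)$ forms on $W$, uniformly convergent on compacts.

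Next I would glue these local expansions. The coefficient $F_\alpha^{(y_0)}$ is obtained intrinsically — essentially by integrating $F$ against $\bar w^\alpha$ over small tori in the $w$-variable and reading off the Taylor coefficient in $w$ — so on overlaps $W\cap W'$ the forms $F_\alpha^{(y_0)}$ and $F_\alpha^{(y_0')}$ agree. (Concretely: uniqueness in Lemma \ref{decomp} forces this, since both expansions represent the same $F$ on the overlap of the product charts, and the $w$-coordinate is the same $\pi_1$ on both.) Hence the $F_\alpha^{(y_0)}$ patch to globally defined holomorphic $(n_2,0)$ forms $F_\alpha$ on $Y$, and the union $M_2:=\bigcup_{y_0}M_{2,y_0}$ is the desired neighborhood of $\{o\}\times Y$ in $\tilde M$ on which $F=\sum_\alpha\pi_1^*(w^\alpha dw_1\wedge\cdots\wedge dw_{n_1})\wedge\pi_2^*(F_\alpha)$ holds with uniform convergence on compacts of $M_2$. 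Uniqueness of the global $\{F_\alpha\}$ follows from the uniqueness in each chart.

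For the $L^2$ statement, suppose $\int_{\tilde M}|F|^2\rho<+\infty$. Fix a compact $K\subset Y$; cover $K$ by finitely many of the coordinate polydiscs $W$ above, so it suffices to bound $\int_{W'}|F_\alpha|^2\rho_2$ for $W'\Subset W$. On the product chart $M_{2,y_0}\cong\Delta_r^{n_1}\times W$ we have $\int_{\Delta_r^{n_1}\times W}|F|^2\rho\le\int_{\tilde M}|F|^2\rho<+\infty$, so Lemma \ref{decomp} (its ``moreover'' part, with base $W$) directly yields $\int_{W}|F_\alpha^{(y_0)}|^2\rho_2<+\infty$ for every $\alpha$; since $F_\alpha^{(y_0)}=F_\alpha$ on $W$, this gives $\int_K|F_\alpha|^2\rho_2<+\infty$. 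The main obstacle is the gluing step: one must check that the local polydisc charts on $\tilde M$ can be chosen with the \emph{same} $w=\pi_1$ coordinate (so that the hypotheses $\rho_1(w)=\rho_1(|w_1|,\dots,|w_{n_1}|)$ and the monomial structure are preserved across charts) and that $\pi_1|_{\tilde M}$ is indeed a submersion along the whole fiber $\{o\}\times Y$ — this uses that $\tilde M$ is a submanifold of $M=\Delta^{n_1}\times Y$ of the maximal dimension $n$ containing that fiber, hence the differential of $\pi_1$ restricted to $T\tilde M$ is surjective at each point of $\{o\}\times Y$, and the implicit function theorem provides the product chart. Everything else is a routine transcription of Lemma \ref{decomp}.
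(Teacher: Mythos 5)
Your overall strategy --- localize to product neighborhoods of $\{o\}\times\{y_0\}$ inside $\tilde M$, apply Lemma \ref{decomp} on each product chart, glue the coefficient forms by uniqueness of the fiberwise Taylor expansion in $w$, and then use the product structure again for the $L^2$ statement --- is the natural one; note that this paper gives no proof of the lemma (it is quoted from \cite{BGY-concavity6}), so there is no in-text argument to compare against, but your route is surely the intended one. One simplification: since $\tilde M\subset M=\Delta^{n_1}\times Y$ is an $n$-dimensional complex submanifold of the $n$-dimensional $M$, it is just an open subset of $M$; hence for every $y_0\in Y$ a product neighborhood $\Delta_r^{n_1}\times W\subset\tilde M$, with the globally defined coordinate $w=\pi_1$, exists by the definition of the product topology, and the submersion/implicit-function-theorem discussion, as well as the worry about matching $w$-coordinates on overlaps, is unnecessary: the $F_\alpha^{(y_0)}$ are the Taylor coefficients of $F$ in $w$ at $w=0$, so they agree on overlaps automatically and glue to global $F_\alpha$ on $Y$.

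The step that does not go through as written is the ``moreover'' part. To invoke the $L^2$ conclusion of Lemma \ref{decomp} on $\Delta_r^{n_1}\times W$ you need its hypothesis for the restricted weight, namely that $\{w\in\Delta_r^{n_1}:\rho_1(w)>0\}$ has positive Lebesgue measure, equivalently that $\int_{\Delta_r^{n_1}}|w^{\alpha}|^2\rho_1\,d\lambda>0$; this is not inherited from the stated assumption that $\{\rho_1>0\}$ has positive measure in the unit polydisc. For instance, if $\rho_1$ is supported in an annular region away from the origin and $\tilde M$ is a thin tube $\Delta_{r}^{n_1}\times Y$ missing that region, then $\int_{\tilde M}|F|^2\rho=0$ for every $F$, while $\int_K|F_\alpha|^2\rho_2$ can be infinite when $\rho_2$ is not locally integrable, so the implication you cite fails; in other words, the argument (and indeed the statement) really requires $\{\rho_1>0\}$ to have positive measure in every small polydisc around $o$, which is what holds in all applications in this paper, where $\rho_1$ is of the form $e^{-\varphi_1}c(-\psi_1)$ and is positive almost everywhere. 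To repair your write-up, state explicitly the orthogonality inequality $\left(\int_{\Delta_r^{n_1}}|w^{\alpha}|^2\rho_1\right)\int_W|F_\alpha|^2\rho_2\le\int_{\Delta_r^{n_1}\times W}|F|^2\rho\le\int_{\tilde M}|F|^2\rho$ and verify the positivity of the first factor (or record the strengthened reading of the hypothesis on $\rho_1$); with that proviso, the rest of your argument, including the finite covering of a compact $K\subset Y$ by the base neighborhoods $W$, is complete.
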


Let $f=\sum_{\alpha\in\mathbb{Z}_{\ge0}^{n}}b_{\alpha}w^{\alpha}$ (Taylor expansion) be a holomorphic function  on $D=\{w\in\mathbb{C}^n:|w_j|<r_0$ for any $j\in\{1,\ldots,n\}\}$, where $r_0>0$. 	Let
$$\psi=\max_{1\le j\le n_1}\left\{2p_j\log|w_j|\right\}$$ be a plurisubharmonic function on $\mathbb{C}^n$, where $n_1\le n$ and $p_j>0$ is a constant for any $j\in\{1,\ldots,n_1\}$. We recall a characterization of  $\mathcal{I}(\psi)_o$, where $o$ is the origin in $\mathbb{C}^n$.
\begin{Lemma}[see \cite{BGY-concavity6}]\label{l:0}
$(f,o)\in\mathcal{I}(\psi)_{o}$ if and only if $\sum_{1\le j\le n_1}\frac{\alpha_j+1}{p_j}>1$ for any $\alpha\in\mathbb{Z}_{\ge0}^n$ satisfying $b_{\alpha}\not=0$.
\end{Lemma}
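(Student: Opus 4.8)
The plan is to exploit the invariance of $e^{-\psi}$ under the torus action $(w_1,\dots,w_n)\mapsto(e^{\sqrt{-1}\theta_1}w_1,\dots,e^{\sqrt{-1}\theta_n}w_n)$ in order to reduce the statement, in both directions, to the case of a single monomial. First I would record the elementary fact that $e^{-\psi}=\big(\max_{1\le j\le n_1}|w_j|^{2p_j}\big)^{-1}$ depends only on $w_1,\dots,w_{n_1}$, and in particular is torus-invariant; hence for any polydisc $P=\{|w_j|<\varepsilon\}$ centered at the origin the monomials $\{w^{\alpha}\}_{\alpha\in\mathbb{Z}_{\ge0}^n}$ are pairwise orthogonal in $L^2(P,e^{-\psi}d\lambda_n)$. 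Passing to polar coordinates and combining Parseval's identity on each torus with Tonelli's theorem gives, with values in $[0,+\infty]$,
\begin{equation*}
\int_{P}|f|^2e^{-\psi}d\lambda_n=\sum_{\alpha\in\mathbb{Z}_{\ge0}^n}|b_{\alpha}|^2\int_{P}|w^{\alpha}|^2e^{-\psi}d\lambda_n .
\end{equation*}

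Next I would settle the single-monomial question: $(w^{\alpha},o)\in\mathcal{I}(\psi)_o$ if and only if $\sum_{1\le j\le n_1}\frac{\alpha_j+1}{p_j}>1$. Since $e^{-\psi}$ does not involve $w_{n_1+1},\dots,w_n$, Fubini's theorem factors $\int_{P}|w^{\alpha}|^2e^{-\psi}d\lambda_n$ as $\big(\prod_{n_1<j\le n}\int_{|w_j|<\varepsilon}|w_j|^{2\alpha_j}d\lambda_1\big)$, which is finite and positive, times the genuinely $n_1$-dimensional integral $\int_{\{|w_j|<\varepsilon,\ j\le n_1\}}\prod_{j\le n_1}|w_j|^{2\alpha_j}\big(\max_{j\le n_1}|w_j|^{2p_j}\big)^{-1}d\lambda_{n_1}$. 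Sandwiching the coordinate polydisc $\{|w_j|<\varepsilon,\ j\le n_1\}$ between two sublevel sets of the form $\{\max_{j\le n_1}2p_j\log|w_j|<-t\}$ (both inclusions hold for suitable $t$, since the latter sets are polydiscs with radii $e^{-t/(2p_j)}$), and decomposing such a sublevel set into the unit-width shells of Lemma \ref{l:m2} applied to the monomial $\prod_{j\le n_1}w_j^{\alpha_j}$ in the variables $w_1,\dots,w_{n_1}$ (with $k_1=0$), the $m$-th shell integral is a fixed positive constant when $q_{\alpha}:=\sum_{j\le n_1}\frac{\alpha_j+1}{p_j}-1=0$ and a constant multiple of $e^{-q_{\alpha}(t+m)}-e^{-q_{\alpha}(t+m+1)}$ when $q_{\alpha}\ne 0$; summing over $m\ge 0$, the total is finite precisely when $q_{\alpha}>0$, i.e. $\sum_{j\le n_1}\frac{\alpha_j+1}{p_j}>1$.

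With these two ingredients the lemma follows. For the ``only if'' direction, if $(f,o)\in\mathcal{I}(\psi)_o$ then the displayed identity forces each term $|b_{\alpha}|^2\int_{P}|w^{\alpha}|^2e^{-\psi}$ to be finite, so whenever $b_{\alpha}\ne 0$ we have $(w^{\alpha},o)\in\mathcal{I}(\psi)_o$, hence $\sum_{j\le n_1}\frac{\alpha_j+1}{p_j}>1$. For the ``if'' direction, assume $\sum_{j\le n_1}\frac{\alpha_j+1}{p_j}>1$ for every $\alpha$ with $b_{\alpha}\ne 0$; then every such monomial lies in $\mathcal{I}(\psi)_o$, so the partial sums $f_N=\sum_{|\alpha|\le N}b_{\alpha}w^{\alpha}$ all lie in $\mathcal{I}(\psi)_o$, and since $f_N\to f$ uniformly on a neighborhood of $o$, Lemma \ref{closedness} applied to the submodule $\mathcal{I}(\psi)_o\subset\mathcal{O}_{\mathbb{C}^n,o}$ yields $(f,o)\in\mathcal{I}(\psi)_o$.

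This is a preparatory lemma and has no deep obstacle, but the step requiring the most care is the explicit monomial computation: one must correctly peel off the ``inessential'' variables $w_{n_1+1},\dots,w_n$ on which $\psi$ is constant, compare the coordinate polydisc with the sublevel sets of the weight, and sum the shell contributions of Lemma \ref{l:m2} so that the borderline case $q_{\alpha}=0$ is recognized as non-integrable (it contributes a divergent series of equal positive terms) while $q_{\alpha}<0$ is non-integrable with growing terms and $q_{\alpha}>0$ telescopes to a finite value. One can also phrase the whole argument conceptually: averaging elements of $\mathcal{I}(\psi)_o$ over the torus action and invoking Lemma \ref{closedness} shows that $\mathcal{I}(\psi)_o$ is a monomial ideal, namely $\mathcal{I}(\psi)_o=\big\langle w^{\alpha}:\sum_{1\le j\le n_1}\tfrac{\alpha_j+1}{p_j}>1\big\rangle$, which is the content of the lemma.
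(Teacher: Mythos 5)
Your proof is correct; note, though, that there is no in-paper proof to compare it with: Lemma \ref{l:0} is only quoted here from \cite{BGY-concavity6}, so I can only judge your argument on its own terms and against the toolkit this paper does provide (Lemmas \ref{l:m1}, \ref{l:m2}, \ref{closedness}), with which it is entirely consistent. The Parseval--Tonelli identity $\int_P|f|^2e^{-\psi}\,d\lambda_n=\sum_{\alpha}|b_{\alpha}|^2\int_P|w^{\alpha}|^2e^{-\psi}\,d\lambda_n$, valid with values in $[0,+\infty]$ because $e^{-\psi}$ is invariant under each coordinate circle action, correctly reduces the ``only if'' direction to the single-monomial case, and your monomial computation is right: after peeling off $w_{n_1+1},\dots,w_n$ by Fubini, sandwiching the coordinate polydisc between sublevel sets of $\max_{1\le j\le n_1}2p_j\log|w_j|$ (legitimate since the integrand is locally bounded off the origin of the first $n_1$ variables), the shell sums from Lemma \ref{l:m2} diverge when $q_{\alpha}:=\sum_{1\le j\le n_1}\frac{\alpha_j+1}{p_j}-1\le 0$ (equal positive terms at $q_\alpha=0$, exponentially growing ones for $-1<q_{\alpha}<0$, and $q_\alpha\le-1$ is impossible) and telescope to a finite value when $q_{\alpha}>0$; alternatively, Lemma \ref{l:m1} with $c(s)=e^{s}$ and $A=0$ gives the same criterion in one stroke. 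In the ``if'' direction, invoking Lemma \ref{closedness} on the partial sums is exactly the right move, since membership of each monomial does not by itself give convergence of the series of weighted norms on a fixed polydisc; uniform convergence of the Taylor expansion on a smaller polydisc makes the Grauert--Remmert closedness applicable. The only cosmetic point is in the ``only if'' step: choose the polydisc $P$ inside the (possibly small) neighborhood on which $|f|^2e^{-\psi}$ is integrable before reading off term-wise finiteness, which is harmless because the monomial criterion is independent of the polyradius.
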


Let $\Omega=\Delta\subset \mathbb{C}$ be an unit disk. Let $Y=\Delta^n$ . Denote $M=\Omega \times Y$. Let $\pi_1$ and $\pi_2$ be the natural projections from $M$ to $\Omega$ and $Y$ respectively.

Let $\psi=\pi^*_1(2p\log|z|)+N$ be a plurisubharmonic function on $M$, where $N\le 0$ is a plurisubharmonic function on $M$ and $N|_{\{0\}\times \Delta^n}\not\equiv -\infty$.
Assume that there exist a holomorphic function $g$ on $\Delta$ and a function $\tilde{\psi}_2\in Psh(M)$ such that
$$\psi+\pi^{*}_1(\varphi_1)=\pi^{*}_1(2\log|g|)+\tilde{\psi}_2,$$
 where $ord_0(g)=q$. We assume that $g=dz^q h(z)$ on $\Delta$, where $d$ is a constant, $h(z)$ is a holomorphic function on $\Delta$ and $h(0)=1$.

Let $\varphi_2\in Psh(Y)$. Denote $\varphi:=\pi^{*}_1(\varphi_1)+\pi^{*}_2(\varphi_2)$.

	Let $F$ be a holomorphic $(n,0)$ form on $M$, where
	\begin{equation*}
		F=\sum_{j=k}^{\infty}\pi_1^*(z^jdz)\wedge \pi_2^*(F_j)
	\end{equation*}
	according to Lemma \ref{decomp}. Here $k\in\mathbb{N}$ and $F_j$ is a holomorphic $(n,0)$ form on $Y$ for any $j\geq k$.

Assume that $\int_M|F|^2e^{-\varphi}c(-\psi)<+\infty$ and $c(t)$ is increasing near $+\infty$.
As $\psi=\pi^*_1(2p\log|z|)+N\le \pi^*_1(2p\log|z|)$, when $t$ is large enough, we have
$$S:=\int_{\{\pi_1^*(2p\log|z|)<-t\}}|F|^2e^{-\varphi}c(-\pi_1^*(2p\log|z|))
\le\int_{\{\psi<-t\}}|F|^2e^{-\varphi}c(-\psi)<+\infty.$$

	\begin{Lemma}\label{k>k0}
		Let $c$ be a positive measurable function on $(0,+\infty)$ such that $c(t)e^{-t}$ is decreasing on $(0,+\infty)$, $c$ is increasing near $+\infty$, and $\int_0^{+\infty}c(s)e^{-s}ds<+\infty$. Assume that $k\ge q$, and
	$$S=\int_{\{\pi_1^*(2p\log|z|)<-t\}}|F|^2e^{-\varphi}c\big(-\pi_1^*(2p\log|z|)\big)
<+\infty.$$
		Then
		\[\big(F,(0,y)\big)\in\big(\mathcal{O}(K_M))\otimes\mathcal{I}(\pi_1^*(2\log|g|)+\pi_2^*(\varphi_2)\big)_{(0,y)}\]
		for any $y\in Y$.
	\end{Lemma}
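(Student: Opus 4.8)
The plan is to reduce the statement to a one–variable integrability condition along the factor $\Omega=\Delta$ and then apply the product structure via Lemma \ref{decomp} and Lemma \ref{e-varphic-psi}. First I would use the decomposition $F=\sum_{j\ge k}\pi_1^*(z^jdz)\wedge\pi_2^*(F_j)$ together with the identity
\[
|F|^2e^{-\varphi}c(-\pi_1^*(2p\log|z|))=\Big(\text{form on }\Omega\Big)\cdot\Big(\text{form on }Y\Big)
\]
and Fubini (as in the proof of Lemma \ref{e-varphic-psi}) to pass from the finiteness of $S$ to the fact that for a.e.\ $y\in Y$ the slice integral $\int_{\{2p\log|z|<-t\}}|F(\cdot,y)|^2e^{-\varphi_1}c(-2p\log|z|)$ is finite. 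Since $c$ is increasing near $+\infty$ and $\psi=\pi_1^*(2p\log|z|)+N\le \pi_1^*(2p\log|z|)$, the finiteness of $S$ is indeed guaranteed by the hypothesis $\int_{\{\psi<-t\}}|F|^2e^{-\varphi}c(-\psi)<+\infty$, as already noted in the excerpt.

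The key step is the one–variable analysis on $\Delta$. On a slice $y$, write $F(z,y)=\big(\sum_{j\ge k}F_j(y)z^j\big)dz$ (abusing notation for the coefficient forms on $Y$); near $0$ we have $\varphi_1+\psi_1=2\log|g|+\tilde\psi_2$ with $\tilde\psi_2$ plurisubharmonic, $g=dz^qh(z)$, $h(0)=1$, and $N\le 0$. The plan is: (i) because $k\ge q$, the holomorphic function $\sum_{j\ge k}F_j(y)z^j$ is divisible by $z^q$, hence by $g$ up to an invertible factor near $0$; (ii) divide and reduce the desired membership $(F,(0,y))\in(\mathcal O(K_M)\otimes\mathcal I(\pi_1^*(2\log|g|)+\pi_2^*(\varphi_2)))_{(0,y)}$ to the statement that $F/\pi_1^*(g)$, which is a holomorphic $(n,0)$ form near $(0,y)$, lies in $\mathcal O(K_M)\otimes\mathcal I(\pi_2^*(\varphi_2))$ at $(0,y)$; (iii) this last membership follows from Lemma \ref{f1zf2w} (or directly from Fubini) once we know the corresponding local integrability, i.e.\ $\int|F/\pi_1^*(g)|^2e^{-\pi_2^*(\varphi_2)}<+\infty$ near $(0,y)$. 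To obtain that integrability I would compare $e^{-\varphi}c(-\pi_1^*(2p\log|z|))$ with $e^{-\pi_1^*(2\log|g|)-\pi_2^*(\varphi_2)}$: since $c(t)e^{-t}$ is decreasing and $c$ is eventually increasing, $c(-2p\log|z|)$ is comparable from below to a positive constant times $|z|^{2p}$ on a punctured neighborhood of $0$ (use $c(t)\ge c(t)e^{-t}\ge$ const for $t$ large, giving $c(-2p\log|z|)\gtrsim 1$), and $e^{-\varphi_1}=e^{-2\log|g|-\tilde\psi_2+\psi_1}=|z|^{-2q}|h|^{-2}e^{-\tilde\psi_2+2p\log|z|+N}$; combining, the weight $e^{-\varphi_1}c(-2p\log|z|)$ dominates $|z|^{-2q}$ times a bounded-below factor near $0$, which is exactly what is needed to turn $\int_{\{2p\log|z|<-t\}}|F(\cdot,y)|^2e^{-\varphi_1}c(-2p\log|z|)<+\infty$ into $\int |F(\cdot,y)/g|^2 e^{-\varphi_2(y)}\,d\lambda<+\infty$ near $0$ on each slice.

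Finally I would upgrade the almost-everywhere-in-$y$ statement to a statement at a genuine ideal-sheaf stalk: having shown that for a.e.\ $y$ the slice form $F(\cdot,y)/g$ is locally $L^2$ near $0$, I would use the product decomposition of $F/\pi_1^*(g)$ via Lemma \ref{decomp} (its coefficient forms $\tilde F_\alpha$ on $Y$ satisfy $\int_K|\tilde F_\alpha|^2e^{-\varphi_2}<+\infty$ on compacts $K$ when the global product integral is finite, here coming from the slicewise bounds plus Fubini), so that $(\tilde F_\alpha,y)\in(\mathcal O(K_Y)\otimes\mathcal I(\varphi_2))_y$ for every $y$, and then reassemble by Lemma \ref{f1zf2w} to conclude $(F,(0,y))\in(\mathcal O(K_M)\otimes\mathcal I(\pi_1^*(2\log|g|)+\pi_2^*(\varphi_2)))_{(0,y)}$ for every $y\in Y$. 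The main obstacle I anticipate is step (iii) of the middle paragraph: controlling the weight $e^{-\varphi_1}c(-2p\log|z|)$ from below near $z=0$ in a way that is uniform enough to extract slicewise $L^2$ bounds — in particular making sure the "$c$ increasing near $+\infty$" hypothesis is used correctly to get the $|z|^{2p}$-type lower bound on $c(-2p\log|z|)$, and handling the non-pluriharmonic term $\tilde\psi_2$ (which is only plurisubharmonic, hence bounded above locally, so $e^{-\tilde\psi_2}$ is bounded \emph{below}, which is the right direction) together with the $e^{N}$ factor ($N\le 0$, so $e^{N}\le 1$, again the harmless direction). Once these comparisons are pinned down the rest is Fubini plus the cited lemmas.
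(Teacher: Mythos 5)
Your reduction in steps (i)--(ii) is fine (since $k\ge q$, $F=\pi_1^*(g)\cdot H$ with $H$ holomorphic near $(0,y)$, and membership in $\mathcal{I}(\pi_1^*(2\log|g|)+\pi_2^*(\varphi_2))_{(0,y)}$ is equivalent to local integrability of $|H|^2e^{-\pi_2^*(\varphi_2)}$), but the crux, your step (iii), is not: the claimed lower bound on the weight is false. From $\psi+\pi_1^*(\varphi_1)=\pi_1^*(2\log|g|)+\tilde\psi_2$ one gets $e^{-\varphi_1}c(-2p\log|z|)=|g|^{-2}e^{-\tilde\psi_2}e^{N}\cdot\bigl[c(s)e^{-s}\bigr]_{s=-2p\log|z|}$, and neither of the last two factors is bounded below near $\{z=0\}$: $c(s)e^{-s}$ is decreasing and in general tends to $0$ (take $c\equiv 1$), and $e^{N}$ can degenerate since $N\le 0$ is merely plurisubharmonic and may tend to $-\infty$ along $\{z=0\}$ and nearby analytic sets. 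You call $e^{N}\le 1$ ``the harmless direction'', but you need a \emph{lower} bound on the weight, so it is exactly the harmful one; with $c\equiv1$ your weight equals $|z|^{2(p-q)}|h|^{-2}e^{-\tilde\psi_2}e^{N}$, which does not dominate $|z|^{-2q}$. Hence $S<+\infty$ does not yield the slicewise $L^2$ bound on $F/\pi_1^*(g)$, and even if it did, your reassembly is circular: the stalk membership is a two-variable integral against $e^{-\varphi_2}$ (which need not be locally integrable, as $\mathcal{I}(\varphi_2)_y$ may be a proper ideal), and slice-by-slice finiteness plus Fubini does not produce the product integrability that you feed into Lemma \ref{decomp}. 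It is also telling that your argument never uses the hypothesis $N|_{\{0\}\times\Delta^n}\not\equiv-\infty$, which is essential for the lemma.

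The paper's proof avoids any pointwise weight comparison and instead extracts information at $z=0$ by a limiting mean-value argument. Writing $F=z^k\tilde h(z,w)\,dz\wedge dw$ and replacing $\tilde\psi_2$ by decreasing approximations $\tilde\psi_{2,l}$, it bounds $S$ from below by integrating over the angular variable and using that $e^{N}|\tilde h|^2$ is plurisubharmonic, together with $\int_t^{+\infty}c(s)e^{-(\frac{k-q+1}{p}+1)s}ds<+\infty$ (this is where $k\ge q$ enters); letting $l\to+\infty$ gives $\int_{\Delta^n}|\tilde h(0,w)|^2e^{N(0,w)-\tilde\psi_2(0,w)}e^{-\varphi_2(w)}<+\infty$. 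Since $N|_{\{0\}\times\Delta^n}\not\equiv-\infty$, this forces $\tilde\psi_2(0,w_1)>-\infty$ for some $w_1$, and a H\"older estimate on the slice $\{w=w_1\}$ then shows $\int_{\Delta_\delta}|z|^{2k}e^{-\varphi_1-2p\log|z|}|dz|^2<+\infty$. With these two facts the leading term $\pi_1^*(z^kdz)\wedge\pi_2^*(F_k)$ is shown to lie in the ideal (via Lemma \ref{f1zf2w}) and to have finite weighted norm, so it can be subtracted from $F$; the argument is iterated for $j=k+1,k+2,\ldots$, and the infinite sum is handled by the closedness Lemma \ref{closedness}. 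To repair your proposal you would need some version of this limiting/mean-value mechanism at $z=0$ rather than a pointwise domination of weights.
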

	
	\begin{proof}

It follows from $M$ is a Stein manifold that there exist smooth plurisubharmonic functions $\tilde{\psi}_{2,l}$ on $M$ such that $\tilde{\psi}_{2,l}$ are decreasingly convergent to $\tilde{\psi}_{2}$. Since the computation is local, we assume that $|\tilde{\psi}_{2,l}(z,w)-\tilde{\psi}_{2,l}(0,w)|\le \epsilon$ for any $(z,w)\in M=\Delta\times \Delta^n$.

We also assume that $F=z^k\tilde{h}(z,w)dz\wedge dw$ on $M$ and $|h(z)-h(0)|<\epsilon$ for any $z\in \Delta$.
\begin{equation}\label{k>q formula1}
  \begin{split}
     S & =\int_{\{\pi_1^*(2p\log|z|)<-t\}}|F|^2e^{-\pi_1^*(2\log|g|)-\tilde{\psi}_2+\pi_1^*(2p\log|z|)+N-\pi_2^*(\varphi_2)}
     c(-\pi_1^*(2p\log|z|)) \\
       & \ge \int_{\{\pi_1^*(2p\log|z|)<-t\}}\frac{|z|^{2k+2p}|\tilde{h}(z,w)|^2}{d|z|^{2q}|h(z)|^2}
       e^{-\tilde{\psi}_{2,l}+N-\pi_2^*(\varphi_2)}
     c(-\pi_1^*(2p\log|z|))\\
     &\ge \int_{w\in\Delta^n}\int_{\{2p\log|z|<-t\}}|z|^{2k+2p-2q}e^N\frac{|\tilde{h}(z,w)|^2}{d|1+\epsilon|^2}
       e^{-\tilde{\psi}_{2,l}(0,w)-\epsilon-\pi_2^*(\varphi_2)}
     c(-\pi_1^*(2p\log|z|))\\
     &=\int_{w\in\Delta^n}\left(2\int_{\{2p\log|r|<-t\}}\int_{0}^{2\pi}|r|^{2k+2p-2q+1}e^{N(re^{i\theta},w)}
     |\tilde{h}(re^{i\theta},w)|^2c(-2p\log r)d\theta dr\right)\times\\
     &\frac{e^{-\tilde{\psi}_{2,l}(0,w)-\epsilon-\varphi_2(w)}}{d|1+\epsilon|^2}\\
     &\ge \frac{4\pi}{d|1+\epsilon|^2}e^{-\epsilon}
     \int_{\{2p\log|r|<-t\}}r^{2k+2p-2q+1}c(-2p\log r)dr
     \int_{w\in\Delta^n}|\tilde{h}(0,w)|^2 e^{N(0,w)-\tilde{\psi}_{2,l}(0,w)}e^{-\varphi_2(w)}\\
     &=\frac{2\pi}{pd|1+\epsilon|^2}e^{-\epsilon}
     \int_{t}^{+\infty}c(s)e^{-(\frac{k-q+1}{p}+1)s}ds
     \int_{w\in\Delta^n}|\tilde{h}(0,w)|^2 e^{N(0,w)-\tilde{\psi}_{2,l}(0,w)}e^{-\varphi_2(w)}.
  \end{split}
\end{equation}

Since $k\ge q$ and $\int_0^{+\infty}c(s)e^{-s}ds<+\infty$, we have
		\begin{equation*}
			\int_t^{+\infty}c(s)e^{-\left(\frac{k-q+1}{p}+1\right)s}ds<+\infty.
		\end{equation*}
		Letting $l\to+\infty$ in (\ref{k>q formula1}), it follows from $S<+\infty$ and Levi's Theorem that we have
		\begin{equation}\label{k>q formula2}
		\frac{2\pi}{pd|1+\epsilon|^2}e^{-\epsilon}
     \int_{t}^{+\infty}c(s)e^{-(\frac{k-q+1}{p}+1)s}ds
     \int_{w\in\Delta^n}|\tilde{h}(0,w)|^2 e^{N(0,w)-\tilde{\psi}_{2}(0,w)}e^{-\varphi_2(w)}<+\infty.
		\end{equation}

It follows from inequality \eqref{k>q formula2} that  we have
\begin{equation}
\label{contradiction}
\int_{w\in\Delta^n}|\tilde{h}(0,w)|^2 e^{N(0,w)-\tilde{\psi}_{2}(0,w)}e^{-\varphi_2(w)}<+\infty.
\end{equation}
 Note that $N|_{\{0\}\times \Delta^n}\not\equiv -\infty$. It follows from \eqref{contradiction} that there must exist $w_1\in Y$ such that
$\tilde{\psi}_2(0,w_1)>-\infty$.

Since $k\ge q$, we know $z^k\in\mathcal{I}(2\log |g|)_0$. It follows from Lemma \ref{f1zf2w} that
\begin{equation}
			\big(\pi_1^*(z^kdz)\wedge\pi_2^*(F_k),(0,y)\big)\in
\big(\mathcal{O}(K_M)\otimes\mathcal{I}(\pi_1^*(2\log|g|)+\pi_2^*(\varphi_2)\big)_{(0,y)},
		\end{equation}
for any $y\in Y$.

Let $0<\delta<1$ be a constant such that $w_1\in \Delta_{\delta}^n$. It follows from Fubini's Theorem and $c(t)e^{-t}$ is decreasing with respect to $t$ that
		\begin{flalign}
			\begin{split}
				&\int_{\Delta_{\delta}\times \Delta_{\delta}^n}|\pi_1^*(z^kdz)\wedge \pi_2^*(F_k)|^2e^{-\varphi}c(-\pi_1^*(2p\log|z|))\\
				=&\int_{\Delta_{\delta}}|z^k|^2e^{-\varphi_1}c(-2p\log|z|)|dz|^2\cdot\int_{\Delta_{\delta}^n}|F_k|^2e^{-\varphi_2}\\
				\leq&C\int_{\Delta_{\delta}}|z^k|^2e^{-\varphi_1-2p\log|z|}|dz|^2\cdot\int_{\Delta_{\delta}^n}|F_k|^2e^{-\varphi_2},\\
			\end{split}
		\end{flalign}
		where $C$ is a positive constant independent of $F$.

Consider
\begin{equation}
  \begin{split}
     I: & = \int_{\Delta_{\delta}}|z|^{2k}e^{-\varphi_1-2p\log|z|}|dz|^2\\
       &= \int_{\Delta_{\delta}}|z|^{2k}e^{-2\log |g|-u(z)}|dz|^2\\
         &= \int_{\Delta_{\delta}}|z|^{2k}e^{-2\log |g|+N(z,w)-\tilde{\psi}_2(z,w)}|dz|^2.
  \end{split}
\end{equation}
As $N$ is a plurisubharmonic function on $M$, $e^N$ has a upper bound $C_1$ on $\Delta_{\delta}\times \Delta_{\delta}^n$ (especially, $C_1$ is independent of $w$). Hence
$$I\le C_1 \int_{\Delta_{\delta}}|z|^{2k}e^{-2\log |g|-\tilde{\psi}_2(z,w)}|dz|^2=C_1
\int_{\Delta_{\delta}}|z|^{2k-2q}|h(z)|^2e^{-\tilde{\psi}_2(z,w)}|dz|^2.$$
Denote $M(w)=\int_{\Delta_{\delta}}|z|^{2k-2q}|h(z)|^2e^{-\tilde{\psi}_2(z,w)}|dz|^2$. We have $I\le M(w)$ for any $w\in \Delta_{\delta}^n$, especially $I\le M(w_1)$.

Next we prove $M(w_1)<+\infty.$
Note that $M(w_1)=\int_{\Delta_{\delta}}|z|^{2k-2q}|h(z)|^2e^{-\tilde{\psi}_2(z,w_1)}|dz|^2$. As $e^{-\tilde{\psi}_2(0,w_1)}>-\infty$, $k\ge q$ and $h(z)$ is a holomorphic function on $\Delta_{\delta}$, by H{\" o}lder inequality, we have
\begin{equation}\nonumber
\begin{split}
   M(w_1) & =\int_{\Delta_{\delta}}|z|^{2k-2q}|h(z)|^2e^{-\tilde{\psi}_2(z,w_1)}|dz|^2 \\
     & \le ( \int_{\Delta_{\delta}}|z|^{s(2k-2q)}|h(z)|^{2s}|dz|^2)^{\frac{1}{s}}
     (\int_{\Delta_{\delta}}e^{-\frac{s}{s-1}\tilde{\psi}_2(z,w_1)}|dz|^2)^{\frac{s-1}{s}}\\
     &< +\infty,
\end{split}
\end{equation}
where $s>1$ is a real number.
Hence we know $I< +\infty$. Then
$$\int_{\Delta_{\delta}\times \Delta_{\delta}^n}|\pi_1^*(z^kdz)\wedge \pi_2^*(F_k)|^2e^{-\varphi}c(-\pi_1^*(2p\log|z|))<+\infty.$$

As $S =\int_{\Delta_{\delta}\times \Delta_{\delta}^n}|F|^2e^{-\varphi}
     c(-\pi_1^*(2p\log|z|))<+\infty$, we have
		\begin{equation*}
			\int_{\Delta_{\delta}\times \Delta_{\delta}^n}|F-\pi_1^*(z^kdz)\wedge \pi_2^*(F_k)|^2e^{-\varphi}
     c(-\pi_1^*(2p\log|z|))<+\infty.
		\end{equation*}
		 Note that
		\begin{equation*}
			F-\pi_1^*(z^kdz)\wedge\pi_2^*(F_k)=\sum_{j=k+1}^{\infty}\pi_1^*(z^jdz)\wedge\pi_2^*(F_j)
		\end{equation*}
		and $k+1>q$. Using the same method as above, we can get that
		\begin{equation*}
			\big(\pi_1^*(z^{k+1}dz)\wedge\pi_2^*(F_{k+1}),(0,y)\big)\in
\big(\mathcal{O}(K_M)\otimes\mathcal{I}(\pi_1^*(2\log|g|)+\pi_2^*(\varphi_2)\big)_{(0,y)}
		\end{equation*}
		for any $y\in Y$ and
		\begin{equation*}
			\int_{\Delta_{\delta}\times \Delta_{\delta}^n}\left|F-\pi_1^*(z^kdz)\wedge\pi_2^*(F_k)-\pi_1^*(z^{k+1}dz)\wedge\pi_2^*(F_{k+1})\right|^2e^{-\varphi}c(-\pi_1^*(2p\log|z|))<+\infty.
		\end{equation*}
		By induction, we know that
		\begin{equation*}
			\big(\pi_1^*(z^jdz)\wedge\pi_2^*(F_j),(0,y)\big)\in
\big(\mathcal{O}(K_M))\otimes\mathcal{I}(\pi_1^*(2\log|g|)+\pi_2^*(\varphi_2)\big)_{(0,y)}
		\end{equation*}
		for any $j\geq k$, $y\in Y$. Then it follows from Lemma \ref{closedness} that
		\begin{equation*}
			\big(F,(0,y)\big)=\big(\sum_{j=k}^{\infty}\pi_1^*(z^jdz)\wedge\pi_2^*(F_j),(0,y)\big)
\in\big(\mathcal{O}(K_M)\otimes\mathcal{I}(\pi_1^*(2\log|g|)+\pi_2^*(\varphi_2)\big)_{(0,y)}
		\end{equation*}
		for any $y\in Y$.
	\end{proof}

Let $\Omega=\Delta$ be the unit disk in $\mathbb{C}$, where the coordinate is $z$. Let $Y=\Delta^{n}$ be the unit polydisc in $\mathbb{C}^n$, where the coordinate is $w=(w_1,\ldots,w_{n})$. Let $M=\Omega\times Y$. Let $\pi_1$, $\pi_2$ be the natural projections from $M$ to $\Omega$ and $Y$.
	
	Let $\psi_1=2p\log|z|+\psi_0$ on $\Omega$, where $p>0$ and $\psi_0$ is a negative subharmonic function on $\Omega$ with $\psi_0(0)>-\infty$. Let $\varphi_1$ be a Lebesgue measurable function on $\Omega$ such that $\varphi_1+\psi$ is a subharmonic function on $\Omega$. It follows from the Weierstrass Theorem on open Riemann surfaces (see \cite{OF81}) and
	the Siu's Decomposition Theorem, that  $\varphi_1+\psi_1=2\log|g|+2u$, where $g$ is a holomorphic function on $\Omega$ with $ord(g)_0=q\in\mathbb{N}$, $u$ is a subharmonic function on $\Omega$ such that $v(dd^cu,z)\in[0,1)$ for any $z\in \Omega$. Let $\varphi_2$ be a plurisubharmonic function on $Y$. Denote $\varphi:=\pi_1^*(\varphi_1)+\pi_2^*(\varphi_2)$ on $M$.

\begin{Lemma}
\label{equiv of multiplier ideal sheaf}
$\mathcal{I}\big(\pi^{*}_1(2\log|g|)+\pi^{*}_2(\varphi_2)\big)_{(0,y)}
=\mathcal{I}\big(\pi^{*}_1(\psi_1)+\pi^{*}_1(\varphi_1)+\pi^{*}_2(\varphi_2)\big)_{(0,y)}$ for any $y \in Y$.
\end{Lemma}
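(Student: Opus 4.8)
The plan is to use the given decomposition $\pi_1^*(\psi_1)+\pi_1^*(\varphi_1)=\pi_1^*(2\log|g|+2u)$ to rewrite the asserted equality as
$$\mathcal{I}\big(\pi_1^*(2\log|g|)+\pi_2^*(\varphi_2)\big)_{(0,y)}=\mathcal{I}\big(\pi_1^*(2\log|g|+2u)+\pi_2^*(\varphi_2)\big)_{(0,y)},$$
i.e.\ to show that adding the pullback of $2u$ does not change the multiplier ideal at $(0,y)$. Two elementary facts will be used: since $g$ is holomorphic with $\operatorname{ord}_0 g=q$ we may write $g=z^qh$ with $h$ holomorphic and $h(0)\neq0$, so on a small polydisc $\Delta_r\times W$ around $(0,y)$ (with $W$ a polydisc neighbourhood of $y$ in $Y$) there are constants $0<c_0\le C_0$ with $c_0|z|^{-2q}\le e^{-2\log|g(z)|}\le C_0|z|^{-2q}$; and since $v(dd^cu,z)<1$, hence $v(dd^c(2u),z)<2$, for every $z\in\Omega$, Skoda's integrability theorem gives $e^{-2u}\in L^1_{\mathrm{loc}}(\Omega)$, while $u$, being subharmonic, is locally bounded above. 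The inclusion ``$\subseteq$'' in the rewritten equality is then immediate: if $(F,(0,y))$ lies in the right-hand side, shrink the polydisc so that $u\le C$ on it; then $e^{-2\log|g(z)|}\le e^{2C}e^{-2\log|g(z)|-2u(z)}$ there, hence $\int|F|^2e^{-\pi_1^*(2\log|g|)}e^{-\pi_2^*(\varphi_2)}\le e^{2C}\int|F|^2e^{-\pi_1^*(2\log|g|+2u)}e^{-\pi_2^*(\varphi_2)}<+\infty$, so $(F,(0,y))$ lies in the left-hand side.

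For ``$\supseteq$'', suppose $(F,(0,y))$ lies in the left-hand side, so (after shrinking) $\int_{\Delta_r\times W}|F|^2e^{-2\log|g(z)|}e^{-\varphi_2(w)}<+\infty$. Expand the germ $F$ in its Taylor series in the $\Omega$-variable via Lemma~\ref{decomp}: $F=\sum_{j\ge0}\pi_1^*(z^j\,dz)\wedge\pi_2^*(F_j)$ on $\Delta_r\times W$, with $F_j$ holomorphic $(n,0)$-forms on $W$ and the series uniformly convergent on compacta. Using $e^{-2\log|g(z)|}\ge c_0|z|^{-2q}$, Fubini's theorem, and the orthogonality of the monomials $z^j$ on circles, one gets
$$+\infty>c_0\sum_{j\ge0}\Big(2\pi\int_0^r\rho^{2j-2q+1}\,d\rho\Big)\int_W|F_j|^2e^{-\varphi_2};$$
the $\rho$-integral is $+\infty$ precisely when $j<q$, which (as $e^{-\varphi_2}>0$ a.e.) forces $F_j\equiv0$ for $j<q$, and for $j\ge q$ it is positive and finite, so $(F_j,y)\in\mathcal{I}(\varphi_2)_y$. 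Moreover, for each $j\ge q$ we have $(z^j,0)\in\mathcal{I}(2\log|g|+2u)_0$, since $\int_{\Delta_r}|z^j|^2e^{-2\log|g|-2u}\le C\int_{\Delta_r}|z|^{2(j-q)}e^{-2u}\le C\int_{\Delta_r}e^{-2u}<+\infty$ (using $j\ge q$, boundedness of $|h|^{-2}$, and $e^{-2u}\in L^1_{\mathrm{loc}}$). Hence by Fubini's theorem (cf.\ Lemma~\ref{f1zf2w}) each $\pi_1^*(z^j\,dz)\wedge\pi_2^*(F_j)$ with $j\ge q$, and so each finite partial sum $\sum_{q\le j\le N}\pi_1^*(z^j\,dz)\wedge\pi_2^*(F_j)$, lies in $\big(\mathcal{O}(K_M)\otimes\mathcal{I}(\pi_1^*(2\log|g|+2u)+\pi_2^*(\varphi_2))\big)_{(0,y)}$; since these partial sums converge uniformly on a neighbourhood of $(0,y)$ to $F=\sum_{j\ge q}\pi_1^*(z^j\,dz)\wedge\pi_2^*(F_j)$, Lemma~\ref{closedness} (applied after trivializing $K_M$ near $(0,y)$) gives $(F,(0,y))\in\big(\mathcal{O}(K_M)\otimes\mathcal{I}(\pi_1^*(2\log|g|+2u)+\pi_2^*(\varphi_2))\big)_{(0,y)}$, as desired.

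The main obstacle is the inclusion ``$\supseteq$'': one cannot simply apply Hölder's inequality to the bound $\int|F|^2e^{-\pi_1^*(2\log|g|)}e^{-\pi_2^*(\varphi_2)}<+\infty$, since no control is available on higher $L^p$-norms of $|F|^2e^{-\varphi}$. Holomorphicity of $F$ must instead be exploited through the Taylor expansion in the $\Omega$-direction, which reduces matters to the one-dimensional identity $\mathcal{I}(2\log|g|)_0=\mathcal{I}(2\log|g|+2u)_0$; there the local integrability $e^{-2u}\in L^1_{\mathrm{loc}}$ — coming from $v(dd^cu,\cdot)<1$ via Skoda's theorem — is exactly what provides the extra room needed.
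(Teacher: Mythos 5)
Your proof is correct and follows essentially the same route as the paper's: the easy inclusion via local upper-boundedness of $u$, and the hard inclusion via the fiberwise expansion of Lemma \ref{decomp}, the one-dimensional fact that $v(dd^cu,\cdot)<1$ keeps $z^j$ (for $j\ge q$) in $\mathcal{I}(2\log|g|+2u)_0$, then Lemma \ref{f1zf2w} and Lemma \ref{closedness}. The only cosmetic difference is that you extract all coefficients simultaneously by orthogonality of monomials on circles (Parseval), whereas the paper peels off the lowest-order term inductively using Fubini and the sub-mean value inequality; both yield the same conclusion.
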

\begin{proof}
  It is easy to see that $\mathcal{I}\big(\pi^{*}_1(2\log|g|)+\pi^{*}_2(\varphi_2)\big)_{(0,y)}
  \supset\mathcal{I}\big(\pi^{*}_1(\psi_1+\varphi_1)+\pi^{*}_2(\varphi_2)\big)_{(0,y)}$ for any $y \in Y$.

  Now we prove $\mathcal{I}\big(\pi^{*}_1(2\log|g|)+\pi^{*}_2(\varphi_2)\big)_{(0,y)}
  \subset\mathcal{I}\big(\pi^{*}_1(\psi_1+\varphi_1)+\pi^{*}_2(\varphi_2)\big)_{(0,y)}$ for any $y \in Y$.

  Let $F\in \mathcal{I}\big(\pi^{*}_1(2\log|g|)+\pi^{*}_2(\varphi_2)\big)_{(0,y)}.$ Then by Lemma \ref{decomp} (although Lemma \ref{decomp} is stated for the holomorphic $(n,0)$ forms, since our case is local, the decomposition still holds for holomorphic functions), we know $F=\sum_{j=k}^{+\infty}z^j F_j$ on $M$, where the right hand side is uniformly convergent to $F$ on $M$,  $F_j(w)$ is a holomorphic function on $Y$ for any $j\ge k$ and $F_k\not\equiv 0$.

  Since the case is local, we also assume that $F=z^{k}h_1(z,w)$ on $M$, where $h_1(z,w)$ is a holomorphic function on $M$ satisfying $h_1(0,w)\neq 0$.
  $F\in \mathcal{I}\big(\pi^{*}_1(2\log|g|)+\pi^{*}_2(\varphi_2)\big)_{(0,y)}$ implies that $k\ge q$ and $$\int_{\Delta\times \Delta^n}|z|^{2k-2q}|h_1(z,w)|e^{-\pi_2^{*}(\varphi(w))}<+\infty.$$

  By Fubini's theorem and sub-mean value inequality of subharmonic functions, we have
  \begin{equation*}
    \begin{split}
       \int_{\Delta\times \Delta^n}|z|^{2k-2q}|h_1(z,w)|e^{-\pi_2^{*}(\varphi_2(w))} &
       =\int_{w\in \Delta^n} \int_{z\in\Delta}|z|^{2k-2q}|h_1(z,w)|e^{-\pi_2^{*}(\varphi_2(w))}\\
         & \ge C \int_{w\in \Delta^n} |h_1(0,w)|e^{-\varphi_2(w)}\\
         & =C \int_{w\in \Delta^n} |F_k|^2e^{-\varphi_2(w)},
    \end{split}
  \end{equation*}
  where $C>0$ is a constant.
Hence we have $(F_k,y)\in\mathcal{I}(\varphi_2)_y$ for any $y\in Y$.

As $v(dd^cu,0)\in[0,1)$, we have $(z^k,0) \in \mathcal{I}(2\log|g|)_0=\mathcal{I}(2\log|g|+2u)_0=\mathcal{I}(\psi_1+\varphi_1)_0$. It follows from Lemma \ref{f1zf2w} that we know $\big(z^k F_k,(0,y)\big)\in\mathcal{I}\big(\pi^{*}_1(\psi_1+\varphi_1)+\pi^{*}_2(\varphi_2)\big)_{(0,y)}$. Then we know
$$\int_{\Delta\times \Delta^n} |z^k F_k|^2e^{\pi_1^*(-2\log|g|+2u)-\pi_2^*(\varphi_2)}<+\infty,$$
which implies
 $$\int_{\Delta\times \Delta^n} |z^k F_k|^2e^{\pi_1^*(-2\log|g|)-\pi_2^*(\varphi_2)}<+\infty.$$
  Hence we have $\big(F-z^kF_k,(0,y)\big)\in \mathcal{I}\big(\pi^{*}_1(2\log|g|)+\pi^{*}_2(\varphi_2)\big)_{(0,y)}$.

  Denote $\tilde{F}_{k+1}=F-z^k F_k$ on $\Delta\times \Delta^n$. Note that $\tilde{F}_{k+1}=\sum_{j=k+1}^{+\infty}z^jF_j$ on $M$ and $\big(\tilde{F}_{k+1},(0,y)\big)\in \mathcal{I}\big(\pi^{*}_1(2\log|g|)+\pi^{*}_2(\varphi_2)\big)_{(0,y)}$.

  By using similar discussion as above, we know $$\big(z^{k+1} F_{k+1},(0,y)\big)\in\mathcal{I}\big(\pi^{*}_1(\psi_1+\varphi_1)+\pi^{*}_2(\varphi_2)\big)_{(0,y)},$$
 and
  $$\int_{\Delta\times \Delta^n} |z^{k+1} F_{k+1}|^2e^{\pi_1^*(-2\log|g|)-\pi_2^*(\varphi_2)}<+\infty.$$
   Hence $\big(\tilde{F}_{k+1}-z^{k+1}F_{k+1},(0,y)\big)\in \mathcal{I}\big(\pi^{*}_1(2\log|g|)+\pi^{*}_2(\varphi_2)\big)_{(0,y)}$. Denote $\tilde{F}_{k+2}=\tilde{F}_{k+1}-z^{k+1}F_{k+1}$ on $\Delta\times \Delta^n$. Note that $\tilde{F}_{k+2}=\sum_{j=k+2}^{+\infty}z^j F_j$ on $M$ and $\big(\tilde{F}_{k+2},(0,y)\big)\in \mathcal{I}\big(\pi^{*}_1(2\log|g|)+\pi^{*}_2(\varphi_2)\big)_{(0,y)}$.

By induction, we know that for any $j\ge k$,
   $$\big(z^{j} F_{j},(0,y)\big)\in\mathcal{I}\big(\pi^{*}_1(\psi_1+\varphi_1)+\pi^{*}_2(\varphi_2)\big)_{(0,y)}$$
 holds. Then it follows from Lemma \ref{closedness} that we know
    $$\big(F,(0,y)\big)\in\mathcal{I}\big(\pi^{*}_1(\psi_1+\varphi_1)+\pi^{*}_2(\varphi_2)\big)_{(0,y)}$$
  for any $y\in Y=\Delta^n$.

\end{proof}
 We recall a well known result about multiplier ideal sheaves.
 \begin{Lemma}[see \cite{BGY-concavity6}]
 	\label{l:phi1+phi2}Let $\Phi_1$ and $\Phi_2$ be plurisubharmonic functions on $\Delta^n$ satisfying $\Phi_2(o)>-\infty$, where $n\in \mathbb{Z}_{>0}$ and $o$ is the origin in $\Delta^n$. Then $\mathcal{I}(\Phi_1)_o=\mathcal{I}(\Phi_1+\Phi_2)_o$.
 \end{Lemma}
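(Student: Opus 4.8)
The plan is to establish the two inclusions $\mathcal{I}(\Phi_1+\Phi_2)_o\subset\mathcal{I}(\Phi_1)_o$ and $\mathcal{I}(\Phi_1)_o\subset\mathcal{I}(\Phi_1+\Phi_2)_o$ separately; only the second requires genuine work. The first is immediate: $\Phi_2$ is plurisubharmonic, hence upper semicontinuous, hence bounded above on a small polydisc $U\ni o$, say $\Phi_2\le C$ on $U$. Then for any holomorphic $f$ on $U$ one has $|f|^2e^{-\Phi_1}\le e^{C}|f|^2e^{-\Phi_1-\Phi_2}$ on $U$, so $(f,o)\in\mathcal{I}(\Phi_1+\Phi_2)_o$ forces $(f,o)\in\mathcal{I}(\Phi_1)_o$.

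For the reverse inclusion the hypothesis $\Phi_2(o)>-\infty$ enters through the Lelong number. First I would record that $\nu(\Phi_2,o)=0$: if $\nu(\Phi_2,o)=c>0$, then from $\nu(\Phi_2,o)=\liminf_{z\to o}\frac{\Phi_2(z)}{\log|z|}$ one gets $\Phi_2(z)\le(c/2)\log|z|$ for $z$ near $o$, so $\Phi_2(z)\to-\infty$ as $z\to o$; since $\Phi_2(o)=\limsup_{z\to o}\Phi_2(z)$ for plurisubharmonic functions, this contradicts $\Phi_2(o)>-\infty$. Hence $\nu(\lambda\Phi_2,o)=0<2$ for every $\lambda>0$, and Skoda's integrability theorem (see e.g. \cite{Demaillybook}) yields that $e^{-\lambda\Phi_2}$ is integrable on some neighborhood of $o$ for every $\lambda>0$.

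Now take $(f,o)\in\mathcal{I}(\Phi_1)_o$, represented by a holomorphic function $f$ on a polydisc $U\ni o$ with $\int_U|f|^2e^{-\Phi_1}<+\infty$. By the strong openness property of multiplier ideal sheaves \cite{GZSOC} there is $\epsilon_0>0$ with $(f,o)\in\mathcal{I}\big((1+\epsilon_0)\Phi_1\big)_o$; set $q:=(1+\epsilon_0)/\epsilon_0$. Shrinking $U$, we may assume $\int_U|f|^2e^{-(1+\epsilon_0)\Phi_1}<+\infty$, $\int_Ue^{-q\Phi_2}<+\infty$, and $\sup_U|f|<+\infty$. On $U$ write the pointwise identity
\[
|f|^2e^{-\Phi_1-\Phi_2}=\Big(|f|^2e^{-(1+\epsilon_0)\Phi_1}\Big)^{\frac{1}{1+\epsilon_0}}\cdot\Big(|f|^2e^{-q\Phi_2}\Big)^{\frac{\epsilon_0}{1+\epsilon_0}}
\]
and apply H\"older's inequality with exponents $1+\epsilon_0$ and $q$:
\[
\int_U|f|^2e^{-\Phi_1-\Phi_2}\le\Big(\int_U|f|^2e^{-(1+\epsilon_0)\Phi_1}\Big)^{\frac{1}{1+\epsilon_0}}\Big(\sup_U|f|^2\cdot\int_Ue^{-q\Phi_2}\Big)^{\frac{\epsilon_0}{1+\epsilon_0}}<+\infty ,
\]
so $(f,o)\in\mathcal{I}(\Phi_1+\Phi_2)_o$, which finishes the proof.

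The main obstacle is precisely this reverse inclusion, and it combines two ingredients: converting the pointwise hypothesis $\Phi_2(o)>-\infty$ into an $L^p$-type integrability statement for $e^{-\Phi_2}$ (through the vanishing of the Lelong number together with Skoda's theorem), and upgrading the bare $L^1$ bound on $|f|^2e^{-\Phi_1}$ to the slightly stronger bound $\int_U|f|^2e^{-(1+\epsilon_0)\Phi_1}<+\infty$ coming from strong openness, so that the exponents in the H\"older split balance. The remaining care is bookkeeping: choosing a single neighborhood $U$ of $o$ on which a representative of the germ $f$ is defined and on which all three integral bounds hold simultaneously.
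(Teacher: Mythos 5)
Your argument is correct: both inclusions are handled properly, the identity $\nu(\Phi_2,o)=0$ combined with Skoda's theorem gives $e^{-q\Phi_2}\in L^1$ near $o$ for every $q$, strong openness supplies the needed room in the exponent of $e^{-\Phi_1}$, and the H\"older split with conjugate exponents $1+\epsilon_0$ and $(1+\epsilon_0)/\epsilon_0$ balances exactly. The present paper does not reproduce a proof of Lemma \ref{l:phi1+phi2} (it only cites \cite{BGY-concavity6}), and your route is precisely the standard argument behind that reference, so there is nothing further to fix.
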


\subsection{Optimal jet $L^2$ extensions}
\label{some construction before extension}
Let $\Omega$ be an open Riemann surface with nontrivial Green functions. Let $Z_{\Omega}=\{z_j:j \in \mathbb{N}_+ \& j<\gamma\}$ be a subset of $\Omega$ of discrete points, where $\gamma\in \mathbb{Z}^+_{\ge 2}$ or $\gamma=+\infty$. Let $Y$ be an $n-$dimensional weakly pseudoconvex K\"ahler manifold. Denote $M=\Omega \times Y$. Let $\pi_1$ and $\pi_2$ be the natural projections from $M$ to $\Omega$ and $Y$ respectively. Denote $Z_0:=Z_{\Omega}\times Y$. Denote $Z_j:=\{z_j\}\times Y$.

Let $\tilde{M}\subset M$ be an $n-$dimensional weakly pseudoconvex K\"ahler manifold satisfying that $Z_0\subset \tilde{M}$.  Let $F$ be a holomorphic $(n,0)$ form on a neighborhood $U_0\subset \tilde{M}$ of $Z_0$.

Let $\psi$ be a plurisubharmonic function on $\tilde{M}$.
It follows from Siu's decomposition theorem that $$dd^{c}\psi=\sum\limits_{j\ge1}2p_j[Z_j]+\sum\limits_{i\ge 1}\lambda_i[A_i]+R,$$
where $[Z_j]$ and $[A_i]$ are the currents of integration over an irreducible $(n-1)-$dimensional analytic set, and where $R$ is a closed positive current with the property that $dimE_c(R)<n-1$ for every $c>0$, where $E_c(R)=\{x\in \tilde M: v(R,x)\ge c\}$ is the upperlevel sets of Lelong number. We assume that $p_j>0$ for any $1\le j< \gamma$.

Then $N:=\psi-\pi^{*}_1\big(\sum\limits_{j\ge1}2p_jG_{\Omega}(z,z_j)\big)$ is a plurisubharmonic function on $\tilde M$. We assume that $N\le0$.

Let $\varphi_1$ be a Lebesgue measurable function on $\Omega$ such that $\psi+\pi^{*}_1(\varphi)$ is a plurisubharmonic function on $\tilde M$. With similar discussion as above, by Siu's decomposition theorem, we have

$$dd^{c}(\psi+\pi^{*}_1(\varphi))=\sum\limits_{j\ge1}2\tilde{q}_j[Z_j]+\sum\limits_{i\ge 1}\tilde{\lambda}_i[\tilde{A}_i]+\tilde{R},$$
where $\tilde{q}_j\ge 0$ for any $1\le j< \gamma$.

By Weierstrass theorem on open Riemann surface, there exists a holomorphic function $g$ on $\Omega$ such that $ord_{z_j}(g)=q_j:=[\tilde{q}_j]$ for any $z_j\in Z_{\Omega}$ and $g(z)\neq 0$ for any $z\notin Z_{\Omega}$, where $[q]$ equals to the integer part of the nonnegative real number $q$.
Then we know that there exists a plurisubharmonic function $\tilde{\psi}_2\in Psh(\tilde M)$ such that $$\psi+\pi^{*}_1(\varphi_1)=\pi^{*}_1(2\log|g|)+\tilde{\psi}_2.$$

Let $\varphi_2\in Psh(Y)$. Denote $\varphi:=\pi^{*}_1(\varphi_1)+\pi^{*}_2(\varphi_2)$.

For $1\le j<\gamma$, let $(V_{j},\tilde{z}_j)$ be a local coordinated open neighborhood of $z_j$ in $\Omega$ satisfying $V_{j}\Subset \Omega$, $\tilde{z}_j(z_j)=0$ under the local coordinate and $V_{j}\cap V_{k}=\emptyset$ for any $j\neq k$. Denote $V_0:=\cup_{1\le j<\gamma} V_j$. We assume that $g=d_j\tilde{z}_j^{q_j}h_j(z)$ on $V_j$, where $d_j$ is a constant, $h_j(z)$ is a holomorphic function on $V_j$ and $h(z_j)=1$.

Let $c(t)$ be a positive measurable function on $(0,+\infty)$ satisfying that $c(t)e^{-t}$ is decreasing and $\int_{0}^{+\infty}c(s)e^{-s}ds<+\infty$.

 We have the following lemma.

 \begin{Lemma}
 \label{optimal extension}
 Let $F$ be a holomorphic $(n,0)$ form on $U_0$ such that for $1\le j <\gamma$, $F=\pi^{*}_1(\tilde{z}_j^{k_j}f_jdz_j)\wedge\pi^{*}_2(F_j)$ on $U_j\Subset U_0\cap (V_j\times Y)$, where $U_j$ is an open neighborhood of $Z_j$ in $\tilde{M}$, $k_j$ is a nonnegative integer, $f_j$ is a holomorphic function on $V_j$ satisfying $f_j(z_j)=a_j\neq 0$ and $F_j$ is a holomorphic $(n-1,0)$ form on $Y$.

 Denote $I_F:= \{j:1\le j < \gamma \& k_j+1-q_j\le 0\}$. Assume that $k_j+1=q_j$ for any $j\in I_F$ and $\tilde{\psi}_2|_{Z_j}$ is not identically $-\infty$ on $Z_j$.
 If
 \begin{equation}
 \label{measure finite}
 \sum_{j\in I_F}\frac{2\pi|a_j|^2}{p_j|d_j|^2}\int_Y|F_j|^2e^{-\varphi_2-\tilde{\psi}_2(z_j,w)}<+\infty
 \end{equation}
   and
 \begin{equation}
 \label{measure finite}
 \int_{Y} |F_j|^2e^{-\varphi_2-\tilde{\psi_2}(z_j,w)}<+\infty,
 \end{equation}
 for any $j\notin I_F$.
 Then there exists a holomorphic $(n,0)$ form $\tilde{F}$ on $\tilde M$ such that
 $\big(\tilde{F}-F,(z_j,y)\big)\in
 \big(\mathcal{O}(K_{\tilde M})\otimes\mathcal{I}(\pi^{*}_1(2\log |g|)+\pi^{*}_2(\varphi_2))\big)_{(z_j,y)}$ for any $1\le j <\gamma$ and $y \in Y$ and
  \begin{equation}
 \label{estimate in L2}
 \begin{split}
    \int_{\tilde M}|\tilde{F}|^2c(-\psi)e^{-\varphi}
 \le(\int_{0}^{+\infty}c(s)e^{-s}ds)
 \sum_{j\in I_F}\frac{2\pi|a_j|^2}{p_j|d_j|^2}\int_Y|F_j|^2e^{-\varphi_2-\tilde{\psi}_2(z_j,w)}<+\infty.
 \end{split}
 \end{equation}
 \end{Lemma}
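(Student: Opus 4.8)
The plan is to reduce the problem to the one-variable, single-point model case covered by Lemma~\ref{k>k0} (and the multiplier-sheaf identifications of Lemma~\ref{equiv of multiplier ideal sheaf} and Lemma~\ref{f1zf2w}), and then to a product-of-factors $L^2$ estimate handled by Lemma~\ref{L2 method}. First I would set $\psi_{G}:=\pi_1^*\big(\sum_{j\ge1}2p_jG_\Omega(\cdot,z_j)\big)$, so that $\psi=\psi_G+N$ with $N\le0$, and note $\pi_1^*(2\log|g|)+\tilde\psi_2=\psi+\pi_1^*(\varphi_1)$. Since $c(t)e^{-t}$ is decreasing and $\int_0^{+\infty}c(s)e^{-s}ds<+\infty$, by Lemma~\ref{tildecincreasing} we may enlarge $c$ to $\tilde c\ge c$ that is increasing near $+\infty$, prove the estimate for $\tilde c$, and then specialize; so we may and do assume $c$ is increasing near $+\infty$. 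The strategy is to build the extension $\tilde F$ directly by the Ohsawa--Takegoshi-type construction of Lemma~\ref{L2 method} applied to $F$ (extended by zero outside $U_0$ and cut off near $\{\psi<-t_0\}$ for $t_0$ large), with weight $\varphi$ and the plurisubharmonic exhaustion data coming from $\psi$; the key quantitative input is the computation of $\int_{\{-t_0-B<\psi<-t_0\}}|F|^2e^{-\varphi}$, which, because $F=\pi_1^*(\tilde z_j^{k_j}f_jd\tilde z_j)\wedge\pi_2^*(F_j)$ and $\psi$ near $Z_j$ behaves like $2p_j\log|\tilde z_j|$ up to the bounded-above term $N$ and the $\tilde\psi_2$ decomposition, reduces via Fubini and a polar-coordinate computation (as in the proof of Lemma~\ref{k>k0}, equation~\eqref{k>q formula1}) to $\sum_{j\in I_F}\tfrac{2\pi|a_j|^2}{p_j|d_j|^2}\int_Y|F_j|^2e^{-\varphi_2-\tilde\psi_2(z_j,w)}$ times an elementary factor in $B$. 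Letting $B\to\infty$ and passing through $h(t)=\int_t^{+\infty}c(s)e^{-s}ds$ then produces the factor $\int_0^{+\infty}c(s)e^{-s}ds$ in \eqref{estimate in L2}.

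More precisely, I would first verify the finiteness of the candidate right-hand side: the hypothesis \eqref{measure finite} gives $\sum_{j\in I_F}\tfrac{2\pi|a_j|^2}{p_j|d_j|^2}\int_Y|F_j|^2e^{-\varphi_2-\tilde\psi_2(z_j,w)}<+\infty$, and the second finiteness hypothesis ($\int_Y|F_j|^2e^{-\varphi_2-\tilde\psi_2(z_j,w)}<+\infty$ for $j\notin I_F$) plus $k_j+1-q_j>0$ there ensures that the local pieces $\pi_1^*(\tilde z_j^{k_j}f_jd\tilde z_j)\wedge\pi_2^*(F_j)$ already lie in $\big(\mathcal O(K_{\tilde M})\otimes\mathcal I(\pi_1^*(2\log|g|)+\pi_2^*(\varphi_2))\big)_{(z_j,y)}$ for all $y\in Y$ (this is exactly the content of the integrability argument in Lemma~\ref{k>k0} combined with Lemma~\ref{f1zf2w}, using that $\tilde\psi_2|_{Z_j}\not\equiv-\infty$). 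Hence near the points $z_j$ with $j\notin I_F$ the form $F$ vanishes modulo the ideal and contributes nothing to the jet-matching condition; only the points in $I_F$, where $k_j+1=q_j$, carry genuine data. Then I would apply Lemma~\ref{L2 method} on the weakly pseudoconvex Kähler manifold $\{\psi<-S\}$ (with $S$ large so that $\{\psi<-S\}$ localizes near $Z_0$ and $c(-v_{t_0,B}(\psi))$ is controlled), with $F$ there replaced by the given local form glued across the disjoint $V_j\times Y$; the output $\widetilde F$ satisfies $\int|\widetilde F-(1-b_{t_0,B}(\psi))F|^2e^{-\varphi+v_{t_0,B}(\psi)}c(-v_{t_0,B}(\psi))\le C\int_S^{t_0+B}c(t)e^{-t}dt$ with $C=\sup_B\tfrac1B\int_{\{-t_0-B<\psi<-t_0\}}|F|^2e^{-\varphi}$.

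The computation of $C$ is where the structure of the problem enters: on $V_j\times Y$, using $dd^c\psi\ge 2p_j[Z_j]$ and more precisely $\psi=2p_j\pi_1^*(\log|\tilde z_j|)+(\text{bounded above})$ together with $\varphi=\pi_1^*(\varphi_1)+\pi_2^*(\varphi_2)$ and $\pi_1^*(\varphi_1)=\pi_1^*(2\log|g|)+\tilde\psi_2-\psi$, we get $|F|^2e^{-\varphi}=\tfrac{|\tilde z_j^{k_j}f_j|^2}{|d_j\tilde z_j^{q_j}h_j|^2}|F_j|^2e^{-\varphi_2}e^{-\tilde\psi_2+\psi}$; since $k_j+1=q_j$, the $|\tilde z_j|$-powers combine to $|\tilde z_j|^{-2}$ times $e^{\psi}$-type behavior, so by Fubini over $Y$ and a change to polar coordinates in $\tilde z_j$ (with $\tilde\psi_2$ and $\varphi_2$ replaced by their values at $(z_j,w)$ via mean-value inequality / Fatou as $B\to0$, exactly the manipulation in \eqref{k>q formula1}) one obtains $\lim_{B\to0}\tfrac1B\int_{\{-t_0-B<\psi<-t_0\}}|F|^2e^{-\varphi}= \tfrac{2\pi|a_j|^2}{p_j|d_j|^2}\int_Y|F_j|^2e^{-\varphi_2-\tilde\psi_2(z_j,w)}\cdot e^{-t_0}$, summed over $j\in I_F$ (the $j\notin I_F$ pieces give a factor $e^{-(k_j+1-q_j)t_0/p_j}\cdot(\cdots)$ which one can also absorb, or more cleanly subtract off first). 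Plugging into Lemma~\ref{L2 method}, dividing by $e^{-t_0}$, sending $t_0\to+\infty$ and $B\to+\infty$ appropriately (using $v_{t_0,B}(\psi)\to\psi$ and $c(-v_{t_0,B}(\psi))\to c(-\psi)$ by monotone/dominated convergence, legitimate because $c$ is increasing near $+\infty$), and finally using $\int_S^{+\infty}c(t)e^{-t}dt\to\int_0^{+\infty}c(t)e^{-t}dt$ as $S\to0$, yields the bound \eqref{estimate in L2}; the jet-matching $\big(\widetilde F-F,(z_j,y)\big)\in\big(\mathcal O(K_{\tilde M})\otimes\mathcal I(\pi_1^*(2\log|g|)+\pi_2^*(\varphi_2))\big)_{(z_j,y)}$ follows since $(1-b_{t_0,B}(\psi))F\equiv F$ near $Z_0$ and the $L^2$ bound forces $\widetilde F-F$ into the multiplier ideal (again the Lemma~\ref{k>k0} argument, now applied to $\widetilde F - F$ locally). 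I expect the main obstacle to be bookkeeping the interchange of limits ($B\to0$ for computing $C$, then $B\to\infty$ in the final estimate, and $t_0\to\infty$, $S\to0$) while keeping the constant sharp; technically this is routine given Lemma~\ref{L2 method}, Lemma~\ref{l:converge} (to extract a convergent subsequence of the $\widetilde F$'s as the parameters vary and pass the estimate to the limit), Lemma~\ref{closedness} (to keep the limit form in the ideal), and the explicit integral computations already carried out in the proofs of Lemmas~\ref{k>k0} and~\ref{l:m1}, but the $N\not\equiv0$ case requires care since then $\psi<\psi_G$ strictly on a positive-measure set (Lemma~\ref{decreasing property of l}), which is harmless for the \emph{inequality} direction here but must be tracked so the constant is not lost.
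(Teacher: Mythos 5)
Your overall skeleton (apply the Ohsawa--Takegoshi-type Lemma \ref{L2 method} to the data $F$, estimate the mass of $F$ on an annulus of $\psi$, then pass to limits with Lemma \ref{l:converge}, Fatou and Lemma \ref{closedness}) is the same as the paper's, but two of your key steps do not work as stated. First, the reduction ``assume $c$ is increasing near $+\infty$ by enlarging $c$ to $\tilde c\ge c$ via Lemma \ref{tildecincreasing} and then specialize'' destroys exactly the constant you are trying to prove: the estimate for $\tilde c$ has $\int_0^{+\infty}\tilde c(s)e^{-s}ds$ on the right, which dominates $\int_0^{+\infty}c(s)e^{-s}ds$, so \eqref{estimate in L2} does not follow. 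Increasingness of $c$ is in fact never needed here: the only monotonicity used is that $c(t)e^{-t}$ is decreasing, which gives $e^{v_{t,1}(\pi_1^*(G))}c\big(-v_{t,1}(\pi_1^*(G))\big)\ge e^{\pi_1^*(G)}c\big(-\pi_1^*(G)\big)$, and that is how the paper passes from the weight produced by Lemma \ref{L2 method} to the weight $c(-\pi_1^*(G))$.

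Second, your central quantitative step --- computing $\lim_{B\to0}\frac1B\int_{\{-t_0-B<\psi<-t_0\}}|F|^2e^{-\varphi}$ at a \emph{fixed} $t_0$ and claiming it equals $e^{-t_0}\sum_{j\in I_F}\frac{2\pi|a_j|^2}{p_j|d_j|^2}\int_Y|F_j|^2e^{-\varphi_2-\tilde\psi_2(z_j,w)}$ --- is not valid. At finite $t_0$ the region $\{-t_0-B<\psi<-t_0\}$ is neither concentrated near $Z_0$ nor an annulus in $\tilde z_j$ (recall $\psi=\pi_1^*(G)+N$ with $N$ possibly $-\infty$ somewhere), so no exact polar-coordinate evaluation is available; the correct statement is only an asymptotic upper bound ($\limsup$ as the level $t\to+\infty$, with the width $B=1$ fixed), which the paper obtains after first bounding $c(-\psi)e^{-\varphi}\le c(-\pi_1^*(G))\,e^{-N-\varphi}$ (using $N\le0$ and $c(t)e^{-t}$ decreasing), so that $N$ cancels against the decomposition $\psi+\pi_1^*(\varphi_1)=\pi_1^*(2\log|g|)+\tilde\psi_2$ and all sublevel sets are those of $\pi_1^*(G)$; the annulus $\limsup$ is then controlled by Lemma \ref{limit discussion} applied to the bounded truncations $\tilde\psi_{2,l}=\max\{\tilde\psi_2,-l\}$. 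Your parameter scheme is also internally inconsistent: in a single application of Lemma \ref{L2 method} you cannot both send $B\to0$ (to identify the constant $C$) and $B\to+\infty$ (in the final bound); in the paper $B=1$ throughout and only $t\to+\infty$, $l\to+\infty$, $s\to+\infty$ are taken, each with Fatou and Lemma \ref{l:converge}. Finally, the reduction from $\gamma=+\infty$ to finitely many points (via Lemma \ref{approximate of Green function} and the monotone approximation of Green functions) is needed before any such annulus computation and is absent from your outline; without it, and without the $\tilde\psi_{2,l}$ truncation, neither the mass estimate nor the preservation of the jet condition $\big(\tilde F-F,(z_j,y)\big)\in\big(\mathcal{O}(K_{\tilde M})\otimes\mathcal{I}(\pi_1^*(2\log|g|)+\pi_2^*(\varphi_2))\big)_{(z_j,y)}$ in the limit is justified.
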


 \begin{proof}
  Denote $G=\sum\limits_{j\ge1}2p_jG_{\Omega}(z,z_j)$. Note that $\psi\le \pi^{*}_1(G)$ and $c(t)e^{-t}$ is decreasing. We have
 $$c(-\psi)e^{-\varphi}\le c(-\pi^{*}_1(G))e^{-N-\pi^{*}_1(\varphi_1)-\pi^{*}_2(\varphi_2)}.$$
To prove Lemma \ref{optimal extension}, it suffice to prove
   \begin{equation}
 \label{goal in L2}
 \begin{split}
    &\int_{\tilde M}|\tilde{F}|^2c\big(-\pi^{*}_1(G)\big)e^{-N-\pi^{*}_1(\varphi_1)-\pi^{*}_2(\varphi_2)}\\
 \le&(\int_{0}^{+\infty}c(s)e^{-s}ds)
 \sum_{j\in I_F}\frac{2\pi|a_j|^2}{p_j|d_j|^2}\int_Y|F_j|^2e^{-\varphi_2-\tilde{\psi}_2(z_j,w)}<+\infty.
 \end{split}
 \end{equation}

The following remark shows that we only need to prove formula \eqref{goal in L2} when $Z_{\Omega}$ is a finite set.
 \begin{Remark}

 It follows from Lemma \ref{approximate of Green function} that there exists a sequence of open Riemann surfaces $\{\Omega_l\}_{l\in\mathbb{Z}^+}$ such that $\Omega_l\Subset \Omega_{l+1}\Subset \Omega$, $\cup_{l\in \mathbb{Z}^+}\Omega_l=\Omega$ and $\{G_{\Omega_l}(\cdot,z_0)-G_{\Omega}(\cdot,z_0)\}$ is decreasingly convergent to $0$ on $\Omega$ with respect to $l$ for any $z_0\in \Omega$.

 Denote $Z_l:=\Omega_l\cap Z_0$. As $Z_{\Omega}$ is a subset of $\Omega$ of discrete points, $Z_l$ is a set of finite points.

 Denote $$G_l:=\sum\limits_{z_j\in Z_l}2p_jG_{\Omega}(z,z_j)$$ and
 $$\varphi_{1,l}:=\varphi_1+G-G_l.$$
 Then we have $N+\pi_1^*(G_l)+\pi_1^*(\varphi_{1,l})=N+\pi_1^*(G)+\pi_1^*(\varphi_1)=\psi+\pi_1^*(\varphi_1).$

 Let $I_l:=I_F\cap\{j: z_j\in Z_l\}$. Denote $\tilde{M}_l:=(\Omega_l\times Y)\cap \tilde{M}$. We note that $\tilde{M}_l$ is weakly pseudoconvex K\"ahler. Now we assume that the formula \eqref{goal in L2} holds on $\tilde{M}_l$, i.e. we have
   \begin{equation}\nonumber
 \begin{split}
    &\int_{\tilde{M}_l}|\tilde{F}_l|^2c\big(-\pi^{*}_1(G_l)\big)e^{-N-\pi^{*}_1(\varphi_{1,l})-\pi^{*}_2(\varphi_2)}\\
 \le&(\int_{0}^{+\infty}c(s)e^{-s}ds)
 \sum_{j\in I_l}\frac{2\pi|a_j|^2}{p_j|d_j|^2}\int_Y|F_j|^2e^{-\varphi_2-\tilde{\psi}_2(z_j,w)}<+\infty,
 \end{split}
 \end{equation}
 where  $F_l$ is a holomorphic $(n,0)$ form on $M_l$ satisfying $(F_l-F,(z_j,y))\in
 (\mathcal{O}(K_M)\otimes\mathcal{I}(\pi^{*}_1(2\log|g_0|)+\pi^{*}_2(\varphi_2)))_{(z_j,y)}$ for any $l$, $z_j\in Z_l$, and $y\in Y$.

 As $G\le G_l$ and $c(t)e^{-t}$ is decreasing on $(0,+\infty)$, we have
    \begin{equation}
 \label{goal in L2 for finite points}
 \begin{split}
 &\int_{\tilde{M}_l}|\tilde{F}_l|^2c\big(-\pi^{*}_1(G)\big)e^{-N-\pi^{*}_1(\varphi_{1})-\pi^{*}_2(\varphi_2)}\\
   \le &\int_{\tilde{M}_l}|\tilde{F}_l|^2c\big(-\pi^{*}_1(G_l)\big)e^{-N-\pi^{*}_1(\varphi_{1,l})-\pi^{*}_2(\varphi_2)}\\
 \le&(\int_{0}^{+\infty}c(s)e^{-s}ds)
 \sum_{j\in I_l}\frac{2\pi|a_j|^2}{p_j|d_j|^2}\int_Y|F_j|^2e^{-\varphi_2-\tilde{\psi}_2(z_j,w)}<+\infty.
 \end{split}
 \end{equation}

Note that $\pi^{*}_1(G)$ is continuous on $\tilde{M}\backslash Z_0$, where $Z_0$ is a closed analytic subset of $\tilde{M}$ and $N+\pi_1^*(G)+\pi_1^*(\varphi_1)=\psi+\pi_1^*(\varphi_1)$ is a plurisubharmonic function on $M$. For any compact subset $K$ of $\tilde{M}\backslash Z_0$, there exist $\hat{l}$ (depending on the choice of $K$) such that $K \subset \subset \tilde M_{\hat{l}}$ and $C_K>0$ such that $\frac{e^{N+\pi^{*}_1(\varphi_{1})+\pi^{*}_2(\varphi_2)}}{c(-\pi^{*}_1(G))}=
\frac{e^{N+\pi^{*}_1(\varphi_{1})+\pi^{*}_2(\varphi_2)+\pi^{*}_1(G)}}{c(-\pi^{*}_1(G))e^{\pi^{*}_1(G)}}
\le C_K$ on $K$. It follows from Lemma \ref{l:converge} and diagonal method that there exists a subsequence of $\{F_l\}$ (also denoted by $\{F_l\}$), which is compactly convergent to a holomorphic $(n,0)$ form $\tilde{F}$ on $\tilde{M}$. Combining formula \eqref{goal in L2 for finite points} and Fatou's lemma, we have

    \begin{equation}\nonumber
 \begin{split}
 &\int_{\tilde{M}_l}|\tilde{F}|^2c(-\pi^{*}_1(G))e^{-N-\pi^{*}_1(\varphi_{1})-\pi^{*}_2(\varphi_2)}\\
 \le&\liminf_{l\to+\infty}\int_{\tilde{M} _l}|\tilde{F}_l|^2c(-\pi^{*}_1(G))e^{-N-\pi^{*}_1(\varphi_{1})-\pi^{*}_2(\varphi_2)}\\
   \le &\liminf_{l\to+\infty}\int_{\tilde{M}_l}|\tilde{F}_l|^2c(-\pi^{*}_1(G_l))e^{-N-\pi^{*}_1(\varphi_{1,l})-\pi^{*}_2(\varphi_2)}\\
 \le&\liminf_{l\to+\infty}(\int_{0}^{+\infty}c(s)e^{-s}ds)
 \sum_{j\in I_l}\frac{2\pi|a_j|^2}{p_j|d_j|^2}\int_Y|F_j|^2e^{-\varphi_2-\tilde{\psi}_2(z_j,w)}\\
 \le &(\int_{0}^{+\infty}c(s)e^{-s}ds)
 \sum_{j\in I_F}\frac{2\pi|a_j|^2}{p_j|d_j|^2}\int_Y|F_j|^2e^{-\varphi_2-\tilde{\psi}_2(z_j,w)}<+\infty.
 \end{split}
 \end{equation}

As $\{F_l\}$ is compactly convergent to $\tilde{F}$ on $\tilde{M}$ and $(F_l-F,(z_j,y))\in
 (\mathcal{O}(K_{\tilde{M}})\otimes\mathcal{I}(\pi^{*}_1(2\log|g_0|)+\pi^{*}_2(\varphi_2)))_{(z_j,y)}$ for any $l$, $z_j\in Z_l$, and $y\in Y$. It follows from Lemma \ref{closedness} that
 $(\tilde{F}-F,(z_j,y))\in
 (\mathcal{O}(K_{\tilde{M}})\otimes\mathcal{I}(\pi^{*}_1(2\log|g_0|)+\pi^{*}_2(\varphi_2)))_{(z_j,y)}$ for any $z_j\in Z_{\Omega}$, and $y\in Y$.

 \end{Remark}

We continue to prove Lemma \ref{optimal extension}. Now we assume that $\gamma=m+1$ i.e. $Z_{\Omega}=\{z_j:1\le j \le m\}$ and $I_F=\{1,2,\ldots,m_1\}$, where $m_1\le m$.

   Denote $\tilde{\psi}_{2,l}=\max\{\tilde{\psi}_2,-l\}$, where $l$ is a positive integer. As $\tilde{\psi}_2$ is plurisubharmonic, we know $\{\tilde{\psi}_{2,l}\}_{l=1}^{+\infty}$ is a sequence of plurisubharmonic functions on $\tilde M$ decreasingly convergent to $\tilde{\psi}_2$. We also note that every $\tilde{\psi}_{2,l}$ is lower bounded.

   When $t_0$ is large enough, we know that $\{G<-t\}\Subset V_0$, for any $t>t_0$.
    As $\tilde{M}$ is a weakly pseudoconvex K\"ahler manifold, there exists a sequence of weakly pseudoconvex K\"ahler manifolds $\tilde {M}_s$ satisfying $\tilde {M}_1\Subset \tilde{M}_2 \cdots \Subset \tilde{M}_s\Subset\cdots \tilde{M}$ and $\cup_{s\in \mathbb{N}^+} \tilde {M}_s=\tilde{M}$.

    It is easy to verify that $\int_{\{\pi^{*}_1(G)<-t\}\cap \tilde{M}_s}|F|^2<+\infty,$ and it follows from formula \eqref{measure finite} and Fubini's theorem that
    $$\int_{\tilde{M}_s}\mathbb{I}_{\{-t-1<\pi^{*}_1(G)<-t\}}|F|^2
    e^{\pi^{*}_1(-2\log|g|)-\tilde{\psi}_{2,l}-\pi^{*}_2(\varphi_2)}<+\infty.$$

It follows from Lemma \ref{L2 method} that there exists a holomorphic $(n,0)$ form $F_{t,l,s}$ on $M$ such that
    \begin{equation}\label{original estimate}
    \begin{split}
      &\int_{\tilde{M}_s}|F_{t,l,s}-(1-b_{t,1}(\pi^{*}_1(G)))F|^2
      e^{\pi^{*}_1(-2\log|g|)-\tilde{\psi}_{2,l}-\pi^{*}_2(\varphi_2)+v_{t,1}(\pi^{*}_1(G))}c(-v_{t,1}(\pi^{*}_1(G)))\\
    \le&(\int_{0}^{t+1}c(s)e^{-s}ds)\int_{\tilde{M}_s}\mathbb{I}_{\{-t-1<\pi^{*}_1(G)<-t\}}|F|^2
    e^{\pi^{*}_1(-2\log|g|)-\tilde{\psi}_{2,l}-\pi^{*}_2(\varphi_2)}<+\infty,
    \end{split}
    \end{equation}
    for any $t\ge t_0$.

As $b_{t,1}(\hat{t})=0$ for $\hat{t}$ largely enough, then we know  $(F_{t,l,s}-F,(z_0,y))\in
 (\mathcal{O}(K_{\tilde{M}})\otimes\mathcal{I}(\pi^{*}_1(2\log|g_0|)+\pi^{*}_2(\varphi_2)))_{(z_j,y)}$ for any $(z_j,y) \in Z_0\cap \tilde{M}_s$.

For any $\epsilon>0$, there exists $t_1>t_0$ such that
\\
(1) for any $j\in\{1,2,\ldots,m\}$\\
\centerline{$\sup\limits_{\tilde z_j\in\{G<-t_1\}\cap V_j}|g_1(\tilde z)-g_1(z_j)|<\epsilon,$}

where $g_1(z)$ is a smooth function on $V_0$ satisfying $g_1(\tilde z)|_{V_j}:=G-2p_j\log|\tilde{z}_j|$.
\\
(2) for any $j\in\{1,2,\ldots,m\}$\\
\centerline{$\sup\limits_{\tilde z_j\in\{G<-t_1\}\cap V_j}|f_j(\tilde{z}_j)-a_j|<\epsilon.$}
\\
(3) for any $j\in\{1,2,\ldots,m\}$\\
\centerline{$\sup\limits_{\tilde z_j\in\{G<-t_1\}\cap V_j}|h_j(\tilde{z}_j)-1|<\epsilon.$}

 For any $(z_j,y) \in Z_0\cap \tilde{M}_s$, letting $(U_y,w)$ be a small local coordinated open neighborhood of $y$ and shrinking $V_j$ if necessary, we have $V_j\times U_y\Subset U_j$ for any $j$. Recall that $V_0:=\cup_{j\ge 1}V_j$. Assume that $F_j=\tilde{h}_j(w)dw$ on $U_y$, where $\tilde{h}_j$ is a holomorphic function on $U_y$ and $dw=dw_1\wedge dw_2\wedge\ldots\wedge dw_n$. There exists $t_2>t_1$ such that when $t>t_2$, $(\{G<-t\}\times U_y)\cap (V_j\times U_y) \Subset V_j\times U_y$, for any $j$.

 When $t>t_2$, direct calculation shows
 \begin{equation}\nonumber
 \begin{split}
     & \int_{V_0\times U_y} \mathbb{I}_{\{-t-1<\pi^{*}_1(G)<-t\}}|F|^2
    e^{\pi^{*}_1(-2\log|g|)-\tilde{\psi}_{2,l}-\pi^{*}_2(\varphi_2)}\\
      =& \sum_{j=1}^{m}\int_{\{-t-1<G<-t\}\times U_y}\frac{|\tilde{z}_j|^{2k_j}|f_j|^2|F_j|^2}{|d_j|^{2}|\tilde{z}|^{2q_j}|h_j|^{2}}
      e^{-\tilde{\psi}_{2,l}-\pi^{*}_2(\varphi_2)}|d\tilde{z}_j|^2|dw|^2\\
      \le & \sum_{j=1}^{m}\int_{y\in U_y}\left(\int_{\{-t-1-\epsilon-g_1(z_0)<2p_j\log|\tilde{z}|<-t+\epsilon-g_1(z_0)\}}
     |\tilde{z}_j|^{2k_j-2q_j} \frac{|f_j|^2}{|d_j|^{2}|h_j|^{2}} e^{-\tilde{\psi}_{2,l}(z,w)}|d\tilde{z}_j|^2\right)\times \\
     &
     |\tilde{h}_j|^2
      e^{-\varphi_2}|dw|^2.
 \end{split}
 \end{equation}

Note that $2k_j-2q_j=-2$ for any $1\le j\le m_1$ and $2k_j-2q_j\ge 0$ for $m_1< j\le m$. When $t$ is laege enough, for any $j$, the integral $$\int_{\{-t-1-\epsilon-g_1(z_0)<2p_j\log|\tilde{z}|<-t+\epsilon-g_1(z_0)\}}
     |\tilde{z}_j|^{2k_j-2q_j}
     \frac{|f_j|^2}{|d_j|^{2}|h_j|^{2}} e^{-\tilde{\psi}_{2,l}(z,w)}|d\tilde{z}_j|^2$$ is uniformly bounded with respect to $t$. It follows from \eqref{measure finite} that $\int_{U_y} |\tilde{h}_j|^2
      e^{-\varphi_2}|dw|^2<+\infty$ for any $j$.
Then, by Fatou's lemma and Lemma \ref{limit discussion} (we use Lemma \ref{limit discussion} for the case $n=1$), we have

\begin{equation}\nonumber
 \begin{split}
     & \limsup_{t\to+\infty}\int_{V_0\times U_y} \mathbb{I}_{\{-t-1<\pi^{*}_1(G)<-t\}}|F|^2
    e^{\pi^{*}_1(-2\log|g|)-\tilde{\psi}_{2,l}-\pi^{*}_2(\varphi_2)}\\
      \le &\limsup_{t\to+\infty}  \sum_{j=1}^{m}\int_{y\in U_y}(\int_{\{-t-1-\epsilon-g_1(z_0)<2p_j\log|\tilde{z}|<-t+\epsilon-g_1(z_0)\}}
     |\tilde{z}_j|^{2k_j-2q_j}\times \\
     &\frac{|f_j|^2}{|d_j|^{2}|h_j|^{2}} e^{-\tilde{\psi}_{2,l}(z,w)}|d\tilde{z}_j|^2)
     |\tilde{h}_j|^2
      e^{-\varphi_2}|dw|^2\\
      \le& \sum_{j=1}^{m}\int_{y\in U_y}\limsup_{t\to+\infty} (\int_{\{-t-1-\epsilon-g_1(z_0)<2p_j\log|\tilde{z}|<-t+\epsilon-g_1(z_0)\}}
     |\tilde{z}_j|^{2k_j-2q_j}\times \\
     &\frac{|f_j|^2}{|d_j|^{2}|h_j|^{2}} e^{-\tilde{\psi}_{2,l}(z,w)}|d\tilde{z}_j|^2)
     |\tilde{h}_j|^2
      e^{-\varphi_2}|dw|^2\\
      \le&\sum_{j=1}^{m_1}\int_{y\in U_y}\frac{2\pi(1+2\epsilon)|a_j|^2}{p_j|d_j|^{2}}e^{-\tilde{\psi}_{2,l}(z_0,w)}|\tilde{h}_j|^2
      e^{-\varphi_2}|dw|^2.
 \end{split}
 \end{equation}
Let $\epsilon\to +\infty$, we have
\begin{equation}\nonumber
 \begin{split}
     & \limsup_{t\to+\infty}\int_{V_0\times U_y} \mathbb{I}_{\{-t-1<\pi^{*}_1(G)<-t\}}|F|^2
    e^{\pi^{*}_1(-2\log|g|)-\tilde{\psi}_{2,l}-\pi^{*}_2(\varphi_2)}\\
      \le&\sum_{j=1}^{m_1}\int_{y\in U_y}\frac{2\pi|a_j|^2}{p_j|d_j|^{2}}e^{-\tilde{\psi}_{2,l}(z_0,w)}|\tilde{h}_j|^2
      e^{-\varphi_2}|dw|^2.
 \end{split}
 \end{equation}

 As $y$ and $U_y$ are arbitrarily chosen, we have
 \begin{equation}\label{estimate for measure}
 \begin{split}
     \limsup_{t\to+\infty}& \int_{\tilde{M}_s} \mathbb{I}_{\{-t-1<\pi^{*}_1(G)<-t\}}|F|^2
    e^{\pi^{*}_1(-2\log|g|)-\tilde{\psi}_{2,l}-\pi^{*}_2(\varphi_2)}\\
 \le&\sum_{j=1}^{m_1}\frac{2\pi|a_j|^2}{p_j|d_j|^{2}}\int_{(\{z_j\}\times Y)\cap \tilde{M}_s}|F_j|^2e^{-\tilde{\psi}_{2,l}(z_0,w)-\varphi_2}.
 \end{split}
 \end{equation}
 Since $v_{t,1}(\pi^{*}_1(G))\ge \pi^{*}_1(G)$ and $c(t)e^{-t}$ is decreasing with respect to $t$, it follows from \eqref{original estimate} and \eqref{estimate for measure} that
 \begin{equation}
 \label{estimate for Ftls}
 \begin{split}
 &\limsup_{t\to +\infty}  \int_{\tilde{M}_s}|F_{t,l,s}-(1-b_{t,1}(\pi^{*}_1(G)))F|^2
      e^{\pi^{*}_1(-2\log|g|)-\tilde{\psi}_{2,l}-\pi^{*}_2(\varphi_2)+\pi^{*}_1(G)}c(-\pi^{*}_1(G))\\
 \le&\limsup_{t\to +\infty}  \int_{\tilde{M}_s}|F_{t,l,s}-(1-b_{t,1}(\pi^{*}_1(G)))F|^2
      e^{\pi^{*}_1(-2\log|g|)-\tilde{\psi}_{2,l}-\pi^{*}_2(\varphi_2)+v_{t,1}(\pi^{*}_1(G))}c(-v_{t,1}(\pi^{*}_1(G)))\\
 \le& \limsup_{t\to+\infty}(\int_{0}^{t+1}c(t_1)e^{-t_1}dt_1) \int_{\tilde{M}_s} \mathbb{I}_{\{-t-1<\pi^{*}_1(G)<-t\}}|F|^2
    e^{\pi^{*}_1(-2\log|g|)-\tilde{\psi}_{2,l}-\pi^{*}_2(\varphi_2)}\\
    \le&(\int_{0}^{+\infty}c(t_1)e^{-t_1}dt_1)
    \sum_{j=1}^{m_1}\frac{2\pi|a_j|^2}{p_j|d_j|^{2}}\int_{(\{z_j\}\times Y)\cap \tilde{M}_s}|F_j|^2e^{-\tilde{\psi}_{2,l}(z_0,w)-\varphi_2}.
 \end{split}
 \end{equation}
Note that $k_j-q_j=-1$ for $1\le j\le m_1$, $k_j-q_j\ge 0$ for $m_1< j\le m$, when $t$ is large enough, we have
\begin{equation}\nonumber
\begin{split}
    & \int_{(\{\pi^{*}_1(G)<-t\}\times Y)\cap \tilde{M}_s}|F|^2 e^{\pi^{*}_1(-2\log|g|+G)-\tilde{\psi}_{2,l}-\pi^{*}_2(\varphi_2)}c(-\pi^{*}_1(G))\\
     \le& \sum_{j=1}^{m} C_1\int_{\{G<-t\}}|z_j|^{2k_j}|f_j|^{2}e^{-2\log|g_0|}e^{-G}c(-G)\int_{\{z_j\}\times Y}|F_j|^2e^{-\varphi_2}\\
     \le&C_2 \sum_{j=1}^{m}\int_{t}^{+\infty}c(t_1)e^{-t_1}dt_1\int_{(\{z_j\}\times Y)\cap \tilde{M}_s}|F_j|^2e^{-\varphi_2}<+\infty,
\end{split}
\end{equation}
where $C_1$ and $C_2$ are constants.
Hence we have
$$\limsup_{t\to +\infty}  \int_{\tilde{M}_s}|(1-b_{t,1}(\pi^{*}_1(G)))F|^2
      e^{\pi^{*}_1(-2\log|g|)-\tilde{\psi}_{2,l}-\pi^{*}_2(\varphi_2)+\pi^{*}_1(G)}c(-\pi^{*}_1(G))<+\infty.$$
Combining with \eqref{estimate for Ftls}, we know
$$\limsup_{t\to +\infty}  \int_{\tilde{M}_s}|F_{t,l,s}|^2
      e^{\pi^{*}_1(-2\log|g|)-\tilde{\psi}_{2,l}-\pi^{*}_2(\varphi_2)+\pi^{*}_1(G)}c(-\pi^{*}_1(G))<+\infty.$$
By Lemma \ref{l:converge}, we know there exists a subsequence of $\{F_{t,l,s}\}_{t\to+\infty}$ (still denoted by $\{F_{t,l,s}\}_{t\to+\infty}$) compactly convergent to a holomorphic $(n,0)$ form $F_{l,s}$ on $M_s$. It follows from \eqref{estimate for Ftls} and Fatou's lemma that
 \begin{equation}
 \begin{split}
 &\int_{\tilde{M}_s}|F_{l,s}|^2e^{\pi^{*}_1(-2\log|g|)-\tilde{\psi}_{2,l}-\pi^{*}_2(\varphi_2)+\pi^{*}_1(G)}c(-\pi^{*}_1(G))\\
 \le&
 \liminf_{t\to +\infty}  \int_{\tilde{M}_s}|F_{t,l,s}-(1-b_{t,1}(\pi^{*}_1(G)))F|^2
      e^{\pi^{*}_1(-2\log|g|)-\tilde{\psi}_{2,l}-\pi^{*}_2(\varphi_2)+\pi^{*}_1(G)}c(-\pi^{*}_1(G))\\
    \le&(\int_{0}^{+\infty}c(t_1)e^{-t_1}dt_1)
    \sum_{j=1}^{m_1}\frac{2\pi|a_j|^2}{p_j|d_j|^{2}}\int_{(\{z_j\}\times Y)\cap \tilde{M}_s}|F_j|^2e^{-\tilde{\psi}_{2,l}(z_0,w)-\varphi_2}<+\infty.
 \end{split}
 \end{equation}
As $\tilde{\psi}_{2,l}$ is decreasingly convergent to $\tilde{\psi}_2$, when $l\to+\infty$, and $\int_{Y}|F_j|^2e^{-\tilde{\psi}_{2}(z_j,w)-\pi^{*}_2(\varphi_2)}<+\infty$ for any $j$,
 \begin{equation}
 \label{estimate for Fls}
 \begin{split}
 &\int_{\tilde{M}_s}|F_{l,s}|^2e^{\pi^{*}_1(-2\log|g|)-\tilde{\psi}_{2,l}-\pi^{*}_2(\varphi_2)+\pi^{*}_1(G)}c(-\pi^{*}_1(G))\\
    \le&(\int_{0}^{+\infty}c(t_1)e^{-t_1}dt_1)
     \sum_{j=1}^{m_1}\frac{2\pi|a_j|^2}{p_j|d_j|^{2}}\int_{(\{z_j\}\times Y)\cap \tilde{M}_s}|F_j|^2e^{-\tilde{\psi}_{2}(z_j,w)-\varphi_2}<+\infty.
 \end{split}
 \end{equation}
As $\tilde{\psi}_{2,l}$ is decreasingly convergent to $\tilde{\psi}_2$, for any compact subset $K\subset \tilde{M}_s$, we have $$\inf_{l}\inf_{K}e^{-\tilde{\psi}_{2,l}}\ge\inf_{K}e^{-\tilde{\psi}_{2,1}}>0,$$
then it follows from \eqref{estimate for Fls} that
$$\sup_l\int_K|F_{l,s}|^2e^{\pi^{*}_1(-2\log|g|)-\pi^{*}_2(\varphi_2)+\pi^{*}_1(G)}c(-\pi^{*}_1(G))<+\infty.$$
Hence it follows from Lemma \ref{l:converge} and diagonal method that there exists a subsequence of $\{F_{l,s}\}_{l\to+\infty}$ (still denoted by $\{F_{l,s}\}_{l\to+\infty}$) compactly convergent to a holomorphic $(n,0)$ form $F_{s}$ on $\tilde{M}_s$. It follows from \eqref{measure finite}, \eqref{estimate for Fls} and Fatou's lemma that
 \begin{equation}
 \label{estimate for Fs}
 \begin{split}
 &\int_{\tilde{M}_s}|F_{s}|^2e^{\pi^{*}_1(-2\log|g|)-\tilde{\psi}_{2}-\pi^{*}_2(\varphi_2)+\pi^{*}_1(G)}c(-\pi^{*}_1(G))\\
 \le&\liminf_{l\to+\infty}\int_{\tilde{M}_s}|F_{l,s}|^2e^{\pi^{*}_1(-2\log|g|)-\tilde{\psi}_{2,l}-\pi^{*}_2(\varphi_2)+\pi^{*}_1(G)}c(-\pi^{*}_1(G))\\
    \le&(\int_{0}^{+\infty}c(t_1)e^{-t_1}dt_1)
    \sum_{j=1}^{m_1}\frac{2\pi|a_j|^2}{p_j|d_j|^{2}}\int_{(\{z_j\}\times Y)\cap \tilde{M}_s}|F_j|^2e^{-\tilde{\psi}_{2}(z_j,w)-\varphi_2}\\
    \le&(\int_{0}^{+\infty}c(t_1)e^{-t_1}dt_1)
    \sum_{j=1}^{m_1}\frac{2\pi|a_j|^2}{p_j|d_j|^{2}}\int_{Y}|F_j|^2e^{-\tilde{\psi}_{2}(z_j,w)-\varphi_2}<+\infty.\\
 \end{split}
 \end{equation}

Again using Lemma \ref{l:converge} and diagonal method, we know that there exists a subsequence of $\{F_{s}\}_{s\to+\infty}$ (still denoted by $\{F_{s}\}_{s\to+\infty}$) compactly convergent to a holomorphic $(n,0)$ form $\tilde{F}$ on $\tilde{M}$. It follows from \eqref{estimate for Fs} and Fatou's lemma that
 \begin{equation}
 \begin{split}
 &\int_{\tilde{M}}|\tilde{F}|^2e^{\pi^{*}_1(-2\log|g|)-\tilde{\psi}_{2}-\pi^{*}_2(\varphi_2)+\pi^{*}_1(G)}c(-\pi^{*}_1(G))\\
 \le&\liminf_{s\to+\infty}\int_{M_s}|F_{l,s}|^2e^{\pi^{*}_1(-2\log|g|)-\tilde{\psi}_{2,l}-\pi^{*}_2(\varphi_2)+\pi^{*}_1(G)}c(-\pi^{*}_1(G))\\
    \le&(\int_{0}^{+\infty}c(t_1)e^{-t_1}dt_1)
    \sum_{j=1}^{m_1}\frac{2\pi|a_j|^2}{p_j|d_j|^{2}}\int_{Y}|F_j|^2e^{-\tilde{\psi}_{2}(z_j,w)-\varphi_2}<+\infty.
 \end{split}
 \end{equation}
  \label{estimate for tildeF}
Note that $N+\pi^{*}_1(G)+\pi^{*}_1(\varphi_1)=2\log|g|+\tilde{\psi}_2$. We have
\begin{equation}
 \begin{split}
 &\int_{\tilde{M}}|\tilde{F}|^2e^{-\pi^{*}_1(\varphi_1)-N-\pi^{*}_2(\varphi_2)}c(-\pi^{*}_1(G))\\
    \le&(\int_{0}^{+\infty}c(t_1)e^{-t_1}dt_1)
    \sum_{j=1}^{m_1}\frac{2\pi|a_j|^2}{p_j|d_j|^{2}}\int_{Y}|F_j|^2e^{-\tilde{\psi}_{2}(z_j,w)-\varphi_2}<+\infty.
 \end{split}
 \end{equation}
 It follows from $(F_{t,l,s}-F,(z_j,y))\in
 (\mathcal{O}(K_{\tilde M})\otimes\mathcal{I}(\pi^{*}_1(2\log|g|)+\pi^{*}_2(\varphi_2)))_{(z_j,y)}$ for any $(z_j,y) \in Z_0\cap \tilde{M}_s$, Lemma \ref{closedness} and the compactly convergence of all the sequences, we know that
 $$(\tilde{F}-F,(z_j,y))\in
 (\mathcal{O}(K_M)\otimes\mathcal{I}(\pi^{*}_1(2\log|g|)+\pi^{*}_2(\varphi_2)))_{(z_j,y)}$$ for any $y \in Y$.

Lemma \ref{optimal extension} is proved.
\end{proof}

Let $\Omega_j$  be an open Riemann surface, which admits a nontrivial Green function $G_{\Omega_j}$ for any  $1\le j\le n_1$. Let $Y$ be an $n_2-$dimensional weakly pseudoconvex K\"ahler manifold, and let $K_Y$ be the canonical (holomorphic) line bundle on $Y$. Let
$M=\left(\prod_{1\le j\le n_1}\Omega_j\right)\times Y$
 be an $n-$dimensional complex manifold, where $n=n_1+n_2$, and let $K_M$ be the canonical (holomorphic) line bundle on $M$. Let $\pi_{1}$, $\pi_{1,j}$ and $\pi_2$ be the natural projections from $M$ to $\prod_{1\le j\le n_1}\Omega_j$, $\Omega_j$ and $Y$ respectively.

 Let $\tilde{M}\subset M$ be an $n-$dimensional weakly pseudoconvex K\"ahler manifold satisfying that $Z_0\subset \tilde{M}$.  Let $F$ be a holomorphic $(n,0)$ form on a neighborhood $U_0\subset \tilde{M}$ of $Z_0$.

 Let $Z_j=\{z_{j,k}:1\le k<\tilde m_j\}$ be a discrete subset of $\Omega_j$ for any  $j\in\{1,\ldots,n_1\}$, where $\tilde m_j\in\mathbb{Z}_{\ge2}\cup\{+\infty\}$. Denote that $Z_0:=\left(\prod_{1\le j\le n_1}Z_j\right)\times Y$.

Let $p_{j,k}$ be a positive number for any $1\le j\le n_1$ and $1\le k<\tilde m_j$, which satisfies that $\sum_{1\le k<\tilde m_j}p_{j,k}G_{\Omega_j}(\cdot,z_{j,k})\not\equiv-\infty$ for any $1\le j\le n_1$.
Denote that
$$G:=\max_{1\le j\le n_1}\left\{2\sum_{1\le k<\tilde m_j}p_{j,k}\pi_{1,j}^{*}(G_{\Omega_j}(\cdot,z_{j,k}))\right\}.$$
Let $N\le0$ be a plurisubharmonic function on $\tilde{M}$. Denote $\psi:=G+N$.

Let $\varphi_X$ be a Lebesgue measurable function on $\prod_{1\le j\le n_1}\Omega_j$. Assume that $\pi_1^*(\varphi_X)+N$ is a plurisubharmonic function on $\tilde{M}$ and  $(\pi_1^*(\varphi_X)+N)|_{Z_0}\not\equiv -\infty$. Denote $\Phi=\pi_1^*(\varphi_X)+N$.
 Let $\varphi_Y$ be a plurisubharmonic function on $Y$,
and
$$\varphi:=\pi_1^*(\varphi_X)+\pi_2^*(\varphi_Y)$$
 on $M$.

 Let $w_{j,k}$ be a local coordinate on a neighborhood $V_{z_{j,k}}\Subset\Omega_{j}$ of $z_{j,k}\in\Omega_j$ satisfying $w_{j,k}(z_{j,k})=0$ for any $1\le j\le n_1$ and $1\le k<\tilde m_j$, where $V_{z_{j,k}}\cap V_{z_{j,k'}}=\emptyset$ for any $j$ and $k\not=k'$.

  Denote that $\tilde I_1:=\{\beta=(\beta_1,\ldots,\beta_{n_1}):1\le \beta_j<\tilde m_j$ for any $j\in\{1,\ldots,n_1\}\}$, $V_{\beta}:=\prod_{1\le j\le n_1}V_{z_{j,\beta_j}}$  and $w_{\beta}:=(w_{1,\beta_1},\ldots,w_{n_1,\beta_{n_1}})$ is a local coordinate on $V_{\beta}$ of $z_{\beta}:=(z_{1,\beta_1},\ldots,z_{n_1,\beta_{n_1}})\in\prod_{1\le j\le n_1}\Omega_j$ for any $\beta=(\beta_1,\ldots,\beta_{n_1})\in\tilde I_1$.
 Denote that $E_{\beta}:=\left\{\alpha=(\alpha_1,\ldots,\alpha_{n_1}):\sum_{1\le j\le n_1}\frac{\alpha_j+1}{p_{j,\beta_j}}=1\,\&\,\alpha_j\in\mathbb{Z}_{\ge0}\right\}$ and $\tilde E_{\beta}:=\left\{\alpha=(\alpha_1,\ldots,\alpha_{n_1}):\sum_{1\le j\le n_1}\frac{\alpha_j+1}{p_{j,\beta_j}}\ge1\,\&\,\alpha_j\in\mathbb{Z}_{\ge0}\right\}$ for any $\beta\in\tilde I_1$.
Let $U_0$ be an open neighborhood of $Z_0$ in $\tilde{M}$. Let $f$ be a holomorphic $(n,0)$ form on $U_0$  such that
$$f=\sum_{\alpha\in\tilde E_{\beta}}\pi_1^*(w_{\beta}^{\alpha}dw_{1,\beta_1}\wedge\ldots\wedge dw_{n_1,\beta_{n_1}})\wedge\pi_2^*(f_{\alpha,\beta})$$ on $U_0\cap(V_{\beta}\times Y)$, where $f_{\alpha,\beta}$ is a holomorphic $(n_2,0)$ form on $Y$ for any $\alpha\in\tilde E_{\beta}$ and $\beta\in\tilde I_1$. Denote that
\begin{equation*}
c_{j,k}:=\exp\lim_{z\rightarrow z_{j,k}}\left(\frac{\sum_{1\le k_1<\tilde m_j}p_{j,k_1}G_{\Omega_j}(z,z_{j,k_1})}{p_{j,k}}-\log|w_{j,k}(z)|\right)
\end{equation*}
 for any $j\in\{1,\ldots,n\}$ and $1\le k<\tilde m_j$ (following from Lemma \ref{l:green-sup} and Lemma \ref{l:green-sup2}, we get that the above limit exists).

 \begin{Lemma}
 	\label{p:exten-fibra}
 Let $c$ be a positive function on $(0,+\infty)$ such that $\int_{0}^{+\infty}c(t)e^{-t}dt<+\infty$ and $c(t)e^{-t}$ is decreasing on $(0,+\infty)$. Assume that $f_{\alpha,\beta}\in \mathcal{I}(\varphi_Y)_y$ for any $y\in Y$, where $\alpha\in \tilde{E}_{\beta}\backslash E_{\beta}$ and $\beta\in \tilde{I}_1$, and
\begin{equation}
\label{measure-finite-2}
\sum_{\beta\in\tilde I_1}\sum_{\alpha\in E_{\beta}}\frac{(2\pi)^{n_1}\int_Y|f_{\alpha,\beta}|^2
e^{-\varphi_{Y}-\left(N+\pi_1^*(\varphi_X)\right)(z_{\beta},w')}}{\prod_{1\le j\le n_1}(\alpha_j+1)c_{j,\beta_j}^{2\alpha_{j}+2}}<+\infty.
\end{equation}
Then there exists a holomorphic $(n,0)$ form $F$ on $\tilde{M}$ satisfying that $\big(F-f,(z,y)\big)\in\big(\mathcal{O}(K_{\tilde{M}})\otimes\mathcal{I}(G+\pi_2^*(\varphi_Y))\big)_{(z,y)}$ for any $(z,y)\in Z_0$ and
\begin{displaymath}
	\begin{split}
	\int_{\tilde{M}}|F|^2e^{-\varphi}c(-\psi)
\le\left(\int_0^{+\infty}c(s)e^{-s}ds\right)\sum_{\beta\in\tilde I_1}\sum_{\alpha\in E_{\beta}}\frac{(2\pi)^{n_1}
\int_Y|f_{\alpha,\beta}|^2e^{-\varphi_{Y}-\left(N+\pi_1^*(\varphi_X)\right)(z_{\beta},w')}}{\prod_{1\le j\le n_1}(\alpha_j+1)c_{j,\beta_j}^{2\alpha_{j}+2}}.	
	\end{split}
\end{displaymath}
\end{Lemma}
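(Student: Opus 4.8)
The plan is to reduce the extension problem on the product of open Riemann surfaces to the already-established fibration result over a single open Riemann surface (Lemma \ref{optimal extension}) by peeling off one factor $\Omega_{j}$ at a time. First I would fix local coordinates and reduce to the local model: near each $z_{j,k}$ identify $V_{z_{j,k}}$ with a disc, so that the piece $2\sum_{k}p_{j,k}G_{\Omega_{j}}(\cdot,z_{j,k})$ behaves like $2p_{j,\beta_{j}}\log|w_{j,\beta_{j}}|$ plus a bounded term, with the exponential of the correction captured precisely by the constants $c_{j,\beta_{j}}$. The expansion $f=\sum_{\alpha\in\tilde E_{\beta}}\pi_{1}^{*}(w_{\beta}^{\alpha}dw_{1,\beta_{1}}\wedge\cdots\wedge dw_{n_{1},\beta_{n_{1}}})\wedge\pi_{2}^{*}(f_{\alpha,\beta})$ and the hypothesis $f_{\alpha,\beta}\in\mathcal{I}(\varphi_{Y})_{y}$ for $\alpha\in\tilde E_{\beta}\setminus E_{\beta}$ mean that, modulo $\mathcal{I}(G+\pi_{2}^{*}(\varphi_{Y}))$, only the ``critical'' multi-indices $\alpha\in E_{\beta}$ contribute to the boundary data; Lemma \ref{f1zf2w} and Lemma \ref{l:0} (together with Lemma \ref{k>k0}-type arguments) let me discard the rest.

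Next I would run an induction on $n_{1}$. For $n_{1}=1$ the statement is exactly Lemma \ref{optimal extension} (after matching notation: the role of $q_{j}$, $k_{j}$, $d_{j}$, $a_{j}$ there corresponds to the critical exponents and capacity constants here, and the weight $\hat\psi_{2}$ there plays the role of $N+\pi_{1}^{*}(\varphi_{X})$). For the inductive step, I would split $\prod_{1\le j\le n_{1}}\Omega_{j}=\Omega_{1}\times\big(\prod_{2\le j\le n_{1}}\Omega_{j}\big)$, regard $M$ as a fibration over $\Omega_{1}$ with fiber $\big(\prod_{2\le j\le n_{1}}\Omega_{j}\big)\times Y$, and apply the single-surface extension lemma in the $\Omega_{1}$-direction. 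The point is that $G=\max\{2\sum_{k}p_{1,k}\pi_{1,1}^{*}G_{\Omega_{1}}(\cdot,z_{1,k}),\ (\text{rest})\}$ is not literally a pullback from $\Omega_{1}$, but the $\max$-structure localizes: on the neighborhood $V_{z_{1,\beta_{1}}}\times(\cdots)$ of the relevant component of $Z_{0}$, after shrinking, $G$ agrees with $\pi_{1,1}^{*}\big(2\sum_{k}p_{1,k}G_{\Omega_{1}}(\cdot,z_{1,k})\big)$ plus a plurisubharmonic term bounded above, which can be absorbed into the ``$N$'' of the single-surface lemma. Thus one obtains an extension over $\Omega_{1}\times(\text{fiber})$ whose fiberwise data over a generic point of $\Omega_{1}$ satisfies the induction hypothesis in $n_{1}-1$ variables; assembling these with the $L^{2}$ estimate of Lemma \ref{optimal extension} and Lemma \ref{decomp} (to control the fiberwise Taylor coefficients) produces $F$ on $\tilde M$. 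The constant bookkeeping — that the product $\prod_{1\le j\le n_{1}}(\alpha_{j}+1)c_{j,\beta_{j}}^{2\alpha_{j}+2}$ and the factor $(2\pi)^{n_{1}}$ emerge correctly — follows by iterating the one-dimensional computation (essentially Lemma \ref{l:m1} applied successively, or Lemma \ref{limit discussion}/\ref{limit discussion 2} in the limit calculation) in each factor.

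The membership assertion $\big(F-f,(z,y)\big)\in\big(\mathcal{O}(K_{\tilde M})\otimes\mathcal{I}(G+\pi_{2}^{*}(\varphi_{Y}))\big)_{(z,y)}$ I would verify at each stage by the same mechanism as in the proof of Lemma \ref{optimal extension}: the $L^{2}$-method (Lemma \ref{L2 method}) produces corrections that vanish to the right order because the cutoff $b_{t_{0},B}(\psi)$ is identically $1$ near $Z_{0}$, and then Lemma \ref{closedness} passes this through the various weak limits (in $l$, $s$, and the exhaustion $\Omega_{1,l}\Subset\Omega_{1}$). The use of Lemma \ref{approximate of Green function} to approximate $\Omega_{j}$ by relatively compact subsurfaces handles the case $\tilde m_{j}=+\infty$, exactly as in the Remark inside the proof of Lemma \ref{optimal extension}: reduce to finitely many points, get a uniform estimate, then take a convergent subsequence via Lemma \ref{l:converge}.

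The main obstacle I anticipate is the interaction of the $\max$ defining $G$ with the product/fibration splitting. Unlike the case where $\psi$ is a genuine pullback from one factor, here on the overlap of different local charts the two functions in the $\max$ compete, and the set $\{\psi<-t\}$ is not a product; one must choose the exhausting neighborhoods of the components $\{z_{\beta}\}\times Y$ of $Z_{0}$ small enough (using Lemma \ref{relative compactnees of Green function}) that on each such neighborhood $G$ coincides with the single distinguished summand, and then argue that the $L^{2}$ estimates glued from these neighborhoods still control the global integral — this is where the decreasing property of $c(t)e^{-t}$ and the comparison $\psi\le\pi_{1}^{*}(G)$ (as in Lemma \ref{decreasing property of l} and the opening estimate of the proof of Lemma \ref{optimal extension}) are essential. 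Handling the weight $\Phi=N+\pi_{1}^{*}(\varphi_{X})$, which need not split as a sum of pullbacks from the individual $\Omega_{j}$, is the technical heart: I would keep it intact throughout and only use that it is plurisubharmonic with $\Phi|_{Z_{0}}\not\equiv-\infty$, so that its restriction to each fiber and to $\{z_{\beta}\}\times Y$ is well-behaved, mirroring how $\tilde\psi_{2}$ is treated (via $\tilde\psi_{2,l}=\max\{\tilde\psi_{2},-l\}$ and monotone convergence) in Lemma \ref{optimal extension}.
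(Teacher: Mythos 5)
Your reduction to the single-surface case via induction on $n_{1}$ has a genuine gap at exactly the point you flag as the ``main obstacle''. You claim that near a component $\{z_{\beta}\}\times Y$ of $Z_{0}$ the weight $G=\max_{j}\{2\sum_{k}p_{j,k}\pi_{1,j}^{*}G_{\Omega_{j}}(\cdot,z_{j,k})\}$ agrees with the single summand $\pi_{1,1}^{*}\big(2\sum_{k}p_{1,k}G_{\Omega_{1}}(\cdot,z_{1,k})\big)$ plus a plurisubharmonic term that can be absorbed into the ``$N$'' of Lemma \ref{optimal extension}. This fails for two reasons. First, $G\ge \pi_{1,1}^{*}\big(2\sum_{k}p_{1,k}G_{\Omega_{1}}\big)$ everywhere, so the difference is \emph{nonnegative}, whereas the single-surface lemma requires $N\le 0$; moreover the difference of these two plurisubharmonic functions is not plurisubharmonic, and near $Z_{0}$ the maximum genuinely switches between the factors on every neighborhood, so no shrinking makes $G$ a pullback from $\Omega_{1}$ up to an admissible error. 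Second, the same sign problem kills the weight comparison you need: since $-G\le -\pi_{1,1}^{*}(\cdots)$ and $c(t)e^{-t}$ is decreasing, one only gets a \emph{lower} bound for $c(-G)$ in terms of $c\big(-\pi_{1,1}^{*}(\cdots)\big)$, not the upper bound required to run the extension in the $\Omega_{1}$-direction with the gain $c$. Because $c(-\psi)$ depends on the max over all factors, it cannot be peeled off one variable at a time, and in the subsequent fiberwise steps of your induction the gain would no longer be of the form admitted by Lemma \ref{optimal extension}; it is also not clear that iterating the one-variable sharp constants through such a scheme reproduces the constant $(2\pi)^{n_{1}}/\prod_{j}(\alpha_{j}+1)c_{j,\beta_{j}}^{2\alpha_{j}+2}$, which in the paper arises from a genuinely $n_{1}$-dimensional computation.

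The paper's proof avoids the induction entirely: after the reductions you correctly identify (discarding non-critical $\alpha$ via the adapted coordinates $\tilde w_{j,k}$, Lemma \ref{l:0} and the hypothesis on $f_{\alpha,\beta}$; truncating $\Phi=N+\pi_{1}^{*}(\varphi_{X})$ by $\Phi_{l}=\max\{\Phi,-l\}$; exhausting by $\tilde M_{s}$ and reducing to finitely many points via Lemma \ref{approximate of Green function}), it applies Lemma \ref{L2 method} \emph{once} with the full weight $G$ on $\{-t-1<G<-t\}$, and the sharp constant comes from the limit of the annulus integrals computed by Lemma \ref{limit discussion 2} (i.e.\ Lemma \ref{l:m2} for the max weight $\max_{j}\{2p_{j}\log|w_{j}|\}$), followed by the limit arguments in $t$, $l$, $s$ with Lemma \ref{l:converge}, Fatou's lemma and Lemma \ref{closedness}. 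To repair your argument you would either have to prove a product-type iterated extension theorem with gain adapted to the max weight (which is essentially new work), or switch to the paper's direct several-variable approach.
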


\begin{Remark}\label{remark after extension}
If we don't assume that $f_{\alpha,\beta}\in \mathcal{I}(\varphi_Y)_y$ for any $y\in Y$, where $\alpha\in \tilde{E}_{\beta}\backslash E_{\beta}$ and $\beta\in \tilde{I}_1$, we can still find a holomorphic $(n,0)$ form $F$ on $\tilde{M}$ satisfying that $\big(F-f,(z,y)\big)\in\big(\mathcal{O}(K_{\tilde{M}})\otimes\mathcal{I}(G)\big)_{(z,y)}$ for any $(z,y)\in Z_0$ and
\begin{displaymath}
	\begin{split}
	\int_{\tilde{M}}|F|^2e^{-\varphi}c(-\psi)
\le\left(\int_0^{+\infty}c(s)e^{-s}ds\right)\sum_{\beta\in\tilde I_1}\sum_{\alpha\in E_{\beta}}\frac{(2\pi)^{n_1}
\int_Y|f_{\alpha,\beta}|^2e^{-\varphi_{Y}-\left(N+\pi_1^*(\varphi_X)\right)(z_{\beta},w')}}{\prod_{1\le j\le n_1}(\alpha_j+1)c_{j,\beta_j}^{2\alpha_{j}+2}}
	\end{split}
\end{displaymath}
as long as \eqref{measure-finite-2} holds.
\end{Remark}

\begin{proof}[Proof of Lemma \ref{p:exten-fibra}]

As $\psi=G+N\le G$, we know that $c(-\psi)e^{-\varphi}\le c(-G)e^{-\varphi-N}$. To prove Lemma \ref{p:exten-fibra}, it suffice to prove that there exists a holomorphic $(n,0)$ form $F$ on $\tilde{M}$ satisfying that $(F-f,z)\in(\mathcal{O}(K_{\tilde{M}})\otimes\mathcal{I}(G+\pi_2^*(\varphi_Y)))_{z}$ for any $z\in Z_0$ and
\begin{displaymath}
	\begin{split}
	\int_{\tilde{M}}|F|^2e^{-\varphi-N}c(-G)
\le\left(\int_0^{+\infty}c(s)e^{-s}ds\right)\sum_{\beta\in\tilde I_1}\sum_{\alpha\in E_{\beta}}\frac{(2\pi)^{n_1}
\int_Y|f_{\alpha,\beta}|^2e^{-\varphi_{Y}-\left(N+\pi_1^*(\varphi_X)\right)(z_{\beta},w')}}{\prod_{1\le j\le n_1}(\alpha_j+1)c_{j,\beta_j}^{2\alpha_{j}+2}}.	
	\end{split}
\end{displaymath}

 The following Remark shows that it suffices to prove Proposition \ref{p:exten-fibra} for the case $\tilde m_j<+\infty$ for any $j\in\{1,\ldots,n_1\}$.
\begin{Remark} Assume that Proposition \ref{p:exten-fibra} holds for the case $\tilde m_j<+\infty$ for any $j\in\{1,\ldots,n_1\}$.
For any $j\in\{1,\ldots,n_1\}$, it follows from Lemma \ref{approximate of Green function} that there exists a sequence of Riemann surfaces $\{\Omega_{j,l}\}_{l\in\mathbb{Z}_{\ge1}}$, which satisfies that $\Omega_{j,l}\Subset\Omega_{j,l+1}\Subset\Omega_{j}$ for any $l$, $\cup_{l\in\mathbb{Z}_{\ge1}}\Omega_{j,l}=\Omega_j$ and $\{G_{\Omega_{j,l}}(\cdot,z)-G_{\Omega_j}(\cdot,z)\}_{l\in\mathbb{Z}_{\ge1}}$ is decreasingly convergent to $0$ with respect to $l$ for any $z\in\Omega_j$. As ${Z}_j$ is a discrete subset of $\Omega_j$, $ Z_{j,l}:=\Omega_{j,l}\cap{Z}_{j}$ is a set of finite points. Denote that $\tilde{M}_l:=\left((\prod_{1\le j\le n_1}\Omega_{j,l})\times Y\right)\cap \tilde{M}$ and $G_l:=\max_{1\le j\le n_1}\left\{\pi_{1,j}^*\left(\sum_{z_{j,k}\in {Z}_{j,l}}2p_{j,k}G_{\Omega_{j,l}}(\cdot,z_{j,k})\right)\right\}$  on $\tilde{M}_l$. Note that $\tilde{M}_l$ is weakly pseudoconvex K\"ahler manifold.
Denote that $$c_{j,k,l}=\exp\lim_{z\rightarrow z_{j,k}}\left(\frac{\sum_{z_{j,k_1}\in Z_{j,l}}p_{j,k_1}G_{\Omega_{j,l}}(z,z_{j,k_1})}{p_{j,k}}-\log|w_{j,k}(z)|\right)$$
for any $1\le j\le n_1$, $l\in\mathbb{Z}_{\ge1}$ and $1\le k<\tilde m_j$ satisfying $z_{j,k}\in Z_{j,l}$.
Hence $c_{j,k,l}$ is decreasingly convergent to $c_{j,k}$ with respect to $l$, $G_l$ is decreasingly convergent to $G$ with respect to $l$ and $\cup_{l\in\mathbb{Z}_{\ge1}}\tilde{M}_l=\tilde{M}$.

Then there exists a holomorphic $(n,0)$ form $F_l$ on $\tilde{M}_l$ such that $(F_l-f,(z_{\beta},y))\in(\mathcal{O}(K_{\tilde{M}_l})\otimes(\mathcal{I}(G_l)+\pi_2^*(\varphi_Y))_{(z_{\beta},y)}=(\mathcal{O}(K_{\tilde{M}})\otimes\mathcal{I}(G+\pi_2^*(\varphi_Y)))_{(z_{\beta},y)}$ for any $\beta\in\{\tilde\beta\in\tilde{I}_1:z_{\tilde\beta}\in \prod_{1\le j\le n_1}\Omega_{j,l}\}$ and $y\in Y$, and $F_l$ satisfies
\begin{displaymath}
	\begin{split}
		&\int_{\tilde{M}_l}|F_l|^2e^{-\varphi-N}c(-G_l)\\
		\leq&\left(\int_0^{+\infty}c(s)e^{-s}ds\right)\sum_{\beta\in\{\tilde\beta\in\tilde{I}_1:z_{\tilde\beta}\in \prod_{1\le j\le n_1}\Omega_{j,l}\}}\sum_{\alpha\in E_{\beta}}\frac{(2\pi)^{n_1}\int_Y|f_{\alpha,\beta}|^2e^{-\varphi_{Y}-(N+\pi_1^*(\varphi_X))(z_{\beta},w')}}{\prod_{1\le j\le n_1}(\alpha_j+1)c_{j,\beta_j,l}^{2\alpha_{j}+2}}\\
		\le& \left(\int_{0}^{+\infty}c(s)e^{-s}ds\right)\sum_{\beta\in\tilde I_1}\sum_{\alpha\in E_{\beta}}\frac{(2\pi)^{n_1}\int_Y|f_{\alpha,\beta}|^2e^{-\varphi_{Y}-\left(N+\pi_1^*(\varphi_X)\right)(z_{\beta},w')}}{\prod_{1\le j\le n_1}(\alpha_j+1)c_{j,\beta_j}^{2\alpha_{j}+2}}\\
		<&+\infty.
	\end{split}
\end{displaymath}
Since $\psi\le\psi_l$ and $c(t)e^{-t}$ is decreasing on $(0,+\infty)$, we have
\begin{equation}
	\label{eq:211223c}\begin{split}
		&\int_{\tilde{M}_l}|F_l|^2e^{-\varphi-N-G_l+G}c(-G)\\
		\le&\int_{\tilde{M}_l}|F_l|^2e^{-\varphi-N}c(-G_l)\\
		\leq&\left(\int_{0}^{+\infty}c(s)e^{-s}ds\right)\sum_{\beta\in\tilde I_1}\sum_{\alpha\in E_{\beta}}\frac{(2\pi)^{n_1}\int_Y|f_{\alpha,\beta}|^2e^{-\varphi_{Y}-\left(N+\pi_1^*(\varphi_X)\right)(z_{\beta},w')}}{\prod_{1\le j\le n_1}(\alpha_j+1)c_{j,\beta_j}^{2\alpha_{j}+2}}.
	\end{split}
\end{equation}
Note that $\psi$ is continuous on $M\backslash Z_0$, $\psi_l$ is continuous on $M_l\backslash Z_0$ and $Z_0$ is a closed complex submanifold of $M$.
 For any compact subset $K$ of $M\backslash Z_0$, there exist $l_K>0$ such that $K\Subset M_{l_K}$ and $C_K>0$  such that $\frac{e^{\varphi+N+G_l-G}}{c(-G)}\le C_K$ for any $l\ge l_K$.
 It follows from Lemma \ref{l:converge} and the diagonal method that there exists a subsequence of $\{F_l\}$, denoted still by $\{F_l\}$, which is uniformly convergent to a holomorphic $(n,0)$ form $F$ on any compact subset of $M$. It follows from Fatou's Lemma and inequality \eqref{eq:211223c} that
\begin{displaymath}
	\begin{split}
		\int_{\tilde{M}}|F|^2e^{-\varphi-N}c(-G)&=\int_{\tilde{M}}\lim_{l\rightarrow+\infty}|F_l|^2e^{-\varphi-N-G_l+G}c(-G)\\
		&\leq\liminf_{l\rightarrow+\infty}\int_{\tilde{M}_l}|F_l|^2e^{-\varphi-N-G_l+G}c(-G)\\
		&\le\left(\int_{0}^{+\infty}c(s)e^{-s}ds\right)\sum_{\beta\in\tilde I_1}\sum_{\alpha\in E_{\beta}}\frac{(2\pi)^{n_1}\int_Y|f_{\alpha,\beta}|^2e^{-\varphi_{Y}-\left(N+\pi_1^*(\varphi_X)\right)(z_{\beta},w')}}{\prod_{1\le j\le n_1}(\alpha_j+1)c_{j,\beta_j}^{2\alpha_{j}+2}}.	
	\end{split}
\end{displaymath}
 Since $\{F_l\}$ is uniformly convergent to   $F$ on any compact subset of $\tilde{M}$ and $(F_l-f,(z_{\beta},y))\in(\mathcal{O}(K_{\tilde{M}})\otimes\mathcal{I}(G+\pi_2^*(\varphi_Y)))_{(z_{\beta},y)}$ for any $\beta\in\left\{\tilde\beta\in\tilde{I}_1:z_{\tilde\beta}\in \prod_{1\le j\le n_1}\Omega_{j,l}\right\}$ and $y\in Y$, it follows from Lemma \ref{closedness} that  $(F-f,(z_{\beta},y))\in(\mathcal{O}(K_{\tilde{M}})\otimes\mathcal{I}(G+\pi_2^*(\varphi_Y)))_{(z_{\beta},y)}$ for any $\beta\in\tilde I_1$ and $y\in Y$.
\end{Remark}

In the following, we assume that $\tilde m_j<+\infty$ for any $1\le j\le n_1$.
Denote that $m_j=\tilde{m}_j-1$.

As $\tilde{M}$ is a weakly pseudoconvex K\"ahler manifold, there exists a sequence of weakly pseudoconvex K\"ahler manifolds $\tilde {M}_s$ satisfying $\tilde {M}_1\Subset \tilde{M}_2 \cdots \Subset \tilde{M}_s\Subset\cdots \tilde{M}$ and $\cup_{s\in \mathbb{N}^+} \tilde {M}_s=\tilde{M}$.

Recall that $\Phi=\pi_1^*(\varphi_X)+N\in Psh(M)$. Denote $\Phi_l=\max\{\Phi,-l\}$, where $l$ is a positive integer. We note that $\Phi_l$ is a bounded plurisubharmonic function on $\tilde{M}$.

It follows from Lemma \ref{l:green-sup} and Lemma \ref{l:green-sup2} that there exists a local coordinate $\tilde{w}_{j,k}$ on a neighborhood $\tilde{V}_{z_{j,k}}\Subset V_{z_{j,k}}$ of $z_{j,k}$ satisfying $\tilde{w}_{j,k}(z_{j,k})=0$ and
 $$|\tilde{w}_{j,k}|=\exp\left({\frac{\sum_{1\le k_1\le m_j}p_{j,k_1}G_{\Omega_j}(\cdot,z_{j,k_1})}{p_{j,k}}}\right)$$
  on $\tilde{V}_{z_{j,k}}.$

Denote that $\tilde{V}_{\beta}:=\prod_{1\le j\le n_1}\tilde V_{j,\beta_j}$ for any $\beta\in\tilde I_1$. Let $\tilde{f}$ be a holomorphic $(n,0)$ form on $(\cup_{\beta\in\tilde I_1}\tilde{V}_{\beta}\times Y)\cap U_0$ satisfying
$$\tilde{f}=\sum_{\alpha\in E_{\beta}}c_{\alpha,\beta}\pi_1^*(\tilde{w}_{\beta}^{\alpha}d\tilde{w}_{1,\beta_1}\wedge d\tilde{w}_{2,\beta_2}\wedge\ldots\wedge d\tilde{w}_{n_1,\beta_{n_1}})\wedge\pi_2^*(f_{\alpha,\beta})$$
on $(\tilde V_{\beta}\times Y)\cap U_0$,  where $c_{\alpha,\beta}=\prod_{1\le j\le n_1}\left(\lim_{z\rightarrow z_{j,\beta_j}}\frac{w_{j,\beta_j}(z)}{\tilde{w}_{j,\beta_j}(z)}\right)^{\alpha_j+1}$.
It follows from $f_{\alpha,\beta}\in \mathcal{I}(\varphi_Y)_y$ for any $y\in Y$, where $\alpha\in \tilde{E}_{\beta}$ and $\beta\in \tilde{I}_1$, and Lemma \ref{l:0} that
\begin{equation}\label{for remark after extension}
(f-\tilde{f},(z,y))\in(\mathcal{O}(K_{M})\otimes\mathcal{I}(G+\pi_2^*(\varphi_Y)))_{(z,y)}
\end{equation}
 for any $(z,y)\in Z_0$.

Denote that
 $G_1:=\max_{1\le j\le n_1}\left\{\tilde\pi_{j}^*\left(2\sum_{1\le k<\tilde{m}_j}p_{j,k}G_{\Omega_j}(\cdot,z_{j,k})\right)\right\}$
  on $\prod_{1\le j\le n_1}\Omega_j$, where $\tilde\pi_j$ is the natural projection from $\prod_{1\le j\le n_1}\Omega_j$ to $\Omega_j$ and  $G=\pi_1^*(G_1)$.
It follows from Lemma \ref{l:green-sup2} and Lemma \ref{relative compactnees of Green function} that there exists $t_0>0$ such that $\{G_1<-t_0\}\Subset \cup_{\beta\in\tilde I_1}\tilde V_{\beta}$, which implies that $\int_{\{G<-t\}\cap \tilde{M}_{s}}|\tilde f|^2<+\infty$. It follows from \eqref{measure-finite-2}, $\Phi_l$ is bounded and Fubini's theorem that we know
$$\int_{\tilde{M}_{s}}\mathbb{I}_{\{-t-1<G<-t\}}|\tilde f|^2e^{-G-\pi_2^*(\varphi_Y)-\Phi_l}<+\infty.$$

Using Lemma \ref{L2 method}, there exists a holomorphic $(n,0)$ form $F_{l,s,t}$ on $\tilde{M}_{s}$ such that
\begin{equation}
	\label{eq:1125m}
	\begin{split}
&\int_{\tilde{M}_{s}}|F_{l,s,t}-(1-b_{t,1}(\psi))\tilde{f}|^{2}e^{-G-\pi_2^*(\varphi_Y)-\Phi_l+v_{t,1}(G)}c(-v_{t,1}(G))\\
\leq& \left(\int_{0}^{t+1}c(t)e^{-t}dt\right) \int_{\tilde{M}_{s}}\mathbb{I}_{\{-t-1<G<-t\}}|\tilde f|^2e^{-G-\pi_2^*(\varphi_Y)-\Phi_l},
\end{split}
\end{equation}
where $t\ge t_0$. Note that $b_{t,1}(t_1)=0$ for large enough $t_1$, then $(F_{l,s,t}-\tilde f,(z,y))\in(\mathcal{O}(K_{M})\otimes\mathcal{I}(G+\pi_2^*(\varphi_Y)))_{(z,y)}$ for any $(z,y)\in Z_0\cap \tilde{M}_{s}$.

For any $(z_{\beta},y) \in Z_0\cap \tilde{M}_s$, letting $(U_y,w)$ be a small local coordinated open neighborhood of $y$ and shrinking $\tilde{V}_{\beta}$ if necessary, we have $\tilde{V}_{\beta}\times U_y\Subset U_0\cap(V_{\beta}\times Y)$ for any $\beta\in\tilde{I}_1$. Assume that $F_j=\tilde{h}_j(w)dw$ on $U_y$, where $\tilde{h}_j$ is a holomorphic function on $U_y$ and $dw=dw_1\wedge dw_2\wedge\ldots\wedge dw_n$. There exists $t_2>t_1$ such that when $t>t_2$, $(\{G_1<-t\}\times U_y)\cap (\tilde V_{\beta}\times U_y) \Subset (V_{\beta}\times Y)\cap \tilde{M}$, for any $\beta\in\tilde{I}_1$.

 Now we consider
 \begin{equation}
 \label{multi L2 formula 1}
 \begin{split}
  & \int_{(\cup_{\beta\in \tilde{I}_1}\tilde{V}_{\beta})\times U_y}\mathbb{I}_{\{-t-1<G<-t\}}|\tilde f|^2e^{-G-\pi_2^*(\varphi_Y)-\Phi_l}\\
      =&\sum_{\beta\in\tilde{I}_1}\int_{w'\in U_y}(\int_{\{-t-1<G_1<-t\}}|f(\tilde{w},w')|^2\frac{e^{-\Phi_l(\tilde{w},w')}}{\max_{1\le j \le n}\{|\tilde{w}_{j,\beta_j}|^{2p_{j,\beta_j}}\}})e^{-\varphi_Y}.
 \end{split}
 \end{equation}

Note that $|c_{\alpha,\beta}|=\frac{1}{\prod_{1\le j\le n_1}c_{j,\beta_j}^{\alpha_j+1}}$ and $\int_{w'\in Y_s}|f_{\alpha,\beta}|^2e^{-\varphi_Y}<+\infty$. It follows from \eqref{multi L2 formula 1}, Fatou's Lemma and Lemma \ref{limit discussion 2} that we have

 \begin{equation}
 \label{multi L2 formula 3}
 \begin{split}
  & \limsup_{t\to +\infty}\int_{\cup_{\beta\in \tilde{I}_1}\tilde{V}_{\beta}\times U_y}\mathbb{I}_{\{-t-1<G<-t\}}|\tilde f|^2e^{-G-\pi_2^*(\varphi_Y)-\Phi_l}\\
      =&\limsup_{t\to +\infty} \sum_{\beta\in\tilde{I}_1}\int_{w'\in U_y}(\int_{\{-t-1<G_1<-t\}}|f(\tilde{w},w')|^2\frac{e^{-\Phi_l(\tilde{w},w')}}{\max_{1\le j \le n}\{|\tilde{w}_{j,\beta_j}|^{2p_{j,\beta_j}}\}})e^{-\varphi_Y}\\
      \le  & \sum_{\beta\in\tilde{I}_1}\int_{w'\in U_y}\limsup_{t\to +\infty}(\int_{\{-t-1<G_1<-t\}}|f(\tilde{w},w')|^2\frac{e^{-\Phi_l(\tilde{w},w')}}{\max_{1\le j \le n}\{|\tilde{w}_{j,\beta_j}|^{2p_{j,\beta_j}}\}})e^{-\varphi_Y}\\
      \le &\sum_{\beta\in\tilde{I}_1}\sum_{\alpha\in E_{\beta}}
       \frac{(2\pi)^{n_1}\int_{U_y}|f_{\alpha,\beta}|^2e^{-\varphi_{Y}-\Phi_l(z_{\beta},w')}}{\prod_{1\le j\le n_1}(\alpha_j+1)c_{j,\beta_j}^{2\alpha_{j}+2}}.
 \end{split}
 \end{equation}

Note that $y$ and $U_y$ are arbitrarily chosen, $v_{t,1}(\psi)\ge\psi$ and $c(t)e^{-t}$ is decreasing. Combining inequalities \eqref{eq:1125m} and \eqref{multi L2 formula 3}, then we have

\begin{equation}
	\label{multi L2 formula 4}
	\begin{split}
&\int_{\tilde{M}_{s}}|F_{l,s,t}-(1-b_{t,1}(\psi))\tilde{f}|^{2}e^{-\pi_2^*(\varphi_Y)-\Phi_l}c(-G)\\
\le & \int_{\tilde{M}_{s}}|F_{l,s,t}-(1-b_{t,1}(\psi))\tilde{f}|^{2}e^{-G-\pi_2^*(\varphi_Y)-\Phi_l+v_{t,1}(G)}c(-v_{t,1}(G))\\
\leq& \left(\int_{0}^{t+1}c(t_1)e^{-t_1}dt_1\right) \int_{\tilde{M}_{s}}\mathbb{I}_{\{-t-1<G<-t\}}|\tilde f|^2e^{-G-\pi_2^*(\varphi_Y)-\Phi_l}\\
\le &\left(\int_{0}^{t+1}c(t_1)e^{-t_1}dt_1\right)\sum_{\beta\in\tilde{I}_1}\sum_{\alpha\in E_{\beta}}
       \frac{(2\pi)^{n_1}\int_{(\{z_{\beta}\}\times Y)\cap \tilde{M}_s}|f_{\alpha,\beta}|^2e^{-\varphi_{Y}-\Phi_l(z_{\beta},w')}}{\prod_{1\le j\le n_1}(\alpha_j+1)c_{j,\beta_j}^{2\alpha_{j}+2}}.
\end{split}
\end{equation}

Note that $G$ is continuous on $\tilde{M}\backslash Z_0$. For any open set $K\Subset \tilde{M}_{s}\backslash Z_0$, as $b_{t,1}(t_1)=1$ for any $t_1$ large enough and $c(t_2)e^{-t_2}$ is decreasing with respect to $t_2$, we get that there exists a constant $C_K>0$ such that
$$\int_{K}|(1-b_{t,1}(\psi))\tilde{f}|^2e^{-\pi_2^*(\varphi_Y)-\Phi_l}c(-G)\le C_K\int_{\{G<-t_1\}\cap K}|\tilde{f}|^2<+\infty$$ for any $t>t_1$,
which implies that
$$\limsup_{t\rightarrow+\infty}\int_{K}|F_{l,s,t}|^2e^{-\pi_2^*(\varphi_Y)-\Phi_l}c(-G)<+\infty.$$
Using Lemma \ref{l:converge} and the diagonal method,
we obtain that
there exists a subsequence of $\{F_{l,s,t}\}_{t\rightarrow+\infty}$ denoted by $\{F_{l,s,t_m}\}_{m\rightarrow+\infty}$
uniformly convergent on any compact subset of $\tilde{M}_{s}\backslash Z_0$. As $Z_0$ is a closed complex submanifold of $\tilde{M}$, we obtain that $\{F_{l,s,t_m}\}_{m\rightarrow+\infty}$ is uniformly convergent to a holomorphic $(n,0)$ form $F_{l,s}$ on $\tilde{M}_{s}$ on any compact subset of $\tilde{M}_{s}$. Then it follows from inequality \eqref{multi L2 formula 4} and Fatou's Lemma that
\begin{displaymath}
\begin{split}
&\int_{\tilde{M}_{s}}|F_{l,s}|^{2}e^{-\pi_2^*(\varphi_Y)-\Phi_l}c(-G)
\\=&\int_{\tilde{M}_{s}}\liminf_{m\rightarrow+\infty}|F_{l,s,t_m}-(1-b_{t_m,1}(\psi))\tilde{f}|^{2}e^{-\pi_2^*(\varphi_Y)-\Phi_l}c(-G)\\
\leq&\liminf_{m\rightarrow+\infty}\int_{\tilde{M}_{s}}|F_{l,s,t_m}-(1-b_{t_m,1}(\psi))\tilde{f}|^{2}e^{-\pi_2^*(\varphi_Y)-\Phi_l}c(-G)\\
\leq&\left(\int_0^{+\infty}c(t_1)e^{-t_1}dt_1\right)\sum_{\beta\in\tilde{I}_1}\sum_{\alpha\in E_{\beta}} \frac{(2\pi)^{n_1}\int_{(\{z_{\beta}\}\times Y)\cap \tilde{M}_s}|f_{\alpha,\beta}|^2e^{-\varphi_{Y}-\Phi_l(z_{\beta},w')}}{\prod_{1\le j\le n_1}(\alpha_j+1)c_{j,\beta_j}^{2\alpha_{j}+2}}\\
	<&+\infty.
\end{split}	
\end{displaymath}

As $\Phi_l(z_{\beta},w')$ is decreasingly convergent to $\Phi(z_{\beta},w')$ for any $\beta\in \tilde{I}_1$, then we have
\begin{equation}
	\label{eq:1126e}\begin{split}
		&\limsup_{l\rightarrow+\infty}\int_{\tilde{M}_{s}}|F_{l,s}|^{2}e^{-\pi_2^*(\varphi_Y)-\Phi_l}c(-G)\\
		\leq&\left(\int_0^{+\infty}c(t_1)e^{-t_1}dt_1\right)\sum_{\beta\in\tilde{I}_1}\sum_{\alpha\in E_{\beta}} \frac{(2\pi)^{n_1}\int_{(\{z_{\beta}\}\times Y)\cap \tilde{M}_s}|f_{\alpha,\beta}|^2e^{-\varphi_{Y}-\Phi(z_{\beta},w')}}{\prod_{1\le j\le n_1}(\alpha_j+1)c_{j,\beta_j}^{2\alpha_{j}+2}}\\
=&\left(\int_0^{+\infty}c(t_1)e^{-t_1}dt_1\right)\sum_{\beta\in\tilde{I}_1}\sum_{\alpha\in E_{\beta}} \frac{(2\pi)^{n_1}\int_{(\{z_{\beta}\}\times Y)\cap \tilde{M}_s}|f_{\alpha,\beta}|^2e^{-\varphi_{Y}-\left(N+\pi_1^*(\varphi_X)\right)(z_{\beta},w')}}{\prod_{1\le j\le n_1}(\alpha_j+1)c_{j,\beta_j}^{2\alpha_{j}+2}}\\
	<&+\infty.
	\end{split}
\end{equation}
Note that $G$ is continuous on $\tilde{M}\backslash Z_0$ and $Z_0$ is a closed complex submanifold of $\tilde{M}$.
Using Lemma \ref{l:converge},
we obtain that
there exists a subsequence of $\{F_{l,s}\}_{l\rightarrow+\infty}$ (also denoted by $\{F_{l,s}\}_{l\rightarrow+\infty}$)
uniformly convergent to a holomorphic $(n,0)$ form $F_{s}$ on $\tilde{M}_{s}$ on any compact subset of $\tilde{M}_{s}$, which satisfies that
\begin{equation*}
\begin{split}
&\int_{\tilde{M}_{s}}|F_{s}|^2e^{-\pi_2^*(\varphi_Y)-N-\varphi_X}c(-G)\\
\le &\left(\int_0^{+\infty}c(t_1)e^{-t_1}dt_1\right)\sum_{\beta\in\tilde{I}_1}\sum_{\alpha\in E_{\beta}} \frac{(2\pi)^{n_1}\int_{(\{z_{\beta}\}\times Y)\cap \tilde{M}_s}|f_{\alpha,\beta}|^2e^{-\varphi_{Y}-\left(N+\pi_1^*(\varphi_X)\right)(z_{\beta},w')}}{\prod_{1\le j\le n_1}(\alpha_j+1)c_{j,\beta_j}^{2\alpha_{j}+2}}.
\end{split}
\end{equation*}

As $\cup_{s\in\mathbb{Z}_{\ge1}}\tilde{M}_{s}=\tilde{M}$, we have
\begin{equation}
	\label{eq:211223d}\begin{split}
	&\limsup_{s\rightarrow+\infty}\int_{\tilde{M}_{s}}|F_{s}|^2e^{-\varphi-N}c(-\psi)\\
	\leq
		&\lim_{s\rightarrow+\infty}\sum_{\beta\in\tilde I_1}\sum_{\alpha\in E_{\beta}}\frac{(2\pi)^{n_1}\int_{(\{z_{\beta}\}\times Y)\cap \tilde{M}_s}|f_{\alpha,\beta}|^2e^{-\varphi_Y-(N+\varphi_X)(z_{\beta},w')}}{\prod_{1\le j\le n_1}(\alpha_j+1)c_{j,\beta_j}^{2\alpha_{j}+2}}\\		
	=&\sum_{\beta\in\tilde I_1}\sum_{\alpha\in E_{\beta}}\frac{(2\pi)^{n_1}\int_{Y}|f_{\alpha,\beta}|^2e^{-\varphi_Y-\left(N+\pi_1^*(\varphi_X)\right)(z_{\beta},w')}}{\prod_{1\le j\le n_1}(\alpha_j+1)c_{j,\beta_j}^{2\alpha_{j}+2}}\\
	<&+\infty.
	\end{split}
\end{equation}
Note that $\psi$ is continuous on $\tilde{M}\backslash Z_0$, $Z_0$ is a closed complex submanifold of $\tilde{M}$ and $\cup_{s\in\mathbb{Z}_{\ge1}}\tilde{M}_{s}=\tilde{M}$.
Using Lemma \ref{l:converge} and  the diagonal method,
we get that
there exists a subsequence of $\{F_{s}\}$ (also denoted by $\{F_{s}\}$)
uniformly convergent to a holomorphic $(n,0)$ form $F$ on $\tilde M$ on any compact subset of $\tilde M$.  Then it follows from inequality \eqref{eq:211223d} and  Fatou's Lemma that
\begin{equation*}
\begin{split}
	\int_{\tilde M}|F|^2e^{-\varphi}c(-\psi)&=\int_{\tilde M}\liminf_{s\rightarrow+\infty}\mathbb{I}_{\tilde M_{s}}|F_{l'}|^2e^{-\varphi}c(-\psi)\\
	&\le\liminf_{s\rightarrow+\infty}\int_{\tilde M_{s}}|F_{l'}|^2e^{-\varphi}c(-\psi)\\
	&\leq\sum_{\beta\in\tilde I_1}\sum_{\alpha\in E_{\beta}}\frac{(2\pi)^{n_1}\int_{Y}|f_{\alpha,\beta}|^2e^{-\varphi_Y-\left(N+\pi_1^*(\varphi_X)\right)(z_{\beta},w')}}{\prod_{1\le j\le n_1}(\alpha_j+1)c_{j,\beta_j}^{2\alpha_{j}+2}}.\end{split}
\end{equation*}
Following from Lemma \ref{closedness}, we have $(F-f,(z,y))\in(\mathcal{O}(K_{\tilde M})\otimes\mathcal{I}(G+\pi_2^*(\varphi_Y)))_{(z,y)}$ for any $(z,y)\in Z_0$.

Thus, Proposition \ref{p:exten-fibra} holds.
\end{proof}

\begin{proof}[Proof of Remark \ref{remark after extension}]
If we don't assume that $f_{\alpha,\beta}\in \mathcal{I}(\varphi_Y)_y$ for any $y\in Y$, where $\alpha\in \tilde{E}_{\beta}\backslash E_{\beta}$ and $\beta\in \tilde{I}_1$, it follows from Lemma \ref{l:0} that for any $(z,y)\in Z_0$, we will have
\begin{equation}
(f-\tilde{f},(z,y))\in(\mathcal{O}(K_{M})\otimes\mathcal{I}(G))_{(z,y)}.
\end{equation}

Replace the formula \eqref{for remark after extension} by $(f-\tilde{f},(z,y))\in(\mathcal{O}(K_{M})\otimes\mathcal{I}(G))_{(z,y)}$.
Then if \eqref{measure-finite-2} holds, by the same proof as Proposition \ref{p:exten-fibra}, we know Remark \ref{remark after extension} holds.
\end{proof}
\subsection{Other Calculations}
Let $\Omega$ be an open Riemann surface with nontrivial Green functions. Let $Z_{\Omega}=\{z_j:j \in \mathbb{N}_+ \& j<\gamma\}$ be a subset of $\Omega$ of discrete points. Let $Y$ be an $(n-1)-$dimensional weakly pseudoconvex K\"ahler manifold. Denote $M=\Omega \times Y$. Let $\pi_1$ and $\pi_2$ be the natural projections from $M$ to $\Omega$ and $Y$ respectively. Denote $Z_0:=Z_{\Omega}\times Y$. Denote $Z_j:=\{z_j\}\times Y$.

Let $\psi$ be a plurisubharmonic function on $M$.
It follows from Siu's decomposition theorem that $$dd^{c}\psi=\sum\limits_{j\ge1}2p_j[Z_j]+\sum\limits_{i\ge 1}\lambda_i[A_i]+R,$$
where $[Z_j]$ and $[A_i]$ are the currents of integration over an irreducible $(n-1)-$dimensional analytic set, and where $R$ is a closed positive current with the property that $dimE_c(R)<n-1$ for every $c>0$. We assume that $p_j>0$ for any $1\le j< \gamma$.

Then $N:=\psi-\pi^{*}_1\big(\sum\limits_{j\ge1}2p_jG_{\Omega}(z,z_j)\big)$ is a plurisubharmonic function on $M$. We assume that $N\le0$ and $N|_{Z_j}$ is not identically $-\infty$ for any $j$.

Let $\varphi_1$ be a Lebesgue measurable function on $\Omega$ such that $\psi+\pi^{*}_1(\varphi)$ is a plurisubharmonic function on $M$. With Similar discussion as above, by Siu's decomposition theorem, we have

$$dd^{c}(\psi+\pi^{*}_1(\varphi))=\sum\limits_{j\ge1}2\tilde{q}_j[Z_j]+\sum\limits_{i\ge 1}\tilde{\lambda}_i[\tilde{A}_i]+\tilde{R},$$
where $\tilde{q}_j\ge 0$ for any $1\le j< \gamma$.

By Weierstrass theorem on open Riemann surfaces, there exists a holomorphic function $g$ on $\Omega$ such that $ord_{z_j}(g)=q_j:=[\tilde{q}_j]$ for any $z_j\in Z_{\Omega}$ and $g(z)\neq 0$ for any $z\notin Z_{\Omega}$, where $[q]$ equals to the integer part of the nonnegative real number $q$.
Then we know that there exists a plurisubharmonic function $\tilde{\psi}_2\in Psh(M)$ such that $$\psi+\pi^{*}_1(\varphi_1)=\pi^{*}_1(2\log|g|)+\tilde{\psi}_2.$$

Let $\varphi_2\in Psh(Y)$. Denote $\varphi:=\pi^{*}_1(\varphi_1)+\pi^{*}_2(\varphi_2)$.

For $1\le j<\gamma$, let $(V_{j},\tilde{z}_j)$ be a local coordinated open neighborhood of $z_j$ in $\Omega$ satisfying $V_{j}\Subset \Omega$, $\tilde{z}_j(z_j)=0$ under the local coordinate and $V_{j}\cap V_{k}=\emptyset$ for any $j\neq k$. Denote $V_0:=\cup_{1\le j<\gamma} V_j$. We assume that $g=d_j\tilde{z}_j^{q_j}h_j(z)$ on $V_j$, where $d_j$ is a constant, $h_j(z)$ is a holomorphic function on $V_j$ and $h_j(z_j)=1$.

Let $c(t)$ be a positive measurable function on $(0,+\infty)$ satisfying that $c(t)e^{-t}$ is decreasing and $\int_{0}^{+\infty}c(s)e^{-s}ds<+\infty$.

\begin{Lemma}
\label{slope bigger than measure}
Assume that $c(t)$ is increasing near $+\infty$. Let $F$ be a holomorphic $(n,0)$ form on $M$ such that $F=\sum_{l=k_j}^{+\infty}\pi^{*}_1(\tilde{z}_j^ld\tilde{z}_j)\wedge\pi^{*}_2(F_{j,l})$ on $V_j\times Y$ $(1\le j<\gamma)$, where $k_j$ is a nonnegative integer, $F_{j,l}$ is a holomorphic $(n-1,0)$ form on $Y$, for any $l,j$, and $F_j:=F_{j,k_j}\not\equiv 0$ on $Y$. Denote that
$$I_F:=\{j:k_j+1-q_j\le 0 \& 1\le j<\gamma\}.$$
Assume that
\begin{equation}\nonumber
  \liminf_{t\to+\infty}\frac{\int_{\{\psi<-t\}}|F|^2e^{-\varphi}c(-\psi)}{\int_{t}^{+\infty}c(s)e^{-s}ds}<+\infty.
\end{equation}
Then $k_j+1-q_j=0$ for any $j\in I_F$, and
\begin{equation}
  \liminf_{t\to+\infty}\frac{\int_{\{\psi<-t\}}|F|^2e^{-\varphi}c(-\psi)}{\int_{t}^{+\infty}c(s)e^{-s}ds}
  \ge
 \sum_{j\in I_F}\frac{2\pi}{p_j|d_j|^2}\int_Y|F_j|^2e^{-\varphi_2-\tilde{\psi}_2(z_j,w)}.
\end{equation}
\end{Lemma}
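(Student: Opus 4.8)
The plan is to localize the integral near the fibers $Z_j$ with $j\in I_F$ and to extract there the leading coefficient $F_j$ of $F$. Since the open sets $V_j\times Y$ are pairwise disjoint and the integrand is nonnegative, for every finite subset $S\subset I_F$ one has
\begin{equation*}
\int_{\{\psi<-t\}}|F|^2e^{-\varphi}c(-\psi)\ \ge\ \sum_{j\in S}\int_{\{\psi<-t\}\cap(V_j\times Y)}|F|^2e^{-\varphi}c(-\psi),
\end{equation*}
so it suffices to prove that for each $j\in I_F$ one has $k_j+1-q_j=0$ and
\begin{equation*}
\liminf_{t\to+\infty}\frac{1}{h(t)}\int_{\{\psi<-t\}\cap(V_j\times Y)}|F|^2e^{-\varphi}c(-\psi)\ \ge\ \frac{2\pi}{p_j|d_j|^2}\int_Y|F_j|^2e^{-\varphi_2-\tilde{\psi}_2(z_j,w)},
\end{equation*}
where $h(t):=\int_t^{+\infty}c(s)e^{-s}ds$, and then to sum over $S$ and let $S$ exhaust $I_F$. (If some $j\in I_F$ had $k_j+1-q_j<0$, the computation below produces $\liminf=+\infty$, contradicting the hypothesis, which is how the equality $k_j+1-q_j=0$ gets forced.)

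Fix $j\in I_F$ and shrink $V_j$ so that $g=d_j\tilde{z}_j^{q_j}h_j$ on $V_j$ with $h_j$ nowhere vanishing and $h_j(z_j)=1$, and so that $\sum_{k\ge1}2p_kG_{\Omega}(\cdot,z_k)=2p_j\log|\tilde{z}_j|+v_j$ with $v_j$ harmonic on $V_j$; then $\psi=\pi_1^*(2p_j\log|\tilde{z}_j|+v_j)+N$ on $V_j\times Y$, and by Lemma \ref{relative compactnees of Green function} the integration over $V_j\times Y$ concentrates, for large $t$, in a fixed relatively compact neighborhood of $Z_j$. Using $\psi+\pi_1^*(\varphi_1)=\pi_1^*(2\log|g|)+\tilde{\psi}_2$ I rewrite, on $(V_j\times Y)\setminus Z_0$,
\begin{equation*}
|F|^2e^{-\varphi}c(-\psi)=\frac{|F|^2}{|g|^2}\,e^{-\tilde{\psi}_2}\,e^{-\pi_2^*(\varphi_2)}\,e^{\psi}\,c(-\psi),
\end{equation*}
and expand $F=\big(\sum_{l\ge k_j}\tilde{z}_j^{\,l}\tilde{F}_{j,l}(w)\big)\,d\tilde{z}_j\wedge dw$ in coordinates, so that $\tilde{F}_{j,k_j}(w)\,dw=F_j$. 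A direct computation (using $|g|^2=|d_j|^2|\tilde{z}_j|^{2q_j}|h_j|^2$ and $e^{\psi}=|\tilde{z}_j|^{2p_j}e^{v_j}e^{N}$) shows that the $l=k_j$ term of the integrand has, in the variable $\tilde{z}_j$, the decay exponent $2(k_j+p_j-\tilde{q}_j)$, the other factors being bounded or $w$-independent (off $Z_0$, $e^{-\tilde{\psi}_2}e^{N}$ equals the base function $e^{-\pi_1^*(2p_j\log|\tilde{z}_j|+v_j+\varphi_1-2\log|g|)}$, which has at $z_j$ the harmless Lelong number $2(\tilde{q}_j-q_j)\in[0,2)$).

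Passing to the limit is the heart of the matter. After the scaling substitution $\tilde{z}_j=e^{-s/(2p_j)}\zeta$ of the proof of Lemma \ref{limit discussion} (together with a splitting into annuli in $|\tilde{z}_j|$), the contribution of the $l=k_j$ term reduces to an integral of the type evaluated in the model Lemma \ref{l:m1}, but with the non-constant shift $v_j(\tilde{z}_j)+N(\tilde{z}_j,w)$ replacing a constant; this is handled by truncating $N$ (replacing $N$ by $\max\{N,-l\}$), applying Fatou's lemma to move $\liminf$ inside, and letting $l\to+\infty$ by monotone convergence, exactly as in the proof of Lemma \ref{optimal extension}, together with Jensen's inequality for the circular averages of $e^{N}$ (whose limits as $\tilde{z}_j\to z_j$ are $\ge e^{N(z_j,w)}$) and the lower semicontinuity of $e^{-\tilde{\psi}_2}$ ($\tilde{\psi}_2$ being plurisubharmonic); near each $w$ with $F_j(w)\neq0$ the leading coefficient is split off from the higher-order tail with an arbitrarily small multiplicative error, and the higher-order terms decay strictly faster and are negligible by the second part of Lemma \ref{limit discussion}. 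One arrives at
\begin{equation*}
\liminf_{t\to+\infty}\frac{1}{h(t)}\int_{\{\psi<-t\}\cap(V_j\times Y)}|F|^2e^{-\varphi}c(-\psi)\ \ge\ \Big(\lim_{t\to+\infty}\frac{\int_t^{+\infty}c(s)e^{-\rho_j s}\,ds}{h(t)}\Big)\,\frac{2\pi}{p_j|d_j|^2}\int_Y|F_j|^2e^{-\varphi_2-\tilde{\psi}_2(z_j,w)},
\end{equation*}
with $\rho_j:=1+\tfrac{1}{p_j}(k_j+1-\tilde{q}_j)$. For $j\in I_F$ we have $k_j+1\le q_j\le\tilde{q}_j$, so $\rho_j\le1$; if $\rho_j<1$ then Lemma \ref{growth rate of c(t)e(-t)}(3) makes the bracket $+\infty$, forcing the $\liminf$ to be $+\infty$, contradicting the hypothesis. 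Hence $\rho_j=1$, i.e. $k_j+1=\tilde{q}_j$, and since $k_j+1\in\mathbb{Z}$ and $q_j=[\tilde{q}_j]$ this forces $q_j=\tilde{q}_j=k_j+1$, so $k_j+1-q_j=0$; then Lemma \ref{growth rate of c(t)e(-t)}(1) makes the bracket equal to $1$, yielding the $j$-th lower bound. Summing over $j\in S$ and letting $S$ exhaust $I_F$ completes the proof.

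I expect the main obstacle to be the limit computation of the third paragraph: carrying out the change of variables rigorously when the weights $N$ and $\tilde{\psi}_2$ are merely upper semicontinuous and possibly unbounded below on the fibers $\{z_j\}\times Y$ (where, however, they are not identically $-\infty$), controlling the higher-order part of $F$ uniformly, and matching the exponent so that precisely the $l=k_j$ term produces the growth rate $\int_t^{+\infty}c(s)e^{-\rho_j s}ds$. The truncation/Fatou/monotone-convergence device of Lemma \ref{optimal extension}, Jensen's inequality for $e^{N}$, and the lower semicontinuity of $e^{-\tilde{\psi}_2}$ are the tools that make this go through.
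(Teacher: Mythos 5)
Your overall strategy is the same as the paper's: localize near each fiber $Z_j$, replace the upper semicontinuous weights by continuous/truncated approximations, capture the fiber values by a circular mean--value (Jensen) inequality, reduce the radial integral to the model computation that produces $\int_t^{+\infty}c(s)e^{-\rho s}ds$, and invoke Lemma \ref{growth rate of c(t)e(-t)} to force the exponent and get the lower bound. The genuine weak point is the step you yourself call the heart of the matter: you propose to split off the leading term $\tilde z_j^{k_j}F_j$ ``with an arbitrarily small multiplicative error'' and to discard the higher--order tail by the second part of Lemma \ref{limit discussion}. For a \emph{lower} bound this requires an \emph{upper} bound on the tail's contribution against the weights $e^{-\varphi_2}$, $c(-\psi)$ and the singular factor you extracted, over the shrinking annuli; those weights are unbounded, the coefficients $F_{j,l}$ ($l>k_j$) carry no a priori $L^2(e^{-\varphi_2})$ control on the relevant neighborhoods, and Lemma \ref{limit discussion} (which needs bounded weights and a continuous density) does not supply it. The paper avoids the splitting altogether: writing $F=\tilde z_j^{k_j}\tilde h_j\,d\tilde z_j\wedge dw$ with $\tilde h_j$ the \emph{full} local coefficient, it applies the sub--mean--value inequality to $|\tilde h_j|^2e^{2g_j+N}$ (whose logarithm is plurisubharmonic) on circles, so the value at the center --- which is exactly $|F_j(w)|^2e^{2g_j(z_j)+N(z_j,w)}$ --- appears automatically and no tail estimate is ever needed. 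If you keep your splitting, you must add the missing uniform control of the tail; otherwise adopt the mean--value device.

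A second, smaller issue is your bookkeeping with $\tilde q_j$. Extracting the factor $|\tilde z_j|^{-2(\tilde q_j-q_j)}$ from $e^{-\tilde\psi_2}$ is legitimate (near $Z_j$ one has $\tilde\psi_2=2(\tilde q_j-q_j)\log|\tilde z_j|+v_j$ with $v_j$ plurisubharmonic), and it does yield the stronger conclusion $k_j+1=\tilde q_j=q_j$; but then the fiber weight your derivation produces is $e^{-v_j(z_j,w)}$, not $e^{-\tilde\psi_2(z_j,w)}$. As literally displayed, your intermediate inequality pairs the $\tilde q_j$--exponent $\rho_j$ with $e^{-\tilde\psi_2(z_j,w)}$, and in the case $\tilde q_j>q_j$ (the case you are trying to exclude) that right--hand side is $+\infty$ while the left side is finite by hypothesis, so the inequality as written is not what the argument gives. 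The contradiction still goes through because only positivity of the fiber integral is needed to rule out $\rho_j<1$, and once $\rho_j=1$ one has $v_j=\tilde\psi_2$; but you should state the inequality with the residual weight. The paper sidesteps this entirely by working with $q_j$ only (so the exponent is $\frac{k_j+1-q_j}{p_j}+1$) and recovering $e^{-\tilde\psi_2(z_j,w)}$ at the end by monotone convergence of the continuous approximations $\tilde\psi_{2,\alpha}$.
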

\begin{proof}

As $\tilde{\psi}_2$ and $N$ are upper semi-continuous functions on $M$, there exists continuous functions $\tilde{\psi}_{2,\alpha}$ and $N_{\beta}$ on $M$ decreasingly convergent to $\tilde{\psi}_2$ and $N$  respectively.

   There exists some $t_1\ge0$ such that $c(t)$ is increasing on $[t_1,+\infty)$. Recall that $\psi=\pi^{*}_1(\sum\limits_{j\ge1}2p_j\pi^{*}_1(G_{\Omega}(z,z_j)))+N$ and denote $G=\sum\limits_{j\ge1}2p_j\pi^{*}_1(G_{\Omega}(z,z_j))$. For any $t>t_1$,
   \begin{equation}\label{bigger 1}
   \begin{split}
      &\int_{\{\psi<-t\}}|F|^2e^{-\varphi}c(-\psi)  \\
        =& \int_{\{G+N<-t\}}|F|^2e^{-2\pi^{*}_1(\log|g|)-\tilde{\psi}_2+\pi^{*}_1(G)+N-\pi^{*}_2(\varphi_2)}c(-(G+N))\\
    \ge&\int_{\{G+N_{\beta}<-t\}}|F|^2e^{-2\pi^{*}_1(\log|g|)-\tilde{\psi}_{2,\alpha}+\pi^{*}_1(G)+N-\pi^{*}_2(\varphi_2)}c(-(G+N_{\beta})).
   \end{split}
   \end{equation}

For any $y\in Y$, let $(U_y,w)$ be a local coordinated open neighborhood of $y$ in $Y$ satisfying $U_y\Subset Y$. We assume that $F=\tilde{z_j}^{k_j}\tilde{h}_j(\tilde{z_j},w)d\tilde{z}_j\wedge dw$ on $V_j\times U_y$.

Let $m\in\mathbb{N}$. For any $0<\epsilon<\frac{1}{2}$, let $s_0$ be large enough and shrink $U_y$ if necessary such that,
\\
(1) for any $j\in\{1,2,\ldots,m\},$
\\
 \centerline{$\tilde{V}_j:=\{|\tilde{z}_j|<s_0,z\in V_j\}\Subset V_j$,}
\\
(2) for any $j\in\{1,2,\ldots,m\}$ and any $w\in U_y$, we have
\\
\centerline{$\sup\limits_{\tilde{z}\in \tilde{V}_j }|\tilde{\psi}_{2,\alpha}(\tilde{z},w)-\tilde{\psi}_{2,\alpha}(z_0,w)|<\epsilon.$}
\\
(3) Denote $H_j(\tilde{z}_j,w):=G-2p_j\log|\tilde{z}_j|+N_{\beta}(\tilde{z}_j,w)+\epsilon$ on $\tilde{V}_j\times U_y$. Then for any $w\in U_y$, we have
\\
 \centerline{$\sup\limits_{\tilde{z}\in \tilde{V}_j}|H(\tilde{z}_j,w)-H(z_j,w)|<\epsilon.$}
\\
(4)
Recall that $g=d_j\tilde{z}^{q_j}h_j(\tilde{z}_j)$ on $V_j$, where $d_j$ is a constant, $h_j(\tilde{z})$ is a holomorphic function on $V_j$ and $h(z_j)=1$. We assume that
\\
\centerline{$\sup\limits_{\tilde{z}\in \tilde{V}_j }|h_j(\tilde{z}_j)-1|<\epsilon.$}
\\
(5)
Denote that $2g_j(\tilde{z}_j)=G-2p_j\log|\tilde{z}_j|$ on $\tilde{V}_j$. Note that $g_j$ is a harmonic function on $\tilde{V}_j$.

It follows from Lemma \ref{relative compactnees of Green function} that there exists $t'>t_1$ such that $$\left(\{G+N_{\beta}<-t\}\cap (\tilde{V}_j\times U_y)\right)\Subset \tilde{V}_j\times U_y,$$ for any $t>t'$.
We also have $G+N_{\beta}\le 2p_j\log|\tilde{z}_j|+H_j(z_j,w)$ on $V_j\times U_y$.

Following from \eqref{bigger 1}, for any $t>t'$, direct calculation shows that

  \begin{equation}
   \begin{split}
      &\int_{\{\psi<-t\}\cap (V_0\times U_y)}|F|^2e^{-\varphi}c(-\psi)  \\
\ge&\int_{\{G+N_{\beta}<-t\}\cap (V_0\times U_y)}|F|^2e^{-2\pi^{*}_1(\log|g|)-\tilde{\psi}_{2,\alpha}+\pi^{*}_1(G)+N-\pi^{*}_2(\varphi_2)}c(-(G+N_{\beta}))\\
\ge &  \sum_{j=1}^{m} \int_{\{2p_j\log|\tilde{z}_j|+H_j(z_j,w)<-t\}\cap (\tilde{V}_j\times U_y)}
|\tilde{z}_j|^{2k_j+2p_j-2q_j}\frac{|\tilde{h}_j(\tilde{z}_j,w)|^2}{|d_j|^{2}|h_j(\tilde{z}_j)|^{2}}\times\\
&e^{-\tilde{\psi}_{2,l,\alpha}(z_j,w)-\epsilon+2g_j(\tilde{z}_j)+N-\pi^{*}_2(\varphi_2)}c(-2p_j\log|\tilde{z}_j|-H_j(z_j,w))\\
\ge&\sum_{j=1}^{m}\int_{w\in U_y}(\int_{\{2p_j\log|\tilde{z}_j |+H_j(z_j,w)<-t\}}|\tilde{h}_j(\tilde{z}_j ,w)|^2|\tilde{z}_j|^{2k_j+2p_j-2q_j}e^{2g_j(\tilde{z}_j )+N}\times\\
&c(-2p_j\log|\tilde{z}_j |-H_j(z_j ,w))|d\tilde{z}_j |^2)
\frac{e^{-\tilde{\psi}_{2,\alpha}(z_j,w)-\epsilon-\varphi_2(w)}}{|d|^{2}|1+\epsilon|^{2}}|dw|^2\\
=&\sum_{j=1}^{m}\int_{w\in U_y}(2\int_{0}^{e^{\frac{-t-H_j(z_j ,w)}{2p_j}}}\int_{0}^{2\pi}|\tilde{h}_j(re^{-\theta},w)|^2r^{2k_j+2p_j-2q_j+1}
e^{2g_j(re^{i\theta})+N(re^{i\theta},w)}\times\\
&c(-2p_j\log r-H_j(z_j,w))d\theta dr)\frac{e^{-\tilde{\psi}_{2,\alpha}(z_j,w)-\epsilon-\varphi_2(w)}}{|d|^{2}|1+\epsilon|^{2}}|dw|^2\\
\ge&\sum_{j=1}^{m}
\int_{w\in U_y}4\pi(\int_{0}^{e^{\frac{-t-H_j(z_j,w)}{2p_j}}}r^{2k_j+2p_j-2q_j+1}c(-2p_j\log r-H_j(z_j,w)))\times\\
&|\tilde{h}_j(z_j,w)|^2e^{2g_j(z_j)+N(z_j,w)}\frac{e^{-\tilde{\psi}_{2,\alpha}(z_j,w)-\epsilon-\varphi_2(w)}}{|d|^{2}|1+\epsilon|^{2}}|dw|^2\\
=&\sum_{j=1}^{m}\frac{2\pi}{p_j|d_j|^{2}|1+\epsilon|^{2}}\left(\int_{t}^{+\infty}c(s)e^{-(\frac{k_j+1-q_j}{p_j}+1)s}ds\right)\times\\
&\int_{w\in U_y}|\tilde{h}_j(z_j,w)|^2e^{-(\frac{k_j+1-q_j}{p_j}+1)H_j(z_j,w)}e^{2g_j(z_j)+N(z_j,w)-\tilde{\psi}_{2,\alpha}(z_j,w)-\epsilon-\varphi_2(w)}|dw|^2.
   \end{split}
   \end{equation}
As $\int_{0}^{+\infty}c(s)e^{-s}ds<+\infty$, hence we have
  \begin{equation}\label{bigger 2}
   \begin{split}
      &\frac{\int_{\{\psi<-t\}\cap (V_0\times U_y)}|F|^2e^{-\varphi}c(-\psi)}{\int_{t}^{+\infty}c(s)e^{-s}ds}  \\
\ge&
   \sum_{j=1}^{m}\frac{2\pi}{p_j|d_j|^{2}|1+\epsilon|^{2}}\left(\frac{\int_{t}^{+\infty}c(s)e^{-(\frac{k+1-q_2}{p}+1)s}ds}{\int_{t}^{+\infty}c(s)e^{-s}ds}\right)\times\\
&\int_{w\in U_y}|\tilde{h}_j(z_j,w)|^2e^{-(\frac{k_j+1-q_j}{p_j}+1)H_j(z_j,w)}e^{2g_j(z_j)+N(z_j,w)-\tilde{\psi}_{2,\alpha}(z_j,w)-\epsilon-\varphi_2(w)}|dw|^2.
   \end{split}
   \end{equation}

   Denote
   $$I_m:=\{1\le j\le m: k_j+1-q_j\le 0\}.$$
Note that
\begin{equation}\nonumber
  \liminf_{t\to+\infty}\frac{\int_{\{\psi<-t\}}|F|^2e^{-\varphi}c(-\psi)}{\int_{t}^{+\infty}c(s)e^{-s}ds}<+\infty,
\end{equation}
 $N(z_j,w)\not\equiv -\infty$ and $|\tilde{h}(z_j,w)|^2\not\equiv 0$. It follows from Lemma \ref{growth rate of c(t)e(-t)} and \eqref{bigger 2} that we know $k_j+1-q_j=0$ for any $j\in I_m$ and
  \begin{equation}\nonumber
   \begin{split}
      &\liminf_{t\to+\infty}\frac{\int_{\{\psi<-t\}\cap (V_0\times U_y)}|F|^2e^{-\varphi}c(-\psi)}{\int_{t}^{+\infty}c(s)e^{-s}ds}  \\
\ge&\sum_{j\in I_m}\frac{2\pi}{p_j|d_j|^{2}}
\int_{w\in U_y}|\tilde{h}_j(z_j,w)|^2e^{-H(z_j,w)}e^{2g_j(z_j)+N(z_j,w)-\tilde{\psi}_{2,\alpha}(z_j,w)-\varphi_2(w)}|dw|^2\\
 =&  \sum_{j\in I_m}\frac{2\pi}{p_j|d_j|^{2}}
\int_{w\in U_y}|\tilde{h}_j(z_j,w)|^2e^{-2g_j(z_j,0)+N_{\beta}(z_j,w)}e^{2g_j(z_j)+N(z_j,w)-\tilde{\psi}_{2,\alpha}(z_j,w)-\varphi_2(w)}|dw|^2\\
=&   \sum_{j\in I_m}\frac{2\pi}{p_j|d_j|^{2}}
\int_{w\in U_y}|\tilde{h}_j(z_j,w)|^2e^{N(z_j,w)-N_{\beta}(z_j,w)-\tilde{\psi}_{2,\alpha}(z_j,w)-\varphi_2(w)}|dw|^2.
   \end{split}
   \end{equation}
By Monotone convergence theorem, letting $\alpha \to +\infty$ and $\beta \to +\infty$, we have
  \begin{equation}\label{bigger 3}
   \begin{split}
      &\liminf_{t\to+\infty}\frac{\int_{\{\psi<-t\}\cap (V_0\times U_y)}|F|^2e^{-\varphi}c(-\psi)}{\int_{t}^{+\infty}c(s)e^{-s}ds}  \\
\ge&   \sum_{j\in I_m}\frac{2\pi}{p_j|d_j|^{2}}
\int_{w\in U_y}|\tilde{h}_j(z_j,w)|^2e^{-\tilde{\psi}_{2}(z_j,w)-\varphi_2(w)}|dw|^2.
   \end{split}
   \end{equation}
As $y$ and $U_y$ are arbitrarily chosen, it follows from $Y$ is a weakly pseudoconvex K\"ahler manifold that we have
  \begin{equation}\label{bigger 3}
   \begin{split}
      &\liminf_{t\to+\infty}\frac{\int_{\{\psi<-t\}}|F|^2e^{-\varphi}c(-\psi)}{\int_{t}^{+\infty}c(s)e^{-s}ds}  \\
\ge&   \sum_{j\in I_m}\frac{2\pi}{p_j|d_j|^{2}}
\int_{Y}|F_j|^2e^{-\tilde{\psi}_{2}(z_j,w)-\varphi_2(w)}.
   \end{split}
   \end{equation}

   Let $m\to +\infty$ and we have

   \begin{equation}\label{bigger 4}
   \begin{split}
      &\liminf_{t\to+\infty}\frac{\int_{\{\psi<-t\}}|F|^2e^{-\varphi}c(-\psi)}{\int_{t}^{+\infty}c(s)e^{-s}ds}
\ge  \sum_{j\in I_F}\frac{2\pi}{p_j|d_j|^{2}}
\int_{Y}|F_j|^2e^{-\tilde{\psi}_{2}(z_j,w)-\varphi_2(w)}.
   \end{split}
   \end{equation}

   Especially, for any $j\in I_F$, we have
$$\int_{Y}|F_j|^2e^{-\tilde{\psi}_{2}(z_j,w)-\varphi_2(w)}<+\infty.$$
\end{proof}

Denote $M=\prod_{1\le j\le n_1}\Omega_j\times Y$.
Let $p_{j,k}$ be a positive number for any $1\le j\le n_1$ and $1\le k<\tilde m_j$, which satisfies that $\sum_{1\le k<\tilde m_j}p_{j,k}G_{\Omega_j}(\cdot,z_{j,k})\not\equiv-\infty$ for any $1\le j\le n_1$.
Recall that
$$G:=\max_{1\le j\le n_1}\left\{2\sum_{1\le k<\tilde m_j}p_{j,k}\pi_{1,j}^{*}(G_{\Omega_j}(\cdot,z_{j,k}))\right\}.$$
Let $N\le0$ be a plurisubharmonic function on $M$ satisfying that $N|_{Z_0}\not\equiv -\infty$. Denote $\psi:=G+N$.

Let $\varphi_X:=\sum_{1\le j\le n_1}\varphi_j(z)$, where each $\varphi_j$ is an upper semi-continuous function on $\Omega_j$ satisfying $\varphi_j(z_j)\neq -\infty$ for any $z_j\in \Omega_j$. We assume that $\pi_1^*(\varphi_X)+N$ is a plurisubharmonic function on $M$. Let $\varphi_Y$ be a plurisubharmonic function on $Y$. Denote $\varphi:=\pi_1^*(\varphi_X)+\pi_2^*(\varphi_Y)$.

It follows from Lemma \ref{l:green-sup} and Lemma \ref{l:green-sup2} that there exists a local coordinate $w_{j,k}$  on a neighborhood $V_{z_{j,k}}\Subset\Omega_{j}$ of $z_{j,k}\in\Omega_j$ satisfying $w_{j,k}(z_{j,k})=0$ and
$$\log|w_{j,k}|=\frac{1}{p_{j,k}}\sum_{1\le k<\tilde m_j}p_{j,k}G_{\Omega_j}(\cdot,z_{j,k})$$
 for any $j\in\{1,\ldots,n_1\}$ and $1\le k<\tilde{m}_j$, where $V_{z_{j,k}}\cap V_{z_{j,k'}}=\emptyset$ for any $j$ and $k\not=k'$.

 Denote that $\tilde I_1:=\{(\beta_1,\ldots,\beta_{n_1}):1\le \beta_j< \tilde m_j$ for any $j\in\{1,\ldots,n_1\}\}$, $V_{\beta}:=\prod_{1\le j\le n_1}V_{z_{j,\beta_j}}$ for any $\beta=(\beta_1,\ldots,\beta_n)\in\tilde I_1$ and $w_{\beta}:=(w_{1,\beta_1},\ldots,w_{n,\beta_n})$ is a local coordinate on $V_{\beta}$ of $z_{\beta}:=(z_{1,\beta_1},\ldots,z_{n,\beta_n})\in M$.

 Let
 $$G_1=\max_{1\le j\le n_1}\left\{\tilde\pi_{j}^*\left(2\sum_{1\le k<\tilde{m}_j}p_{j,k}G_{\Omega_j}(\cdot,z_{j,k})\right)\right\}$$
  on $\prod_{1\le j\le n_1}\Omega_j$, where $\tilde\pi_j$ is the natural projection from $\prod_{1\le j\le n_1}\Omega_j$ to $\Omega_j$. Note that $G=\pi_1^*(G_1)$.

Let $F$ be a holomorphic $(n,0)$ form on $\{\psi<-t_0\}\subset M$ for some $t_0>0$ satisfying $\int_{\{\psi<-t_0\}}|F|^2e^{-\varphi}c(-\psi)<+\infty$. For any $\beta\in \tilde{I}_1$, it follows from Lemma \ref{decomp} that there exists a sequence of  holomorphic  $(n_2,0)$  forms $\{F_{\alpha,\beta}\}_{\alpha\in\mathbb{Z}_{\ge0}^{n_1}}$ on $Y$ such that
$$F=\sum_{\alpha\in\mathbb{Z}_{\ge0}^{n_1}}\pi_1^*(w_{\beta}^{\alpha}dw_{1,\beta_1}\wedge\ldots\wedge dw_{n_1,\beta_{n_1}})\wedge\pi_2^*(F_{\alpha,\beta})$$
on $V_{\beta}\times Y$.

Denote that $E_{\beta}:=\left\{\alpha\in\mathbb{Z}_{\ge0}^{n_1}:\sum_{1\le j\le n_1}\frac{\alpha_j+1}{p_{j,\beta_j}}=1\right\}$, $E_{1,\beta}:=\left\{\alpha\in\mathbb{Z}_{\ge0}^{n_1}:\sum_{1\le j\le n_1}\frac{\alpha_j+1}{p_{j,\beta_j}}<1\right\}$ and $E_{2,\beta}:=\left\{\alpha\in\mathbb{Z}_{\ge0}^{n_1}:\sum_{1\le j\le n_1}\frac{\alpha_j+1}{p_{j,\beta_j}}>1\right\}$.

Assume that $c(t)$ is increasing near $+\infty$.
\begin{Lemma}
	\label{l:limit}If $\liminf_{t\rightarrow+\infty}\frac{\int_{\{\psi<-t\}}|F|^2e^{-\varphi}c(-\psi)}{\int_t^{+\infty}c(s)e^{-s}ds}<+\infty$, we have $F_{\alpha,\beta}\equiv0$ for any $\alpha\in E_{1,\beta}$ and $\beta\in\tilde I_1$, and
	$$\liminf_{t\rightarrow+\infty}\frac{\int_{\{\psi<-t\}}|F|^2e^{-\varphi}c(-\psi)}{\int_t^{+\infty}c(s)e^{-s}ds}\ge
\sum_{\beta\in\tilde I_1}\sum_{\alpha\in E_{\beta}}\frac{(2\pi)^{n_1}e^{-\sum_{1\le j\le n_1}\varphi_j(z_{j,\beta_j})}}{\prod_{1\le j\le n_1}(\alpha_j+1)}\int_{Y}|F_{\alpha,\beta}|^2e^{-\varphi_Y-N(z_{\beta},w')}.$$
\end{Lemma}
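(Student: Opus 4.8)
# Proof Plan for Lemma \ref{l:limit}

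The plan is to localize the problem near each $z_\beta$ and reduce it to the one-variable-style estimate already carried out in Lemma \ref{slope bigger than measure}, but now using the multi-dimensional computations of Lemma \ref{l:m2}, Lemma \ref{l:m1} and the growth comparison in Lemma \ref{growth rate of c(t)e(-t)}. First I would fix $\beta\in\tilde I_1$, a point $y\in Y$ and a small coordinate neighborhood $(U_y,w')$ of $y$, and pass to the local coordinates $w_{j,\beta_j}$ on $V_{z_{j,\beta_j}}$ in which $\log|w_{j,\beta_j}|=\frac{1}{p_{j,\beta_j}}\sum_{1\le k<\tilde m_j}p_{j,k}G_{\Omega_j}(\cdot,z_{j,k})$, so that on $V_\beta$ we have $G_1=\max_{1\le j\le n_1}\{2p_{j,\beta_j}\log|w_{j,\beta_j}|\}$ exactly of the model form in Lemma \ref{l:m2}. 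Writing $F=\sum_\alpha \pi_1^*(w_\beta^\alpha dw_{1,\beta_1}\wedge\cdots\wedge dw_{n_1,\beta_{n_1}})\wedge\pi_2^*(F_{\alpha,\beta})$ and $\varphi=\sum_j\varphi_j(z)+\varphi_Y(w')$ with $N$ plurisubharmonic, I would replace $\tilde\psi_2$-type terms: since $\varphi_j$ is upper semicontinuous with $\varphi_j(z_{j,\beta_j})>-\infty$ and $N$ is upper semicontinuous, choose continuous functions decreasing to them on $V_\beta\times U_y$, exactly as in the proof of Lemma \ref{slope bigger than measure}, to get lower bounds with controlled constants $1+\epsilon$.

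Next I would restrict the integral $\int_{\{\psi<-t\}}|F|^2e^{-\varphi}c(-\psi)$ to $(V_\beta\times U_y)\cap\{\psi<-t\}$ and use $\psi=G+N\le G$ together with the monotonicity of $c(t)e^{-t}$ to bound it below by an integral against $c(-G)e^{-G}$ (times bounded error factors). By Lemma \ref{relative compactnees of Green function} (applied coordinate-wise through Lemma \ref{l:green-sup}, Lemma \ref{l:green-sup2}) the sublevel set $\{G_1<-t\}$ eventually sits inside $V_\beta$, so the local model applies. Then I would invoke Lemma \ref{l:m1} (in the form with the constant $A$ absorbing the continuous boundary values of $N$ and $\sum\varphi_j$ at $z_\beta$, up to $\epsilon$) to compute, for each monomial $w_\beta^\alpha$, that
\begin{equation}\nonumber
\int_{\{G<-t\}\cap(V_\beta\times U_y)}|w_\beta^\alpha|^2\frac{c(-G)\,e^{-G}}{\cdots}\sim
\Big(\int_t^{+\infty}c(s)e^{-q_\alpha s}ds\Big)\cdot\frac{(2\pi)^{n_1}}{\prod_{j}(\alpha_j+1)}\cdot(\text{const}),
\end{equation}
where $q_\alpha=\sum_j\frac{\alpha_j+1}{p_{j,\beta_j}}$. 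Summing over $\alpha$ and dividing by $\int_t^{+\infty}c(s)e^{-s}ds$, Lemma \ref{growth rate of c(t)e(-t)} shows that the ratio tends to $0$ for $q_\alpha>1$ (i.e. $\alpha\in E_{2,\beta}$), to $+\infty$ for $q_\alpha<1$ (i.e. $\alpha\in E_{1,\beta}$), and to $1$ for $\alpha\in E_\beta$. Since the left side is assumed finite in $\liminf$, the $E_{1,\beta}$ terms must vanish, forcing $F_{\alpha,\beta}\equiv0$ for $\alpha\in E_{1,\beta}$; and the surviving $E_\beta$ terms give precisely the stated lower bound for the contribution of $V_\beta\times U_y$.

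Finally I would let $\epsilon\to 0$, let the continuous approximants increase to $N$ and $\varphi_j$ (monotone convergence), exhaust $Y$ by the $U_y$'s (using that $Y$ is weakly pseudoconvex Kähler so the integrals over $Y$ are well-defined and the local contributions add up), and sum over $\beta\in\tilde I_1$ — the $V_\beta$ being pairwise disjoint, these contributions simply add, yielding
$$\liminf_{t\to+\infty}\frac{\int_{\{\psi<-t\}}|F|^2e^{-\varphi}c(-\psi)}{\int_t^{+\infty}c(s)e^{-s}ds}\ge
\sum_{\beta\in\tilde I_1}\sum_{\alpha\in E_\beta}\frac{(2\pi)^{n_1}e^{-\sum_j\varphi_j(z_{j,\beta_j})}}{\prod_j(\alpha_j+1)}\int_Y|F_{\alpha,\beta}|^2e^{-\varphi_Y-N(z_\beta,w')}.$$
The main obstacle I anticipate is the bookkeeping in the local estimate: one must simultaneously handle the change of coordinates $w_\beta$ (whose modulus is not literally $|z_j|$ but an exponential of a Green-type function, so one needs Lemma \ref{l:green-sup}/\ref{l:green-sup2} to make $G_1$ exactly the model $\max_j\{2p_{j,\beta_j}\log|w_{j,\beta_j}|\}$), the non-continuity of $N$ and $\varphi_j$ (requiring the decreasing/increasing approximation argument with uniform bounds), and the interchange of $\liminf_t$ with the sum over $\alpha$ and the integral over $U_y$ (Fatou's Lemma, as in Lemma \ref{slope bigger than measure}). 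Once these are set up carefully, the analytic heart is a direct application of Lemma \ref{l:m1} and Lemma \ref{growth rate of c(t)e(-t)}.
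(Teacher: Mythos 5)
Your plan is correct and follows essentially the same route as the paper's proof of Lemma \ref{l:limit}: continuous functions decreasing to the upper semi-continuous data $N$ and $\varphi_j$, frozen at $z_{\beta}$ and fed into Lemma \ref{l:m1} as the shift constant $A$, the trichotomy of Lemma \ref{growth rate of c(t)e(-t)} to kill the $E_{1,\beta}$ terms and isolate the $E_{\beta}$ terms, then monotone convergence in $\epsilon$, in the approximants and in an exhaustion of $Y$, and summation over the pairwise disjoint $V_{\beta}$. One caution on your intermediate step: the comparison of $c(-\psi)$ with $c(-G-H(z_{\beta},w'))$ rests on the standing assumption that $c$ is increasing near $+\infty$ (the decreasingness of $c(t)e^{-t}$ gives the opposite inequality), and you must not pass all the way down to $c(-G)$ — the frozen value of $N$ at $z_{\beta}$ has to stay in the argument of $c$ and in the sublevel set, exactly as your shift $A$ indicates, since this is what produces the factor $e^{-N(z_{\beta},w')}$ (while $\sum_{j}\varphi_j(z_{j,\beta_j})$ enters only as the multiplicative factor $e^{-\varphi_{X,l}(z_{\beta})}$, not through $A$).
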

\begin{proof}
	 As $\varphi_X$ is an upper semi-continuous functions on $X:=\prod_{1\le j\le n_1}\Omega_j$ and $N$ is an upper semi-continuous function on $M$, there exist continuous functions $\varphi_{X,l}$ and $N_{\gamma}$ on $X$ and $M$ decreasingly convergent to $\varphi_X$ and $N$ respectively.

When $t$ is large enough, $c(t)$ is increasing, then we have
\begin{equation}\nonumber
\int_{\{\psi<-t\}}|F|^2e^{-\varphi}c(-\psi)\ge
\int_{\{G+N_{\gamma}<-t\}}|F|^2e^{-\pi_1^*(\varphi_{X,l})-\pi_2^*(\varphi_{Y})}c(-G-N_{\gamma}).
\end{equation}

As $Y$ is a weakly pseudoconvex K\"ahler manifold, there exist open weakly pseudoconvex K\"ahler manifolds $Y_1\Subset\ldots\Subset Y_{s}\Subset Y_{s+1}\Subset\ldots$ such that $\cup_{s\in\mathbb{Z}_{\ge1}}Y_{s}=Y$.
	
Fix $Y_s$. For any $\beta\in \tilde{I}_1$, there exists $t_{\beta}>t_{0}$ such that $\{G+N_{\gamma}<-t_{\beta}\}\cap (V_{\beta}\times Y_{s})\Subset (V_{\beta}\times Y_{s+1})$.

On $V_{\beta}\times Y_s$, for any $\epsilon>0$, there exists $t_{\epsilon}$ large enough such that when $t>t_{\epsilon}$,
\\
(1) for any $(w,w')\in \{G+N_{\gamma}<-t\}$,
\\
 \centerline{$|\varphi_{X,l}(w)-\varphi_{X,l}(z_\beta)|<\epsilon$.}
\\
(2) Denote $H(w,w'):=N_{{\gamma}}(w,w')+\epsilon$.
For any $(w,w')\in \{G+N_{\gamma}<-t\}$,
\\
 \centerline{$|H(w,w')-H(z_\beta,w')|<\epsilon$.}
Then we have $G+H(z_{\beta},w')\ge G+N_s(w,w')$, for any $(w,w')\in\{G+N_{\gamma}<-t_{\beta}\}\cap (V_{\beta}\times Y_s)$.

Let
 $$G_1=\max_{1\le j\le n_1}\left\{\tilde\pi_{j}^*\left(2\sum_{1\le k<\tilde{m}_j}p_{j,k}G_{\Omega_j}(\cdot,z_{j,k})\right)\right\}$$
  on $\prod_{1\le j\le n_1}\Omega_j$, where $\tilde\pi_j$ is the natural projection from $\prod_{1\le j\le n_1}\Omega_j$ to $\Omega_j$. Note that $G=\pi_1^*(G_1)$.

  For any $t\ge \max{\{t_{\beta},t_\epsilon\}}$, note that $\{G_1<-t\}=\prod_{1\le j\le n_1}\left\{|w_{j,\beta_j}|<e^{-\frac{t}{2p_{j,\beta_j}}}\right\}$ and $F=\sum_{\alpha\in\mathbb{Z}_{\ge0}^{n_1}}\pi_1^*(w_{\beta}^{\alpha}dw_{1,\beta_1}\wedge\ldots\wedge dw_{n_1,\beta_{n_1}})\wedge\pi_2^*(F_{\alpha,\beta})$ on $\{G_1<-t\}\times Y$, then we have
\begin{equation}\begin{split}
	\label{eq:1218a}&\int_{\{\psi<-t\}\cap(V_{\beta}\times Y_s)}|F|^2e^{-\varphi}c(-\psi)\\
	\ge&\int_{\{G+N_{\gamma}<-t\}\cap(V_{\beta}\times Y_s)}|F|^2e^{-\pi_1^*(\varphi_{X,l})-\pi_2^*(\varphi_{Y})}c(-G-N_{\gamma})	\\
\ge&e^{-\epsilon}\int_{\{G+H(z_{\beta},w')<-t\}\cap(V_{\beta}\times Y_s)}|F|^2e^{-\pi_1^*(\varphi_{X,l}(z_{\beta},w'))-\pi_2^*(\varphi_{Y})}c(-G-H(z_{\beta},w'))\\
=&e^{-\epsilon}\sum_{\alpha\in\mathbb{Z}_{\ge0}^{n_1}}\int_{w'\in Y_s}\left(\int_{\{G_1+H(z_{\beta},w')<-t\}}|w_{\beta}^{\alpha}dw_{1,\beta_1}\wedge\ldots\wedge dw_{n_1,\beta_{n_1}}|^2c(-G-H(z_{\beta},w'))\right)\times\\
&e^{-\varphi_{X,l}(z_{\beta})}|F_{\alpha,\beta}|^2e^{-\varphi_Y}.
\end{split}\end{equation}
Denote that $q_{\alpha}:=\sum_{1\le j\le n_1}\frac{\alpha_j+1}{p_{j,\beta_j}}-1$. It follows from Lemma \ref{l:m1} and inequality \eqref{eq:1218a} that
\begin{equation*}
	\begin{split}
		&\int_{\{\psi<-t\}\cap(V_{\beta}\times Y_s)}|F|^2e^{-\varphi}c(-\psi)\\
		\ge& e^{-\varphi_{X,l}(z_{\beta})-\epsilon}\sum_{\alpha\in\mathbb{Z}_{\ge0}^{n_1}}\left(\int_t^{+\infty}c(s)e^{-(q_{\alpha}+1)s}ds\right)\frac{(q_{\alpha}+1)(2\pi)^{n_1}}{\prod_{1\le j\le n_1}(\alpha_j+1)}\int_{Y_s}|F_{\alpha,\beta}|^2e^{-\varphi_Y-(q_{\alpha}+1)H(z_{\beta},w')}.
	\end{split}
\end{equation*}
It follows from $\liminf_{t\rightarrow+\infty}\frac{\int_{\{\psi<-t\}}|F|^2e^{-\varphi}c(-\psi)}{\int_t^{+\infty}c(s)e^{-s}ds}<+\infty$ and Lemma \ref{growth rate of c(t)e(-t)} that
$$F_{\alpha,\beta}\equiv0$$
 for any $\alpha$ satisfying $q_{\alpha}<0$ and
\begin{equation*}
\liminf_{t\rightarrow+\infty}\frac{\int_{\{\psi<-t\}\cap(V_{\beta}\times Y_s)}|F|^2e^{-\varphi}c(-\psi)}{\int_t^{+\infty}c(s)e^{-s}ds}\ge e^{-\varphi_{X,l}(z_{\beta})-\epsilon}\sum_{\alpha\in E_{\beta}}\frac{(2\pi)^{n_1}\int_{Y_s}|F_{\alpha,\beta}|^2e^{-\varphi_Y-H(z_\beta,w')}}{\prod_{1\le j\le n_1}(\alpha_j+1)}.
\end{equation*}
Note that $H(w,w'):=N_{\gamma}(w,w')+\epsilon$. Letting $\epsilon\rightarrow 0$, $\gamma\rightarrow+\infty$ and $s\rightarrow+\infty$, we have
\begin{equation}
	\label{eq:1218b}
	\begin{split}
		&\liminf_{t\rightarrow+\infty}\frac{\int_{\{\psi<-t\}\cap(V_{\beta}\times Y)}|F|^2e^{-\varphi}c(-\psi)}{\int_t^{+\infty}c(s)e^{-s}ds}\\
		\ge&\sum_{\alpha\in E_{\beta}}\frac{(2\pi)^{n_1}e^{-\sum_{1\le j\le n_1}\varphi_j(z_{j,\beta_j})}}{\prod_{1\le j\le n_1}(\alpha_j+1)}\int_Y|F_{\alpha,\beta}|^2e^{-\varphi_Y-N(z_{\beta},w')}.
			\end{split}
\end{equation}

Note that $V_{\beta}\cap V_{\tilde\beta}=\emptyset$ for any $\beta\not=\tilde\beta$ and $\{\psi_1<-t_{\beta}\}\cap V_{\beta} \Subset V_{\beta}$ for any $\beta\in\tilde I_1$. It follows from inequality \eqref{eq:1218b} that
$$\liminf_{t\rightarrow+\infty}\frac{\int_{\{\psi<-t\}}|F|^2e^{-\varphi}c(-\psi)}{\int_t^{+\infty}c(s)e^{-s}ds}\ge\sum_{\beta\in\tilde I_1}\sum_{\alpha\in E_{\beta}}\frac{(2\pi)^{n_1}e^{-\sum_{1\le j\le n_1}\varphi_j(z_{j,\beta_j})}}{\prod_{1\le j\le n_1}(\alpha_j+1)}\int_{Y}|F_{\alpha,\beta}|^2e^{-\varphi_Y-N(z_{\beta},w')}.$$
Thus, Lemma \ref{l:limit} holds.
\end{proof}

Let $\tilde{M}$ be an open complex submanifold of $M$ satisfying that $Z_0=\{z_{\beta}:\beta\in\tilde I_1\}\times Y\subset \tilde{M}$, and let $K_{\tilde{M}}$  be the  canonical (holomorphic) line bundle on  $M_1$. Let $F_1$ be a holomorphic $(n,0)$ form on $\{\psi<-t_0\}\cap \tilde{M}$ for $t_0>0$ satisfying that $\int_{\{\psi<-t_0\}\cap \tilde{M}}|F_1|^2e^{-\varphi}c(-\psi)<+\infty$. For any $\beta\in\tilde I_1$, it follows from Lemma \ref{decomp-tildeM} that there exist a sequence of holomorphic $(n_2,0)$ forms $\{F_{\alpha,\beta}\}_{\alpha\in\mathbb{Z}_{\ge0}^{n_1}}$ on $Y$ and an open subset $U_{\beta}$ of $\{\psi<-t_0\}\cap \tilde{M}\cap (V_{\beta}\times Y)$ such that
$$F_1=\sum_{\alpha\in\mathbb{Z}_{\ge0}^{n_1}}\pi_1^*(w_{\beta}^{\alpha}dw_{1,\beta_1}\wedge\ldots\wedge dw_{n_1,\beta_{n_1}})\wedge\pi_2^*(F_{\alpha,\beta})$$
on $U_{\beta}$ and
$$\int_{K}|F_{\alpha,\beta}|^2e^{-\varphi_Y}<+\infty$$
for any $\alpha\in\mathbb{Z}_{\ge0}^{n_1}$ and compact subset $K$ of $Y$. Using the similar method in Lemma \ref{l:limit}, we have the following Remark.
\begin{Remark}
	\label{l:limit2}If $\liminf_{t\rightarrow+\infty}\frac{\int_{\{\psi<-t\}\cap \tilde{M}}|F|^2e^{-\varphi}c(-\psi)}{\int_t^{+\infty}c(s)e^{-s}ds}<+\infty$, we have $F_{\alpha,\beta}\equiv0$ for any $\alpha\in E_{1,\beta}$ and $\beta\in\tilde I_1$, and

$$\liminf_{t\rightarrow+\infty}\frac{\int_{\{\psi<-t\}\cap \tilde{M}}|F|^2e^{-\varphi}c(-\psi)}{\int_t^{+\infty}c(s)e^{-s}ds}\ge
\sum_{\beta\in\tilde I_1}\sum_{\alpha\in E_{\beta}}\frac{(2\pi)^{n_1}e^{-\sum_{1\le j\le n_1}\varphi_j(z_{j,\beta_j})}}{\prod_{1\le j\le n_1}(\alpha_j+1)}\int_{Y}|F_{\alpha,\beta}|^2e^{-\varphi_Y-N}.$$

	\end{Remark}
\begin{proof}
 As $\varphi_X$ is an upper semi-continuous functions on $X:=\prod_{1\le j\le n_1}\Omega_j$ and $N$ is an upper semi-continuous function on $M$, there exist continuous functions $\varphi_{X,l}$ and $N_{\gamma}$ on $X$ and $M$ decreasingly convergent to $\varphi_X$ and $N$ respectively.

When $t$ is large enough, $c(t)$ is increasing, then we have
\begin{equation}\nonumber
\int_{\{\psi<-t\}\cap \tilde{M}}|F|^2e^{-\varphi}c(-\psi)\ge
\int_{\{G+N_{\gamma}<-t\}\cap \tilde{M}}|F|^2e^{-\pi_1^*(\varphi_{X,l})-\pi_2^*(\varphi_{Y})}c(-G-N_{\gamma}).
\end{equation}

As $Y$ is a weakly pseudoconvex K\"ahler manifold, there exist open weakly pseudoconvex K\"ahler manifolds $Y_1\Subset\ldots\Subset Y_{s}\Subset Y_{s+1}\Subset\ldots$ such that $\cup_{s\in\mathbb{Z}_{\ge1}}Y_{s}=Y$.
	
For any $\beta\in\tilde I_1$ and any $Y_s$ , there exists open subset $\hat{V}_{\beta}\Subset V_{\beta}$ and $t_{\beta,s}>t_{0}$ such that $\{G+N_{\gamma}<-t_{\beta,s}\}\cap (\hat{V}_{\beta}\times Y_s)\Subset (\hat{V}_{\beta}\times Y_{s+1})\Subset U_{\beta}$.

On $\hat{V}_{\beta}\times Y_s$, for any $\epsilon>0$, there exists $t_{\epsilon}$ large enough such that when $t>t_{\epsilon}$,
\\
(1) for any $(w,w')\in \{G+N_{\gamma}<-t\}$,
\\
 \centerline{$|\varphi_{X,l}(w)-\varphi_{X,l}(z_\beta)|<\epsilon$.}
\\
(2) Denote $H(w,w'):=N_{{\gamma}}(w,w')+\epsilon$.
For any $(w,w')\in \{G+N_{\gamma}<-t\}$,
\\
 \centerline{$|H(w,w')-H(z_\beta,w')|<\epsilon$.}
Then we have $G+H(z_{\beta},w')\ge G+N_s(w,w')$, for any $(w,w')\in\{G+N_{\gamma}<-t_{\beta}\}\cap (\hat{V}_{\beta}\times Y_s)$.

Recall that
 $G_1=\max_{1\le j\le n_1}\left\{\tilde\pi_{j}^*\left(2\sum_{1\le k<\tilde{m}_j}p_{j,k}G_{\Omega_j}(\cdot,z_{j,k})\right)\right\}$
  on $\prod_{1\le j\le n_1}\Omega_j$, where $\tilde\pi_j$ is the natural projection from $\prod_{1\le j\le n_1}\Omega_j$ to $\Omega_j$. Note that $G=\pi_1^*(G_1)$.
For any $t\ge \max{\{t_{\beta,s},t_\epsilon\}}$, note that $\{G_1<-t\}=\prod_{1\le j\le n_1}\left\{|w_{j,\beta_j}|<e^{-\frac{t}{2p_{j,\beta_j}}}\right\}$ and $F=\sum_{\alpha\in\mathbb{Z}_{\ge0}^{n_1}}\pi_1^*(w_{\beta}^{\alpha}dw_{1,\beta_1}\wedge\ldots\wedge dw_{n_1,\beta_{n_1}})\wedge\pi_2^*(F_{\alpha,\beta})$ on $U_{\beta}$, then we have
\begin{equation}\begin{split}
	\label{eq:1218a2}&\int_{\{\psi<-t\}\cap(\hat{V}_{\beta}\times Y_s)}|F|^2e^{-\varphi}c(-\psi)\\
	\ge&\int_{\{G+N_{\gamma}<-t\}\cap(\hat{V}_{\beta}\times Y_s)}|F|^2e^{-\pi_1^*(\varphi_{X,l})-\pi_2^*(\varphi_{Y})}c(-G-N_{\gamma})	\\
\ge&e^{-\epsilon}\int_{\{G+H(z_{\beta},w')<-t\}\cap(\hat{V}_{\beta}\times Y_s)}|F|^2e^{-\pi_1^*(\varphi_{X,l}(z_{\beta},w'))-\pi_2^*(\varphi_{Y})}c(-G-H(z_{\beta},w'))\\
=&e^{-\epsilon}\sum_{\alpha\in\mathbb{Z}_{\ge0}^{n_1}}\int_{w'\in Y_s}\left(\int_{\{G_1+H(z_{\beta},w')<-t\}}|w_{\beta}^{\alpha}dw_{1,\beta_1}\wedge\ldots\wedge dw_{n_1,\beta_{n_1}}|^2c(-G-H(z_{\beta},w'))\right)\times\\
&e^{-\varphi_{X,l}(z_{\beta})}|F_{\alpha,\beta}|^2e^{-\varphi_Y}.
\end{split}\end{equation}
Denote that $q_{\alpha}:=\sum_{1\le j\le n_1}\frac{\alpha_j+1}{p_{j,\beta_j}}-1$. It follows from Lemma \ref{l:m1} and inequality \eqref{eq:1218a2} that
\begin{equation*}
	\begin{split}
		&\int_{\{\psi<-t\}\cap(\hat{V}_{\beta}\times Y_s)}|F|^2e^{-\varphi}c(-\psi)\\
		\ge& e^{-\varphi_{X,l}(z_{\beta})-\epsilon}\sum_{\alpha\in\mathbb{Z}_{\ge0}^{n_1}}\left(\int_t^{+\infty}c(s)e^{-(q_{\alpha}+1)s}ds\right)\frac{(q_{\alpha}+1)(2\pi)^{n_1}}{\prod_{1\le j\le n_1}(\alpha_j+1)}\int_{Y_s}|F_{\alpha,\beta}|^2e^{-\varphi_Y-(q_{\alpha}+1)H(z_{\beta},w')}.
	\end{split}
\end{equation*}
It follows from $\liminf_{t\rightarrow+\infty}\frac{\int_{\{\psi<-t\}\cap \tilde{M}}|F|^2e^{-\varphi}c(-\psi)}{\int_t^{+\infty}c(s)e^{-s}ds}<+\infty$ and Lemma \ref{growth rate of c(t)e(-t)} that
$$F_{\alpha,\beta}\equiv0$$
 for any $\alpha$ satisfying $q_{\alpha}<0$ and
\begin{equation*}
\liminf_{t\rightarrow+\infty}\frac{\int_{\{\psi<-t\}\cap(\hat{V}_{\beta}\times Y_s)}|F|^2e^{-\varphi}c(-\psi)}{\int_t^{+\infty}c(s)e^{-s}ds}\ge e^{-\varphi_{X,l}(z_{\beta})-\epsilon}\sum_{\alpha\in E_{\beta}}\frac{(2\pi)^{n_1}\int_{Y_s}|F_{\alpha,\beta}|^2e^{-\varphi_Y-H(z_\beta,w')}}{\prod_{1\le j\le n_1}(\alpha_j+1)}.
\end{equation*}
Note that $H(w,w'):=N_{\gamma}(w,w')+\epsilon$. Letting $\epsilon\rightarrow 0$, $\gamma\rightarrow+\infty$ and $s\rightarrow+\infty$, we have
\begin{equation}
	\label{eq:1218b2}
	\begin{split}
		&\liminf_{t\rightarrow+\infty}\frac{\int_{\{\psi<-t\}\cap (\hat{V}_{\beta}\times Y_s)}|F|^2e^{-\varphi}c(-\psi)}{\int_t^{+\infty}c(s)e^{-s}ds}\\
		\ge&\sum_{\alpha\in E_{\beta}}\frac{(2\pi)^{n_1}e^{-\sum_{1\le j\le n_1}\varphi_j(z_{j,\beta_j})}}{\prod_{1\le j\le n_1}(\alpha_j+1)}\int_Y|F_{\alpha,\beta}|^2e^{-\varphi_Y-N(z_{\beta},w')}.
			\end{split}
\end{equation}

Note that $V_{\beta}\cap V_{\tilde\beta}=\emptyset$ implies that $\hat{V}_{\beta}\cap \hat{V}_{\tilde\beta}=\emptyset$ for any $\beta\not=\tilde\beta$. It follows from inequality \eqref{eq:1218b2} that
$$\liminf_{t\rightarrow+\infty}\frac{\int_{\{\psi<-t\}\cap\tilde{M}}|F|^2e^{-\varphi}c(-\psi)}{\int_t^{+\infty}c(s)e^{-s}ds}\ge\sum_{\beta\in\tilde I_1}\sum_{\alpha\in E_{\beta}}\frac{(2\pi)^{n_1}e^{-\sum_{1\le j\le n_1}\varphi_j(z_{j,\beta_j})}}{\prod_{1\le j\le n_1}(\alpha_j+1)}\int_{Y}|F_{\alpha,\beta}|^2e^{-\varphi_Y-N(z_{\beta},w')}.$$
Thus, Remark \ref{l:limit2} holds.

\end{proof}

\section{Proofs of Theorem \ref{finite points}, Proposition \ref{infinite points} and Proposition \ref{tildeM}}

\subsection{Proof of Theorem \ref{finite points}}

\begin{proof}
We firstly give the proof of the sufficiency in Theorem \ref{finite points}.

As $\psi+\pi_1^*(\varphi_1)=\pi_1^*(2\sum_{j=1}^mp_jG_{\Omega}(\cdot,z_j)+\varphi_1)$ is a plurisubharmonic function on $M$, by definition, we know that $2\sum_{j=1}^mp_jG_{\Omega}(\cdot,z_j)+\varphi_1$ is a subharmonic function on $\Omega$. By the construction of $g\in \mathcal{O}_{\Omega}$, we have $2u(z):=2\sum_{j=1}^mp_jG_{\Omega}(\cdot,z_j)+\varphi_1-2\log|g|$ is a subharmonic function on $\Omega$ which satisfies $v(dd^cu,z)\in[0,1)$ for any $z\in Z_{\Omega}$. Then it follows from Lemma \ref{equiv of multiplier ideal sheaf} that we know $\mathcal{I}(\varphi+\psi)_{(z_j,y)}=\mathcal{I}\big(\pi_1^{*}(2\log |g|)+\pi_2^{*}(\varphi_2)\big)_{(z_j,y)}$ for any $(z_j,y)\in Z_0$. It follows from Theorem \ref{BGY-RESULT-FINITE POINTS} that we know the sufficiency part of  Theorem \ref{finite points} holds.

Now we prove the necessity part of Theorem \ref{finite points}.

Assume that $G\big(h^{-1}(r)\big)$ is linear with respect to $r\in (0,\int_0^{+\infty}c(s)e^{-s}ds]$. Then according to Lemma \ref{linear}, there exists a unique holomorphic $(n,0)$ form $F$ on $M$ satisfying $(F-f)\in H^0(Z_0,\Big(\mathcal{O}(K_M)\otimes\mathcal{I}\big(\pi^{*}_1(2\log |g|)+\pi^{*}_2(\varphi_2)\big)\Big)|_{Z_0})$, and $G(t;c)=\int_{\{\psi<-t\}}|F|^2e^{-\varphi}c(-\psi)$ for any $t\geq 0$. Then according to Lemma \ref{linear}, Remark \ref{ctildec} and Lemma \ref{tildecincreasing}, we can assume that $c$ is increasing near $+\infty$.

Following from Lemma \ref{decomp}, for any $j\in\{1,2,\ldots,m\}$, we assume that
		\begin{equation*}
			F=\sum_{l=k_j}^{\infty}\pi_1^*(\tilde{z}_j^ld\tilde{z}_j)\wedge \pi_2^*(F_{j,l})
		\end{equation*}
		on $V_{j}\times Y$, where $k_j\in\mathbb{N}$, $F_{j,l}$ is a holomorphic $(n-1,0)$ form on $Y$ for any $l\geq k_j$, and $\tilde{F}_j:=F_{j,k_j}\not\equiv 0$.

We firstly recall the construction of $\psi$ and $\varphi$.

Recall that $\psi=\pi^{*}_1\big(\sum\limits_{j=1}^m 2p_jG_{\Omega}(z,z_j)\big)+N$ is a plurisubharmonic function on $M$. We assume that $N\le0$ is a plurisubharmonic function on $M$ and $N|_{Z_j}$ is not identically $-\infty$ for any $j$. $\varphi_1$ is a Lebesgue measurable function on $\Omega$ such that $\psi+\pi^{*}_1(\varphi)$ is a plurisubharmonic function on $M$. We also note that, by Siu's decomposition theorem and Weierstrass theorem on open Riemann surfaces, we have
$$\psi+\pi^{*}_1(\varphi_1)=\pi^{*}_1(2\log|g|)+\tilde{\psi}_2,$$
where $g$ is a holomorphic function on $\Omega$ and $\tilde{\psi}_2\in Psh(M)$. Denote $ord_{z_j}(g)=p_j$ and $d_j:=\lim_{z\rightarrow z_j}(g/\tilde{z}_j^{q_j})(z)$.

		Now we prove that $\psi=\pi_1^*\big(2\sum_{j=1}^mp_jG_{\Omega}(\cdot,z_j)\big)$. As $G(h^{-1}(r))$ is linear and $c$ is increasing near $+\infty$, using Lemma \ref{slope bigger than measure}, we can get that $k_j-q_j+1=0$ for any $j\in I_F$ and
		\begin{equation}\label{geqfinite finite points}
			\frac{\int_M|F|^2e^{-\varphi}c(-\psi)}{\int_0^{+\infty}c(s)e^{-s}ds}\geq \sum_{j\in I_F}\frac{2\pi}{p_j|d_j|^2}\int_Y|F_j|^2e^{-\varphi_2-\tilde{\psi}_2(z_j,w)},
		\end{equation}
		where $I_F:=\{j:k_j-q_j+1\leq 0\}$. Especially, $\sum_{j\in I_F}\frac{2\pi}{p_j|d_j|^2}\int_Y|F_j|^2e^{-\varphi_2-\tilde{\psi}_2(z_j,w)}<+\infty$ and $\tilde{\psi}_2$ is not identically $-\infty$ on $Z_j$ for any $j\in I_F$.

Let $V_j\times U_y$ be an open neighborhood of $(z_j,y)$. Assume that $F=\tilde{z}_j^{k_j}h(\tilde{z}_j,w)d\tilde{z}_j\wedge dw$ on $V_j\times U_y$, where $h(z_j,w)dw=\tilde{F}_j$.  Note that $h(z_j,w)$ is not identically zero on $\{z_j\}\times U_y$, there must exist $\hat{w}\in U_y$ such that $h(z_j,\hat{w})\neq 0$. Then we know $|h(z_j,w)|^2$ has a positive lower bound on $\tilde{V}_j\times U_{\hat{w}}$, where $\tilde{V}_j\times U_{\hat{w}}$ is a small open neighborhood of $(z_j,\hat{w})$.
 Then, according to Lemma \ref{e-varphic-psi}, $\psi\le\pi_1^*\big(2\sum_{j=1}^mp_jG_{\Omega}(\cdot,z_j)\big)$ and $c(t)$ is increasing near $+\infty$, we know that for any $w\in U_{\hat{w}}$, we have $\int_{\tilde{V}_j}|\tilde{z}_j^{k_j}d\tilde{z}_j|^2e^{-\varphi_1}c(-\sum_{j=1}^m2p_jG_{\Omega}(\cdot,z_j))<+\infty$.
It follows from Lemma \ref{f1zf2w} that we have $$\int_{\tilde{V}_j\times U_{\hat{w}}}|\pi_1^*(\tilde{z}_j^{k_j}d\tilde{z}_j)\wedge\pi_2^*(\tilde{F}_j)|^2
e^{-\varphi}c\big(-\pi_1^*(\sum_{j=1}^m2p_jG_{\Omega}(\cdot,z_j))\big)<+\infty.$$

Then by  Lemma \ref{k>k0}, we have
		\begin{equation*}
			\big(F-\pi_1^*(\tilde{z}_j^{k_j}d\tilde{z}_j)\wedge\pi_2^*(\tilde{F}_j),(z_j,y)\big)
\in\big(\mathcal{O}(K_M)\otimes\mathcal{I}(\pi^{*}_1(2\log |g|)+\pi^{*}_2(\varphi_2))\big)_{(z_j,y)}
		\end{equation*}
		for any $j\in I_F$, $y\in Y$. And according to Lemma \ref{k>k0} we also have
		\begin{equation*}
			\big(F,(z_j,y)\big)\in\big(\mathcal{O}(K_M)\otimes\mathcal{I}(\pi^{*}_1(2\log |g|)+\pi^{*}_2(\varphi_2))\big)_{(z_j,y)}
		\end{equation*}
		for any $j\in\{1,2,\ldots,m\}\setminus I_F$, $y\in Y$.

Denote that $\tilde{\psi}:=\pi_1^*\big(2\sum_{j=1}^mp_jG_{\Omega}(\cdot,z_j)\big)$, $\tilde{\varphi}_1:=\pi_1^*(\varphi_1)+\psi-\tilde{\psi}$, and $\tilde{\varphi}:=\tilde{\varphi}_1+\pi_2^*(\varphi_2)$. Then according to Lemma \ref{optimal extension} \big($F=\pi_1^*(\tilde{z}_j^{k_j}dw_j)\wedge\pi_2^*(\tilde{F}_j)$ on $V_j\times Y$ for $j\in I_F$ and $F\equiv0$ on $V_j\times Y$ for $j\notin I_F$ in Lemma \ref{optimal extension}\big), there exists a holomorphic $(n,0)$ form $\tilde{F}$ on $M$ such that $\big(\tilde{F}-\pi_1^*(\tilde{z}_j^{k_j}dw_j)\wedge\pi_2^*(\tilde{F}_j),(z_j,y)\big)
\in\big(\mathcal{O}(K_M)\otimes\mathcal{I}(\pi^{*}_1(2\log |g|)+\pi^{*}_2(\varphi_2))\big)_{(z_j,y)}$ for any $j\in I_F$, $y\in Y$, $\big(\tilde{F},(z_j,y)\big)\in\big(\mathcal{O}(K_M)\otimes\mathcal{I}(\pi^{*}_1(2\log |g|)+\pi^{*}_2(\varphi_2))\big)_{(z_j,y)}$ for any $j\in\{1,2,\ldots,m\}\setminus I_F$, $y\in Y$, and
		\begin{equation}\label{leqfinite finite points}
			\int_M|\tilde{F}|^2e^{-\tilde{\varphi}}c(-\tilde{\psi})\leq \left({\int_0^{+\infty}c(s)e^{-s}ds}\right)\sum_{j\in I_F}\frac{2\pi}{p_j|d_j|^2}\int_Y|F_j|^2e^{-\varphi_2-\tilde{\psi}_2(z_j,w)}.
		\end{equation}
		Then $\big(\tilde{F}-F,(z_j,y)\big)\in\big(\mathcal{O}(K_M)\otimes\mathcal{I}(\pi^{*}_1(2\log |g|)+\pi^{*}_2(\varphi_2))\big)_{(z_j,y)}$ for any $(z_j,y)\in Z_0$. Combining inequality (\ref{geqfinite finite points}) with inequality (\ref{leqfinite finite points}), we have that
		\begin{equation}\label{inequality in finite pts}
			\int_M|\tilde{F}|^2e^{-\varphi}c(-\psi)\geq \int_M|F|^2e^{-\varphi}c(-\psi)\geq \int_M|\tilde{F}|^2e^{-\tilde{\varphi}}c(-\tilde{\psi}).
		\end{equation}
		As $c(t)e^{-t}$ is decreasing with respect to $t$ and $\psi\le \tilde{\psi}$, we have $\int_M|\tilde{F}|^2e^{-\tilde{\varphi}}c(-\tilde{\psi})\ge \int_M|\tilde{F}|^2e^{-\varphi}c(-\psi)$. Then all ``$\ge$" in \eqref{inequality in finite pts} should be ``$=$". It follows from Lemma \ref{decreasing property of l} that $N=0$ and $\psi=\pi_1^*\big(2\sum_{j=1}^mp_jG_{\Omega}(\cdot,z_j)\big)$.

As $\psi+\pi_1^*(\varphi_1)=\pi_1^*(2\sum_{j=1}^mp_jG_{\Omega}(\cdot,z_j)+\varphi_1)$ is a plurisubharmonic function on $M$, by definition, we know that $2\sum_{j=1}^mp_jG_{\Omega}(\cdot,z_j)+\varphi_1$ is a subharmonic function on $\Omega$. By the construction of $g\in \mathcal{O}_{\Omega}$, we have $2u(z):=2\sum_{j=1}^mp_jG_{\Omega}(\cdot,z_j)+\varphi_1-2\log|g|$ is a subharmonic function on $\Omega$ which satisfies $v(dd^cu,z)\in[0,1)$ for any $z\in Z_0$. Then it follows from Lemma \ref{equiv of multiplier ideal sheaf} that we know $\mathcal{I}(\varphi+\psi)_{(z_j,y)}=\mathcal{I}\big(\pi_1^{*}(2\log |g|)+\pi_2^{*}(\varphi_2)\big)_{(z_j,y)}$ for any $(z_j,y)\in Z_0$.

Then it follows from Theorem \ref{BGY-RESULT-FINITE POINTS} that we know Theorem \ref{finite points} holds.
\end{proof}

\subsection{Proof of Proposition \ref{infinite points}}

\begin{proof}[Proof of Proposition \ref{infinite points}]
Assume that $G\big(h^{-1}(r)\big)$ is linear with respect to $r\in (0,\int_0^{+\infty}c(s)e^{-s}ds]$. Then according to Lemma \ref{linear}, there exists a unique holomorphic $(n,0)$ form $F$ on $M$ satisfying $(F-f)\in H^0(Z_0,\big(\mathcal{O}(K_M)\otimes\mathcal{I}(\pi^{*}_1(2\log |g|)+\pi^{*}_2(\varphi_2))\big)|_{Z_0})$, and $G(t;c)=\int_{\{\psi<-t\}}|F|^2e^{-\varphi}c(-\psi)$ for any $t\geq 0$. Then according to Lemma \ref{linear}, Remark \ref{ctildec} and Lemma \ref{tildecincreasing}, we can assume that $c$ is increasing near $+\infty$.

Following from Lemma \ref{decomp}, for any $j\ge 1$, we assume that
		\begin{equation*}
			F=\sum_{l=k_j}^{\infty}\pi_1^*(\tilde{z}_j^ld\tilde{z}_j)\wedge \pi_2^*(F_{j,l})
		\end{equation*}
		on $V_{j}\times Y$, where $k_j\in\mathbb{N}$, $F_{j,l}$ is a holomorphic $(n-1,0)$ form on $Y$ for any $l\geq k_j$, and $\tilde{F}_j:=F_{j,k_j}\not\equiv 0$.

We firstly recall the construction of $\psi$ and $\varphi$.

Recall that $\psi=\pi^{*}_1\big(\sum\limits_{j\ge1}2p_jG_{\Omega}(z,z_j)\big)+N$ is a plurisubharmonic function on $M$. We assume that $N\le 0$ is a plurisubharmonic function on $M$ and $N|_{Z_j}$ is not identically $-\infty$ for any $j$. $\varphi_1$ is a Lebesgue measurable function on $\Omega$ such that $\psi+\pi^{*}_1(\varphi)$ is a plurisubharmonic function on $M$. We also note that, by Siu's decomposition theorem and Weierstrass theorem on open Riemann surface, we have
$$\psi+\pi^{*}_1(\varphi_1)=\pi^{*}_1(2\log|g|)+\tilde{\psi}_2,$$
where $g$ is a holomorphic function on $\Omega$ and $\tilde{\psi}_2\in Psh(M)$. Denote $ord_{z_j}(g)=p_j$ and $d_j:=\lim_{z\rightarrow z_j}(g/\tilde{z}_j^{q_j})(z)$.

		Now we prove that $\psi_1=2\sum_{j=1}^{\gamma}p_jG_{\Omega}(\cdot,z_j)$. As $G(h^{-1}(r))$ is linear and $c$ is increasing near $+\infty$, using Lemma \ref{slope bigger than measure}, we can get that $k_j-q_j+1=0$ for any $j\in I_F$ and
		\begin{equation}\label{geqfinite infinite points}
			\frac{\int_M|F|^2e^{-\varphi}c(-\psi)}{\int_0^{+\infty}c(s)e^{-s}ds}\geq \sum_{j\in I_F}\frac{2\pi}{p_j|d_j|^2}\int_Y|F_j|^2e^{-\varphi_2-\tilde{\psi}_2(z_j,w)},
		\end{equation}
		where $I_F:=\{j:k_j-q_j+1\leq 0\}$. Especially, $\sum_{j\in I_F}\frac{2\pi}{p_j|d_j|^2}\int_Y|F_j|^2e^{-\varphi_2-\tilde{\psi}_2(z_j,w)}<+\infty$ and $\tilde{\psi}_2$ is not identically $-\infty$ on $Z_j$ for any $j\in I_F$.

Let $V_j\times U_y$ be an open neighborhood of $(z_j,y)$. Assume that $F=\tilde{z}_j^{k_j}h(\tilde{z}_j,w)d\tilde{z}_j\wedge dw$ on $V_j\times U_y$, where $h(z_j,w)dw=\tilde{F}_j$.  Note that $h(z_j,w)$ is not identically zero on $\{z_j\}\times U_y$, there must exist $\hat{w}\in U_y$ such that $h(z_j,\hat{w})\neq 0$. Then we know $|h(z_j,w)|^2$ has a positive lower bound on $\tilde{V}_j\times U_{\hat{w}}$, where $\tilde{V}_j\times U_{\hat{w}}$ is a small open neighborhood of $(z_j,\hat{w})$.
 Then, according to Lemma \ref{e-varphic-psi}, $\psi\le\pi_1^*(\sum_{j=1}^{\gamma}2p_jG_{\Omega}(\cdot,z_j))$ and $c(t)$ is increasing near $+\infty$, we know that for any $w\in U_{\hat{w}}$, we have $\int_{\tilde{V}_j}|\tilde{z}_j^{k_j}d\tilde{z}_j|^2e^{-\varphi_1}c(-\sum_{j=1}^m2p_jG_{\Omega}(\cdot,z_j))<+\infty$.
It follows from Lemma \ref{f1zf2w} that we have $$\int_{\tilde{V}_j\times U_{\hat{w}}}|\pi_1^*(\tilde{z}_j^{k_j}d\tilde{z}_j)\wedge\pi_2^*(\tilde{F}_j)|^2
e^{-\varphi}c(-\pi_1^*(\sum_{j=1}^m2p_jG_{\Omega}(\cdot,z_j)))<+\infty.$$

Then by Lemma \ref{k>k0}, we have
		\begin{equation*}
			\big(F-\pi_1^*(\tilde{z}_j^{k_j}d\tilde{z}_j)\wedge\pi_2^*(\tilde{F}_j),(z_j,y)\big)
\in\big(\mathcal{O}(K_M)\otimes\mathcal{I}(\pi^{*}_1(2\log |g|)+\pi^{*}_2(\varphi_2))\big)_{(z_j,y)}
		\end{equation*}
		for any $j\in I_F$, $y\in Y$. And according to Lemma \ref{k>k0} we also have
		\begin{equation*}
			\big(F,(z_j,y)\big)\in\big(\mathcal{O}(K_M)\otimes\mathcal{I}(\pi^{*}_1(2\log |g|)+\pi^{*}_2(\varphi_2))\big)_{(z_j,y)}
		\end{equation*}
		for any $j\in\{1,2,\ldots,m\}\setminus I_F$, $y\in Y$.

Denote that $\tilde{\psi}:=\pi_1^*\big(2\sum_{j=1}^{\gamma}p_jG_{\Omega}(\cdot,z_j)\big)$, $\tilde{\varphi}_1:=\pi_1^*(\varphi_1)+\psi-\tilde{\psi}$, and $\tilde{\varphi}:=\tilde{\varphi}_1+\pi_2^*(\varphi_2)$. Then according to Lemma \ref{optimal extension} \big($F=\pi_1^*(\tilde{z}_j^{k_j}dw_j)\wedge\pi_2^*(\tilde{F}_j)$ on $V_j\times Y$ for $j\in I_F$ and $F\equiv0$ on $V_j\times Y$ for $j\notin I_F$ in Lemma \ref{optimal extension}\big), there exists a holomorphic $(n,0)$ form $\tilde{F}$ on $M$ such that $\big(\tilde{F}-\pi_1^*(\tilde{z}_j^{k_j}dw_j)\wedge\pi_2^*(\tilde{F}_j),(z_j,y)\big)
\in\big(\mathcal{O}(K_M)\otimes\mathcal{I}(\pi^{*}_1(2\log |g|)+\pi^{*}_2(\varphi_2))\big)_{(z_j,y)}$ for any $j\in I_F$, $y\in Y$, $\big(\tilde{F},(z_j,y)\big)\in\big(\mathcal{O}(K_M)\otimes\mathcal{I}(\pi^{*}_1(2\log |g|)+\pi^{*}_2(\varphi_2))\big)_{(z_j,y)}$ for any $j\in\{1,2,\ldots,m\}\setminus I_F$, $y\in Y$, and
		\begin{equation}\label{leqfinite infinite points}
			\int_M|\tilde{F}|^2e^{-\tilde{\varphi}}c(-\tilde{\psi})\leq \left({\int_0^{+\infty}c(s)e^{-s}ds}\right)\sum_{j\in I_F}\frac{2\pi}{p_j|d_j|^2}\int_Y|F_j|^2e^{-\varphi_2-\tilde{\psi}_2(z_j,w)}.
		\end{equation}
		 Then $\big(\tilde{F}-F,(z_j,y)\big)\in\big(\mathcal{O}(K_M)\otimes\mathcal{I}(\pi^{*}_1(2\log |g|)+\pi^{*}_2(\varphi_2))\big)_{(z_j,y)}$ for any $(z_j,y)\in Z_0$. Combining inequality (\ref{geqfinite infinite points}) with inequality (\ref{leqfinite infinite points}), we have that
		\begin{equation}\label{inequality in infinite pts}
			\int_M|\tilde{F}|^2e^{-\varphi}c(-\psi)\geq \int_M|F|^2e^{-\varphi}c(-\psi)\geq \int_M|\tilde{F}|^2e^{-\tilde{\varphi}}c(-\tilde{\psi}).
		\end{equation}
As $c(t)e^{-t}$ is decreasing with respect to $t$ and $\psi\le \tilde{\psi}$, we have $\int_M|\tilde{F}|^2e^{-\tilde{\varphi}}c(-\tilde{\psi})\ge \int_M|\tilde{F}|^2e^{-\varphi}c(-\psi)$. Then all ``$\ge$" in \eqref{inequality in infinite pts} should be ``$=$". It follows from Lemma \ref{decreasing property of l} that $N=0$ and $\psi=\pi_1^*\big(2\sum_{j=1}^{\gamma}p_jG_{\Omega}(\cdot,z_j)\big)$.		

As $\psi+\pi_1^*(\varphi_1)=\pi_1^*(2\sum_{j=1}^{\gamma}p_jG_{\Omega}(\cdot,z_j)+\varphi_1)$ is a plurisubharmonic function on $M$, by definition, we know that $2\sum_{j=1}^{\gamma}p_jG_{\Omega}(\cdot,z_j)+\varphi_1$ is a subharmonic function on $\Omega$. By the construction of $g\in \mathcal{O}_{\Omega}$, we have $2u(z):=2\sum_{j=1}^{\gamma}p_jG_{\Omega}(\cdot,z_j)+\varphi_1-2\log|g|$ is a subharmonic function on $\Omega$ which satisfies $v(dd^cu,z)\in[0,1)$ for any $z\in Z_0$. Then it follows from Lemma \ref{equiv of multiplier ideal sheaf} that we know $\mathcal{I}(\varphi+\psi)_{(z_j,y)}=\mathcal{I}\big(\pi_1^{*}(2\log |g|)+\pi_2^{*}(\varphi_2)\big)_{(z_j,y)}$ for any $(z_j,y)\in Z_0$.

Then it follows from Proposition \ref{BGY-RESULT-INFINITE POINTS} that we know Proposition \ref{infinite points} holds.
\end{proof}
\subsection{Proof of Proposition \ref{tildeM}}

\begin{proof}
Assume that $\tilde{G}\big(h^{-1}(r)\big)$ is linear with respect to $r\in (0,\int_0^{+\infty}c(s)e^{-s}ds]$. Then according to Lemma \ref{linear}, there exists a unique holomorphic $(n,0)$ form $F$ on $\tilde M$ satisfying $(F-f)\in H^0(Z_0,\big(\mathcal{O}(K_{\tilde{M}})\otimes\mathcal{I}(\pi^{*}_1(2\log |g|)+\pi^{*}_2(\varphi_2))\big)|_{Z_0})$, and $\tilde{G}(t;c)=\int_{\{\psi<-t\}}|F|^2e^{-\varphi}c(-\psi)$ for any $t\geq 0$. Then according to Lemma \ref{linear}, Remark \ref{ctildec} and Lemma \ref{tildecincreasing}, we can assume that $c$ is increasing near $+\infty$.

We firstly recall the construction of $\psi$ and $\varphi$.

Recall that $\psi=\pi^{*}_1\big(\sum\limits_{j\ge1}2p_jG_{\Omega}(z,z_j)\big)+N$ is a plurisubharmonic function on $\tilde M$. We assume that $N\le 0$ is a plurisubharmonic function on $\tilde{M}$ and $N|_{Z_j}$ is not identically $-\infty$ for any $j$. $\varphi_1$ is a Lebesgue measurable function on $\Omega$ such that $\psi+\pi^{*}_1(\varphi)$ is a plurisubharmonic function on $\tilde M$. We also note that, by Siu's decomposition theorem and Weierstrass theorem on open Riemann surface, we have
$$\psi+\pi^{*}_1(\varphi_1)=\pi^{*}_1(2\log|g|)+\tilde{\psi}_2,$$
where $g$ is a holomorphic function on $\Omega$ and $\tilde{\psi}_2\in Psh(\tilde M)$. Denote $ord_{z_j}(g)=p_j$ and $d_j:=\lim_{z\rightarrow z_j}(g/\tilde{z}_j^{q_j})(z)$.

Following from Lemma \ref{decomp-tildeM}, for any $j\in\{1,2,\ldots,m\}$, we assume that
		\begin{equation*}
			F=\sum_{l=k_j}^{\infty}\pi_1^*(\tilde{z}_j^ld\tilde{z}_j)\wedge \pi_2^*(F_{j,l})
		\end{equation*}
		on $U_j\Subset (V_j\times Y)\cap \tilde{M}$ is a neighborhood of $Z_j:=\{z_j\}\times Y$ in $\tilde{M}$, and $k_j\in\mathbb{N}$, $F_{j,l}$ is a holomorphic $(n-1,0)$ form on $Y$ for any $l\geq k_j$, and $\tilde{F}_j:=F_{j,k_j}\not\equiv 0$.

	Let $W$ be an open subset of $Y$ such that $W\Subset Y$. Then for any $j$, $1\leq j<\gamma$, there exists $r_{j,W}>0$ such that $V_{j,W}\times W\subset\tilde{M}$, where $V_{j,W}:=\{z\in\Omega : |\tilde{z}_j(z)|<r_{j,W}\}$.

As $\tilde{G}(h^{-1}(r))$ is linear and $c$ is increasing near $+\infty$, using Lemma \ref{slope bigger than measure}, we can get that $k_j-q_j+1=0$ for any $j\in I_F$ and
		\begin{equation}\nonumber
			\frac{\int_{\tilde{M}}|F|^2e^{-\varphi}c(-\psi)}{\int_0^{+\infty}c(s)e^{-s}ds}\geq \sum_{j\in I_F}\frac{2\pi}{p_j|d_j|^2}\int_W|F_j|^2e^{-\varphi_2-\tilde{\psi}_2(z_j,w)},
		\end{equation}
where $I_F:=\{j: 1\leq j<\gamma \ \&\ \tilde{k}_j+1-k_j\leq 0\}$.

By the arbitrariness of $W$, we have $\int_Y|\tilde{F}_j|^2e^{-\varphi_2}<+\infty$ for any $j\in I_F$, and
	\begin{equation}\label{geqfinite tildeM}
		\frac{\int_{\tilde{M}}|F|^2e^{-\varphi}c(-\psi)}{\int_0^{+\infty}c(s)e^{-s}ds}\geq \sum_{j\in I_F}\frac{2\pi}{p_j|d_j|^2}\int_Y|F_j|^2e^{-\varphi_2-\tilde{\psi}_2(z_j,w)}.
	\end{equation}
 Especially, we know $\sum_{j\in I_F}\frac{2\pi}{p_j|d_j|^2}\int_Y|F_j|^2e^{-\varphi_2-\tilde{\psi}_2(z_j,w)}<+\infty$ and $\tilde{\psi}_2$ is not identically $-\infty$ on $Z_j$ for any $j\in I_F$.

 Let $V_j\times U_y$ be an open neighborhood of $(z_j,y)$ in $\tilde{M}$. Assume that $F=\tilde{z}_j^{k_j}h(\tilde{z}_j,w)d\tilde{z}_j\wedge dw$ on $V_j\times U_y$, where $h(z_j,w)dw=\tilde{F}_j$.  Note that $h(z_j,w)$ is not identically zero on $\{z_j\}\times U_y$, there must exist $\hat{w}\in U_y$ such that $h(z_j,\hat{w})\neq 0$. Then we know $|h(z_j,w)|^2$ has a positive lower bound on $\tilde{V}_j\times U_{\hat{w}}$, where $\tilde{V}_j\times U_{\hat{w}}\Subset V_j\times U_y$ is a small open neighborhood of $(z_j,\hat{w})$.
 Then, according to Lemma \ref{e-varphic-psi}, $\psi\le\sum_{j=1}^{\gamma}2p_jG_{\Omega}(\cdot,z_j)$ and $c(t)$ is increasing near $+\infty$, we know that for any $w\in U_{\hat{w}}$, we have $\int_{\tilde{V}_j}|\tilde{z}_j^{k_j}d\tilde{z}_j|^2e^{-\varphi_1}c(-\sum_{j=1}^{\gamma}2p_jG_{\Omega}(\cdot,z_j))<+\infty$.
It follows from Lemma \ref{f1zf2w} that we have $$\int_{\tilde{V}_j\times U_{\hat{w}}}|\pi_1^*(\tilde{z}_j^{k_j}d\tilde{z}_j)\wedge\pi_2^*(\tilde{F}_j)|^2
e^{-\varphi}c\big(-\pi_1^*(\sum_{j=1}^{\gamma}2p_jG_{\Omega}(\cdot,z_j))\big)<+\infty.$$

Then by Lemma \ref{k>k0}, we have
		\begin{equation*}
			\big(F-\pi_1^*(\tilde{z}_j^{k_j}d\tilde{z}_j)\wedge\pi_2^*(\tilde{F}_j),(z_j,y)\big)
\in\big(\mathcal{O}(K_M)\otimes\mathcal{I}(\pi^{*}_1(2\log |g|)+\pi^{*}_2(\varphi_2))\big)_{(z_j,y)}
		\end{equation*}
		for any $j\in I_F$, $y\in Y$. And according to Lemma \ref{k>k0} we also have
		\begin{equation*}
			\big(F,(z_j,y)\big)\in\big(\mathcal{O}(K_M)\otimes\mathcal{I}(\pi^{*}_1(2\log |g|)+\pi^{*}_2(\varphi_2))\big)_{(z_j,y)}
		\end{equation*}
		for any $j\in\{1,2,\ldots,m\}\setminus I_F$, $y\in Y$.

Now we prove that $N\equiv0$. Denote that $\tilde{\psi}:=\pi_1^*(2\sum_{j=1}^{\gamma}p_jG_{\Omega}(\cdot,z_j))$, $\tilde{\varphi}_1:=\pi_1^*(\varphi_1)+\psi-\tilde{\psi}$, and $\tilde{\varphi}:=\tilde{\varphi}_1+\pi_2^*(\varphi_2)$.
Then according to Lemma \ref{optimal extension} \big($F=\pi_1^*(\tilde{z}_j^{k_j}dw_j)\wedge\pi_2^*(\tilde{F}_j)$ on $U_j$ for $j\in I_F$ and $F\equiv0$ on $U_j$ for $j\notin I_F$ in Lemma \ref{optimal extension}\big), there exists a holomorphic $(n,0)$ form $\tilde{F}$ on $\tilde M$ such that $\big(\tilde{F}-\pi_1^*(\tilde{z}_j^{k_j}dw_j)\wedge\pi_2^*(\tilde{F}_j),(z_j,y)\big)
\in\big(\mathcal{O}(K_{\tilde{M}})\otimes\mathcal{I}(\pi^{*}_1(2\log |g|)+\pi^{*}_2(\varphi_2))\big)_{(z_j,y)}$ for any $j\in I_F$, $y\in Y$, $\big(\tilde{F},(z_j,y)\big)\in\big(\mathcal{O}(K_{\tilde{M}})\otimes\mathcal{I}(\pi^{*}_1(2\log |g|)+\pi^{*}_2(\varphi_2))\big)_{(z_j,y)}$ for any $j\ge 1 \& j\notin I_F$ and any $y\in Y$, and
		\begin{equation}\label{leqfinite}
			\int_{\tilde{M}}|\tilde{F}|^2e^{-\tilde \varphi}c(-\tilde \psi)\leq \left({\int_0^{+\infty}c(s)e^{-s}ds}\right)\sum_{j\in I_F}\frac{2\pi}{p_j|d_j|^2}\int_Y|F_j|^2e^{-\varphi_2-\tilde{\psi}_2(z_j,w)}.
		\end{equation}
		Note that we have $\big(\tilde{F}-F,(z_j,y)\big)\in\big(\mathcal{O}(K_{\tilde{M}})\otimes\mathcal{I}(\pi^{*}_1(2\log |g|)+\pi^{*}_2(\varphi_2))\big)_{(z_j,y)}$ for any $(z_j,y)\in Z_0$.

It follows from \eqref{geqfinite tildeM} and \eqref{leqfinite} that, we have
\begin{equation}\label{PROP 1.6-3}
\int_{\tilde{M}}|\tilde{F}|^2e^{-\varphi}c(-\psi)\ge\int_{\tilde{M}}|F|^2e^{-\varphi}c(-\psi)\ge \int_{\tilde{M}}|\tilde{F}|^2e^{-\tilde \varphi}c(-\tilde \psi).
\end{equation}
 As $c(t)e^{-t}$ is decreasing with respect to $t$ and $\psi\le \tilde{\psi}$, we have $\int_{\tilde M}|\tilde{F}|^2e^{-\tilde{\varphi}}c(-\tilde{\psi})\ge \int_{\tilde M}|\tilde{F}|^2e^{-\varphi}c(-\psi)$. Then all ``$\ge$" in \eqref{PROP 1.6-3} should be ``$=$". It follows from Lemma \ref{decreasing property of l} that $N=0$ and $\psi=\pi_1^*\big(2\sum_{j=1}^{\gamma}p_jG_{\Omega}(\cdot,z_j)\big)$.

  Using Lemma \ref{optimal extension} \big($\tilde{M}\sim M$ and $F=\pi_1^*(\tilde{z}_j^{k_j}dw_j)\wedge\pi_2^*(\tilde{F}_j)$ for $j\in I_F$ and $F\equiv0$ for $j\notin I_F$ in Lemma \ref{optimal extension}\big), there exists a holomorphic $(n,0)$ form $\hat{F}$ on $M$ such that $\big(\hat{F}-\pi_1^*(\tilde{z}_j^{k_j}dw_j)\wedge\pi_2^*(\tilde{F}_j),(z_j,y)\big)
  \in\big(\mathcal{O}(K_M)\otimes\mathcal{I}(\pi^{*}_1(2\log |g|)+\pi^{*}_2(\varphi_2))\big)_{(z_j,y)}$ for any $j\in I_F$, $y\in Y$, $\big(\hat{F},(z_j,y)\big)\in\big(\mathcal{O}(K_M)\otimes\mathcal{I}(\pi^{*}_1(2\log |g|)+\pi^{*}_2(\varphi_2))\big)_{(z_j,y)}$ for any $j\ge 1 \& j\notin I_F$ and any $y\in Y$, and
		\begin{equation}\label{leqfinite}
			\int_M|\hat{F}|^2e^{-\varphi}c(-\psi)\leq \left({\int_0^{+\infty}c(s)e^{-s}ds}\right)\sum_{j\in I_F}\frac{2\pi}{p_j|d_j|^2}\int_Y|F_j|^2e^{-\varphi_2-\tilde{\psi}_2(z_j,w)}.
		\end{equation}
		Then $\big(\hat{F}-F,(z_j,y)\big)\in\big(\mathcal{O}(K_M)\otimes\mathcal{I}(\pi^{*}_1(2\log |g|)+\pi^{*}_2(\varphi_2))\big)_{(z_j,y)}$ for any $(z_j,y)\in Z_0$.

Now According to the choice of $F$, we have that
	\begin{flalign}\label{tildeM-2}
		\begin{split}
			\tilde{G}(0)=&\int_{\tilde{M}}|F|^2e^{-\varphi}c(-\psi)\leq \int_{\tilde{M}}|\hat{F}|^2e^{-\varphi}c(-\psi)\\
			\leq&\int_M|\hat{F}|^2e^{-\varphi}c(-\psi)\\
			\leq&\left(\int_0^{+\infty}c(s)e^{-s}ds\right)\sum_{j\in I_F}\frac{2\pi}{p_j|d_j|^2}\int_Y|F_j|^2e^{-\varphi_2-\tilde{\psi}_2(z_j,w)}.
		\end{split}
	\end{flalign}	
	 Combining inequality (\ref{geqfinite tildeM}) with inequality (\ref{tildeM-2}), we get that
	\begin{equation*}
		\int_{\tilde{M}}|\hat{F}|^2e^{-\varphi}c(-\psi)=\int_M|\hat{F}|^2e^{-\varphi}c(-\psi).
	\end{equation*}
	As $\tilde{F}\not\equiv 0$, the above equality implies that $\tilde{M}=M$.

\end{proof}
\section{ Proofs of Theorem \ref{thm:linear-fibra-single}, Theorem \ref{thm:linear-fibra-finite}, Theorem \ref{thm:linear-fibra-infinite} and Proposition \ref{p:M=M_1}}
In this section, we prove Theorem \ref{thm:linear-fibra-single}, Theorem \ref{thm:linear-fibra-finite}, Theorem \ref{thm:linear-fibra-infinite} and Proposition \ref{p:M=M_1}.

\subsection{Proof of Theorem \ref{thm:linear-fibra-single}}
\begin{proof}
We firstly give the proof of the sufficiency of Theorem \ref{thm:linear-fibra-single}.
It follows from Theorem \ref{GBY6-single pt} that $G\big(h^{-1}(r);\mathcal{I}(\varphi+\psi)\big)$ is linear with respect to $r$.

 When $N\equiv 0$, then $\varphi_j$ is a subharmonic function on $\Omega_j$ satisfying $\varphi_j(z_j)>-\infty$ for any $1\le j\le n_1$,
it follows from Lemma \ref{l:phi1+phi2} that $\mathcal{I}(\varphi+\psi)_{(z_j,y)}=\mathcal{I}\big(\hat G+\pi_2^*(\varphi_Y)\big)_{(z_j,y)}$ for any $(z_j,y)\in Z_0$.

Hence $G\big(h^{-1}(r);\mathcal{I}(\hat{G}+\pi_2^*(\varphi_Y))\big)$ is linear with respect to $r$. The sufficiency of Theorem \ref{thm:linear-fibra-single} is proved.

We prove the necessity part of Theorem \ref{thm:linear-fibra-single}.

Assume that $G\big(h^{-1}(r)\big)$ is linear with respect to $r\in (0,\int_0^{+\infty}c(s)e^{-s}ds]$. Then according to Lemma \ref{linear}, there exists a unique holomorphic $(n,0)$ form $F$ on $M$ satisfying $(F-f)\in H^0(Z_0,\big(\mathcal{O}(K_M)\otimes\mathcal{I}(\hat G+\pi_2^*(\varphi_Y)\big)|_{Z_0})$, and $G(t;c)=\int_{\{\psi<-t\}}|F|^2e^{-\varphi}c(-\psi)$ for any $t\geq 0$. Then according to Lemma \ref{linear}, Remark \ref{ctildec} and Lemma \ref{tildecincreasing}, we can assume that $c$ is increasing near $+\infty$.

It follows from Lemma \ref{decomp} that there exists a sequence of  holomorphic  $(n_2,0)$  forms $\{F_{\alpha}\}_{\alpha\in\mathbb{Z}_{\ge0}^{n_1}}$ on $Y$ such that
$$F=\sum_{\alpha\in\mathbb{Z}_{\ge0}^{n_1}}\pi_1^*(w^{\alpha}dw_{1}\wedge\ldots\wedge dw_{n_1})\wedge\pi_2^*(F_{\alpha})$$
on $V_{0}\times Y$.

Denote that $E_0:=\left\{\alpha\in\mathbb{Z}_{\ge0}^{n_1}:\sum_{1\le j\le n_1}\frac{\alpha_j+1}{p_{j}}=1\right\}$, $E_{1}:=\left\{\alpha\in\mathbb{Z}_{\ge0}^{n_1}:\sum_{1\le j\le n_1}\frac{\alpha_j+1}{p_{j}}<1\right\}$ and $E_{2}:=\left\{\alpha\in\mathbb{Z}_{\ge0}^{n_1}:\sum_{1\le j\le n_1}\frac{\alpha_j+1}{p_{j}}>1\right\}$.

Now we prove $N\equiv 0$. It follows from Lemma \ref{l:limit} that
we have $F_{\alpha}\equiv0$ for any $\alpha\in E_{1}$ , and
	\begin{equation}
\label{multi single 1}
\liminf_{t\rightarrow+\infty}\frac{\int_{\{\psi<-t\}}|F|^2e^{-\varphi}c(-\psi)}
{\int_t^{+\infty}c(s)e^{-s}ds}\ge\sum_{\alpha\in E_{0}}\frac{(2\pi)^{n_1}e^{-\sum_{1\le j\le n_1}\varphi_j(z_{j})}}{\prod_{1\le j\le n_1}(\alpha_j+1)c_{j}^{2\alpha_{j}+2}(z_j)}\int_{Y}|F_{\alpha,\beta}|^2e^{-\varphi_Y-N(z_0,w')}.
\end{equation}

Note that $\psi=\hat G+N\le \hat G$, $c(t)$ is increasing near $+\infty$, $\varphi_X$ is upper semi-continuous on $\prod_{j=1}^{n_1}\Omega_j$. When $t$ is large enough, we have
	\begin{equation}\nonumber
\begin{split}
&\int_{\{\psi<-t\}}|F|^2e^{-\varphi}c(-\psi)\ge\int_{\{\hat G<-t\}}|F|^2e^{-\pi_1^*\big(-\sum_{1\le j\le n_1}\varphi_j(z_{j})\big)-
\pi_2^*(\varphi_Y)}c(-\hat G)\\
\ge &
C_0\int_{\{\hat G<-t\}}|F|^2e^{-
\pi_2^*(\varphi_Y)}c(-\hat G),
\end{split}
\end{equation}
where $C_0>0$ is a constant, then it follows from $\int_{\{\psi<-t\}}|F|^2e^{-\varphi}c(-\psi)<+\infty$ and Lemma \ref{decomp} that for any $\alpha\in E_{2}:=\left\{\alpha\in\mathbb{Z}_{\ge0}^{n_1}:\sum_{1\le j\le n_1}\frac{\alpha_j+1}{p_{j}}>1\right\}$, we know $F_{\alpha}\in\mathcal{I}(\varphi_Y)$ for any $y\in Y$.

Recall that
$$\hat G=\max_{1\le j\le n_1}\left\{2p_j\pi_{1,j}^{*}\big(G_{\Omega_j}(\cdot,z_j)\big)\right\}.$$
Denote $\tilde{\varphi}:=\varphi+\psi-\hat G$. It follows from Lemma \ref{p:exten-fibra} that there exists a holomorphic $(n,0)$ form $\hat{F}$ on $M$ satisfying that $(\hat{F}-F,z)\in\big(\mathcal{O}(K_M)\otimes\mathcal{I}(\hat G+\pi_2^*(\varphi_Y))\big)_{z}$ for any $z\in Z_0$ and
\begin{equation}
\label{multi single 2}
	\begin{split}
	&\int_{M}|\hat{F}|^2e^{-\tilde{\varphi}}c(-\hat G)\\
\le&\left(\int_0^{+\infty}c(s)e^{-s}ds\right)\sum_{\alpha\in E_{0}}\frac{(2\pi)^{n_1}e^{-\sum_{1\le j\le n_1}\varphi_j(z_{j})}
\int_Y|f_{\alpha}|^2e^{-\varphi_{Y}-N(z_0,w')}}{\prod_{1\le j\le n_1}(\alpha_j+1)c_{j}^{2\alpha_{j}+2}(z_j)}.	
	\end{split}
\end{equation}
It follows from \eqref{multi single 1} and \eqref{multi single 2} that we have
\begin{equation}\label{multi single 3}
	\begin{split}
	\int_{M}|\hat{F}|^2e^{-\tilde{\varphi}}c(-\hat G)\le\int_M |F|^2e^{-\varphi}c(-\psi)
\le\int_M |\hat{F}|^2e^{-\varphi}c(-\psi)
	\end{split}
\end{equation}
As $c(t)e^{-t}$ is decreasing with respect to $t$ and $\psi\le \hat G$, we have $\int_{M}|\hat{F}|^2e^{-\tilde{\varphi}}c(-\hat G)\ge\int_M |\hat{F}|^2e^{-\varphi}c(-\psi)$. Then all ``$\ge$" in \eqref{multi single 3} should be ``$=$". It follows from Lemma \ref{decreasing property of l} that we know $N\equiv 0$ and $\psi=\max_{1\le j\le n_1}\{2p_j\pi_{1,j}^{*}\big(G_{\Omega_j}(\cdot,z_j)\big)\}$.

As $N\equiv 0$, we know $\varphi_j$ is a subharmonic function on $\Omega_j$ satisfying $\varphi_j(z_j)>-\infty$ for any $1\le j\le n_1$ and
it follows from Lemma \ref{l:phi1+phi2} that $\mathcal{I}(\varphi+\psi)_{(z_j,y)}=\mathcal{I}\big(\hat G+\pi_2^*(\varphi_Y)\big)_{(z_j,y)}$ for any $(z_j,y)\in Z_0$. Note that $\psi=\max_{1\le j\le n_1}\{2p_j\pi_{1,j}^{*}\big(G_{\Omega_j}(\cdot,z_j)\big)\}$.
It follows from  Theorem \ref{GBY6-single pt} that the necessity part of Theorem \ref{thm:linear-fibra-single} holds.
\end{proof}

\subsection{Proof of Theorem \ref{thm:linear-fibra-finite}}
\begin{proof}
We firstly give the proof of the sufficiency of Theorem \ref{thm:linear-fibra-finite}.
It follows from Theorem \ref{GBY6-finitie pts} that $G\big(h^{-1}(r);\mathcal{I}(\varphi+\psi)\big)$ is linear with respect to $r$.

 When $N\equiv 0$, then $\varphi_j$ is a subharmonic function on $\Omega_j$ satisfying $\varphi_j(z_j)>-\infty$ for any $1\le j\le n_1$ and
it follows from Lemma \ref{l:phi1+phi2} that $\mathcal{I}(\varphi+\psi)_{(z_j,y)}=\mathcal{I}\big(\hat G+\pi_2^*(\varphi_Y)\big)_{(z_j,y)}$ for any $(z_j,y)\in Z_0$.

Hence $G\big(h^{-1}(r);\mathcal{I}(\hat G+\pi_2^*(\varphi_Y))\big)$ is linear with respect to $r$.  The sufficiency of Theorem \ref{thm:linear-fibra-finite} is proved.

We prove the necessity part of Theorem \ref{thm:linear-fibra-finite}.

Assume that $G\big(h^{-1}(r)\big)$ is linear with respect to $r\in (0,\int_0^{+\infty}c(s)e^{-s}ds]$. Then according to Lemma \ref{linear}, there exists a unique holomorphic $(n,0)$ form $F$ on $M$ satisfying $(F-f)\in H^0(Z_0,\big(\mathcal{O}(K_M)\otimes\mathcal{I}(\hat G+\pi_2^*(\varphi_Y)\big)|_{Z_0})$, and $G(t;c)=\int_{\{\psi<-t\}}|F|^2e^{-\varphi}c(-\psi)$ for any $t\geq 0$. Then according to Lemma \ref{linear}, Remark \ref{ctildec} and Lemma \ref{tildecincreasing}, we can assume that $c$ is increasing near $+\infty$.

It follows from Lemma \ref{decomp} that there exists a sequence of  holomorphic  $(n_2,0)$  forms $\{F_{\alpha,\beta}\}_{\alpha\in\mathbb{Z}_{\ge0}^{n_1}}$ on $Y$ such that
$$F=\sum_{\alpha\in\mathbb{Z}_{\ge0}^{n_1}}\pi_1^*(w^{\alpha}_{\beta}dw_{1,\beta_1}\wedge\ldots\wedge dw_{n_1,\beta_{n_1}})\wedge\pi_2^*(F_{\alpha,\beta})$$
on $V_{\beta}\times Y$, for any $\beta\in\tilde{I}_1$. Denote that $E_{\beta}:=\left\{\alpha\in\mathbb{Z}_{\ge0}^{n_1}:\sum_{1\le j\le n_1}\frac{\alpha_j+1}{p_{j,\beta_j}}=1\right\}$, $E_{\beta,1}:=\left\{\alpha\in\mathbb{Z}_{\ge0}^{n_1}:\sum_{1\le j\le n_1}\frac{\alpha_j+1}{p_{j,\beta_j}}<1\right\}$ and $E_{\beta,2}:=\left\{\alpha\in\mathbb{Z}_{\ge0}^{n_1}:\sum_{1\le j\le n_1}\frac{\alpha_j+1}{p_{j,\beta_j}}>1\right\}$.

Now we prove $N\equiv 0$. It follows from Lemma \ref{l:limit} that we have
we have $F_{\alpha}\equiv0$ for any $\alpha\in E_{\beta,1}$, and
	\begin{equation}
\label{multi finite 1}
\liminf_{t\rightarrow+\infty}\frac{\int_{\{\psi<-t\}}|F|^2e^{-\varphi}c(-\psi)}
{\int_t^{+\infty}c(s)e^{-s}ds}\ge\sum_{\beta\in\tilde I_1}\sum_{\alpha\in E_{\beta}}\frac{(2\pi)^{n_1}e^{-\sum_{1\le j\le n_1}\varphi_j(z_{j,\beta_j})}}{\prod_{1\le j\le n_1}(\alpha_j+1)c_{j,\beta_j}^{2\alpha_{j}+2}}\int_{Y}|F_{\alpha,\beta}|^2e^{-\varphi_Y-N(z_{\beta},w')}.
\end{equation}

Note that $\psi=\hat G+N\le \hat G$, $c(t)$ is increasing near $+\infty$, $\varphi_X$ is upper semi-continuous on $\prod_{j=1}^{n_1}\Omega_j$. When $t$ is large enough, we have
	\begin{equation}\nonumber
\begin{split}
&\int_{\{\psi<-t\}}|F|^2e^{-\varphi}c(-\psi)\ge\int_{\{\hat G<-t\}}|F|^2e^{-\pi_1^*\big(\sum_{1\le j\le n_1}\pi_{1,j}^*(\varphi_j)\big)-
\pi_2^*(\varphi_Y)}c(-\hat G)\\
\ge &
C_0\int_{\{\hat G<-t\}}|F|^2e^{-
\pi_2^*(\varphi_Y)}c(-\hat G),
\end{split}
\end{equation}
then it follows from $\int_{\{\psi<-t\}}|F|^2e^{-\varphi}c(-\psi)<+\infty$ and Lemma \ref{decomp} that for any $\alpha\in E_{\beta,2}:=\left\{\alpha\in\mathbb{Z}_{\ge0}^{n_1}:\sum_{1\le j\le n_1}\frac{\alpha_j+1}{p_{j,\beta_j}}>1\right\}$ and any $\beta\in \mathcal{I}_1$, we know $F_{\alpha,\beta}\in\mathcal{I}(\varphi_Y)$ for any $y\in Y$.

Recall that
$$\hat{G}:=\max_{1\le j\le n_1}\left\{\pi_{1,j}^*\left(2\sum_{1\le k\le m_j}p_{j,k}G_{\Omega_j}(\cdot,z_{j,k})\right)\right\},$$
Denote $\tilde{\varphi}:=\varphi+\psi-\hat G$. It follows from Lemma \ref{p:exten-fibra} that there exists a holomorphic $(n,0)$ form $\hat{F}$ on $M$ satisfying that $(\hat{F}-F,z)\in\big(\mathcal{O}(K_M)\otimes\mathcal{I}(\hat G+\pi_2^*(\varphi_Y))\big)_{z}$ for any $z\in Z_0$ and
\begin{equation}
\label{multi finite 2}
	\begin{split}
	&\int_{M}|\hat{F}|^2e^{-\tilde{\varphi}}c(-\hat G)\\
\le&\left(\int_0^{+\infty}c(s)e^{-s}ds\right)\sum_{\beta\in\tilde I_1}\sum_{\alpha\in E_{\beta}}\frac{(2\pi)^{n_1}e^{-\sum_{1\le j\le n_1}\varphi_j(z_{j,\beta_j})}
\int_Y|f_{\alpha,\beta}|^2e^{-\varphi_{Y}-N(z_{\beta},w')}}{\prod_{1\le j\le n_1}(\alpha_j+1)c_{j,\beta_j}^{2\alpha_{j}+2}}.	
	\end{split}
\end{equation}
It follows from \eqref{multi finite 1} and \eqref{multi finite 2} that we have
\begin{equation}\label{multi finite 3}
	\begin{split}
	\int_{M}|\hat{F}|^2e^{-\tilde{\varphi}}c(-\hat G)\le\int_M |F|^2e^{-\varphi}c(-\psi)
\le\int_M |\hat{F}|^2e^{-\varphi}c(-\psi).
	\end{split}
\end{equation}
As $c(t)e^{-t}$ is decreasing with respect to $t$ and $\psi\le \hat G$, we have $\int_{M}|\hat{F}|^2e^{-\tilde{\varphi}}c(-\hat G)\ge\int_M |\hat{F}|^2e^{-\varphi}c(-\psi)$. Then all ``$\ge$" in \eqref{multi finite 3} should be ``$=$". It follows from Lemma \ref{decreasing property of l} that we know $N\equiv 0$ and $\psi=\max_{1\le j\le n_1}\left\{\pi_{1,j}^*\left(2\sum_{1\le k\le m_j}p_{j,k}G_{\Omega_j}(\cdot,z_{j,k})\right)\right\}$.

As $N\equiv 0$, we know $\varphi_j$ is a subharmonic function on $\Omega_j$ satisfying $\varphi_j(z_j)>-\infty$ for any $1\le j\le n_1$ and
it follows from Lemma \ref{l:phi1+phi2} that $\mathcal{I}(\varphi+\psi)_{(z_j,y)}=\mathcal{I}\big(\hat G+\pi_2^*(\varphi_Y)\big)_{(z_j,y)}$ for any $(z_j,y)\in Z_0$. Note that $\psi=\max_{1\le j\le n_1}\left\{\pi_{1,j}^*\left(2\sum_{1\le k\le m_j}p_{j,k}G_{\Omega_j}(\cdot,z_{j,k})\right)\right\}$. It follows from Theorem \ref{GBY6-finitie pts} that the necessity part of Theorem \ref{thm:linear-fibra-finite} holds.
\end{proof}

\subsection{Proof of Theorem \ref{thm:linear-fibra-infinite}}
\begin{proof}

We prove Theorem \ref{thm:linear-fibra-infinite} by contradiction.

Assume that $G\big(h^{-1}(r)\big)$ is linear with respect to $r\in (0,\int_0^{+\infty}c(s)e^{-s}ds]$. Then according to Lemma \ref{linear}, there exists a unique holomorphic $(n,0)$ form $F$ on $M$ satisfying $(F-f)\in H^0(Z_0,\big(\mathcal{O}(K_M)\otimes\mathcal{I}(\hat G+\pi_2^*(\varphi_Y)\big)|_{Z_0})$, and $G(t;c)=\int_{\{\psi<-t\}}|F|^2e^{-\varphi}c(-\psi)$ for any $t\geq 0$. Then according to Lemma \ref{linear}, Remark \ref{ctildec} and Lemma \ref{tildecincreasing}, we can assume that $c$ is increasing near $+\infty$.

It follows from Lemma \ref{decomp} that there exists a sequence of  holomorphic  $(n_2,0)$  forms $\{F_{\alpha,\beta}\}_{\alpha\in\mathbb{Z}_{\ge0}^{n_1}}$ on $Y$ such that
$$F=\sum_{\alpha\in\mathbb{Z}_{\ge0}^{n_1}}\pi_1^*(w^{\alpha}_{\beta}dw_{1,\beta_1}\wedge\ldots\wedge dw_{n_1,\beta_{n_1}})\wedge\pi_2^*(F_{\alpha,\beta})$$
on $V_{\beta}\times Y$ for any $\beta\in\tilde{I}_1$. Denote that $E_{\beta}:=\left\{\alpha\in\mathbb{Z}_{\ge0}^{n_1}:\sum_{1\le j\le n_1}\frac{\alpha_j+1}{p_{j,\beta_j}}=1\right\}$, $E_{\beta,1}:=\left\{\alpha\in\mathbb{Z}_{\ge0}^{n_1}:\sum_{1\le j\le n_1}\frac{\alpha_j+1}{p_{j,\beta_j}}<1\right\}$ and $E_{\beta,2}:=\left\{\alpha\in\mathbb{Z}_{\ge0}^{n_1}:\sum_{1\le j\le n_1}\frac{\alpha_j+1}{p_{j,\beta_j}}>1\right\}$.

Now we prove $N\equiv 0$. It follows from Lemma \ref{l:limit} that we have
we have $F_{\alpha}\equiv0$ for any $\alpha\in E_{\beta,1}$, and
	\begin{equation}
\label{multi infinite 1}
\liminf_{t\rightarrow+\infty}\frac{\int_{\{\psi<-t\}}|F|^2e^{-\varphi}c(-\psi)}
{\int_t^{+\infty}c(s)e^{-s}ds}\ge\sum_{\beta\in\tilde I_1}\sum_{\alpha\in E_{\beta}}\frac{(2\pi)^{n_1}e^{-\sum_{1\le j\le n_1}\varphi_j(z_{j,\beta_j})}}{\prod_{1\le j\le n_1}(\alpha_j+1)c_{j,\beta_j}^{2\alpha_{j}+2}}\int_{Y}|F_{\alpha,\beta}|^2e^{-\varphi_Y-N(z_{\beta},w')}.
\end{equation}

Note that $\psi=\hat G+N\le\hat  G$, $c(t)$ is increasing near $+\infty$, $\sum_{1\le j\le n_1}\pi_{1,j}^*(\varphi_j)$  is upper semi-continuous on $\prod_{j=1}^{n_1}\Omega_j$. When $t$ is large enough, we have
	\begin{equation}\nonumber
\begin{split}
&\int_{\{\psi<-t\}}|F|^2e^{-\varphi}c(-\psi)\ge\int_{\{G<-t\}}|F|^2e^{-\sum_{1\le j\le n_1}\pi_{1,j}^*(\varphi_j) -
\pi_2^*(\varphi_Y)}c(-\hat G)\\
\ge &
C_0\int_{\{\hat G<-t\}}|F|^2e^{-
\pi_2^*(\varphi_Y)}c(-\hat G),
\end{split}
\end{equation}
where $C_0$ is a constant.
Then it follows from $\int_{\{\psi<-t\}}|F|^2e^{-\varphi}c(-\psi)<+\infty$ and Lemma \ref{decomp} that for any $\alpha\in E_{\beta,2}:=\left\{\alpha\in\mathbb{Z}_{\ge0}^{n_1}:\sum_{1\le j\le n_1}\frac{\alpha_j+1}{p_{j,\beta_j}}>1\right\}$ and any $\beta\in \mathcal{I}_1$, we know $F_{\alpha,\beta}\in\mathcal{I}(\varphi_Y)$ for any $y\in Y$.

Recall that
$$\hat{G}:=\max_{1\le j\le n_1}\left\{\pi_{1,j}^*\left(2\sum_{1\le k\le m_j}p_{j,k}G_{\Omega_j}(\cdot,z_{j,k})\right)\right\}.$$
Denote $\tilde{\varphi}:=\varphi+\psi-\hat G$. It follows from Lemma \ref{p:exten-fibra} that there exists a holomorphic $(n,0)$ form $\hat{F}$ on $M$ satisfying that $(\hat{F}-F,z)\in\big(\mathcal{O}(K_M)\otimes\mathcal{I}(\hat G+\pi_2^*(\varphi_Y))\big)_{z}$ for any $z\in Z_0$ and
\begin{equation}
\label{multi infinite 2}
	\begin{split}
	&\int_{M}|\hat{F}|^2e^{-\tilde{\varphi}}c(-\hat G)\\
\le&\left(\int_0^{+\infty}c(s)e^{-s}ds\right)\sum_{\beta\in\tilde I_1}\sum_{\alpha\in E_{\beta}}\frac{(2\pi)^{n_1}e^{-\sum_{1\le j\le n_1}\varphi_j(z_{j,\beta_j})}
\int_Y|F_{\alpha,\beta}|^2e^{-\varphi_{Y}-N(z_{\beta},w')}}{\prod_{1\le j\le n_1}(\alpha_j+1)c_{j,\beta_j}^{2\alpha_{j}+2}}.	
	\end{split}
\end{equation}
It follows from \eqref{multi infinite 1} and \eqref{multi infinite 2} that we have
\begin{equation}\label{multi infinite 3}
	\begin{split}
	\int_{M}|\hat{F}|^2e^{-\tilde{\varphi}}c(-\hat G)\le\int_M |F|^2e^{-\varphi}c(-\psi)
\le\int_M |\hat{F}|^2e^{-\varphi}c(-\psi).
	\end{split}
\end{equation}
As $c(t)e^{-t}$ is decreasing with respect to $t$ and $\psi\le \hat G$, we have $\int_{M}|\hat{F}|^2e^{-\tilde{\varphi}}c(-\hat G)\ge\int_M |\hat{F}|^2e^{-\varphi}c(-\psi)$. Then all ``$\ge$" in \eqref{multi infinite 3} should be ``$=$". It follows from Lemma \ref{decreasing property of l} that we know $N\equiv 0$ and $\psi=\max_{1\le j\le n_1}\left\{\pi_{1,j}^*\left(2\sum_{1\le k< \tilde{m}_j}p_{j,k}G_{\Omega_j}(\cdot,z_{j,k})\right)\right\}$.

As $N\equiv 0$, we know $\varphi_j$ is a subharmonic function on $\Omega_j$ satisfying $\varphi_j(z_j)>-\infty$ for any $1\le j\le n_1$ and
it follows from Lemma \ref{l:phi1+phi2} that $\mathcal{I}(\varphi+\psi)_{(z_j,y)}=\mathcal{I}\big(\hat G+\pi_2^*(\varphi_Y)\big)_{(z_j,y)}$ for any $(z_j,y)\in Z_0$. Note that $\psi=\max_{1\le j\le n_1}\left\{\pi_{1,j}^*\left(2\sum_{1\le k< \tilde{m}_j}p_{j,k}G_{\Omega_j}(\cdot,z_{j,k})\right)\right\}$. It follows from Theorem \ref{GBY6-infinite points} that we get a contradiction.
\end{proof}

\subsection{Proof of proposition \ref{p:M=M_1}}

\begin{proof}
As $\tilde{G}(h^{-1}(r))$ is linear with respect to $r\in (0,\int_0^{+\infty}c(s)e^{-s}ds]$, then according to Lemma \ref{linear}, there exists a unique holomorphic $(n,0)$ form $F$ on $\tilde{M}$ satisfying $(F-f)\in H^0(Z_0,\big(\mathcal{O}(K_{\tilde{M}})\otimes\mathcal{I}(\hat G+\pi_2^*(\varphi_Y)\big)|_{Z_0})$, and $G(t)=\int_{\{\psi<-t\}\cap \tilde{M}}|F|^2e^{-\varphi}c(-\psi)$ for any $t\geq 0$. Then according to Lemma \ref{linear}, Remark \ref{ctildec} and Lemma \ref{tildecincreasing}, we can assume that $c$ is increasing near $+\infty$.

It follows from Lemma \ref{decomp-tildeM} that there exists a sequence of  holomorphic  $(n_2,0)$  forms $\{F_{\alpha,\beta}\}_{\alpha\in\mathbb{Z}_{\ge0}^{n_1}}$ on $Y$ such that
$$F=\sum_{\alpha\in\mathbb{Z}_{\ge0}^{n_1}}\pi_1^*(w^{\alpha}_{\beta}dw_{1,\beta_1}\wedge\ldots\wedge dw_{n_1,\beta_{n_1}})\wedge\pi_2^*(F_{\alpha,\beta})$$
on a open neighborhood $U_{\beta}\subset (V_{\beta}\times Y)\cap \tilde{M}$ of $\{z_{\beta}\}\times Y$ for any $\beta\in\tilde{I}_1$. Denote that $E_{\beta}:=\left\{\alpha\in\mathbb{Z}_{\ge0}^{n_1}:\sum_{1\le j\le n_1}\frac{\alpha_j+1}{p_{j,\beta_j}}=1\right\}$, $E_{\beta,1}:=\left\{\alpha\in\mathbb{Z}_{\ge0}^{n_1}:\sum_{1\le j\le n_1}\frac{\alpha_j+1}{p_{j,\beta_j}}<1\right\}$ and $E_{\beta,2}:=\left\{\alpha\in\mathbb{Z}_{\ge0}^{n_1}:\sum_{1\le j\le n_1}\frac{\alpha_j+1}{p_{j,\beta_j}}>1\right\}$.

 It follows from Remark \ref{l:limit2} that we have
we have $F_{\alpha,\beta}\equiv0$ for any $\alpha\in E_{\beta,1}$, and
	\begin{equation}
\label{multi m1=m 1}
\frac{G(0)}
{\int_t^{+\infty}c(s)e^{-s}ds}\ge\sum_{\beta\in\tilde I_1}\sum_{\alpha\in E_{\beta}}\frac{(2\pi)^{n_1}e^{-\sum_{1\le j\le n_1}\varphi_j(z_{j,\beta_j})}}{\prod_{1\le j\le n_1}(\alpha_j+1)c_{j,\beta_j}^{2\alpha_{j}+2}}\int_{Y}|F_{\alpha,\beta}|^2e^{-\varphi_Y-N(z_{\beta},w')}.
\end{equation}

Note that $\psi=\hat G+N\le \hat G$, $c(t)$ is increasing near $+\infty$, $\sum_{1\le j\le n_1}\pi_{1,j}^*(\varphi_j)$ is upper semi-continuous on $\prod_{j=1}^{n_1}\Omega_j$. When $t$ is large enough, we have
	\begin{equation}\nonumber
\begin{split}
&\int_{\{\psi<-t\}\cap\tilde{M}}|F|^2e^{-\varphi}c(-\psi)\ge\int_{\{\hat G<-t\}\cap\tilde{M}}|F|^2e^{-\sum_{1\le j\le n_1}\pi_{1,j}^*(\varphi_j)-
\pi_2^*(\varphi_Y)}c(-\hat G)\\
\ge &
C_0\int_{\{\hat G<-t\}\cap\tilde{M}}|F|^2e^{-
\pi_2^*(\varphi_Y)}c(-\hat G),
\end{split}
\end{equation}
then it follows from $\int_{\{\psi<-t\}}|F|^2e^{-\varphi}c(-\psi)<+\infty$ and Lemma \ref{decomp-tildeM} that for any $\alpha\in E_{\beta,2}:=\left\{\alpha\in\mathbb{Z}_{\ge0}^{n_1}:\sum_{1\le j\le n_1}\frac{\alpha_j+1}{p_{j,\beta_j}}>1\right\}$ and any $\beta\in \mathcal{I}_1$, we know $F_{\alpha,\beta}\in\mathcal{I}(\varphi_Y)$ for any $y\in Y$.

Recall that $\hat G:=\max_{1\le j\le n_1}\left\{2\sum_{1\le k<\tilde m_j}p_{j,k}\pi_{1,j}^{*}(G_{\Omega_j}(\cdot,z_{j,k}))\right\}$ and $\tilde{\varphi}=\varphi+N$. It follows from Lemma \ref{p:exten-fibra} that there exists a holomorphic $(n,0)$ form $\tilde{F}$ on $\tilde{M}$ satisfying that $(\tilde{F}-F,z)\in(\mathcal{O}(K_{\tilde{M}})\otimes\mathcal{I}(\hat{G}+\pi_2^*(\varphi_Y)))_{z}$ for any $z\in Z_0$ and
\begin{equation}
\label{multi m1=m 2}
	\begin{split}
	&\int_{\tilde M}|\tilde{F}|^2e^{-\tilde\varphi}c(-\hat G)\\
\le&\left(\int_0^{+\infty}c(s)e^{-s}ds\right)\sum_{\beta\in\tilde I_1}\sum_{\alpha\in E_{\beta}}\frac{(2\pi)^{n_1}e^{-\sum_{1\le j\le n_1}\varphi_j(z_{j,\beta_j})}
\int_Y|f_{\alpha,\beta}|^2e^{-\varphi_{Y}-N(z_{\beta},w')}}{\prod_{1\le j\le n_1}(\alpha_j+1)c_{j,\beta_j}^{2\alpha_{j}+2}}.	
	\end{split}
\end{equation}
Combining with \eqref{multi m1=m 1} and \eqref{multi m1=m 2}, we have
\begin{equation}
\label{multi m1=m 3}\int_{\tilde M}|\tilde F|^2e^{-\varphi}c(-\psi)
\ge\int_{\tilde M}|F|^2e^{-\varphi}c(-\psi)\ge\int_{\tilde M}|\tilde{F}|^2e^{-\tilde\varphi}c(-\hat G).
\end{equation}
As $c(t)e^{-t}$ is decreasing with respect to $t$ and $\psi\le \hat G$, we have $\int_{\tilde M}|\tilde{F}|^2e^{-\tilde{\varphi}}c(-\hat G)\ge \int_{\tilde M}|\tilde{F}|^2e^{-\varphi}c(-\psi)$. Then all ``$\ge$" in \eqref{multi m1=m 3} should be ``$=$". It follows from Lemma \ref{decreasing property of l} that $N=0$ and $\psi=\max_{1\le j\le n_1}\left\{\pi_{1,j}^*\left(2\sum_{1\le k\le m_j}p_{j,k}G_{\Omega_j}(\cdot,z_{j,k})\right)\right\}$.

It follows from Lemma \ref{p:exten-fibra} ($\tilde{M}\sim M$) that there exists a holomorphic $(n,0)$ form $\hat{F}$ on $M$ satisfying that $(\hat{F}-F,z)\in\big(\mathcal{O}(K_M)\otimes\mathcal{I}(\hat G+\pi_2^*(\varphi_Y))\big)_{z}$ for any $z\in Z_0$ and
\begin{equation}
	\begin{split}
\label{multi m1=m 4}
	&\int_{M}|\hat{F}|^2e^{-\varphi}c(-\psi)\\
\le&\left(\int_0^{+\infty}c(s)e^{-s}ds\right)\sum_{\beta\in\tilde I_1}\sum_{\alpha\in E_{\beta}}\frac{(2\pi)^{n_1}e^{-\sum_{1\le j\le n_1}\varphi_j(z_{j,\beta_j})}
\int_Y|f_{\alpha,\beta}|^2e^{-\varphi_{Y}-N(z_{\beta},w')}}{\prod_{1\le j\le n_1}(\alpha_j+1)c_{j,\beta_j}^{2\alpha_{j}+2}}.	
	\end{split}
\end{equation}
 We also have
	\begin{equation}
\label{multi m1=m 5}
\frac{\tilde G(0)}
{\int_t^{+\infty}c(s)e^{-s}ds}=\frac{\int_{ \tilde{M}}|F|^2e^{-\varphi}c(-\psi)}
{\int_t^{+\infty}c(s)e^{-s}ds}
\le\frac{\int_{\tilde{M}}|\hat{F}|^2e^{-\varphi}c(-\psi)}
{\int_t^{+\infty}c(s)e^{-s}ds}.
\end{equation}
Combining \eqref{multi m1=m 1}, \eqref{multi m1=m 4} and \eqref{multi m1=m 5}, we have
$$\int_{ \tilde{M}}|\hat{F}|^2e^{-\varphi}c(-\psi)=\int_{ M}|\hat{F}|^2e^{-\varphi}c(-\psi).$$
As $\hat{F}\neq 0$, the above equality shows that $M=\tilde{M}$.

\end{proof}

\section{ Proofs of Theorem \ref{app-L2 equality} and Theorem \ref{app-L2 inequality}}
In this section, we prove Theorem \ref{app-L2 equality} and Theorem \ref{app-L2 inequality}.

\subsection{Proof of Theorem \ref{app-L2 equality}}
\begin{proof}

It follows from Lemma \ref{optimal extension} that we know there exists a holomorphic $(n,0)$ form $F$ on $\tilde M$ such that $(F-f,(z_j,y))\in\big(\mathcal{O}(K_{\tilde{M}})\otimes\mathcal{I}(\pi_1^*(2\log |g|)+\pi_2^*(\varphi_2))\big)_{(z_j,y)}$ for any $(z_j,y)\in Z_0$ and

	\begin{equation}
			\int_{\tilde{M}}|F|^2e^{-\varphi}c(-\psi)\leq
\left(\int_0^{+\infty}c(s)e^{-s}ds\right)
 \sum_{j=1}^{m}\frac{2\pi|a_j|^2}{p_j|d_j|^2}\int_Y|F_j|^2e^{-\varphi_2-\tilde{\psi}_2(z_j,w)}.
		\end{equation}

In the following, we prove the characterization of the holding of the equality $\left(\int_0^{+\infty}c(s)e^{-s}ds\right)
 \sum_{j=1}^{m}\frac{2\pi|a_j|^2}{p_j|d_j|^2}\int_Y|F_j|^2e^{-\varphi_2-\tilde{\psi}_2(z_j,w)}=\inf\big\{$ $ \int_{\tilde{M}}|\tilde{F}|^2e^{-\varphi}c(-\psi):\tilde{F}$ is a holomorphic $(n,0)$ form on $\tilde{M}$ such that $\big(\tilde{F}-f,(z_j,y)\big)\in\big(\mathcal{O}(K_{\tilde{M}})\otimes\mathcal{I}(\pi_1^*(2\log |g|)+\pi_2^*(\varphi_2))\big)_{(z_j,y)}$ for any $(z_j,y)\in Z_0\big\}$.

We firstly give the proof of the sufficiency in Theorem \ref{app-L2 equality}.

As $\psi+\pi_1^*(\varphi_1)=\pi_1^*(2\sum_{j=1}^mp_jG_{\Omega}(\cdot,z_j)+\varphi_1)$ is a plurisubharmonic function on $M$, by definition, we know that $2\sum_{j=1}^mp_jG_{\Omega}(\cdot,z_j)+\varphi_1$ is a subharmonic function on $\Omega$. By the construction of $g\in \mathcal{O}_{\Omega}$ , we have $2u(z):=2\sum_{j=1}^mp_jG_{\Omega}(\cdot,z_j)+\varphi_1-2\log|g|$ is a subharmonic function on $\Omega$ which satisfies $v(dd^cu,z)\in[0,1)$ for any $z\in Z_{\Omega}$. Then it follows from Lemma \ref{equiv of multiplier ideal sheaf} that we know $\mathcal{I}(\varphi+\psi)_{(z_j,y)}=\mathcal{I}\big(\pi_1^{*}(2\log |g|)+\pi_2^{*}(\varphi_2)\big)_{(z_j,y)}$ for any $(z_j,y)\in Z_0$. It follows from Theorem \ref{GBY-APP1} that we know the sufficiency part of  Theorem \ref{app-L2 equality} holds.

Next we prove the necessity part of Theorem \ref{app-L2 equality}.

Denote that $\tilde{\psi}:=\pi_1^*\big(2\sum_{j=1}^mp_jG_{\Omega}(\cdot,z_j)\big)$, $\tilde{\varphi}_1:=\pi_1^*(\varphi_1)+\psi-\tilde{\psi}$, and $\tilde{\varphi}:=\tilde{\varphi}_1+\pi_2^*(\varphi_2)$.
It follows from Lemma \ref{optimal extension} that there exists a holomorphic $(n,0)$ form $\tilde{F}_1\neq 0$ on $\tilde {M}$ such that $\big(\tilde{F}_1-f,(z_j,y)\big)\in\big(\mathcal{O}(K_{\tilde M})\otimes\mathcal{I}(\pi_1^*(2\log |g|)+\pi_2^*(\varphi_2))\big)_{(z_j,y)}$ for any $(z_j,y)\in Z_0$ and
		\begin{flalign}
			\begin{split}
				&\int_{\tilde M}|\tilde{F}_1|^2e^{-\tilde{\varphi}}c\left(-\tilde{\psi}\right)\\
				\leq&\left(\int_0^{+\infty}c(s)e^{-s}ds\right)
\sum_{j=1}^{m}\frac{2\pi|a_j|^2}{p_j|d_j|^2}\int_Y|F_j|^2e^{-\varphi_2-\tilde{\psi}_2(z_j,w)}.
			\end{split}
		\end{flalign}	

As $\left(\int_0^{+\infty}c(s)e^{-s}ds\right)
 \sum_{j=1}^{m}\frac{2\pi|a_j|^2}{p_j|d_j|^2}\int_Y|F_j|^2e^{-\varphi_2-\tilde{\psi}_2(z_j,w)}=\inf\big\{$ $ \int_{\tilde M}|\tilde{F}|^2e^{-\varphi}c(-\psi):\tilde{F}$ is a holomorphic $(n,0)$ form on $\tilde{M}$ such that $\big(\tilde{F}-f,(z_j,y)\big)\in\big(\mathcal{O}(K_{\tilde{M}})\otimes\mathcal{I}(\pi_1^*(2\log |g|)+\pi_2^*(\varphi_2))\big)_{(z_j,y)}$ for any $(z_j,y)\in Z_0\big\}$ holds, we know
 $$\int_{\tilde{M}}|\tilde{F}_1|^2e^{-\varphi}c\left(-\psi\right)
				\geq\left(\int_0^{+\infty}c(s)e^{-s}ds\right)
\sum_{j=1}^{m}\frac{2\pi|a_j|^2}{p_j|d_j|^2}\int_Y|F_j|^2e^{-\varphi_2-\tilde{\psi}_2(z_j,w)}.$$
Note that $c(t)e^{-t}$ is decreasing on $(0,+\infty)$, $\psi\leq \tilde{\psi}$. Hence we must have
$$\int_{\tilde{M}}|\tilde{F}_1|^2e^{-\varphi}c\left(-\psi\right)
=\int_{\tilde M}|\tilde{F}_1|^2e^{-\tilde{\varphi}}c\left(-\tilde{\psi}\right).$$
It follows from Lemma \ref{decreasing property of l} that we have $N\equiv 0$ and $\psi=\pi_1^*\big(2\sum_{j=1}^mp_jG_{\Omega}(\cdot,z_j)\big)$.

It follows from Lemma \ref{optimal extension} ($\tilde{M}\sim M$) that there exists a holomorphic $(n,0)$ form $\hat{F}\neq 0$ on $M$ such that $\big(\hat{F}-f,(z_j,y)\big)\in\big(\mathcal{O}(K_{ M})\otimes\mathcal{I}(\pi_1^*(2\log |g|)+\pi_2^*(\varphi_2))\big)_{(z_j,y)}$ for any $(z_j,y)\in Z_0$ and
		\begin{flalign}
			\begin{split}
				&\int_{ M}|\hat{F}|^2e^{-\varphi}c\left(-\psi\right)\\
				\leq&\left(\int_0^{+\infty}c(s)e^{-s}ds\right)
\sum_{j=1}^{m}\frac{2\pi|a_j|^2}{p_j|d_j|^2}\int_Y|F_j|^2e^{-\varphi_2-\tilde{\psi}_2(z_j,w)}.
			\end{split}
		\end{flalign}	
As $\left(\int_0^{+\infty}c(s)e^{-s}ds\right)
 \sum_{j=1}^{m}\frac{2\pi|a_j|^2}{p_j|d_j|^2}\int_Y|F_j|^2e^{-\varphi_2-\tilde{\psi}_2(z_j,w)}=\inf\big\{$ $ \int_{\tilde M}|\tilde{F}|^2e^{-\varphi}c(-\psi):\tilde{F}$ is a holomorphic $(n,0)$ form on $\tilde{M}$ such that $\big(\tilde{F}-f,(z_j,y)\big)\in\big(\mathcal{O}(K_{\tilde{M}})\otimes\mathcal{I}(\pi_1^*(2\log |g|)+\pi_2^*(\varphi_2))\big)_{(z_j,y)}$ for any $(z_j,y)\in Z_0\big\}$ holds, we know
	\begin{flalign*}
			\begin{split}
&\left(\int_0^{+\infty}c(s)e^{-s}ds\right)
\sum_{j=1}^{m}\frac{2\pi|a_j|^2}{p_j|d_j|^2}\int_Y|F_j|^2e^{-\varphi_2-\tilde{\psi}_2(z_j,w)}\\
\leq&\int_{\tilde M}|\hat{F}|^2e^{-\varphi}c\left(-\psi\right)\\
				\leq&\int_{ M}|\hat{F}|^2e^{-\varphi}c\left(-\psi\right)\\
				\leq&\left(\int_0^{+\infty}c(s)e^{-s}ds\right)
\sum_{j=1}^{m}\frac{2\pi|a_j|^2}{p_j|d_j|^2}\int_Y|F_j|^2e^{-\varphi_2-\tilde{\psi}_2(z_j,w)}.
			\end{split}
		\end{flalign*}	
As $\hat{F}\neq 0$, we have $\tilde M=M$.

Now $\psi+\pi_1^*(\varphi_1)=\pi_1^*(2\sum_{j=1}^mp_jG_{\Omega}(\cdot,z_j)+\varphi_1)$ is a plurisubharmonic function on $M$. By definition, we know that $2\sum_{j=1}^mp_jG_{\Omega}(\cdot,z_j)+\varphi_1$ is a subharmonic function on $\Omega$. By the construction of $g\in \mathcal{O}_{\Omega}$, we have $2u(z):=2\sum_{j=1}^mp_jG_{\Omega}(\cdot,z_j)+\varphi_1-2\log|g|$ is a subharmonic function on $\Omega$  and satisfies $v(dd^cu,z)\in[0,1)$ for any $z\in Z_{\Omega}$. Then it follows from Lemma \ref{equiv of multiplier ideal sheaf} that we know $\mathcal{I}(\varphi+\psi)_{(z_j,y)}=\mathcal{I}\big(\pi_1^{*}(2\log |g|)+\pi_2^{*}(\varphi_2)\big)_{(z_j,y)}$ for any $(z_j,y)\in Z_0$.

Then it follows from the necessity part of Theorem \ref{GBY-APP1} that we know Theorem \ref{app-L2 equality} holds.

\end{proof}

\subsection{Proof of Theorem \ref{app-L2 inequality}}

\begin{proof}
It follows from Lemma \ref{optimal extension} that we know there exists a holomorphic $(n,0)$ form $F$ on $M$ such that $\big(F-f,(z_j,y)\big)\in(\mathcal{O}(K_{\tilde M})\otimes\mathcal{I}\big(\pi_1^*(2\log |g|)+\pi_2^*(\varphi_2)\big)_{(z_j,y)}$ for any $(z_j,y)\in Z_0$ and
		\begin{equation}\nonumber
			\int_{\tilde M}|F|^2e^{-\varphi}c(-\psi)\leq
\left(\int_0^{+\infty}c(s)e^{-s}ds\right)
 \sum_{j=1}^{+\infty}\frac{2\pi|a_j|^2}{(k_j+1)|d_j|^2}\int_Y|F_j|^2e^{-\varphi_2-\tilde{\psi}_2(z_j,w)}.
		\end{equation}

Now it suffices to show that the equality $\left(\int_0^{+\infty}c(s)e^{-s}ds\right)
 \sum_{j=1}^{+\infty}\frac{2\pi|a_j|^2}{(k_j+1)|d_j|^2}\int_Y|F_j|^2e^{-\varphi_2-\tilde{\psi}_2(z_j,w)}
 =\inf\big\{ \int_{\tilde M}|\tilde{F}|^2e^{-\varphi}c(-\psi):\tilde{F}$ is a holomorphic $(n,0)$ form on $\tilde M$ such that $\big(\tilde{F}-f,(z_j,y)\big)\in\big(\mathcal{O}(K_{\tilde M})\otimes\mathcal{I}(\pi_1^*(2\log |g|)+\pi_2^*(\varphi_2))\big)_{(z_j,y)}$ for any $(z_j,y)\in Z_0\big\}$ can not hold. We assume that the equality holds to get a contradiction.

Denote that $\tilde{\psi}:=\pi_1^*\big(2\sum_{j=1}^m(k_j+1)G_{\Omega}(\cdot,z_j)\big)$, $\tilde{\varphi}_1:=\pi_1^*(\varphi_1)+\psi-\tilde{\psi}$, and $\tilde{\varphi}:=\tilde{\varphi}_1+\pi_2^*(\varphi_2)$.
It follows from Lemma \ref{optimal extension} that there exists a holomorphic $(n,0)$ form $\tilde{F}_1\neq 0$ on $\tilde M$ such that $\big(\tilde{F}_1-f,(z_j,y)\big)\in\big(\mathcal{O}(K_{\tilde M})\otimes\mathcal{I}(\pi_1^*(2\log |g|)+\pi_2^*(\varphi_2))\big)_{(z_j,y)}$ for any $(z_j,y)\in Z_0$ and
		\begin{flalign}
			\begin{split}
				&\int_{\tilde M}|\tilde{F}_1|^2e^{-\tilde{\varphi}}c\left(-\tilde{\psi}\right)\\
				\leq&\left(\int_0^{+\infty}c(s)e^{-s}ds\right)
\sum_{j=1}^{+\infty}\frac{2\pi|a_j|^2}{(k_j+1)|d_j|^2}\int_Y|F_j|^2e^{-\varphi_2-\tilde{\psi}_2(z_j,w)}.
			\end{split}
		\end{flalign}

As $\left(\int_0^{+\infty}c(s)e^{-s}ds\right)
 \sum_{j=1}^{+\infty}\frac{2\pi|a_j|^2}{(k_j+1)|d_j|^2}\int_Y|F_j|^2e^{-\varphi_2-\tilde{\psi}_2(z_j,w)}=\inf\big\{$ $ \int_{\tilde M}|\tilde{F}|^2e^{-\varphi}c(-\psi):\tilde{F}$ is a holomorphic $(n,0)$ form on $\tilde M$ such that $\big(\tilde{F}-f,(z_j,y)\big)\in\big(\mathcal{O}(K_{\tilde M})\otimes\mathcal{I}(\pi_1^*(2\log |g|)+\pi_2^*(\varphi_2))\big)_{(z_j,y)}$ for any $(z_j,y)\in Z_0\big\}$, we know
 $$\int_{\tilde M}|\tilde{F}_1|^2e^{-\varphi}c\left(-\psi\right)
				\geq\left(\int_0^{+\infty}c(s)e^{-s}ds\right)
\sum_{j=1}^{+\infty}\frac{2\pi|a_j|^2}{(k_j+1)|d_j|^2}\int_Y|F_j|^2e^{-\varphi_2-\tilde{\psi}_2(z_j,w)}.$$
Note that $c(t)e^{-t}$ is decreasing on $(0,+\infty)$ and $\psi\leq \tilde{\psi}$ . Hence we must have
$$\int_{\tilde M}|\tilde{F}_1|^2e^{-\varphi}c\left(-\psi\right)
=\int_{\tilde M}|\tilde{F}_1|^2e^{-\tilde{\varphi}}c\left(-\tilde{\psi}\right).$$
Hence, by Lemma \ref{decreasing property of l}, we have $N\equiv 0$ and $\psi=\pi_1^*\big(2\sum_{j=1}^{+\infty}(k_j+1)G_{\Omega}(\cdot,z_j)\big)$.

It follows from Lemma \ref{optimal extension} ($\tilde{M}\sim M$) that there exists a holomorphic $(n,0)$ form $\hat{F}\neq 0$ on $M$ such that $\big(\hat{F}-f,(z_j,y)\big)\in\big(\mathcal{O}(K_{ M})\otimes\mathcal{I}(\pi_1^*(2\log |g|)+\pi_2^*(\varphi_2))\big)_{(z_j,y)}$ for any $(z_j,y)\in Z_0$ and
		\begin{flalign}
			\begin{split}
				&\int_{ M}|\hat{F}|^2e^{-\varphi}c\left(-\psi\right)\\
				\leq&\left(\int_0^{+\infty}c(s)e^{-s}ds\right)
\sum_{j=1}^{m}\frac{2\pi|a_j|^2}{(k_j+1)|d_j|^2}\int_Y|F_j|^2e^{-\varphi_2-\tilde{\psi}_2(z_j,w)}.
			\end{split}
		\end{flalign}	
As $\left(\int_0^{+\infty}c(s)e^{-s}ds\right)
 \sum_{j=1}^{m}\frac{2\pi|a_j|^2}{p_j|d_j|^2}\int_Y|F_j|^2e^{-\varphi_2-\tilde{\psi}_2(z_j,w)}=\inf\big\{$ $ \int_{\tilde M}|\tilde{F}|^2e^{-\varphi}c(-\psi):\tilde{F}$ is a holomorphic $(n,0)$ form on $\tilde{M}$ such that $\big(\tilde{F}-f,(z_j,y)\big)\in\big(\mathcal{O}(K_{\tilde{M}})\otimes\mathcal{I}(\pi_1^*(2\log |g|)+\pi_2^*(\varphi_2))\big)_{(z_j,y)}$ for any $(z_j,y)\in Z_0\big\}$ holds, we know
	\begin{flalign*}
			\begin{split}
&\left(\int_0^{+\infty}c(s)e^{-s}ds\right)
\sum_{j=1}^{m}\frac{2\pi|a_j|^2}{(k_j+1)|d_j|^2}\int_Y|F_j|^2e^{-\varphi_2-\tilde{\psi}_2(z_j,w)}\\
\leq&\int_{\tilde M}|\hat{F}|^2e^{-\varphi}c\left(-\psi\right)\\
				\leq&\int_{ M}|\hat{F}|^2e^{-\varphi}c\left(-\psi\right)\\
				\leq&\left(\int_0^{+\infty}c(s)e^{-s}ds\right)
\sum_{j=1}^{m}\frac{2\pi|a_j|^2}{(k_j+1)|d_j|^2}\int_Y|F_j|^2e^{-\varphi_2-\tilde{\psi}_2(z_j,w)}.
			\end{split}
		\end{flalign*}	
As $\hat{F}\neq 0$, we have $\tilde M=M$.

Now $\psi+\pi_1^*(\varphi_1)=\pi_1^*(2\sum_{j=1}^{+\infty}(k_j+1)G_{\Omega}(\cdot,z_j)+\varphi_1)$ is a plurisubharmonic function on $M$, by definition, we know that $2\sum_{j=1}^m(k_j+1)G_{\Omega}(\cdot,z_j)+\varphi_1$ is a subharmonic function on $\Omega$. By the construction of $g\in \mathcal{O}_{\Omega}$, we have $2u(z):=2\sum_{j=1}^{+\infty}(k_j+1)G_{\Omega}(\cdot,z_j)+\varphi_1-2\log|g|$ is a subharmonic function on $\Omega$ and satisfies $v(dd^cu,z)\in[0,1)$ for any $z\in Z_{\Omega}$. Then it follows from Lemma \ref{equiv of multiplier ideal sheaf} that we know $\mathcal{I}(\varphi+\psi)_{(z_j,y)}=\mathcal{I}\big(\pi_1^{*}(2\log |g|)+\pi_2^{*}(\varphi_2)\big)_{(z_j,y)}$ for any $(z_j,y)\in Z_0$.

Then it follows from  Theorem \ref{GBY-APP2} that we know Theorem \ref{app-L2 inequality} holds.

\end{proof}

\section{ Proofs of Theorem \ref{thm:exten-fibra-single} and Theorem \ref{thm:exten-fibra-finite} and Theorem \ref{thm:exten-fibra-infinite}}
\label{section of multi extension}

In this section, we prove Theorem \ref{thm:exten-fibra-single} and Theorem \ref{thm:exten-fibra-finite} and Theorem \ref{thm:exten-fibra-infinite}.

\subsection{Proofs of Theorem \ref{thm:exten-fibra-single} and Remark \ref{rem:exten-fibra-single}}

\begin{proof}[Proof of Theorem \ref{thm:exten-fibra-single}]
It follows from Remark \ref{remark after extension} that there exists a holomorphic $(n,0)$ form $F$ on $\tilde{M}$ satisfying that $(F-f,z)\in\left(\mathcal{O}(K_{\tilde{M}})\otimes\mathcal{I}\left(\max_{1\le j\le n_1}\left\{2p_j\pi_{1,j}^{*}(G_{\Omega_j}(\cdot,z_j))\right\}\right)\right)_{z}$ for any $z\in Z_0$ and
\begin{displaymath}
	\begin{split}
	&\int_{\tilde{M}}|F|^2e^{-\varphi}c(-\psi)\\
	\le&\left(\int_0^{+\infty}c(s)e^{-s}ds\right)\sum_{\alpha\in E}\frac{(2\pi)^{n_1}\int_Y|f_{\alpha}|^2e^{-\varphi_Y-\left(N+\pi_{1,j}^*(\sum_{1\le j\le n_1}\varphi_j)\right)(z_0,w)}}{\prod_{1\le j\le n_1}(\alpha_j+1)c_{j}(z_j)^{2\alpha_{j}+2}}.	
	\end{split}
\end{displaymath}

In the following, we prove the characterization of the holding of the equality. It follows from Theorem \ref{GBY6-exten-fibra-single} that we only need to show the necessity.

Denote $\tilde{\varphi}:=\varphi+\psi-\hat G$. It follows from Remark \ref{remark after extension} that there exists a holomorphic $(n,0)$ form $\hat{F}$ on $\tilde M$ satisfying that $(\hat{F}-F,z)\in\big(\mathcal{O}(K_{\tilde M})\otimes\mathcal{I}(\hat G)\big)_{z}$ for any $z\in Z_0$ and
\begin{equation}
	\begin{split}
	&\int_{\tilde M}|\hat{F}|^2e^{-\tilde{\varphi}}c(-\hat G)\\
\le&\left(\int_0^{+\infty}c(s)e^{-s}ds\right)\sum_{\alpha\in E}\frac{(2\pi)^{n_1}
\int_Y|f_{\alpha}|^2e^{-\varphi_{Y}-\left(N+\pi_{1,j}^*(\sum_{1\le j\le n_1}\varphi_j)\right)(z_0,w)}}{\prod_{1\le j\le n_1}(\alpha_j+1)c_{j}^{2\alpha_{j}+2}(z_j)}.	
	\end{split}
\end{equation}

When the equality $\inf\big\{\int_{\tilde{M}}|\tilde{F}|^2e^{-\varphi}c(-\psi):\tilde{F}\in H^0(\tilde{M},\mathcal{O}(K_{\tilde{M}}))\,\&\, (\tilde{F}-f,z)\in(\mathcal{O}\left(K_{\tilde{M}})\otimes\mathcal{I}\left(\max_{1\le j\le n_1}\left\{2p_j\pi_{1,j}^{*}(G_{\Omega_j}(\cdot,z_j))\right\}\right)\right)_{z}$ for any $z\in Z_0\big\}=\left(\int_0^{+\infty}c(s)e^{-s}ds\right)\times\sum_{\alpha\in E}\frac{(2\pi)^{n_1}\int_Y|f_{\alpha}|^2e^{-\varphi_Y-\left(N+\pi_{1,j}^*(\sum_{1\le j\le n_1}\varphi_j)\right)(z_0,w)}}{\prod_{1\le j\le n_1}(\alpha_j+1)c_{j}(z_j)^{2\alpha_{j}+2}}$ holds, we have
$$\int_{\tilde{M}}|\hat{F}|^2e^{-\varphi}c(-\psi)\ge \left(\int_0^{+\infty}c(s)e^{-s}ds\right)\sum_{\alpha\in E}\frac{(2\pi)^{n_1}
\int_Y|f_{\alpha}|^2e^{-\varphi_{Y}-\left(N+\pi_{1,j}^*(\sum_{1\le j\le n_1}\varphi_j)\right)(z_0,w)}}{\prod_{1\le j\le n_1}(\alpha_j+1)c_{j}^{2\alpha_{j}+2}(z_j)}.$$

Note that $c(t)e^{-t}$ is decreasing on $(0,+\infty)$, $\psi\leq \hat G$. Hence we must have
$$\int_{\tilde{M}}|\hat{F}|^2e^{-\varphi}c\left(-\psi\right)
=\int_{\tilde M}|\hat{F}|^2e^{-\tilde{\varphi}}c\left(-\hat G\right).$$
It follows from Lemma \ref{decreasing property of l} that we have $N\equiv 0$ and $\psi=\max_{1\le j\le n_1}\left\{2p_j\pi_{1,j}^{*}(G_{\Omega_j}(\cdot,z_j))\right\}$.

As $N\equiv 0$, we know $\varphi_j$ is a subharmonic function on $\Omega_j$ satisfying $\varphi_j(z_j)>-\infty$ for any $1\le j\le n_1$ and
it follows from Lemma \ref{l:phi1+phi2} that $\mathcal{I}(\varphi+\psi)_{(z_j,y)}=\mathcal{I}\big(\hat G+\pi_2^*(\varphi_2)\big)_{(z_j,y)}$ for any $(z_j,y)\in Z_0$. Note that $\psi=\max_{1\le j\le n_1}\big\{2p_j\pi_{1,j}^{*}( G_{\Omega_j}(\cdot,z_j))\big\}$.
Then It follows from the necessity part of Theorem \ref{GBY6-exten-fibra-single} that Theorem \ref{thm:exten-fibra-single} holds.
\end{proof}

\begin{proof}[Proof of Remark \ref{rem:exten-fibra-single}]
As $(f_{\alpha},y)\in\big(\mathcal{O}(K_Y)\otimes\mathcal{I}(\varphi_Y)\big)_y$ for any $y\in Y$ and $\alpha\in \tilde E\backslash E$, it follows from Lemma \ref{p:exten-fibra} that there exists a holomorphic $(n,0)$ form $F$ on $\tilde M$ satisfying that $(F-f,(z,y))\in\left(\mathcal{O}(K_{\tilde M})\otimes\mathcal{I}\left(\hat G+\pi_2^*(\varphi_Y)\right)\right)_{(z,y)}$ for any $(z,y)\in Z_0$ and
\begin{equation}\nonumber
	\begin{split}
	&\int_{\tilde{M}}|F|^2e^{-\varphi}c(-\psi)\\
	\le&\left(\int_0^{+\infty}c(s)e^{-s}ds\right)\sum_{\alpha\in E}\frac{(2\pi)^{n_1}\int_Y|f_{\alpha}|^2e^{-\varphi_Y-\left(N+\pi_{1,j}^*(\sum_{1\le j\le n_1}\varphi_j)\right)(z_0,w)}}{\prod_{1\le j\le n_1}(\alpha_j+1)c_{j}^{2\alpha_{j}+2}(z_j)}.	
	\end{split}
\end{equation}

In the following, we prove the characterization of the holding of the equality.

When $N\equiv 0$, we know $\varphi_j$ is a subharmonic function on $\Omega_j$ satisfying $\varphi_j(z_j)>-\infty$ for any $1\le j\le n_1$ and
it follows from Lemma \ref{l:phi1+phi2} that $\mathcal{I}(\varphi+\psi)_{(z_j,y)}=\mathcal{I}\big(\hat G+\pi_2^*(\varphi_2)\big)_{(z_j,y)}$ for any $(z_j,y)\in Z_0$. Note that $\psi=\max_{1\le j\le n_1}\big\{2p_j\pi_{1,j}^{*}(G_{\Omega_j}(\cdot,z_j))\big\}$.
It follows from Remark \ref{GBY6-exten-fibra-single-rem} that we have the sufficiency of Remark \ref{rem:exten-fibra-single}.

Next we show the necessity of Remark \ref{rem:exten-fibra-single}.

Denote $\tilde{\varphi}:=\varphi+\psi-\hat G$. It follows from Remark \ref{remark after extension} that there exists a holomorphic $(n,0)$ form $\hat{F}$ on $\tilde M$ satisfying that $(\hat{F}-F,z)\in\big(\mathcal{O}(K_{\tilde M})\otimes\mathcal{I}(\hat G+\pi_2^*(\varphi_Y))\big)_{z}$ for any $z\in Z_0$ and
\begin{equation}
	\begin{split}\label{rem-extension-single 1}
	&\int_{\tilde M}|\hat{F}|^2e^{-\tilde{\varphi}}c(-\hat G)\\
\le&\left(\int_0^{+\infty}c(s)e^{-s}ds\right)\sum_{\alpha\in E}\frac{(2\pi)^{n_1}
\int_Y|f_{\alpha}|^2e^{-\varphi_{Y}-\left(N+\pi_{1,j}^*(\sum_{1\le j\le n_1}\varphi_j)\right)(z_0,w)}}{\prod_{1\le j\le n_1}(\alpha_j+1)c_{j}^{2\alpha_{j}+2}(z_j)}.	
	\end{split}
\end{equation}

 Now we know the equality $\inf\big\{\int_{\tilde{M}}|\tilde{F}|^2e^{-\varphi}c(-\psi):\tilde{F}\in H^0(\tilde{M},\mathcal{O}(K_{\tilde{M}}))\,\&\, \big(\tilde{F}-f,(z,y)\big)\in\big(\mathcal{O}(K_{\tilde{M}})\otimes\mathcal{I}(\hat G+\pi_2^*(\varphi_Y))\big)_{(z,y)}$ for any $(z,y)\in Z_0\big\}=\left(\int_0^{+\infty}c(s)e^{-s}ds\right)\times\sum_{\alpha\in E}\frac{(2\pi)^{n_1}\int_Y|f_{\alpha}|^2e^{-\varphi_Y-\left(N+\pi_{1,j}^*(\sum_{1\le j\le n_1}\varphi_j)\right)(z_0,w)}}{\prod_{1\le j\le n_1}(\alpha_j+1)c_{j}(z_j)^{2\alpha_{j}+2}}$ holds, then we have

\begin{equation}
 \label{rem-extension-single 2}
\begin{split}
&\int_{\tilde{M}}|\hat F|^2e^{-\varphi}c(-\psi)\\
\ge& \left(\int_0^{+\infty}c(s)e^{-s}ds\right)\sum_{\alpha\in E}\frac{(2\pi)^{n_1}
\int_Y|f_{\alpha}|^2e^{-\varphi_{Y}-\left(N+\pi_{1,j}^*(\sum_{1\le j\le n_1}\varphi_j)\right)(z_0,w)}}{\prod_{1\le j\le n_1}(\alpha_j+1)c_{j}^{2\alpha_{j}+2}(z_j)}.
\end{split}
\end{equation}

It follows from \eqref{rem-extension-single 1} and \eqref{rem-extension-single 2} that we  have
$$\int_{\tilde{M}}|\hat F|^2e^{-\varphi}c\left(-\psi\right)
=\int_{\tilde M}|\hat F|^2e^{-\tilde{\varphi}}c\left(-\hat G\right).$$
Note that $c(t)e^{-t}$ is decreasing on $(0,+\infty)$, $\psi\leq \hat G$.
It follows from Lemma \ref{decreasing property of l} that we have $N\equiv 0$ and $\psi=\max_{1\le j\le n_1}\left\{2p_j\pi_{1,j}^{*}(G_{\Omega_j}(\cdot,z_j))\right\}$.

As $N\equiv 0$, we know $\varphi_j$ is a subharmonic function on $\Omega_j$ satisfying $\varphi_j(z_j)>-\infty$ for any $1\le j\le n_1$ and
it follows from Lemma \ref{l:phi1+phi2} that $\mathcal{I}(\varphi+\psi)_{(z_j,y)}=\mathcal{I}\big(\hat G+\pi_2^*(\varphi_Y)\big)_{(z_j,y)}$ for any $(z_j,y)\in Z_0$. Note that $\psi=\max_{1\le j\le n_1}\big\{2p_j\pi_{1,j}^{*}(G_{\Omega_j}(\cdot,z_j))\big\}$.
Then It follows from Remark \ref{GBY6-exten-fibra-single-rem} that Remark \ref{rem:exten-fibra-single} holds.
\end{proof}

\subsection{Proofs of Theorem \ref{thm:exten-fibra-finite} and Remark \ref{rem:exten-fibra-finite}}

\begin{proof}[Proof of Theorem \ref{thm:exten-fibra-finite}]
It follows from Remark \ref{remark after extension} that there exists a holomorphic $(n,0)$ form $F$ on $\tilde{M}$ satisfying that $(F-f,z)\in\left(\mathcal{O}(K_{\tilde{M}})\otimes\mathcal{I}\left(\max_{1\le j\le n_1}\left\{2\sum_{1\le k\le m_j}p_{j,k}\pi_{1,j}^{*}(G_{\Omega_j}(\cdot,z_{j,k}))\right\}\right)\right)_{z}$ for any $z\in Z_0$ and
\begin{displaymath}
	\begin{split}
	&\int_{\tilde{M}}|F|^2e^{-\varphi}c(-\psi)\\
	\le&\left(\int_0^{+\infty}c(s)e^{-s}ds\right)\sum_{\beta\in \tilde{I}_1}\sum_{\alpha\in E_{\beta}}\frac{(2\pi)^{n_1}\int_Y|f_{\alpha,\beta}|^2e^{-\varphi_Y-\left(N+\pi_{1,j}^*(\sum_{1\le j\le n_1}\varphi_j)\right)(z_{\beta},w)}}{\prod_{1\le j\le n_1}(\alpha_j+1)c_{j,\beta_j}^{2\alpha_{j}+2}}.	
	\end{split}
\end{displaymath}
	
In the following, we prove the characterization of the holding of the equality. It follows from Theorem \ref{GBY6:exten-fibra-finite} that we only need to show the necessity.

Denote $\tilde{\varphi}:=\varphi+\psi-\hat G$. It follows from Remark \ref{remark after extension} that there exists a holomorphic $(n,0)$ form $\hat{F}$ on $\tilde M$ satisfying that $(\hat{F}-F,z)\in\big(\mathcal{O}(K_{\tilde M})\otimes\mathcal{I}(\hat G)\big)_{z}$ for any $z\in Z_0$ and
\begin{equation}
	\begin{split}
	&\int_{\tilde M}|\hat{F}|^2e^{-\tilde{\varphi}}c(-\hat G)\\
\le&\left(\int_0^{+\infty}c(s)e^{-s}ds\right)\sum_{\beta\in \tilde{I}_1}\sum_{\alpha\in E_{\beta}}\frac{(2\pi)^{n_1}\int_Y|f_{\alpha,\beta}|^2e^{-\varphi_Y-\left(N+\pi_{1,j}^*(\sum_{1\le j\le n_1}\varphi_j)\right)(z_{\beta},w)}}{\prod_{1\le j\le n_1}(\alpha_j+1)c_{j,\beta_j}^{2\alpha_{j}+2}}.
	\end{split}
\end{equation}

When the equality $\inf\bigg\{\int_{\tilde{M}}|\tilde{F}|^2e^{-\varphi}c(-\psi):\tilde{F}\in H^0(\tilde{M},\mathcal{O}(K_{\tilde{M}}))\,\&\, (\tilde{F}-f,z)\in\left(\mathcal{O}(K_{\tilde{M}})\otimes\mathcal{I}\left(\max_{1\le j\le n_1}\left\{2\sum_{1\le k\le m_j}p_{j,k}\pi_{1,j}^{*}(G_{\Omega_j}(\cdot,z_{j,k}))\right\}\right)\right)_{z}$ for any $z\in Z_0\bigg\}=\left(\int_0^{+\infty}c(s)e^{-s}ds\right)\sum_{\beta\in \tilde{I}_1}\sum_{\alpha\in E_{\beta}}\frac{(2\pi)^{n_1}\int_Y|f_{\alpha,\beta}|^2e^{-\varphi_Y-\left(N+\pi_{1,j}^*(\sum_{1\le j\le n_1}\varphi_j)\right)(z_{\beta},w)}}{\prod_{1\le j\le n_1}(\alpha_j+1)c_{j,\beta_j}^{2\alpha_{j}+2}}$ holds,
we have
$$\int_{\tilde{M}}|\hat{F}|^2e^{-\varphi}c(-\psi)\ge \left(\int_0^{+\infty}c(s)e^{-s}ds\right)\sum_{\beta\in \tilde{I}_1}\sum_{\alpha\in E_{\beta}}\frac{(2\pi)^{n_1}\int_Y|f_{\alpha,\beta}|^2e^{-\varphi_Y-\left(N+\pi_{1,j}^*(\sum_{1\le j\le n_1}\varphi_j)\right)(z_{\beta},w)}}{\prod_{1\le j\le n_1}(\alpha_j+1)c_{j,\beta_j}^{2\alpha_{j}+2}}.$$

Note that $c(t)e^{-t}$ is decreasing on $(0,+\infty)$, $\psi\leq \hat G$. Hence we must have
$$\int_{\tilde{M}}|\hat{F}|^2e^{-\varphi}c\left(-\psi\right)
=\int_{\tilde M}|\hat{F}|^2e^{-\tilde{\varphi}}c\left(-\hat G\right).$$
It follows from Lemma \ref{decreasing property of l} that we have $N\equiv 0$ and $\psi=\max_{1\le j\le n_1}\left\{2\sum_{1\le k\le m_j}p_{j,k}\pi_{1,j}^{*}(G_{\Omega_j}(\cdot,z_{j,k}))\right\}$.

As $N\equiv 0$, we know $\varphi_j$ is a subharmonic function on $\Omega_j$ satisfying $\varphi_j(z_j)>-\infty$ for any $1\le j\le n_1$ and
it follows from Lemma \ref{l:phi1+phi2} that $\mathcal{I}(\varphi+\psi)_{(z_j,y)}=\mathcal{I}\big(\hat G+\pi_2^*(\varphi_Y)\big)_{(z_j,y)}$ for any $(z_j,y)\in Z_0$. Note that $\psi=\max_{1\le j\le n_1}\left\{2\sum_{1\le k\le m_j}p_{j,k}\pi_{1,j}^{*}(G_{\Omega_j}(\cdot,z_{j,k}))\right\}$. Then It follows from the necessity part of Theorem \ref{GBY6:exten-fibra-finite} that Theorem \ref{thm:exten-fibra-finite} holds.
\end{proof}

\begin{proof}[Proof of Remark \ref{rem:exten-fibra-finite}]
As $(f_{\alpha,\beta},y)\in\big(\mathcal{O}(K_Y)\otimes\mathcal{I}(\varphi_Y)\big)_y$ holds for any $y\in Y$,  $\alpha\in \tilde E_{\beta}\backslash E_{\beta}$ and $\beta\in \tilde{I}_1$, it follows from Lemma \ref{p:exten-fibra} that there exists a holomorphic $(n,0)$ form $F$ on $M$ satisfying that $\big(F-f,(z,y)\big)\in\left(\mathcal{O}(K_{\tilde M})\otimes\mathcal{I}\left(\hat G+\pi_2^*(\varphi_Y)\right)\right)_{(z,y)}$ for any $(z,y)\in Z_0$ and
\begin{equation}\nonumber
	\begin{split}
	&\int_{\tilde{M}}|F|^2e^{-\varphi}c(-\psi)\\
	\le&\left(\int_0^{+\infty}c(s)e^{-s}ds\right)\sum_{\beta\in \tilde{I}_1}\sum_{\alpha\in E_{\beta}}\frac{(2\pi)^{n_1}\int_Y|f_{\alpha,\beta}|^2e^{-\varphi_Y-\left(N+\pi_{1,j}^*(\sum_{1\le j\le n_1}\varphi_j)\right)(z_{\beta},w)}}{\prod_{1\le j\le n_1}(\alpha_j+1)c_{j,\beta_j}^{2\alpha_{j}+2}}.		
	\end{split}
\end{equation}

In the following, we prove the characterization of the holding of the equality.

When $N\equiv 0$, we know $\varphi_j$ is a subharmonic function on $\Omega_j$ satisfying $\varphi_j(z_j)>-\infty$ for any $1\le j\le n_1$ and
it follows from Lemma \ref{l:phi1+phi2} that $\mathcal{I}(\varphi+\psi)_{(z_j,y)}=\mathcal{I}\big(\hat G+\pi_2^*(\varphi_Y)\big)_{(z_j,y)}$ for any $(z_j,y)\in Z_0$. Note that $\psi=\max_{1\le j\le n_1}\{2p_j\pi_{1,j}^{*}(G_{\Omega_j}(\cdot,z_j))\}$.
It follows from Remark \ref{GBY6:exten-fibra-finite-rem} that we have the sufficiency of Remark \ref{rem:exten-fibra-finite}.

Next we show the necessity of Remark \ref{rem:exten-fibra-finite}.

Denote $\tilde{\varphi}:=\varphi+\psi-\hat G$. It follows from Remark \ref{remark after extension} that there exists a holomorphic $(n,0)$ form $\hat{F}$ on $\tilde M$ satisfying that $(\hat{F}-F,z)\in\big(\mathcal{O}(K_{\tilde M})\otimes\mathcal{I}(\hat G+\pi_2^*(\varphi_Y))\big)_{(z,y)}$ for any $(z,y)\in Z_0$ and
\begin{equation}\label{rem-extension-finite 1}
	\begin{split}
	&\int_{\tilde M}|\hat{F}|^2e^{-\tilde{\varphi}}c(-\hat G)\\
\le&\left(\int_0^{+\infty}c(s)e^{-s}ds\right)\sum_{\beta\in \tilde{I}_1}\sum_{\alpha\in E_{\beta}}\frac{(2\pi)^{n_1}\int_Y|f_{\alpha,\beta}|^2e^{-\varphi_Y-\left(N+\pi_{1,j}^*(\sum_{1\le j\le n_1}\varphi_j)\right)(z_{\beta},w)}}{\prod_{1\le j\le n_1}(\alpha_j+1)c_{j,\beta_j}^{2\alpha_{j}+2}}.
	\end{split}
\end{equation}

 Now we know the equality $\inf\big\{\int_{\tilde{M}}|\tilde{F}|^2e^{-\varphi}c(-\psi):\tilde{F}\in H^0(\tilde{M},\mathcal{O}(K_{\tilde{M}}))\,\&\, (\tilde{F}-f,(z,y)\in(\mathcal{O}\left(K_{\tilde{M}})\otimes\mathcal{I}\left(\hat G+\pi_2^*(\varphi_Y)\right)\right)_{(z,y)}$ for any $(z,y)\in Z_0\big\}=\left(\int_0^{+\infty}c(s)e^{-s}ds\right)$\\$\sum_{\beta\in \tilde{I}_1}\sum_{\alpha\in E_{\beta}}\frac{(2\pi)^{n_1}\int_Y|f_{\alpha,\beta}|^2e^{-\varphi_Y-\left(N+\pi_{1,j}^*(\sum_{1\le j\le n_1}\varphi_j)\right)(z_{\beta},w)}}{\prod_{1\le j\le n_1}(\alpha_j+1)c_{j,\beta_j}^{2\alpha_{j}+2}}$ holds, then we have
\begin{equation}
\label{rem-extension-finite 2}
\begin{split}
&\int_{\tilde{M}}|\hat F|^2e^{-\varphi}c(-\psi)\\
\ge& \left(\int_0^{+\infty}c(s)e^{-s}ds\right)\sum_{\beta\in \tilde{I}_1}\sum_{\alpha\in E_{\beta}}\frac{(2\pi)^{n_1}\int_Y|f_{\alpha,\beta}|^2e^{-\varphi_Y-\left(N+\pi_{1,j}^*(\sum_{1\le j\le n_1}\varphi_j)\right)(z_{\beta},w)}}{\prod_{1\le j\le n_1}(\alpha_j+1)c_{j,\beta_j}^{2\alpha_{j}+2}}.
\end{split}
\end{equation}

Note that $c(t)e^{-t}$ is decreasing on $(0,+\infty)$, $\psi\leq \hat G$.
It follows from \eqref{rem-extension-finite 1} and \eqref{rem-extension-finite 2} that we  have
$$\int_{\tilde{M}}|\hat F|^2e^{-\varphi}c\left(-\psi\right)
=\int_{\tilde M}|\hat F|^2e^{-\varphi-N}c\left(-\hat G\right).$$
It follows from Lemma \ref{decreasing property of l} that we have $N\equiv 0$ and $\psi=\max_{1\le j\le n_1}\left\{2\sum_{1\le k\le m_j}p_{j,k}\pi_{1,j}^{*}(G_{\Omega_j}(\cdot,z_{j,k}))\right\}$.

As $N\equiv 0$, we know $\varphi_j$ is a subharmonic function on $\Omega_j$ satisfying $\varphi_j(z_j)>-\infty$ for any $1\le j\le n_1$,
it follows from Lemma \ref{l:phi1+phi2} that $\mathcal{I}(\varphi+\psi)_{(z_j,y)}=\mathcal{I}\big(\hat G+\pi_2^*(\varphi_Y)\big)_{(z_j,y)}$ for any $(z_j,y)\in Z_0$. Note that $\psi=\max_{1\le j\le n_1}\left\{2\sum_{1\le k\le m_j}p_{j,k}\pi_{1,j}^{*}(G_{\Omega_j}(\cdot,z_{j,k}))\right\}$.
Then It follows from Remark \ref{GBY6:exten-fibra-finite-rem} that Remark \ref{rem:exten-fibra-finite} holds.
\end{proof}

\subsection{Proofs of Theorem \ref{thm:exten-fibra-infinite} and Remark \ref{rem:exten-fibra-infinite}}

\begin{proof}[Proof of Theorem \ref{thm:exten-fibra-infinite}]
It follows from Remark \ref{remark after extension} that there exists a holomorphic $(n,0)$ form $F$ on $\tilde{M}$ satisfying that $(F-f,z)\in\left(\mathcal{O}(K_{\tilde{M}})\otimes\mathcal{I}\left(\max_{1\le j\le n_1}\left\{2\sum_{1\le k< \tilde{m}_j}p_{j,k}\pi_{1,j}^{*}(G_{\Omega_j}(\cdot,z_{j,k}))\right\}\right)\right)_{z}$ for any $z\in Z_0$ and
\begin{displaymath}
	\begin{split}
	&\int_{\tilde{M}}|F|^2e^{-\varphi}c(-\psi)\\
	\le&\left(\int_0^{+\infty}c(s)e^{-s}ds\right)\sum_{\beta\in \tilde{I}_1}\sum_{\alpha\in E_{\beta}}\frac{(2\pi)^{n_1}\int_Y|f_{\alpha,\beta}|^2e^{-\varphi_Y-\left(N+\pi_{1,j}^*(\sum_{1\le j\le n_1}\varphi_j)\right)(z_{\beta},w)}}{\prod_{1\le j\le n_1}(\alpha_j+1)c_{j,\beta_j}^{2\alpha_{j}+2}}.	
	\end{split}
\end{displaymath}

In the following, we prove the characterization of the holding of the equality. It follows from Theorem \ref{GBY6:exten-fibra-finite} that we only need to show the necessity.

We show the equality $\inf\bigg\{\int_{\tilde{M}}|\tilde{F}|^2e^{-\varphi}c(-\psi):\tilde{F}\in H^0(\tilde{M},\mathcal{O}(K_{\tilde{M}}))\,\&\, (\tilde{F}-f,z)\in\left(\mathcal{O}(K_{\tilde{M}})\otimes\mathcal{I}\left(\max_{1\le j\le n_1}\left\{2\sum_{1\le k\le m_j}p_{j,k}\pi_{1,j}^{*}(G_{\Omega_j}(\cdot,z_{j,k}))\right\}\right)\right)_{z}$ for any $z\in Z_0\bigg\}=\left(\int_0^{+\infty}c(s)e^{-s}ds\right)\sum_{\beta\in \tilde{I}_1}\sum_{\alpha\in E_{\beta}}\frac{(2\pi)^{n_1}\int_Y|f_{\alpha,\beta}|^2e^{-\varphi_Y-\left(N+\pi_{1,j}^*(\sum_{1\le j\le n_1}\varphi_j)\right)(z_{\beta},w)}}{\prod_{1\le j\le n_1}(\alpha_j+1)c_{j,\beta_j}^{2\alpha_{j}+2}}$ can not hold. We assume that the equality holds to get a contradiction.

Denote $\tilde{\varphi}:=\varphi+\psi-\hat G$. It follows from Remark \ref{remark after extension} that there exists a holomorphic $(n,0)$ form $\hat{F}$ on $\tilde M$ satisfying that $(\hat{F}-F,z)\in\big(\mathcal{O}(K_{\tilde M})\otimes\mathcal{I}(\hat G)\big)_{z}$ for any $z\in Z_0$ and
\begin{equation}
	\begin{split}
	&\int_{\tilde M}|\hat{F}|^2e^{-\tilde{\varphi}}c(-\hat G)\\
\le&\left(\int_0^{+\infty}c(s)e^{-s}ds\right)\sum_{\beta\in \tilde{I}_1}\sum_{\alpha\in E_{\beta}}\frac{(2\pi)^{n_1}\int_Y|f_{\alpha,\beta}|^2e^{-\varphi_Y-\left(N+\pi_{1,j}^*(\sum_{1\le j\le n_1}\varphi_j)\right)(z_{\beta},w)}}{\prod_{1\le j\le n_1}(\alpha_j+1)c_{j,\beta_j}^{2\alpha_{j}+2}}.
	\end{split}
\end{equation}

As the equality holds,
we have
$$\int_{\tilde{M}}|\hat{F}|^2e^{-\varphi}c(-\psi)\ge \left(\int_0^{+\infty}c(s)e^{-s}ds\right)\sum_{\beta\in \tilde{I}_1}\sum_{\alpha\in E_{\beta}}\frac{(2\pi)^{n_1}\int_Y|f_{\alpha,\beta}|^2e^{-\varphi_Y-\left(N+\pi_{1,j}^*(\sum_{1\le j\le n_1}\varphi_j)\right)(z_{\beta},w)}}{\prod_{1\le j\le n_1}(\alpha_j+1)c_{j,\beta_j}^{2\alpha_{j}+2}}.$$

Note that $c(t)e^{-t}$ is decreasing on $(0,+\infty)$, $\psi\leq \hat G$. Hence we must have
$$\int_{\tilde{M}}|\hat{F}|^2e^{-\varphi}c\left(-\psi\right)
=\int_{\tilde M}|\hat{F}|^2e^{-\tilde{\varphi}}c\left(-G\right).$$
It follows from Lemma \ref{decreasing property of l} that we have $\psi=\max_{1\le j\le n_1}\left\{2\sum_{1\le k< \tilde{m}_j}p_{j,k}\pi_{1,j}^{*}(G_{\Omega_j}(\cdot,z_{j,k}))\right\}$.

As $N\equiv 0$, we know $\varphi_j$ is a subharmonic function on $\Omega_j$ satisfying $\varphi_j(z_j)>-\infty$ for any $1\le j\le n_1$ and
it follows from Lemma \ref{l:phi1+phi2} that $\mathcal{I}(\varphi+\psi)_{(z_j,y)}=\mathcal{I}\big(G+\pi_2^*(\varphi_Y)\big)_{(z_j,y)}$ for any $(z_j,y)\in Z_0$. Note that $\psi=\max_{1\le j\le n_1}\left\{2\sum_{1\le k< \tilde{m}_j}p_{j,k}\pi_{1,j}^{*}(G_{\Omega_j}(\cdot,z_{j,k}))\right\}$.
Then It follows from Theorem \ref{GBY6:exten-fibra-infinite} that we get the contradiction and Theorem \ref{thm:exten-fibra-infinite} is proved.
\end{proof}

\begin{proof}[Proof of Remark \ref{rem:exten-fibra-infinite}]
As $(f_{\alpha,\beta},y)\in\big(\mathcal{O}(K_Y)\otimes\mathcal{I}(\varphi_Y)\big)_y$ holds for any $y\in Y$,  $\alpha\in \tilde E_{\beta}\backslash E_{\beta}$ and $\beta\in \tilde{I}_1$, it follows Lemma \ref{p:exten-fibra} that there exists a holomorphic $(n,0)$ form $F$ on $\tilde M$ satisfying that $\big(F-f,(z,y)\big)\in\left(\mathcal{O}(K_{\tilde M})\otimes\mathcal{I}\left(\hat G+\pi_2^*(\varphi_Y)\right)\right)_{(z,y)}$ for any $(z,y)\in Z_0$ and
\begin{equation}\nonumber
	\begin{split}
	&\int_{\tilde{M}}|F|^2e^{-\varphi}c(-\psi)\\
	\le&\left(\int_0^{+\infty}c(s)e^{-s}ds\right)\sum_{\beta\in \tilde{I}_1}\sum_{\alpha\in E_{\beta}}\frac{(2\pi)^{n_1}\int_Y|f_{\alpha,\beta}|^2e^{-\varphi_Y-\left(N+\pi_{1,j}^*(\sum_{1\le j\le n_1}\varphi_j)\right)(z_{\beta},w)}}{\prod_{1\le j\le n_1}(\alpha_j+1)c_{j,\beta_j}^{2\alpha_{j}+2}}.		
	\end{split}
\end{equation}

In the following, we assume that
 the equality $\inf\big\{\int_{\tilde{M}}|\tilde{F}|^2e^{-\varphi}c(-\psi):\tilde{F}\in H^0\big(\tilde{M},\mathcal{O}(K_{\tilde{M}})\big)\,\&\, \big(\tilde{F}-f,(z,y)\big)\in
 \big(\mathcal{O}(K_{\tilde{M}})\otimes\mathcal{I}(G+\pi_2^*(\varphi_Y))\big)_{(z,y)}$ for any $(z,y)\in Z_0\big\}=\left(\int_0^{+\infty}c(s)e^{-s}ds\right)\sum_{\beta\in \tilde{I}_1}\sum_{\alpha\in E_{\beta}}\frac{(2\pi)^{n_1}\int_Y|f_{\alpha,\beta}|^2e^{-\varphi_Y-\left(N+\pi_{1,j}^*(\sum_{1\le j\le n_1}\varphi_j)\right)(z_{\beta},w)}}{\prod_{1\le j\le n_1}(\alpha_j+1)c_{j,\beta_j}^{2\alpha_{j}+2}}$ holds to get a contradiction.

Denote $\tilde{\varphi}:=\varphi+\psi-\hat G$. It follows from Remark \ref{remark after extension} that there exists a holomorphic $(n,0)$ form $\hat{F}$ on $\tilde M$ satisfying that $\big(\hat{F}-F,z\big)\in\big(\mathcal{O}(K_{\tilde M})\otimes\mathcal{I}(\hat G+\pi_2^*(\varphi_Y))\big)_{(z,y)}$ for any $(z,y)\in Z_0$ and
\begin{equation}\label{rem-extension-infinite 1}
	\begin{split}
	&\int_{\tilde M}|\hat{F}|^2e^{-\tilde{\varphi}}c(-\hat G)\\
\le&\left(\int_0^{+\infty}c(s)e^{-s}ds\right)\sum_{\beta\in \tilde{I}_1}\sum_{\alpha\in E_{\beta}}\frac{(2\pi)^{n_1}\int_Y|f_{\alpha,\beta}|^2e^{-\varphi_Y-\left(N+\pi_{1,j}^*(\sum_{1\le j\le n_1}\varphi_j)\right)(z_{\beta},w)}}{\prod_{1\le j\le n_1}(\alpha_j+1)c_{j,\beta_j}^{2\alpha_{j}+2}}.
	\end{split}
\end{equation}

Since the equality holds,  we have
\begin{equation}
\label{rem-extension-infinite 2}
\begin{split}
&\int_{\tilde{M}}|\hat F|^2e^{-\varphi}c(-\psi)\\
\ge& \left(\int_0^{+\infty}c(s)e^{-s}ds\right)\sum_{\beta\in \tilde{I}_1}\sum_{\alpha\in E_{\beta}}\frac{(2\pi)^{n_1}\int_Y|f_{\alpha,\beta}|^2e^{-\varphi_Y-\left(N+\pi_{1,j}^*(\sum_{1\le j\le n_1}\varphi_j)\right)(z_{\beta},w)}}{\prod_{1\le j\le n_1}(\alpha_j+1)c_{j,\beta_j}^{2\alpha_{j}+2}}.
\end{split}
\end{equation}

Note that $c(t)e^{-t}$ is decreasing on $(0,+\infty)$, $\psi\leq \hat G$. It follows from \eqref{rem-extension-infinite 1} and \eqref{rem-extension-infinite 2} that we  have
$$\int_{\tilde{M}}|F|^2e^{-\varphi}c\left(-\psi\right)
=\int_{\tilde M}|F|^2e^{-\varphi-N}c\left(-\hat G\right).$$

It follows from Lemma \ref{decreasing property of l} that we have $N\equiv 0$ and $\psi=\max_{1\le j\le n_1}\left\{2\sum_{1\le k< \tilde{m}_j}p_{j,k}\pi_{1,j}^{*}(G_{\Omega_j}(\cdot,z_{j,k}))\right\}$.

As $N\equiv 0$, we know $\varphi_j$ is a subharmonic function on $\Omega_j$ satisfying $\varphi_j(z_j)>-\infty$ for any $1\le j\le n_1$ and
it follows from Lemma \ref{l:phi1+phi2} that $\mathcal{I}(\varphi+\psi)_{(z_j,y)}=\mathcal{I}\big(\hat G+\pi_2^*(\varphi_2)\big)_{(z_j,y)}$ for any $(z_j,y)\in Z_0$. Note that $\psi=\max_{1\le j\le n_1}\left\{2\sum_{1\le k< \tilde{m}_j}p_{j,k}\pi_{1,j}^{*}(G_{\Omega_j}(\cdot,z_{j,k}))\right\}$.
Then It follows from Remark \ref{GBY6:exten-fibra-infinite-rem} that Remark \ref{rem:exten-fibra-infinite} holds.
\end{proof}


\vspace{.1in} {\em Acknowledgements}.\
The second author and the third author were supported by National Key R\&D Program of China 2021YFA1003100.
The second author was supported by NSFC-11825101, NSFC-11522101 and NSFC-11431013. The third author was supported by China Postdoctoral Science Foundation 2022T150687.

\bibliographystyle{references}
\bibliography{xbib}

\end{document}